\documentclass{amsbook}
\usepackage{amsthm,amssymb}
\usepackage[T1]{fontenc}
\usepackage{lmodern}
\usepackage{xcolor}
\usepackage{enumerate}
\usepackage[
  pdftitle={A rigidity framework for Roe-like algebras},
  pdfauthor={Diego Mart\'{i}nez, Federico Vigolo},
  pdfsubject={Mathematics},
  colorlinks=true, linkcolor=blue, linkbordercolor=blue, citecolor=red, citebordercolor=red, urlcolor=blue, linktocpage=true
]{hyperref}
\usepackage[capitalise, nameinlink, noabbrev, nosort]{cleveref} 
\usepackage[lite,initials]{amsrefs}

\usepackage{mathtools}          
\usepackage{microtype}          
\usepackage{bbm}          
\usepackage{bm}           
\usepackage[shortcuts]{extdash}       
\setlength{\marginparwidth}{3 cm}  

\usepackage{subcaption}         
\usepackage{accents}
\usepackage{xspace}

\usepackage{xparse} 

\usepackage{tikz-cd}


\usepackage[inline]{enumitem}
\setlist[enumerate,1]{font=\normalfont}
\crefname{enumi}{}{} 
\crefname{enumi}{}{} 

\newcounter{proofpartcount}


\newcommand{\supp}{{\rm Supp}}

\newcommand{\CHx}{{\CH_{\crse X}}}
\newcommand{\CHy}{{\CH_{\crse Y}}}
\newcommand{\CHz}{{\CH_{\crse Z}}}

\newcommand{\C}{\mathbb{C}}

\makeatletter
\def\appmap{\@ifnextchar[{\@withappmap}{\@withoutappmap}}
  \def\@withappmap[#1]#2#3#4{{{f}^{#1}_{#2, #3, #4}}}
  \def\@withoutappmap#1#2#3{{{f}_{#1, #2, #3}}}
\def\cappmap{\@ifnextchar[{\@withcappmap}{\@withoutcappmap}}
  \def\@withcappmap[#1]#2#3#4{{\crse{f}^{#1}_{#2, #3, #4}}}
  \def\@withoutcappmap#1#2#3{{\crse{f}_{#1, #2, #3}}}
\def\crseimg{\@ifnextchar[{\@withcrseimg}{\@withoutcrseimg}}
  \def\@withcrseimg[#1]#2#3{{I^{#1}_{#2, #3}}}
  \def\@withoutcrseimg#1#2{{I_{#1, #2}}}
\makeatother

\newcommand*{\concvec}[3]{{\rm C\hspace{-.5em}V}_{#1,#2}{\paren{#3}}}

\newcommand*{\cstar}{\texorpdfstring{$C^*$\nobreakdash-\hspace{0pt}}{*-}}
\newcommand*{\Star}{\(^*\)\nobreakdash-}

\newcommand*{\multiplieralg}[1]{\CM{\left(#1\right)}}

\newcommand*{\uroecstar}[1]{C^*_{\rm u}{\left(#1\right)}}

\newcommand*{\lccstar}[1]{C^*_{{\rm lc}}{\left(#1\right)}}
\makeatletter
  \def\roeclike{\@ifnextchar[{\@withroeclike}{\@withoutroeclike}}
    \def\@withroeclike[#1]#2{\CR^*_{#1}{\left(#2\right)}}
    \def\@withoutroeclike#1{\CR^*{\left(#1\right)}}
  \def\roeclikeone{\@ifnextchar[{\@withroeclikeone}{\@withoutroeclikeone}}
    \def\@withroeclikeone[#1]#2{\CR^*_{#1}{\left(#2\right)}}
    \def\@withoutroeclikeone#1{\CR^*_1{\left(#1\right)}}
  \def\roecliketwo{\@ifnextchar[{\@withroecliketwo}{\@withoutroecliketwo}}
    \def\@withroecliketwo[#1]#2{\CR^*_{#1}{\left(#2\right)}}
    \def\@withoutroecliketwo#1{\CR^*_2{\left(#1\right)}}
  \def\roecstar{\@ifnextchar[{\@withroecstar}{\@withoutroecstar}}
    \def\@withroecstar[#1]#2{C^*_{#1,\rm Roe}{\left(#2\right)}}
    \def\@withoutroecstar#1{C^*_{\rm Roe}{\left(#1\right)}}
  \def\roestar{\@ifnextchar[{\@withroestar}{\@withoutroestar}}
    \def\@withroestar[#1]#2{\C_{#1,\rm Roe}{\left[#2\right]}}
    \def\@withoutroestar#1{\C_{\rm Roe}{\left[#1\right]}}
  \def\classicalroecstar{\@ifnextchar[{\@withclassicalroecstar}{\@withoutclassicalroecstar}}
    \def\@withclassicalroecstar[#1]#2{C^*{\left(#2\right)}}
    \def\@withoutclassicalroecstar#1{C^*{\left(#1\right)}}
  \def\cpcstar{\@ifnextchar[{\@withcpcstar}{\@withoutcpcstar}}
    \def\@withcpcstar[#1]#2{C^*_{#1,\rm cp}{\left(#2\right)}}
    \def\@withoutcpcstar#1{C^*_{\rm cp}{\left(#1\right)}}
  \def\cpstar{\@ifnextchar[{\@withcpstar}{\@withoutcpstar}}
    \def\@withcpstar[#1]#2{\C_{#1,\rm cp}{\left[#2\right]}}
    \def\@withoutcpstar#1{\C_{\rm cp}{\left[#1\right]}}
  \def\qlcstar{\@ifnextchar[{\@withqlcstar}{\@withoutqlcstar}}
    \def\@withqlcstar[#1]#2{C^*_{#1,\rm ql}{\left(#2\right)}}
    \def\@withoutqlcstar#1{C^*_{\rm ql}{\left(#1\right)}}
\makeatother

\newcommand*{\cpop}[1]{{\rm Prop}{(#1)}}
\newcommand*{\cpcontr}[1]{{\rm Prop}{\left(#1\right)}_{\leq 1}}
\newcommand*{\cscontr}[1]{#1\text{-Supp}_{\leq 1}}
\newcommand*{\piinvql}{\CY}
\newcommand*{\piinvcp}{\CZ}
\newcommand*{\phiinvql}[2]{\piinvql_{#1, #2}}
\newcommand*{\phiinvcp}[2]{\piinvcp_{#1, #2}}
\newcommand*{\phiinv}[2]{\CX_{#1, #2}}

\newcommand*{\sotnbhd}[2]{\CN_{#1, #2}}

\newcommand*{\sot}{\text{SOT}}
\newcommand*{\wot}{\text{WOT}}

\newcommand*{\coe}[1]{{\rm CE}{\left(#1\right)}}
\newcommand*{\eacuni}[1]{{\rm EACUni}{{\left(#1\right)}}}
\newcommand*{\eqcuni}[1]{{\rm EQCUni}{{\left(#1\right)}}}
\newcommand*{\cuni}[1]{{\rm CtrUni}{{\left(#1\right)}}}

\newcommand*{\spn}[1]{{\rm Span}\{#1\}}
\newcommand*{\spnclosed}[1]{\overline{{\rm Span}}\{#1\}}

\newcommand*{\charfunc}[1]{\mathbbm{1}_{#1}}                        \newcommand*{\chf}[1]{\charfunc{#1}}
\newcommand*{\charfunccomp}[2]{\charfunc{#1 \smallsetminus #2}}     
\newcommand*{\charfunccompX}[1]{\charfunccomp{X}{#1}}               \newcommand*{\chfcX}[1]{\charfunccompX{#1}}
\newcommand*{\charfunccompY}[1]{\charfunccomp{Y}{#1}}               \newcommand*{\chfcY}[1]{\charfunccompY{#1}}

\newcommand{\gauge}{\ensuremath{\widetilde{E}}{}}
\newcommand{\gaugex}{\ensuremath{\widetilde{E}_{\crse X}}{}}
\newcommand{\gaugey}{\ensuremath{\widetilde{F}_{\crse Y}}{}}

\newcommand*{\matrixunit}[2]{e_{#1,#2}}

\definecolor{darkgreen}{rgb}{0.0, 0.5, 0.0}
\newcounter{fcomment}

\definecolor{brown}{rgb}{0.55, 0.3, 0.25}
\newcounter{dcomment}

\definecolor{newpurple}{rgb}{0.8, 0, 0.9}


\newcommand{\CCC}{\mathbb{C}}

\newcommand{\MM}{\mathbb{M}}		\newcommand{\NN}{\mathbb{N}}
		
		\newcommand{\RR}{\mathbb{R}}

		\newcommand{\ZZ}{\mathbb{Z}}

\newcommand{\CA}{\mathcal{A}}		\newcommand{\CB}{\mathcal{B}}
		\newcommand{\CD}{\mathcal{D}}
\newcommand{\CE}{\mathcal{E}}		\newcommand{\CF}{\mathcal{F}}
\newcommand{\CG}{\mathcal{G}}		\newcommand{\CH}{\mathcal{H}}
		
\newcommand{\CK}{\mathcal{K}}		\newcommand{\CL}{\mathcal{L}}
\newcommand{\CM}{\mathcal{M}}		\newcommand{\CN}{\mathcal{N}}
		\newcommand{\CP}{\mathcal{P}}
		\newcommand{\CR}{\mathcal{R}}
		
\newcommand{\CU}{\mathcal{U}}		\newcommand{\CV}{\mathcal{V}}
		\newcommand{\CX}{\mathcal{X}}
\newcommand{\CY}{\mathcal{Y}}		\newcommand{\CZ}{\mathcal{Z}}

\newcommand{\fkA}{\mathfrak{A}}		\newcommand{\fkB}{\mathfrak{B}}






 \fboxrule0.0001pt \fboxsep0pt			


\DeclarePairedDelimiter\abs{\lvert}{\rvert}		
\DeclarePairedDelimiter\norm{\lVert}{\rVert}		
\DeclarePairedDelimiter\angles{\langle}{\rangle}	

\DeclarePairedDelimiter\paren{(}{)}			
\DeclarePairedDelimiter\braces{\{}{\}}			

	\newcommand{\bigparen}[1]{\paren[\big]{#1}}
	\newcommand{\Bigparen}[1]{\paren[\Big]{#1}}


\newcommand{\scal}[2]{\angles{#1,#2}}			



\DeclareMathOperator{\id}{id}				
\DeclareMathOperator{\rk}{rk}				



\DeclareMathOperator{\aut}{Aut}				
\DeclareMathOperator{\out}{Out}				




\DeclareMathOperator{\diag}{diag}
\DeclareMathOperator{\image}{im}

\DeclareMathOperator{\Out}{Out}

\DeclareMathOperator{\rank}{rank}

\DeclareMathOperator{\U}{U}




\theoremstyle{plain}
\newtheorem{thm}{Theorem}[section]		\newtheorem{theorem}[thm]{Theorem}
		\newtheorem{proposition}[thm]{Proposition}
\newtheorem{lem}[thm]{Lemma}			\newtheorem{lemma}[thm]{Lemma}
\newtheorem{cor}[thm]{Corollary}		\newtheorem{corollary}[thm]{Corollary}

\newtheorem{claim}[thm]{Claim}

\newtheorem*{thm*}{Theorem}			\newtheorem*{theorem*}{Theorem}
\newtheorem*{prop*}{Proposition}		\newtheorem*{proposition*}{Proposition}
\newtheorem*{lem*}{Lemma}			\newtheorem*{lemma*}{Lemma}
\newtheorem*{cor*}{Corollary}			\newtheorem*{corollary*}{Corollary}
\newtheorem*{qu*}{Question}			\newtheorem*{question*}{Question}
\newtheorem*{conj*}{Conjecture}			\newtheorem*{conjecture*}{Question}
\newtheorem*{fact*}{Fact}
\newtheorem*{claim*}{Claim}
\newtheorem*{case*}{Case}
\newtheorem*{problem*}{Problem}

\numberwithin{equation}{section}

\newtheorem{alphthm}{Theorem}			

\newtheorem{alphcor}[alphthm]{Corollary}           

\theoremstyle{definition}
		\newtheorem{definition}[thm]{Definition}
\newtheorem{notation}[thm]{Notation}
		\newtheorem{convention}[thm]{Convention}

\newtheorem*{de*}{Definition}			\newtheorem{definition*}{Definition}
\newtheorem*{notation*}{Notation}
\newtheorem*{conv*}{Convention}			\newtheorem*{convention*}{Convention}

\theoremstyle{remark}
\newtheorem{rmk}[thm]{Remark}			\newtheorem{remark}[thm]{Remark}
			\newtheorem{example}[thm]{Example}

\newtheorem*{remark*}{Remark}

\DeclareMathAlphabet{\mathbit}{OT1}{cmr}{bx}{it}

\crefname{thm}{Theorem}{Theorems}              \crefname{theorem}{Theorem}{Theorems}
\crefname{prop}{Proposition}{Propositions}     \crefname{proposition}{Proposition}{Propositions}
\crefname{lem}{Lemma}{Lemmas}                  \crefname{lemma}{Lemma}{Lemmas}
\crefname{rmk}{Remark}{Remarks}                \crefname{remark}{Remark}{Remarks}
\crefname{cor}{Corollary}{Corollaries}         \crefname{corollary}{Corollary}{Corollaries}
\crefname{qu}{Question}{Questions}             \crefname{question}{Question}{Questions}
\crefname{conj}{Conjecture}{Conjectures}       \crefname{conjecture}{Conjecture}{Conjectures}
\crefname{fact}{Fact}{Facts}
\crefname{claim}{Claim}{Claims}
\crefname{case}{Case}{Cases}
\crefname{alphthm}{Theorem}{Theorems}          \crefname{alphcor}{Corollary}{Corollaries}
\crefname{alphprop}{Proposition}{Propositions}


\ExplSyntaxOn
\cs_new_protected:Nn \egreg_embolden_command:N
 {\cs_set_eq:cN { __egreg_ \cs_to_str:N #1 : } #1
  \cs_set_protected:Npn #1 { \bm { \use:c { __egreg_ \cs_to_str:N #1 : } } }}
\cs_new_protected:Nn \egreg_embolden_char:n
 {\exp_args:Nc \mathchardef { __egreg_#1: } = \mathcode`#1 \scan_stop:
  \cs_set_protected:cn { __egreg_#1_bold: } { \bm { \use:c { __egreg_#1: } } }
  \char_set_active_eq:nc { `#1 } { __egreg_#1_bold: }
  \mathcode`#1 = "8000 \scan_stop:}
\cs_new_protected:Nn \egreg_embolden:
 {\clist_map_function:nN
   {
    A,B,C,D,E,F,G,H,I,J,K,L,M,N,O,P,Q,R,S,T,U,V,W,X,Y,Z,
    a,b,c,d,e,f,g,h,i,j,k,l,m,n,o,p,q,r,s,t,u,v,w,x,y,z,
    *,<,>,/,-,1,2,3,4,5,6,7,8,9,0
   }
   \egreg_embolden_char:n
  \clist_map_function:nN
   {
    \alpha,\beta,\gamma,\delta,\epsilon,\varepsilon,\zeta,\eta,\theta,\vartheta,\iota,\kappa,\lambda,\mu,\nu,\xi,\pi,\varpi,\rho,\varrho,\sigma,\varsigma,\tau,\upsilon,\phi,\varphi,\chi,\psi,\omega,
    \Gamma,\varGamma,\Delta,\varDelta,\Theta,\varTheta,\Lambda,\varLambda,\Xi,\varXi,\Pi,\varPi,\Sigma,\varSigma,\Upsilon,\varUpsilon,\Phi,\varPhi,\Psi,\varPsi,\Omega,\varOmega,
    \cup,\cap,
    \neq,\cong,\ncong,
    \in,\ell,
    \subset,\subseteq,\supset,\supseteq,\subsetneq,\supsetneq,\nsubseteq,\nsupseteq,
    \leq,\geq,\lneq,\gneq,\lhd,\rhd,\trianglelefteq,\trianglerighteq,\triangleright,\trianglerighteq,\circ,
   }
   \egreg_embolden_command:N}
\NewDocumentCommand{\crse}{m}
 {\group_begin:
  \egreg_embolden:
  #1
  \group_end:}
\ExplSyntaxOff

\newcommand{\cid}{\mathbf{id}}



\DeclareMathOperator{\cim}{\mathbf{im}}

\DeclareMathOperator{\cdom}{\mathbf{dom}}
\DeclareMathOperator{\csupp}{\mathbf{Supp}}
\DeclareMathOperator{\qcsupp}{\mathbf{q-Supp}}
\DeclareMathOperator{\acsupp}{\mathbf{a-Supp}}


\newcommand*{\op}[1]{#1^{\scriptscriptstyle\rm T}}

\makeatletter
\def\varcrs{\@ifnextchar[{\@withvarcrs}{\@withoutvarcrs}}
\def\@withvarcrs[#1]#2{\CE_{\rm #2}^{\rm #1}}
\def\@withoutvarcrs#1{\CE_{\rm #1}}
\def\varFcrs{\@ifnextchar[{\@withvarFcrs}{\@withoutvarFcrs}}
\def\@withvarFcrs[#1]#2{\CF_{\rm #2}^{\rm #1}}
\def\@withoutvarFcrs#1{\CF_{\rm #1}}
\makeatother

\newcommand{\csub}{
                    \preccurlyeq}


\newcommand{\Ad}{{\rm Ad}}

\newcommand{\Cat}[1]{\ifmmode \text{\normalfont \textbf{#1}} \else {\normalfont \textbf{#1}}\fi}
\newcommand{\mhyphen}{\textnormal{-}}
\newcommand{\variable}{\,\mhyphen\,}

\newcommand*{\munit}[2]{\matrixunit{#1}{#2}}

\newcommand{\rlone}{rank-$\leq$1-preserving\xspace}
\newcommand*{\sssec}[1]{\smallskip\noindent{\bf #1}}


\newcommand*{\rfLemCloseLikeClassic}{\cite{roe-algs}*{Lemmas 3.22 and 3.31}\xspace}
\newcommand*{\rfLemRelCloseToFunciton}{\cite{roe-algs}*{Lemma 3.31}\xspace}
\newcommand*{\rfSsecCrsMap}{\cite{roe-algs}*{Subsection 3.2}\xspace}
\newcommand*{\rfDefCrsMap}{\cite{roe-algs}*{Definition 3.24}\xspace}
\newcommand*{\rfDefCrsComp}{\cite{roe-algs}*{Definition 3.27}\xspace}
\newcommand*{\rfLemCrsComp}{\cite{roe-algs}*{Lemma 3.28}\xspace}
\newcommand*{\rfLemSameSupportSameFunction}{\cite{roe-algs}*{Corollary 3.42}\xspace}
\newcommand*{\rfPropTransposeIsInverse}{\cite{roe-algs}*{Proposition 3.44}\xspace}

\newcommand*{\rfDefCrseModule}{\cite{roe-algs}*{Definition 4.5}\xspace}
\newcommand*{\rfDefAdmissibleModule}{\cite{roe-algs}*{Definition 4.15}\xspace}
\newcommand*{\rfDefDiscreteModule}{\cite{roe-algs}*{Definition 4.19}\xspace}
\newcommand*{\rfRmkExtensionToDiscrete}{\cite{roe-algs}*{Remark 4.20}\xspace}
\newcommand*{\rfRmkDiscreteIsDiscrete}{\cite{roe-algs}*{Remark 4.23}\xspace}
\newcommand*{\rfPropAdmissibleIsDiscrete}{\cite{roe-algs}*{Proposition 4.24}\xspace}
\newcommand*{\rfDefAmple}{\cite{roe-algs}*{Definition 4.11}\xspace}
\newcommand*{\rfCorExistsAmplePartition}{\cite{roe-algs}*{Corollary 4.28}\xspace}
\newcommand*{\rfLemQuasisupportOfVectors}{\cite{roe-algs}*{Lemma 4.3}\xspace}

\newcommand*{\rfUnifRoeAlg}{\cite{roe-algs}*{Examples 4.21 and 6.4}\xspace}

\newcommand*{\rfLemSupportComposition}{\cite{roe-algs}*{Lemma 5.3}\xspace}
\newcommand*{\rfDefCoarseSupport}{\cite{roe-algs}*{Definition 5.6}\xspace}
\newcommand*{\rfPropCoarseSupport}{\cite{roe-algs}*{Proposition 5.7}\xspace}
\newcommand*{\rfLemProperIffProper}{\cite{roe-algs}*{Lemma 5.11}\xspace}
\newcommand*{\rfLemOperationsOnQuasiLocal}{\cite{roe-algs}*{Lemma 5.18}\xspace}

\newcommand*{\rfLemQlIsAlgebra}{\cite{roe-algs}*{Lemma 6.6}\xspace}
\newcommand*{\rfLemLcIsAlgebra}{\cite{roe-algs}*{Lemma 6.6}\xspace}
\newcommand*{\rfMultRoeinCcp}{\cite{roe-algs}*{Corollary 6.9}\xspace}

\newcommand*{\rfRoelikeDisconnected}{\cite{roe-algs}*{Corollaries 6.16 and 6.18}\xspace}
\newcommand*{\rfThmApproxUnit}{\cite{roe-algs}*{Theorem 6.20}\xspace}
\newcommand*{\rfRoeAlgCap}{\cite{roe-algs}*{Theorem 6.20}\xspace}
\newcommand*{\rfThmCartanSubalg}{\cite{roe-algs}*{Theorem 6.32}\xspace}
\newcommand*{\rfApproxUnit}{\cite{roe-algs}*{Theorem 6.20}\xspace}

\newcommand*{\rfPropControlledIffAdControlled}{\cite{roe-algs}*{Proposition 7.1}\xspace}

\newcommand*{\rfCorExistCoveringIsom}{\cite{roe-algs}*{Corollary 7.12 and Remark 7.13 (ii)}\xspace}
\newcommand*{\rfCorAdProperLocalCpt}{\cite{roe-algs}*{Corollary 7.3}\xspace}

\newcommand*{\rfLemCoveringUniIsUnique}{\cite{roe-algs}*{Lemma 7.14}\xspace}
\newcommand*{\rfDefCEandCUni}{\cite{roe-algs}*{Definition 7.15}\xspace}
\newcommand*{\rfIsoCEtoCtrUni}{\cite{roe-algs}*{Theorem 7.16}\xspace}

\newcommand*{\rfEmbeddingCtrUniintoOut}{\cite{roe-algs}*{Theorem 7.18}\xspace}

\newcommand*{\rfDefRoeAlgsSpace}{\cite{roe-algs}*{Definition 8.1}\xspace}
\newcommand*{\rfLemExistsDiscreteIffCardinality}{\cite{roe-algs}*{Lemma 8.9}\xspace}

\begin{document}

\title[A rigidity framework for Roe-like algebras]{A rigidity framework for Roe-like algebras}
\date{\today}

\author[Diego Mart\'{i}nez]{Diego Mart\'{i}nez $^{1}$}
\address{Department of Mathematics, KU Leuven, Celestijnenlaan 200B, 3001 Leuven, Belgium.}
\email{diego.martinez@kuleuven.be}

\author[Federico Vigolo]{Federico Vigolo $^{2}$}
\address{Mathematisches Institut, Georg-August-Universit\"{a}t G\"{o}ttingen, Bunsenstr. 3-5, 37073 G\"{o}ttingen, Germany.}
\email{federico.vigolo@uni-goettingen.de}

\maketitle


\begin{abstract}
  In this memoir we develop a framework to study rigidity problems for Roe-like \cstar{}algebras of countably generated coarse spaces.
  The main goal is to give a complete and self-contained solution to the problem of \cstar{}rigidity for proper (extended) metric spaces. Namely, we show that (stable) isomorphisms among Roe algebras always give rise to coarse equivalences. 

  The material is organized as to provide a unified proof of \cstar{}rigidity for Roe-like \cstar{}algebras, algebras of operators of controlled propagation, and algebras of quasi-local operators.

  We also prove a more refined \cstar{}rigidity statement which has several additional applications. For instance, we can put the correspondence between coarse geometry and operator algebras in a categorical framework, and we prove that the outer automorphism groups of all of these algebras are isomorphic to the group of coarse equivalences of the starting coarse space.
\end{abstract}


\newcommand{\subjclass}[2][2020]{\bigskip\textbf{MSC #1:} #2}  
\subjclass[2020]{53C24, 48L89, 51F30, 52C25, 51K05}
%
%

\newcommand{\keywords}[1]{\bigskip\textbf{keywords:} #1}  
\keywords{Roe algebras; rigidity; coarse geometry}



\tableofcontents

\chapter{Introduction}

The main focus of this memoir is a \emph{\cstar{}rigidity} phenomenon that links coarse geometric properties of proper metric spaces on the one hand and algebraic/analytic properties of certain \cstar{}algebras on the other.

The general idea is that one can associate with certain geometric constructs some analytic counterparts. This can be seen as saying that the analytic setup is ``flexible enough'' to accommodate for the geometry of the spaces. It is a much deeper and rather surprising phenomenon that the analytic side is also ``rigid enough'' to imply that such an association often gives rise to a perfect correspondence between the two worlds.
This is the aspect that we intend to explain in this work. In the following, we will be particularly interested with two such results (a \emph{stable} and a \emph{refined rigidity} theorems) and their consequences.

\section{The Stable Rigidity Theorem: background and statement}\label{intro:background}
Let us start our explanation of the \cstar{}rigidity phenomenon from the coarse geometric side. A function $f\colon X\to Y$ between metric spaces is called \emph{controlled} if for every $r\geq 0$ there is some $R\geq 0$ such that if $d(x,x')\leq r$ then $d(f(x),f(x'))\leq R$.\footnote{\, 
For ease of notation, in this introduction $R$ denotes a positive real number, while in the rest of the memoir it denotes a binary relation. The role that $r>0$ and $R>0$ play here will be taken over by controlled entourages $E \in \CE$ and $F \in \CF$ respectively.}

A mapping $g\colon Y \to X$ is a \emph{coarse inverse} for $f$ if $g\circ f$ is within bounded distance from the identity (\emph{i.e.}\ $g(f(x))$ stays uniformly close to $x$ when $x$ ranges in $X$). The spaces $X$ and $Y$ are then \emph{coarsely equivalent} if there are controlled maps $f\colon X\to Y$ and $g\colon Y\to X$ that are coarse inverses to one another.
\emph{Coarse geometry} is concerned with the study of geometric properties that are invariant up to coarse equivalence.

One way of paraphrasing what coarse equivalences are is that the only piece of information about $X$ that is always retained up to coarse equivalence is which families of subsets of $X$ have uniformly bounded diameter. Considering what a weak form of equivalence this is, it is impressive how much information the coarse geometry of a space actually preserves.
Typical spaces of coarse geometric interest are Cayley graphs of finitely generated groups and coverings of compact manifolds: in this context there are deep connections between algebra, geometry/topology and analysis.
In this brief introduction to the subject, it is not possible to make justice to the ramifications of the coarse geometric approach. We shall therefore content ourselves to refer the reader to the books \cites{nowak-yu-book,dructu2018geometric,gromov-1993-invariants,roe_lectures_2003,roe_index_1996} for a glimpse to the depth of these ideas.

\smallskip

The introduction to the analytical side is perhaps most clear in the context of Riemannian manifolds. With such a manifold $M$ are associated the Hilbert space $L^2(M)$ and various algebras of operators on $L^2(M)$.
One such algebra of special interest is the \Star{}algebra of locally compact operators of finite propagation: its closure in $\CB(L^2(M))$ is known as the \emph{Roe algebra} $\roecstar M$ of $M$.
The original motivation to introduce this algebra stemmed from index-theoretical considerations.
Moreover, the K-theory of $\roecstar M$ is related with a coarse K-homology of $M$ via a certain \emph{coarse assembly map}. When this map is an isomorphism (\emph{i.e.}\ $M$ satisfies the ``coarse Baum--Connes Conjecture''), very strong consequences can be deduced, for instance with regard to the Novikov Conjecture  \cites{aparicio_baum-connes_2019,higson-roe-1995-coarse-bc,yu1995coarse,yu_coarse_2000,skandalis-tu-yu-coarse-gpds-02,yu-1998-novikov-groups}. More recently, the Roe algebra has also been proposed to model topological phases in mathematical physics \cite{ewert2019coarse}.

Such a Roe algebra can be similarly defined for any proper metric space $X$. To do so, the role of the Hilbert space $L^2(M)$ is played by an appropriate choice of \emph{geometric module} $\CH_X$, so that $\roecstar X$ can be defined as a \cstar{}subalgebra of $\CB(\CH_X)$. Specifically, $\CH_X$ must be an \emph{ample} module (the idea of using modules to abstract the properties of spaces of functions on manifolds goes back at least as far as \cite{atiyah1970global}).

Besides Roe algebras, we shall work with other related \cstar{}subalgebras of $\CB(\CH_X)$.
Specifically, the \cstar{}algebra $\cpcstar X$ of operators that are approximable by controlled propagation\footnote{\,
  In this content ``controlled propagation'' is a synonym for  ``finite propagation''. We use the former terminology because it extends more naturally to the setting of coarse spaces.} operators
(such operators are also referred to as being \emph{band dominated}~\cite{braga_gelfand_duality_2022}), and the \cstar{}algebra $\qlcstar X$ of \emph{quasi-local} operators. The former is the most closely related to the Roe algebras: the only difference is that the approximating operators need not be locally compact. The latter has other advantages, in that the quasi-locality condition is often simpler to verify. Without entering into details, let us here mention that $\cpcstar{X}$ is always a subalgebra of $\qlcstar X$ (approximable operators must be quasi-local), and $\cpcstar{X}=\qlcstar X$ if $X$ has Yu's property A~\cite{spakula-zhang-2020-quasi-loc-prop-a}. Examples where the inclusion $\cpcstar{X}\subseteq\qlcstar X$ is strict are given in \cite{ozawa-2023}.

If $X$ is a discrete metric space and $\CH_X$ is taken to be the---non ample---module $\ell^2(X)$, the \Star{}algebra of operators of controlled propagation is also known as translation algebra \cite{gromov-1993-invariants}*{page 262}. Its closure is the \emph{uniform Roe-algebra} $\uroecstar X$ (in other words, $\uroecstar X$ is the non-ample analog of either $\roecstar\variable$ and $\cpcstar\variable$).
In this memoir we shall collectively refer to all the \cstar{}algebras discussed above as \emph{classical Roe-like \cstar{}algebras}. We postpone a more detailed discussion of the Roe-like \cstar{}algebras to \cref{subsec: roe algs of modules}.

\smallskip

It has been known for a long time that Roe-like \cstar{}algebras are intimately connected with the coarse geometry of the underlying metric space. 
Specifically, it is simple to show that a coarse equivalence between two proper metric spaces $X$ and $Y$ gives rise to \Star{}isomorphisms at the level of the associated algebras $\roecstar\variable$, $\cpcstar\variable$, $\qlcstar\variable$, and a stable isomorphism (equivalently, a Morita equivalence) between $\uroecstar{X}$ and $\uroecstar{Y}$.\footnote{\,
  Two \cstar{}algebras $A$ and $B$ are \emph{stably \Star{}isomorphic} if $A \otimes \CK(\CH)\cong B \otimes \CK(\CH)$, where $\CK(\CH)$ is the algebra of compact operators on the separable Hilbert space $\CH$.
  Showing that coarse equivalences give rise to stable isomorphisms among uniform Roe algebras is more delicate that the construction of the isomorphisms among the other Roe-like \cstar{}algebras, and also requires the spaces to be \emph{uniformly} locally finite \cite{brodzki_property_2007}*{Theorem 4}.
  This is due to the lack of the ampleness condition on the geometric module.} 
The \emph{\cstar{}rigidity} question asks whether the converse is true.
Namely, it asks whether two metric spaces $X$ and $Y$ must be coarsely equivalent as soon as (one of) their associated Roe-like \cstar{}algebras are \Star{}isomorphic (resp.\ Morita equivalent).

The first foundational result on the problem of \cstar{}rigidity was obtained by \v{S}pakula and Willett in \cite{spakula_rigidity_2013}, where it is proven that \cstar{}rigidity holds for bounded geometry metric spaces with Yu's \emph{property A} \cite{yu_coarse_2000}. The latter is a rather mild but important regularity condition \cites{roe_ghostbusting_2014,sako_property_2014,brodzki_uniform_2013,sako_finite_2019}, which is very useful in the context of \cstar{}rigidity (see \cref{ssubsec:intro: approximations} below for more details about its use here).
After \cite{spakula_rigidity_2013}, a sequence of papers gradually improved the state of the art by proving \cstar{}rigidity in more and more general settings \cites{braga2021uniform,braga2020embeddings,braga_farah_vignati_2022,braga_gelfand_duality_2022,braga_farah_rig_2021,spakula_maximal_2013,braga2020coarse,li-spakula-zhang-2023-measured-asym-exp,jiang2023rigidity}.
Two final breakthroughs were obtained in \cites{braga_rigid_unif_roe_2022,rigidIHES}. In the former it is proved that uniformly locally finite metric spaces having stably isomorphic uniform Roe algebras must be coarsely equivalent. 
The latter manages to prove rigidity in the ample setting, showing that bounded geometry metric spaces with isomorphic $\roecstar\variable$ or $\cpcstar\variable$ must be coarsely equivalent.

\smallskip

At this point the picture is almost complete. The main grievances that remain are the following.
\begin{enumerate}[label=(\roman*)]
  \item In the ample case, \cite{rigidIHES} does not prove \emph{stable} rigidity;
  \item rigidity was not shown for $\qlcstar{\variable}$ (note however that the non-ample analogue of $\qlcstar\variable$ is also done in \cite{braga_rigid_unif_roe_2022});
  \item the rigidity results only apply to bounded geometry metric spaces;
  \item and the approaches mentioned before have parallels between them, but a unified approach is still missing.
\end{enumerate}
Of these issues, the first one would be easy to solve: one could improve the result of \cite{rigidIHES} to prove a stable rigidity the same way that it is done in \cite{braga_rigid_unif_roe_2022}. The other three are more serious, because \cite{rigidIHES} relies on deep works of Braga, Farah and Vignati that have only been shown in restricted settings \cites{braga_farah_rig_2021,braga_gelfand_duality_2022}.

In particular, one ``philosophical'' complaint is that many of the results surrounding the \cstar{}rigidity problem focus on one kind of Roe-like \cstar{}algebra at a time (quite often the uniform Roe algebra $\uroecstar{X}$, which is in many respects easier to analyse). Notable exception in this regard is the seminal paper \cite{spakula_rigidity_2013} (see also \cite{spakula2019relative}).
This is somewhat disappointing, as these individual rigidity results should be part of a broader ``rigidity theory''.

The first goal of this memoir is to develop the theory necessary to provide a full, completely general and unified solution to the stable \cstar{}rigidity problem. Namely, we give a self-contained proof of the following.

\begin{alphthm}[cf.\ \cref{thm: stable rigidity}]\label{thm:intro: rigidity}
  Let $X$ and $Y$ be proper (extended) metric spaces.
  If there is an isomorphism 
  \[
    \roeclikeone{X}\otimes \CK(\CH_1) \cong \roecliketwo{Y}\otimes\CK(\CH_2),
  \]
  then $X$ and $Y$ are coarsely equivalent.\footnote{\,
  In this context the maximal and minimal tensor products coincide, and we denote either/both by $\otimes$.}
\end{alphthm}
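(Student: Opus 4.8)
The plan is to upgrade the abstract $\Star{}$isomorphism to a geometrically controlled one and then run it through the dictionary between ``covering'' isomorphisms and coarse equivalences established in \cite{roe-algs}. First I would absorb the stabilisation: since $\CK(\CH_1)\cong\CK(\ell^2(\NN))$, tensoring a Roe-like algebra by $\CK(\CH_1)$ just replaces the underlying admissible module $\CH_X$ by $\CH_X\otimes\ell^2(\NN)$, which is again admissible --- and ample whenever that matters; for the uniform Roe algebra this identifies $\uroecstar{X}\otimes\CK(\CH_1)$ with a Roe algebra on an ample module (cf.\ \rfUnifRoeAlg). Hence we may assume we are handed a genuine $\Star{}$isomorphism $\Phi\colon\roeclikeone{X}\to\roecliketwo{Y}$ of Roe-like algebras of (automatically countably generated) coarse spaces on admissible modules. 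Using \rfRoelikeDisconnected one reduces to the coarsely connected case, and using that admissible modules are discrete in the sense of \cite{roe-algs} (\rfPropAdmissibleIsDiscrete) one may treat both algebras as ``discrete'': each carries its canonical Cartan subalgebra (\rfThmCartanSubalg), and, having fixed an ample partition (\rfCorExistsAmplePartition) indexed by a coarsely dense countable net $X_0\subseteq X$, a distinguished family of rank-one ``point-mass'' projections $p_x=\xi_x\xi_x^*$, $x\in X_0$, where $\xi_x$ is a unit vector supported near $x$; crucially $p_x\in\roeclikeone{X}$ no matter which flavour of Roe-like algebra is meant. Similarly one obtains data $\braces{q_y}_{y\in Y_0}$ on the $Y$ side.

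The core of the argument is a rigidity statement: $\Phi$ is, after composing with an inner automorphism of $\roecliketwo{Y}$ if necessary, \emph{controlled}, meaning that it maps operators of controlled propagation to operators with controlled coarse support (\rfDefCoarseSupport, \rfPropCoarseSupport); in particular it respects the two Cartan subalgebras up to inner conjugacy. Granting this, the theorem follows essentially formally from \cite{roe-algs}: a controlled isomorphism between Roe-like algebras is implemented by a ``covering'' partial isometry between the two modules (\rfThmExistCoveringIsomSharp, \rfIsoCEtoCtrUni), and the combinatorics of such a partial isometry amount to a pair of mutually coarsely inverse controlled maps $X\to Y$ and $Y\to X$ --- that is, a coarse equivalence $X\simeq Y$.

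To establish the rigidity statement I would chase projections, as in the classical treatments but working with honest projections rather than with vectors, which is what the ample case demands. For $x\in X_0$ the image $\Phi(p_x)$ is a rank-one projection in $\roecliketwo{Y}$ and so has a well-defined coarse support (\rfPropCoarseSupport); the crux is to show that these coarse supports are automatically \emph{uniformly bounded}, so that each of them selects a point $f(x)\in Y$ up to bounded ambiguity. Running the same construction for $\Phi^{-1}$ yields $g\colon Y_0\to X_0$, and the relation $\Phi^{-1}\circ\Phi=\id$ combined with the composition rule for coarse supports (\rfLemSupportComposition) forces $g\circ f$ to lie at bounded distance from $\id_{X_0}$, and symmetrically for $f\circ g$. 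Controlledness of $f$ follows because, when $d(x,x')$ is small, there is a rank-one partial isometry $v\in\roeclikeone{X}$ of small propagation with $v p_x v^*=p_{x'}$, so that applying $\Phi$ keeps $f(x)$ and $f(x')$ at bounded distance. Extending $f$ from the net $X_0$ to all of $X$ by a nearest-point assignment then produces the controlled map with controlled coarse inverse. The whole discussion is insensitive to which of the Roe-like algebras $\roeclikeone{\variable}$ and $\roecliketwo{\variable}$ denote, since the locally compact, controlled-propagation and quasi-local versions differ only through the approximation theorems \rfThmApproxUnit and \rfCorQlocIFFEquiQLoc, which are already built into the coarse-support formalism of \cite{roe-algs}.

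The serious difficulty is concentrated entirely in the uniform boundedness claim above --- equivalently, in showing that $\Phi$ can be made controlled --- \emph{without Yu's property A}. This is exactly the point at which all earlier treatments of the non-uniform case get stuck: the intrinsically finite-dimensional arguments of \cite{braga_rigid_unif_roe_2022} must be recast so as to survive the passage to ample (infinite-multiplicity) modules, and making this work is the whole point of the ``rigidity framework'' announced in the introduction. I expect it to be handled by distilling the purely coarse-combinatorial skeleton of those arguments and phrasing it in terms of coarse supports of projections, exploiting that the finite partial sums of the point-mass projections form an approximate unit for $\roeclikeone{X}$ and that the Roe-like algebras are stable under the relevant approximations. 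Everything else --- the module absorption, the disconnected and non-discrete reductions, the passage from nets to the whole space, and the coarse-support bookkeeping --- should be routine given the foundations of \cite{roe-algs}.
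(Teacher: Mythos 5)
Your general orientation---absorb the stabilisation, spatially implement the isomorphism, extract a controlled relation and then run the covering-operator machinery of \cite{roe-algs}---is in the right spirit, but two of the concrete steps you take for granted fail, and the point where you say ``I expect it to be handled\dots'' is not a mop-up but the actual theorem.

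First, the stabilisation cannot be absorbed the way you propose. The algebra $\roeclike{\CHx}\otimes\CK(\CH_1)$ is a proper subalgebra of $\roeclike{\CHx\otimes\CH_1}$ whenever $\CH_1$ has infinite rank (see \cref{rmk: roelike of tensor with finite rank}); they are different \cstar{}algebras and the isomorphism you are handed lives on the smaller one. In fact \cref{ex:tensor-k-screws-quasi-control} exhibits a unitary $U\colon\CHx\otimes\CH_1\to\CHy\otimes\CH_2$ implementing an isomorphism of $\cpcstar{\CHx}\otimes\CK(\CH_1)$ that is nevertheless not weakly quasi-controlled as an operator between the tensored modules: so after your ``absorption'' the hypotheses you would need are simply false. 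The paper's actual stable argument replaces this by a passage to carefully chosen submodules $p_X^0\leq p_X'\leq p_X''$ of $\CHx\otimes\CH_1$ on which $U$ \emph{is} weakly quasi-controlled, and then invokes the more flexible \cref{thm: rigidity quasi-proper operators}. Second, the core rigidity claim you state---that $\Phi$ becomes \emph{controlled} after an inner perturbation, i.e.\ sends controlled-propagation operators to controlled-propagation operators---is simply not true. An isomorphism of Roe-like algebras only satisfies the \emph{approximate} or \emph{quasi} versions of this (\cref{def:quasi-controlled mapping}), and even that is not automatic: it is the uniformization phenomenon (\cref{thm: uniformization}), whose proof is a Baire-category argument in $(\cpcontr{E},\sot)$ that you do not mention at all. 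Third, your claim that the argument is ``insensitive to which Roe-like algebra is meant'' glosses over exactly the obstacle for $\roecstar{\variable}$: there the isomorphism is only defined on the (much smaller) Roe algebra, so \cref{thm: uniformization} does not apply directly; the paper resolves this by the quasi-proper characterisation \cref{thm: quasi-proper} or by the submodule trick already needed for the stable case. Finally, the ``uniform boundedness of the coarse supports $\Phi(p_x)$'' you need is precisely the coarse-surjectivity problem, and the new ingredient that solves it without property~A is the Concentration Inequality (\cref{prop: concentration-ineq}), which is not a repackaging of Braga--Farah--Vignati but a genuinely different, parallelogram-law based estimate valid in the ample/infinite-multiplicity setting. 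In short, the skeleton of the argument is correct but every load-bearing joint in your sketch---stabilisation absorption, literal controlledness, uniformity of supports, Roe-algebra case---needs the tools you list under ``routine'' to be replaced by something substantially stronger.
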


In the above, $\roeclikeone\variable$ and $\roecliketwo\variable$ denote any of the Roe-like \cstar{}algebras we consider in this work, \emph{i.e.}\ $\roecstar\variable$, $\cpcstar\variable$, $\qlcstar\variable$ or $\uroecstar\variable$. Likewise, $\CK(\CH_1)$, $\CK(\CH_2)$ are the compact operators on the Hilbert spaces $\CH_1$ and $\CH_2$, which may possibly be different and of arbitrary dimension (including both finite rank and non-separable spaces). 
For instance, \cref{thm:intro: rigidity} lets us compare $\uroecstar{X}\otimes\CK(\CH)$ with $\roecstar Y$, of $\cpcstar X$ with $\qlcstar Y$. 
Our choice of notation is meant to highlight the unifying nature of the proof we provide.

\begin{remark}
  As a matter of fact, if we are willing to exclude $\roecstar\variable$ from the statement, the rest of \cref{thm:intro: rigidity} applies even without the properness assumption.
\end{remark}

Combining \cref{thm:intro: rigidity} with classical results, we immediately obtain the following corollary
(the separability assumption below can be disposed: we only kept it to make the relation with \cref{cor:roe-rigidity} more immediate).

\begin{alphcor}[cf.\ \cref{cor:roe-rigidity}] \label{cor:intro: roe-rigidity}
  Let ${X}$ and ${Y}$ be proper separable extended metric spaces. Then the following are equivalent:
   \begin{enumerate} [label=(\roman*)]
     \item \label{cor:intro: roe-rigidity:1} $ X$ and $ Y$ are coarsely equivalent.
     \item \label{cor:intro: roe-rigidity:2} $\roecstar{{X}}$ and $\roecstar{{Y}}$ are \Star{}isomorphic.
     \item \label{cor:intro: roe-rigidity:3} $\cpcstar{{X}}$ and $\cpcstar{{Y}}$ are \Star{}isomorphic.
     \item \label{cor:intro: roe-rigidity:4} $\qlcstar{{X}}$ and $\qlcstar{{Y}}$ are \Star{}isomorphic.
   \end{enumerate}
   If $X$ and $Y$ are uniformly locally finite, then the above are also equivalent to the following:
   \begin{enumerate}[label=(\roman*),resume]
     \item \label{cor:intro: roe-rigidity:5} $\uroecstar{ X}$ and $\uroecstar{ Y}$ are stably \Star{}isomorphic.
     \item \label{cor:intro: roe-rigidity:6} $\uroecstar{ X}$ and $\uroecstar{ Y}$ are Morita equivalent.
   \end{enumerate}
 \end{alphcor}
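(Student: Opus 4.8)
The plan is to deduce \cref{cor:intro: roe-rigidity} from \cref{thm:intro: rigidity} together with standard facts about Roe-like algebras established in \cite{roe-algs} and classical results from the literature. The implications \ref{cor:intro: roe-rigidity:1}$\Rightarrow$\ref{cor:intro: roe-rigidity:2}, \ref{cor:intro: roe-rigidity:1}$\Rightarrow$\ref{cor:intro: roe-rigidity:3}, and \ref{cor:intro: roe-rigidity:1}$\Rightarrow$\ref{cor:intro: roe-rigidity:4} are the ``easy'' direction: a coarse equivalence $X \to Y$ induces a \Star{}isomorphism of the ample geometric modules compatible with the coarse structure (after passing to suitable ample modules, using that any two ample modules for coarsely equivalent spaces are ``coarsely isomorphic'', cf.\ the covering isometry results \rfThmExistCoveringIsom), and such an isomorphism carries $\roecstar\variable$ to $\roecstar\variable$, $\cpcstar\variable$ to $\cpcstar\variable$, and $\qlcstar\variable$ to $\qlcstar\variable$, since each of these algebras is defined purely in terms of the coarse structure (propagation/quasi-locality/local compactness conditions are preserved). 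Concretely these are instances of the functoriality discussed in \cite{roe-algs}; for separable $X$ the relevant ample modules are separable, so no set-theoretic subtleties arise.

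For the converse directions \ref{cor:intro: roe-rigidity:2}$\Rightarrow$\ref{cor:intro: roe-rigidity:1}, \ref{cor:intro: roe-rigidity:3}$\Rightarrow$\ref{cor:intro: roe-rigidity:1}, \ref{cor:intro: roe-rigidity:4}$\Rightarrow$\ref{cor:intro: roe-rigidity:1} one simply observes that a \Star{}isomorphism $A \cong B$ trivially yields a stable \Star{}isomorphism $A \otimes \CK(\CH_1) \cong B \otimes \CK(\CH_1)$ by tensoring with $\id_{\CK(\CH_1)}$, so \cref{thm:intro: rigidity} applies directly with $\CH_1 = \CH_2$ and gives that $X$ and $Y$ are coarsely equivalent. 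Thus \ref{cor:intro: roe-rigidity:1}--\ref{cor:intro: roe-rigidity:4} are all equivalent. (Note the theorem even allows mixing the types of algebra, which is stronger than what the corollary asks for.)

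For the last two items, assume in addition that $X$ and $Y$ are uniformly locally finite. The equivalence \ref{cor:intro: roe-rigidity:5}$\Leftrightarrow$\ref{cor:intro: roe-rigidity:6} is the classical Brown--Green--Rieffel theorem: for $\sigma$-unital \cstar{}algebras, Morita equivalence is the same as stable \Star{}isomorphism, and $\uroecstar{X}$, $\uroecstar{Y}$ are unital (as $X$, $Y$ are countable) hence $\sigma$-unital. For \ref{cor:intro: roe-rigidity:1}$\Rightarrow$\ref{cor:intro: roe-rigidity:6}: a coarse equivalence between uniformly locally finite spaces induces a Morita equivalence of their uniform Roe algebras---this is the classical result of Brodzki--Niblo--Wright \cite{brodzki_property_2007}*{Theorem 4}, which crucially uses bounded geometry. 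For \ref{cor:intro: roe-rigidity:5}$\Rightarrow$\ref{cor:intro: roe-rigidity:1}: by definition $\uroecstar{X} \otimes \CK(\CH) \cong \uroecstar{Y} \otimes \CK(\CH)$ for $\CH$ separable, which is exactly the hypothesis of \cref{thm:intro: rigidity} in the case $\roeclikeone = \roecliketwo = \uroecstar\variable$ and $\CH_1 = \CH_2 = \CH$, so $X$ and $Y$ are coarsely equivalent. This closes the cycle.

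The proof is therefore entirely a matter of assembling \cref{thm:intro: rigidity} with known results; there is no genuine obstacle, but the point requiring the most care is the ``easy'' direction \ref{cor:intro: roe-rigidity:1}$\Rightarrow$\ref{cor:intro: roe-rigidity:2}--\ref{cor:intro: roe-rigidity:4}, where one must be slightly careful that a coarse equivalence need not be a bijection, so the induced map on modules is implemented by a covering isometry between (possibly enlarged) ample modules rather than by a unitary conjugation in the naive sense; this is handled by the machinery of \cite{roe-algs} (covering isometries and the fact that Roe-like algebras do not depend on the choice of ample module). Local compactness of $X$ and $Y$ is needed precisely to have $\roecstar\variable$ sit inside $\cpcstar\variable$ as the locally compact part \rfThmApproxUnit{} and to ensure ample modules exist; separability guarantees we stay within separable Hilbert spaces throughout.
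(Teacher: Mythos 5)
Your proposal matches the paper's proof in both structure and the ingredients used: the classical direction (i)$\Rightarrow$(ii),(iii),(iv) via covering isometries, (ii)--(v)$\Rightarrow$(i) via \cref{thm:intro: rigidity} (the paper's \cref{thm: stable rigidity} allows $\CH_1,\CH_2$ of finite rank, so no tensoring is even needed for the non-uniform cases), Brown--Green--Rieffel for (v)$\Leftrightarrow$(vi), and Brodzki--Niblo--Wright for (i)$\Rightarrow$(v)/(vi) under uniform local finiteness. The only small slip is the parenthetical ``(as $X$, $Y$ are countable)'' for unitality of $\uroecstar{X}$: the uniform Roe algebra is always unital because the identity has zero propagation, independently of countability; this does not affect the argument since unital trivially implies $\sigma$-unital.
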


 \begin{remark}
  \cref{cor:intro: roe-rigidity} shows that the (stable) isomorphism type of the Roe-like \cstar{}algebras are complete invariants for the coarse geometry of (bounded geometry) metric spaces.
  It also follows that every coarse geometric invariant of metric spaces gives rise to an invariant under stable \Star{}isomorphism for such Roe-like \cstar{}algebras. Since these are \cstar{}algebras of great interest within the operator-algebra community, it is an intriguing prospect to search for intrinsic analytic characterizations of such invariants.
 \end{remark}

\begin{remark}
  Recall that an \emph{extended metric space} is a set equipped with a metric function that is allowed to take the value $+\infty$.
  The setting of extended metric spaces is a natural one for our results, because these precisely correspond to coarse spaces with countably generated coarse structures (cf.\ \cref{prop:metrizable coarse structure iff ctably gen}). In fact, this countability assumption is key in various points of our arguments. 
  Namely, \cref{lem:spakula-willett easy}, \cref{thm: uniformization phenomenon,prop:equi-controlled rank-1} all make use of it.
  The extent to which countable generation is necessary for \cstar{}rigidity to apply is not yet entirely clear. However, \cref{ex:non-cnt-gen-coarse-space-with-roe-compact} below does show that rigidity fails for Roe algebras if both the countable generation and coarse local finiteness assumptions are dropped (see \cref{rem:rigidity-fails-without-cnt-gen}).

  One additional motivation to work with \emph{extended} metric spaces comes from considerations regarding reduced crossed products $\ell^\infty(S)\rtimes_{\rm red}S$ of (quasi-countable) inverse monoids. Indeed, such \cstar{}algebras are isomorphic to the uniform Roe algebra of extended metric spaces where the distance function \emph{does} take the value $+\infty$. We refer to \cref{cor:inv-sem-roe-rigid} for a more detailed discussion.
\end{remark}

\section{The route to rigidity}
Since the first work of \v{S}pakula and Willett \cite{spakula_rigidity_2013}, the route to rigidity has---more or less implicitly---always consisted of three fundamental milestones: \emph{spatial implementation of isomorphisms}; a \emph{uniformization phenomenon}; and the construction of appropriate \emph{approximations}.
What changed and progressed over time are the steps that are taken to reach these milestones, with fundamental innovations arising from the works of Braga, Farah and Vignati (various key proofs in our arguments are inspired by \cites{braga_farah_rig_2021,braga_gelfand_duality_2022}).
This work also follows this well trodden path. However, the language and various of the techniques that we develop in doing so are novel and represent one of the main contributions of this work.
Specifically, we found it helpful to recast the problem of \cstar{}rigidity using the language of coarse geometry developed by Roe in \cite{roe_lectures_2003}. This required a certain amount of ground work, which was done in \cite{roe-algs}: there Roe-like \cstar{}algebras for (coarse geometric modules of) arbitrary coarse spaces were introduced, and the foundations upon which we now build this memoir were laid.
This approach allowed us to distill the phenomena underlying the solution to the \cstar{}rigidity problem to their essence, which we believe to be an important added value of this work.
Introducing this non-standard language requires some time, and for this reason we do not use it in this introduction. This means that all the statements appearing here are simplified versions of the results that we actually prove, obtained by restricting our theorems to the more classical setting of proper (extended) metric spaces.

We shall now enter a more detailed discussion of the strategy we follow in order to prove \cref{thm:intro: rigidity}.

\subsection{Spatial implementation of isomorphisms}\label{ssubsec:intro: spatial implementation}
As already explained, Roe-like \cstar{}algebras of $X$ are always concretely represented as subalgebras of $\CB(\CH_X)$ for some choice of geometric module $\CH_X$.
In its simplest form, the spatial implementation phenomenon is the observation that if $\phi\colon\roecstar{X}\to\roecstar{Y}$ is an isomorphism, then there is a unitary operator $U\colon\CH_X\to\CH_Y$ at the level of defining modules such that $\phi$ coincides with the restriction of $\Ad(U)$ to $\roecstar X$, where $\Ad(U)$ denotes the action by conjugation $t\mapsto UtU^*$. This phenomenon follows from the classical result that an isomorphism between the compact operators $\CK(\CH)\to\CK(\CH)$ is always spatially implemented (spatially implemented homomorphisms are sometimes called \emph{inner}).

What we need in this work is a slight refinement of the above, because we need to show that also an isomorphism $\phi\colon \roeclikeone X\otimes\CK(\CH_1)\to\roecliketwo Y\otimes\CK(\CH_2)$ is spatially implemented by a unitary $U\colon\CH_X\otimes\CH_1\to\CH_Y\otimes\CH_2$. In our setting, proving this requires a few extra technicalities (cf.\ \cref{prop: isos are spatially implemented}) because we work with extended metric spaces and hence the Roe-like \cstar{}algebras need not contain all the compact operators. These steps are more or less standard, so we do not substantially improve the existing results, nor shall we comment on it further.

\subsection{The uniformization phenomenon}\label{ssubsec:intro: uniformization}
Suppose $\phi \colon \cpcstar{X} \to \cpcstar{Y}$ is a \Star{}homomorphism (which needs not be an isomorphism, and not even an embedding). 
Since $\cpcstar\variable$ consists of operators that are norm-approximable by operators of controlled propagation (see \cref{subsec: coarse supp and propagation,subsec: roe algs of modules} for actual definitions), $\phi$ clearly satisfies:

\medskip
\noindent\begin{minipage}{0.1\textwidth}
  \begin{center}
    (a)\label{intro:not-approx-controlled}  
  \end{center}
\end{minipage}\begin{minipage}{0.9\textwidth}
  Fix $\varepsilon > 0$ and $r \geq 0$. For every $t \in \cpcstar{X}$ of propagation $r$, there is some $R \geq 0$ and $s \in \cpcstar{Y}$ of propagation controlled by $R$ such that $\norm{\phi(t) - s} \leq \varepsilon$.
\end{minipage}
\smallskip

Note that, a priori, $R = R(\varepsilon, r, t)$ depends on $t$. The \emph{uniformization phenomenon} states that $R$, in fact, may be chosen independently of $t$. Namely, such a homomorphism $\phi$ automatically satisfies the stronger condition:

\medskip
\noindent\begin{minipage}{0.1\textwidth}
  \begin{center}
    (A)\label{intro:yes-approx-controlled} 
  \end{center}
\end{minipage}\begin{minipage}{0.9\textwidth}
  Fix $\varepsilon > 0$ and $r \geq 0$. There is some $R \geq 0$ such that for every $t \in \cpcstar{X}$ of propagation $r$, there is some $s \in \cpcstar{Y}$ of propagation controlled by $R$ such that $\norm{\phi(t) - s} \leq \varepsilon$.
\end{minipage}
\smallskip

If $\phi\colon\CB(\CH_X)\to\CB(\CH_Y)$ satisfies \hyperref[intro:yes-approx-controlled]{(A)} then we say that it is \emph{approximately controlled} (cf.\ \cref{def:quasi-controlled mapping}). 
We may then state (a special case) of the uniformization phenomenon as the following.
\begin{alphthm}[cf.\ \cref{thm: uniformization}] \label{thm:intro: uniformization}
  Let $X$ and $Y$ be proper extended metric spaces.
  Then any strongly continuous \Star{}homomorphism $\phi \colon \cpcstar{X} \rightarrow \cpcstar{Y}$ is approximately-controlled.
\end{alphthm}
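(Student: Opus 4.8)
The plan is to prove the contrapositive-flavored statement by a Baire category argument on the unit ball of a suitable controlled piece of $\cpcstar{X}$, reducing the uniformity in (A) to a pointwise-type estimate that follows from the definition of $\cpcstar{\variable}$ and the strong continuity of $\phi$. First I would fix $\varepsilon > 0$ and a controlled entourage $E$ (playing the role of "$r$"), and consider the set $B_E$ of operators $t \in \cpcstar{X}$ with $\norm{t} \leq 1$ and propagation controlled by $E$. The key structural input is that $B_E$ is an SOT-closed (hence, since $X$ is countably generated, SOT-metrizable on bounded sets) bounded convex set, and in particular a Polish space in the strong operator topology; this is where the countable-generation hypothesis enters, exactly as flagged in the excerpt after \cref{thm:intro: uniformization}. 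For each $R$ (ranging over a cofinal sequence of controlled entourages of $Y$) let $C_R \subseteq B_E$ be the set of $t$ such that there exists $s \in \cpcstar{Y}$ with propagation controlled by $R$ and $\norm{\phi(t) - s} \leq \varepsilon$. By property \hyperref[intro:not-approx-controlled]{(a)} (which $\phi$ satisfies simply because $\cpcstar{\variable}$ is the closure of operators of controlled propagation), we have $B_E = \bigcup_R C_R$.

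Next I would show each $C_R$ is closed in $B_E$ for the strong operator topology. This is the main technical point and where I expect the real work to be: one needs that the set of $s \in \cpcstar{Y}$ of propagation controlled by a fixed $R$ and bounded norm is SOT-closed (true, because having propagation controlled by $R$ is a condition of the form $\chf{A} s \chf{B} = 0$ for all disjoint-enough pairs $A,B$, which is SOT-closed on bounded sets), that $\phi$ is SOT-to-SOT continuous on bounded sets (this is the strong-continuity hypothesis, upgraded from sequences to nets on bounded balls using metrizability), and that a net in $C_R$ converging in SOT has its witnesses $s_i$ lying in a fixed bounded SOT-compact-enough set so that one can extract an SOT-convergent subnet $s_i \to s$ with $\norm{\phi(t) - s}\leq\varepsilon$ by lower semicontinuity of the norm under SOT limits. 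Assembling these, $C_R$ is SOT-closed in the Polish space $B_E$.

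By the Baire category theorem, some $C_{R_0}$ has nonempty interior in $B_E$, say it contains an SOT-neighborhood of a point $t_0 \in C_{R_0}$; after translating by $t_0$ and rescaling (using that $C_R - C_{R'} \subseteq C_{R''}$ for $R''$ large enough in terms of $R, R'$, and that $B_E$ is a convex symmetric set whose dilates absorb one another), one concludes that a fixed dilate of $B_E$ — hence, after rescaling, all of $B_E$ — is contained in $C_{R_1}$ for a single controlled entourage $R_1$. Unwinding, this says precisely that for the fixed $\varepsilon$ and $E$ there is a single $R_1$ witnessing (A). Since $\varepsilon$ and $E$ were arbitrary, $\phi$ is approximately controlled. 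The hard part, as indicated, is the SOT-closedness of the $C_R$: one must be careful that the approximants $s_i$ do not escape to infinity in norm (they can be truncated to norm $\leq \norm{\phi(t)} + \varepsilon \leq 1 + \varepsilon$ without changing the propagation bound, since cutting by spectral projections of $s_i^*s_i$ is a controlled operation up to a further enlargement of $R$, or more simply one replaces $s_i$ by its image under a controlled completely positive contraction) and that the SOT limit of the $s_i$ genuinely lands in $\cpcstar{Y}$ with the required propagation control rather than merely in $\CB(\CHy)$ — both handled by the SOT-closedness of bounded propagation-$R$ balls recalled above. I would expect the precise bookkeeping of "$R''$ large enough" to be routine given the composition and sum estimates for propagation established in the companion paper \cite{roe-algs}.
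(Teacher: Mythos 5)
Your overall Baire-category strategy is the right one and matches the paper's at a high level, but there are two genuine gaps, the second of which elides the hardest part of the argument.

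First, the claim that $B_E = \cpcontr{E}$ is ``SOT-metrizable on bounded sets'' and hence Polish because a coarse structure is countably generated is false: countable generation is a property of the coarse structures $\CE$ and $\CF$ and has no bearing on the rank of the module $\CHx$, which may be arbitrarily large (indeed the paper insists on non-separable modules, since these are needed for ample modules and for $\roecstar\variable$). The strong operator topology on the unit ball of $\CB(\CHx)$ is metrizable only when $\CHx$ is separable, and a closed subspace of a Baire space need not be Baire. The paper has to prove by hand, in \cref{sec:baire} (see \cref{prop:prop-e-baire}), that $(\cpcontr{E},\sot)$ is a Baire space; your argument needs to invoke that result rather than metrizability. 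Where countable generation of $\CF$ \emph{is} used is in picking a countable cofinal sequence $(F_n)_{n\in\NN}$ so that $B_E$ is covered by countably many $C_{F_n}$ — you have this part right, but the role of countability is different from what you say.

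Second, and more seriously, the step ``after translating by $t_0$ and rescaling (using that $C_R - C_{R'}\subseteq C_{R''}$ and that $B_E$ is convex with absorbing dilates)'' does not work. The nonempty-interior conclusion from Baire produces an SOT-neighborhood of the form $(t_0 + \sotnbhd{\delta}{V})\cap B_E \subseteq C_{R_0}$ with $V$ finite and $\delta$ small. Such a set is \emph{not} absorbing in $B_E$: there is no scalar $c$ for which $B_E \subseteq c\cdot\sotnbhd{\delta}{V}$, because for any finite $V$ there are contractions $t$ with $\norm{t(v)}=1$ for $v\in V$, and the SOT topology on a bounded set is far weaker than the norm topology for which the usual translate-and-dilate Baire argument is designed. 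The paper gets around this by decomposing an arbitrary $t\in\cpcontr{E}$ as
\[
t = pt + (1-p)tp + (1-p)t(1-p),
\]
where $p$ is a finite-rank projection dominating the span of $V$ chosen so that $(1-p)t(1-p)\in\sotnbhd{\delta}{V}$. The last term is then handled by the interior point as you intend, but the first two terms $pt$ and $(1-p)tp$ must independently be shown to lie in some $\phiinv{\varepsilon}{F}$ with $F$ uniform in $t$. This is the content of \cref{lem: uniformization finite rank}, which in turn requires the \emph{one-vector control} condition of \cref{def: one-vector quasi-controlled mapping}, established in \cref{prop:equi-controlled rank-1} — the bulk of \cref{ssec: uniformization rank-one}, which involves a second, quite delicate Baire-type argument (\cref{lem:spakula-willett easy} applied via \cref{lem:e_vnvm non-uniformly-approximable}). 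Calling this ``routine bookkeeping'' is where your proposal breaks down: without the one-vector control you have no way to handle the terms $pt$ and $(1-p)tp$, and there is no purely convexity-theoretic shortcut.

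Finally, a smaller issue: for your set $C_R$ to be SOT-closed you need strong continuity of $\phi$ together with \cref{lemma:qloc-sot-closed}\cref{lemma:qloc-sot-closed:3} (the set of $\varepsilon$-$F$-approximable operators is \wot-closed), which itself already requires extracting a \wot-limit of approximants as you sketch; this part of your outline is essentially correct and agrees with the paper.
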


As stated, \cref{thm:intro: uniformization} will not be surprising to an expert: an almost complete proof can already be found in \cite{braga_gelfand_duality_2022}*{Theorem 3.5}.
What \emph{is} surprising is the precise result we actually prove (see \cref{thm: uniformization}).
Namely, the uniformization phenomenon manifests itself under much weaker hypotheses than we had anticipated. For instance, $\phi$ needs not be defined on the whole of $\cpcstar{X}$, properness is not needed and neither is separability. We may even prove ``effective'' versions of \cref{thm: uniformization} (see \cref{rmk:effective uniformization}). We refer to \cref{rmk: on uniformization hypotheses} for more observations regarding the hypotheses of \cref{thm:intro: uniformization}.

Moreover, we also show that a \Star{}homomorphism $\phi \colon \cpcstar{X} \to \qlcstar{X}$ is automatically \emph{quasi-controlled} (cf.\ \cref{def:quasi-controlled mapping}). The definition of ``quasi-control'' is analogous to that of ``approximate control'', and is simply rephrasing \hyperref[intro:yes-approx-controlled]{(A)} with a \emph{quasi-local} mindset (as opposed to an \emph{approximate} one).
It follows easily from the definitions that an approximately controlled mapping $\phi\colon\CB(\CH_X)\to\CB(\CH_Y)$ is a fortiori quasi-controlled.
This last observation is important because, as we shall see, the weaker notion of quasi-control is already enough to construct the required coarse equivalences---and using it actually makes the construction clearer. 
It is this point of view that enables us---and \v{S}pakula--Willett before us---to provide a unified proof for \cstar{}rigidity that applies simultaneously to all Roe-like algebras. Specifically, this is the reason why in \cref{thm:intro: rigidity} we are able to compare Roe-like \cstar{}algebras of different kinds.

\begin{rmk}
   Throughout this memoir we will define several ``approximate'' and ``quasi'' notions. Just as indicated by the containment $\cpcstar\variable\subseteq\qlcstar\variable$, the approximate notions always imply their quasi counterparts.
\end{rmk}

The main points in our proof of \cref{thm:intro: uniformization} rely on some Baire-type ``compactness'' arguments that are clearly inspired by the insights of \cites{braga_farah_rig_2021,braga_gelfand_duality_2022}.

\subsection{Construction of approximations}\label{ssubsec:intro: approximations}
The next ingredient that is needed to prove \cstar{}rigidity is a procedure to pass from operators among modules $T\colon\CH_X\to\CH_Y$ to mappings among $X$ and $Y$. One classical approach is as follows.
Assume that $X$ and $Y$ are (uniformly) locally finite, $\CH_X=\ell^2(X)$ and $\CH_Y=\ell^2(Y)$. Fix some $\delta>0$ and construct a partially defined map $Y\to X$ sending a point $y\in Y$ to an arbitrarily chosen point $x$ such that $\scal{T(\chf{x})}{\chf{y}}>\delta$. One may then prove that, under appropriate conditions and small enough $\delta>0$, this gives an everywhere defined controlled mapping (see \emph{e.g.}\ \cites{braga_rigid_unif_roe_2022,spakula_rigidity_2013,rigidIHES}).

The approach we take here is related, but formally rather different. Since points and bounded sets cannot be distinguished within the coarse formalism, we see no reason to insist on constructing a \emph{function} from $X$ to $Y$. All that is needed is some subset of $Y\times X$ whose fibers have uniformly bounded diameter (this can be thought of as a ``coarsely well-defined'' function).
Taking this point of view lets us construct ``coarse functions'' approximating $T\colon \CH_X\to\CH_Y$ in a more natural way. Namely, we consider the \emph{relations} defined by
\[
  \appmap[T]{\delta}{R}{r} \coloneqq 
    \bigcup\left\{B\times A \; \mid \;
    \text{diam}\left(B\right) \leq R, \;\text{diam}\left(A\right) \leq r \;\; \text{and} \;\;
    \norm{\chf{B}T\chf{A}} > \delta \right\}. 
\]
On a first sight, this definition is more complicated than the previous, point-based, approach. However, it is much more flexible and easy to use. For one, it is a more canonical object that works equally well regardless of the spaces and modules under consideration,\footnote{\, Notice that before we had to specify that the spaces are discrete and the Hilbert spaces are $\ell^2$. Such steps are unnecessary within our formalism.} and whose properties are more streamlined to prove. 
Another major advantage of using relations instead of functions is that it gives a way to confound between mappings and supports of operators among modules (this will become clearer after a few definitions, see \cref{subsec: supports}).

A simple but crucial observation is that if $\Ad(T)$ is a quasi-controlled mapping as introduced in \cref{ssubsec:intro: uniformization}, then its approximations $\appmap[T]{\delta}{R}{r}$ are always \emph{controlled relations}.
Intuitively, this means that they give rise to controlled functions $D\to Y$ defined on the domain $D\coloneqq\pi_X(\appmap[T]{\delta}{R}{r}) \subseteq X$, and that these functions are uniquely defined up to closeness. In other words, controlled relations give rise to (partially defined) coarse maps from $X$ to $Y$. This is the tool needed to pass from isomorphisms to coarse equivalences. There are, however, two difficulties to overcome. The first one is that the approximation procedure is, alas, \emph{not} functorial (see \cref{rem:app-maps not functorial}): this is not a great issue, but it does mean that extra care has to be paid to verify coarse invertibility.
On the contrary, the second difficulty is a major one. The issue here is that, with no further assumptions, $\appmap[T]{\delta}{R}{r}$ may well be ``too small'', or even just the empty relation. That is, we need to show that the parameters can be chosen so that domain and image of $\appmap[T]{\delta}{R}{r}$ are large---coarsely dense, in fact.

Guaranteeing that for a quasi-controlled unitary $U\colon \CH_X\to\CH_Y$ there is an approximation $\appmap[U]{\delta}{R}{r}$ which is coarsely surjective---a necessary intermediate step in the proof that $\appmap[U]{\delta}{R}{r}$ is a coarse equivalence---is hard. 
In its essence, this issue is one of the core points that required hard work in each successive improvement on the problem of \cstar{}rigidity.
In the earliest results \cites{spakula_rigidity_2013,braga_gelfand_duality_2022} \emph{property A} (or weakenings thereof) was assumed precisely to overcome this problem.
The main contribution of \cite{braga_rigid_unif_roe_2022} was to give a general solution to the surjectivity problem in the setting of uniformly locally finite metric spaces equipped with their $\ell^2$ space (see \cite{braga_rigid_unif_roe_2022}*{Lemma 3.2}), while \cite{rigidIHES} provided an unconditional solution to the problem of coarse surjectivity proving a \emph{Concentration Inequality} (see \cite{rigidIHES}*{Proposition 3.2}).

The latter is the approach we follow in this memoir, as the Concentration Inequality always implies that $\appmap[U]{\delta}Rr$ can be made ``surjective enough'' (cf.\ \cref{prop: unconditional image estimate}).
With this at hand, the following statement is then easy to prove.

\begin{alphthm}[cf.\ \cref{cor: rigidity controlled unitaries}]\label{thm-intro: rigidity controlled unitaries}
  Let $U\colon\CH_X\to \CH_Y$ be a unitary (between faithful discrete modules) such that both $\Ad(U)$ and $\Ad(U^*)$ are quasi-controlled. Then, for every $0<\delta<1$ there are $r, R \geq 0$ large enough such that $\appmap[U]{\delta}{R}r\colon X\to Y$ is a coarse equivalence with coarse inverse $\appmap[U^*]{\delta}rR$.
\end{alphthm}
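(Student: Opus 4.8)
\emph{Approach.} The plan is to show that the relation $f\coloneqq\appmap[U]{\delta}{R}{r}$ is a coarse equivalence $X\to Y$ and that $\appmap[U^*]{\delta}{r}{R}$ is exactly its transpose; the fact that the transpose of a coarse equivalence (presented by a relation) is its coarse inverse, \rfPropTransposeIsInverse, then gives the statement. The transpose identification is immediate: for $A\subseteq X$ and $B\subseteq Y$ one has $\norm{\chf{B}U\chf{A}}=\norm{\chf{A}U^*\chf{B}}$, and the two diameter constraints in the definitions are interchanged accordingly, so $\op{f}=\appmap[U^*]{\delta}{r}{R}$. Since $U^*$ is again a unitary between faithful discrete modules and $\Ad(U^*)$ is again quasi-controlled, every assertion proved about $f$ applies to $\op{f}$ after swapping $X\leftrightarrow Y$ and $r\leftrightarrow R$; this lets me run the two halves of the argument symmetrically.

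\emph{Soft step: $f$ and $\op{f}$ are coarse maps.} Because $\Ad(U)$ is quasi-controlled, the relation $\appmap[U]{\delta}{R}{r}$ is a controlled relation for any values of $\delta,R,r$: if $\norm{\chf{B}U\chf{A}}>\delta$ with $\diam(A)\le r$, then quasi-control of $\Ad(U)$ confines $B$ to a bounded subset of $Y$ whose size depends only on $r$ and $\delta$, so the fibres of $f$ over points of $X$ have uniformly bounded diameter (cf.\ \cref{def:quasi-controlled mapping}; in practice quasi-control is guaranteed by the uniformization \cref{thm: uniformization}). The same reasoning applied to $U^*$ makes $\op{f}=\appmap[U^*]{\delta}{r}{R}$ controlled, i.e.\ $f$ is proper. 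Hence $f$ and $\op{f}$ are controlled proper relations, that is, coarse maps — with coarsely full domain once the next step is in place. Note this imposes no lower bound on $r$ or $R$.

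\emph{Hard step: coarse surjectivity.} This is the genuinely difficult point and is where "$r,R$ large enough" and "$\delta<1$" are spent. I would invoke \cref{prop: unconditional image estimate}, the unconditional image bound derived from the Concentration Inequality \cref{prop: concentration-ineq}, which supplies $r_0,R_0\ge 0$ such that for all $r\ge r_0$, $R\ge R_0$ the image $\cim(f)$ is coarsely dense in $Y$ (contained in a bounded neighbourhood of it). Applying the estimate to $U^*$ (enlarging $r_0,R_0$ if needed) yields that $\cim(\op{f})$ is coarsely dense in $X$, equivalently that $\cdom(f)$ is coarsely all of $X$ and $\cdom(\op{f})$ coarsely all of $Y$. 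Now fix $r\ge r_0$, $R\ge R_0$: if $x,x'\in X$ lie in a common fibre of $\op{f}\circ f$, there is $y\in Y$ with $x,x'\in\op{f}(\{y\})$, a uniformly bounded set by controlledness of $\op{f}$, so $\op{f}\circ f$ lies in a bounded thickening of the diagonal of $X$; coarse density of $\cdom(f)$ in $X$ gives the reverse inclusion up to boundedness, whence $\op{f}\circ f\ceq\cid_{X}$. Symmetrically, controlledness of $f$ and coarse density of $\cim(f)$ in $Y$ give $f\circ\op{f}\ceq\cid_{Y}$. Therefore $f=\appmap[U]{\delta}{R}{r}$ is a coarse equivalence with coarse inverse $\op{f}=\appmap[U^*]{\delta}{r}{R}$.

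\emph{Main obstacle.} The only hard ingredient is the coarse-surjectivity step: in the uniformly locally finite, $\ell^2$ setting it is \cite{braga_rigid_unif_roe_2022}*{Lemma 3.2}, whose proof is intrinsically finite-dimensional and does not survive the passage to ample modules; providing a dimension-free replacement is exactly the purpose of the Concentration Inequality \cref{prop: concentration-ineq}. The transpose identification, controlledness from quasi-control, and the closing of the two compositions are routine manipulations with controlled relations.
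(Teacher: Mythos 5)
Your proof is correct and follows essentially the same route as the paper. The paper obtains \cref{cor: rigidity controlled unitaries} by specializing the more general \cref{thm: rigidity quasi-proper operators} (with all the submodule projections set to the identity, $\mu=0$, $\eta=1$), whereas you prove the special case directly; the ingredients are identical — controlledness of $\appmap[U]{\delta}{F}{E}$ from quasi-control (this is \cref{lem:approximation is controlled}, which you re-derive in condensed form rather than citing), coarse surjectivity from the Concentration Inequality via \cref{prop: unconditional image estimate} (in the unitary case one only needs that $\concvec{\kappa}{F_0}{U(\CHx)} = \concvec{\kappa}{F_0}{\CHy}$ is coarsely dense, which follows from faithfulness of $\CHy$), and the closing of the two compositions via \cref{prop:adjoint is coarse inverse}. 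One small imprecision in your ``soft step'': quasi-control of $\Ad(U)$ does not, on its own, confine $B$ to a bounded set for a \emph{single} block $A$; the actual argument (as in \cref{lem:approximation is controlled}) runs on \emph{pairs} of blocks $B\times A$, $B'\times A'$ with $A'\cap\bar E(A)\ne\emptyset$, using quasi-locality of $U\matrixunit{v'}{v}U^*$ to force $B$ and $B'$ to be $\bar F$-close; the conclusion you state is correct, but you should cite \cref{lem:approximation is controlled} rather than sketching an incomplete re-derivation.
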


\subsection{Combining all the pieces}\label{ssubsec:intro: intro: combining things}
Using the ingredients we have illustrated thus far, we may already sketch a proof of the following result, which implies, among other things, the main theorem of \cite{braga_rigid_unif_roe_2022}*{Theorem 1.2}.

\begin{alphcor}\label{cor:intro: easy rigidity}
  Let $X$ and $Y$ be extended metric spaces. Suppose that there is an isomorphism 
  \(
    \roeclikeone{X} \cong \roecliketwo{Y},
  \)
  where $\roeclikeone\variable$ and $\roecliketwo\variable$ are any of $\cpcstar\variable$, $\qlcstar\variable$, $\uroecstar{\variable}$. Then $X$ and $Y$ are coarsely equivalent.
\end{alphcor}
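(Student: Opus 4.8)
The plan is to run, in sequence, the three milestones described above: spatial implementation of the isomorphism, uniformization, and the construction of approximating controlled relations. Write $\phi\colon\roeclikeone{X}\to\roecliketwo{Y}$ for the given isomorphism, and represent $\roeclikeone{X}\subseteq\CB(\CH_X)$ and $\roecliketwo{Y}\subseteq\CB(\CH_Y)$ on geometric modules $\CH_X$ and $\CH_Y$. In each of the three cases ($\cpcstar\variable$, $\qlcstar\variable$, $\uroecstar\variable$) these modules may be taken to be faithful and \emph{discrete} — for ample modules this is \rfPropAdmissibleIsDiscrete, for $\ell^2(X)$ it is \rfUnifRoeAlg — and in each case the algebra contains the compacts $\CK(\CH_X)$ (resp.\ $\CK(\CH_Y)$) as an ideal. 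The first step is then to apply the spatial implementation result \cref{prop: isos are spatially implemented}, which produces a unitary $U\colon\CH_X\to\CH_Y$ with $\phi=\Ad(U)|_{\roeclikeone{X}}$; in particular $\phi$ and $\phi^{-1}=\Ad(U^*)|_{\roecliketwo{Y}}$ are strongly continuous, since conjugation by a unitary is.

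The second step is uniformization. Applying \cref{thm: uniformization} — in the generality that simultaneously covers $\cpcstar\variable$, $\qlcstar\variable$ and $\uroecstar\variable$, and all the relevant directions of comparison among them — to $\phi$ and to $\phi^{-1}$, we obtain that both $\Ad(U)$ and $\Ad(U^*)$ are \emph{quasi-controlled} in the sense of \cref{def:quasi-controlled mapping}: roughly, for every $\varepsilon>0$ and $r\geq0$ there is an $R\geq0$, independent of the operator, such that every operator of propagation at most $r$ is carried by $\Ad(U)$ to an operator that is $\varepsilon$-close to an $R$-quasi-local one, and symmetrically for $\Ad(U^*)$. When both algebras equal $\cpcstar\variable$ one in fact gets the stronger conclusion that these maps are approximately controlled; this implies quasi-controlled, and it is precisely the weaker notion of quasi-control that is robust enough to also cover $\qlcstar\variable$ — which is all that the next step needs.

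The third and final step turns $U$ into a coarse equivalence. Since $\CH_X$ and $\CH_Y$ are faithful discrete modules and both $\Ad(U)$ and $\Ad(U^*)$ are quasi-controlled, \cref{thm-intro: rigidity controlled unitaries} — that is, \cref{cor: rigidity controlled unitaries} — applies directly: fixing any $0<\delta<1$ and choosing $r,R\geq0$ sufficiently large, the controlled relation $\appmap[U]{\delta}{R}{r}\colon X\to Y$ is a coarse equivalence with coarse inverse $\appmap[U^*]{\delta}{r}{R}$. Hence $X$ and $Y$ are coarsely equivalent, as claimed.

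I expect essentially all of the genuine difficulty to be packaged inside \cref{thm-intro: rigidity controlled unitaries}, whose proof rests on the Concentration Inequality \cref{prop: concentration-ineq} and the resulting coarse-surjectivity estimate \cref{prop: unconditional image estimate}; this is exactly the ingredient for which earlier treatments needed property A or finite-dimensional arguments, and removing it is a main contribution of the paper. Granting those, what still needs care in the present corollary is only bookkeeping: that spatial implementation goes through for extended metric spaces, where the algebras need not be \emph{coarsely connected} and the ideal of compacts can a priori decompose (handled by \cref{prop: isos are spatially implemented}), and that the hypotheses of \cref{thm: uniformization} are verified for each of the three families of algebras and in both directions. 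Note, finally, that this argument uses neither local compactness nor separability — which is why the Roe algebra $\roecstar\variable$ is absent from the statement — and that the stabilized strengthening \cref{thm:intro: rigidity} is obtained by running the very same three steps after replacing $\CH_X$ and $\CH_Y$ by $\CH_X\otimes\CH_1$ and $\CH_Y\otimes\CH_2$.
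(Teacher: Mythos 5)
Your argument reproduces the paper's own sketch-of-proof for this corollary almost verbatim: spatial implementation via \cref{prop: isos are spatially implemented}, uniformization via \cref{thm: uniformization} applied to $\Ad(U)$ and $\Ad(U^*)$ restricted to $\cpstar{\CH_X}$ (resp.\ $\cpstar{\CH_Y}$), and then \cref{cor: rigidity controlled unitaries} to produce the coarse equivalence; this is correct and requires no further hypotheses since extended metric spaces automatically yield countably generated coarse structures (\cref{prop:metrizable coarse structure iff ctably gen}). One small slip: the citation to \rfPropAdmissibleIsDiscrete to justify discreteness of the modules is misplaced (that result needs coarse local finiteness, which is not assumed here) — but it is also unnecessary, since the Roe-like algebras $\cpcstar{X}$, $\qlcstar{X}$ are defined over discrete ample modules to begin with and $\uroecstar{X}$ uses $\ell^2(X)$, which is trivially discrete.
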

\begin{proof}[Sketch of proof]
  As explained in \cref{ssubsec:intro: spatial implementation}, $\phi$ is induced by a unitary operator $U\colon\CH_X\to\CH_Y$. In particular, $\phi^{-1}$ is induced by the adjoint $U^*$.
  \cref{thm:intro: uniformization} implies that both $\Ad(U)$ and $\Ad(U^*)$ are quasi-controlled. Thus, by \cref{thm-intro: rigidity controlled unitaries}, $\appmap[U]{\delta}{R}{r}$ is a coarse equivalence between ${X}$ and ${Y}$.
\end{proof}

Note that \cref{cor:intro: easy rigidity} falls short of proving rigidity for $\roecstar\variable$ (whence \cite{rigidIHES}*{Theorem A}) and dealing with stable isomorphisms. The issue is that in both cases one cannot directly apply \cref{thm:intro: uniformization} to deduce that $\Ad(U)$ is quasi-controlled. This is because $\phi$ is only defined on some \cstar{}algebra which is  much smaller than $\cpcstar{X}$.
This is however not an issue for our framework. In fact, rather than directly proving \cref{thm-intro: rigidity controlled unitaries}, we prove a slightly stronger version of the Concentration Intequality (cf.\ \cref{prop: concentration-ineq}) and use it to prove a more general version of \cref{thm-intro: rigidity controlled unitaries} (cf. \cref{thm: rigidity quasi-proper operators}) which allows us to construct coarse equivalences by restricting to carefully selected submodules of $\CH_X$ and $\CH_Y$. This more general statement can be used together with \cref{thm:intro: uniformization} to directly prove \cref{thm:intro: rigidity} in its most general form (cf.\ \cref{thm: stable rigidity}).

\section{Refined rigidity and its consequences}
The main highlight of \cref{thm:intro: rigidity} is the level of generality under which it shows that coarse equivalences exist. Its downside is that it does not immediately provide much more information about the coarse equivalence and how it relates with the originating (stable) isomorphism.

At the price of renouncing to some generality, we can however prove more ``refined'' rigidity results that retain much more information on the relation between isomorphisms and coarse equivalences.
The precise statement we prove relies heavily on the notion of ``coarse support'' of operators between coarse spaces (and its ``approximate'' and ``quasified'' versions, cf.\ \cref{subsec: coarse supp and propagation,subsec: strong approx-quasi control}).
Staying true to the spirit of this introduction, we rephrase it as the following (slightly informal) statement.

\begin{alphthm}[cf.\ \cref{thm: strong rigidity,rmk: functoriality of qcsupp}]\label{thm:intro: refined rigidity}
  Let $X$ and $Y$ be proper extended metric spaces, and let $\phi\colon\roeclikeone X\to\roecliketwo Y$ be an isomorphism between ample Roe-like \cstar{}algebras. Then:
  \begin{enumerate}[label=(\roman*)]
    \item $\roeclikeone\variable=\roecliketwo\variable$ must be Roe-like \cstar{}algebras of the same kind;
    \item\label{item:thm:intro: refined rigidity : two} associating with $\phi$ its ``quasi-support'' defines a functorial mapping to the category of coarse spaces.
  \end{enumerate}
\end{alphthm}

The above applies to $\roecstar\variable$, $\cpcstar\variable$ and $\qlcstar\variable$ (the uniform Roe algebra $\uroecstar{\variable}$ is excluded by the ampleness condition).
The first part of the statement is perhaps the least interesting one: if one was not interested on quasi-locality it would be trivial to distinguish $\roecstar\variable$ from  $\cpcstar\variable$, as the latter is unital while the former is not.
On the other hand, it is an interesting piece of information that one is always able to distinguish between $\cpcstar\variable$ and $\qlcstar\variable$ (should these differ, of course).
One immediate application is the following.

\begin{alphcor}[cf.\ \cref{thm: strong rigidity}~\cref{thm: strong rigidity:one}]\label{cor:intro: cpcstar neq qlcstar}
  If $X$ is an extended metric space such that $\cpcstar X\neq\qlcstar X$, then $\cpcstar X$ is not isomorphic to $\qlcstar Y$ for any other extended metric space $Y$.
\end{alphcor}

\begin{remark}
  As already mentioned, examples of metric spaces where $\cpcstar X\subsetneq\qlcstar X$ are given in \cite{ozawa-2023}. \cref{cor:intro: cpcstar neq qlcstar} implies that there must be a purely coarse geometric and a purely \cstar{}algebraic condition characterizing whether $\qlcstar X$ is isomorphic to $\cpcstar X$. As pointed out by Ozawa in \cite{ozawa-2023}, it may well be the case that $\cpcstar X =\qlcstar X$ if and only if $X$ has property A. 
  Reading his proof, it becomes natural to ask whether $\cpcstar X\subsetneq\qlcstar X$ if and only if $\prod_{n}M_n$ embeds in $\qlcstar{X}$. If the answer to both questions was positive, this would give a new characterization of Property A (at least for uniformly locally finite metric spaces).
\end{remark}

The second part of \cref{thm:intro: refined rigidity} is very interesting.
Rephrasing it, it says that with any unitary $U\colon\CHx\to\CHy$ inducing an isomorphism $\roeclike X\to\roeclike Y$ we can canonically associate a coarse equivalence $X\to Y$ (unique up to closeness) and that this association is well-behaved under composition of unitaries $\CHx\to\CHy\to\CHz$.
This point of view resolves the lack of functoriality observed in \cref{ssubsec:intro: approximations} when discussing the construction of approximating relations $\appmap[T]\delta R r$.\footnote{\,
  Note however this only applies to \emph{unitaries} inducing \emph{isomorphisms}. Arbitrary homomorphisms need not give rise to well-defined coarse maps.
}

The mapping $X\to Y$ associated with the unitary $U$ is explicitly described as a sort of ``support'' for $U$ (see \cref{def: approximate and quasi support}).
To see why this is a reasonable thing to do, recall that with any coarse equivalence $f\colon X\to Y$ one can associate unitary operators $U\colon \CH_X\to \CH_Y$ whose support (defined as a subset of $Y\times X)$ is within finite Hausdorff distance from the graph of $f$---one says that such a $U$ \emph{covers $f$} (cf.\ \cref{def:controlled and proper operator,prop: existence of covering iso}).
\cref{thm:intro: refined rigidity}~\cref{item:thm:intro: refined rigidity : two} says that a ``quasi-fied'' version of the converse is always true: if a unitary induces an isomorphism of Roe-like algebras then it is quasi-supported on the graph of a coarse equivalence.
This extra information is very valuable and has a number of consequences.
For instance, the following corollaries are applications of a version of \cref{thm:intro: refined rigidity}~\cref{item:thm:intro: refined rigidity : two} specialized to $\roecstar\variable$ and $\cpcstar\variable$.

\begin{alphcor}[cf.\ \cref{cor:  aut is limit of controlled} and \cite{rigidIHES}*{Theorem 4.5}]
  Let $X$ and $Y$ be proper extend metric spaces. If $U\colon \CH_X\to\CH_Y$ is a unitary of ample modules inducing an isomorphism between $\roecstar\variable$ or $\cpcstar\variable$, then $U$ is the norm limit of unitaries that cover a fixed coarse equivalence $X\to Y$.
\end{alphcor}

\begin{alphcor}[cf.\ \cref{cor: cpc is multiplier of roec} and \cite{braga_gelfand_duality_2022}*{Theorem 4.1}]\label{cor: intro: multiplier}
  If $X$ is a proper extended metric space, then $\cpcstar{X}$ is the multiplier algebra of $\roecstar X$.
\end{alphcor}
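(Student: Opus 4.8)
The plan is to identify $\multiplieralg{\roecstar X}$ with the idealiser of $\roecstar X$ in $\CB(\CH_X)$, observe that $\cpcstar X$ lands inside it because $\roecstar X$ is an essential ideal of $\cpcstar X$, and then use the uniformization machinery of the paper — after a unitary‑dilation reduction — to show that every element of that idealiser has controlled propagation.

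\emph{Setup and the easy inclusion.} Fix an ample module $\CH_X$ for $X$. By \rfThmApproxUnit the algebra $\roecstar X$ is precisely the set of locally compact operators in $\cpcstar X$, and it is a closed two‑sided ideal of $\cpcstar X$: a product of operators of controlled propagation has controlled propagation, and a product one of whose factors is locally compact is again locally compact. Moreover, for every $\xi\in\CH_X$ supported on a bounded subset of $X$ the rank‑one projection onto $\xi$ has controlled propagation and is locally compact, hence lies in the algebraic Roe ring $\roestar X\subseteq\roecstar X$; since such vectors are dense in $\CH_X$, the algebra $\roecstar X$ acts non‑degenerately on $\CH_X$ and the ideal $\roecstar X\trianglelefteq\cpcstar X$ is essential (an operator of $\cpcstar X$ annihilating $\roecstar X$ would kill all of these projections, hence all of $\CH_X$). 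Therefore the canonical map $\cpcstar X\to\multiplieralg{\roecstar X}$ is an injective \Star{}homomorphism, and, by non‑degeneracy, it identifies $\multiplieralg{\roecstar X}$ with the idealiser
\[
  \bigl\{\, T\in\CB(\CH_X) \ \big|\ T\,\roecstar X\subseteq\roecstar X \ \text{ and }\ \roecstar X\, T\subseteq\roecstar X \,\bigr\}
\]
of $\roecstar X$ in $\CB(\CH_X)$, inside which $\cpcstar X$ sits as a \Star{}subalgebra. The content of the corollary is the reverse inclusion: every $T$ in this idealiser lies in $\cpcstar X$.

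\emph{The reverse inclusion, via a unitary dilation.} Let $T$ be in the idealiser; rescaling, assume $\norm{T}\le 1$. Since $\multiplieralg{\roecstar X}$ is a unital \cstar{}algebra containing $T$ (hence $T^*$, again a multiplier), the operators $(1-TT^*)^{1/2}$ and $(1-T^*T)^{1/2}$ lie in $\multiplieralg{\roecstar X}$, so the Halmos dilation
\[
  W\coloneqq\begin{pmatrix} T & -(1-TT^*)^{1/2}\\ (1-T^*T)^{1/2} & T^*\end{pmatrix}
\]
on $\CH_X\oplus\CH_X$ is a unitary multiplier of $M_2(\roecstar X)$. But $M_2(\roecstar X)$ is again the Roe algebra of $X$, now computed with the ample module $\CH_X\oplus\CH_X$, and $M_2(\cpcstar X)$ is the corresponding controlled‑propagation algebra. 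As $W$ is a unitary multiplier, $\Ad(W)$ restricts to a \Star{}automorphism of $M_2(\roecstar X)$; hence, by \cref{cor: uniformization for roe algs} (uniformization for maps restricting to isomorphisms of Roe algebras), both $\Ad(W)$ and $\Ad(W^*)$ are approximately controlled. The automatic upgrade of \cref{prop: quasi-control vs str quasi-control} then applies to the unitary $W$ and shows that $W$ is \emph{strongly approximately controlled} in the sense of \cref{def: strong approx ctrl}; that is, $W$ is a norm‑limit of operators of controlled propagation, so $W\in M_2(\cpcstar X)$. Compressing with the (propagation‑zero) corner projections of $\CH_X\oplus\CH_X$ we conclude $T\in\cpcstar X$.

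\emph{Where the difficulty lies.} The first inclusion and the identification with the idealiser are routine \cstar{}algebra, using only that the module is ample. The real point is the reverse inclusion, specifically the passage from ``$\Ad(W)$ approximately controlled'' to ``$W$ is a norm‑limit of finite‑propagation operators''. A pointwise argument will not do: for a fixed finite‑propagation $S$ one knows only that $TS\in\roecstar X\subseteq\cpcstar X$ is norm‑approximable by finite‑propagation operators, but a priori with propagation depending on $S$; and even having controlled $\varepsilon$‑support would only give quasi‑locality of $T$, which is strictly weaker than membership in $\cpcstar X$ (this is exactly the failure $\cpcstar\variable\neq\qlcstar\variable$). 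One genuinely needs the uniformization phenomenon (\cref{thm: uniformization}, in the broad form where the underlying map is only defined on $\roecstar X$), together with the strong‑control upgrade, whose proof in turn uses the quasi‑properness characterisation of \cref{thm: quasi-proper} to control how $T$ interacts with local compactness. The dilation is only the device that brings a general multiplier $T$ — which need not be a partial isometry — into the ``spatially implemented automorphism of a Roe algebra'' setting where \cref{cor: uniformization for roe algs} and \cref{prop: quasi-control vs str quasi-control} are available; alternatively one can work directly with the completely positive map $\Ad(T)\colon\roecstar X\to\roecstar X$, feed it into \cref{thm: quasi-proper} and the general uniformization theorem, and avoid the dilation. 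In all cases this is precisely the content packaged by \cref{thm: strong rigidity}.
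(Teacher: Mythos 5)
There is a genuine gap in the last step of your argument, at the point where you pass from ``$W$ is strongly approximately controlled'' to ``$W$ is a norm-limit of operators of controlled propagation, so $W\in M_2(\cpcstar X)$''. This conflates two different notions. \emph{Strongly approximately controlled} (\cref{def: strong approx ctrl}) means: for every $\varepsilon>0$ there is a \emph{controlled relation} $R\subseteq Y\times X$ — i.e.\ some partial coarse map, not necessarily a neighborhood of the diagonal — such that $W$ is within $\varepsilon$ of an $R$-controlled operator. This is the approximate analogue of being a \emph{controlled operator} in the sense of \cref{def:controlled and proper operator}, \emph{not} of having controlled propagation. In fact \cref{prop: quasi-control vs str quasi-control} gives you that $\acsupp(W)$ exists and is a coarse equivalence $\crse X\to\crse X$ (the approximating coarse map $\cappmap[W]{\delta}{F}{E}$), but a priori this coarse equivalence could be any self-equivalence of $\crse X$, not the identity. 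By \cref{rkm: about approximate and quasi support}, membership in $\cpcstar{\CHx\oplus\CHx}=M_2(\cpcstar X)$ is equivalent to $\acsupp(W)\crse\subseteq\cid_{\crse X}$, which is precisely the condition you have not verified.

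What is missing is the argument the paper carries out in the proof of \cref{cor: cpc is multiplier of roec}: one picks $t\in\roestar{\CHx}$ with $\csupp(t)=\cid_{\crse X}$ (say a locally finite rank projection selecting one rank-one vector from each block of a discrete partition). Since $W^*$ (or $T^*$, or $U^*$) is a multiplier, $tW^*\in M_2(\roecstar\CHx)$ has $\acsupp(tW^*)\crse\subseteq\cid_{\crse X}$, whence
\[
  \cid_{\crse X}=\csupp\bigl(tW^*W\bigr)\crse\subseteq\acsupp(tW^*)\crse\circ\acsupp(W)\crse\subseteq\cid_{\crse X}\crse\circ\acsupp(W)=\acsupp(W),
\]
and then $\acsupp(W)=\cid_{\crse X}$ by \cref{lem: functions with same domain coincide}. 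This is not a cosmetic omission; without it the argument only shows that a unitary multiplier of $\roecstar X$ covers \emph{some} coarse self-equivalence of $X$, which is strictly weaker than being approximable. (Incidentally, once this step is inserted your Halmos-dilation route is a perfectly valid alternative to the paper's reduction to unitaries via $\multiplieralg{\roecstar X}$ being a unital \cstar{}algebra: both bring a general multiplier into the setting where \cref{thm: strong rigidity} applies.)
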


\begin{alphcor}[cf.\ \cref{cor: aut of Roe}]\label{cor: intro: aut}
  If $X$ is a proper extended metric space, then $\aut(\cpcstar{X})=\aut(\roecstar{X})$.
\end{alphcor}

In \cref{cor: intro: multiplier,cor: intro: aut} it makes sense to compare those \cstar{}algebras because they are all naturally represented as subalgebras of $\CB(\CH_X)$.

\smallskip

Another interesting application of \cref{thm:intro: refined rigidity}~\cref{item:thm:intro: refined rigidity : two} is concerned with the groups of outer automorphisms of Roe-like \cstar{}algebras.
In the following, let $X$ be a fixed proper (extended) metric space, $\CH_X$ an ample module (the following discussion does not apply to $\uroecstar{X}$), and denote by $\coe X$ the group of coarse equivalences $X\to X$ considered up to closeness.

As we already mentioned, it is a classical observation that for any given coarse equivalence $f\colon X \to X$ one may choose a unitary operator $U_{f} \colon \CH_X \to \CH_X$ between ample geometric modules that covers $f$.
Conjugation by $U_f$ then defines isomorphisms of Roe-like \cstar{}algebras
\[
  \Phi_{f,\CR} \coloneqq \Ad(U_{f}) \colon \roeclike{X} \xrightarrow{\ \cong\ } \roeclike{X}.
\]

In the above construction, the choice of $U_f$ is highly non-canonical, and therefore assigning to $f$ the automorphism $\Phi_{f,\CR}\in \aut(\roeclike X)$ is not a natural operation.
However, one can show that different choices of $U_f$ are always conjugated via some unitary $u\in \U(\cpcstar X)$. This implies that the conjugacy class $[\Phi_{f,\CR}]\in \out(\roeclike X)$ \emph{is} uniquely determined.\footnote{\, 
  Here we are using that $\cpcstar X$ is always contained in the multiplier algebra of $\roeclike X$, so its unitaries are quotiented out in $\out(\roeclike X)$.
}
Moreover, close coarse equivalences give rise to the same outer automorphism. It follows that there are natural mappings
\[
  \varsigma_\CR\colon \coe X\to \out(\roeclike X), 
\]
which are also easily verified to be group homomorphisms.
It is relatively well-known that the homomorphisms $\varsigma_\CR$ are injective (see \cite{braga_gelfand_duality_2022}*{Section 2.2} or \rfEmbeddingCtrUniintoOut, and see also \cite{spakula_rigidity_2013}*{Theorem A.5} for an algebraic counterpart of this statement).

Using \cref{thm:intro: refined rigidity}, it is not hard to prove that each $\varsigma_\CR$ is also surjective (cf.\ \cref{cor: surjection to Out})---for spaces with property A this is one of the main results of \cite{braga_gelfand_duality_2022}. This proves the following.

\begin{alphcor}[cf.\ \cref{cor: all groups are iso}]\label{cor:intro: all groups are iso}
  Let $X$ be a proper extended metric space. The following groups are all canonically isomorphic.
  \begin{enumerate}[label=(\roman*)]
    \item \label{cor:intro: all groups are iso:1} The group of $\coe X$ of coarse equivalences of $X$ up to closeness.
    \item \label{cor:intro: all groups are iso:2} The group of outer automorphisms of $\roecstar{X}$.
    \item \label{cor:intro: all groups are iso:3} The group of outer automorphisms of $\cpcstar{X}$.
    \item \label{cor:intro: all groups are iso:4} The group of outer automorphisms of $\qlcstar{X}$.
  \end{enumerate}
\end{alphcor}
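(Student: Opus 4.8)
The plan is to deduce \cref{cor:intro: all groups are iso} from the refined rigidity statement (\cref{thm: strong rigidity}) together with the already established injective morphism $\coe X \hookrightarrow \out(\roeclike X)$ from \rfEmbeddingCtrUniintoOut. First I would recall the three embeddings
\[
  \coe X \hookrightarrow \outiso{\roecstar X}, \quad
  \coe X \hookrightarrow \outiso{\cpcstar X}, \quad
  \coe X \hookrightarrow \outiso{\qlcstar X},
\]
each given by $f \mapsto [\Ad(U_f)]$ for a chosen unitary $U_f \colon \CHx \to \CHx$ covering $f$; these are well defined and injective by the cited theorem, and the real content left to prove is surjectivity in each case, i.e.\ that \emph{every} automorphism of the relevant algebra is inner modulo an $\Ad(U_f)$ with $f$ a coarse equivalence. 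To avoid treating the three cases separately, I would observe that they are linked: by \cref{cor: intro: multiplier} one has $\cpcstar X = \multiplieralg{\roecstar X}$, so any automorphism of $\roecstar X$ extends \emph{canonically} to $\cpcstar X$ and any automorphism of $\cpcstar X$ restricting to the ideal $\roecstar X$ (it always does, $\roecstar X$ being the ideal of locally compact operators, invariantly characterised by \cref{thm:intro: quasi-proper}) can be recovered from its restriction; likewise \cref{cor: strong rigidity} shows a \Star{}isomorphism $\qlcstar X \cong \qlcstar Y$ forces $X \ceq Y$ \emph{and} carries the ``$\cpcstar{}$ part'' to the ``$\cpcstar{}$ part'', so automorphisms of $\qlcstar X$ also restrict to automorphisms of $\cpcstar X$. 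Thus it suffices to prove surjectivity for one algebra, say $\cpcstar X$, and then transport it.

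So the core step: let $\phi \in \aut(\cpcstar X)$ be arbitrary. By the spatial implementation discussion (\cref{prop: isos are spatially implemented}) $\phi = \Ad(U)$ for a unitary $U \colon \CHx \to \CHx$. By \cref{thm:intro: uniformization} both $\Ad(U)$ and $\Ad(U^*)$ are approximately controlled, and then (by the automatic upgrade, \cref{prop: quasi-control vs str quasi-control}) $U$ and $U^*$ are strongly approximately controlled, i.e.\ $U$ is close to controlled operators. From the refined rigidity output one extracts from $U$ a coarse equivalence $f \coloneqq \appmap[U]{\delta}{R}{r} \colon X \to X$ (with coarse inverse coming from $U^*$), using \cref{thm-intro: rigidity controlled unitaries}. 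The key remaining point is that $U$ actually \emph{covers} $f$ in the sense of \cref{def:controlled and proper operator}: this is exactly where strong approximate control (rather than mere approximate control of $\Ad(U)$) is needed, since ``covering'' is a statement about $U$ itself, not just about $\Ad(U)$; one checks that the support of $U$, which is comparable to $\gr{f}$ by construction of the approximation relations (see \cref{subsec: coarse support}), together with the controlled-operator approximants, gives $U \in \ctrl{f}{\CB(\CHx)}$. Granting this, $U_f^*U$ commutes with $\cpcstar X$ modulo nothing — more precisely $\Ad(U_f^* U)$ is the identity on $\cpcstar X$ — hence $U_f^* U$ is a scalar times a unitary in the commutant, so $[\phi] = [\Ad(U_f)]$ in $\outiso{\cpcstar X}$, proving surjectivity.

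Finally, to get the \emph{canonical} isomorphisms between \ref{cor:intro: all groups are iso:2}, \ref{cor:intro: all groups are iso:3}, \ref{cor:intro: all groups are iso:4} (and not merely abstract ones), I would note that the identifications are all induced by the \emph{same} assignment $f \mapsto [\Ad(U_f)]$ applied in the three algebras, and that these are compatible under the canonical maps $\outiso{\roecstar X} \to \outiso{\cpcstar X}$ (multiplier extension) and the restriction $\outiso{\qlcstar X} \to \outiso{\cpcstar X}$; since each composite equals the identity on $\coe X$ and each arrow is injective with the same image, all are isomorphisms and the diagram commutes. I expect the main obstacle to be precisely the ``$U$ covers $f$'' step — i.e.\ bridging from the existence of a coarse equivalence extracted from $U$ to the genuine statement $U \in \ctrl{f}{\CB(\CHx)}$ — since this requires the strong (operator-level) versions of approximate/quasi control and a careful comparison of $\csupp U$ with $\gr f$; the rest is bookkeeping with \rfEmbeddingCtrUniintoOut, \cref{cor: intro: multiplier} and the refined rigidity theorem.
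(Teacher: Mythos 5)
Your overall architecture — pass through $\coe X \hookrightarrow \out(\roeclike X)$, transport between the three algebras using the multiplier identification and the invariant characterisation of locally compact operators, and prove surjectivity via spatial implementation plus uniformization — matches the paper's strategy. But the step you yourself flag as the crux is wrong as stated.

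You claim that once $U$ and $U^*$ are known to be strongly approximately controlled, ``$U$ actually covers $f$ in the sense of \cref{def:controlled and proper operator}'', \emph{i.e.}\ that $\csupp(U)$ is comparable to $\gr f$ and hence $U\in\ctrl{f}{\CB(\CHx)}$. This is false in general: strong approximate control says that for every $\varepsilon>0$ the unitary $U$ is within $\varepsilon$ of some controlled operator, and this gives $\acsupp(U)=\crse f$; it does \emph{not} say that $U$ itself is controlled or that $\csupp(U)$ is a controlled relation. A unitary in $\cpcstar X$ that is a norm limit of controlled-propagation operators without itself having controlled propagation already gives a counterexample to your claim (and in the quasi-local setting, Ozawa's example of a quasi-local non-approximable operator shows the gap is unavoidable). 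The paper precisely introduces the notions of \emph{approximate support} $\acsupp$ and \emph{quasi-support} $\qcsupp$ (\cref{def: approximate and quasi support}) to replace $\csupp$ in this situation, and the surjectivity step in \cref{cor: surjection to Out} is carried out there: pick a covering unitary $V$ for $\qcsupp(U)$ via \cref{cor:existence of covering Ad} and then show $\qcsupp(V^*U)\crse\subseteq\cid_{\crse X}$ (using the composition rule for approximate/quasi supports and that $\qcsupp(U^*)=\op{\qcsupp(U)}$), so $V^*U\in\multiplieralg{\roeclike\CHx}$ and $[\Ad(V)]=[\phi]$.

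Your concluding step compounds the error: from ``$U$ and $U_f$ both cover $f$'' you deduce that $\Ad(U_f^*U)$ is the identity on $\cpcstar X$ and hence $U_f^*U$ is a scalar times a commutant unitary. This is far too strong and not needed. Even granting the (incorrect) covering claim, the right deduction is that $U_f^*U$ has support coarsely contained in $\cid_{\crse X}$, hence lies in $\cpcstar X$ (or more generally in $\multiplieralg{\roeclike X}$), hence $\Ad(U_f^*U)$ is an \emph{inner} automorphism — not the identity. That is all that is needed for $[\phi]=[\Ad(U_f)]$ in the outer automorphism group. Both problems disappear if you systematically replace ``covers''/$\csupp$ by ``has approximate (resp.\ quasi) support contained in'' throughout, and replace $U_f$ by a unitary covering $\acsupp(U)$ (resp.\ $\qcsupp(U)$), which is the route the paper takes.
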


\begin{remark}
  Some remarks about \cref{cor:intro: all groups are iso} are in order.
  \begin{enumerate}[label=(\roman*)]
    \item On our way to prove \cref{cor:intro: all groups are iso}, we actually show that these groups are all isomorphic to the group of unitaries $U$ such that both $\Ad(U)$ and $\Ad(U^*)$ are \emph{controlled}, up to unitary equivalence in $\cpcstar{X}$. It is proved in \rfIsoCEtoCtrUni that this is isomorphic to $\coe X$.
    \item \cref{cor: intro: multiplier,cor: intro: aut} immediately imply that $\out(\roecstar{X})\cong\out(\cpcstar{X})$.
    \item If $X$ is a metric space with property A, it is proven in \cite{spakula-zhang-2020-quasi-loc-prop-a} that quasi-local operators are always approximable, \emph{i.e.}\ $\cpcstar X=\qlcstar X$ (see also \cite{ozawa-2023} for an alternative proof). In this case, the isomorphism $\out(\qlcstar{X})\cong\out(\cpcstar{X})$ is once again induced by the identification $\aut(\qlcstar{X})=\aut(\cpcstar{X})$.
    \item \cite{braga_gelfand_duality_2022}*{Theorem~B} states that if $X$ has property A then the map \(\varsigma_{\rm Roe}\) is a group isomorphism $\coe X\cong\out(\roecstar{X})$. The two points above explain why in this case the same holds for an arbitrary ample Roe-like \cstar{}algebra.
    On the other hand, without the property A assumption it is no longer true that quasi-local operators need to be approximable \cite{ozawa-2023}. When $\cpcstar X\neq \qlcstar X$, it follows from our refined rigidity theorem that there is a strict inclusion $\aut(\cpcstar X)\subsetneq\aut(\qlcstar X)$. In this case, the isomorphism $\out(\qlcstar{X})\cong\out(\cpcstar{X})$ is significantly more surprising.
  \end{enumerate}
\end{remark}

We conclude the introduction by noting that as an intermediate step in our proof of \cref{thm:intro: refined rigidity} we also prove the following result, which may be of independent interest.

\begin{alphthm}[cf.\ \cref{thm: quasi-proper}]\label{thm:intro: quasi-proper}
  Let $X$ and $Y$ be proper metric spaces equipped with (ample) modules $\CH_X$ and $\CH_Y$, and let  $T\colon\CH_X\to\CH_Y$ be an operator. Then the following are equivalent:
  \begin{itemize}
    \item $\Ad(T)$ preserves local compactness;
    \item the restriction of $\Ad(T)$ to $\roecstar{X}$ preserves local compactness;
    \item $T$ is \emph{quasi-proper}.
  \end{itemize}
\end{alphthm}

In the above, $T$ is said to be \emph{quasi-proper} if for every $\varepsilon>0$ and bounded set $B\subseteq Y$ there is some bounded $A\subseteq X$ such that $T^*\chf{B}\approx_{\varepsilon}\chf{A}T^*\chf{B}$. Without entering into details, this geometric characterization of the property of preserving local compactness is very helpful in the problem of \cstar{}rigidity. In fact, this gives us a tool to directly show that the uniformization phenomenon of \cref{ssubsec:intro: uniformization} also applies to mappings $\Ad(U)$ under the assumption that $\Ad(U)$ restricts to an isomorphism of Roe algebras (see \cref{cor: uniformization for roe algs}).

\section{Structure of the memoir}
As already mentioned, when writing this memoir we decided to part from the standard setting of (extended) metric spaces and developed a set of notations and conventions that are better suited for our coarse geometric investigations. This results in tidier and more conceptual proofs in the later chapters, but the price to pay for it is a certain overhead of definitions and elementary observations in the earlier ones. We believe this to be a worthwhile tradeoff. The memoir is structured as follows.

\cref{sec: general prelim} contains some general funtional analytic preliminaries.
In \cref{sec: coarse geometric setup} we introduce the coarse geometric language we will use throughout the memoir. This is based on the formalism of Roe \cite{roe_lectures_2003} with the notational conventions of \cites{coarse_groups,roe-algs}. We use certain non-standard definitions (especially regarding coarse maps) which are particularly well suited to highlight the interplay between coarse geometry and operator algebras.

In \cref{sec: coarse modules} we recall the formalism of coarse geometric modules. The definitions and conventions here explained are used extensively throughout. This language was developed in \cite{roe-algs}, which may be thought of as an introduciton to the present memoir. For a reader willing to accept a few black-boxes, this material can be understood without having previously read \cite{roe-algs}. We also introduce a few new definitions and results that are here needed but were out of place in \cite{roe-algs}, such as block entourages and the Baire property.

In \cref{sec: roe algebras} we explain the bridge from geometry to operator algebras by defining the coarse support of operators and the Roe-like \cstar{}algebras of coarse geometric modules. We then recall some structural properties of Roe-like \cstar{}algebras, and conclude with a discussion of submodules: this is a technical device that we employ to prove the stable rigidity theorem \cref{thm:intro: rigidity}.

\cref{sec: uniformization,sec: rigidity spatially implemented} contain the proofs of the phenomena explained in \cref{ssubsec:intro: uniformization,ssubsec:intro: approximations} respectively. These sections are completely independent from one another.
Their results are combined in \cref{sec: rigidity phenomena}, where \cref{thm:intro: rigidity} and  its immediate consequences are proved. Together, these three chapters form the core of the proof of the stable \cstar{}rigidity phenomenon.

We then start moving towards the refined \cstar{}rigidity theorem.
In \cref{sec: quasi-proper} we prove \cref{thm:intro: quasi-proper} (this proof is independent from the rest of the memoir), and we show how this result interacts with the uniformization phenomenon of \cref{sec: uniformization}.

\cref{sec: strong approx and out} starts by introducing the language needed to properly state the refined rigidity results. Namely, strong and effective notions of approximate and quasi-control for operators and their supports. It then proceeds with the proof of the main result \cref{thm:intro: refined rigidity}. Its consequences are collected in the final chapter, \cref{cor: intro: multiplier,cor:intro: all groups are iso}.

With the exception of the references to \cite{roe-algs}, this work is mostly self-contained.

\medskip\noindent\textbf{Assumptions and notation:}
by \emph{projection} we mean \emph{self-adjoint idempotent}. $\CH$ is assumed to be a complex Hilbert space which, unless otherwise specified, needs not be separable. Its inner product is $\scal{\cdot}{\cdot}$, and is assumed to be linear in the second coordinate. $\CB(\CH)$ denotes the algebra of bounded linear operators on $\CH$, and $\CK(\CH)\subseteq \CB(\CH)$ denotes the compact operators.

$\crse{X} = (X, \CE)$ and $\crse{Y} = (Y, \CF)$ denote coarse spaces, and $E \in \CE, F \in \CF$ are entourages. Bounded sets are usually denoted by $A \subseteq X$ and $B \subseteq Y$. $\CHx$ and $\CHy$ are coarse geometric modules associated to $\crse X$ and $\crse Y$ respectively.
We usually denote by $t, s, x, y, z \in \CB(\CH)$ general operators, whereas $p, q \in \CB(\CH)$ will be projections. 
For the most part, we will denote with capital letters $T,U,V,W$ operators between (different) coarse geometric modules, and with with lowercase letters $t,s,r,u,w$ operators within the same module, especially when considered as elements of Roe-like \cstar{}algebras.
Lastly, vectors are denoted by $v, w, u \in \CH$.

\medskip\noindent\textbf{Acknowledgements.}
We are grateful for several fruitful conversations about these topics with Florent Baudier, Tim de Laat, Ilijas Farah, Ralf Meyer, Alessandro Vignati and Wilhelm Winter. We are also grateful to Kostyantyn Krutoy for sharing a preliminary version of \cite{Krutoy2025categorical} with us.

\chapter{Functional analytic preliminaries} \label{sec: general prelim}
\section{Notation and elementary facts}\label{ssec:notation}
We start by briefly establishing notation and conventions that will be later used without further notice.

\sssec{Boolean Algebras.}
A \emph{unital Boolean algebra} of subsets of $X$ is a family $\fkA\subseteq \CP(X)$ closed under finite intersections, complements and containing $X$.

\sssec{Hilbert spaces.}
Given a Hilbert space $\CH$, we denote its unit \emph{ball} and unit \emph{sphere} respectively by
\begin{align*}
  \CH_{\leq 1}\coloneqq\{v\in\CH\mid \norm v\leq 1\}
  \quad\text{ and }\quad
  \CH_{1}\coloneqq\{v\in\CH\mid \norm v= 1\}.
\end{align*}
Likewise, $\CB(\CH)_{\leq 1}$ denotes the contractions in $\CB(\CH)$.
Given Hilbert spaces $(\CH_i)_{i\in I}$, we denote by $\bigoplus_{i\in I} \CH_i$ their Hilbert sum (or $\ell^2$-sum). 

\sssec{Operator Topologies.}
We denote by \emph{$\sot$} and \emph{$\wot$} the strong and weak operator topologies respectively.
In the following we will be especially concerned with the set of contractions $\CB(\CH)_{\leq 1}$. We will thus use the following notation.
\begin{notation}\label{notation:sot-open-basis}
  Given $\delta > 0$ and a finite $V \subseteq \CH_{\leq 1}$, let
  \[
    \sotnbhd{\delta}{V} \coloneqq \left\{t \in \CB(\CH)_{\leq 1} \; \middle| \; \max_{v \in V} \; \norm{tv} < \delta\right\} \subseteq \CB(\CH)_{\leq 1}.
  \]
\end{notation}

We will make use of the following characterization of \sot-neighborhoods.
\begin{lemma}\label{lemma:sot-basis-top}
  Let $\sotnbhd{\delta}{V}$ be as in \cref{notation:sot-open-basis}.
  \begin{enumerate}[label=(\roman*)]
    \item \label{lemma:sot-basis-top:ordered} If $\delta_2 \leq \delta_1$ and $V_1 \subseteq V_2$ then $\sotnbhd{\delta_2}{V_2} \subseteq \sotnbhd{\delta_1}{V_1}$.
  \end{enumerate}
  \begin{enumerate}[label=(\roman*), resume]
    \item \label{lemma:sot-basis-top:nghbd} the family $\paren{\sotnbhd{\delta}{V}}_{\delta, V}$, with $\delta>0$ and $V\subseteq \CH_{\leq 1}$ finite, forms a basis of open neighborhoods of $0$ for the $\sot$ in $\CB(\CH)_{\leq 1}$.
  \end{enumerate}
  Let also $\CV\subseteq \CH_{\leq 1}$ be a fixed Hilbert basis for $\CH$. Then
  \begin{enumerate}[label=(\roman*), resume]
    \item \label{lemma:sot-basis-top:nghbd-orth-sys} the family $\paren{\sotnbhd{\delta}{V}}_{\delta, V}$, with $\delta>0$ and $V\subseteq \CV$ finite, forms a basis of open neighborhoods of $0$ for the $\sot$ in $\CB(\CH)_{\leq 1}$.
  \end{enumerate}
\end{lemma}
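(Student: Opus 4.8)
The plan is to establish the three items in order; (i) and (ii) are routine unwindings of the definitions, while (iii) carries the only genuine argument.

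For (i) I argue directly: if $t\in\sotnbhd{\delta_2}{V_2}$, then since $V_1\subseteq V_2$ and $\delta_2\leq\delta_1$ one has $\max_{v\in V_1}\norm{tv}\leq\max_{v\in V_2}\norm{tv}<\delta_2\leq\delta_1$, so $t\in\sotnbhd{\delta_1}{V_1}$. For (ii), first note that $\sotnbhd{\delta}{V}=\CB(\CH)_{\leq 1}\cap\bigcap_{v\in V}\{t\in\CB(\CH)\mid\norm{tv}<\delta\}$ is a finite intersection of $\sot$-subbasic sets, hence $\sot$-open in $\CB(\CH)_{\leq 1}$, and it contains $0$. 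Conversely, a basic $\sot$-neighborhood of $0$ in $\CB(\CH)$ has the form $W=\bigcap_{i=1}^{n}\{t\mid\norm{tv_i}<\varepsilon_i\}$ with $v_i\in\CH$ and $\varepsilon_i>0$; discarding the indices with $v_i=0$ (whose constraint is vacuous on any operator), set $w_i\coloneqq v_i/\norm{v_i}\in\CH_{\leq 1}$, $V\coloneqq\{w_1,\dots,w_n\}$ and $\delta\coloneqq\min_i\varepsilon_i/\norm{v_i}>0$. Then $t\in\sotnbhd{\delta}{V}$ forces $\norm{tv_i}=\norm{v_i}\,\norm{tw_i}<\varepsilon_i$ for every $i$, that is, $\sotnbhd{\delta}{V}\subseteq W\cap\CB(\CH)_{\leq 1}$. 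Together these two observations show that $(\sotnbhd{\delta}{V})_{\delta,V}$ is a neighborhood basis of $0$.

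For (iii), since by (ii) the sets $\sotnbhd{\delta}{V}$ with $V\subseteq\CH_{\leq 1}$ finite already form a basis, it suffices to show that every such $\sotnbhd{\delta}{V}$ contains one of the form $\sotnbhd{\delta'}{V'}$ with $V'\subseteq\CV$ finite. Fix $\delta>0$ and finite $V\subseteq\CH_{\leq 1}$. For each $v\in V$ expand $v=\sum_{e\in\CV}\scal{e}{v}\,e$ and, since $(\scal{e}{v})_{e\in\CV}\in\ell^2(\CV)$, pick a finite $F_v\subseteq\CV$ with $\norm{v-v'}<\delta/2$, where $v'\coloneqq\sum_{e\in F_v}\scal{e}{v}\,e$; set $C_v\coloneqq\sum_{e\in F_v}\abs{\scal{e}{v}}<\infty$, $C\coloneqq\max_{v\in V}C_v$, $V'\coloneqq\bigcup_{v\in V}F_v$ (finite) and $\delta'\coloneqq\delta/(2(1+C))>0$. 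Then for any $t\in\sotnbhd{\delta'}{V'}$ and any $v\in V$, using $\norm{t}\leq 1$ and $\norm{te}<\delta'$ for $e\in F_v\subseteq V'$,
\[
  \norm{tv}\leq\norm{tv'}+\norm{t(v-v')}\leq\sum_{e\in F_v}\abs{\scal{e}{v}}\,\norm{te}+\norm{v-v'}\leq C_v\delta'+\norm{v-v'}<C\delta'+\tfrac{\delta}{2}<\delta,
\]
so $t\in\sotnbhd{\delta}{V}$, proving the inclusion and hence (iii).

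The only step requiring a moment's care — the ``hard part'' only in relative terms — is the uniform choice of $\delta'$ in (iii): one truncates each $v\in V$ to a finite sub-sum over $\CV$, absorbs the truncation error harmlessly in operator norm because $\CB(\CH)_{\leq 1}$ consists of contractions, and then uses finiteness of the $\ell^1$-mass $C_v$ of the retained coefficients so that a single $\delta'$ works for all of the finitely many $v\in V$. Everything else is purely formal.
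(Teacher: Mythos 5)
Your proof is correct and follows essentially the same route as the paper: items (i) and (ii) are direct unwindings of the definitions, and (iii) is proved by truncating each $v\in V$ against a finite subset of the basis $\CV$, using $\norm{t}\leq 1$ to absorb the $\ell^2$-small tail, and then shrinking $\delta'$ uniformly over the finitely many retained coefficients. Your version simply fills in explicitly (via the $\ell^1$-mass $C_v$) the "small enough $\delta'$" that the paper leaves to the reader.
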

\begin{proof}
  Both \cref{lemma:sot-basis-top:ordered,lemma:sot-basis-top:nghbd} are clear and follow from the definition of the $\sot$.
  Likewise, \cref{lemma:sot-basis-top:nghbd-orth-sys} follows easily from \cref{lemma:sot-basis-top:nghbd}. In fact, fix a finite $V\subseteq \CH_{\leq 1}$ and $\delta > 0$. Write each $v_k\in V$ as $\sum_{e_l\in\CV}v_{k,l}e_l$ and observe that there is a finite $V'\subseteq \CV$ such that
  \[
   \norm{\sum_{e_l\notin V'}v_{k,l}e_l}^2
   =\sum_{e_l\notin V'}\abs{v_{k,l}}^2\leq \frac{\delta^2}{4}
  \]
  for every $v_k\in V$. Write $v_k = v_k'+ v_k''$, where $v_k'$ is in the span of $V'$ and $v_k''$ in the orthogonal complement. Since we are only working with contractions, for every $t\in\CB(\CH)_{\leq 1}$ we have $\norm{tv_k}\leq \norm{tv_k'}+\norm{tv_k''}\leq \norm{tv_k'} +\delta/2$. Since $V'$ is finite, it is then clear that for some small enough $\delta'$ we obtain an inclusion $\sotnbhd{V'}{\delta'}\subseteq\sotnbhd{V}{\delta}$, as desired.
\end{proof}

\sssec{Orthogonal operators.}
Two operators $s,t\colon \CH\to\CH'$ are \emph{orthogonal} if $ts^*=s^*t=0$. If $\paren{t_i}_{i\in I}$ is a family of pairwise orthogonal operators of uniformly bounded norm, their \sot-sum $\sum_{i\in I}t_i$ always converges to some bounded operator $t$ of norm $\sup_{i\in I}\norm{t_i}$.

The space $\CP(I)$, formed by the subsets of $I$, can be topologized by identifying it with $\{0,1\}^I$ (with the product topology). Equivalently, this defines the topology of pointwise convergence.
\begin{lemma}\label{lem: sums of orthogonal is cts}
  If $\paren{t_i}_{i\in I}$ is a family of pairwise orthogonal operators of uniformly bounded norm, the mapping $\CP(I)\to\CB(\CH)$ sending $J\subseteq I$ to $t_J\coloneqq \sum_{i\in J}t_i$ is continuous with respect to the \sot.
\end{lemma}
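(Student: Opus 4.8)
The statement is that for a family $(t_i)_{i\in I}$ of pairwise orthogonal operators with $\sup_i\norm{t_i}=:M<\infty$, the map $\CP(I)\to\CB(\CH)$, $J\mapsto t_J=\sum_{i\in J}t_i$, is $\sot$-continuous, where $\CP(I)\cong\{0,1\}^I$ carries the product topology. Since $\{0,1\}^I$ is compact and $\CB(\CH)$ with the $\sot$ is not metrizable in general, I would prove sequential-style continuity along \emph{nets}, or — cleaner — verify continuity directly at each point $J_0\in\CP(I)$ using the neighborhood basis from \cref{notation:sot-open-basis}. Fix $J_0\subseteq I$, a vector $v\in\CH$ (it suffices to handle a single $v$, since finite intersections of such preimages give the basic $\sot$-neighborhoods $\sotnbhd{\delta}{V}$ of \cref{lemma:sot-basis-top}), and $\varepsilon>0$; I must produce a basic open set $W\subseteq\{0,1\}^I$ containing $J_0$ with $\norm{(t_J-t_{J_0})v}<\varepsilon$ for all $J\in W$.

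**Key step: the tail estimate.** The crucial point is that, by pairwise orthogonality, $\sum_{i\in J}\norm{t_iv}^2\leq \norm{t_Jv}^2\le M^2\norm v^2$ — indeed, orthogonality $t_it_j^*=t_i^*t_j=0$ gives $\scal{t_iv}{t_jv}=\scal{v}{t_i^*t_jv}=0$ for $i\neq j$, so the vectors $(t_iv)_{i\in I}$ are pairwise orthogonal and $\norm{t_Jv}^2=\sum_{i\in J}\norm{t_iv}^2$ for every $J$. In particular $\sum_{i\in I}\norm{t_iv}^2\le M^2\norm v^2<\infty$, so only countably many $i$ have $t_iv\neq 0$, and we may pick a finite set $F\subseteq I$ with $\sum_{i\in I\smallsetminus F}\norm{t_iv}^2<\varepsilon^2/4$. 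Now let $W:=\{J\subseteq I\mid J\cap F=J_0\cap F\}$, which is a basic open neighborhood of $J_0$ in $\{0,1\}^I$ (it only constrains finitely many coordinates). For any $J\in W$, the symmetric difference $J\bigtriangleup J_0$ is disjoint from $F$, and since $(t_J-t_{J_0})v=\sum_{i\in J\smallsetminus J_0}t_iv-\sum_{i\in J_0\smallsetminus J}t_iv$ is an orthogonal sum over indices in $I\smallsetminus F$, we get
\[
  \norm{(t_J-t_{J_0})v}^2=\sum_{i\in J\bigtriangleup J_0}\norm{t_iv}^2\le \sum_{i\in I\smallsetminus F}\norm{t_iv}^2<\frac{\varepsilon^2}{4}<\varepsilon^2.
\]
Thus $t_J v\to t_{J_0}v$ as $J\to J_0$, which is exactly $\sot$-continuity at $J_0$ for the seminorm attached to $v$; intersecting finitely many such $W$'s over $v\in V$ handles an arbitrary basic neighborhood $\sotnbhd{\delta}{V}$ of the difference.

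**Anticipated obstacle.** There is no deep obstacle here — the only thing to be careful about is not to conflate continuity with sequential continuity, since $\CP(I)$ need not be second countable and the $\sot$ is not metrizable; working with the explicit product-topology neighborhood basis $\{J : J\cap F = J_0\cap F\}$ (constraining finitely many coordinates) sidesteps this cleanly. A minor point worth stating explicitly is that the orthogonal sum $t_J$ converges in the $\sot$ to a bounded operator of norm $\le M$ — this is recalled in the paragraph preceding the lemma, so I would simply cite it. One should also note that continuity at \emph{every} $J_0$ (not just at $\emptyset$) is needed, but the argument above is uniform in $J_0$, so this costs nothing.
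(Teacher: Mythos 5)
Your proposal is correct and follows essentially the same approach as the paper: both reduce to the $\sot$-neighborhood basis and locate a finite set of indices outside which the tail contributions to $\norm{t_J v}$ are uniformly small. The only difference is cosmetic — you make the paper's implicit divergence argument fully explicit by invoking the Pythagorean identity $\norm{t_Jv}^2 = \sum_{i\in J}\norm{t_iv}^2$ for the pairwise-orthogonal vectors $t_iv$.
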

\begin{proof}
  Rescaling if necessary, we may assume that the $t_i$ are contractions, so that the $\sotnbhd{\delta}{V}$ in \cref{notation:sot-open-basis} are a basis of open neighborhoods of 0. Fix one such neighborhood. For every $v\in V$, there exists a finite $I_v\subseteq I$ such that $\norm{t_J(v)}<\delta$ for every $J\subseteq I\smallsetminus I_v$, as otherwise it would be possible to find operators $t_J$ so that $\norm{t_J(v)}$ diverges.

  Let $\widetilde{I}$ be the finite union of the finite sets $I_v$ as $v$ ranges in $V$. If a net $(J_\lambda)_\lambda$ converges to some $J$, then $J_\lambda\cap \widetilde I = J\cap \widetilde I$ for every $\lambda$ large enough.
  It follows that $t_{J_\lambda}\in t_J +\sotnbhd{\delta}{V}$ for every $\lambda$ large enough.
\end{proof}

\sssec{Operators bounded from below.}
Giver $\eta>0$, we say that a bounded operator $T\colon\CH\to\CH'$ is \emph{$\eta$-bounded below} if $\norm{Tv}\geq\eta\norm{v}$ for every $v\in\CH$. We say that $T\colon\CH\to\CH'$ is a \emph{bi-Lipschitz isomorphism} if it is an invertible bounded operator with bounded inverse (equivalently, it is invertible and bounded below).
\begin{remark}
  If $T\colon\CH\to\CH'$ is a bi-Lipschitz isomorphism that is $\eta$-bounded from below, then so is its adjoint $T^*$. In fact, for every unit vector $w\in \CH'_1$, we may let $v \coloneqq T^{-1}(w)$. Then
  \[
    \norm{T^*(w)} = \sup_{u\in\CH_1}\scal{T^*T(v)}{u}=\scal{T(v)}{T(v)}/\norm{v}\geq \eta.
  \]
\end{remark}

\sssec{Adjoint action.}
Given a bounded $t\colon \CH\to\CH'$, we let $\Ad(t)\colon \CB(\CH)\to\CB(\CH')$ be the conjugation $s\mapsto tst^*$. Observe that $\Ad(t)$ is norm, \sot\ and \wot\ continuous. Moreover, it is a non-zero \Star{}homomorphism if and only if $t$ is an isometry (\emph{i.e.}\ $t^*t=1_\CH$), in which case it is even a \Star{}embedding. Indeed, suppose that \(tat^*tbt^* = tabt^*\) for all \(a, b \in \CB(\CH)\). Taking \(b = 1\) this implies that \(t^*t a t^*t = t^*t a (t^*t)^2\). If \(t^*t \neq 1\) then there is some \(v \in \CH\) of norm \(1\) such that \(\norm{t(v)} \neq \norm{v} = 1\). Letting \(a \coloneqq p_{t^*t(v)}\) be the projection onto $\angles{t^*t(v)}$ we get
\[
  \norm{(t^*t p_{t^*t(v)} t^*t) (v)} = \norm{(t^*t)^2 (v)} \neq \norm{(t^*t)^3 (v)} = \norm{t^*t p_{t^*t(v)} (t^*t)^2 (v)}.
\]
This yields that \(t^*t = 1\), as desired.

\sssec{Rank-one operators.}
Let $\CH$, $\CH'$ be Hilbert spaces and let $v\in \CH$, $v' \in \CH'$. We denote by $\matrixunit{v'}{v}$ the operator given by $\matrixunit{v'}{v} (h) \coloneqq \scal{v}{h} \, v'$.
Likewise, we denote by $p_v\in\CB(\CH)$ the orthogonal projection onto the span of $v$.

Observe that $\matrixunit{v'}{v}$ is the operator sending $v$ to $\norm{v}^2 \, v'$, and $v^\perp$ to $0$. Note we do \emph{not} assume that $v$ or $v'$ are unit vectors.
However, in the sequel we will only need to use $\matrixunit{v'}{v}$ with vectors $v,v'$ of norm at most one, in which case $\matrixunit{v'}{v}$ is a contraction.
When both $v$ and $v'$ are non-zero, $\munit{v'}{v}$ is a rank-one operator. Vice versa, every rank-one operator can be realized this way.

The following observations show that these operators behave like the usual elementary matrices $\{e_{ij}\}_{i,j = 1}^n \subseteq \MM_n$.
We spell out here this behavior, for it will be used extensively in the sequel.

\begin{lemma}\label{lem: matrixunit identities}
  For every $v\in\CH,\ v'\in\CH',\ v'' \in \CH''$ and projections $p\in\CB(\CH)$, $p' \in \CB(\CH')$, we have
  \begin{enumerate}[label=(\roman*)]
    \item\label{lem:e-v norm} $p_v=\munit{v}{v}/\norm{v}^2$.
    \item\label{lem:e-v adjoint} $\matrixunit{v'}{v}^* = \matrixunit{v}{v'}$.
    \item\label{lem:e-v composition} $\matrixunit{v''}{v'} \, \matrixunit{v'}{v} =\norm{v'}^2 \, \matrixunit{v''}{v}$.
    \item\label{lem:e-v norm projs} $\norm{p' \matrixunit{v'}{v} p} = \norm{p(v)} \, \norm{p'(v')}$.
  \end{enumerate}  
\end{lemma}
\begin{proof}
  The proof are rather straightforward computations. \cref{lem:e-v norm} is immediate. To prove \cref{lem:e-v adjoint}, notice that for every $w\in\CH$, $w'\in\CH'$ one has that
  \begin{align*}
    \scal{\matrixunit{v}{v'} \left(w'\right)}{w} = \scal{\scal{v'}{w'} v}{w} & = \scal{w'}{v'} \, \scal{v}{w} \\
    & = \scal{w'}{\scal{v}{w} v'} = \scal{w'}{\matrixunit{v'}{v} \left(w\right)}.
  \end{align*}
  Likewise, to show \cref{lem:e-v composition} it suffices to observe that
  \[
  \matrixunit{v''}{v'} \left(\matrixunit{v'}{v} \left(w\right)\right) = \scal{v'}{\scal{v}{w} v'} v'' = \scal{v'}{v'} \scal{v}{w} v'' = \norm{v'}^2 \, \matrixunit{v''}{v} \left(w\right),
  \]
  for every $w\in\CH$.

  It remains to verify \cref{lem:e-v norm projs}. By definition,
  \begin{align*}
    \norm{p' \matrixunit{v'}{v} p} 
    = \sup_{\substack{w \in p\left(\CH\right) \\ \norm{w} \leq 1}} \norm{p' \matrixunit{v'}{v} (w)} 
    & = \norm{p'(v')} \, \sup_{\substack{w \in p\left(\CH\right) \\ \norm{w} \leq 1}} \abs{\scal{v}{w}} \allowdisplaybreaks
  \end{align*}
  If $v\in p(\CH)^\perp$ this norm is zero and there is nothing to show. Otherwise, $p(v)/\norm{p(v)}$ realizes the supremum because $p(v)$ is, by construction, the closest vector in $p(\CH)$ to $v$. Hence 
  \[
  \norm{p' \matrixunit{v'}{v} p}
  = \norm{p'(v')} \, \scal{v}{\frac{p(v)}{\norm{p(v)}}}
  = \norm{p'(v')} \, \frac{\scal{p(v)}{p(v)}}{\norm{p(v)}} = \norm{p'(v')} \, \norm{p(v)}.\qedhere
  \]
\end{proof}

\section{Spatially implemented homomorphisms}
In this section we discuss the necessary ingredients for a \Star{}homomorphism of concretely represented \Star{}algebras to be \emph{spatially implemented}. Since the following facts are (for the most part) classical, we shall keep their proofs rather brief and only include them for the sake of completeness.

\begin{definition}
  Let $A\leq \CB(\CH)$ be a \Star{}algebra, and $\phi\colon A\to \CB(\CH')$ a \Star{}homomorphism. We say that $\phi$ is \emph{spatially implemented} if there exists an operator $W\colon \CH\to \CH'$ such that $\phi= \Ad(W)|_A$.
\end{definition}

Clearly, a necessary condition for a \Star{}homomorphism $\phi \colon A \to \CB(\CH')$ to be spatially implemented is that $\phi$ does not increase the rank of the operators, \emph{i.e.}\ $\rank(\phi(a))\leq\rank(a)$ for every $a\in A \subseteq \CB(\CH)$. In particular, $\phi$ sends operators of rank $\leq 1$ to operators of rank $\leq 1$. We call these \emph{\rlone}.
\begin{remark}
  If $A \leq \CB(\CH)$ contains no non-trivial operator of rank one then any $\phi \colon A \to \CB(\CH')$ is \rlone.
  However, the algebras we shall consider will generally contain a large amount of such operators (though not all, see \cref{prop: non-degenerate hom is spatially implemented}), so the \rlone condition will be far from void.
\end{remark}

Let $\CF(\CH) \subseteq \CB(\CH)$ be the \Star{}algebra of finite-rank operators. The following criterion can be useful to show that an operator is \rlone.
\begin{lemma}\label{lem: rank-1 to rank-1 if hereditary}
  Let $\phi \colon A \to \CB(\CH')$ be a \Star{}homomorphism, where $A\leq \CB(\CH)$ is a \Star{}algebra. Moreover, let $B\leq\CB(\CH')$ be a \cstar{}algebra containing $\CF(\CH')$ and $\phi(A)$.
  If $\phi(A)$ is a hereditary subalgebra of $B$,\footnote{\, Recall that a subalgebra \(A \subseteq B\) is \emph{hereditary} if \(b \in A\) for all non-negative \(b \in B\) such that \(b \leq a\) for some non-negative \(a \in A\).} then $\phi$ is \rlone.
\end{lemma}
\begin{proof}
  It is enough to show that $\phi$ sends rank-1 projections to rank-$\leq$1 projections, because $a^*a$ has rank-$\leq$1 for every rank-$1$ operator $a\in A$. Thus, $a = \lambda a p$, for some $\lambda \in \CCC$ and rank-$\leq$1 projection $p \in A$.

  Let $p\in A$ be a rank-1 projection and set $q\coloneqq\phi(p)$. Since $\phi$ is a homomorphism, $q$ is itself a projection.
  In particular, either $q=0$ or there is some $w\in \CH'$ such that $0 < q_w\leq q$, where $q_w\in\CF(\CH)\leq B$ denotes the orthogonal projection onto $\CCC \cdot w \subseteq \CH'$. Since $\phi(A)$ is hereditary, $q_w = \phi(a)$ for some $a\in A$.
  However, since $p$ is a rank-1 projection, $pap =\lambda p$ for some $\lambda \in \CCC$.
  On the other hand, $q_w = qq_wq = \phi(pap)=\lambda\phi(p) = \lambda q$ shows that $\lambda=1$ and $q=q_w$.
\end{proof}

The following shows that if $A \subseteq \CB(\CH)$ is large enough and $\phi$ is continuous enough, then the \rlone condition already implies spatial implementation. This is essentially a re-elaboration of the well-known fact that isomorphisms of algebras of compact operators are implemented by unitaries.
\begin{lemma}
  \label{lem: SOT-cts hom is spatially implemented}
  Any non-zero \rlone \Star{}homomorphism $\phi \colon \CF(\CH) \to \CB(\CH')$ is spatially implemented by an isometry.
\end{lemma}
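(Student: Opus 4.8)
The plan is to mimic the classical argument that a non-degenerate representation of the compact operators is unitarily equivalent to a multiple of the identity representation, but being careful to only use the rank-$\leq 1$ hypothesis and to land on an isometry rather than a unitary (since $\phi$ need not be surjective). First I would fix a Hilbert basis $\CV=(e_i)_{i\in I}$ of $\CH$ and consider the rank-one projections $p_i\coloneqq p_{e_i}=\munit{e_i}{e_i}$ together with the matrix units $\munit{e_i}{e_j}\in\CF(\CH)$, which satisfy the usual relations from \cref{lem: matrixunit identities}. Since $\phi$ is a non-zero \Star{}homomorphism and each $p_i$ is Murray--von Neumann equivalent to $p_j$ inside $\CF(\CH)$ (via $\munit{e_i}{e_j}$), the projections $q_i\coloneqq\phi(p_i)$ are pairwise orthogonal and all of the same rank; by the \rlone hypothesis each $q_i$ has rank $\leq 1$, and since $\phi\neq 0$ they all have rank exactly $1$. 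Pick a unit vector $w_i\in\CH'$ spanning the range of $q_i$, with the $w_i$ chosen coherently: fix $i_0$, pick $w_{i_0}$ arbitrarily, and set $w_i\coloneqq\phi(\munit{e_i}{e_{i_0}})w_{i_0}$. A short computation with \cref{lem: matrixunit identities} shows $\|w_i\|=1$, that $w_i$ spans the range of $q_i$, and that $\phi(\munit{e_i}{e_j})=\munit{w_i}{w_j}$ for all $i,j$.

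Next I would define $W\colon\CH\to\CH'$ on the basis by $W e_i\coloneqq w_i$ and extend linearly; since the $w_i$ are orthonormal, $W$ is a well-defined isometry (its range is the closed span of the $w_i$). It remains to check $\phi=\Ad(W)|_{\CF(\CH)}$. By linearity and \sot-density it suffices to check this on the matrix units $\munit{e_i}{e_j}$, and indeed $\Ad(W)(\munit{e_i}{e_j})=W\munit{e_i}{e_j}W^*=\munit{We_i}{We_j}=\munit{w_i}{w_j}=\phi(\munit{e_i}{e_j})$, using that $W^*$ acts as $\munit{e_j}{w_j}$ on the relevant rank-one pieces (equivalently, $W^* w_k=e_k$ and $W^* v=0$ for $v\perp\{w_k\}$). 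Since every element of $\CF(\CH)$ is a finite linear combination of such matrix units, this gives $\phi=\Ad(W)|_{\CF(\CH)}$ on all of $\CF(\CH)$.

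The one genuinely delicate point — and the step I would be most careful about — is verifying that the $q_i$ really are pairwise orthogonal rank-one projections \emph{using only that $\phi$ is \rlone}, rather than assuming $\phi$ preserves rank globally or is injective. Orthogonality is automatic since $p_ip_j=0$ implies $q_iq_j=\phi(p_ip_j)=0$; that each $q_i$ is a projection is automatic since $\phi$ is a $\ast$-homomorphism; rank $\leq 1$ is exactly the \rlone hypothesis applied to $p_i$; and rank $\geq 1$ for \emph{every} $i$ follows because $q_i=\phi(\munit{e_i}{e_j})q_j\phi(\munit{e_j}{e_i})$, so if some $q_j\neq 0$ then all $q_i\neq 0$, and $\phi\neq 0$ forces some $q_j\neq 0$ (as the $p_j$ generate $\CF(\CH)$ and $\phi$ is a homomorphism). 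A secondary technical point, if $I$ is infinite, is that "extend linearly" for $W$ should be read as extending by continuity/\sot-limits, which is harmless since the $w_i$ are orthonormal; and one should note $W$ need not be surjective (its range may be a proper subspace), which is why the conclusion only claims an isometry. No appeal to separability or to any structure of the ambient Roe-like algebras is needed here — this is a purely Hilbert-space statement about $\CF(\CH)$.
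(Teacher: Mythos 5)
Your proof follows the same route as the paper's (which defers to Murphy's Theorem 2.4.8): show each $q_i=\phi(p_{e_i})$ is a rank-one projection, build the $w_i$ coherently via $\phi(\munit{e_i}{e_{i_0}})$, and define $W$ on the basis. The argument for nonvanishing of the $q_i$ via Murray--von~Neumann equivalence is a clean substitute for the paper's appeal to algebraic simplicity of $\CF(\CH)$; both are fine.

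There is, however, a genuine gap at the very end. The claim \emph{``every element of $\CF(\CH)$ is a finite linear combination of such matrix units''} is false when $\CH$ is infinite-dimensional: a rank-one operator $\munit{u}{v}$ with $u,v$ not finitely supported in the basis $(e_i)$ is not a finite combination of the $\munit{e_i}{e_j}$. Nor does the earlier appeal to ``\sot-density'' close the gap, since $\phi$ is not assumed \sot-continuous (it is only a \Star{}homomorphism on a \Star{}algebra, not a representation of a von~Neumann algebra). To repair this, observe first that $\phi$ is automatically norm-contractive: for self-adjoint $a\in\CF(\CH)$ with spectral decomposition $a=\sum_k\lambda_k p_k$ into orthogonal rank-one projections, $\phi(a)=\sum_k\lambda_k\phi(p_k)$ is a sum over orthogonal projections, so $\norm{\phi(a)}\leq\max_k\abs{\lambda_k}=\norm{a}$, and then $\norm{\phi(t)}^2=\norm{\phi(t^*t)}\leq\norm{t}^2$ in general. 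Second, for $t\in\CF(\CH)$ and $p_n\coloneqq\sum_{i\in I_n}\munit{e_i}{e_i}$ with $I_n\subseteq I$ finite increasing, one has $p_ntp_n\to t$ in norm (because $t$ and $t^*$ have finite-dimensional ranges and $p_n\to 1$ strongly), and each $p_ntp_n$ \emph{is} a finite combination of matrix units. Since $\phi$ and $\Ad(W)$ are both norm-continuous and agree on these, they agree on all of $\CF(\CH)$. With this patch your proof is complete and matches the paper's approach.
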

\begin{proof}[Sketch of the proof]
  Since $\CF(\CH)$ is algebraically simple, $\phi(p_v)$ must be a rank-$1$ projection for every $v\in \CH_1$. In fact, if $\phi(p_v) = 0$ for some $v \in \CH_1$, any other rank-$1$ operator $t \in \CF(\CH)$ can be written as $t = xp_vy$ for some $x, y \in \CF(\CH)$ and therefore $\phi(t) = \phi(x) \phi(p_v) \phi(y) = 0$. Since rank-$1$ operators algebraically generate $\CF(\CH)$ it would follow that $\phi = 0$ on $\CF(\CH)$.

  Fix an orthonormal basis  $\paren{v_i}_{i\in I}$ for $\CH$. Then $q_i \coloneqq \phi(p_{v_i})$ defines a family of orthogonal rank-$1$ projections.
  We may then now argue as in \cite{murphy2014c}*{Theorem 2.4.8--paragraph 2 onwards} to find unit vectors $(w_i)_{i \in I} \subseteq \CH'$, with $w_i$ spanning the image of $q_i$, such that the assignment $v_i \mapsto w_i$ defines an isometry $W \colon \CH \to \CH'$ such that $\phi$ coincides with $\Ad(W)$ on $\CF(\CH)$.
\end{proof}

We will later need to deal with orthogonal sums of Hilbert spaces (this is necessary when working with \emph{disconnected} coarse spaces, see \cref{subsec: coarse spaces}).
Let $\CH=\bigoplus_{i\in I} \CH_i$ and let $p_i\in \CB(\CH)$ be the orthogonal projection onto $\CH_i \leq \CH$.
Observe that there is a natural containment $\prod_{i\in I}\CB(\CH_i) \subseteq \CB(\CH)$, seeing the former as block-diagonal matrices in the latter.
In the following, $\bigoplus_{i\in I}^{\rm alg}\CF(\CH_i)$ denotes the {\emph{algebraic} direct sum (as opposed to the $c_0$-sum)} of the \Star{}algebras of operators of finite rank. In other words, $\bigoplus_{i\in I}^{\rm alg}\CF(\CH_i) =\paren{\prod_{i\in I}\CB(\CH_i)}\cap \CF(\CH)$.

\begin{proposition}[cf.\ \cite{spakula_rigidity_2013}*{Lemma 3.1} and \cite{braga2020embeddings}*{Lemma 6.1}]\label{prop: non-degenerate hom is spatially implemented}
  Let $\CH=\bigoplus_{i\in I} \CH_i$ and $A\subseteq \prod_{i\in I}\CB(\CH_i)$ a \cstar{}algebra such that $\CF(\CH_i) \subseteq A$ for every $i\in I$.
  For a $\phi\colon A\to \CB(\CH')$ \rlone \Star{}homomorphism, the following conditions are equivalent:
  \begin{enumerate}[label=(\roman*)]
    \item \label{prop: non-degenerate hom is spatially implemented:sot-cont} $\phi$ is strongly continuous.
    \item \label{prop: non-degenerate hom is spatially implemented:spa-impl} $\phi$ is spatially implemented.
  \end{enumerate}
  Moreover, \cref{prop: non-degenerate hom is spatially implemented:sot-cont,prop: non-degenerate hom is spatially implemented:spa-impl} always hold if the restriction $\phi\colon \bigoplus_{i\in I}^{\rm alg}\CF(\CH_i)\to \CB(\CH')$ is a non-degenerate \Star{}representation.
\end{proposition}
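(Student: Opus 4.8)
We prove $(ii)\Rightarrow(i)$ and $(i)\Rightarrow(ii)$, and then obtain the ``moreover'' clause as a variant of the latter. The implication $(ii)\Rightarrow(i)$ is immediate: if $\phi=\Ad(W)|_A$, then $\phi$ is strongly continuous because $\Ad(W)$ is. For the converse, the plan is to build the implementing operator blockwise on $\CK_0:=\bigoplus_{i\in I}^{\rm alg}\CF(\CH_i)=\CF(\CH)\cap\prod_{i\in I}\CB(\CH_i)$, which is contained in $A$ by hypothesis, and then propagate it to all of $A$ by continuity.

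Write $p_i\in\CB(\CH)$ for the orthogonal projection onto $\CH_i$. For each $i$ the restriction $\phi|_{\CF(\CH_i)}$ is a \rlone \Star{}homomorphism into $\CB(\CH')$, so by \cref{lem: SOT-cts hom is spatially implemented} it is either zero or spatially implemented by an isometry $W_i\colon\CH_i\to\CH'$; we set $W_i:=0$ in the former case. For a unit vector $v\in\CH_i$ one has $\phi(p_v)=W_ip_vW_i^*=p_{W_iv}$, so the relation $p_vp_{v'}=0$ for $v\in\CH_i$, $v'\in\CH_j$ with $i\neq j$ (which holds as $\CH_i\perp\CH_j$) shows that $W_i(\CH_i)\perp W_j(\CH_j)$; similarly $W_i^*W_j=0$ and $p_ip_j=0$ whenever $i\neq j$. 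Hence the operators $W_ip_i\colon\CH\to\CH'$ are pairwise orthogonal contractions, and their \sot-sum $W:=\sum_{i\in I}W_ip_i$ converges to a partial isometry $W\colon\CH\to\CH'$ (cf.\ the discussion preceding \cref{lem: sums of orthogonal is cts}). A short computation gives $WsW^*=W_isW_i^*=\phi(s)$ for every $s\in\CF(\CH_i)$, since $sW^*$ only acts on the $\CH_i$-component; by linearity, $\phi=\Ad(W)$ on $\CK_0$.

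To reach all of $A$, fix a net $(k_\lambda)$ of finite-rank block-diagonal projections increasing strongly to $1_\CH$, so that $k_\lambda\in\CK_0$. For $a\in A$, the net $ak_\lambda$ is finite-rank and block-diagonal, hence lies in $\CK_0$; moreover $ak_\lambda\to a$ strongly and $\norm{ak_\lambda}\leq\norm{a}$. Applying strong continuity of $\phi$ to the bounded net $(ak_\lambda)$, and strong continuity of $\Ad(W)$, we obtain $\phi(a)=\lim_\lambda\phi(ak_\lambda)=\lim_\lambda W(ak_\lambda)W^*=WaW^*$. Thus $\phi=\Ad(W)|_A$, which is $(ii)$.

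For the ``moreover'' clause we construct $W$ exactly as above (now no $W_i$ need be an isometry). Given $a\in A$ and $k\in\CK_0$, the product $ak$ again lies in $\CK_0$, so $\phi(a)\phi(k)=\phi(ak)=W(ak)W^*$. Since $W$ annihilates each degenerate summand $\CH_i$ (those with $W_i=0$) while $a$ is block-diagonal, setting $p_0:=1-W^*W$ we get $Wap_0kW^*=0$, whence $W(ak)W^*=Wa(W^*W)kW^*=(WaW^*)(WkW^*)=(WaW^*)\phi(k)$. Therefore $\bigl(\phi(a)-WaW^*\bigr)\phi(k)=0$ for every $k\in\CK_0$, and the non-degeneracy of $\phi|_{\CK_0}$ forces $\phi(a)=WaW^*$; hence $\phi$ is spatially implemented, so both conditions hold. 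All the computations here are elementary; the points that require some care are the convergence and partial-isometry structure of $W=\sum_iW_ip_i$, and — in the ``moreover'' part — checking that $W$ annihilates the degenerate summands so that $W(ak)W^*$ collapses to $(WaW^*)\phi(k)$. I expect this last piece of bookkeeping to be the main, if minor, obstacle.
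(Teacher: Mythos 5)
Your proof is correct and follows essentially the same route as the paper's: build $W = \sum_i W_i p_i$ blockwise from \cref{lem: SOT-cts hom is spatially implemented}, verify orthogonality of ranges and initial spaces, match $\phi$ and $\Ad(W)$ on $\bigoplus^{\rm alg}_i\CF(\CH_i)$, then propagate to all of $A$ by SOT-density for $(i)\Rightarrow(ii)$ and by non-degeneracy for the ``moreover'' clause. The only notable difference is in the ``moreover'' step: the paper argues by approximating a fixed vector $w$ by elements of $\phi(\CK_0)\CH'$ and chasing the identity $\phi(a)(w) \approx (\Ad(W)(a))(w)$, whereas you factor $(\phi(a)-WaW^*)\phi(k)=0$ and invoke density of $\phi(\CK_0)\CH'$; these are the same argument in two guises. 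You are, if anything, more careful than the paper in making explicit the bookkeeping point that $a$ commutes with $W^*W=\mathbf{1}-p_0$ because $a$ is block-diagonal and $p_0$ is a sum of block projections — the paper uses the identity $W(at_k)W^* = (WaW^*)(Wt_kW^*)$ without comment, which is only valid for the same reason you spell out. No gaps.
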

\begin{proof}
  We already noted that \cref{prop: non-degenerate hom is spatially implemented:spa-impl} always implies \cref{prop: non-degenerate hom is spatially implemented:sot-cont},
  so we only need to prove the converse implication. For each $i\in I$, we define a partial isometry $W_i\colon \CH_i\to\CH'$ such that $\phi=\Ad(W_i)$ on $\CF(\CH_i)$. Namely, if $\phi$ is $0$ on $\CF(\CH_i)$ we put $W_i \coloneqq 0$. Otherwise, since $\phi$ is \rlone, we may define $W_i$ using \cref{lem: SOT-cts hom is spatially implemented}.

  Observe that for every $i\neq j\in I$ and $v_i\in \CH_i$, $v_j\in\CH_j$ the rank-1 projections $p_{v_i}$ and $p_{v_j}$ are orthogonal, and are thus sent to orthogonal projections via $\phi$. This implies that $W_i(\CH_i)$ and $W_j(\CH_j)$ are orthogonal subspaces of $\CH'$. Since the (partial) isometries $W_i$ are orthogonal, we deduce that the (strongly convergent) sum $W\coloneqq \sum_{i\in I}W_i$ is a well-defined partial isometry from $\CH$ into $\CH'$.
  By construction, $\phi$ and $\Ad(W)$ coincide on $\CF(\CH_i)$ for every $i\in I$, and therefore also on finite sums thereof. Namely, they coincide on $\bigoplus_{i\in I}^{\rm alg}\CF(\CH_i)$.
  Observe that the latter is strongly dense in $\prod_{i\in I}\CB(\CH_i)$, and hence the claim follows since $A\subseteq \prod_{i\in I}\CB(\CH_i)$ and both $\phi$ and $\Ad(W)$ are strongly continuous.

  \smallskip

  It remains to prove the `moreover' statement. We will show directly that the non-degeneracy assumption implies that $\phi$ coincides with $\Ad(W)$, where $W$ is constructed as above. 
  Fix $a\in A$ and $w\in\CH'$.
    By non-degeneracy of $\phi\colon\bigoplus_{i\in I}^{\rm alg}\CF(\CH_i)\to \CH'$, we may find for every $\varepsilon>0$ finitely many $t_k\in\bigoplus_{i\in I}^{\rm alg}\CF(\CH_i)$ and $w_k\in\CH'$ such that $\norm{w- \sum_{k}\phi(t_k)(w_k)}\leq \varepsilon$. Observe that for each such $t_k$ the product $at_k$ still belongs to $\bigoplus_{i\in I}^{\rm alg}\CF(\CH_i)$ (this uses that $A$ commutes with $1_i \in \CB(\CH_i) \subseteq \CB(\CH)$ for all $i \in I$).
  It follows that
  \begin{align*}
    \phi(a)(w) &\approx \phi(a)\bigparen{\sum_{k}\phi(t_k)(w_k)} \\
      &= \sum_k \phi(at_k)(w_k)\\
      &= \sum_k \paren{\Ad(W)(at_k)}(w_k)
      = \paren{\Ad(W)(a)}\bigparen{\sum_{k}(\Ad(W)(t_k))(w_k)},
  \end{align*}
  where $\approx$ means ``up to $\varepsilon$ multiplied by the norms of the relevant operators''.
  Since we already know that $\phi$ and $\Ad(W)$ coincide on $\bigoplus_{i\in I}^{\rm alg}\CF(\CH_i)$, the last expression is equal to 
  \[
    \paren{\Ad(W)(a)}\bigparen{\sum_{k}\phi(t_k)(w_k)} \approx \paren{\Ad(W)(a)}(w).
  \]
  The proof is completed letting $\varepsilon$ tend to zero.
\end{proof}

\begin{remark}\label{rmk: about spatial implementation}
  We end the section with a few remarks.
  \begin{enumerate}[label=(\roman*)]
    \item The `moreover' statement of \cref{prop: non-degenerate hom is spatially implemented} is a manifestation of the interplay between strict-continuity, unique extension property, and non-degeneracy (cf.\ \cite{lance_hilbert_modules_1997}*{Proposition 2.5} and \cite{blackadar2006operator}*{II.7.3.9}).
    \item In certain circumstances one can use the arguments of \cite{braga2020embeddings}*{Lemma 6.1} to slightly weaken the non-degeneracy assumption. For instance, if $A$ is unital and $\phi$ is a homomorphism such that $\phi(A)$ is a hereditary subalgebra of a large enough \cstar algebra of $\CB(\CH')$.
  \end{enumerate}
\end{remark}


\chapter{Coarse geometric setup}\label{sec: coarse geometric setup}
In this chapter we give a quick recap of the notions and conventions used in \cite{roe-algs}, which are in turn based on those of \cite{coarse_groups}. The proofs that we do not include here are all rather elementary (although they may require some care), and the reader may take them as exercises: proving them should illuminate the reasoning underlying the various definitions. For more details and comprehensive explanations we refer to the sources mentioned above.

\section{Coarse spaces}\label{subsec: coarse spaces}
A \emph{relation from $X$ to $Y$} is any subset $R\subseteq Y\times X$. For every $A\subseteq X$ or $\bar x\in X$ we let $R(A)\coloneqq \pi_Y(R\cap \pi_X^{-1}(A))=\{y\in Y\mid \exists x\in X,\ (y,x)\in R\}$ and $R(\bar x)\coloneqq R(\{\bar x\})$.
A relation \emph{on} $X$ is a relation from $X$ to $X$.
For every $A\subseteq X$, we let $\Delta_A\coloneqq \{(a,a)\mid a\in A\}\subseteq X\times X$ denote the \emph{diagonal over $A$}.

We denote by $\op{R}\coloneqq\braces{(x,y)\mid (y,x)\in R}\subseteq X\times Y$ the \emph{transposition} of $R$, and we say that $R$ is \emph{symmetric} if $R=\op R$.
The \emph{composition} of two relations $R_1\subseteq Z\times Y$ and $R_2\subseteq Y\times X$ is the relation
\[
  R_1\circ R_2\coloneqq\braces{(z,x)\mid\exists y\in Y \;\; \text{such that} \;\; (z,y)\in R_1 \; \text{and} \; (y,x)\in R_2}.
\]
Observe that $(R_1\circ R_2)(A)=R_1(R_2(A))$ for every $A\subseteq X$.

\begin{definition}
 A \emph{coarse structure on a set $X$} is a family $\CE$ of relations on $X$ closed under taking subsets and finite unions, such that
 \begin{itemize}
   \item $\CE$ contains the diagonal $\Delta_X$;
   \item $E\in\CE \Rightarrow \op{E}\in \CE$;
   \item $E,F\in\CE \Rightarrow E\circ F\in \CE$.
 \end{itemize}
 A \emph{coarse space} $\crse X = (X,\CE)$ is a set $X$ equipped with a coarse structure $\CE$.
\end{definition}

We call the elements $E \in \CE$ \emph{(controlled) entourages}.
The prototypical example of coarse space is constructed from an extended pseudo-metric space $(X,d)$: the coarse structure induced by the (extended) metric is defined as
\[
  \CE_d\coloneqq\braces{E\subseteq X \times X \, \mid \, \exists \, r<\infty \;\; \text{such that} \;\; d(x,y)<r\ \forall (x,y)\in E}.
\]

We collect below some pieces of nomenclature.
\begin{notation} \label{notation:bounded and controlled}
  Let $\crse X= (X,\CE)$ be a coarse space, $E\in \CE$, and $A,B \subseteq X$.
  \begin{enumerate}[label=(\roman*)]
    \item $A$ is \emph{$E$-bounded} if $A\times A\subseteq E$.
    \item a family $(A_i)_{i\in I}$ of subsets of $X$ is \emph{$E$\=/controlled} if $A_i$ is $E$-bounded for all $i \in I$.
    \item $B$ is an \emph{$E$-controlled thickening of $A$} if $A\subseteq B \subseteq E(A)$.
  \end{enumerate}
  Likewise, we say \emph{bounded} (resp.\ \emph{controlled}) to mean $E$-bounded (resp.\ $E$-controlled) for some $E\in \CE$.
\end{notation}

A coarse space $\crse X$ is \emph{(coarsely) connected} if every finite subset of $X$ is bounded. Given a coarse space $\crse X$, the base set $X$ can be uniquely decomposed as a disjoint union $X=\bigsqcup_{i\in I} X_i$ so that for every bounded $A\subseteq X$ there is a unique $i\in I$ so that $A\subseteq X_i$, and the restriction of $\CE$ to each $X_i$ makes it into a connected coarse space $\crse X_i$. We say that this is the \emph{decomposition of $\crse X$ into its coarsely connected components}, denoted $\crse X =\bigsqcup_{i\in I}\crse X_i$.
For instance, if $\crse X=(X,\CE_d)$ is the coarse space defined by an extended metric $d$, then $X=\bigsqcup_{i\in I} X_i$ is the partition in components consisting of points that are at finite distance from one another. In particular, metric spaces are always coarsely connected.

Observe that a coarse structure $\CE$ is a directed set with respect to inclusion. In particular, it makes sense to talk about \emph{cofinal} families, that is, $(E_i)_{i \in I} \subseteq \CE$ such that for all $E \in \CE$ there is some $i \in I$ such that $E \subseteq E_i$. 
The coarse structure \emph{generated} by a family of relations on $X$ is the smallest coarse structure containing them all.
The following is a classical fact, and not at all hard to prove (see, \emph{e.g.}\ \cite{roe_lectures_2003}*{Theorem 2.55}, \cite{protasov2003ball}*{Theorem 9.1}, or \cite{coarse_groups}*{Lemma 8.2.1}).

\begin{proposition}\label{prop:metrizable coarse structure iff ctably gen}
  Let $\CE$ be a coarse structure. The following are equivalent:
  \begin{enumerate} [label=(\roman*)]
   \item $\CE=\CE_d$ for some extended metric $d$;
   \item $\CE$ is countably generated;
   \item $\CE$ contains a countable cofinal sequence of entourages.
  \end{enumerate}
\end{proposition}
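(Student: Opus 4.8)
The plan is to prove the cycle of implications (i) $\Rightarrow$ (ii) $\Rightarrow$ (iii) $\Rightarrow$ (i), with the bulk of the work being the last step. First, (i) $\Rightarrow$ (ii): if $\CE = \CE_d$ for an extended metric $d$, then the entourages $E_n \coloneqq \{(x,y) \mid d(x,y) < n\}$ for $n \in \NN$ generate $\CE$, since any controlled entourage is contained in some $E_n$ by definition of $\CE_d$; hence $\CE$ is countably generated. Next, (ii) $\Rightarrow$ (iii): suppose $\CE$ is generated by a countable family $\{R_k\}_{k \in \NN}$ of relations; without loss of generality each $R_k$ is symmetric and contains $\Delta_X$ (replace $R_k$ by $R_k \cup \op{R_k} \cup \Delta_X$, which does not change the generated structure). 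Then set $E_n \coloneqq (R_1 \cup \cdots \cup R_n)^{\circ n}$, the $n$-fold composition of the union of the first $n$ generators with itself. Each $E_n$ lies in $\CE$, and the sequence $(E_n)$ is increasing and cofinal: any $E \in \CE$ is, by the description of the generated coarse structure, contained in a finite union of finite compositions of the $R_k$ and $\Delta_X$, and for $n$ large enough all the relevant indices are $\leq n$ and the composition length is $\leq n$, so $E \subseteq E_n$.

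The substantive step is (iii) $\Rightarrow$ (i). Given a countable cofinal increasing sequence $\Delta_X \subseteq E_1 \subseteq E_2 \subseteq \cdots$ in $\CE$, I would first arrange, by passing to a cofinal subsequence and replacing each $E_n$ by $E_n \cup \op{E_n} \cup \Delta_X$ and then by $E_n^{\circ n}$ if necessary, that each $E_n$ is symmetric, contains the diagonal, and satisfies the ``triangle-friendly'' nesting $E_n \circ E_n \subseteq E_{n+1}$. Then define
\[
  d(x,y) \coloneqq \inf\bigl\{\, 2^{-n} \;\bigm|\; (x,y) \in E_n \,\bigr\},
\]
with the convention that $d(x,y) = +\infty$ if $(x,y)$ lies in no $E_n$, and $d(x,x) = 0$. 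Symmetry of $d$ is immediate from symmetry of the $E_n$. The point of the nesting condition $E_n \circ E_n \subseteq E_{n+1}$ is exactly to force a (weak) triangle inequality: if $(x,y) \in E_n$ and $(y,z) \in E_n$ then $(x,z) \in E_{n+1}$, which gives $d(x,z) \leq 2 \max\{d(x,y), d(y,z)\}$, an ultrametric-type inequality up to a factor of $2$; this suffices to produce a genuine extended pseudo-metric after the standard ``chain'' refinement $d'(x,y) \coloneqq \inf \sum_{i} d(x_{i}, x_{i+1})$ over finite chains from $x$ to $y$, with the classical lemma (as in \cite{roe_lectures_2003}*{Theorem 2.55}) guaranteeing $\tfrac12 d \leq d' \leq d$, so that $d'$ is comparable to $d$. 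Finally one checks $\CE_{d'} = \CE$: the containment $\CE_{d'} \subseteq \CE$ holds because each ball $\{d' < r\}$ is contained in some $E_n$ (as $d' \geq \tfrac12 d$), and conversely each $E_n \in \CE_{d'}$ because $(x,y) \in E_n$ forces $d(x,y) \leq 2^{-n}$, hence $d'(x,y) \leq 2^{-n}$, so $E_n$ is $2^{-n}$-bounded in the $d'$-metric; since the $E_n$ are cofinal in $\CE$, this gives $\CE \subseteq \CE_{d'}$.

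The main obstacle is purely the step (iii) $\Rightarrow$ (i): one must be careful to first normalize the cofinal sequence so that the compositions behave (symmetry, diagonal, and the geometric nesting $E_n \circ E_n \subseteq E_{n+1}$), and then correctly invoke the chaining argument that upgrades the ``approximate ultrametric'' $d$ to an honest extended pseudo-metric $d'$ without destroying the identity $\CE_{d'} = \CE$. Since this is a classical result, I would in practice cite \cite{roe_lecters_2003}*{Theorem 2.55}, \cite{protasov2003ball}*{Theorem 9.1}, or \cite{coarse_groups}*{Lemma 8.2.1} for the metrization lemma and merely indicate the normalization of the generating sequence, rather than reproving the chaining estimate from scratch.
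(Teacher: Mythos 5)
The paper does not prove this proposition---it cites \cite{roe_lectures_2003}*{Theorem 2.55}, \cite{protasov2003ball}*{Theorem 9.1}, and \cite{coarse_groups}*{Lemma 8.2.1}---so there is no internal proof to compare against; this is a direct check of your argument. Implications (i)~$\Rightarrow$~(ii) and (ii)~$\Rightarrow$~(iii) are fine. (One small caveat: replacing $E_n$ by $E_n^{\circ n}$ does not in general achieve $E_n \circ E_n \subseteq E_{n+1}$; the right normalization is to pass inductively to a cofinal subsequence, choosing at each stage an $E_{n_{k+1}} \in \CE$ containing both $E_{n_k+1}$ and $E_{n_k}\circ E_{n_k}$, which exists by cofinality. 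Your phrase ``passing to a cofinal subsequence'' already covers this; the $E_n^{\circ n}$ move is the one that does not work.)

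The step (iii)~$\Rightarrow$~(i) contains a genuine sign error that breaks the verification $\CE_{d'}=\CE$. Your normalized sequence is increasing, $\Delta_X \subseteq E_1 \subseteq E_2 \subseteq \cdots$, and with
\[
  d(x,y) = \inf\bigl\{\,2^{-n} \mid (x,y)\in E_n\,\bigr\}
\]
the set of admissible $n$ is an up-set: once $(x,y)\in E_{n_0}$, it lies in $E_n$ for all $n\geq n_0$, so the infimum of $2^{-n}$ is $0$. Hence $d\equiv 0$ on $\bigcup_n E_n$ and $\equiv +\infty$ off it, and the chain refinement $d'\leq d$ inherits this degeneracy. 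The forward containment $\CE_{d'}\subseteq\CE$ in your last sentence then fails: $\{d'<r\}$ equals $\bigcup_n E_n$ for every $r>0$, which is generally not contained in any single $E_n$ and need not be an entourage at all (so $\CE_{d'}$ would be strictly larger than $\CE$). You have imported the formula from the metrization theorem for uniformities, where the basic entourages \emph{shrink}; in a coarse structure they \emph{grow}, and the exponent must be reversed. Taking instead $d(x,y)=\inf\{2^{n}\mid (x,y)\in E_n\}$ (equivalently $2^{n_0}$ where $n_0$ is the least index with $(x,y)\in E_n$, or simply $d(x,y)=n_0$ itself), the nesting $E_n\circ E_n\subseteq E_{n+1}$ yields $d(x,z)\leq 2\max\{d(x,y),d(y,z)\}$, the Frink chain lemma applies as you indicate, and now each ball $\{d'<r\}$ really is contained in a single $E_n$, so the argument closes.
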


A family $(A_i)_{i\in I}$ of subsets of $\crse X$ is \emph{locally finite} if 
for all bounded $A \subseteq X$
\[
\#\braces{i\in I\mid A_i \cap A \neq \emptyset} < \infty.
\]
It is \emph{uniformly locally finite} if we also have 
\[
  \sup\braces{\#\braces{i\in I\mid A_i \cap A \neq \emptyset} \mid A \subseteq X \text{ is $E$-controlled}} < \infty
\]
for all $E \in \CE$.
A \emph{controlled partition} for a coarse space $\crse X$ is a partition  $X=\bigsqcup_{i\in I} A_i$ that is controlled in the sense of \cref{notation:bounded and controlled}.
\begin{definition} \label{def:loc-fin}
  A coarse space $\crse X$ is \emph{coarsely locally finite} if it admits a locally finite controlled partition.
  Likewise, $\crse X$ has \emph{bounded geometry} if there is a uniformly locally finite controlled partition.
\end{definition}

\begin{example}
  Every metric space of bounded diameter (trivially) has bounded geometry. More generally this is true for any coarse space $\crse X = (X,\CE)$ where the whole space $X$ is a bounded set. We call such a coarse space \emph{bounded coarse space}, and note that these are eminently uninteresting examples of coarse spaces, as they are ``coarsely trivial''.
\end{example}

\begin{example}
  Let $\CG$ be a connected graph. We can see it as a discrete metric space by equipping its vertex set with the path-metric, which assigns to any pair of points the length of the shortest path connecting them. Considering the family of closed balls of radius one shows that if every vertex in $\CG$ has finite degree then it is coarsely locally finite. The converse is not true without other assumptions, because $\CG$ may very well have many edges that do not contribute to its ``large scale geometry'', or even be a bounded coarse space!
  Similarly, if $\CG$ has degree uniformly bounded from above then it has bounded geometry.

  These examples are actually rather generic, as it can be shown that every ``coarsely geodesic'' coarse space is ``coarsely equivalent'' to a graph (see \emph{e.g.}\ \cite{coarse_groups}*{Proposition B.1.6.}).
  One class of graphs of special interest are \emph{Cayley graphs}. Namely, if $\Gamma$ is some group and $S\subseteq \Gamma$ is a generating set, the associated Cayley graph is the graph having one vertex for each $\gamma\in \Gamma$ and an edge between $\gamma$ and $\gamma s\in \Gamma$ for every $s\in S$. Of course, if $S$ is finite the Cayley graph will have bounded geometry. This construction can be extended to certain classes of semigroups as well, see \cite{chyuan_chung_inv_sem_2022} or \cref{subsec:inv-sem} below.

  If on the other hang the graph $\CG$ is disconnected, it is then natural to equip it with an extended metric by declaring that the distance between two vertices that are not joined by any path to be infinite. The same considerations above hold. 
\end{example}

\begin{example}
  It is simple to verify that every proper (extended) metric space is coarsely locally finite (recall that a metric space is \emph{proper} if all closed balls are compact). One important source of bounded geometry metric spaces is obtained by taking covers of manifolds: let $M$ be a compact Riemannian manifold. Then the Riemannian metric can be lifted to the universal cover $\widetilde M$, which makes it into a locally compact metric space. Exploiting the compactness of $M$, one can also show that $\widetilde M$ is a bounded geometry metric space.
  One very nice proof of this fact is by realising  that $\Gamma\coloneqq\pi_1(M)$ is a finitely generated group and applying the Milnor--Schwarz Lemma to deduce that its Cayley graph is coarsely equivalent to $\widetilde M$. 
\end{example}

\begin{remark}
  The above are examples of \emph{coarse geometric properties}, namely properties that are preserved under \emph{coarse equivalence} (this will be defined momentarily, after a few more pieces of notation).
\end{remark}

A \emph{gauge} for $\crse X$ is a symmetric controlled entourage containing the diagonal. We usually denote gauges by $\gauge$. 
These are useful to discuss coarse geometric properties, as such properties have often a definition of the form ``there is a gauge large enough such that\ldots''.
For instance, we say that $\gauge$ is a gauge witnessing coarse local finiteness of $\crse X$ if the latter admits a locally finite \gauge-controlled partition. Since gauges contain the diagonal, $A\subseteq \gauge(A)$ for every $A\subseteq X$.

\begin{remark}\label{rmk: uniform local finiteness}
  In the literature around (uniform) Roe algebras and, in particular, around rigidity questions (\emph{e.g.},~\cites{braga_gelfand_duality_2022,braga_rigid_unif_roe_2022,white_cartan_2018}), coarse spaces are often assumed to be \emph{uniformly locally finite} or, possibly, to have \emph{bounded geometry}.
  These notions are much stronger than coarse local finiteness and are rather convenient assumptions in various practical purposes (see~\cites{braga_rigid_unif_roe_2022,white_cartan_2018}).
  The techniques we develop in this memoir do not make use of them: the uniform local finiteness condition only appears in \cref{cor:roe-rigidity} in one of the implications that does not directly follow from our work.
\end{remark}

\section{Coarse subspaces, maps and equivalences} \label{subsec:coarse-maps}
We write $A\csub B$ if $A$ is contained in a controlled thickening of $B$. In such case we say $A$ is \emph{coarsely contained} in $B$. If $A\csub B$ and $B\csub A$ then $A$ and $B$ are \emph{asymptotic}, denoted $A \asymp B$ (this is a coarse geometric analog of ``being at finite Hausdorff distance''). Observe that $\asymp$ is an equivalence relation, which we use in the following.

\begin{definition} \label{def:coarse-subspace}
  A \emph{coarse subspace $\crse Y$ of $\crse X$} (denoted $\crse{Y\subseteq X}$) is the $\asymp$\=/equivalence class $[Y]$ of a subset $Y \subseteq X$. If $\crse Y=[Y]$ and $\crse Z=[Z]$ are coarse subspaces of $\crse X$, we say that $\crse Y$ is \emph{coarsely contained} in $\crse Z$ (denoted $\crse{Y\subseteq Z}$) if $Y\csub Z$.
  A subset $Y\subseteq X$ is \emph{coarsely dense} if $X\csub Y$ (or, equivalently, $\crse Y=\crse X$ as coarse subspaces of $\crse X$).
\end{definition}

\begin{remark}
  Given $\crse{Y\subseteq X}$, the coarse space $\crse Y=(Y,\CE|_Y)$ is uniquely defined (\emph{i.e.}\ does not depend on the choice of representative) up to canonical coarse equivalence (see \cref{def: coarse equivalence}). In categorical terms, coarse subspaces are subobjects in the coarse category \Cat{Coarse} \cite{coarse_groups}*{Appendix A.2}.
\end{remark}

Given $E\subseteq Y\times X$ and $E'\subseteq Y'\times X'$, we denote by
\[
 E\otimes E'\coloneqq \braces{\paren{(y,y'),(x,x')}\mid (y,x)\in E \; \text{and} \; (y',x')\in E'}
\]
the \emph{product relation from $X\times X'$ to $Y\times Y'$}. These are used below to define a coarse structure on the Cartesian product. 
Observe that if $D\subseteq X\times X'$ is a relation from $X'$ to $X$ then
\begin{equation}\label{eq:product relation image}
 (E\otimes E')(D)=E\circ D \circ \op{(E')}.
\end{equation}

Before continuing, this is a good point to start using the following.
\begin{convention}
  In the sequel, $\crse X$ (resp.\ $\crse Y$) will always denote a coarse space over the set $X$ (resp.\ $Y$) with coarse structure $\CE$ (resp.\ $\CF$).
\end{convention}

We may then return to products of coarse spaces. We use the following.
\begin{definition} \label{def:coarse-prod}
  \emph{$\crse{Y\times X}$} is the coarse space $(Y\times X, \CF\otimes\CE)$, where
  \[
   \CF\otimes\CE\coloneqq \braces{D \, \mid \, \exists \, F\in \CF \; \text{and} \; E\in \CE \;\; \text{such that} \; D\subseteq F\otimes E}.
  \]
\end{definition}

We need the above notion to define (partial) coarse maps in terms of relations instead of functions. This approach requires some additional formalism, but this effort is worth it, as the end results become much cleaner to state and prove. At various places, in fact, it will be much more natural to use relations than functions, see \cref{def:approximating crse map}.

We denote by $\pi_X \coloneqq Y \times X \to X$ and $\pi_Y \coloneqq Y \times X \to Y$ the usual projections onto the $X$ and $Y$-coordinates respectively.
\begin{definition}\label{def:controlled_function}
  A relation $R$ from $X$ to $Y$ is \emph{controlled} if
  \[
    \left(R\otimes R\right)\left(E\right) \in \CF
  \]
  for every $E\in \CE$. Moreover, $R$ is \emph{coarsely everywhere defined} if $\pi_X(R)$ is coarsely dense in $X$, and it is \emph{coarsely surjective} if $\pi_Y(R)=R(X)$ is coarsely dense in $Y$.
\end{definition}

Observe that \cref{eq:product relation image} implies that $R$ is controlled if and only if  $R\circ E\circ \op{R}\in \CF$ for all $E \in \CE$.

\begin{definition}\label{def:close_relations}
  Two controlled relations $R,R'$ from $\crse X$ to $\crse Y$ are \emph{close} if $R\asymp R'$ in $\crse{Y\times X}$, that is, if they define the same coarse subspace.
\end{definition}

It is not hard to show (see \rfSsecCrsMap) that if $R$ is a controlled relation and $R'\asymp R$, then $R'$ is also controlled. The property of being coarsely everywhere defined is preserved under closeness as well.
The following is therefore well posed.
\begin{definition}[cf.\ \rfDefCrsMap]\label{def:coarse-map}
  A \emph{partial coarse map $\crse R$ from $\crse X$ to $\crse Y$} is the $\asymp$-equivalence class $[R]$ of a controlled relation $R \subseteq Y \times X$.
  Likewise, a \emph{coarse map $\crse R$ from $\crse X$ to $\crse Y$} is a partial coarse map that is coarsely everywhere defined.
\end{definition}

One should think of controlled relations as controlled partial functions that are only coarsely well-defined.
The following lemma shows that \cref{def:coarse-map} is compatible with the usual notion of coarse map, as used \emph{e.g.}\ in \cite{coarse_groups}. The proof requires unraveling the definitions, but it is fairly straightforward. We refer to \cite{roe-algs} for details.
\begin{lemma}[cf.\ \rfLemCloseLikeClassic] \label{fact:relations vs functions}
  Let $f,f'\colon X\to Y$ be functions.
  \begin{enumerate}[label=(\roman*)]
    \item \label{fact:relations vs functions:controlled} $f$ is controlled in the sense of \cite{coarse_groups} (or \emph{bornologous} in~\cite{roe_lectures_2003}) if and only if its graph is a controlled relation from $X$ to $Y$.
    \item \label{fact:relations vs functions:close} $f$ and $f'$ are close in the sense of \cite{coarse_groups} if and only if their graphs are close.
    \item \label{fact:relations vs functions:graph} If $\crse R$ is a coarse map from $\crse X$ to $\crse Y$, there exists a function $f\colon X\to Y$ whose graph is close to $R$.
  \end{enumerate}
\end{lemma}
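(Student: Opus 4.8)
The plan is to unwind the definitions of \emph{controlled relation}, \emph{closeness}, and \emph{coarse map} directly, so that the bulk of each item becomes a bookkeeping computation with compositions and transpositions of relations; the only item that requires an actual construction is \cref{fact:relations vs functions:graph}. For \cref{fact:relations vs functions:controlled} I would use the reformulation recorded right after \cref{eq:product relation image}, namely that a relation $R$ is controlled if and only if $R\circ E\circ\op R\in\CF$ for every $E\in\CE$. Unwinding the two compositions gives $\gr{f}\circ E\circ\op{\gr{f}}=\braces{(f(x),f(x'))\mid (x,x')\in E}$, which is exactly the set $(f\times f)(E)$ appearing in \cite{coarse_groups}; so $\gr{f}$ is controlled if and only if $(f\times f)(E)\in\CF$ for all $E\in\CE$, which is the definition of $f$ being bornologous.

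For \cref{fact:relations vs functions:close} I would first note that the statement only makes sense when $f$ and $f'$ are controlled, since otherwise $\gr{f}$ and $\gr{f'}$ are not controlled relations and \cref{def:close_relations} does not apply to them; I take this for granted. If $f$ and $f'$ are close, put $F\coloneqq\braces{(f(x),f'(x))\mid x\in X}\in\CF$; using $(F\otimes\Delta_X)(\gr{f'})=F\circ\gr{f'}=\braces{(y,x)\mid (y,f'(x))\in F}$ one sees at once that $\gr{f}\subseteq(F\otimes\Delta_X)(\gr{f'})$, and symmetrically $\gr{f'}\subseteq(\op{F}\otimes\Delta_X)(\gr{f})$, whence $\gr{f}\asymp\gr{f'}$. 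Conversely, if $\gr{f}\csub\gr{f'}$ there are $F\in\CF$ and a gauge $E\in\CE$ with $\gr{f}\subseteq(F\otimes E)(\gr{f'})=F\circ\gr{f'}\circ\op{E}$; unwinding this says that for every $x\in X$ there is $x'$ with $(x,x')\in E$ and $(f(x),f'(x'))\in F$. Since $f'$ is controlled, $(f'(x'),f'(x))\in (f'\times f')(E)\in\CF$, and hence $(f(x),f'(x))\in F\circ(f'\times f')(E)\in\CF$; as $x$ is arbitrary this shows $f$ and $f'$ are close.

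For \cref{fact:relations vs functions:graph} I would write $\crse{R}=[R]$ with $R\subseteq Y\times X$ controlled and $\pi_X(R)$ coarsely dense, and fix a gauge $E\in\CE$ with $X\subseteq E(\pi_X(R))$. Using the axiom of choice, for each $x\in X$ I pick $\sigma(x)\in\pi_X(R)$ with $(x,\sigma(x))\in E$ and then some $f(x)\in R(\sigma(x))$ (a nonempty set, since $\sigma(x)\in\pi_X(R)$); the claim is that $\gr{f}\asymp R$. On the one hand, $(f(x),\sigma(x))\in R$ together with $(x,\sigma(x))\in E$ gives $(f(x),x)\in R\circ\op{E}=(\Delta_Y\otimes E)(R)$, so $\gr{f}\csub R$. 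On the other hand, given $(y,x)\in R$ the chain $(y,x)\in R$, $(x,\sigma(x))\in E$, $(f(x),\sigma(x))\in R$ yields $(y,f(x))\in R\circ E\circ\op{R}$; since $R$ is controlled, $F\coloneqq R\circ E\circ\op{R}\in\CF$, and then $(y,x)\in F\circ\gr{f}=(F\otimes\Delta_X)(\gr{f})$, so $R\csub\gr{f}$. Hence $\gr{f}$ is close to $R$; moreover $\gr{f}$ is automatically a controlled relation, being asymptotic to the controlled relation $R$ (cf.\ \rfSsecCrsMap), so $f$ is the required function.

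I expect \cref{fact:relations vs functions:controlled,fact:relations vs functions:close} to be pure bookkeeping, provided one is careful about the direction of each composition and about which factor of a product relation lives over $X$ and which over $Y$. The one genuine point is \cref{fact:relations vs functions:graph}: the mild difficulty there is arranging the choices so that the resulting graph is asymptotic to $R$ from \emph{both} sides at once. Coarse density of $\pi_X(R)$ is precisely what lets one extend $R$ to an everywhere-defined relation without losing control (giving $\gr{f}\csub R$), while the controlledness of $R$ is what lets one relate an arbitrary pair $(y,x)\in R$ back to $(f(x),x)\in\gr{f}$ (giving $R\csub\gr{f}$).
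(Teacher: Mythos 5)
Your proof is correct. Since the paper itself only cites \cite{roe-algs} for this lemma (``The proof requires unraveling the definitions... We refer to \cite{roe-algs} for details''), I cannot compare approaches line by line, but your argument is exactly the kind of definition-unwinding the authors have in mind, and each step checks out: the identity $\gr{f}\circ E\circ\op{\gr f}=\{(f(x),f(x'))\mid (x,x')\in E\}$ does give \cref{fact:relations vs functions:controlled}; in \cref{fact:relations vs functions:close} both inclusions hold as you write them; and in \cref{fact:relations vs functions:graph} the double choice of $\sigma(x)$ and $f(x)\in R(\sigma(x))$ is precisely what makes both $\gr f\csub R$ (via coarse density of $\pi_X(R)$) and $R\csub\gr f$ (via controlledness of $R$) work simultaneously.

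Two small remarks. First, in the converse direction of \cref{fact:relations vs functions:close} you write $(f'(x'),f'(x))\in(f'\times f')(E)$, but with $(x,x')\in E$ that pair actually sits in $(f'\times f')(\op E)$; of course $\op E\in\CE$ since coarse structures are closed under transposition, so this is harmless — it is just worth being precise, since that kind of direction-slip is exactly what goes wrong when one is less careful. Second, your parenthetical that \cref{fact:relations vs functions:close} ``only makes sense when $f,f'$ are controlled'' is a defensible reading given that \cref{def:close_relations} in this paper only declares closeness for controlled relations; however, the classical notion of closeness from \cite{coarse_groups} is defined for arbitrary functions, and the forward direction of your argument (passing from classical closeness to $\gr f\asymp\gr{f'}$) actually goes through without using controlledness. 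Only the converse direction genuinely needs $f'$ controlled, so the caveat should probably be attached only there.
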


Note that \cref{fact:relations vs functions} is about \emph{functions}, \emph{i.e.}\ it only deals with coarsely everywhere defined relations. Dealing with partially defined coarse maps would require some extra finesse, as the domain of definition needs to be handled with care. One convenient feature of the approach via controlled relations is that these issues take care of themselves automatically, so one does not need to worry about them. More precisely, it follows from the definition of closeness in the product coarse structure that the following is well-posed.
\begin{definition}\label{def: crse domain and image}
  Let $\crse R$ be a partial coarse map from $\crse X$ to $\crse Y$. Then:
  \begin{enumerate}[label=(\roman*)]
    \item $[\pi_X(R)]\crse{\subseteq X}$ is the \emph{coarse domain of $\crse R$}, denoted $\cdom(\crse R)$;
    \item $[\pi_Y(R)]\crse{\subseteq Y}$ is the \emph{coarse image of $\crse R$}, denoted $\cim(\crse R) =\crse {R(X)}$.
  \end{enumerate}
\end{definition}

Extending \cref{fact:relations vs functions}, for every coarse partial map $\crse R$ from $\crse X$ to $\crse Y$, one may define a function $f\colon\pi_X(R)\to Y$ whose graph is close to $\crse R$ (see \rfLemRelCloseToFunciton). In view of this me may (and will) denote partial coarse maps from $\crse X$ to $\crse Y$ as functions $\crse{f\colon X\to Y}$ (or $\crse f\colon \cdom(\crse f)\to\crse Y$ if we want to make the coarse domain explicit).
Observe that $\crse f$ is coarsely everywhere defined (resp.\ coarsely surjective) precisely when $\cdom(\crse f)=\crse X$ (resp.\ $\cim(\crse f)=\crse Y$).
The following observation is sometimes convenient to prove that certain partial coarse maps coincide.

\begin{lemma}[cf.\ \rfLemSameSupportSameFunction]\label{lem: functions with same domain coincide} 
  If $\crse{f,g\colon X\to Y}$ are partial coarse maps with $\crse{f\subseteq g}$ and $\cdom(\crse g)\crse\subseteq \cdom(\crse f)$, then $\crse{f = g}$.
\end{lemma}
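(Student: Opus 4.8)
The plan is to reduce the statement to a single coarse containment between representing relations, and then verify it by a diagram chase. Pick controlled relations $R,S\subseteq Y\times X$ with $\crse f=[R]$ and $\crse g=[S]$ (possible by \cref{def:coarse-map}). Unravelling \cref{def:coarse-subspace,def: crse domain and image}, the hypothesis $\crse{f\subseteq g}$ says exactly that $R\csub S$ in $\crse{Y\times X}$, while $\cdom(\crse g)\crse\subseteq\cdom(\crse f)$ says that $\pi_X(S)\csub\pi_X(R)$ in $\crse X$. Since $R\csub S$ is assumed, it remains only to prove the reverse containment $S\csub R$: together these give $R\asymp S$ in $\crse{Y\times X}$, i.e.\ $\crse f=\crse g$. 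By the identity in \cref{eq:product relation image} (and the observation after \cref{def:controlled_function} that a relation $P$ is controlled iff $P\circ E\circ\op P\in\CF$ for all $E\in\CE$), establishing $S\csub R$ amounts to producing $F'\in\CF$ and $E'\in\CE$ with $S\subseteq F'\circ R\circ\op{E'}$.

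To this end I would fix entourages witnessing the two hypotheses: some $E_0\in\CE$ with $\pi_X(S)\subseteq E_0(\pi_X(R))$, and some $F\in\CF$, $E\in\CE$ with $R\subseteq F\circ S\circ\op E$. Given an arbitrary $(y,x)\in S$, I chase it through these containments. Since $x\in\pi_X(S)$ there is $x_0\in\pi_X(R)$ with $(x,x_0)\in E_0$, and I pick $y_0$ with $(y_0,x_0)\in R$. Applying $R\subseteq F\circ S\circ\op E$ to $(y_0,x_0)$ then yields a pair $(y_1,x_1)\in S$ with $(y_0,y_1)\in F$ and $(x_0,x_1)\in E$, so in particular $(x,x_1)\in E_0\circ E\in\CE$. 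The one substantive step is now to invoke controlledness of $S$: since $(y,x)$ and $(y_1,x_1)$ both lie in $S$ and their $X$-coordinates are $(E_0\circ E)$-related, their $Y$-coordinates satisfy $(y,y_1)\in S\circ(E_0\circ E)\circ\op S=:F_2\in\CF$. Composing the $\CF$-steps (transposing where needed) gives $(y,y_0)\in F_2\circ\op F=:F'\in\CF$; combining this with $(y_0,x_0)\in R$ and $(x_0,x)\in\op{E_0}$ yields $(y,x)\in F'\circ R\circ\op{E_0}$. As $(y,x)\in S$ was arbitrary, $S\subseteq F'\circ R\circ\op{E_0}=(F'\otimes E_0)(R)$ by \cref{eq:product relation image}, whence $S\csub R$, as wanted.

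I do not expect a genuine obstacle here: the whole argument is a short, if slightly fiddly, bookkeeping exercise in composing relations and their transpositions. The only conceptual point is the use of controlledness of $S$ to pass from ``$E$-closeness of the two domain points'' to ``$\CF$-closeness of the two range points''; this is precisely where the hypothesis $\cdom(\crse g)\crse\subseteq\cdom(\crse f)$ is converted into genuine closeness of the graphs, and it makes transparent why that hypothesis cannot be dropped. Two routine caveats to keep in mind while writing it out in full: one must carefully track the order of every composition and which relation is transposed; and, as usual, a containment $S\subseteq D(R)$ for an entourage $D$ of $\crse{Y\times X}$ only literally exhibits a controlled thickening of $R$ after enlarging $D$ to $D\cup\Delta_{Y\times X}$, which is harmless.
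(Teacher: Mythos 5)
Your proof is correct. The paper does not give an in-line argument for this lemma---it cites \rfLemSameSupportSameFunction---so a line-by-line comparison is not possible, but your direct diagram chase is the expected route and every step checks out: unravelling $\crse{f\subseteq g}$ to $R\subseteq F\circ S\circ\op{E}$, unravelling the domain hypothesis to $\pi_X(S)\subseteq E_0(\pi_X(R))$, and then producing from an arbitrary $(y,x)\in S$ a pair $(y_0,x_0)\in R$ with $(y,y_0)\in F_2\circ\op F$ and $(x,x_0)\in E_0$ is exactly right, with the crucial use of controlledness of $S$ to convert $(E_0\circ E)$-closeness of $X$-coordinates into $\CF$-closeness of $Y$-coordinates. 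Your two caveats (order/transposition bookkeeping; enlarging to $D\cup\Delta_{Y\times X}$ so that $D(R)$ is an actual thickening of $R$) are also the right things to keep in mind and are handled correctly.
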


Composing partially defined coarse maps is a delicate matter. This is essentially because the coarse intersection of coarse subspaces is not always well defined. For instance, if we are given two lines $\ell_1$, $\ell_2$ in $\RR^2$ that ``oscillate'', getting sometimes close and sometimes far from one another in an irregular fashion, it is then not possible to find a sensible notion of coarse intersection for the coarse subspaces  $\crse \ell_1$, $\crse \ell_2$ that they generate.\footnote{\,
This is a known phenomenon, appearing for instance when trying to define coarse Mayer--Vietoris sequences \cite{higson1993coarse}. We refer to \cite{coarse_groups}*{Section 3.4} for a discussion of coarse intersections and related matters.
}
In turn, if $\ell_1$ is the image of some function $g$ and $\ell_2$ is the domain of a partially defined function $f$, there is no good way to define what $\crse{f\circ g}$ is. We refer the reader to \cite{coarse_groups}*{Example 3.4.7} for a more explicit example.

As it turns out, the definition we need is the following.
\begin{definition}[cf. \rfDefCrsComp]\label{def: coarse composition}
  Given coarse subsets $\crse{R\subseteq Y\times X}$ and $\crse{S\subseteq Z\times Y}$, we say that a coarse subset $\crse{T\subseteq Z\times X}$ is their \emph{coarse composition} (denoted $\crse{T = S\circ R}$) if it is the smallest coarse subspace of $\crse{Z\times X}$ with the property that for every choice of representatives $R\csub\crse R$ and $S\csub\crse S$ the composition $S\circ R$ is coarsely contained in $\crse T$.
  This definition specializes the obvious way to the case partial coarse maps $\crse{f\colon X\to Y}$ and $\crse{g\colon Y\to Z}$ to define $\crse{g\circ f\colon X\to Z}$.
\end{definition}

\begin{remark}
  For the sake of clarity, let us stress that in \cref{def: coarse composition} ``smallest'' means ``least element'' in the ordering given by the coarse containment. Explicitly, this means that if $\crse{f\circ g}$ is the coarse composition and $\crse{h\subseteq Z\times X}$ is a coarse subspace such that $R\circ S\csub \crse h$ whenever $R\csub \crse f$ and $S\csub \crse g$, then $\crse{f\circ g\subseteq h}$.
\end{remark}

As expected, the coarse composition of two partial coarse maps may not exist, because coarse containment need not have least elements. On the other hand, if it exists it is clearly unique.
The following simple lemma (proved in \cite{roe-algs}) provides us with a useful criterion to decide whether the composition exists, and also provides us with a representative for it.
\begin{lemma}[cf. \rfLemCrsComp] \label{lem: composition exists}
  If $\crse{f\colon X\to Y}$ and $\crse{g\colon Y\to Z}$ are partial coarse maps with $\cim(\crse f)\crse \subseteq \cdom(\crse g)$, then the coarse composition $\crse{g\circ f}$ exists. Moreover, if $\crse f=[R]$ and $\crse g=[S]$ are representatives such that $\pi_Y(R)\subseteq\pi_Y(S)$ then $\crse{g\circ f}=[S\circ R]$.
\end{lemma}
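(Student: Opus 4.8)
The plan is to produce an explicit representative of $\crse{g\circ f}$ and then verify by hand that it has the minimality property required by \cref{def: coarse composition}.

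First I would normalise the representatives. Fix arbitrary representatives $R_0\subseteq Y\times X$ of $\crse f$ and $S_0\subseteq Z\times Y$ of $\crse g$. By \cref{def: crse domain and image} the hypothesis $\cim(\crse f)\crse{\subseteq}\cdom(\crse g)$ amounts to $\pi_Y(R_0)\csub\pi_Y(S_0)$, so I can pick a symmetric $F\in\CF$ with $\Delta_Y\subseteq F$ and $\pi_Y(R_0)\subseteq F(\pi_Y(S_0))$. Setting $R\coloneqq R_0$ and $S\coloneqq S_0\circ F$, formula \eqref{eq:product relation image} shows $S=(\Delta_Z\otimes F)(S_0)$ is a controlled thickening of $S_0$, hence $S\asymp S_0$; since closeness preserves controlledness (see \rfSsecCrsMap), $S$ is still a representative of $\crse g$, and $\pi_Y(S)=F(\pi_Y(S_0))\supseteq\pi_Y(R)$. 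So I may assume throughout that $\pi_Y(R)\subseteq\pi_Y(S)$. It is then routine to check that $S\circ R\subseteq Z\times X$ is a controlled relation: for $E\in\CE$, $(S\circ R)\circ E\circ\op{(S\circ R)}=S\circ(R\circ E\circ\op R)\circ\op S$ lies in the coarse structure $\CG$ of $\crse Z$, because $R\circ E\circ\op R\in\CF$ (controlledness of $R$) and then $S\circ(\variable)\circ\op S$ stays in $\CG$ (controlledness of $S$). Thus $\crse h\coloneqq[S\circ R]$ is a partial coarse map from $\crse X$ to $\crse Z$.

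Next I would show $\crse h=\crse{g\circ f}$. Given arbitrary relations $R'\csub\crse f$ and $S'\csub\crse g$, use \eqref{eq:product relation image} to write $R'\subseteq F_1\circ R\circ\op{E_1}$ and $S'\subseteq G_1\circ S\circ\op{F_2}$ for suitable $E_1\in\CE$, $F_1,F_2\in\CF$, $G_1\in\CG$, and set $F_3\coloneqq(\op{F_2}\circ F_1)\cup\Delta_Y\in\CF$. Composing gives $S'\circ R'\subseteq G_1\circ(S\circ F_3\circ R)\circ\op{E_1}=(G_1\otimes E_1)(S\circ F_3\circ R)$, a controlled thickening of $S\circ F_3\circ R$, so it is enough to prove $S\circ F_3\circ R\asymp S\circ R$. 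One containment is clear from $\Delta_Y\subseteq F_3$. For the other I would take $(z,x)\in S\circ F_3\circ R$, witnessed by $(y_1,x)\in R$, $(y_2,y_1)\in F_3$, $(z,y_2)\in S$; since $y_1\in\pi_Y(R)\subseteq\pi_Y(S)$ there is some $z'$ with $(z',y_1)\in S$, and then $(z,z')\in S\circ F_3\circ\op S\in\CG$ whereas $(z',x)\in S\circ R$, so $(z,x)$ lies in the controlled thickening $(S\circ F_3\circ\op S)\circ(S\circ R)$ of $S\circ R$. Hence $S\circ F_3\circ R\csub S\circ R$, and therefore $S'\circ R'\csub\crse h$ for every admissible pair. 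Since any coarse subspace of $\crse{Z\times X}$ with this property must in particular contain $[S\circ R]=\crse h$ (take $R'=R$, $S'=S$), $\crse h$ is the smallest such and equals $\crse{g\circ f}$. Finally, re-running this argument with two different pairs of representatives $(R,S)$, $(\tilde R,\tilde S)$ each satisfying $\pi_Y(R)\subseteq\pi_Y(S)$ shows $[S\circ R]$ does not depend on the choices, which gives the ``moreover'' clause.

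The step I expect to be the main obstacle is the coarse containment $S\circ F_3\circ R\csub S\circ R$: a thickening inserted along the coordinate over which the composition is formed cannot in general be absorbed, and it can be absorbed here only because of the ``rerouting through a point of $\pi_Y(S)$'' trick, which is exactly the place where the hypothesis $\cim(\crse f)\crse{\subseteq}\cdom(\crse g)$ is used. Everything else is bookkeeping with \eqref{eq:product relation image} and the definitions of controlled relation and closeness.
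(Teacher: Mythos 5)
The paper itself does not prove this lemma; it defers to the companion paper \cite{roe-algs}*{Lemma 3.28}, so there is no in-text proof to compare against. Assessing your argument on its own terms: it is correct, and the route is the natural one.

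A few checkpoints. The normalization $S\coloneqq S_0\circ F$ with $F$ symmetric and containing $\Delta_Y$ does give $S\asymp S_0$ by \eqref{eq:product relation image} together with $S_0\subseteq S$, and $\pi_Y(S)=F(\pi_Y(S_0))\supseteq\pi_Y(R_0)$ as you say. The verification that $S\circ R$ is controlled via $(S\circ R)\circ E\circ\op{(S\circ R)}=S\circ(R\circ E\circ\op R)\circ\op S$ is exactly the right use of \cref{def:controlled_function}. You correctly identify the crux: showing $S\circ F_3\circ R\csub S\circ R$. The rerouting through a $z'$ with $(z',y_1)\in S$ is legitimate precisely because $y_1\in\pi_Y(R)\subseteq\pi_Y(S)$, and the resulting containment $(z,x)\in(S\circ F_3\circ\op S)\circ(S\circ R)$ exhibits the controlled thickening by $G\coloneqq S\circ F_3\circ\op S\in\CG$, since $(G\otimes\Delta_X)(S\circ R)=G\circ(S\circ R)$. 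Minimality then follows by instantiating the defining property at $(R',S')=(R,S)$, and the ``moreover'' clause follows by noting that once $\pi_Y(R)\subseteq\pi_Y(S)$ holds for a given pair of representatives, the normalization step can be skipped and the rest of the argument applies verbatim, so $[S\circ R]$ is independent of the choice.

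One minor reading note: in \cref{def: coarse composition} the paper writes ``the composition $R\circ S$'', which with its conventions ($R\subseteq Y\times X$, $S\subseteq Z\times Y$) must be $S\circ R$; you have read it the intended way.
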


\begin{remark}
  Observe that \cref{lem: composition exists} always applies if $\crse g$ is a coarse map. Moreover, if we choose as representatives a function $g\colon Y\to X$ and a (partial) function $f\colon X\to Y$ then $\crse{g\circ f}$ is nothing but $[g\circ f]$. That is, coarse composition of coarse maps is a natural extension of the usual composition.
\end{remark}

The diagonal $\Delta_X\subseteq X\times X$ is the graph of the identity function.
True to our conventions, we denote:
\[
\cid_{\crse X} \coloneqq [\id_X] = [\Delta_X].
\]

\begin{remark}
  Note that a relation $E\subseteq X\times X$ is a controlled entourage if and only if $E\csub \Delta_X$. Namely, $E\in\CE$ if and only if $[E]\crse\subseteq\cid_{\crse X}$.
\end{remark}

We may finally define coarse equivalences as follows.
\begin{definition}\label{def: coarse equivalence}
  The coarse spaces $\crse X$ and $\crse Y$ are \emph{coarsely equivalent} if there are coarse maps $\crse{ f\colon X\to Y}$ and $ \crse{g\colon Y\to X}$ such that $\crse{f\circ g}=\cid_{\crse Y}$ and $\crse{g\circ f}=\cid_{\crse X}$. Such coarse maps are \emph{coarse equivalences} and are said to be \emph{coarse inverse} to one another.
\end{definition}

\begin{example}
  If $\crse X = (X,d_X)$ and $\crse Y = (Y,d_Y)$ are metric spaces, then they are coarsely equivalent as coarse spaces if and only if they are coarsely equivalent in the classical metric sense as we defined in \cref{intro:background}.
  Another classical characterization of coarse equivalence is as a coarsely surjective coarse embedding. Namely, it is not hard to verify that $(X,d_X)$ and $(Y,d_Y)$ are coarsely equivalent if and only if there exist unbounded increasing ``control functions'' $\rho_-,\rho_+\colon [0,\infty)\to [0,\infty)$ and a map
  \(f\colon X\to Y\) with coarsely dense image and such that
  \begin{equation}\label{eq:classic coarse embedding}
    \rho_-(d_X(x,x') )\leq d_Y(f(x),f(x')) \leq  \rho_+(d_X(x,x') )
  \end{equation}
  for every $x,x'\in X$.
\end{example}

\begin{example}
  A coarse space $\crse X$ is coarsely locally finite (resp.\ has bounded geometry) if and only if it is coarsely equivalent to a locally finite (resp.\ uniformly locally finite) coarse space.
  One direction is easily seen by fixing an appropriate partition of $\crse X$ and collapsing each of these regions to a point (\emph{i.e.}\ replace $X$ by the set indexing the partition). For the converse implication, it is enough to choose an everywhere-defined representative $f\colon X\to I$ for the coarse equivalence and consider the partitioning of $X$ into preimages of points of $I$.
\end{example}

Given a partial coarse map $\crse{f\colon X\to Y}$, we may always consider the symmetric coarse subspace $\op{\crse f}\crse{\subseteq X\times Y}$. Of course, $\op{\crse f}$ needs not be a partial coarse map, because the transpose of a controlled relation is not controlled in general.\footnote{\, Think of a map $f \colon X \to Y$ that collapses everything to a point.} If it is, we say that $\crse f$ is a \emph{partial coarse embedding} into $\crse Y$, or simply a \emph{coarse embedding} if $\cdom(\crse f)=\crse X$ (by definition, a partial coarse embedding $\crse f$ is a coarse embedding of $\cdom(\crse{f})$ into $\crse{Y}$).

\begin{example}
  A function $f\colon X\to Y$ between metric spaces is a coarse embedding if and only if it satisfies \cref{eq:classic coarse embedding} for some choice of unbounded increasing control functions $\rho_-,\rho_+\colon [0,\infty)\to [0,\infty)$.
\end{example}

It is a useful observation that $\op{\crse f}$ is a canonical ``tentative coarse inverse'' for $\crse f$. Namely, a patient scrutiny of the definitions given up to this point reveals the following.
\begin{proposition}[cf.\ \rfPropTransposeIsInverse]\label{prop:transpose is coarse inverse}
  Let $\crse f$ be a partial coarse embedding. Then $\op{\crse f}$ is a partial coarse embedding as well, and the compositions $\op{\crse{f}}\crse \circ \crse{f}$ and $\crse{f}\crse\circ\op{\crse{f}}$ are well-defined and are contained in $\cid_{\crse X}$ and $\cid_{\crse Y}$ respectively. 
  Moreover, $\crse{f}$ is coarsely everywhere defined if and only if $\op{\crse{f}}$ is coarsely surjective (and vice versa).
  Lastly, the following are equivalent:
  \begin{enumerate}[label=(\roman*)]
    \item\label{item:prop:transpose is coarse inverse-ce} $\crse{f}$ is a coarse equivalence;
    \item\label{item:prop:transpose is coarse inverse-*ce} $\op{\crse{f}}$ is a coarse equivalence;
    \item\label{item:prop:transpose is coarse inverse-cinv} $\crse{f}$ and $\op{\crse{f}}$ are coarse inverses of one another;
    \item\label{item:prop:transpose is coarse inverse-csur} $\crse{f}$ and $\op{\crse{f}}$ are coarsely surjective;
    \item\label{item:prop:transpose is coarse inverse-cdef} $\crse{f}$ and $\op{\crse{f}}$ are coarsely everywhere defined.
  \end{enumerate}  
\end{proposition}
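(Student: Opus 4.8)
The plan is to fix a controlled relation $R\subseteq Y\times X$ with $\crse f=[R]$ and to work throughout at the level of relations, passing back to coarse subspaces only at the end. That $\op{\crse f}=[\op R]$ is again a partial coarse embedding is immediate: $\op R$ is controlled because $\crse f$ is a partial coarse embedding, and its transpose $R$ is controlled because $\crse f$ is a partial coarse map. Next, the ``$X$-shadow'' $\{x\mid\exists y,\ (y,x)\in R\}$ of $R$ coincides with the analogous shadow of $\op R$, and likewise for the ``$Y$-shadows''; unwinding \cref{def: crse domain and image} this yields $\cdom(\op{\crse f})=\cim(\crse f)$ and $\cim(\op{\crse f})=\cdom(\crse f)$, and hence at once the equivalence ``$\crse f$ is coarsely everywhere defined $\iff$ $\op{\crse f}$ is coarsely surjective'' together with its symmetric counterpart. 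In particular $\cim(\crse f)\crse\subseteq\cdom(\op{\crse f})$ and $\cim(\op{\crse f})\crse\subseteq\cdom(\crse f)$, so \cref{lem: composition exists} applies: the coarse compositions $\op{\crse f}\crse\circ\crse f$ and $\crse f\crse\circ\op{\crse f}$ exist, and since in each case the two relevant shadows of $R$ and $\op R$ are literally the same subset, they are represented by $\op R\circ R$ and $R\circ\op R$ respectively.

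To see that these compositions are contained in the coarse identities I would invoke the reformulation recorded after \cref{def:controlled_function}: $R$ controlled means $R\circ E\circ\op R\in\CF$ for all $E\in\CE$, and $\op R$ controlled means $\op R\circ F\circ R\in\CE$ for all $F\in\CF$. Taking $E=\Delta_X\in\CE$ gives $R\circ\op R=R\circ\Delta_X\circ\op R\in\CF$, so $\crse f\crse\circ\op{\crse f}=[R\circ\op R]\crse\subseteq\cid_{\crse Y}$; taking $F=\Delta_Y\in\CF$ gives $\op R\circ R=\op R\circ\Delta_Y\circ R\in\CE$, so $\op{\crse f}\crse\circ\crse f=[\op R\circ R]\crse\subseteq\cid_{\crse X}$. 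This completes the claims of the first two sentences of the statement.

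It remains to establish the five equivalent conditions, which I would do by a short cycle. A coarse equivalence is by definition a coarse map, hence coarsely everywhere defined; moreover if $\crse g$ is a coarse inverse then $\crse Y=\cim(\cid_{\crse Y})=\cim(\crse f\crse\circ\crse g)\crse\subseteq\cim(\crse f)$, so $\crse f$ is also coarsely surjective, and by the equivalences of the first paragraph $\op{\crse f}$ is then both coarsely everywhere defined and coarsely surjective; thus (i) implies (iv) and (v) (and (iv) $\iff$ (v) also follows from the first paragraph). Assuming (v), we have $\cdom(\op{\crse f}\crse\circ\crse f)=\cdom(\crse f)=\crse X=\cdom(\cid_{\crse X})$, so the containment $\op{\crse f}\crse\circ\crse f\crse\subseteq\cid_{\crse X}$ together with \cref{lem: functions with same domain coincide} forces $\op{\crse f}\crse\circ\crse f=\cid_{\crse X}$; symmetrically $\crse f\crse\circ\op{\crse f}=\cid_{\crse Y}$, so $\crse f$ and $\op{\crse f}$ are coarse inverses, which is (iii). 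Finally (iii) trivially gives (i), and it gives (ii) by applying everything above to $\op{\crse f}$ (whose transpose is $\crse f$); likewise (ii) gives (i) by this symmetry. This closes the loop.

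I do not expect any step to be genuinely difficult; the point that requires care is the \emph{partiality} of the maps. Obtaining the inclusions $\op{\crse f}\crse\circ\crse f\crse\subseteq\cid_{\crse X}$ is easy, but promoting them to equalities is exactly where one must match coarse domains, and this is why \cref{lem: functions with same domain coincide} --- rather than a naive ``same controlled closure'' argument --- is the right tool. The only other friction I anticipate is bookkeeping: keeping straight which projection / shadow of $R$, $\op R$, $\op R\circ R$, $R\circ\op R$ is relevant each time \cref{def: crse domain and image} or \cref{lem: composition exists} is invoked.
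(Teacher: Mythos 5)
Your proof is correct, and it follows the route the paper implicitly prescribes (``a patient scrutiny of the definitions''), unwinding everything to representatives and appealing to \cref{lem: composition exists} to identify the compositions with $[\op R\circ R]$ and $[R\circ\op R]$, and to \cref{lem: functions with same domain coincide} to upgrade containment in $\cid$ to equality once coarse domains match. The key observations — that the two projections of $R$ and $\op R$ literally coincide (giving $\cdom(\op{\crse f})=\cim(\crse f)$ and $\cim(\op{\crse f})=\cdom(\crse f)$), that $\pi_X(\op R\circ R)=\pi_X(R)$, and that the remark following \cref{def: coarse equivalence} identifying entourages with subspaces of $\cid$ closes the ``contained in the identity'' claims via $E=\Delta_X$, $F=\Delta_Y$ — are exactly what is needed, and your recognition that matching coarse domains is the nontrivial point is on the mark.
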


To conclude this brief introduction to coarse geometry, later we will also need the following.
\begin{definition}\label{def: proper}
  A coarse subspace  $\crse{R\subseteq Y\times X}$ is \emph{proper} if $\op{R}(B)$ is bounded for every bounded $B\subseteq Y$.
\end{definition}

\begin{example}
  If $\crse R=\crse f$ is a coarse map, and the representative $f$ is a function, then $\op{f}(B)=f^{-1}(B)$ is just the preimage of $B$. That is, a (partial) coarse map is proper if preimages of bounded sets are bounded. It is clear that coarse embeddings are proper. The converse is, just as clearly, false.
\end{example}

\chapter{Coarse geometric modules}\label{sec: coarse modules}
The formalism of \emph{coarse geometric modules} developed in \cite{roe-algs} is designed to bridge between coarse geometry and operator algebras. This expands on ideas of Roe \cite{roe_lectures_2003}*{Section 4.4}, which in turn had precursors \emph{e.g.} in \cites{atiyah1970global,mooreschochet1988}. We refer to \cite{roe-algs} for a more detailed and motivated discussion.
In the following sections we shall briefly recall the main facts and definitions that will be used throughout this memoir. Once again, many results are stated without proof as their proof is fairly straightforward and including it would excessively increase the length of our exposition. References to the relevant statements in \cite{roe-algs} are provided.

\section{Coarse geometric modules and their properties}
Throughout the section, let $\crse X$ be a coarse space and let $\fkA\subseteq \CP(X)$ be a unital Boolean algebra of subsets of $X$. A \emph{unital representation of $\fkA$} is a projection-valued mapping $\chf{\bullet}\colon\fkA\to\CB(\CH)$, where $\CH$ is a Hilbert space, such that
\begin{itemize}
  \item $\chf{A' \cap A} = \chf{A'} \chf{A}$ (in particular, $\chf{A'}$ and $\chf{A}$ commute);
  \item if $A\cap A'=\emptyset$ then $\chf{A'\sqcup A} = \chf {A'} +\chf A$;
  \item $\chf{X} = 1_{\CH}$.
\end{itemize}
Furthermore, a unital representation $\chf{\bullet}\colon\fkA\to\CB(\CH)$ is \emph{non-degenerate} if there is a gauge $\gauge\in\CE$ such that the images $\chf{A}(\CH)$, as $A\in\fkA$ ranges among the $\gauge$-bounded sets, generate a dense subspace of $\CH$, \emph{i.e.}\
\[
  \CH = \overline{\angles{\chf{A}(\CH) \mid A\in\fkA ,\ \gauge\text{-bounded}}}^{\norm{\cdot}}.
\]
We call such an $\gauge$ a \emph{non-degeneracy gauge}.

\begin{definition}[cf. \rfDefCrseModule]\label{def:coarse-module}
  A \emph{(finitely additive) coarse geometric module for $\crse X$}, or an \emph{$\crse X$-module} for short, is a tuple $(\fkA,\CHx,\chf\bullet)$, where $\fkA\leq \CP(X)$ is a unital Boolean algebra, $\CHx$ is a Hilbert space and $\chf{\bullet}\colon \fkA \to \CB(\CHx)$ is a non-degenerate unital representation.
  We will denote the $\crse X$-module $(\fkA,\CHx,\chf\bullet)$ simply by $\CHx$.
\end{definition}

Given an $\crse X$-module $\CHx$ as in \cref{def:coarse-module}, we freely call the elements of $\fkA$ \emph{measurable subsets} of $X$. For every measurable $A\subseteq X$, we denote by $\CH_A$ the Hilbert space $\chf A(\CHx)$, which is a closed subspace of $\CHx$.
\begin{convention}
  From now on, $\CHx$ and $\CHy$ will always denote coarse geometric modules for the spaces $\crse X$ and $\crse Y$ respectively.
\end{convention}

The definition of coarse module given in \cref{def:coarse-module} is sufficient for a first rudimentary study of coarse spaces via operators on Hilbert spaces. However, in order to obtain more refined results extra assumptions are required. Specifically, we will often use the following conditions.
\begin{definition}[cf. \rfDefAdmissibleModule]\label{def:local admissibility}
  An $\crse X$-module $\CHx$ is \emph{admissible} (resp.\ \emph{locally admissible}) if there is a gauge $\gauge$ such that every subset (resp.\ every bounded subset) of $X$ has a measurable $\gauge$-controlled thickening.
\end{definition}
\begin{definition}[cf. \rfDefDiscreteModule]\label{def:discrete module}
  An $\crse X$-module $(\fkA, \CH)$ is \emph{discrete} if there is a \emph{discreteness gauge} $\gauge\in\CE$ and a partition $X=\bigsqcup_{i\in I}A_i$ such that
  \begin{enumerate}[label=(\roman*)]
    \item \label{def:discrete module:contr} $A_i$ is \gauge-controlled for every $i\in I$ (\emph{i.e.}\ it is a controlled partition);
    \item \label{def:discrete module:measu} $A_i\in \fkA$ for every $i\in I$;
    \item \label{def:discrete module:sum} $\sum_{i\in I}\chf{A_i} = 1 \in \CB(\CH)$ (the sum is in the strong operator topology);
    \item \label{def:discrete module:conv} if $C\subseteq X$ is such that $C\cap A_i\in\fkA$ for every $i\in I$, then $C\in\fkA$.
  \end{enumerate}
  We call such a partition $(A_i)_{i \in I}$ a \emph{discrete partition}.
\end{definition}

The following constitute the most important examples of modules.
\begin{example} \label{ex:module-uniform}
  Given a coarse space $\crse X= (X,\CE)$, the \emph{uniform module $\CH_{u,\crse X}$} is defined by the representation $\chf \bullet\colon\CP(X)\to\CB(\ell^2(X))$ given by multiplication by the indicator function.
  Given a cardinal $\kappa$, we define the \emph{uniform rank-$\kappa$ module $\CH_{u,\crse X}^{\kappa}$} as the tensor product $\CH_{u,\crse X}^{\kappa}\coloneqq \CH_{u,\crse X}\otimes\CH = \ell^2(X;\CH)$ where $\CH$ is a fixed Hilbert space of rank $\kappa$.
\end{example}
\begin{example} \label{ex:module-met-spc}
  Let $(X,d)$ be a separable metric space (\emph{e.g.}\ a proper one) and $\crse X =(X,\CE_d)$ be the metric coarse space. If $\mu$ is any measure on the Borel $\sigma$-algebra $\fkB(X)$, then multiplication by indicator functions defines a geometric module $\fkB(X)\to \CB(L^2(X, \mu))$.

  Typical examples of this could be locally compact $\sigma$-compact Hausdorff groups equipped with their Haar measures, or Riemannian manifolds with the measure induced by the volume form.
\end{example}

Observe that both \cref{ex:module-uniform,ex:module-met-spc} yield discrete modules. This is clear in \cref{ex:module-uniform}, and for \cref{ex:module-met-spc} it is enough to observe that the separability condition implies there is a countable controlled measurable partition $X=\bigsqcup_{n\in\NN}A_n$. It then follows that $L^2(X, \mu)=\bigoplus_{n\in\NN}L^2(A_n, \mu)$.

\begin{remark}\label{rmk: on discrete modules}
  We briefly collect below some facts and observations.
  \begin{enumerate}[label=(\roman*)]
    \item An $\crse X$-module satisfying \cref{def:discrete module:contr,def:discrete module:measu,def:discrete module:sum} of \cref{def:discrete module} always has a natural extension satisfying \cref{def:discrete module:conv} as well (cf. \rfRmkExtensionToDiscrete).
    \item Discrete modules are admissible. Conversely, \rfPropAdmissibleIsDiscrete shows that if $\crse X$ is coarsely locally finite then every locally admissible $\crse X$-module can naturally be extended to a discrete one.
    \item\label{rmk: on discrete modules:disc is disc} If $X=\bigsqcup_{i\in I}A_i$ is a discrete partition for a module $\CHx$, one may use it to make the index set $I$ into a coarse space $\crse I$ that is coarsely equivalent to $\crse X$. In this context, $\CHx$ gives rise to an $\crse I$-module of the form $(\CP(I),\bigoplus_{i\in I}\CH_{A_i},\delta_\bullet)$, where we let $\delta_i$ be the projection onto the $i$-th component of the sum (cf. \rfRmkDiscreteIsDiscrete).
    
    In other words, when working with discrete modules one does not lose much by thinking the space $\crse X$ to be a discrete collection of points $x_i$, each of which comes with its own Hilbert space $\CH_i$. The module $\CHx$ is then the space of square-integrable sections associating with each point $x_i\in X$ a vector in $\CH_i$.
    This point of view simplifies some operator-algebraic proofs, but the price to pay is a certain loss of immediacy if one is primarily interested in continuous spaces, \emph{e.g.}\ as in \cref{ex:module-met-spc}. In this work we mostly avoid using this approach, as we find it valuable to illustrate how the formalism of modules can be used in general. We realise that this is perhaps a questionable choice, and we invite the dissatisfied reader to assume that all the modules occurring in the sequel be of the form $\ell^2(X;\CH)$.
  \end{enumerate}
\end{remark}

Up to this point, coarse modules can still be rather trivial (\emph{e.g.}\ the trivial module $\{0\}$). The faithfulness condition below is necessary for coarse modules to ``fully represent'' the space $\crse X$. This is however not sufficient to also represent coarse maps with other spaces: this requires modules to be ``ample enough''.
\begin{definition}[cf. \rfDefAmple]\label{def: ample and faithful}
  Given a cardinal $\kappa$, we say that an $\crse X$-module $\CHx$ is \emph{$\kappa$-ample} if there is a gauge $\gauge\in\CE$ such that the family of measurable \gauge-controlled subsets $A\subseteq X$ so that $\chf{A}$ has rank at least $\kappa$ is coarsely dense in $\crse X$.

  If $\CHx$ is $1$-ample, we say that it is a \emph{faithful} coarse geometric module.
\end{definition}

In other words, $\CHx$ is $\kappa$-ample if the whole space $X$ is a controlled thickening of
\(
  \bigcup\braces{A\in\fkA\mid A\text{ is \gauge-controlled and } \rk(\chf{A})\geq\kappa}
\).
This condition is especially useful when the degree of ampleness coincides with the total rank of the module. Observe that if $\CHx$ is $\kappa$-ample and of rank $\kappa$ for some finite $\kappa$ then $\crse X$ must be bounded. Since this is a trivial case, we will only need to consider infinite cardinals.

\begin{example}
  If $(X,d)$ is a metric space, let $\CHx = L^2(X,\mu)$ where $\mu$ is some Borel measure as in \cref{ex:module-met-spc}. The measure $\mu$ is said to have \emph{full support} if $\mu(\Omega)>0$ for every non-empty open $\Omega\subseteq X$. Of course, if $\mu$ has full support then $\CHx$ is faithful.
  
  The counting measure on discrete metric spaces shows that even if $\mu$ has full support, the module $\CHx$ needs not be ample. However, if $\mu$ is such that $\mu(\{x\})=0$ for every $x\in X$, then $\CHx$ must indeed be ample. To see this, observe that every open $\Omega$ must contain more than one point (this rules out the existence of isolated points in $X$). Thus, \(\Omega\) can be written as a union of at least two sets with non-empty interior---and hence positive measure. Iterating such a decomposition shows that $L^2(\Omega,\mu)$ must have infinite rank.
\end{example}

\begin{remark}
  If $\CHx$ is a discrete module, it is not hard to show that it is $\kappa$-ample if and only if there is a discrete partition $X=\bigsqcup_{i\in I} A_i$ such that $\rk(\chf{A_i})\geq\kappa$ for every $i\in I$ (see \rfCorExistsAmplePartition).

  Similarly, if $\crse X$ is a coarsely locally finite coarse spaces and $\CHx$ is discrete, one may always construct a \emph{locally finite} discrete partition.
\end{remark}

\section{On supports of vectors and operators}\label{subsec: supports}
In the metric setting, the module $\CHx=L^2(X,\mu)$ consists of functions on $X$, and with any such function $f$ one can associate its \emph{support}, defined as the smallest closed subset of $X$ supporting $f$.
In the general setup this definition does not make sense because $X$ is not a topological space. However, it still makes sense to say that a function is supported on a certain measurable subset (the essential support). This is the point of view that we will take in the sequel.
\begin{definition}
  Given $v \in \CHx$ and a measurable $A \subseteq X$, we say \emph{$A$ contains the support of $v$} (denoted $\supp(v)\subseteq A$) if $\chf{A}(v) = v$.
\end{definition}

Observe that $\supp(v)\subseteq A$ if and only if $p_v = \chf A p_v = p_v \chf{A}$, where $p_v$ is---as usual---the orthogonal projection onto the span of $v$. 
Clearly, not every vector $v\in\CHx$ needs to be supported on a bounded set. However, the following simple consequence of the non-degeneracy condition of coarse geometric modules shows that we may always find bounded sets that ``quasi-contain'' the support of $v$.
\begin{lemma}[cf. \rfLemQuasisupportOfVectors]\label{lem:supports of vectors almost contained in bounded}
  Let $\CHx$ be an $\crse X$-module and let $\gauge$ be a non-degeneracy gauge. For every $v\in \CHx$ and $\varepsilon > 0$, there is an $A\in\fkA$ that is a finite union of disjoint $\gauge$-bounded sets such that
    \(
      \norm{v - \chf{A}(v)}<\varepsilon.
    \)
\end{lemma}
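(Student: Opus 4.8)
The plan is to read the statement off directly from the non-degeneracy of the representation $\chf{\bullet}\colon\fkA\to\CB(\CHx)$, with $\gauge$ in the role of a non-degeneracy gauge. By definition, the linear span of the subspaces $\chf{A}(\CHx)$, as $A$ ranges over the $\gauge$-bounded members of $\fkA$, is norm-dense in $\CHx$. So the first step is: given $v\in\CHx$ and $\varepsilon>0$, choose finitely many $\gauge$-bounded sets $A_1,\dots,A_n\in\fkA$ and vectors $w_1,\dots,w_n\in\CHx$ such that $\norm{v-w}<\varepsilon$, where $w\coloneqq\sum_{k=1}^n\chf{A_k}(w_k)$.

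The second step is a routine disjointification. Set $B_k\coloneqq A_k\smallsetminus\bigcup_{j<k}A_j$ for $k=1,\dots,n$. Each $B_k$ lies in $\fkA$ (which is a Boolean algebra), is contained in $A_k$ and hence is $\gauge$-bounded, the $B_k$ are pairwise disjoint, and $A\coloneqq\bigsqcup_{k=1}^n B_k=\bigcup_{k=1}^n A_k\in\fkA$ is therefore a finite union of disjoint $\gauge$-bounded sets.

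For the last step, since $A_k\subseteq A$ we have $\chf{A}\chf{A_k}=\chf{A_k}$ for every $k$, hence $\chf{A}(w)=w$, i.e.\ $\supp(w)\subseteq A$. As $\chf{A}$ is a projection, $1-\chf{A}$ is a contraction, and $v-\chf{A}(v)=(1-\chf{A})(v-w)+(1-\chf{A})(w)=(1-\chf{A})(v-w)$, whose norm is at most $\norm{v-w}<\varepsilon$. This is precisely the asserted inequality.

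I do not expect any serious obstacle. The only point requiring a bit of care is that the lemma demands a finite union of \emph{disjoint} $\gauge$-bounded sets rather than an arbitrary finite union of $\gauge$-bounded sets; this is dispatched by the disjointification in the second step (a subset of a $\gauge$-bounded set is $\gauge$-bounded, and $\fkA$ is closed under the relevant Boolean operations). Everything else is immediate from the definition of a coarse geometric module and elementary facts about orthogonal projections.
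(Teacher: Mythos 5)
Your proof is correct, and since the paper defers the proof to Lemma~4.3 of the companion paper, the natural argument you give — peel off a finite approximation from the non-degeneracy condition, disjointify using the Boolean algebra structure, and push the error term through the contraction $1-\chf{A}$ — is exactly what one expects the cited proof to be. One small stylistic note: $\fkA$ is only declared to be closed under complements and finite intersections, so closure under finite unions and set differences (used in your disjointification) should strictly speaking be noted as following from De Morgan and $A\smallsetminus B=A\cap(X\smallsetminus B)$, but this is trivial and certainly intended.
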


Observe that for every measurable $A\subseteq X$ we have $\chfcX{A}=1-\chf{A}$, so the condition in \cref{lem:supports of vectors almost contained in bounded} can be rewritten as $\norm{\chfcX{A}(v)}\leq\varepsilon$. We will use both notations interchangeably.
\begin{corollary}\label{cor:supports of vectors almost contained in bounded}
  Let $\crse X=\bigsqcup_{j\in J}\crse X_j$ be the decomposition into coarsely connected components. For all $v\in \CHx$ and $\varepsilon > 0$, there is an $A\in\fkA$ with
  \begin{itemize}
    \item  $\norm{v - \chf{A}(v)}<\varepsilon$;
    \item $A$ is contained in finitely many components of $\crse X$;
    \item $A_j \coloneqq A\cap X_j$ is bounded for every $j\in J$.
  \end{itemize}
\end{corollary}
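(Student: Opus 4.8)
The plan is to derive the corollary directly from \cref{lem:supports of vectors almost contained in bounded}. First I would fix a non-degeneracy gauge $\gauge$ for $\CHx$ and, given $v\in\CHx$ and $\varepsilon>0$, apply that lemma to obtain a measurable set $A\in\fkA$ of the form $A=\bigsqcup_{k=1}^{n}A^{(k)}$, with each $A^{(k)}$ being $\gauge$-bounded and $\norm{v-\chf{A}(v)}<\varepsilon$. This takes care of the first bullet point for free.

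For the second bullet point, I would invoke the defining property of the decomposition $\crse X=\bigsqcup_{j\in J}\crse X_j$ into coarsely connected components: every bounded subset of $X$ is contained in exactly one component. Since each $A^{(k)}$ is $\gauge$-bounded, hence bounded, there is a unique $j(k)\in J$ with $A^{(k)}\subseteq X_{j(k)}$, and therefore $A\subseteq X_{j(1)}\cup\dots\cup X_{j(n)}$. Thus $A$ meets at most $n$ components, which is the second bullet point.

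For the third bullet point, I would fix $j\in J$. If $j\notin\{j(1),\dots,j(n)\}$ then $A_j=A\cap X_j=\emptyset$ is trivially bounded; otherwise $A_j=\bigsqcup_{k\,:\,j(k)=j}A^{(k)}$ is a finite disjoint union of $\gauge$-bounded sets all lying inside the coarsely connected component $\crse X_j$. It then remains to observe that a finite union of bounded sets inside a coarsely connected coarse space is bounded: choosing a point $a_k$ in each $A^{(k)}$ appearing in the union, the finite set $\{a_k\}_k$ is bounded in $\crse X_j$, so there is an entourage $F$ containing $\{a_k\}_k\times\{a_k\}_k$; then $A^{(k)}\times A^{(k')}\subseteq\gauge\circ F\circ\op{\gauge}$ for all relevant $k,k'$, and taking the finite union of these relations together with $\gauge$ yields an entourage $E$ with $A_j\times A_j\subseteq E$. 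Hence $A_j$ is $E$-bounded.

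I do not expect a genuine obstacle here; the only step that is not purely formal is the very last one, and it is a standard consequence of the closure of a coarse structure under finite unions and compositions combined with the fact that finite subsets of a coarsely connected space are bounded.
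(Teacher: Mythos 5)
Your argument is correct and is exactly the derivation the paper leaves implicit: apply \cref{lem:supports of vectors almost contained in bounded} to get a finite disjoint union of $\gauge$-bounded pieces, note that each bounded piece sits in a unique coarsely connected component, and then observe that finitely many bounded subsets of a single connected component have a bounded union (which you justify correctly by picking one representative per piece, using connectedness to bound the finite set of representatives, and composing with $\gauge$). The only cosmetic omission is that you should discard any empty $A^{(k)}$ before choosing the points $a_k$, but this does not affect the argument.
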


\begin{corollary} \label{cor:ql-finite-rank-locally supported}
  Let $\crse X$ be coarsely connected and $t\in\CB(\CHx)$ a finite rank operator. Then for every $\delta>0$ there is a measurable bounded set $C\subseteq X$ such that $\norm{\chf Ct-t}\leq \delta$ and $\norm{t\chf C-t}\leq \delta$.
\end{corollary}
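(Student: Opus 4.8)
The plan is to reduce the statement to \cref{lem:supports of vectors almost contained in bounded}, applied to finitely many vectors spanning the ranges of $t$ and of $t^*$.

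First I would fix an orthonormal basis $w_1,\dots,w_n$ of the finite-dimensional subspace $\image(t)\subseteq\CHx$ and write $t=\sum_{k=1}^n\munit{w_k}{u_k}$ with $u_k\coloneqq t^*(w_k)$, so that $\norm{u_k}\leq\norm{t}$. For any measurable $C\subseteq X$ one then computes, using the identities in \cref{lem: matrixunit identities}, that $t-\chf{C}t=\chfcX{C}\,t=\sum_{k=1}^n\munit{\chfcX{C}(w_k)}{u_k}$, whence
\[
  \norm{t-\chf{C}t}\;\leq\;\sum_{k=1}^n\norm{\chfcX{C}(w_k)}\,\norm{u_k}\;\leq\;\norm{t}\sum_{k=1}^n\norm{\chfcX{C}(w_k)}.
\]
So it suffices to produce a bounded measurable $C$ making each $\norm{\chfcX{C}(w_k)}$ as small as we like. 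This is precisely \cref{lem:supports of vectors almost contained in bounded}, applied one vector at a time: for each $k$ it yields a bounded $A_k\in\fkA$ with $\norm{\chfcX{A_k}(w_k)}<\varepsilon$. I would then set $C\coloneqq\bigcup_{k}A_k$. Since $\crse X$ is coarsely connected this finite union is again bounded, it lies in $\fkA$ because a Boolean algebra is closed under finite unions, and $A_k\subseteq C$ gives $\chf{A_k}\leq\chf{C}$, hence $\chfcX{C}\leq\chfcX{A_k}$ and $\norm{\chfcX{C}(w_k)}\leq\norm{\chfcX{A_k}(w_k)}<\varepsilon$ for all $k$. Choosing $\varepsilon\leq\delta/(n\norm{t})$ then gives $\norm{t-\chf{C}t}\leq\delta$.

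For the companion estimate I would use $\norm{t-t\chf{C}}=\norm{(t-t\chf{C})^*}=\norm{t^*-\chf{C}t^*}$ and the fact that $t^*$ is again a finite rank operator, so the same construction produces a bounded measurable $C'$ with $\norm{t^*-\chf{C'}t^*}\leq\delta$. Finally, replacing $C$ by $C\cup C'$ keeps the set bounded and measurable and preserves both inequalities, since $\chfcX{C\cup C'}\leq\chfcX{C}$ and $\chfcX{C\cup C'}\leq\chfcX{C'}$, while $\norm{p\,t}\leq\norm{p'\,t}$ whenever $p\leq p'$ are projections. This set $C\cup C'$ is the one asserted by the corollary.

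I do not expect a genuine obstacle here. The steps needing a little care are the two monotonicity facts just invoked — $A\subseteq C$ implies $\chfcX{C}\leq\chfcX{A}$, and $p\leq p'$ implies $\norm{pt}\leq\norm{p't}$, both immediate from monotonicity of $\xi\mapsto\langle q\xi,\xi\rangle$ in the projection $q$ — together with the remark that a finite union of bounded subsets of a coarsely connected space is bounded. One could also bypass the last point by invoking \cref{cor:supports of vectors almost contained in bounded} in place of \cref{lem:supports of vectors almost contained in bounded}.
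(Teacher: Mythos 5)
Your proof is correct and follows essentially the same strategy as the paper's: decompose the finite-rank operator into rank-one pieces, invoke the non-degeneracy of the module (via \cref{lem:supports of vectors almost contained in bounded}, equivalently \cref{cor:supports of vectors almost contained in bounded}) to find a bounded set almost supporting each relevant vector, and use coarse connectedness to combine the finitely many resulting bounded sets. The paper leaves all this implicit in a two-sentence sketch, whereas you make the rank-one decomposition explicit, spell out the operator-norm estimate, and handle the right-multiplication bound by passing to $t^*$; these are reasonable ways to fill in the details and introduce no genuine difference in approach.
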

\begin{proof}
 A finite rank operator is a finite sum of rank-1 operators, and for those the claim follows easily from \cref{cor:supports of vectors almost contained in bounded}. Since $\crse X$ is coarsely connected, the union of the finitely many resulting bounded sets is still bounded.
\end{proof}

We will now move to discussing supports of operators between coarse geometric modules. Once again, in the topological setting one may use the topology to define the support of such an operator, while in the general coarse setting only containment makes sense.\footnote{
  \, However, we will see below that there is a well-defined \emph{coarse support} (see \cref{def:coarse support}). This notion plays a key role in our approach to rigidity of Roe-like \cstar{}algebras.
  }

Given two subsets $A\subseteq X$, $B\subseteq Y$ and a relation $R\subseteq Y\times X$, we say that \emph{$B$ is $R$-separated from $A$} if $B \cap R(A)= \emptyset$.
\begin{definition} \label{def:supp-operator}
  Let $t\colon \CHx\to\CHy$ be a bounded operator, and let $S\subseteq Y\times X$. We write \emph{$\supp(t)\subseteq S$} if $\chf B t\chf A = 0$ for any pair of measurable subsets $A\subseteq X$, and $B\subseteq Y$ such that $B$ is $S$-separated from $A$.
\end{definition}
If $\supp(t) \subseteq S$ in the sense of \cref{def:supp-operator} then we say that \emph{the support of $t$ is contained in $S$}. 
The following is rather straightforward.
\begin{lemma} \label{lemma:supp-vector-a-times-a}
 If $A',A\subseteq X$ are measurable and $v',v\in\CHx$ are vectors, then $\supp(\matrixunit{v'}{v})\subseteq A'\times A$ if and only if $\supp(v')\subseteq A'$ and $\supp(v)\subseteq A$. In particular, $\supp(p_v)\subseteq A\times A$ if and only if $\supp(v)\subseteq A$.
\end{lemma}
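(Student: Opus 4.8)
The plan is to reduce both implications to a single computation of $\chf{B}\matrixunit{v'}{v}\chf{A_0}$. Since $v,v'\in\CHx$, I regard $\matrixunit{v'}{v}$ as an operator $\CHx\to\CHx$, so that \cref{def:supp-operator} applies with $\crse Y=\crse X$ and $\CHy=\CHx$; I may also assume $v\neq 0$ and $v'\neq 0$, the case where one of them vanishes being checked directly. The first step is to record that for measurable $A_0,B\subseteq X$,
\[
  \chf{B}\,\matrixunit{v'}{v}\,\chf{A_0} \;=\; \matrixunit{\chf{B}v'}{\;\chf{A_0}v},
\]
which follows from $\matrixunit{v'}{v}(h)=\scal{v}{h}\,v'$ and self-adjointness of the projections $\chf{A_0},\chf{B}$. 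By the third identity of \cref{lem: matrixunit identities} (or just by inspection of a rank-one operator), the norm of this operator equals $\norm{\chf{A_0}v}\cdot\norm{\chf{B}v'}$, so $\chf{B}\matrixunit{v'}{v}\chf{A_0}=0$ if and only if $\chf{A_0}v=0$ or $\chf{B}v'=0$. The rest is unwinding \cref{def:supp-operator} against this fact.

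For the backward implication, assume $\chf{A}v=v$ and $\chf{A'}v'=v'$, and let $A_0,B\subseteq X$ be measurable with $B$ $(A'\times A)$-separated from $A_0$, i.e.\ $B\cap(A'\times A)(A_0)=\emptyset$. I compute $(A'\times A)(A_0)=A'$ when $A_0\cap A\neq\emptyset$ and $(A'\times A)(A_0)=\emptyset$ when $A_0\cap A=\emptyset$. In the first case the separation condition forces $B\cap A'=\emptyset$, hence $\chf{B}v'=\chf{B}\chf{A'}v'=\chf{B\cap A'}v'=0$; in the second case $\chf{A_0}v=\chf{A_0}\chf{A}v=\chf{A_0\cap A}v=0$. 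Either way $\chf{B}\matrixunit{v'}{v}\chf{A_0}=0$, so $\supp(\matrixunit{v'}{v})\subseteq A'\times A$. For the forward implication I feed two test pairs into \cref{def:supp-operator}. Taking $(A_0,B)=(X\smallsetminus A,\,X)$: since $(X\smallsetminus A)\cap A=\emptyset$, the set $X$ is $(A'\times A)$-separated from $X\smallsetminus A$, so $\chf{X\smallsetminus A}v=0$ or $\chf{X}v'=0$; as $v'\neq 0$ this gives $\chf{A}v=v$, i.e.\ $\supp(v)\subseteq A$ (in particular $A\neq\emptyset$). Taking $(A_0,B)=(X,\,X\smallsetminus A')$: since $A\neq\emptyset$ we have $(A'\times A)(X)=A'$, so $X\smallsetminus A'$ is $(A'\times A)$-separated from $X$, whence $\chf{X}v=0$ or $\chf{X\smallsetminus A'}v'=0$; as $v\neq 0$ this gives $\chf{A'}v'=v'$, i.e.\ $\supp(v')\subseteq A'$.

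Finally, for the ``in particular'' clause: when $v\neq 0$, \cref{lem: matrixunit identities} gives $p_v=\matrixunit{v}{v}/\norm{v}^2$, and supports are insensitive to nonzero rescaling, so $\supp(p_v)\subseteq A\times A$ iff $\supp(\matrixunit{v}{v})\subseteq A\times A$, which by the main equivalence (with $v'=v$ and $A'=A$) holds iff $\supp(v)\subseteq A$; when $v=0$ both sides hold trivially. I do not expect any genuine obstacle here: the only places needing care are the identification of the image $(A'\times A)(A_0)$ for the two relevant cases and the choice of the test pairs $(A_0,B)=(X\smallsetminus A,X)$ and $(X,X\smallsetminus A')$, which are exactly the ``extremal'' separated pairs that isolate the two factors.
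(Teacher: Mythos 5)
Your proof is correct and follows essentially the same route as the paper: both hinge on the observation that $\chf{B}\matrixunit{v'}{v}\chf{A_0}$ factors as $\matrixunit{\chf{B}v'}{\chf{A_0}v}$, so that it vanishes exactly when $\chf{A_0}v=0$ or $\chf{B}v'=0$. The paper packages the forward direction by expanding $\matrixunit{v'}{v}=(\chf{A'}+\chfcX{A'})\matrixunit{v'}{v}(\chf A+\chfcX A)$ and killing the three off-block terms using the support hypothesis; your version, extracting the same information from the two extremal test pairs $(X\smallsetminus A,X)$ and $(X,X\smallsetminus A')$, is a little more hands-on but functionally the same argument. One small caveat: the parenthetical "the case where one of them vanishes being checked directly" is not quite right. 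If, say, $v'=0$ and $v\neq 0$, then $\matrixunit{v'}{v}=0$ has support in $A'\times A$ for any choice of $A',A$, yet $\supp(v)\subseteq A$ need not hold, so the "only if" direction fails there; this is an implicit non-degeneracy assumption the lemma is being read under (and which the paper's own proof also silently makes), not an easily dispatched edge case. For the "in particular" clause about $p_v$, the degenerate case $v=0$ really is harmless, as you observe.
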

\begin{proof}
 Given $B',B\subseteq X$, observe that $B'$ is $(A'\times A)$-separated from $B$ if and only if at least one between $B\cap A$ and $B'\cap A'$ is empty.
 Now, the ``only if'' implication is readily deduced from the identity
 \[
  \matrixunit{v'}{v} = (\chf{A'}+\chfcX{A'})\matrixunit{v'}{v}(\chf A+\chfcX A) = \chf{A'}\matrixunit{v'}{v}\chf A.
 \]
 The ``if'' implication follows from $\chf{B'}\matrixunit{v'}{v}\chf{B}=\chf{B'}\chf{A'} \matrixunit{v'}{v}\chf A\chf{B}=0$.
\end{proof}

It is not hard to show that containments of supports of operators are fairly well behaved. Specifically, if $r,s\colon\CHx\to\CHy$ have $\supp(r)\subseteq S_r$ and $\supp(s)\subseteq S_s$, then
\begin{align*}
 \supp(r+s)\subseteq S_r\cup S_s
 \;\;\text{ and }\;\;
 \supp(r^*)\subseteq \op{S_r}.
\end{align*}
In fact, infinite sums pose no difficulties either:

\begin{lemma}\label{lemma:supports-operators-sum}
  Let $t_i\colon \CHx\to\CHy$, with $i \in I$, be a family of bounded operators such that $\sum_{i \in I} t_i$ converges strongly to a bounded operator $t$. If $\supp{(t_i)} \subseteq S_i$, then $\supp\paren{t} \subseteq \bigcup_{i \in I} S_i$.
\end{lemma}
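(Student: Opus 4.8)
The plan is to verify the defining condition for $\supp(t)\subseteq S$ directly, where $S\coloneqq\bigcup_{i\in I}S_i$. First I would fix measurable sets $A\subseteq X$ and $B\subseteq Y$ such that $B$ is $S$-separated from $A$, i.e.\ $B\cap S(A)=\emptyset$, and aim to show $\chf B t\chf A=0$. The elementary observation that makes this work is that $S(A)=\bigcup_{i\in I}S_i(A)$ (since $(y,x)\in S$ means $(y,x)\in S_i$ for some $i$); consequently $B\cap S(A)=\emptyset$ forces $B\cap S_i(A)=\emptyset$ for every $i\in I$. Thus $B$ is $S_i$-separated from $A$ for each $i$, and the hypothesis $\supp(t_i)\subseteq S_i$ yields $\chf B t_i\chf A=0$ for all $i\in I$.

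The second step is a routine strong-continuity argument to pass from the pieces $t_i$ to the sum $t$. I would note that for fixed $\chf A\in\CB(\CHx)$ and $\chf B\in\CB(\CHy)$, the map $\CB(\CHx,\CHy)\to\CB(\CHx,\CHy)$, $x\mapsto\chf B x\chf A$, is continuous for the strong operator topology: for any $v\in\CHx$ one has $(\chf B x\chf A)v=\chf B\bigparen{x(\chf A v)}$, and if $x_\lambda\to x$ in $\sot$ then $x_\lambda(\chf A v)\to x(\chf A v)$ in norm, hence $\chf B(x_\lambda(\chf A v))\to\chf B(x(\chf A v))$ in norm because $\chf B$ is bounded. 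Since $\sum_{i\in I}t_i=t$ strongly, the net of finite partial sums $s_F\coloneqq\sum_{i\in F}t_i$ (indexed by the finite subsets $F\subseteq I$, ordered by inclusion) converges to $t$ in $\sot$, so $\chf B s_F\chf A\to\chf B t\chf A$ in $\sot$. But $\chf B s_F\chf A=\sum_{i\in F}\chf B t_i\chf A=0$ for every finite $F$, whence $\chf B t\chf A=0$. As $A$ and $B$ were arbitrary with $B$ being $S$-separated from $A$, this gives $\supp(t)\subseteq S$.

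There is essentially no serious obstacle here: the statement is a soft consequence of the definitions together with the strong continuity of two-sided multiplication by fixed bounded operators. The one point worth stating with a little care is precisely the $\sot$-continuity of $x\mapsto\chf B x\chf A$ (as opposed to the joint $\sot$-continuity of multiplication, which fails), so that one may legitimately interchange the limit defining the strong sum with multiplication by the projections $\chf A$ and $\chf B$. Note also that the same computation, restricted to a two-element index set, re-proves the already-observed fact that $\supp(r+s)\subseteq S_r\cup S_s$.
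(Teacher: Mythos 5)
Your proof is correct, and it is essentially the paper's argument read in the contrapositive direction: the paper assumes $\chf B t \chf A \neq 0$, picks a vector $v$ witnessing this, and uses strong convergence of the partial sums to find a finite $J$ with $\chf B t_j \chf A(v) \neq 0$ for some $j \in J$, whence $B \cap S_j(A) \neq \emptyset$; you instead argue directly that $S$-separation of $B$ from $A$ forces $S_i$-separation for every $i$, so every $\chf B t_i \chf A$ vanishes, and then invoke the SOT-continuity of $x \mapsto \chf B x \chf A$ to conclude $\chf B t \chf A = 0$. Same mathematical content, with your version making the relevant continuity fact a bit more explicit.
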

\begin{proof}
  Suppose $\chf{B} t \chf{A}$ is not zero and fix a vector $v \in \CHx$ with $\norm{\chf{B} t \chf{A}(v)} > 0$. Then there is a finite set of indices $J\subseteq I$ so that
  \[
   0< \norm{\chf{B} \Bigparen{\sum_{j\in J}t_j} \chf{A}(v)} \leq \sum_{j\in J}\norm{\chf{B}t_j\chf{A}(v)}.
  \]
  It follows that $B\cap S_j(A)\neq\emptyset$ for at least one such $j \in J$. A fortiori, $B$ intersects $\paren{\bigcup_{i\in I}S_i}(A)$.
\end{proof}

\begin{remark}
  The converse of \cref{lemma:supports-operators-sum} is by no means true. For instance, the identity operator may be expressed as \(1 = a + (1-a)\) for some \(a \in \CB(\CHx)\) of unbounded propagation. 
\end{remark}

Composition of operators is only marginally more complicated. Indeed, given $t\colon\CHy\to \CHz$ and $s \colon \CHx \to \CHy$ with $\supp(t)\subseteq S_t$ and $\supp(s) \subseteq S_s$, then
\begin{equation}\label{eq:support of composition}
 \supp(ts)\subseteq S_t\circ\gauge_{\crse Y}\circ S_s
\end{equation}
where $\gauge_{\crse Y}$ is any non-degeneracy gauge for $\CHy$ (cf. \rfLemSupportComposition). We will see in \cref{subsec: block-entourages} that the situation is even simpler for discrete modules.

We conclude this preliminary discussion of containments of supports by remarking that it is also well behaved under taking joins of projections.

\begin{lemma}\label{lem:join of controlled projection}
  If $\paren{p_i}_{i\in I}$ is a family of projections with supports contained in $S\subseteq X\times X$, then the support of their join $p = \bigvee_{i\in I} p_i$ is contained in $S$ as well.
 \end{lemma}
 \begin{proof}
   Observe that the image of $p\chf{A}$ is contained in the closed span
   \[
    \overline{\angles{p_i(\CHx)\mid p_i\chf A\neq 0}} \leq \CHx.
   \]
   Therefore, if $\chf{A'}p_i\chf A=0$ for every $i\in I$, then  $\chf{A'}p\chf A=0$ too.
 \end{proof}

\section{Almost and quasi containment of support}\label{subsec: almost_n_quasi support}
Sometimes the support of an operator is just too large to be of any use. However, it can still be extremely useful to find relations that ``contain most of it''. There are at least two ways to make this idea precise, as shown by the following.

\begin{definition} \label{def:app_n_quasi-supp}
  Given a relation $S\subseteq Y\times X$ and $\varepsilon>0$, an operator $t \in \CHx\to\CHy$ is said to have
  \begin{enumerate} [label=(\roman*)]
    \item \emph{support $\varepsilon$-quasi-contained in $S$} if $\norm{\chf {B} t\chf A} \leq \varepsilon$ for every choice of $S$-separated measurable subsets $A\subseteq X$ $B\subseteq Y$.\footnote{\, In the case when $\crse{X} = \crse{Y}$ this is similar to the notion of having \emph{$\varepsilon$\=/propagation at most $R$} used \emph{e.g.}\ in~\cite{ozawa-2023}.}
    \item \emph{support $\varepsilon$-approximately contained in $S$} if there is some $t \in \CHx\to\CHy$ such that $\supp(s)\subseteq S$ and $\norm{s - t} \leq \varepsilon$.
  \end{enumerate}
\end{definition}

We warn the reader that the phrasing in \cref{def:app_n_quasi-supp} will keep appearing throughtout the rest of the memoir. Indeed, we will be interested in both ``approximate'' and ``quasi-local'' notions.

\begin{example}
  See $L^2(\RR)$ as an $(\RR,\abs{\variable})$-module, and consider the operator \(m\) given by pointwise multiplication by $e^{-x^2}$. It is clear that for every $\varepsilon>0$ we can choose $r>0$ large enough such that \(m\) has support $\varepsilon$-approximately contained in $[-r,r]\times[-r,r]$. This is valuable information to have, as \(m\) is qualitatively very different from, say, the identity operator.
\end{example}

It is immediate to observe that if $t$ has support $\varepsilon$-approximately contained in $S$, then it also has support $\varepsilon$-quasi-contained in $S$. The converse implication is not true in general (see \cref{rmk: approximable vs quasilocal} below). In the subsequent sections we will work with both notions, because the theory we develop applies equally well in either setting.
To start with, we remark that these notions are well behaved under composition.

\begin{lemma}[cf.\ \rfLemOperationsOnQuasiLocal]\label{lem:supports and operations quasi-local}
  Given $t\colon\CHy\to \CHz$ and $s \colon \CHx \to \CHy$ with support respectively $\varepsilon_t$ and $\varepsilon_s$-approximately contained in $S_t$ and $S_s$, then $ts$ has support $(\varepsilon_t\norm{s}+\varepsilon_s\norm{t})$-approximately contained in
  \(
  S_t\circ\gauge \circ S_s,
  \)
  where $\gauge$ is a non-degeneracy gauge. If $\CHy$ is admissible and $\gauge$ is an admissibility gauge, the same holds true for quasi-containment.
\end{lemma}
\begin{proof}
The statement for approximate containment follows directly from \cref{eq:support of composition}. For the quasi-containment, let $A$ and $B$ be measurable and $(S_s\circ\gauge\circ S_t)$\=/separated, where $\gauge$ is an admissibility gauge, and let $C$ be a measurable \gauge-controlled thickening of $S_t(B)$. Since $A$ and $C$ are still $S_s$\=/separated, and $X\smallsetminus C$ and $B$ are $S_t$\=/separated, we have
  \begin{equation*}
    \norm{\chf{A}st\chf{B}} 
    \leq \norm{\chf{A}s(\chfcX{C}t\chf{B})} + \norm{(\chf{A}s\chf{C})t\chf{B}}
    \leq \varepsilon_t\norm{s}+\varepsilon_s\norm{t}.  \qedhere
  \end{equation*}
\end{proof}

The most crucial fact that we need to record in this section is that all the above notions of containments of supports are closed conditions in the weak operator topology. More precisely, the following holds.

\begin{lemma}\label{lemma:qloc-sot-closed}
  Given $\varepsilon > 0$ and $S \subseteq Y\times X$, the sets
  \begin{enumerate}[label=(\roman*)]
    \item \label{lemma:qloc-sot-closed:1} $\braces{t\in\CB(\CHx)\mid t \;\, \text{has support contained in $S$}}$;
    \item \label{lemma:qloc-sot-closed:2} $\braces{t\in\CB(\CHx)\mid t \;\, \text{has support $\varepsilon$-quasi-contained in $S$}}$;
    \item \label{lemma:qloc-sot-closed:3} $\braces{t\in\CB(\CHx)\mid t \;\, \text{has support $\varepsilon$-approximately contained in $S$}}$
  \end{enumerate}
  are all \wot-closed in $\CB(\CHx)$.
\end{lemma}
\begin{proof}
  The proof of \cref{lemma:qloc-sot-closed:1} follows from that of \cref{lemma:qloc-sot-closed:2} by letting $\varepsilon=0$ throughout, so we stick to \cref{lemma:qloc-sot-closed:2,lemma:qloc-sot-closed:3}.
  Suppose that $\{s_\lambda\}_{\lambda \in \Lambda}$ is a net of operators that have support $\varepsilon$-quasi-contained in $S$ and weakly converge to some $s\in\CB(\CHx)$.
  Given $S$-separated measurable subsets $A\subseteq X$ and $B\subseteq Y$, and arbitrary unit vectors $w'\in\CH_{B}$ and $w\in\CH_A $, we then have
  \[
    \abs{\scal{w'}{s(w)} }=\lim_{\lambda \in \Lambda} \abs{\scal{w'}{s_\lambda(w)}} \leq \varepsilon.
  \]
  This implies that $\norm{\chf{B}s\chf A}\leq\varepsilon$, hence $s$ has support $\varepsilon$\=/quasi-contained in $S$.

  The proof of \cref{lemma:qloc-sot-closed:3} is given in~\cite{braga_gelfand_duality_2022}*{Proposition~3.7} (see also \cite{braga_farah_rig_2021}), but given its importance we include it below for the convenience of the reader. Let $\braces{t_\lambda}_{\lambda \in \Lambda}$ be a net of operators that have support $\varepsilon$-approximately contained in $S$ and strongly converge to some bounded operator $t\in\CB(\CHx)$.
  For each $\lambda \in \Lambda$ there is some $s_\lambda \in \CB(\CHx)$ of with $\supp(s_\lambda)\subseteq S$ such that $\norm{t_\lambda - s_\lambda} \leq \varepsilon$.
  By the Banach--Steinhaus Theorem (also known as the uniform boundedness principle), the family $\braces{s_\lambda}_{\lambda \in \Lambda}$ is uniformly bounded. It follows from the compactness of the unit ball in the weak operator topology that there is a subnet $\braces{s_{\mu}}_{\mu}$ converging in the weak operator topology to some $s \in \CB(\CHx)$. The operator $s$ has support contained in $S$ by \cref{lemma:qloc-sot-closed:1} and satisfies $\norm{t - s}\leq\varepsilon$. This proves that the set in \cref{lemma:qloc-sot-closed:3} is, in fact, weakly closed.
\end{proof}

\section{Block-entourages on discrete modules}\label{subsec: block-entourages}
We conclude this introduction to coarse geometric modules observing that working with discrete modules (as in \cref{def:discrete module}) makes it generally easier to control the supports of operators, for there are entourages that have particularly nice properties.
\begin{definition}\label{def:subordinate to partition}
  Let $\CHx$ be a discrete $\crse X$-module and $X=\bigsqcup_{i\in I}A_i$ a discrete partition. A \emph{block-relation subordinate to $(A_i)_{i\in I}$} is a relation $R\subseteq X\times X$ that is a union of blocks of the form $A_{j} \times A_i$ with $i,j\in I$. Quite naturally, a \emph{block-entourage} is an entourage $E\in \CE$ that is a block relation.
\end{definition}

If $R$ is a block-relation subordinate to $(A_i)_{i \in I}$, we will generally leave the partition $(A_i)_{i \in I}$ implicit and simply say ``block-relation''.

\begin{remark}
  Given a discrete partition $X=\bigsqcup_{i\in I}A_i$, the set of block-entourages is cofinal in $\CE$. This is seen by observing that for every $E\in\CE$ the thickening $\bigsqcup\braces{A_i\times A_j\mid A_i\times A_j\cap E\neq\emptyset}$ is a controlled block-entourage.
\end{remark}

An extremely convenient feature of block-relations is that if $R$ is a block-relation then $R(A)$ is measurable for every $A\subseteq X$ (this is due to condition \cref{def:discrete module:conv} in \cref{def:discrete module}). It follows that if $S_t$ and $S_s$ are block-relations and $t,s\in\CB(\CHx)$ are operators with $\sup(t)\subseteq S_t$ and $\supp(s)\subseteq S_s$, then
\begin{equation}
\supp(ts)\subseteq S_t\circ S_s
\end{equation}
(compare with \cref{eq:support of composition}). This is seen directly by writing
\[
  ts\chf A = \chf{S_t(S_s(A))}t\chf{S_s(A)}s\chf A.\footnote{\, Observe this computation does not make sense if $S$ is not a block-relation, as $S_s(A)$ may well be a non-measurable set.}
\]

An equally useful property is that, once a discrete partition $X=\bigsqcup_{i\in I}A_i$ has been chosen, if we denote the \emph{block-diagonal} by
\[
  \diag(A_i \mid i\in I)\coloneqq \Bigparen{\bigsqcup_{i\in I}A_i\times A_i}\in\CE,
\]
then $\diag(A_i \mid i\in I)$-controlled operators behave as ``0-propagation'' operators. Specifically, if $S$ is a block-relation subordinate to $(A_i)_{i\in I}$ then 
\[
  \diag(A_i \mid i\in I)\circ S= S = S\circ \diag(A_i \mid i\in I).
\] 
Hence, the following holds.

\begin{lemma} \label{lemma:diagonal-ais-preserve-supp}
  In the previous setting, if $\supp(t) \subseteq \diag(A_i \mid i\in I)$ and $\supp(s) \subseteq S$, then $\supp(ts), \supp(st) \subseteq S$.
\end{lemma}

\begin{example}
  If $(X,d)$ is a discrete metric space and $\CH_{u,\crse X}^{\kappa} = \ell^2(X;\CH)$ is its uniform geometric module of rank $\kappa$, then the singletons $\{\{x\}\mid x\in X\}$ form a discrete partition. Thus, every relation $R\subseteq X\times X$ is a block-relation. The block-diagonal is then, of course, just the diagonal $\Delta\subseteq X\times X$, and the space of operators with support contained in $\Delta$ is then $\ell^\infty(X;\CB(\CH))\leq \CB(\ell^2(X;\CH))$.
  In other words, these are the operators such that $\supp(t(v))\subseteq \supp(v)$ for every $v\in \CH_{u,\crse X}^{\kappa}$, whence the name ``0-propagation'' (an operator has propagation bounded by $r\geq 0$ if $\supp(t(v))$ is always contained in the $r$-neighborhood of $\supp(v)$).

  If $(X,d)$ is not discrete and $\CH=L^2(X,\mu)$ for some non-zero non-atomic measure $\mu$, then the singletons do not form a discrete partition as $\chf{x}(\CH)=\{0\}$ for every $x\in X$ and hence $\sum_{x\in X}\chf{x}=0\neq 1$. In particular, $\Delta$ is not a block-relation.
  Geometrically, it still makes sense to say that $L^\infty(X)\leq\CB(L^2(X,\mu))$ is the algebra of 0-propagation operators. However, this algebra is generally not very natural from a \emph{coarse} geometric perspective, and it is also often too small to be of use (note that in the discrete example $L^\infty(X;\CB(\CH))$ is not abelian when $\kappa>1$). In \rfThmCartanSubalg some \emph{non-commutative} Cartan subalgebras of \(\roecstar{L^2(X, \mu)}\) are constructed that play the role of $L^\infty(X;\CB(\CH))$. In particular the are \emph{not} \(L^\infty(X,\mu)\).

  Our replacement for \(L^\infty(X,\mu)\) is to choose a discrete partition of $(X,d)$ and consider the associated block-diagonal operators. Strictly-speaking, these operators do not have 0-propagation, but they do retain most of their useful properties.
\end{example}

We say that a family of vectors $(v_l)_{l\in L}$ is \emph{subordinate to $(A_i)_{i\in I}$} if for every $l\in L$ there is an $i\in I$ so that $v_l$ is supported on $A_i$.
Suppose that this is the case, and let $p$ denote the projection onto their closed span. Observe that $\supp(p)\subseteq \diag(A_i \mid i\in I)$ (see \cref{lemma:supp-vector-a-times-a,lem:join of controlled projection}). We may then apply \cref{lemma:diagonal-ais-preserve-supp} to deduce that composition with $p$ does not increase the support of operators supported on block-relations. Specifically, we have proved the following.

\begin{corollary}\label{cor: sub-basis-has-prop-zero}
  Let $\CHx$ be a discrete $\crse X$-module and $X=\bigsqcup_{i\in I}A_i$ a discrete partition.
  Let $(v_l)_{l\in L}$ be subordinate to $(A_i)_{i\in I}$ and let $p \in \CB(\CHx)$ be the orthogonal projection onto $\spnclosed{v_l \mid l \in L}$. If $S$ is a block-relation and $t\in\CB(\CH)$ is $S$-controlled, then $tp$ and $pt$ are $S$-controlled as well.
\end{corollary}

\begin{definition}\label{def: subordinate projection}
  If $\supp(p)\subseteq \diag(A_i \mid i\in I)$, we also say that the projection $p$ is \emph{subordinate to $(A_i )_{i\in I}$}. 
\end{definition}

The fact that projections subordinate to discrete partitions do not influence the block-support of operators will be very useful in some future technical steps. The fist application for it we already find in the last result of this section, namely the Baire property for the space of contractions of fixed support.

Recall that a topological space is \emph{Baire} if every countable intersection of dense open subsets is dense.
Let 
\begin{equation}\label{eq: S-Supp}
  \cscontr S \coloneqq \{t\in \CB(\CHx)_{\leq 1} \mid \supp(t)\subseteq S\}
\end{equation}
denote the set of contractions with support contained in $S$.
The end of this section is dedicated to showing that if $S$ is a block-relation, then the space $\cscontr S$
, when equipped with the strong operator topology, is Baire. This is a technical point, but it does play a key role in the theory of \cstar{}rigidity (see \cref{prop:prop-e-baire}).

For separable modules this follows from the usual Baire Category Theorem because the space of contractions $(\CB(\CH)_1,\sot)$ of a separable Hilbert space is completely metrizable (and the same is true for $\cscontr S$, as it is closed in $\CB(\CH)_1$ by \cref{lemma:qloc-sot-closed}). This observation already covers most useful cases, and it is the strategy followed in previous literature about rigidity, such as~\cite{braga_gelfand_duality_2022}*{Lemma~3.9} and~\cite{braga_farah_rig_2021}*{Lemma~4.9}.

There are, however, instances where it may be necessary to use non-separable modules as well (see \cref{def: roe-like of spaces}). In such cases, more care is needed because $(\CB(\CH)_1,\sot)$ is no longer metrizable and it is not true that a closed subspace of a Baire space is Baire in general. For this reason, the Baire property for $(\cscontr S,\sot)$ has to be verified ``by hand''.
This proof is quite direct and similar to a proof of the original Baire Category Theorem. The reader may safely skip it should they so please.

\begin{proposition} \label{prop:prop-e-baire}
  Given a discrete $\crse X$-module $\CHx$ and a block-relation $S\subseteq X\times X$, the space $(\cscontr S, \sot)$ is Baire.
\end{proposition}
\begin{proof}
  Let $\{\CU_n\}_{n \in \NN}$ be a countable family of open strongly dense sets $\CU_n \subseteq \cscontr S$, and let $\CU \coloneqq \bigcap_{n \in \NN} \CU_n$.
  We have to show that $\CU \cap \CV \neq \emptyset$, where $\CV \subseteq \cscontr S$ is any arbitrary open set.

  Let $X = \bigsqcup_{i \in I} A_i$ be the discrete partition to which $S$ is subordinate, and let also $(e_l)_{l\in L}$ be a Hilbert basis subordinate to $(A_i)_{i\in I}$.
  Since $\CU_1 \subseteq \cscontr S$ is dense, it follows from \cref{lemma:sot-basis-top}~\cref{lemma:sot-basis-top:nghbd-orth-sys} that there are $\delta_1 > 0$, a finite $V_1 \subseteq (e_l)_{l\in L}$ and $t_1 \in \cscontr S$ such that
  \[
    t_1 \in \left(t_1 + \sotnbhd{\delta_1}{V_1}\right) \cap \cscontr S \subseteq \CU_1 \cap \CV,
  \]
  where $\sotnbhd{\delta}{V}$ is an open neighborhood as in \cref{notation:sot-open-basis}.

  Likewise, $\CU_2 \cap (t_1 + \sotnbhd{\delta_1}{V_1} \cap \cscontr S) \neq \emptyset$, for $\CU_2 \subseteq \cscontr S$ is dense.
  Thus, there are $\delta_2 > 0$, a finite $V_2 \subseteq (e_l)_{l\in L}$ and $t_2 \in \cscontr S$ such that $t_2 \in t_2 + \sotnbhd{\delta_2}{V_2} \cap \cscontr S \subseteq \CU_2 \cap \CV$.
  Moreover, we may further assume that $\delta_2 \leq \delta_1/2$ and that $V_1 \subseteq V_2$, again by \cref{lemma:sot-basis-top}.
  Iterating this process yields countable sequences $\paren{t_n}_{n \in \NN} \subseteq \cscontr S$, $\paren{\delta_n}_{n \in \NN} \subseteq (0, 1)$, and finite sets $V_n \subseteq (e_l)_{l\in L}$ such that for every $n \in \NN$
  \begin{itemize}
    \item $\delta_{n+1} \leq \delta_n/2$;
    \item $V_n \subseteq V_{n+1}$ are finite orthonormal families subordinate to $(A_i)_{i \in I}$;
    \item $(t_{n+1} + \sotnbhd{\delta_{n+1}}{V_{n+1}}) \cap \cscontr S \subseteq \CU_{n+1} \cap (t_n + \sotnbhd{\delta_n}{V_n}) \cap \cscontr S$.
  \end{itemize}

  Let $V \coloneqq \bigcup_{n \in \NN} V_n$, and let $\CH_0 \coloneqq \spnclosed{v \mid v \in V} \leq \CHx$. Decompose $\CHx = \CH_0 \oplus \CH_0^\perp$ and let $p_0$ be the projection onto $\CH_0$.
  Since $S$ is a block-relation, it follows from \cref{cor: sub-basis-has-prop-zero} that $\supp(t_mp_0)\subseteq S$ for every $m\in \NN$.
  \begin{claim}\label{im-w-is-baire-claim}
    The family $\{t_mp_0\}_{m \in \NN}$ strongly converges to an operator $t\in\CB(\CH)_{\leq 1}$ that belongs to $t_n+\sotnbhd{V_n}{\delta_n}$ for every $n\in\NN$.
  \end{claim}
  \begin{proof}[Proof of Claim]
    Observe that for every $v\in V_n$ and $m'\geq m\geq n$ we have
    \begin{align}
      \begin{split}\label{eqn:im-w-is-baire-sot}
          \norm{t_mp_0 (v) - t_{m'}p_0 (v)} &=\norm{t_m (v) - t_{m'} (v)} \\
          &\leq \sum_{i=m}^{m'-1}\norm{t_i(v)-t_{i+1}(v)} \\
          &< \sum_{i = m+1}^{m'} \delta_i \leq \sum_{i = m+1}^{m'} \frac{\delta_m}{2^{i-m}} \leq \delta_m\leq \delta_n.
      \end{split}
    \end{align}
    This shows that $\paren{t_mp_0(v)}_{m \in \NN}$ is a Cauchy sequence for every $v \in V$, so it converges to some vector in $\CH$. Since $t_mp_0$ is always $0$ on $\CH_0^\perp$ and $V$ is a Hilbert basis of $\CH_0$, this implies that the $t_mp_0$ do strongly converge to some linear operator $t$. Moreover, $\norm{t} \leq 1$ since $\CB(\CH)_{\leq 1}$ is strongly closed.

    Furthermore, \cref{eqn:im-w-is-baire-sot} also shows that for every $n\in\NN$
    \[
     \norm{t(v)-t_n(v)}=\norm{t(v)-t_np_0(v)}<\delta_n
    \]
    for every $v\in V_n$, so that $t\in t_n +\sotnbhd{V_n}{\delta_n}$, as claimed.
  \end{proof}
  
  To conclude, observe that the $t$ constructed in \cref{im-w-is-baire-claim} has support contained in $S$ because $\cscontr S$ is strongly closed by \cref{lemma:qloc-sot-closed}. It follows that $t \in \CU \cap \CV$, as desired.
\end{proof}

\begin{remark}
  All the material of this section extends trivially to operators among different discrete modules $\CHx\to\CHy$, as long as two discrete partitions $X\coloneqq \bigsqcup_{i\in I}A_i$ and $Y\coloneqq \bigsqcup_{j\in J}B_i$ are chosen and used to define block-relations.
\end{remark}

\chapter{From coarse geometry to C*-algebras}\label{sec: roe algebras}
In this chapter we complete our quick introduction to the theory of coarse geometry and modules by defining several ``Roe-like'' \cstar{}algebras and their properties. Moreover, we will also add a discussion of submodules, which is a new tool that we introduce here and will be necessary to prove the more general version of the \cstar{}rigidity theorem.

\section{Coarse supports and propagation}\label{subsec: coarse supp and propagation}
We begin by discussing operators of controlled propagation. To do so, it is however better to say more in general what controlled operators are, as they will play a key role in the sequel. In turn, the most coarse-geometric way to introduce them starts by giving a truly coarse geometric perspective to the containments of supports as introduced in \cref{subsec: supports}.

We say that a coarse subspace $\crse{R \subseteq Y\times X}$ \emph{contains the support of an operator $t\colon \CHx\to\CHy$} if $\crse{R}$ has a representative $R\subseteq Y\times X$ such that $\supp(t)\subseteq R$

\begin{definition}[cf. \rfDefCoarseSupport]\label{def:coarse support}
  The \emph{coarse support} of an operator $t\colon \CHx\to \CHy$ is the smallest coarse subspace $\csupp(t)\crse{\subseteq Y\times X}$ containing the support of $t$.
\end{definition}

Once again, `smallest' signifies that it is a least element in the ordering by coarse containment, just as for the definition of coarse composition (cf.\ \cref{def: coarse composition}). However, quite unlike the coarse composition, it is not hard to show that operators between admissible modules always have a well-defined coarse support. Specifically, it is shown in  \rfPropCoarseSupport that if $\gaugex$ and $\gaugey$ are admissibility gauges of $\crse X$ and $\crse Y$ respectively, then 
\begin{equation}\label{eq: explicit coarse support}
  S\coloneqq \bigcup\left\{B\times A \;\middle|
  B\times A\text{ meas.}\ (\gaugey\otimes\gaugex)\text{-bounded},\ \chf{B}t\chf{A}\neq 0
  \right\}
\end{equation}
is a representative for the coarse support of $t\colon \CHx\to\CHy$ such that $\supp(t)\subseteq S$.
In the above, $B\times A$ is defined to be measurable if both $A$ and $B$ are.
In the following, we will not need the explicit description \eqref{eq: explicit coarse support}: all we need is that a coarse support as in \cref{def:coarse support} always exists.

\begin{example}
  If $X$ and $Y$ are proper metric spaces equipped with Borel measures, one may define the support $\supp(t)$ of an operator $t\colon L^2(X)\to L^2(Y)$ as the smallest closed subset of $Y\times X$ such that $\chf{B}t\chf{A}=0$ whenever $B\subseteq Y$ and $A\subseteq X$ are open and $\supp(t)$-separated (equivalently, $(y,x)$ does \emph{not} belong to $\supp(t)$ if and only if there are open neighborhoods $x\in A$, $y\in B$ such that $\chf{B}t\chf{A}=0$).
  It is not hard to show to $\supp(t)$ is a representative for the coarse support: $\csupp(t)=[\supp(t)]$.
  
  However, this definition of $\supp(t)$ relies on the topology of $X$ and $Y$, which is not part of their coarse geometric data. The main point of \cref{def:coarse support} is that the coarse support can be defined and used in general without relying on the existence of an underlying topology.
\end{example}

The fact that both (partial) coarse maps and coarse supports are defined as coarse subspaces of $\crse Y\times \crse X$ can be leveraged to confound between coarse maps and operators in the following way.
\begin{definition}\label{def:controlled and proper operator}
  A bounded operator $t\colon\CHx\to\CHy$ is
  \begin{enumerate}[label=(\roman*)]
    \item \emph{controlled} if its coarse support is a partial coarse map $\csupp(t)\crse{\colon X\to Y}$ (in which case we say that $t$ \emph{covers} $\csupp(t)$);
    \item \emph{proper} if so is $\csupp(t)$ (see \cref{def: proper}).
  \end{enumerate}
  If $t$ is a controlled operator, and $R\subseteq Y\times X$ is a controlled relation from $X$ to $Y$ containing the support of $t$, we may also say that $t$ is \emph{$R$-controlled}. In particular, a controlled operator is always controlled by its support.
\end{definition}

Importantly, if $t\colon \CHx\to \CHy$ and $s\colon\CHy\to\CHz$ are controlled operators such that $\cim(\csupp(t))\crse\subseteq \cdom(\csupp(s))$, then the coarse composition $\csupp(s)\crse\circ\csupp(t)$ is well-defined (cf.\ \cref{lem: composition exists}). Moreover, \eqref{eq:support of composition} shows 
\[
  \csupp(st)\crse\subseteq\csupp(s)\crse\circ\csupp(t).
\]
This will be used extensively in the sequel.
\begin{remark}\label{rmk: proper-iff-proper}
  It is shown in \rfLemProperIffProper that if $\CHx$ and $\CHy$ are locally admissible then $t\colon \CHx\to\CHy$ is proper if and only if for every bounded measurable $B\subseteq Y$ there is a bounded measurable $A\subseteq X$ such that $\chf Bt = \chf B t\chf A$. In other words, for every vector $w\in\CHy$ supported on $B$ the ``preimage'' $t^*(w)$ is supported on $A$.
\end{remark}

Of special interest are the operators in $\CB(\CHx)$ that perturb $\CHx$ ``locally'', \emph{i.e.}\ $\csupp(t)\crse\subseteq \cid_{\crse X}$. In addition, it is often convenient to have a quantitative control of ``how small of a neighborhood'' of the diagonal contains the support of $t$.
This is the content of the following.

\begin{definition} \label{def:controlled-prop operator}
  Given $E\in\CE$, an operator $t \in \CB(\CHx)$ has \emph{$E$-controlled propagation} if $\supp(t)\subseteq E$. We say that $t$ has \emph{controlled propagation} if its propagation is controlled by $E$ for some $E \in \CE$ (equivalently, $\csupp(t) \crse{\subseteq} \cid_{\crse X}$).
\end{definition}

\begin{rmk}
  Our terminology is somewhat redundant, as
  \[
  \text{$E$-controlled propagation}\implies \text{$E$-controlled}\implies \text{support contained in $E$}.
  \]
  This is because we prefer to reserve the word ``propagation'' for operators coarsely supported on the diagonal, so that the notion of ``controlled propagation'' is a direct analogue of what is commonly called ``finite propagation'' \cite{willett_higher_2020}. At the same time, we only use the term ``$R$-controlled'' if $R$ is a controlled relation to avoid confusion (every operator is ``$Y\times X$- controlled'', but not every operator is controlled). The remaining cases are covered by the generic ``having support contained in'' wording.
\end{rmk}

Note that \cref{lemma:supp-vector-a-times-a} shows that if a vector $v\in\CHx$ has support contained in a measurable bounded set then $p_v$ has controlled propagation. The converse is a little more complicated, and follows from the non-degeneracy condition. More generally, the following holds.

\begin{lemma}\label{lem: matrix unit controlled propagation}
  Given $v,w\in\CHx$, the matrix unit $\matrixunit{w}{v}$ has controlled propagation if and only if there is a measurable bounded set $C\subseteq X$ such that $\supp(v),\supp(w)\subseteq C$.
\end{lemma}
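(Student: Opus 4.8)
The statement has one routine direction and one substantive one; throughout I assume $v,w\neq 0$, so that $\matrixunit{w}{v}$ is a genuine rank-one operator (if one of the vectors vanishes the operator is $0$, which trivially has controlled propagation, and the right-hand side is read with the convention that the zero vector has empty support). For the easy implication: if $C$ is measurable and $E$-bounded for some $E\in\CE$ and $\supp(v),\supp(w)\subseteq C$, then \cref{lemma:supp-vector-a-times-a} gives $\supp(\matrixunit{w}{v})\subseteq C\times C\subseteq E$, so $\matrixunit{w}{v}$ has $E$-controlled propagation; nothing beyond the stated hypotheses is needed here.

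For the substantive implication, suppose $\supp(\matrixunit{w}{v})\subseteq E$ for some $E\in\CE$. Enlarging $E$, I may take $\gauge:=E$ to be a gauge that is simultaneously a non-degeneracy gauge and a local admissibility gauge for $\CHx$ (both conditions are preserved under passing to a larger entourage, and $\supp$ is monotone in the entourage). The only algebra I use is the identity $\chf{B}\matrixunit{w}{v}\chf{A}=\matrixunit{\chf{B}w}{\chf{A}v}$ for measurable $A,B\subseteq X$, together with the fact that $\matrixunit{u'}{u}=0$ iff $u'=0$ or $u=0$. Since $w\neq 0$, non-degeneracy (equivalently \cref{lem:supports of vectors almost contained in bounded}) provides a measurable $\gauge$-bounded set $B_0$ with $\chf{B_0}w\neq 0$. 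Plugging $B=B_0$ into the defining condition of $\supp(\matrixunit{w}{v})\subseteq\gauge$ in \cref{def:supp-operator}, and using symmetry of $\gauge$ to rewrite ``$B_0$ is $\gauge$-separated from $A$'' as ``$A\cap\gauge(B_0)=\emptyset$'', I conclude that $\chf{A}v=0$ for every measurable $A$ disjoint from $\gauge(B_0)$.

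Now $\gauge(B_0)$ is a bounded set (a $\gauge$-thickening of a $\gauge$-bounded set), so by local admissibility (\cref{def:local admissibility}) it admits a measurable bounded thickening $C_v\supseteq\gauge(B_0)$. Then $X\smallsetminus C_v$ is measurable and disjoint from $\gauge(B_0)$, so $\chf{X\smallsetminus C_v}v=0$, i.e.\ $\supp(v)\subseteq C_v$. Feeding this back, this time with $A:=C_v$ (so that $\chf{C_v}v=v\neq 0$), the same computation forces $\chf{B}w=0$ for every measurable $B$ disjoint from $\gauge(C_v)$; taking a measurable bounded thickening $C\supseteq\gauge(C_v)$ as before gives $\supp(w)\subseteq C$, while $\supp(v)\subseteq C_v\subseteq\gauge(C_v)\subseteq C$ (using $\Delta_X\subseteq\gauge$). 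Hence $C$ is a bounded measurable set containing both supports, as required. Treating $v$ first and then $w$ relative to $C_v$, rather than $v$ and $w$ symmetrically, is precisely what takes care of disconnected $\crse X$, where a union of two bounded measurable sets need not be bounded.

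The one genuine obstacle is the inference ``$\chf{A}v=0$ for every measurable $A$ far from $B_0$'' $\Rightarrow$ ``$v$ has bounded measurable support'': the natural candidate support $\gauge(B_0)$ need not itself be measurable, so one must interpose a measurable set between it and an honest bounded set, which is exactly what a local admissibility gauge supplies. If instead $\CHx$ is a discrete module, the conclusion follows without admissibility: decompose $v=\sum_{i}\chf{A_i}v$ (strong sum) along a discrete partition, observe that only countably many pieces are charged, that each charged $A_i$ must meet $\gauge(B_0)$ and hence — being controlled — lie inside a fixed thickening of $\gauge(B_0)$, and apply the $\sigma$-closure axiom \cref{def:discrete module:conv} to conclude that the union of the charged pieces is a bounded measurable set supporting $v$. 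Everything else in the argument is a mechanical unwinding of the definitions of support and of controlled propagation.
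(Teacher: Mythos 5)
Your proof is correct, and it takes a genuinely different route from the paper's. The paper's own proof is terse: it invokes the explicit representative of the coarse support from \cref{eq: explicit coarse support}, observes that for a matrix unit the defining condition $\chf{B}\matrixunit{w}{v}\chf{A}\neq 0$ decouples into separate conditions on $B$ and on $A$, so the coarse support factors as a rectangle $\widetilde{B}\times\widetilde{A}$, and then notes that a rectangle is a controlled entourage exactly when the union of its sides is bounded. You instead work directly from \cref{def:supp-operator}, with no recourse to the coarse-support formula: pick a measurable $\gauge$-bounded $B_0$ charging $w$, deduce from $\supp(\matrixunit{w}{v})\subseteq\gauge$ that $v$ is supported on a bounded measurable thickening $C_v$ of $\gauge(B_0)$, then re-run the argument with $C_v$ in place of $B_0$ to confine $\supp(w)$. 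The asymmetric two-stage bootstrap is a genuinely good move, as you point out: it automatically keeps everything in a single coarse component, whereas a symmetric treatment of $v$ and $w$ would produce two bounded sets whose union need not be bounded when $\crse{X}$ is disconnected. Both proofs silently rely on (local) admissibility of $\CHx$ to replace the generally non-measurable saturation $\gauge(B_0)$ by a bounded measurable set; you make this dependency explicit, and your observation that discrete modules let one bypass it via the $\sigma$-closure axiom and countability of the charges is also correct. Your approach buys transparency about exactly which regularity of the module is consumed, at the cost of a slightly longer argument; the paper's buys brevity by outsourcing the content to \cref{eq: explicit coarse support}. One small caveat: the convention ``the zero vector has empty support'' does not rescue the biconditional when, say, $v=0$ while $w$ has unbounded essential support (the left side is then trivially true, the right side false); restricting to $v,w\neq 0$, as you do, is the correct reading of the lemma, and the paper glosses over this degenerate case as well.
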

\begin{proof}
  Observe that for a matrix unit the condition $\chf{B}\matrixunit{w}{v}\chf{A}\neq 0$ is equivalent to asking that both $\chf{B}\matrixunit{w}{v}\neq 0$ and $\matrixunit{w}{v}\chf{A}\neq 0$.
  In particular, the explicit formula for the coarse support given in \cref{eq: explicit coarse support} becomes just the product $\widetilde B\times \widetilde A$ with 
  \begin{align*}
  \widetilde B & \coloneqq \bigcup\left\{B\;\middle| \text{measurable, \gaugex-bounded},\ \chf{B}\matrixunit{w}{v}\neq 0\right\}; \\
    \widetilde A & \coloneqq\bigcup\left\{A \;\middle| \text{ measurable, \gaugex-bounded},\ \matrixunit{w}{v}\chf{A}\neq 0\right\}.
  \end{align*}
  It is then trivial that $\widetilde B\times \widetilde A$ is a controlled entourage if and only if $\widetilde B\cup \widetilde A$ is a measurable bounded subset.
\end{proof}

In the core arguments of this text we need to work rather heavily with the weakenings of the notion of controlled propagation analogous to the weakenings of containment of supports defined in \cref{subsec: almost_n_quasi support}. In the following definitions, $E \in \CE$ denotes a controlled entourage.

\begin{definition} \label{def:cp-ope}
  An operator $t \in \CB(\CHx)$ is
  \begin{enumerate} [label=(\roman*)]
    \item \emph{$\varepsilon$-$E$-approximable} if there is some $s \in \CB(\CHx)$ whose propagation is controlled by $E$ and such that $\norm{s - t} \leq \varepsilon$.
    \item \emph{$\varepsilon$-approximable} if there is $E \in \CE$ such that $t$ is $\varepsilon$-$E$-approximable.
    \item \emph{approximable} if $t$ is $\varepsilon$-approximable for all $\varepsilon > 0$.
  \end{enumerate}
\end{definition}

\begin{definition} \label{def:ql-ope}
  An operator $t \in \CB(\CHx)$ is
  \begin{enumerate} [label=(\roman*)]
    \item \emph{$\varepsilon$-$E$-quasi-local} if $\norm{\chf {A'} t\chf A} \leq \varepsilon$ for every choice of measurable $A, A' \subseteq X$ where $A'$ is $E$\=/separated from $A$.
    \item \emph{$\varepsilon$-quasi-local} if there is $E \in \CE$ such that $t$ is $\varepsilon$-$E$-quasi-local.
    \item \emph{quasi-local} if $t$ is $\varepsilon$-quasi-local for all $\varepsilon > 0$.
  \end{enumerate}
\end{definition}

\begin{remark}\label{rmk: approximable vs quasilocal}
  It is immediate to verify that an approximable operator is quasi-local. On the other hand, Ozawa recently showed~\cite{ozawa-2023} that there are coarse spaces and modules which admit quasi-local operators that are \emph{not} approximable. In the language of Roe-like \cstar{}algebras (see \cref{def:roe-like-cstar} below) this means that the containment $\cpcstar{\CHx}\subseteq \qlcstar{\CHx}$ can be strict.
\end{remark}

It is a fairly simple but important observation that one can characterize whether an operator $t\colon\CHx\to \CHy$ is controlled by examining how it interacts with the operators of controlled propagation.

\begin{proposition}[cf. \rfPropControlledIffAdControlled]\label{prop:coarse functions preserve controlled propagation}
  Let $t\colon\CHx\to\CHy$ be bounded. Consider the assertions.
  \begin{enumerate} [label=(\roman*)]
    \item \label{prop:coarse functions preserve controlled propagation:1} $t$ is controlled.
    \item \label{prop:coarse functions preserve controlled propagation:2} For every $E\in\CE$ there is $F\in\CF$ such that $\Ad(t)$ sends operators of $E$-controlled propagation to operators of $F$-controlled propagation.
    \item \label{prop:coarse functions preserve controlled propagation:3} For every $E\in\CE$ there is $F\in\CF$ such that $\Ad(t)$ sends $\varepsilon$-$E$-approximable operators to $\varepsilon\norm{t}^2$-$F$-approximable operators.
    \item \label{prop:coarse functions preserve controlled propagation:4} For every $E \in \CE$ there is $F \in \CF$ such that $\Ad(t)$ sends $\varepsilon$-$E$-quasi-local operators to $\varepsilon\norm{t}^2$-$F$-quasi-local operators.
  \end{enumerate}
  Then \cref{prop:coarse functions preserve controlled propagation:1} $\Leftrightarrow$ \cref{prop:coarse functions preserve controlled propagation:2} $\Rightarrow$ \cref{prop:coarse functions preserve controlled propagation:3}. Moreover, \cref{prop:coarse functions preserve controlled propagation:1} $\Rightarrow$ \cref{prop:coarse functions preserve controlled propagation:4} when $\CHx$ is admissible.
\end{proposition}

\section{Several Roe-like algebras of coarse modules}\label{subsec: roe algs of modules}
In this section we finally define several algebras of operators on coarse geometric modules of special geometric significance. To begin, an operator $t\in \CB(\CHx)$ is \emph{locally compact} if $\chf{A} t$ and $t \chf{A}$ are compact for every bounded measurable $A\subseteq X$. It is rather straightforward to verify that
\[
\lccstar{\CHx}\coloneqq\{t\in \CB(\CHx)\mid t\text{ is locally compact}\}
\]
is \cstar{}sub-algebra of $\CB(\CHx)$ (cf. \rfLemQlIsAlgebra).

The properties of the supports of operators (see \cref{lemma:supports-operators-sum,eq:support of composition}) imply the following are both \Star{}algebras.
\begin{definition}\label{def:roe-like-star}
  Let $\CHx$ be a $\crse X$-module.
  \begin{enumerate} [label=(\roman*)]
    \item \label{def:roe-like-star:cp} the \emph{\Star{}algebra of operators of controlled propagation} is
    \[
    \cpstar{\CHx}\coloneqq\braces{t\in \CB(\CHx)\mid t\text{ has controlled propagation}};
    \]
    \item \label{def:roe-like-star:roe} the \emph{Roe \Star{}algebra} is
    \(
    \roestar{\CHx}\coloneqq \cpstar{\CHx}\cap\lccstar \CHx.
    \)
  \end{enumerate}
\end{definition}
Taking completions, we then obtain \cstar{}algebras. We collectively call the following \emph{Roe-like \cstar{}algebras}.
\begin{definition}\label{def:roe-like-cstar}
  Let $\CHx$ be a $\crse X$-module.
  \begin{enumerate} [label=(\roman*)]
    \item \label{def:roe-like-cstar:cp} the \emph{\cstar{}algebra of operators of controlled propagation} is
    \[
    \cpcstar{\CHx}\coloneqq\overline{\cpstar{\CHx}};
    \]
    \item \label{def:roe-like-cstar:roe} the \emph{Roe \cstar{}algebra} (or, simply \emph{Roe algebra}) is
    \[
    \roecstar{\CHx}\coloneqq \overline{\roestar{\CHx}};
    \]
    \item \label{def:roe-like-cstar:ql} (if $\CHx$ is admissible) the \emph{\cstar{}algebra of quasi-local operators} is
    \[
    \qlcstar{\CHx}\coloneqq\braces{t\in \CB(\CHx)\mid t\text{ is quasi local}}.
    \]
  \end{enumerate}
\end{definition}
\cref{lem:supports and operations quasi-local} implies that if $\CHx$ is admissible then $\qlcstar{\CHx}$ is indeed a \cstar{}algebra (cf. \rfLemLcIsAlgebra).
In particular, this holds when $\CHx$ is discrete, as will often be the case.

\begin{remark}
  Our ``definition'' of Roe-like \cstar{}algebras by enumeration of useful \cstar{}algebras is by no means a satisfying one, and we simply see it as a convenient way to state and prove results for all those algebras simultaneously.

  Some abstract definitions for Roe-type or Roe-like algebras are already present in literature \cites{spakula_rigidity_2013,spakula2019relative}, but they do not suit our purposes are they do not include \emph{e.g.}\ $\cpcstar{\variable}$ and $\qlcstar{\variable}$. At the same time, we are unsure \emph{how much} more general our ideal definition would be. One may wish to single out a family of \cstar{}algebras for which the techniques of \cstar{}rigidity apply and it is broad enough to cover all interesting cases. The latter is however a moving target (for instance, very recently a new class of ``strongly quasi-local'' operators has been introduced \cite{bao2023strongly}), and finding an optimal abstract condition encompassing all present and future use cases seems a hopeless endeavor.
\end{remark}

We have obvious inclusions
\[
 \roecstar\CHx\subseteq\cpcstar\CHx\subseteq\qlcstar\CHx.
\]
Both inclusions are strict in general. It is not hard to see that $\roecstar\CHx=\cpcstar\CHx$ if and only if $\CHx$ has \emph{locally finite rank}, \emph{i.e.}\ $\CH_A$ has finite rank for every measurable bounded subset $A\subseteq X$.
In contrast, understanding when $\cpcstar\CHx =\qlcstar\CHx$ is still an open problem, and it was proved only very recently that the inclusion is indeed strict if $X$ contains a family of expander graphs \cite{ozawa-2023}.

Observe that $\roecstar\CHx$ is clearly contained in $\cpcstar\CHx\cap\lccstar\CHx$. In the most meaningful cases, the converse containment also holds. Namely, the following is proven in \cite{roe-algs} (extending a previous result of \cite{braga_gelfand_duality_2022}).
\begin{theorem}[cf.\ \rfRoeAlgCap]\label{thm: roe as intersection}
  Let $\crse X$ be a coarsely locally finite coarse space, and let $\CHx$ be a discrete $\crse{X}$\=/module. Then $\cpcstar{\CHx} \cap \lccstar{\CHx} = \roecstar{\CHx}$.
\end{theorem}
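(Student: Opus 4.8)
The inclusion $\roecstar\CHx\subseteq\cpcstar\CHx\cap\lccstar\CHx$ is immediate, since $\roestar\CHx=\cpstar\CHx\cap\lccstar\CHx$ while $\cpcstar\CHx=\overline{\cpstar\CHx}$ and $\lccstar\CHx$ are both norm-closed. For the reverse inclusion the plan is to show that every $t\in\cpcstar\CHx\cap\lccstar\CHx$ is a norm limit of operators lying in $\roestar\CHx$, obtained by ``band-truncating'' $t$ itself. Replacing $\crse X$ by one of its coarsely connected components (and $\CHx$ by the corresponding summand), I may assume $\crse X$ is coarsely connected. Since $\crse X$ is coarsely locally finite and $\CHx$ is discrete, I would first fix a partition $X=\bigsqcup_{i\in I}A_i$ that is simultaneously a discrete partition for $\CHx$ and locally finite (refine a locally finite controlled partition by a discrete one, using \cref{rmk: on discrete modules}); thus $\CHx=\bigoplus_{i\in I}\CH_{A_i}$. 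Let $\gauge\in\CE$ be a symmetric gauge containing the diagonal with every $A_i$ being $\gauge$-bounded.

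Now fix $t\in\cpcstar\CHx\cap\lccstar\CHx$ and, given $\varepsilon>0$, choose $s\in\cpstar\CHx$ with $\|t-s\|<\varepsilon$ and $\supp(s)\subseteq E$ for some symmetric $E\in\CE$ with $\gauge\subseteq E$. For $E'\in\CE$ with $E\subseteq E'$, set $a_{E'}\coloneqq\sum_{i\in I}\chf{B_i}\,t\,\chf{A_i}$, where $B_i$ is a measurable bounded thickening of $E'(A_i)$; this sum converges strongly since the right projections $\chf{A_i}$ are pairwise orthogonal. Because $t$ is locally compact, each ``block column'' $t\chf{A_i}$ is compact, hence so is $\chf{B_i}\,t\,\chf{A_i}$; combined with the local finiteness of $(A_i)_{i\in I}$—which makes $\chf B a_{E'}$ and $a_{E'}\chf B$ finite sums of such operators for every bounded measurable $B$—this shows $a_{E'}\in\lccstar\CHx$. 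On the other hand, by \cref{lemma:supports-operators-sum} the support of $a_{E'}$ is contained in $\bigcup_i B_i\times A_i$, which is a controlled thickening of $E'$; so $a_{E'}\in\cpstar\CHx$. Hence $a_{E'}\in\cpstar\CHx\cap\lccstar\CHx=\roestar\CHx\subseteq\roecstar\CHx$.

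It remains to estimate $\|t-a_{E'}\|$. Writing $t=\sum_i t\chf{A_i}$ and using that $\supp(s)\subseteq E\subseteq E'$ forces $\chfcX{B_i}\,s\,\chf{A_i}=0$ (as $X\smallsetminus B_i$ is $E$-separated from $A_i$), one gets
$$t-a_{E'}=\sum_{i\in I}\chfcX{B_i}\,t\,\chf{A_i}=\sum_{i\in I}\chfcX{B_i}\,(t-s)\,\chf{A_i},$$
which is the ``off-$E'$-band'' part of $t-s$, and hence morally of size $\|t-s\|<\varepsilon$. Making this precise is the step I expect to be the main obstacle. When $\crse X$ has bounded geometry it is routine: one colours $I$ with finitely many colours so that the thickenings $B_i$ within a single colour class are pairwise disjoint, bounds the partial sum over each colour class by $\|t-s\|$ (orthogonal ranges and orthogonal domains), and adds up the finitely many contributions; alternatively one replaces the sharp band by a smooth Schur-multiplier cutoff of controlled norm. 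For a general coarsely locally finite $\crse X$ the multiplicities $\#\{j:A_j\cap E'(A_i)\neq\emptyset\}$ are finite but not uniformly bounded, so the colouring argument breaks down, and controlling $\|t-a_{E'}\|$ in this generality is precisely the delicate point carried out in \cite{roe-algs}, exploiting the discreteness of $\CHx$ together with a choice of approximants $s$ (and band widths $E'$) adapted to $t$. Granting such an estimate, the operators $a_{E'}\in\roecstar\CHx$ approximate $t$ arbitrarily well, so $t\in\roecstar\CHx$, which completes the proof.
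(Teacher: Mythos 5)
Your setup is sound: the easy inclusion $\roecstar\CHx\subseteq\cpcstar\CHx\cap\lccstar\CHx$ follows exactly as you say, the reduction to a coarsely connected $\crse X$ with a simultaneously discrete and locally finite partition $(A_i)_{i\in I}$ is legitimate, and the verification that each band truncation $a_{E'}=\sum_{i\in I}\chf{B_i}t\chf{A_i}$ lies in $\roestar\CHx$ (controlled support via \cref{lemma:supports-operators-sum}, local compactness via local finiteness of the $A_i$ and the $B_i$) is correct, provided you take $B_i$ to be a \emph{controlled} thickening of $E'(A_i)$ so that only finitely many $B_i$ meet any given bounded $B$.

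The genuine gap is precisely the one you flag, and you are right that it is fatal to the argument as written rather than a routine loose end. Writing $r\coloneqq t-s$, the error $t-a_{E'}=\sum_i\chfcX{B_i}\,r\,\chf{A_i}$ is the image of $r$ under the Schur multiplier that kills the $E'$-band. The column projections $\chf{A_i}$ are orthogonal, but the row projections $\chfcX{B_i}$ overlap essentially completely, so one only gets $\sum_i\|\chfcX{B_i}r\chf{A_i}v_i\|^2\leq\varepsilon^2\|v\|^2$ and \emph{not} a bound on the norm of the sum. The usual fix --- a finite colouring of $I$ making the thickenings pairwise disjoint within each colour so that one can pass to orthogonal pieces --- is exactly a bounded-geometry device: it requires a uniform bound on $\#\{j\mid A_j\cap E'(A_i)\neq\emptyset\}$, which mere coarse local finiteness does not give. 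So $\|t-a_{E'}\|$ is not controlled by a constant times $\|t-s\|$, and the Schur-multiplier obstruction is a real one (band cut-offs are not completely bounded uniformly in the band structure). Your remark that an ``adapted'' choice of $s$ and $E'$ might rescue the estimate is where the proof stops being a proof: nothing in the approximability or local compactness of $t$ that you have invoked lets the off-band part of $t-s$ become small in norm (it goes to zero only in the strong topology as $E'$ grows), and you do not supply the missing mechanism. Since the statement you are proving is precisely the content of \rfRoeAlgCap --- the same theorem of \cite{roe-algs} that the present paper also cites for the existence of approximate units of locally finite rank projections commuting with $\chf{A_i}$ (cf.\ \cref{lem: exists submodule approximating operator}) --- the intended proof lives in that companion paper and, given the obstruction above, must proceed along a different route than sharp band truncation; you correctly identify that you are deferring this step rather than carrying it out. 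As it stands, the hard inclusion $\cpcstar\CHx\cap\lccstar\CHx\subseteq\roecstar\CHx$ is not established.
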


\cref{thm: roe as intersection} can be very useful to prove that certain operators belong to the Roe algebra (\emph{e.g.}\ \cref{cor: aut-cp sends roe-to-roe}).

\begin{remark}\label{rmk: unital Roe algebras}
  The identity operator $1\in\CB(\CHx)$ has $\Delta_X$-controlled propagation, and hence belongs to $\cpcstar{\CHx}$ and $\qlcstar\CHx$. These two \cstar{}algebras are hence unital.
  On the other hand,  $1$ is locally compact if an only if $\CHx$ has locally finite rank. This shows that $\roecstar{\CHx}$ is generally not unital, and it is only unital when $\roecstar{\CHx}=\cpcstar{\CHx}$.
\end{remark}

In order to stress the unifying nature of our approach to the rigidity of Roe-like \cstar{}algebras, we use the following.

\begin{notation}
  Throughout the memoir, we will denote Roe-like \cstar{}algebras by $\roeclike{\variable}$.
\end{notation}

In some key technical arguments we will need to differentiate depending on whether $\roeclike\variable$ is unital or not (the latter case often requiring extra hypotheses concerning \emph{e.g.}\ coarse local finiteness). This is the main source of differences between $\roecstar\variable$ and the other two Roe-like \cstar{}algebras---at least insofar rigidity is concerned.

\begin{remark}\label{rmk: roelike of tensor with finite rank}
  Given an $\crse X$-module $\CHx$ and an arbitrary Hilbert space $\CH$, the tensor product $\CHx\otimes\CH$ is naturally an $\crse X$-module with the representation given by the simple tensors $\chf{A}\otimes 1_\CH$, and therefore $\roeclike{\CHx\otimes\CH}$ is defined.
  Since operators of the form $1\otimes t$ always have controlled propagation (they have ``0-propagation''), we trivially have containments
  \[
    \cpcstar\CHx\otimes\CB(\CH)\subseteq \cpcstar{\CHx\otimes\CH} \; \text{ and } \;
    \qlcstar\CHx\otimes\CB(\CH)\subseteq \qlcstar{\CHx\otimes\CH},
  \]
  where on the left-hand side we take the minimal (or, spatial) tensor product.
  If $\CH$ has infinite rank, the analogous containment fails for the Roe algebras because of the local compactness condition. However, it is still true that
  \[
    \roecstar\CHx\otimes\CK(\CH)\subseteq \roecstar{\CHx\otimes\CH}.
  \] 
  All these inclusions may be strict. In contrast, it is important to observe that
  \[
    \roeclike\CHx\otimes\CB(\CH) = \roeclike\CHx\otimes\CK(\CH) = \roeclike{\CHx\otimes\CH}
  \]
  if $\CH$ has finite rank.
\end{remark}

In the sequel, we will want to understand homomorphisms between Roe-like algebras. The first step to do so will be to use \cref{prop: non-degenerate hom is spatially implemented} to deduce that these homomorphisms are spatially implemented. In turn, the reason why \cref{prop: non-degenerate hom is spatially implemented} can be applied to homomorphisms of Roe-like algebras is the following, fairly straightforward, structural result.

\begin{proposition}[cf.\ \rfRoelikeDisconnected]\label{prop: roelike cap compacts}
  Let $\crse X=\bigsqcup_{i\in I} \crse X_i$ be the decomposition in coarsely connected components of $\crse X$, and suppose $X_i\subseteq X$ is measurable for all $i \in I$ (this is always the case if $\CHx$ is discrete).
  Then
  \[
    \roeclike{\CHx}\leq \prod_{i\in I}\CB(\CH_{X_i})
  \]
  and
  \[
    \roeclike{\CHx}\cap \CK(\CHx)= \bigoplus_{i\in I}\CK(\CH_i).
  \]
\end{proposition}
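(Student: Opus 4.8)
The plan is first to show that every operator in $\roeclike{\CHx}$ is block-diagonal for the orthogonal decomposition $\CHx=\bigoplus_{i\in I}\CH_{X_i}$, and then to identify the compact operators inside this block-diagonal algebra. I would begin by recording the elementary but crucial separation fact that $E(X_i)\subseteq X_i$ for every entourage $E\in\CE$: if $(y,x)\in E$ with $x\in X_i$, then $\{x,y\}$ is $\Delta_X\cup E\cup\op E$-bounded, hence bounded, hence contained in a single coarsely connected component, forcing $y\in X_i$. Consequently, for $i\neq j$ the set $X_j$ is $E$-separated from $X_i$ for every $E\in\CE$. Since every $t\in\roeclike{\CHx}$ is, for each $\varepsilon>0$, $\varepsilon$-$E$-quasi-local for some $E\in\CE$ — this is the definition for $\qlcstar{\variable}$, and for $\cpcstar{\variable}$ (hence $\roecstar{\variable}$) it follows because operators of controlled propagation are $0$-$E$-quasi-local and a norm-$\varepsilon$ approximation of $t$ by such an operator makes $t$ itself $\varepsilon$-$E$-quasi-local — we obtain $\norm{\chf{X_j}t\chf{X_i}}\leq\varepsilon$ for all $\varepsilon>0$, so $\chf{X_j}t\chf{X_i}=0$. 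Since $\sum_i\chf{X_i}=1$ strongly (using non-degeneracy together with the fact that bounded measurable sets sit inside a single component), it follows that $t=\sum_i\chf{X_i}t\chf{X_i}$, i.e.\ $t\in\prod_{i\in I}\CB(\CH_{X_i})$.

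For the second identity, write $t\in\prod_i\CB(\CH_{X_i})$ as $t=(t_i)_i$ with $t_i=\chf{X_i}t\chf{X_i}$. The inclusion ``$\subseteq$'' is then soft: if $t\in\roeclike{\CHx}\cap\CK(\CHx)$, each $t_i$ is a compression of a compact operator, hence compact, and $\norm{t_i}\to 0$ — otherwise one extracts an orthonormal sequence of unit vectors $v_i\in\CH_{X_i}$ with $\norm{tv_i}=\norm{t_iv_i}$ bounded below, whose images lie in pairwise orthogonal subspaces and so admit no Cauchy subsequence, contradicting compactness of $t$. Thus $t$ lies in the $c_0$-sum $\bigoplus_{i\in I}\CK(\CH_i)$.

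For the reverse inclusion it suffices, since $\roecstar{\CHx}\subseteq\cpcstar{\CHx}\subseteq\qlcstar{\CHx}$ and a $c_0$-sum of compacts is compact, to show $\bigoplus_i\CK(\CH_i)\subseteq\roecstar{\CHx}=\overline{\roestar{\CHx}}$. Given $t=(t_i)_i$ with $\norm{t_i}\to 0$ and $\varepsilon>0$, I would truncate to the finite set $F\subseteq I$ of indices with $\norm{t_i}\geq\varepsilon$, approximate each $t_i$ ($i\in F$) by a finite-rank operator $s_i$ on $\CH_{X_i}$, and then — crucially using that $\crse X_i$ is coarsely connected, so that \cref{cor:ql-finite-rank-locally supported} applies to the $\crse X_i$-module $\CH_{X_i}$ — replace $s_i$ by $\chf{C_i}s_i\chf{C_i}$ for a suitable bounded measurable $C_i\subseteq X_i$, at arbitrarily small norm cost. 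This operator has support in the controlled entourage $C_i\times C_i$ and is finite-rank, hence locally compact, so $\chf{C_i}s_i\chf{C_i}\in\roestar{\CHx}$; the finite sum $\sum_{i\in F}\chf{C_i}s_i\chf{C_i}\in\roestar{\CHx}$ approximates $t$ to within $O(\varepsilon)$, and letting $\varepsilon\to 0$ gives $t\in\roecstar{\CHx}$.

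I expect the only genuinely delicate point to be this last step: passing from ``compact on a component'' to ``approximable by locally compact operators of controlled propagation'' is exactly where coarse connectedness of $\crse X_i$ is indispensable (bounded sets absorb finite-rank operators there), and one must also observe that $\CH_{X_i}$ really is a coarse geometric module for $\crse X_i$ — which holds because $X_i$ is assumed measurable, the Boolean algebra $\{A\in\fkA\mid A\subseteq X_i\}$ is unital, and any non-degeneracy gauge for $\CHx$ restricts to one for $\CH_{X_i}$. Everything else — the separation fact, block-diagonality, and the compactness bookkeeping — is routine.
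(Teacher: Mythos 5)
Your proof is correct. The paper itself does not reprove this statement but cites it from the companion paper (\cite{roe-algs}, Corollaries 6.16 and 6.18), so there is no in-text argument to compare against; your route — establishing the separation fact $E(X_i)\subseteq X_i$, deducing block-diagonality from $\varepsilon$-$E$-quasi-locality and $\sum_i\chf{X_i}=1$, then for the compact part showing $\norm{t_i}\in c_0(I)$ by orthogonality and approximating each compact block by a finite-rank, boundedly-supported operator via \cref{cor:ql-finite-rank-locally supported} — is the natural one and almost certainly matches the cited proof. The two points you flag as delicate really are the ones worth flagging: that $\CH_{X_i}$ is a legitimate $\crse X_i$-module (which does require $X_i$ to be measurable and the gauge to restrict, and uses that $\gauge$-bounded sets live in a single component), and that $0$-$E$-quasi-locality of controlled-propagation operators is what transfers the block-diagonality argument from $\qlcstar{\variable}$ to $\cpcstar{\variable}$ and hence to $\roecstar{\variable}$. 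No gaps.
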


\begin{remark}
  Note that \cref{cor:ql-finite-rank-locally supported} already proves that if $\crse X$ is coarsely connected then $\roecstar\CHx$ contains the compact operators. Proving \cref{prop: roelike cap compacts} for disconnected extended metric spaces requires just a little extra care.
\end{remark}

\section{Coarse maps vs.\ homomorphisms}
In this section we begin exploring the relations between coarse maps between spaces, operators between modules, and homomorphisms between Roe-like algebras. In a nutshell, the directions
\[
\text{coarse maps}\implies \text{operators}\implies\text{homomorphisms}
\]
are fairly simple and classical. Their converses are hard, and are the main subject of this memoir. We now begin by recalling the classical direction. The following material is not necessary to solve the \cstar{}rigidity problem, but it does provide intuition and motivation for it.

To improve legibility, from this point on we will adopt the following convention.
\begin{notation}\label{notation: capital letters}
  We denote operators between coarse geometric modules with capital letters (\emph{e.g.}\ $T,U,V,W$\ldots). Lower-case letters ($s,t,u$\ldots) are used for operators within the same module, especially when considered as elements of Roe-like \cstar{}algebras.
\end{notation}

\smallskip

The natural way to go from linear operators to mappings of Roe-like algebras is by taking conjugation. Of course, not every operator will induce a map of Roe-like algebras, but the language we introduced in the previous sections lets us easily identify classes of operators that do so. Namely,
\cref{prop:coarse functions preserve controlled propagation} shows that if $T\colon\CHx\to\CHy$ is a controlled operator (cf.\ \cref{def:controlled and proper operator}) then $\Ad(T)\colon\CB(\CHx)\to\CB(\CHy)$ restricts to mappings
\[
  \cpstar{\CHx}\to\cpstar{\CHy} \;\; \text{ and } \;\; \cpcstar\CHx\to\cpcstar\CHy.
\]
Likewise, if $\CHx$ is admissible the same holds for $\qlcstar{\CHx}$. Moreover, if $\CHx$ is locally admissible and $T$ is also proper, then $\Ad(T)$ preserves local compactness as well (cf.\ \rfCorAdProperLocalCpt or \cref{item:thm:qp:qp}$ \Rightarrow $\cref{item:thm:qp:lc-to-lc} of \cref{thm: quasi-proper} below). Thus, it restricts to mappings at the level of $\roestar{\CHx}$ and $\roecstar\CHx$ as well.
In summary, we showed the following.

\begin{cor}\label{cor: Ad restricts to Roe-like}
  If $T\colon \CHx\to \CHy$ is a proper controlled operator 
  and $\CHx$ is admissible then conjugation induces mappings of Roe-like algebras
  \[\Ad(T)\colon \roeclike{\CHx}\to\roeclike{\CHy}.\]
\end{cor}

Note that if $T$ as above is an isometry, then $\Ad(T)$ is an injective \Star{}homomorphism.

\smallskip

In view of the above, we are now interested in associating isometries with coarse maps. The idea for doing so is straightforward: if we are given an injective map $f\colon X\to Y$ it is all but natural to construct an isometry $\ell^2(X)\to \ell^2(Y)$ by sending $\delta_x$ to $\delta_{f(x)}$.
The situation becomes more delicate if one has non-injective maps and/or wishes to investigate more general modules. However, so long as the modules are ample enough (cf.\ \cref{def: ample and faithful}), these complications are purely technical and can be overcome without too much effort.

Specifically, recall that an operator $T\colon \CHx \to \CHy$ covers a (partial) coarse map $\crse{f\colon X\to Y}$ if $\csupp(T) = \crse{f}$. Then the following holds.

\begin{proposition}[cf. \rfCorExistCoveringIsom]\label{prop: existence of covering iso}
  Let $\CHx$ and $\CHy$ be both discrete, $\kappa$-ample and of rank $\kappa$, and $\crse{f\colon X\to Y}$ any coarse map. Then:
  \begin{enumerate}[label=(\roman*)]
    \item $\crse{f}$ is covered by an isometry $T\colon \CHx\to\CHy$.
    \item If $\crse f$ is also proper, the assumption that $\CHx$ and $\CHy$ have rank $\kappa$ can be weakened to \emph{local} rank at most $\kappa$ (\emph{i.e.}\ $\chf\bullet$ has rank at most $\kappa$ on every bounded measurable subset).
    \item If $\crse f$ is a coarse equivalence, $T$ may be taken to be unitary.
  \end{enumerate}
\end{proposition}

If $\crse f$ is a coarse equivalence and $U$ is a unitary covering it, then $U^*$ is a unitary covering $\crse{f^{-1}}$ and $\Ad(U^*)$ induces the inverse \Star{}homomorphism on the level of Roe-like \cstar{}algebras. Putting the pieces together, we obtain:
\begin{cor}
  Let $\CHx$, $\CHy$, $\crse{f\colon X\to Y}$ be as above. Then
  \begin{enumerate}[label=(\roman*)]
    \item $\crse{f}$ is covered by an isometry $T$ which induces a \Star{}embedding at the level of $\cpcstar{\variable}$ and $\qlcstar{\variable}$.
    \item If $\crse f$ is also proper, $\Ad(T)$ defines a \Star{}embedding of Roe (\cstar{})algebras.
    \item If $\crse f$ is a coarse equivalence, $\Ad(T)$ may be taken to define a \Star{}isomorphism of Roe-like \cstar{}algebras.
  \end{enumerate}
\end{cor}

\begin{remark}
  With regard to the rank and ampleness assumption, note that in the vast majority of the cases of interest the coarse geometric modules will be separable, and the relevant ampleness condition ($\aleph_0$-ampleness) coincides with the classical notion of ample modules used to construct (non-uniform) Roe algebras.
  
  The main reason to be interested in non-separable modules would be to work with coarse spaces that are not coarsely equivalent to any countable set. In particular, every proper metric space can be comfortably studied using separable modules.
\end{remark}

Observe that assigning to a coarse map a covering isometry is a very non-canonical operation: a number of choices are involved in its construction.
However, it is not hard to show that the effects of making different choices largely disappear up to conjugating by controlled unitaries (cf.\ \cite{braga_gelfand_duality_2022} and/or \rfLemCoveringUniIsUnique). We will eventually leverage this fact to show that there is a natural homomorphism between the group of coarse equivalences and the groups of outer automorphisms of Roe-like \cstar{}algebras (cf.\ \cref{subsec: rig outer aut 2}).

\section{Submodules}\label{subsec: submodules}
We conclude this chapter with some fairly technical results which are important to provide a proof of \cstar{}rigidity in its most general form. The motivating reason is that there are instances where an operator $T\colon\CHx\to\CHy$ fails to have some desirable property, but it does have it when restricted to appropriate subspaces of $\CHx$.
For instance, if $\CHx$ does not have locally finite rank then $1$ is not locally compact, but one may obtain a well-behaved locally compact operator by restricting it to a locally finite rank subspace of $\CHx$.
To deal with such instances, we now introduce the notion of \emph{submodule}.

\begin{definition}\label{def: submodule}
  A \emph{submodule} of a coarse geometric module $(\fkA,\CHx,\chf\bullet)$ is a coarse geometric module $(\fkA',\CH'_{\crse X},\chf\bullet')$ where $\fkA'\subseteq \fkA$, $\CH'_{\crse X}\leq\CHx$ is closed and $\chf A'(v)=\chf A(v)$ for every $v\in\CH'_{\crse X}$ and $A \in \fkA'$.
  We say that such a submodule is \emph{subordinate to $\fkA'$} (and omit $\fkA'$ from the notation if clear from the context).
\end{definition}

\begin{example}
  The discrete setting gives most the obvious examples of submodules. Suppose that $\CHx$ is a discrete module over $\crse X$ where the singletons form a discrete partition $\CHx= \bigoplus_{x\in X}\CH_x$. In particular, $\fkA = \CP(X)$ and `$X$ is discrete'.
  Choose for every $x\in X$ a closed subspace $\CH'_x\leq \CH_x$, and let $\CH'_{\crse X}\coloneqq \bigoplus_{x\in X}\CH_x' \leq\CHx$. For every $A\subseteq X$ let $\chf{A}'$ be the restriction of $\chf A$ to $\CH_A'= \bigoplus_{x\in A}\CH_x'$. Then $(\fkA,\CH'_{\crse X},\chf{\bullet}')$ is a submodule of $(\fkA,\CHx,\chf{\bullet})$.

  One example that appears often in practice is when $\CHx = \ell^2(X)\otimes\CH$ is a uniform module, $\CH'<\CH$ is some (finite dimensional) subspace and $\CH'_{\crse X}=\ell^2(X)\otimes\CH'$. For instance, in the proof of \cref{thm: stable rigidity} we will start by choosing some vector $\xi\in \CH_1$ and letting $\CH'=\angles{\xi}$.
\end{example}

\begin{example}\label{exmp: submodule smaller fkA}
  To understand the role of $\fkA'$ it is sufficient to leave the reassuring setting of discrete modules. Let $X$ be a connected Riemannian manifold, $\CHx=L^2(M)$, $\fkA=\{\text{Borel subsets}\}$ and $\chf\bullet$ be the multiplication by the indicator function. If we wished to work with some locally finite rank subspace of $\CHx$ we would be in trouble, because once we take some function $\xi \in L^2(X)\smallsetminus\{0\}$ the images $\chf{A}(\xi)$ generate an infinite dimensional subspace (the whole of $L^2(\supp(\xi))$).
  This shows that to obtain something finite dimensional we are forced to drastically restrict the family of measurable sets.

  Taking inspiration from the discrete example, one simple way to select a faithful locally finite rank submodule of $L^2(X)$ is to choose a locally finite countable discrete partition $X=\bigsqcup_{i\in I}A_i$ and to pick for every $i\in I$ some function $\xi_i\in L^2(A_i)\smallsetminus\{0\}$. Then $\overline{\angles{f_i\mid i\in I}}<L^2(X)$ provides us with the required module if we let $\fkA'$ be the set of arbitrary unions of $A_i$.
  For instance, we may let $\CH'_{\crse X}<L^2(X)$ be the space of functions that are constant on $A_i$ for every $i\in I$.
\end{example}

Since $\chf A'$ coincides with $\chf A$ when defined, we usually drop the prime from the notation and simply write $\chf A$.
The following gives an alternative description of a submodule.

\begin{lemma}\label{lem: submodule iff commutes}
  Let $(\fkA,\CHx,\chf\bullet)$ be an $\crse X$-module, and let $\fkA'\subseteq \fkA$ be unital and $\CH'_{\crse X}\leq\CHx$ be closed. Denote by $p$ the projection onto $\CH'_{\crse X}$. Then $(\fkA',\CH'_{\crse X},\chf\bullet|_{\CH'_{\crse X}})$ is a submodule of $\CHx$ if and only if the following hold:
  \begin{enumerate}[label=(\roman*)]
    \item\label{item:lem: submodule iff commutes: comm} $p$ commutes with $\chf{A}$ for every $A\in\fkA'$;
    \item\label{item:lem: submodule iff commutes: nd} there is a gauge $\gauge$ such that $\angles{p\chf A\paren{\CHx}\mid A\in\fkA',\ \text{\gauge-bounded}}$ is dense in $\CH'_{\crse X}$.
  \end{enumerate}
\end{lemma}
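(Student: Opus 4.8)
The plan is to prove the two implications separately, checking that the stated conditions \ref{item:lem: submodule iff commutes: comm} and \ref{item:lem: submodule iff commutes: nd} match up exactly with the two requirements in \cref{def: submodule}: that $\chf\bullet|_{\CH'_{\crse X}}$ is again a (well-defined, projection-valued) unital representation of $\fkA'$ on $\CH'_{\crse X}$, and that this representation is non-degenerate. The only subtle point is the first: for $\chf A|_{\CH'_{\crse X}}$ to land in $\CB(\CH'_{\crse X})$ we need $\chf A$ to preserve the subspace $\CH'_{\crse X}$, and this invariance is precisely the commutation of $\chf A$ with the projection $p$. So the first condition is not a bonus hypothesis but the content of ``$\chf\bullet|_{\CH'_{\crse X}}$ is a representation''.

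For the forward direction, assume $(\fkA',\CH'_{\crse X},\chf\bullet|_{\CH'_{\crse X}})$ is a submodule. Then by \cref{def: submodule} each $\chf A$ (for $A\in\fkA'$) restricts to a projection on $\CH'_{\crse X}$, i.e.\ $\chf A(\CH'_{\crse X})\subseteq\CH'_{\crse X}$; since $\chf A$ is self-adjoint it then also preserves $(\CH'_{\crse X})^\perp$, hence $\chf A$ commutes with $p$, giving \ref{item:lem: submodule iff commutes: comm}. For \ref{item:lem: submodule iff commutes: nd}: non-degeneracy of the submodule provides a gauge $\gauge$ such that the closed span of $\{\chf A(\CH'_{\crse X})\mid A\in\fkA' \text{ is }\gauge\text{-bounded}\}$ is all of $\CH'_{\crse X}$; since $p$ fixes $\CH'_{\crse X}$ pointwise and commutes with $\chf A$, we have $\chf A(\CH'_{\crse X}) = \chf A p(\CHx)\cap\CH'_{\crse X} = p\chf A(\CHx)$, which rewrites the condition into the form \ref{item:lem: submodule iff commutes: nd}.

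For the converse, assume \ref{item:lem: submodule iff commutes: comm} and \ref{item:lem: submodule iff commutes: nd}. Commutation with $p$ ensures $\chf A$ preserves $\CH'_{\crse X}$, so $\chf A|_{\CH'_{\crse X}}$ is a bounded operator on $\CH'_{\crse X}$, and it is a self-adjoint idempotent there (being a restriction of one); the Boolean-algebra relations $\chf{A\cap A'}=\chf A\chf{A'}$, additivity on disjoint sets, and $\chf X|_{\CH'_{\crse X}} = p|_{\CH'_{\crse X}} = 1_{\CH'_{\crse X}}$ are all inherited by restriction, so $\chf\bullet|_{\CH'_{\crse X}}$ is a unital representation of $\fkA'$. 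Non-degeneracy is then condition \ref{item:lem: submodule iff commutes: nd} once we identify $p\chf A(\CHx)$ with $\chf A(\CH'_{\crse X})$: indeed $p\chf A(\CHx)=\chf A p(\CHx)=\chf A(\CH'_{\crse X})\subseteq \CH'_{\crse X}$, and conversely every $\chf A(v)$ with $v\in\CH'_{\crse X}$ equals $p\chf A(v)$. Hence $(\fkA',\CH'_{\crse X},\chf\bullet|_{\CH'_{\crse X}})$ satisfies \cref{def:coarse-module} and visibly meets the compatibility requirement $\chf A'(v)=\chf A(v)$ of \cref{def: submodule}, so it is a submodule.

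I expect no serious obstacle here; the lemma is essentially a bookkeeping translation. The one place to be a little careful is the manipulation $p\chf A(\CHx)=\chf A(\CH'_{\crse X})$, which uses both that $p$ commutes with $\chf A$ \emph{and} that $p$ is the projection onto $\CH'_{\crse X}$ (so $p(\CHx)=\CH'_{\crse X}$ and $p$ is the identity on $\CH'_{\crse X}$); getting the two directions of this set equality right is the only step worth spelling out in full.
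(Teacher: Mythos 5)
Your proposal is correct and follows the same overall structure as the paper: unpack the definition of submodule, observe that condition~\ref{item:lem: submodule iff commutes: comm} is exactly what makes $\chf A|_{\CH'_{\crse X}}$ a well-defined operator on $\CH'_{\crse X}$, and identify condition~\ref{item:lem: submodule iff commutes: nd} with non-degeneracy via $p\chf A(\CHx)=\chf A(\CH'_{\crse X})$. The one place you diverge from the paper is in the forward implication for~\ref{item:lem: submodule iff commutes: comm}: you argue classically that a self-adjoint operator preserving a closed subspace also preserves its orthogonal complement and hence commutes with the projection, whereas the paper starts from $pqp=qp$ (which is the operator form of ``$q=\chf A$ preserves $\CH'_{\crse X}$'') and invokes the $C^*$\nobreakdash-identity to compute $\norm{qp-pq}^2=\norm{(qp-pq)(qp-pq)}=0$. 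Both are one-line arguments; yours is perhaps the more transparent, the paper's is purely algebraic and avoids mentioning the complement. No gap.
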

\begin{proof}
  Suppose first that \cref{item:lem: submodule iff commutes: comm,item:lem: submodule iff commutes: nd} hold.
  It follows from \cref{item:lem: submodule iff commutes: comm} that for every $A\in\fkA'$ the projection $\chf A$ preserves $\CH'_{\crse X}$. Therefore, letting $\chf A'\coloneqq \chf{A}|_{\CH'_{\crse X}}$ defines a unital representation of $\fkA'$ in $\CB(\CH'_{\crse X})$. Condition \cref{item:lem: submodule iff commutes: nd} implies that $\chf\bullet'$ is non-degenerate.

  Vice versa, if $\CH'_{\crse X}$ is a submodule the non-degeneracy condition immediately implies \cref{item:lem: submodule iff commutes: nd}. For \cref{item:lem: submodule iff commutes: comm}, fix $A\in\fkA'$ and let $q\coloneqq\chf A$ for convenience. Observe that $pqp = qp$, whence
  \begin{equation*}
    \norm{qp - pq}^2 = \norm{(qp-pq)(qp-pq)}=\norm{q(pqp)-(pqp)-qpq+(pqp)q} = 0. \qedhere
  \end{equation*}
\end{proof}

Because of \cref{lem: submodule iff commutes}, we may (and often will) denote a submodule via the projection $p$ onto it. Observe that we are also being somewhat ambiguous with respect to the meaning of $p$, as we sometimes see it as an element in $\CB(\CHx)$, or as an operator $\CHx\to p(\CHx)$, or even $p(\CHx)\to\CHx$ (which is the inclusion map). This ambiguity allows us to write $\chf A'=p\chf A p$, as we will often do in the sequel.

\smallskip

Since we use (coarse) supports of operators to bridge between geometry and functional analysis, it is of paramount importance to make sure that these notions are well behaved under taking submodules. The following lemma does precisely this.

\begin{lemma}\label{lem:support vs submodules}
  Let $p\leq \CHx$ be a submodule and $\gauge$ be a gauge as by \cref{lem: submodule iff commutes}~\cref{item:lem: submodule iff commutes: nd}. Consider $T\colon p(\CHx)\to\CHy$ and $Tp\colon \CHx\to\CHy$. Then:
  \begin{enumerate}[label=(\roman*)]
    \item\label{item:lem:support vs submodules: large to small} $\supp(Tp)\subseteq R \Rightarrow \supp(T)\subseteq R$;
    \item\label{item:lem:support vs submodules:small to large} $\supp(T)\subseteq R \Rightarrow \supp(Tp)\subseteq \gauge \circ R$.
  \end{enumerate}

  Analogous statements hold true for a submodule $q\leq \CHy$ and operators $S\colon \CHx\to q(\CHy)$ and $q^*S = qS\colon \CHx\to \CHy$.
\end{lemma}
\begin{proof}
  The first implication is obvious, because the containment $\fkA'\subseteq \fkA$ implies that containment of support in a submodule is a weaker condition than in the larger module.

  For the second implication, by our assumption we see that 
  \[
    p = \bigvee\braces{p\chf A\paren{\CHx}\mid A\in\fkA',\ \text{\gauge-bounded}}.
  \]
  If $\chf{C}Tp\chf{B}\neq 0$ for some $B\in \fkA$ and $C\in\fkA'$, there must then be an $\gauge$-bounded $A\in \fkA'$ such that $\chf CTp\chf A\chf B\neq 0$.
  Since $p$ commutes with $\chf A$ for every $A\in\fkA'$, we also have
  \[
  0\neq \chf CTp\chf A\chf B = \chf CT\chf A p\chf A\chf B.
  \]
  Since both $C$ and $A$ belong to $\fkA'$, by definition of support we see that they cannot be $R$-separated. It follows that $C$ and $B$ are not $(R\circ\gauge)$-separated.

  The statements for $q$ are obtained by taking adjoints.
\end{proof}

\begin{cor}\label{cor: support and submodules}
  Let $p\leq \CHx$ and $q\leq \CHy$ be submodules. Then the coarse support of $T\colon p(\CHx)\to q(\CHy)$ coincides with the coarse support of the induced operator $T=qTp\colon\CHx\to\CHy$.
  In particular, $T$ is controlled if and only if it is controlled as an operator $T\colon\CHx\to\CHy$.
\end{cor}

\begin{cor}\label{cor: submodules have controlled propagation}
  If $p\leq \CHx$ is a submodule then $p\in\CB(\CHx)$ has finite propagation.
\end{cor}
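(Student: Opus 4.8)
The plan is to combine the characterization of submodules in \cref{lem: submodule iff commutes} with the stability of controlled propagation under joins established in \cref{lem:join of controlled projection}.

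First I would invoke \cref{lem: submodule iff commutes} to fix a gauge $\gauge\in\CE$ with the property that the vectors of the form $w=p\chf{A}(v)$, where $A\in\fkA'$ runs over the \gauge-bounded sets and $v\in\CHx$, span a dense subspace of $p(\CHx)$. For each such $w$, the commutation relation $p\chf{A}=\chf{A}p$ gives $\chf A(w)=\chf A p\chf A(v)=p\chf A\chf A(v)=p\chf A(v)=w$, so that $\supp(w)\subseteq A$. Since $A$ is a measurable \gauge-bounded set we have $A\times A\subseteq\gauge$, and hence \cref{lemma:supp-vector-a-times-a} shows that the rank-$\leq 1$ projection $p_w$ has \gauge-controlled propagation.

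Next I would observe that $p=\bigvee_{w}p_{w}$, the join being taken over this family of vectors: by definition the join of a family of projections is the orthogonal projection onto the closed linear span of the union of their ranges, which here is exactly $p(\CHx)$ by the density statement above (if the family happens to be empty then $p=0$, which trivially has controlled propagation). Applying \cref{lem:join of controlled projection} to the \gauge-controlled projections $p_w$ then yields that $p$ itself has \gauge-controlled propagation, hence finite propagation, which is the claim.

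I do not expect a genuine obstacle. The one point that requires a little care is that $\supp(p)\subseteq S$ is a statement about \emph{all} measurable sets of the ambient Boolean algebra $\fkA$, not merely those of $\fkA'$; routing the argument through \cref{lem:join of controlled projection}, which is phrased for the ambient module $\CHx$, handles this automatically, so there is nothing extra to verify by hand.
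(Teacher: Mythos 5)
Your proof is correct and follows essentially the same route as the paper's: both obtain a gauge $\gauge$ from \cref{lem: submodule iff commutes}\cref{item:lem: submodule iff commutes: nd}, verify $\gauge$-controlled propagation of a generating family of projections, and conclude via \cref{lem:join of controlled projection}. The only cosmetic difference is that you decompose into rank-one projections $p_w$, whereas the paper works directly with the (possibly higher-rank) projections $p\chf{A}=\chf{A}p\chf{A}$, observing that their support sits in $A\times A\subseteq\gauge$; your $p_w$ are just a further refinement of these, and both families feed into the join lemma in the same way.
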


Since $\csupp(p)\crse{\subseteq \cid_{X}}$, \cref{cor: Ad restricts to Roe-like} trivially implies the following.

\begin{cor}\label{cor: inclusion of submodule roe-like}
 If $p\leq \CHx$ is an admissible submodule then $\Ad(p)$ induce mappings
 \[\Ad(p)\colon\roeclike\CHx\to\roeclike p\]
 and \Star{}embeddings
 \[\Ad(p)\colon\roeclike p\to\roeclike \CHx.\]
\end{cor}
In view of the above, we may directly write $\roeclike p\subseteq\roeclike \CHx$.
When we do, we are implicitly using $\Ad(p)$ to embed $\roeclike p$ into $\roeclike \CHx$.

\begin{remark}
  Considering the identity on $L^2(X)$ in \cref{exmp: submodule smaller fkA}, we see that in \cref{lem:support vs submodules}~\cref{item:lem:support vs submodules:small to large} it is indeed necessary to enlarge the support when passing to a submodule: $\supp(1)\subseteq \Delta_X$, while the same is not true for $p$.
\end{remark}

\begin{example}\label{eg: cp as corner in Roe}
  Let $X$ be locally finite and $\CHx=\ell^2(X)\otimes\CH$ a uniform module. Let $p$ be the projection onto $\ell^2(X)\otimes \CH'$, where $\CH'<\CH$ is some finite rank subspace (\emph{e.g.}\ $\CH'=\angles\xi$). Then $\Ad(p)$ induces a \Star{}embedding of $\roecstar{p}=\cpcstar{p}$ into $\roecstar{\CHx}$.
  In particular, this yields numerous embeddings of the usual uniform Roe algebra into the Roe algebra of the coarse space, see \cref{subsec:rig roe algs spaces}.
\end{example}

If $\CHx$ is a discrete module, we say that a submodule $p$ is \emph{subordinate to the discrete partition $X =\bigsqcup_{i\in I}A_i$}, if $p$ is subordinate to the algebra consisting of arbitrary unions of the $A_i$. 
Concretely, a projection $p\in\CB(\CHx)$ defines a submodule of $\CHx$ subordinate to $(A_i )_{i\in I}$  (when equipped with the relevant $\fkA'\subseteq \fkA$)  if and only if $p$ commutes with $\chf{A_i}$ for every $i\in I$.

\begin{remark}
  Observe that if $p$ defines a submodule of $\CHx$ subordinate to $(A_i )_{i\in I}$ then it is also subordinate to $(A_i )_{i\in I}$ as a projection (cf.\ \cref{def: subordinate projection}).
\end{remark}

As shown in \cref{eg: cp as corner in Roe}, one key feature of submodules is that by restricting to locally finite rank submodules we can construct embedding of the (unital) \cstar{}algebras $\cpcstar{p}$ in the (usually not unital) Roe algebra $\roecstar\CHx$.
Once again, when we write $\cpcstar{p}\subseteq\roecstar\CHx$ we mean that the canonical embedding $\Ad(p)$ sends as $\cpcstar{p}$ into $\roecstar\CHx$.
This fact will play an important role later on, and we record this in the following.

\begin{lemma}\label{lem: exists submodule approximating operator}
  Let $\crse X$ be coarsely locally finite and $\CHx$ be discrete with discrete partition $X =\bigsqcup_{i\in I}A_i$.
  Then for every operator $t\in \roecstar{\CHx}$ and $\mu>0$ there is a submodule $p$ subordinate to $(A_i)_{i\in I}$ such that $pt\approx_\mu t$ and $\cpcstar{p}\subseteq\roecstar\CHx$.
\end{lemma}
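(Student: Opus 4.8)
The plan is to approximate $t$ in norm by a more tractable operator, and then to assemble $p$ block by block along the discrete partition, each block a finite-rank projection that approximately carries the corresponding piece of $t$.

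Concretely, first I would pick $t_0 \in \roestar{\CHx}$ with $\norm{t - t_0} < \mu/2$; in particular $t_0$ is locally compact and of $E$-controlled propagation for some $E \in \CE$. Since each $A_i$ is controlled (hence bounded) and measurable, $\chf{A_i}t_0$ is compact, so one may choose a finite-rank projection $q_i \leq \CH_{A_i}$ (for instance the spectral projection of $(\chf{A_i}t_0)(\chf{A_i}t_0)^*$ above a small threshold, which lies inside $\chf{A_i}(\CHx)$) making $\norm{(\chf{A_i} - q_i)\chf{A_i}t_0}$ as small as desired, with $q_i \coloneqq 0$ once $\norm{\chf{A_i}t_0}$ drops below $\mu/4$. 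Then I would set $p \coloneqq \sum_{i \in I} q_i$, a strongly convergent sum of pairwise orthogonal finite-rank projections (orthogonal since $q_i \leq \CH_{A_i}$ and the $\CH_{A_i}$ are mutually orthogonal), hence a projection in $\CB(\CHx)$.

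The verification splits into three points. \textbf{(1)} $p$ defines a submodule subordinate to $(A_i)_{i \in I}$: from $q_i \leq \CH_{A_i}$ one reads off $\chf{A_k} p = q_k = p \chf{A_k}$ for every $k$, so $p$ commutes with each $\chf{A_i}$ (hence with every $\chf A$, $A$ a union of $A_i$'s, which lie in $\fkA$ by axiom (iv) of a discrete partition), while the subspaces $p\chf{A_i}(\CHx) = q_i(\CHx)$ span $p(\CHx)$, giving non-degeneracy with the discreteness gauge; \cref{lem: submodule iff commutes} then applies. \textbf{(2)} $\norm{pt - t} < \mu$: writing $1 - p = \sum_i(\chf{A_i} - q_i)$ as an orthogonal sum, $\norm{(1-p)t_0 v}^2 = \sum_i \norm{(\chf{A_i} - q_i)\chf{A_i}t_0 v}^2$ for every $v$, and using $\chf{A_i}t_0 = \chf{A_i}t_0\chf{C_i}$ for a bounded measurable $E$-thickening $C_i$ of $A_i$, the orthogonality of the blocks $\CH_{A_i}$, and a sufficiently fast index-dependent choice of the truncation parameters (coarse local finiteness of $\crse X$ being used to keep the overlapping $C_i$ under control), this is bounded by $\mu^2/4$, whence $\norm{pt-t} \leq \norm{pt_0-t_0} + \norm{t_0-t} < \mu$. \textbf{(3)} $\cpcstar p \subseteq \roecstar{\CHx}$: the set $I' \coloneqq \{i : q_i \neq 0\}$ satisfies $I' \subseteq \{i : \norm{\chf{A_i}t_0} \geq \mu/4\}$, and the latter family is locally finite, since if infinitely many of its members met a fixed bounded measurable $A$, then choosing a bounded measurable thickening $A' \supseteq \gauge(A)$ (possible as $\CHx$ is admissible) one could extract, for those indices, an orthonormal sequence $(v_i)$ with $v_i \in \CH_{A_i} \subseteq \CH_{A'}$ and $\scal{v_i}{\chf{A'}t_0 t_0^* \chf{A'}\,v_i}$ bounded below, contradicting compactness of $\chf{A'}t_0 t_0^* \chf{A'}$. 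Hence $(A_i)_{i \in I'}$ is locally finite, so for every bounded measurable $A$ the operator $\chf{A}p = \sum_{i \in I'} \chf{A \cap A_i} q_i$ is a finite sum of finite-rank operators; thus $p(\CHx)$ is a discrete $\crse X$-module of locally finite rank and $\cpcstar p = \roecstar{p(\CHx)}$ by \cref{rmk: unital Roe algebras}. Finally, by \cref{cor: support and submodules} an operator of controlled propagation on $p(\CHx)$ still has controlled propagation when extended by $0$ to $\CHx$, and by local finiteness of $(A_i)_{i\in I'}$ it is moreover locally compact there; so $\roestar{p(\CHx)} \subseteq \roestar{\CHx}$ and, passing to closures, $\cpcstar p = \roecstar{p(\CHx)} \subseteq \roecstar{\CHx}$.

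I expect the main obstacle to lie in the two places where coarse local finiteness is genuinely needed: obtaining the uniform error bound in (2), and—more delicately—the local finiteness of the support $I'$ in (3), which is exactly what forces $p(\CHx)$ to have locally finite rank and hence places its controlled-propagation algebra inside $\roecstar{\CHx}$.
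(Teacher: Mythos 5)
Your approach is genuinely different from the paper's: the paper's proof simply invokes the approximate unit result \rfThmApproxUnit (Roe algebras over discrete modules on coarsely locally finite spaces admit approximate units $(p_\lambda)$ of locally-finite-rank projections commuting with the $\chf{A_i}$, and one takes $\lambda$ large), whereas you attempt a direct block-by-block construction of $p$. Steps (1) and (3) of your argument are essentially sound — in particular (3) correctly isolates where coarse local finiteness and local compactness of $t_0$ enter, and matches the closing remark of the paper's proof.

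Step (2) has a genuine gap, however, and it is exactly the point you flag as delicate. You set $q_i \coloneqq 0$ whenever $\norm{\chf{A_i}t_0} < \mu/4$. For those indices the corresponding term of the $\ell^2$-sum is $\norm{\chf{A_i}t_0\,v}^2$ with no cut-off at all, and summing just over them already gives something of order $\norm{t_0 v}^2 \approx \norm{v}^2$, not $\mu^2\norm{v}^2/4$. Your attempted rescue via $\chf{A_i}t_0 = \chf{A_i}t_0\chf{C_i}$ gives $\norm{\chf{A_i}t_0 v} \leq \norm{\chf{A_i}t_0}\,\norm{\chf{C_i}v}$, but the resulting bound $\tfrac{\mu^2}{16}\sum_i\norm{\chf{C_i}v}^2$ needs a \emph{uniform} bound on the multiplicity with which the $C_i$ overlap. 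That is bounded geometry, not coarse local finiteness: coarse local finiteness only gives finite (not uniformly bounded) overlap, and in fact the given discrete partition $(A_i)_{i\in I}$ need not itself be locally finite (only $\crse X$ is assumed to admit \emph{some} locally finite controlled partition). So the parenthetical claim that ``coarse local finiteness\ldots keeps the overlapping $C_i$ under control'' is exactly the step that does not hold. The alternative you hint at --- index-dependent thresholds $\tau_i$ with $\sum_i\tau_i$ small, combined with the crude bound $\norm{\chf{C_i}v}\leq\norm{v}$ --- would work, but it requires $\sum_i\tau_i<\infty$, hence at most countably many non-zero $\tau_i$, hence $\chf{A_i}t_0 = 0$ for all but countably many $i$. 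That holds when $\CHx$ is separable (the $\CH_{A_i}$ are mutually orthogonal), but the lemma has no separability hypothesis and is used in the paper in the non-separable setting. So the gap is real: the direct estimate does not go through under the lemma's hypotheses, and some argument in the spirit of the approximate unit theorem (or a restriction to the separable case) is needed to close it.
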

\begin{proof}
  By \cref{cor: inclusion of submodule roe-like}, it follows that we always have the containment $\cpstar p\subseteq \cpstar\CHx$, so it only remains to verify the local compactness condition. It is then clear that $\cpstar p\subseteq \roestar\CHx$ if and only if $p$ has locally finite rank.
  The proof of the claim is now an easy consequence of the relatively well\=/known fact that Roe algebras over discrete modules on locally finite coarse spaces admit approximate units $(p_\lambda)_{\lambda \in \Lambda}\subseteq\roestar\CHx$ consisting of projections of locally finite rank that commute with the projections $\chf{A_i}$ given by an arbitrarily fixed discrete partition $X=\bigsqcup_{i\in I}A_i$ (see \cite{braga_gelfand_duality_2022}*{Proposition 2.1} or \rfThmApproxUnit for a complete proof in the current level of generality).  
  In fact, we may then choose $\lambda$ large enough such that $p_\lambda t\approx_\mu t$. Moreover, $\cpcstar{p_\lambda}\subseteq \roecstar{\CHx}$ for every $\lambda\in\Lambda$ because $p_\lambda$ has locally finite rank, so for any operator $s$ of finite propagation $p_\lambda sp_\lambda$ is locally compact.
\end{proof}

\begin{remark}\label{rmk: join of submodules}
  Let $p,q\leq \CHx$ be subordinate to the same algebra $\fkA'$.
  \begin{enumerate}[label=(\roman*)]
    \item The join $p\vee q$ is also a submodule of $\CHx$ subordinate to $\fkA'$ (it is important that $p$ and $q$ be subordinate to the same subalgebra to ensure that the join remains non-degenerate).
    \item
    Observe that $\cpcstar{p},\cpcstar{q}\subseteq\roecstar\CHx$ if and only if both $p$ and $q$ have locally finite rank, in which case the same holds for $p\vee q$ and hence $\cpcstar{p\vee q}\subseteq\roecstar\CHx$.
    Since the situation for the unital Roe-like algebras is even easier, this observation can be abbreviated as saying that if
    $\cpcstar{p},\cpcstar{q}\subseteq\roeclike\CHx$ then also $\cpcstar{p\vee q}\subseteq\roeclike\CHx$.
  \end{enumerate}
\end{remark}

\chapter{Uniformization theorems} \label{sec: uniformization}
The goal of this section is to prove~\cref{thm:intro: uniformization}, which is one of the main technical points in the rigidity theorems in the introduction.
The proof strategy is heavily inspired from the arguments of Braga--Farah--Vignati, especially \cites{braga_gelfand_duality_2022,braga_farah_rig_2021}.

\section{Definitions and setup}
The main objects of interest in this section are homomorphisms $\phi\colon\roeclike{\CHx}\to\roeclike\CHy$ among Roe-like \cstar{}algebras. If $\phi=\Ad(T)$ is spatially implemented, one may confound between $\phi$ and $T$ and use the latter to define geometric properties of the former. 
However, an arbitrary \Star{}homomorphism needs not be spatially implemented. As a first step, we thus need to extend the notion of ``geometrically meaningful'' (\emph{i.e.}\ controlled) from spatially implemented homomorphisms to arbitrary ones.
The inspiration in doing so comes from the observation that $T\colon \CHx\to\CHy$ is controlled if and only if $\Ad(T)$ preserves uniform control of propagation (see \cref{prop:coarse functions preserve controlled propagation}). 
\begin{definition}\label{def:controlled mapping}
  Let $\CD\subseteq \CB(\CHx)$ be a set operators and $\phi\colon\CD\to \CB(\CHy)$ any mapping.
  We say $\phi$ is \emph{controlled} if for every $E\in\CE$ there is some $F\in\CF$ such that for every $t\in\CD$ of $E$-controlled propagation the image $\phi(t)$ has $F$-controlled propagation.
\end{definition}

That is, a controlled map $\phi$ sends operators of \emph{equi} controlled propagation to operators of \emph{equi} controlled propagation. 
Observe that this definition makes sense for any mapping, not only \Star{}homomorphisms. For instance, if $T\colon\CHx\to\CHy$ is not an isometry then $\Ad(T)\colon\CB(\CHx)\to\CB(\CHy)$ is not a \Star{}homomorphism, but it still follows from \cref{prop:coarse functions preserve controlled propagation} that $\Ad(T)$ is controlled if and only if $T$ is a controlled operator (compare with \cref{def:controlled-prop operator}).
In the following, $\CD$ will often be $\cpstar\CHx$ (or its subset of contractions).

\medskip

With some thought, one quickly realizes that \cref{def:controlled mapping} is too restrictive to study arbitrary \Star{}homomorphisms of Roe-like \cstar{}algebras. In fact, not even isomorphisms of such \cstar{}algebras need to be controlled.
Instead, one can give \emph{approximate} and \emph{quasi} versions of \cref{def:controlled mapping} that are much better suited for this purpose. The following definitions should be compared with \cref{def:cp-ope,def:ql-ope} respectively.
\begin{definition}\label{def:quasi-controlled mapping}
  Let $\CD\subseteq \CB(\CHx)$ be a set operators and $\phi\colon\CD\to \CB(\CHy)$ any mapping.
  We say:
  \begin{enumerate}[label=(\roman*)]
    \item \label{def:quasi-controlled mapping: approx-control} $\phi$ is \emph{approximately-controlled} if for every $E \in \CE$ and $\varepsilon > 0$ there is some $F \in \CF$ such that for every $t\in\CD$ of $E$-controlled propagation the image $\phi(t)$ is $(\varepsilon\norm{t})$\=/$F$\=/approximable.
    \item \label{def:quasi-controlled mapping: quasi-control} $\phi$ is \emph{quasi-controlled} if for every $E \in \CE$ and $\varepsilon>0$ there is some $F\in\CF$ such that for every $t\in\CD$ of $E$-controlled propagation the image $\phi(t)$ is $(\varepsilon\norm{t})$\=/$F$\=/quasi-local.
  \end{enumerate}
\end{definition}

Note that approximately-controlled mappings are always quasi-controlled, while the converse needs not be true in general (recall \cref{rmk: approximable vs quasilocal}).
In the sequel we shall work with both concepts in \cref{def:quasi-controlled mapping}.
In fact, in the later sections we shall see that the weaker notion of ``quasi-control'' already suffices to prove rigidity for Roe-like \cstar{}algebras (namely \cref{thm:intro: rigidity})---and it is actually the setting were our techniques apply most naturally. This can be seen as the heart of the unifying nature of our techniques.
On the other hand approximate control is needed when dealing with the refined rigidity results (\emph{e.g.}\ \cref{cor:intro: all groups are iso}).

\begin{remark}
  The notion of approximately controlled mapping appears in~\cite{braga2021uniform}*{Definition 3.2} and \cite{braga_gelfand_duality_2022}*{Definition~3.1~(2)} under the name of \emph{coarse-like}.
  We have chosen to change the nomenclature to \emph{approximately-} and \emph{quasi-}controlled to highlight that the former deals with approximable operators while the latter with quasi-local ones.
\end{remark}

In the sequel we will often need to work with contractions having propagation controlled by a specific entourage. It is hence useful to introduce the following.

\begin{notation} \label{not:ql-cp-op}
  Given $E \in \CE$, we let
  \begin{align*}
    \cpop{E} &\coloneqq \{t \in \CB(\CHx)  \mid \supp(t) \subseteq E\}, \\
    \cpcontr{E} &\coloneqq \cpop{E}\cap\CB(\CHx)_{\leq 1}.
  \end{align*}
\end{notation}

In other words, $\cpcontr{E}$ is nothing but $\cscontr E$ as introduced in \eqref{eq: S-Supp}, except that here we insist that $E\in\CE$ and hence we prefer using the term ``propagation'' instead of ``containment of support''.
Observe that $\cpcontr{E}$ is a \wot-closed subset of $\CB(\CH)$ by \cref{lemma:qloc-sot-closed}.

From this point on, $\CD$ will always be $\cpstar\CHx$. More specifically, we will work with a mapping $\phi\colon\cpstar\CHx\to\roeclike\CHy$ which will, moreover, be linear.
The following theorem is the main goal of this section, and constitutes a prototypical example of ``uniformization result''.

\begin{theorem}\label{thm: uniformization}
  Let $\crse X$ and $\crse Y$ be coarse spaces, equipped with modules $\CHx$ and $\CHy$.
  Let $\phi \colon \cpstar{\CHx} \rightarrow \roeclike{\CHy}$ be a \Star{}homomorphism.
  Suppose that:
  \begin{enumerate}[label=(\roman*)]
    \item \label{thm: uniformization-assumption-dis} $\CHx$ is discrete;
    \item \label{thm: uniformization-assumption-adm} $\CHy$ is admissible;
    \item \label{thm: uniformization-assumption-cnt} $\CF$ is countably generated;
    \item \label{thm: uniformization-assumption-sot} $\phi$ is strongly continuous.
  \end{enumerate}
  If $\roeclike{\CHy}$ is $\cpcstar{\CHy}$, then $\phi$ is approximately-controlled. Likewise, if $\roeclike{\CHy}$ is $\qlcstar{\CHy}$, then $\phi$ is quasi-controlled.
\end{theorem}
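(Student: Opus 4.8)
The plan is to argue by contradiction, exploiting a Baire-category/diagonalization scheme in the spirit of \cites{braga_farah_rig_2021,braga_gelfand_duality_2022}. Fix $E\in\CE$ and $\varepsilon>0$ and suppose that no $F\in\CF$ works as required. Since $\CF$ is countably generated (hypothesis \ref{thm: uniformization-assumption-cnt}), we may fix a countable cofinal chain $F_1\subseteq F_2\subseteq\cdots$ of entourages in $\CF$ (using \cref{prop:metrizable coarse structure iff ctably gen}); it then suffices to rule out, for each $n$, that $F=F_n$ works, \emph{and simultaneously for all $n$}. The failure of uniformization means: for each $n$ there is a contraction $t_n\in\cpcontr{E}$ such that $\phi(t_n)$ is \emph{not} $\varepsilon$-$F_n$-approximable (resp.\ not $\varepsilon$-$F_n$-quasi-local in the quasi-local case). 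Using discreteness of $\CHx$ (hypothesis \ref{thm: uniformization-assumption-dis}) I would fix a discrete partition $X=\bigsqcup_{i\in I}A_i$ and, after thickening, take $E$ to be a block-entourage; then each $t_n$ can be chopped into pieces of $E$-controlled propagation living on well-separated ``blocks'' of the partition, so that the relevant pieces are mutually orthogonal. This is the standard trick that lets one pack infinitely many ``bad witnesses'' into a single operator.

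The core construction is then to build a single operator $t\in\cpstar{\CHx}$, supported on $E$, of the form $t=\sum_{k}t_{n_k}^{(k)}$ where each $t_{n_k}^{(k)}$ is a suitably rescaled, spatially translated copy of (a block-piece of) some bad witness $t_{n_k}$, the copies being pairwise orthogonal and placed on blocks that go to infinity in $\crse X$. The orthogonality guarantees (via \cref{lem: sums of orthogonal is cts} and \cref{lemma:quasi-controlled:q-1-q}) that this sum converges strongly to a bounded operator of $E$-controlled propagation, hence $t\in\cpstar{\CHx}$, so $\phi(t)$ is defined and lands in $\roeclike{\CHy}\in\{\cpcstar{\CHy},\qlcstar{\CHy}\}$. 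In particular $\phi(t)$ is $(\varepsilon/3)$-$F$-approximable (resp.\ quasi-local) for \emph{some} $F\in\CF$, hence for some $F_N$ in our cofinal chain. Now the contradiction: because $\phi$ is strongly continuous (hypothesis \ref{thm: uniformization-assumption-sot}), $\phi$ applied to the block-projections recovers $\phi(t_{n_k}^{(k)})$ as compressions of $\phi(t)$; and because $\CHy$ is admissible (hypothesis \ref{thm: uniformization-assumption-adm}), approximability/quasi-locality of $\phi(t)$ by $F_N$-controlled operators passes to each compression $\phi(t_{n_k}^{(k)})$, up to a controlled enlargement $F_N\mapsto F_N'$ of the entourage (using \cref{lem:supports and operations quasi-local} in the quasi-local case, and a standard cutting argument for approximability). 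Choosing $k$ with $n_k>N'$ (where $F_{N'}$ absorbs $F_N'$) and undoing the translation/rescaling, we find that $\phi(t_{n_k})$ \emph{is} $\varepsilon$-$F_{n_k}$-approximable (resp.\ quasi-local), contradicting the choice of $t_{n_k}$.

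Two technical points deserve care. First, the passage ``$\phi(t)$ has controlled approximation/quasi-locality $\Rightarrow$ so does each orthogonal summand $\phi(t_{n_k}^{(k)})$'' requires that the compressions $q_k\phi(t)q_k$ equal $\phi(q_k t q_k)=\phi(t_{n_k}^{(k)})$, where $q_k$ is the image projection of the $k$-th block; this is where strong continuity of $\phi$ is used, together with \cref{lemma:quasi-controlled:q-1-q} to ensure $q_k$ itself has controlled propagation, so that cutting by $q_k$ only enlarges supports in a controlled way. For the approximable case one additionally needs a Banach--Steinhaus / weak-compactness argument as in \cref{lemma:qloc-sot-closed}\ref{lemma:qloc-sot-closed:3} to produce the $F_N'$-controlled approximant of the compression from that of $\phi(t)$. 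Second, to make the translations legitimate one needs enough room in $\crse Y$ — but since the summands are orthogonal in $\CHx$ and $\phi$ is a $*$-homomorphism, their images are orthogonal in $\CHy$, so one works entirely with compressions and no ``room in $\crse Y$'' is actually required; the only place geometry of $\crse Y$ enters is through the \emph{single} entourage $F$ coming from $\phi(t)\in\roeclike{\CHy}$, and this is exactly what forces uniformity.

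I expect the main obstacle to be the bookkeeping in the last step: getting the quantitative bound that the compression $q_k\phi(t)q_k$ inherits $\varepsilon$-$F_N'$-approximability (resp.\ quasi-locality) from $\phi(t)$ with $F_N'$ depending \emph{only} on $F_N$ and the fixed block-entourage $E$ (via the fixed non-degeneracy/admissibility gauges of $\CHy$), and \emph{not} on $k$ — since any $k$-dependence there would collapse the argument. This is handled by \cref{lem:supports and operations quasi-local} and \cref{cor: support and submodules} in the quasi-local setting, and by a routine but slightly delicate triangle-inequality argument in the approximable setting; the rescaling by $\norm{t}$ in \cref{def:quasi-controlled mapping} is what keeps the $\varepsilon$'s from accumulating across the infinitely many summands.
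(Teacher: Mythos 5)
The central step in your plan is the claim that each bad witness $t_n\in\cpcontr{E}$ can be ``chopped into pieces of $E$-controlled propagation living on well-separated blocks of the partition, so that the relevant pieces are mutually orthogonal'' while retaining their badness. This does not work for a general contraction of controlled propagation. Writing $t_n=\sum_{(i,j)}\chf{A_i}t_n\chf{A_j}$ does not give orthogonal summands (they share rows and columns), and grouping the indices into $E$-separated clusters loses the cross-block entries and gives compressions which have no reason to remain ``bad'': approximability/quasi-locality is not a local condition, so $\chf{A}t_n\chf{A}$ may be well approximable even when $t_n$ is not. Furthermore, the ``translation'' step you invoke has no meaning on a general coarse space — there is no translation action to move a piece of $t_n$ to a fresh region — and even on a group the translated piece need not stay in $\cpcontr E$. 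Finally, the compression step in $\CHy$ is circular: the projections $q_k$ you cut with are $\phi(p_k)$ or image projections of $\phi(t_{n_k}^{(k)})$, and the paper's \cref{lemma:quasi-controlled:q-1-q} only applies to operators already known to have controlled propagation in the target — which is precisely what we are trying to establish. So there is no fixed $F_N'$ (independent of $k$) making the compressions $\varepsilon$-$F_N'$-approximable, and the argument collapses exactly where you flag the risk.

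Structurally, what you have sketched is essentially the paper's \cref{lem:spakula-willett easy} (a statement about families that are already orthogonal), applied directly without first producing an orthogonal family from arbitrary bad witnesses. The paper's actual route has two additional layers that cannot be skipped: \cref{thm: uniformization phenomenon}, a Baire-category argument on the Baire space $(\cpcontr E,\sot)$ reducing the problem to \emph{one-vector} control, and \cref{prop:equi-controlled rank-1}, which exploits that the operators $tp_v$ are rank one to run an iterative construction $v_{n+1}=(1-q_n)t_{n+1}(v)$ producing genuinely orthogonal vectors, yielding orthogonal operators $e_{v_{n_{k+1}},v_{n_k}}$ to feed into \cref{lem:spakula-willett easy} via \cref{lem:e_vnvm non-uniformly-approximable}. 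The orthogonality there is manufactured by projecting off previously-used image subspaces, not by spatial separation — and this is only possible after the rank-one reduction. Your proposal would need to supply a substitute for this double-Baire / one-vector architecture, and as written it does not.
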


It is remarkable that \cref{thm: uniformization} even holds on coarse spaces that are not coarsely locally finite.
Before discussing its other hypotheses, we introduce some notation. Let $\phi$ be fixed. For $\varepsilon > 0$ and $F \in \CF$, we let
\begin{itemize}
  \item $\phiinvql{\varepsilon}{F} \coloneqq \phi^{-1}\paren{\braces{t\in\CB\left(\CHy\right)\mid t\text{ is $\varepsilon$-$F$-quasi-local}}}$.
  \item $\phiinvcp{\varepsilon}{F} \coloneqq \phi^{-1}\paren{\braces{t\in\CB\left(\CHy\right)\mid t\text{ is $\varepsilon$-$F$-approximable}}}$.
\end{itemize}
The proof of \cref{thm: uniformization} is a consequence of a more abstract uniformization phenomenon (see \cref{thm: uniformization phenomenon} below), and the argument is almost identical in the quasi-local and approximable cases.
To make this last point more apparent, we further define:
\begin{itemize}
  \item $\phiinv{\varepsilon}{F}$ is $\phiinvcp{\varepsilon}{F}$ if the image of $\phi$ is in $\cpcstar\CHy$, and $\phiinvql{\varepsilon}{F}$ otherwise.
\end{itemize}

Since $\phi$ is additive, $\phiinv{\varepsilon}{F}$ is closed under taking minus sign. Moreover,
\begin{equation}\label{eq:additive CY}
  \phiinv{\varepsilon_1}{F_1} + \phiinv{\varepsilon_2}{F_2} \subseteq \phiinv{\varepsilon_1+\varepsilon_2}{F_1 \cup F_2}.
\end{equation}
The above containment will be used numerous times in the following arguments, and it will often take the form of ``given $F'\subseteq F$, if $t'\in \phiinv{\varepsilon/2}{F'}$ but $t\notin\phiinv{\varepsilon}{F}$, then $t+t'\notin\phiinv{\varepsilon/2}{F}$''. 

Since $\phi$ is linear, \cref{thm: uniformization} can be restated as saying that for every $E \in\CE$ and $\varepsilon>0$ there is $F \in \CF$ such that $\cpcontr{E} \subseteq \phiinv{\varepsilon}{F}$.
\begin{remark}\label{rmk: on uniformization hypotheses}
  Regarding the hypotheses of \cref{thm: uniformization}:
  \begin{enumerate}[label=(\alph*)]
    \item Conditions \cref{thm: uniformization-assumption-dis,thm: uniformization-assumption-cnt,thm: uniformization-assumption-sot} are crucial for the strategy of proof.
    \item Condition \cref{thm: uniformization-assumption-adm} is needed to guarantee that $\qlcstar{\CHy}$ is a \cstar{}algebra.
    \item The containment \eqref{eq:additive CY} is also crucial, it is therefore important that $\phi$ be additive. Fully fledged linearity of $\phi$ is used in \cref{lemma:quasi-controlled:qtp}.
    \item The assumption that $\phi$ be a \Star{}homomorphism is more convenient than necessary. It is only used in the proof of \cref{lem:e_vnvm non-uniformly-approximable}. Later on, we shall encounter another instance of uniformization result that does not need it (see \cref{thm: uniformization phenomenon}).
    \item\label{rmk: on uniformization hypotheses-auto cts} Note that in \cref{thm: uniformization} the map $\phi$ is not required to be norm-continuous. However, the \Star{}homomorphism condition automatically implies that it is contractive on every \cstar{}sub-algebra of $\cpstar{\CHx}$. In particular, this applies to $\CB(\CH_A)\subseteq\cpstar\CHx$ when $A\subseteq X$ is measurable and bounded (or, more generally, to the non-commutative Cartan subalgebras constructed in \rfThmCartanSubalg). Importantly, this also applies to all matrix units $\matrixunit{v'}{v}$ of controlled propagation, as \cref{lem: matrix unit controlled propagation} shows that they belong to one such $\CB(\CH_A)$. This last fact is used in \cref{lem:e_vnvm non-uniformly-approximable}.
    \item \cref{rmk:effective uniformization} below sharpens the statement of \cref{thm: uniformization}, making it ``effective''. This show that \cref{thm: uniformization} can be adapted to include other sorts of domains for $\phi$, such as $\CD=\cpop{E}$. To limit the already significant level of technicality, we will not take this route here and shall be content to let $\CD = \cpstar\CHx$.
  \end{enumerate}
\end{remark}

\begin{remark}
  A very particular class of \Star{}homomorphisms between Roe-like algebras are the homomorphisms $\phi\colon\cpcstar\CHx\to\cpcstar \CHy$ that send a \emph{Cartan subalgebra} into another one.
  Note that the Cartan subalgebras we consider here are, in general, non-commutative. We refer the reader to \rfThmCartanSubalg for details about this notion.

  Concretely, this means that there are discrete partitions $\crse X= \bigsqcup_{i\in I} A_i$ and $\crse Y= \bigsqcup_{j\in J} B_j$ such that $\phi$ maps \(\ell^\infty(I, \CB(\CH_i)) \subseteq \cpcstar \CHx\) into \(\ell^\infty(J, \CB(\CH_j)) \subseteq \cpcstar \CHx\), where \(\CH_i \coloneqq \chf{A_i}(\CHx)\) and \(\CH_j \coloneqq \chf{B_j}(\CHx)\) as usual. In other words $\phi$ sends operators supported on the block-diagonal of $\crse X$ to operators supported on the block-diagonal of $\crse Y$.

  If $\crse X$ has bounded geometry, it is then easy to directly show that such a $\phi$ is approximately controlled without relying on \cref{thm: uniformization}. Indeed,
  it follows from bounded geometry of \(\crse X\) that for any controlled entourage \(E \in \CE\) there are finitely many elements\footnote{\, The $n_i$ can be chosen to be \emph{normalizers} of $\ell^\infty(I, \CB(\CH_i))$ in  $\cpcstar \CHx$ (recall that \(n \in A\) \emph{normalizes} \(B \subseteq A\) if \(aBa^*, a^*Ba \subseteq B\)). These are induced by injective partial translation of the space.} \(n_1, \dots, n_k\in\cpcstar\CHx\) such that
  \[
    \cpop{E} \subseteq n_1 \ell^\infty(I, \CB(\CH_i)) + \dots + n_k \ell^\infty(I, \CB(\CH_i)).
  \] 
  Choosing some \(F_j \in \CF\) approximately containing the support of \(\phi(n_j)\) yields that each term \(n_j \ell^\infty(I, \CB(\CH_i))\) gets approximately mapped to \(\cpop{F_j}\) (cf.\ \cref{lemma:diagonal-ais-preserve-supp}). The controlled entourage \(F \coloneqq F_1 \cup \dots \cup F_k\) then witnesses the desired approximate control of \(\phi\).

  This simple technique seems, however, hard to utilize for coarsely locally finite $\crse X$ that are not of bounded geometry or for other Roe-like \cstar{}algebras. Knowing that $\cpcstar \variable$ is the multiplier algebra of $\roecstar\variable$, one could adapt this approach for homomorphisms of Roe \cstar{}algebras. However, the known proof of that $\cpcstar \variable= \CM(\roecstar\variable)$ relies on the rigidity theorem (cf.\ \cite{braga_gelfand_duality_2022} and \cref{cor: cpc is multiplier of roec}).
\end{remark}

The proof of \cref{thm: uniformization} is long and technical, and it is hence split in several sections.

\section{Uniformization phenomenon: the Baire strategy}
In this section we outline the key points of the proof of \cref{thm: uniformization}, postponing the most technical part to the next section.
We begin by proving the following intermediate result, necessary for the proof of \cref{thm: uniformization}.
In the quasi-local setting \cref{lem:spakula-willett easy} is essentially a corollary of \cite{spakula_rigidity_2013}*{Lemma~3.2}, while a proof in the approximable setting can be extracted from \cite{braga_farah_rig_2021}*{Lemma 4.9} or, more precisely, from \cite{braga_gelfand_duality_2022}*{Lemma 3.9}.
In all of those references the result is only stated for bounded geometry metric spaces, but with some work their proofs can be adapted to our more general setting as well.
We provide below a short proof for \cref{lem:spakula-willett easy} that works in both cases simultaneously. This proof relies on a first application of the Baire Category Theorem, and also serves as a blueprint for the strategy of proof of \cref{thm: uniformization}.
\begin{lemma}\label{lem:spakula-willett easy}
  Let $\paren{F_n}_{n\in \NN}$ be a cofinal nested set in $\CF$. Suppose there is $\varepsilon > 0$ and a family $\paren{t_n}_{n\in\NN}$ of orthogonal operators in $\CB(\CHy)_{\leq 1}$ so that $t_n$ is not $\varepsilon$\=/$F_n$\=/approximable (resp.\ $\varepsilon$\=/$F_n$\=/quasi-local). Then there is some $K \subseteq \NN$ such that $\sum_{k\in K}t_{k}$ is not approximable (resp.\ quasi-local).
\end{lemma}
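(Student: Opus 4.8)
The plan is to run a Baire Category argument on the compact metrizable space $\CP(\NN)\cong\{0,1\}^{\NN}$ of subsets of $\NN$, equipped with the product topology. Since $\paren{t_n}_{n\in\NN}$ is a family of pairwise orthogonal contractions, for every $K\subseteq\NN$ the $\sot$-sum $t_K\coloneqq\sum_{k\in K}t_k$ converges to a contraction in $\CB(\CHy)$, and by \cref{lem: sums of orthogonal is cts} the assignment $K\mapsto t_K$ is continuous from $\CP(\NN)$ to $\CB(\CHy)_{\leq 1}$ with the $\sot$. The goal is to find $K$ for which $t_K$ fails to be approximable (resp.\ quasi-local).

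Arguing by contradiction, I would assume $t_K$ is approximable (resp.\ quasi-local) for every $K\subseteq\NN$. Set $\varepsilon'\coloneqq\varepsilon/3$ and, for each $m\in\NN$, let $C_m\coloneqq\braces{K\subseteq\NN\mid t_K\text{ is }\varepsilon'\text{-}F_m\text{-approximable}}$ (resp.\ $\varepsilon'$-$F_m$-quasi-local). By \cref{lemma:qloc-sot-closed} the set of $\varepsilon'$-$F_m$-approximable (resp.\ quasi-local) operators is $\wot$-closed, hence $\sot$-closed, so each $C_m$ is closed in $\CP(\NN)$. Since $(F_m)_{m}$ is cofinal, every approximable (resp.\ quasi-local) operator is $\varepsilon'$-$F_m$-approximable (resp.\ quasi-local) for some $m$; thus $\CP(\NN)=\bigcup_{m\in\NN}C_m$. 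By the Baire Category Theorem some $C_m$ has nonempty interior, so there exist a finite $S\subseteq\NN$ and a subset $L\subseteq S$ such that every $K\subseteq\NN$ with $K\cap S=L$ belongs to $C_m$.

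Now I would pick $n\in\NN$ with $n\notin S$ and $F_n\supseteq F_m$ (possible since the $F_n$ are nested and cofinal, so any sufficiently large $n$ works). Put $K\coloneqq L$ and $K'\coloneqq L\cup\{n\}$; both meet $S$ exactly in $L$, so $t_K,t_{K'}\in C_m$. As $n\notin L$ and the $t_i$ are pairwise orthogonal, $t_{K'}=t_K+t_n$, whence $t_n=t_{K'}-t_K$. A difference of two operators with $F_m$-controlled propagation again has $F_m$-controlled propagation, so from the approximable case we get that $t_n$ is $2\varepsilon'$-$F_m$-approximable; in the quasi-local case $\|\chf{A'}t_n\chf A\|\leq\|\chf{A'}t_{K'}\chf A\|+\|\chf{A'}t_K\chf A\|\leq 2\varepsilon'$ whenever $A'$ is $F_m$-separated from $A$, so $t_n$ is $2\varepsilon'$-$F_m$-quasi-local. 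Finally, $F_m\subseteq F_n$ upgrades $F_m$-controlled propagation (resp.\ $F_m$-separatedness) to the $F_n$-version, so $t_n$ is $2\varepsilon'$-$F_n$-approximable (resp.\ quasi-local); since $2\varepsilon'=2\varepsilon/3\leq\varepsilon$, this contradicts the hypothesis that $t_n$ is not $\varepsilon$-$F_n$-approximable (resp.\ quasi-local).

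The only delicate point is the quantifier bookkeeping: one must make sure that ``$t_K$ approximable (resp.\ quasi-local) for \emph{all} $K$'' together with cofinality of $(F_m)$ really yields the covering $\CP(\NN)=\bigcup_m C_m$ by \emph{closed} sets, and that the two-point perturbation $K\mapsto K\cup\{n\}$ stays inside the same basic clopen set of $\CP(\NN)$, so that both $t_K$ and $t_{K'}$ land in $C_m$. Everything else—the $\sot$-continuity of $K\mapsto t_K$ and the $\wot$-closedness of the relevant operator classes—is already furnished by \cref{lem: sums of orthogonal is cts,lemma:qloc-sot-closed}, and the argument is word-for-word identical in the approximable and quasi-local cases, which is exactly why one proof handles both simultaneously.
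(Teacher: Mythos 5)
Your proof is correct and follows essentially the same Baire Category route as the paper: the map $K\mapsto t_K$ is $\sot$-continuous, the sets $C_m$ are closed and cover $\CP(\NN)$, and Baire yields a basic clopen set inside some $C_m$. The only cosmetic difference is in the final step — you perturb by a single coordinate $n\notin S$ and subtract ($t_n = t_{K'}-t_K$), whereas the paper decomposes an arbitrary $t_I$ into three pieces; both extract the same contradiction, and your version is if anything slightly leaner.
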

\begin{proof}
  By \cref{lem: sums of orthogonal is cts}, the mapping
  \[
    \varphi \colon \CP(\NN) \to \CB\left(\CH\right), \;\; I \mapsto t_I\coloneqq \sum_{i \in I} t_i
  \]
  is continuous with respect to the \sot.
  We let $\CX_n$ be the preimage of the $\varepsilon/3$-$F_n$-approximable (resp.\ $\varepsilon/3$-$F_n$-quasi-local) operators.
  The sets $\paren{\CX_n}_{n \in \NN}$ form an increasing sequence, and are closed by continuity (cf.\ \cref{lemma:qloc-sot-closed}).

  Assume by contradiction that for every $I\subseteq \NN$ the operator $t_I$ is approximable (resp.\ quasi-local). This precisely means that $\CP(\NN) = \bigcup_{n\in\NN}\CX_n$.
  By the Baire property, there is some $n_0 \in \NN$ such that $\CX_{n_0}$ has non-empty interior. Let $I_0\subseteq \NN$ be an element in the interior of $\CX_{n_0}$: this means that there is some finite $J_0\subseteq \NN$ such that
  \begin{equation*}
    (I_0\cap J_0) + \CP(\NN\smallsetminus J_0) \subseteq \CX_{n_0}.
  \end{equation*}

  On the other hand, there is an $n \in \NN$ large enough such that for each of the finitely many $J \subseteq \CP(J_0)$ the operator $t_J$ is in $\CX_n$. We may assume $n= n_0$.
  Given any $I\subseteq \NN$ we may then write
  \begin{align*}
    t_I &= t_{I\cap J_0} + t_{(I\smallsetminus J_0)} \\
     & =t_{(I\smallsetminus I_0)\cap J_0} - t_{(I_0\smallsetminus I)\cap J_0} + (t_{I_0\cap J_0} + t_{I\smallsetminus J_0}) \in \CX_{n_0} +\CX_{n_0}+\CX_{n_0}.
  \end{align*}
  It follows that every $t_I$ is $\varepsilon$-$F_{n_0}$-approximable (resp.\ $\varepsilon$-$F_{n_0}$-quasi-local), against the hypothesis.
\end{proof}

Returning to \cref{thm: uniformization}, we need to show that if $\cpstar{\CHx}$ is contained in the union $\bigcup_{F\in\CF}\phiinv{\varepsilon}{F}$, then for every $E \in\CE$ and $\varepsilon>0$ there is some $F \in \CF$ such that $\cpcontr{E} \subseteq \phiinv{\varepsilon}{F}$. 
Observe that this is analogous to (and generalizes) the uniformization phenomenon that we just proved in \cref{lem:spakula-willett easy}.
It is hence natural to try to adapt the proof of \cref{lem:spakula-willett easy} in order to prove it.

There are two key points to the proof of \cref{lem:spakula-willett easy}. Firstly, one needs to work in a Baire space (in that case $\CP(\NN)$). Secondly, once an open subset $\Omega$ has been chosen (in that case $(I_0\cap J_0) + \CP(\NN\smallsetminus J_0)$), one needs to be able to decompose any operator as a finite sum of operators that belong to $\Omega$ or $\CX_{n_0}$ for some fixed $n_0$.

In the setting of \cref{thm: uniformization}, there are natural Baire spaces to consider, namely $\cpcontr{E}$ equipped with the \sot{}: we showed in \cref{prop:prop-e-baire} that ($\cpcontr{E}$,\sot) is a Baire space when $E$ is a block-entourage.
\footnote{\, On the contrary, $(\cpop E,\sot)$ is \emph{not} a Baire space.}
Finding the appropriate decompositions is more delicate, and will require making a judicious use of finite rank projections.
For later reference, it is important to further break down \cref{thm: uniformization} into a more fundamental `uniformization phenomenon'. To formulate it, it is convenient to introduce the following technical definition.

\begin{definition}\label{def: one-vector quasi-controlled mapping}
  With reference to \cref{def:quasi-controlled mapping}, a mapping $\phi\colon\CD\to \CB(\CHy)$ is \emph{one-vector approximately-controlled} if for every vector $v\in (\CHx)_{1}$ of bounded support, $E \in \CE$ and $\varepsilon > 0$ there is some $F \in \CF$ such that for every $t\in\cpop{E}$ with $tp_v\in\CD$, the image $\phi(tp_v)$ is $(\varepsilon\norm{t})$\=/$F$\=/approximable.

  We say that $\phi$ is \emph{one-vector quasi-controlled} if the above holds with `approximable' replaced by `quasi-local' everywhere.
\end{definition}

We need the following simple lemma.
\begin{lemma} \label{lem: uniformization finite rank}
  Let $\phi\colon\roestar\CHx\to \roeclike{\CHy}$ be additive, \Star{}preserving and one-vector approximately (resp.\ quasi)-controlled.
  Let also $p\in\CB(\CHx)$ be the projection onto a finite rank subspace of $\CHx$ generated by vectors of bounded support. 
  For every $E\in\CE$ and $\varepsilon>0$ there exists $F\in\CF$ such that for every $t\in\cpcontr E$ both $pt$ and $tp$ belong to $\phiinv{\varepsilon}{F}$.
\end{lemma}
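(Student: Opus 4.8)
The idea is to write $p$ as a finite join of rank-one projections $p_{v_1},\dots,p_{v_k}$ where each $v_j$ has bounded support, and then reduce the statement to the one-vector case applied to each $v_j$ separately. First I would observe that since $p$ has finite rank, we may choose a finite orthonormal basis $v_1,\dots,v_k$ of $p(\CHx)$, and since $p(\CHx)$ is by hypothesis generated by vectors of bounded support, each $v_j$ may be taken (after a Gram--Schmidt within a large enough bounded set, using that $\crse X$ need not be connected but finitely many bounded sets still live in finitely many components) to have bounded support; alternatively one uses \cref{lem:supports of vectors almost contained in bounded} together with a perturbation argument, but the cleanest route is to note $p=\bigvee_{j=1}^k p_{v_j}$ with the $v_j$ pairwise orthogonal of bounded support, whence $p = \sum_{j=1}^k p_{v_j}$.

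Next, fix $E\in\CE$ and $\varepsilon>0$. For each $j\in\{1,\dots,k\}$ apply the one-vector approximately (resp.\ quasi)-controlled hypothesis to the unit vector $v_j$, the entourage $E$, and the tolerance $\varepsilon/k$: this produces $F_j\in\CF$ such that for every $t\in\cpop{E}$ with $tp_{v_j}\in\roestar\CHx$ the image $\phi(tp_{v_j})$ is $(\tfrac{\varepsilon}{k}\norm{t})$\=/$F_j$\=/approximable (resp.\ quasi-local). Set $F\coloneqq F_1\cup\cdots\cup F_k\in\CF$ and also enlarge $F$ if necessary so that it is symmetric and contains the diagonal (harmless). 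Now take any $t\in\cpcontr E$. Then $tp = \sum_{j=1}^k tp_{v_j}$, each summand lies in $\cpop E$ (as $p_{v_j}$ has support in the diagonal over a bounded set, so composing does not increase propagation beyond $E$, cf.\ \cref{lemma:diagonal-ais-preserve-supp} or \cref{cor: sub-basis-has-prop-zero} after passing to a discrete partition; alternatively \cref{eq:support of composition}), and $\norm{t}\le 1$, so $\phi(tp_{v_j})\in\phiinv{\varepsilon/k}{F_j}\subseteq\phiinv{\varepsilon/k}{F}$. By additivity of $\phi$ and the containment \eqref{eq:additive CY}, $\phi(tp)=\sum_{j=1}^k\phi(tp_{v_j})\in\phiinv{\varepsilon}{F}$. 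For $pt$ one argues identically, using that $\phi$ is \Star{}preserving: $pt=(t^*p)^*$, and $\phiinv{\varepsilon}{F}$ is closed under adjoints since $\phi$ commutes with $*$ and the classes of $\varepsilon$\=/$F$\=/approximable (resp.\ quasi-local) operators are $*$\=/closed (for the quasi-local case, replacing $F$ by $F\cup\op F$ if it is not already symmetric). This gives the desired $F$ working for both $pt$ and $tp$.

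The only genuinely delicate point is the first step: producing an orthonormal basis of $p(\CHx)$ consisting of vectors of \emph{bounded} support, given only that $p(\CHx)$ is \emph{generated} by such vectors. The subtlety is that an orthonormal combination of boundedly supported vectors is still boundedly supported (a finite linear combination has support in the union of the supports, which is bounded provided all the supports lie in one coarsely connected component — and finitely many bounded sets do lie in finitely many components, so one may handle each component separately, or simply note the module splits accordingly). So one picks finitely many boundedly supported vectors spanning $p(\CHx)$, applies Gram--Schmidt, and obtains the required orthonormal basis with each basis vector boundedly supported; then $p=\sum_j p_{v_j}$ as needed. I expect the write-up of this reduction to be short, with the bookkeeping of $\varepsilon/k$ and the union $F=\bigcup_j F_j$ being entirely routine, and the invocation of \eqref{eq:additive CY} doing the real work.
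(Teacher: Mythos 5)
Your proof follows the paper's argument essentially verbatim: Gram--Schmidt to write $p=\sum_{j=1}^k p_{v_j}$ with $v_j$ orthonormal of bounded support, apply the one-vector hypothesis to each $v_j$ with tolerance $\varepsilon/k$, union the resulting $F_j$, and invoke additivity via \eqref{eq:additive CY}; the $pt$ case is handled by adjoints exactly as in the paper. Your extra care about Gram--Schmidt across coarsely connected components is a sound (and if anything slightly more careful) remark, but the approach is the same.
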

\begin{proof}
  It is enough to prove the statement for $tp$, as the statement for $pt$ then follows by taking adjoints.

  Let $w_1,\ldots,w_n$ be vectors of bounded support that generate $p(\CHx)$. The Gramm--Schmidt orthogonalization procedure yields an orthonormal basis $v_1\ldots,v_k$ for $p(\CHx)$ consisting of vectors that still have bounded support.\footnote{
    \, In view of the effective version of \cref{thm: uniformization}, it is worthwhile noting that if the vectors $w_j$ are subordinate to a discrete partition $\paren{A_i}_{i\in I}$, then the same is true for the $v_j$. In particular, the projections $p_{v_j}$ all have propagation controlled by $\diag(A_i\mid i\in I)$.
  }
  We then have $p=p_{v_1}+\cdots +p_{v_k} \in \CB(\CHx)$.
  By the one-vector approximately (resp.\ quasi)-controlled condition, there are $F_1, \dots, F_k \in\CF$ such that $tp_{v_i}\in\phiinv{\varepsilon/k}{F_i}$ for every $t\in \cpcontr E$ and all $i = 1, \dots, k$.
  Taking $F\coloneqq F_1\cup\cdots\cup F_k$ immediately yields that
  \[
    tp=tp_{v_1}+\cdots +tp_{v_k} \in \phiinv{\varepsilon}{F}
  \]
  for every $t\in \cpcontr E$, as claimed.
\end{proof}

We may now state and prove the following.
\begin{theorem}[The uniformization phenomenon]\label{thm: uniformization phenomenon}
  Let $\phi\colon\roestar\CHx\to \roeclike{\CHy}$ be linear, \Star{}preserving, one-vector approximately (resp.\ quasi)-controlled and \sot-continuous.
  Suppose that 
  $\CHx$ is discrete, and
  $\crse Y$ is countably generated.
  Then $\phi$ is approximately (resp.\ quasi)-controlled.
\end{theorem}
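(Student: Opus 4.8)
The plan is to adapt the Baire category argument of \cref{lem:spakula-willett easy} to the richer setting of $\cpcontr{E}$ equipped with the $\sot$, which is a Baire space by \cref{sec:baire}. Since $\phi$ is linear, the conclusion ``$\phi$ is approximately (resp.\ quasi)-controlled'' is equivalent to the statement that for each $E\in\CE$ and each $\varepsilon>0$ there is some $F\in\CF$ with $\cpcontr{E}\subseteq\phiinv{\varepsilon}{F}$. Suppose, for contradiction, that this fails for some fixed $E$ and $\varepsilon$. Fix a countable cofinal nested sequence $(F_n)_{n\in\NN}$ in $\CF$ (possible since $\crse Y$ is countably generated, by \cref{prop:metrizable coarse structure iff ctably gen}). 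For each $n$ set
\[
  \CX_n \coloneqq \cpcontr{E}\cap\phiinv{\varepsilon/2}{F_n}.
\]
These form an increasing sequence of subsets of $\cpcontr{E}$, each of which is $\sot$-closed: indeed $\cpcontr{E}$ is $\wot$-closed by \cref{lemma:qloc-sot-closed}, $\phi$ is $\sot$-continuous, and the sets of $\varepsilon/2$-$F_n$-approximable (resp.\ quasi-local) operators are $\wot$-closed again by \cref{lemma:qloc-sot-closed}, hence $\sot$-closed.

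\textbf{The main obstacle.} The crux of the argument, and the step I expect to be hardest, is to show $\cpcontr{E}=\bigcup_{n}\CX_n$, i.e.\ that \emph{every} $t\in\cpcontr{E}$ lies in some $\phiinv{\varepsilon/2}{F_n}$. This is the analogue of ``every $t_I$ is approximable (resp.\ quasi-local)'' in \cref{lem:spakula-willett easy}, but here it genuinely requires input: it is \emph{not} immediate that $\phi(t)$ is approximable/quasi-local merely because it lies in $\roeclike{\CHy}$, since a priori there is no uniform control at all. The plan is to reduce to the one-vector hypothesis by a cutting argument using the discreteness of $\CHx$. Fix a discrete partition $X=\bigsqcup_{i\in I}A_i$ and note that the set of block-entourages subordinate to it is cofinal in $\CE$, so we may enlarge $E$ and assume it is a block-entourage containing the diagonal. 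Given $t\in\cpcontr{E}$, one decomposes $t=\sum_{i\in I}t\chf{A_i}$ and uses \cref{lemma:diagonal-ais-preserve-supp}/\cref{cor: sub-basis-has-prop-zero} to see that each $t\chf{A_i}$ still has $E$-controlled propagation, with the various pieces having mutually far-apart sources; by a standard three-colouring of the index set $I$ coming from the block structure of $E$ (colour $i,j$ differently when $A_j\times A_i\subseteq E$), one writes $t$ as a finite sum of operators $t_c$ ($c$ a colour) each of which is a $\sot$-convergent sum of mutually orthogonal pieces $t\chf{A_i}$. For such an operator $t_c$, $\sot$-continuity of $\phi$ together with \cref{lemma:supports-operators-sum} and the one-vector controlled hypothesis (applied to each $t\chf{A_i}=t\,p$ where $p$ is the projection onto $\CH_{A_i}$, which is finite-dimensional only if the module has locally finite rank — otherwise one instead works with $t\,p_v$ for $v$ ranging over a basis of $\CH_{A_i}$ of bounded support and invokes \cref{lemma:quasi-controlled:qtp} / \cref{lem: uniformization finite rank}) shows $\phi(t_c)$ is quasi-local (resp.\ approximable). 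Summing over the finitely many colours gives $\phi(t)\in\roeclike{\CHy}$ quasi-local/approximable, hence $t\in\phiinv{\varepsilon/2}{F_n}$ for some $n$. This establishes $\cpcontr{E}=\bigcup_n\CX_n$.

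\textbf{Conclusion via Baire.} By the Baire property of $(\cpcontr{E},\sot)$, some $\CX_{n_0}$ has non-empty interior: there is $t_0\in\cpcontr{E}$ and a basic $\sot$-neighbourhood, which by \cref{lemma:sot-basis-top}\cref{lemma:sot-basis-top:nghbd-orth-sys} we may take of the form determined by finitely many basis vectors, all supported (after enlarging) on a single bounded measurable set $A_0$, with the property that $t_0+(\text{anything in that neighbourhood})\in\CX_{n_0}$. Translating into operator language, letting $p$ be the finite-rank projection onto the span of those basis vectors (vectors of bounded support), this says: for every $s\in\cpcontr{E}$ with $sp=0$ we have $t_0+s\in\phiinv{\varepsilon/2}{F_{n_0}}$. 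Now \cref{lem: uniformization finite rank} supplies some $F'\in\CF$ such that $tp, pt, (1-p)t(1-p)$-type pieces of any $t\in\cpcontr{E}$ lie in $\phiinv{\varepsilon'}{F'}$ for suitable small $\varepsilon'$; more precisely, writing an arbitrary $t\in\cpcontr{E}$ (with $E$ a block-entourage containing the diagonal) as
\[
  t = (1-q)t(1-p) + \bigl(qt(1-p)+tp\bigr),
\]
where $q$ is the projection onto the closure of the image of $tp$ (which by \cref{lemma:quasi-controlled:q-1-q} has propagation controlled by $E\circ\op E$, hence $1-q$ too), one checks that $(1-q)t(1-p)$ has $(E\circ\op E)$-controlled propagation and annihilates $p$ on the right, while $qt(1-p)+tp$ is a finite-rank-``corrected'' piece lying in $\phiinv{\varepsilon/4}{F'}$ for a fixed $F'$ by \cref{lem: uniformization finite rank}. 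Therefore $(1-q)t(1-p)\in\phiinv{\varepsilon/2}{F_{n_0}}-t_0$; combining via the additivity relation \eqref{eq:additive CY} and absorbing $t_0$ (itself in $\phiinv{\varepsilon/4}{F_{n_0}}$ after enlarging $n_0$, since $t_0$ is a single fixed operator and $\phi(t_0)\in\roeclike{\CHy}$ is approximable/quasi-local), we obtain $t\in\phiinv{C\varepsilon}{F_{n_0}\cup F'}$ for an absolute constant $C$, uniformly in $t\in\cpcontr{E}$. Since $E,\varepsilon$ were arbitrary (and $C$ can be absorbed by replacing $\varepsilon$ with $\varepsilon/C$ from the start), this contradicts our assumption and proves that $\phi$ is approximately (resp.\ quasi)-controlled.
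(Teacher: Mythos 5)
Your overall strategy (Baire category in $\cpcontr E$, locate an interior point, decompose an arbitrary $t$ via a fixed finite-rank projection) is the right one and matches the paper's. However, you misidentify where the effort lies, and the step you rush through contains genuine gaps.

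\textbf{The ``main obstacle'' is not an obstacle.} The equality $\cpcontr E = \bigcup_n\CX_n$ is immediate: every element of the codomain $\roeclike{\CHy}$ is by definition approximable (if $\roeclike{\CHy}\subseteq\cpcstar{\CHy}$) or quasi-local (if $\roeclike{\CHy}=\qlcstar{\CHy}$), so $\phi(t)$ is $\varepsilon/2$-$F$-approximable (resp.\ quasi-local) for \emph{some} $F\in\CF$, and cofinality of $(F_n)$ puts $t$ in some $\CX_n$. The colouring-based reduction you construct is therefore unnecessary; it is also broken on its own terms, since with no bounded-geometry assumption on $\crse X$ one cannot in general colour the index set $I$ with finitely many colours so that same-coloured blocks are $E$-separated.

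\textbf{The final decomposition is where the real work is, and it has gaps.} Three concrete problems:
\begin{enumerate}[label=(\alph*)]
  \item Your reformulation of the interior condition — ``for every $s\in\cpcontr E$ with $sp=0$, we have $t_0+s\in\phiinv{\varepsilon/2}{F_{n_0}}$'' — silently drops the constraint that $t_0+s$ must itself lie in $\cpcontr E$. This fails in general: the sum of two contractions can have norm $2$. The interior condition $\cpcontr{E}\cap(t_0+\sotnbhd{\delta}{V})\subseteq\CX_{n_0}$ only applies to operators that remain contractions of $E$-controlled propagation.
  \item Your decomposition $t=(1-q)t(1-p)+\bigl(qt(1-p)+tp\bigr)$ uses a projection $q$ that depends on $t$. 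But \cref{lem: uniformization finite rank} produces a uniform $F$ only for a \emph{fixed} finite-rank projection; with $q$ varying in $t$, the resulting $F$ varies too, and the uniformity you are trying to prove evaporates.
  \item Even ignoring the norm issue, $(1-q)t(1-p)$ has propagation controlled by $(E\circ\op E)\circ E$ (from composing the supports of $1-q$, $t$, and $1-p$), not by $E$. So $t_0+(1-q)t(1-p)$ does not lie in $\cpcontr E$ on propagation grounds either.
\end{enumerate}

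\textbf{What the paper does differently.} The projection $p$ is built from vectors subordinate to the discrete partition $(A_i)_{i\in I}$, so both $p$ and $1-p$ have support in $\diag(A_i\mid i\in I)$; by \cref{cor: sub-basis-has-prop-zero} pre- and post-composing with them does not increase $E$-controlled propagation. One then uses the fixed-$p$ decomposition $t=pt+(1-p)tp+(1-p)t(1-p)$, handles $pt$ and $(1-p)tp$ via \cref{lem: uniformization finite rank} (with the \emph{fixed} $p$), and — crucially — replaces the interior centre $t_0$ by $pt_0p$: the operator $(1-p)t(1-p)+pt_0p$ is a direct sum of contractions over orthogonal block ranges/domains, hence a contraction of $E$-controlled propagation lying in $pt_0p+\sotnbhd{\delta}{V}$, and thus in $\CX_{n_0}$. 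This circumvents all three issues simultaneously.
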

\begin{proof}
  Arbitrarily fix a discrete partition $X=\bigsqcup_{i\in I}A_i$, some positive $\varepsilon>0$ and a block-entourage $E\in \CE$. Choose a nested cofinal sequence $\paren{F_n}_{n\in\NN}$ in $\CF$, and let $\CX_n \coloneqq \phiinv{\varepsilon}{F_n}\cap \cpcontr{E}$. Since $\phi$ is \sot-continuous, \cref{lemma:qloc-sot-closed} shows that $(\CX_n)_{n \in \NN}$ is a sequence of closed subsets of $\cpcontr{E}$.
  Since $(F_n)_{n \in \NN}$ is nested, so is $(\CX_n)_{n \in \NN}$. Moreover, $\cpcontr{E}=\bigcup_{n\in\NN}\CX_n$ by cofinality of $(F_n)_{n \in \NN}$.

  As $\cpcontr{E}$ is a Baire space (cf.\ \cref{prop:prop-e-baire}), there is some $n_0 \in \NN$ such that $\CX_{n_0}$ has non-empty interior.
  Explicitly, by \cref{lemma:sot-basis-top} this means that there is some $\delta>0$ small enough, a finite $V\in(\CHx)_{\leq 1}$ and some $t_0 \in\CX_{n_0}$ such that
  \begin{equation}\label{eq: thm: uniformization interior}
    \emptyset \neq \cpcontr{E} \cap \bigparen{t_0 + \sotnbhd{2\delta}{V}} \subseteq \CX_{n_0}.
  \end{equation}
  Furthermore, we may assume that $V$ is subordinate to the discrete partition $\paren{A_i}_{i\in I}$.

  By discreteness,
  for each $v\in V$ there is a finite $I_v\subseteq I$ such that $t_0(v)\approx_\delta \sum_{i\in I_v}w_{i,v}$, where $w_{i,v}\coloneqq\chf{A_i} t_0(v)$.
  Let $p$ denote the projection onto the finite dimensional vector subspace of $\CHx$ spanned by all the vectors $\paren{w_{i,v}}_{i\in I_v,v\in V}$ and $\paren{v}_{v \in V}$.
  Observe that $p$ satisfies the hypotheses of \cref{lem: uniformization finite rank} and it has support contained in $\diag(A_i\mid i\in I)$. Moreover,
  \(
    pt_0 p \in t_0 + \sotnbhd{\delta}{V}
  \).
  It then follows from \eqref{eq: thm: uniformization interior} that, in fact,
  \begin{equation}\label{eq: thm: uniformization interior 2}
    \cpcontr{E}\cap\bigparen{p t_0 p + \sotnbhd{\delta}{V}}\subseteq \CX_{n_0}.
  \end{equation}

  Let now $t\in \cpcontr{E}$ be arbitrary. By construction, the operators $pt$, $(1-p)t$, $tp$ and $t(1-p)$ also have $E$-controlled propagation (cf.\ \cref{lemma:diagonal-ais-preserve-supp}). We may write $t$ as the sum
  \[
    t= pt + (1-p)tp +(1-p)t(1-p).
  \]
  Crucially, \cref{lem: uniformization finite rank} implies that the first two terms belong to $\CX_{n}$ for some large enough $n$ (depending on $p$ but not on $t$). Choosing the largest of the two, we may as well assume that $n_0 = n$.
  On the other hand, since the image of $p$ contains $V$ it is clear that $(1-p)t(1-p)\in\sotnbhd{\delta}{V}\cap \cpcontr{E}$.
  Writing
  \[
    t= pt + (1-p)tp +\bigparen{(1-p)t(1-p) + p t_0 p} - p t_0 p,
  \]
  it follows from \eqref{eq: thm: uniformization interior 2} that $\cpcontr{E}\subseteq \CX_{n} +\CX_{n}+ \CX_{n}+ \CX_{n}\subseteq \phiinv{4\varepsilon}{F_{n}}$.
  The statement then follows by linearity.
\end{proof}

In light of \cref{thm: uniformization phenomenon}, in order to prove \cref{thm: uniformization}, it is sufficient to prove the one-vector approximable/quasi-control. 
We briefly mention that in order to do that we will use \cref{lem:spakula-willett easy}. Thus, \cref{thm: uniformization} uses a Baire-type argument \emph{twice}.
The first time it proves a uniformization theorem for \emph{orthogonal} operators, the second it extends it to operators that may fail to be orthogonal.

\section{The one-vector control}\label{ssec: uniformization rank-one}
In this section we complete the proof of \cref{thm: uniformization}. For this reason, we work under the following.
\begin{convention*}
  For the rest of this section, $\crse X$, $\crse Y$, $\CHx$, $\CHy$ and $\phi$ are fixed as in \cref{thm: uniformization}. All the statements below are proved under these assumptions (\emph{e.g.}\ regarding discreteness and countability of coarse geometric modules).
\end{convention*}

It remains to prove the following, which is surprisingly technical.
\begin{proposition} \label{prop:equi-controlled rank-1}
  $\phi$ is one-vector approximately (resp.\ quasi)-controlled. That is, for every fixed unit vector $v\in(\CHx)_{1}$ of bounded support, $E \in \CE$ and $\varepsilon > 0$ there is some $F \in \CF$ such that $tp_v \in \phiinv{\varepsilon}{F}$ for all $t \in \cpcontr{E}$.
\end{proposition}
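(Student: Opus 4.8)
Fix $v \in (\CHx)_1$ of bounded support, $E \in \CE$ a block-entourage containing the diagonal (we may assume this, as block-entourages are cofinal and enlarging $E$ only makes the statement stronger), and $\varepsilon > 0$. Write $p \coloneqq p_v$, a rank-one projection of controlled propagation; since $v$ has bounded support, $p \in \CB(\CH_A)$ for a bounded measurable $A$, so $tp_v$ lies in $\roestar\CHx$ for every $t \in \cpcontr E$ and $\phi(tp_v)$ makes sense. The goal is a single $F \in \CF$ with $tp_v \in \phiinv{\varepsilon}{F}$ for \emph{all} such $t$. I would argue by contradiction: assume that for every $F \in \CF$ there is $t_F \in \cpcontr E$ with $t_F p_v \notin \phiinv{\varepsilon}{F}$. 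Using a nested cofinal sequence $(F_n)_{n \in \NN}$ in $\CF$ (available since $\crse Y$ is countably generated) I would extract a sequence $t_n \in \cpcontr E$ with $t_n p_v \notin \phiinv{\varepsilon}{F_n}$.

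\textbf{Reduction to an orthogonal family.} The heart of the argument is to replace the $t_n p_v$ by a family of \emph{pairwise orthogonal} operators with the same defect, so that \cref{lem:spakula-willett easy} applies. Here the rank-one nature of $p_v$ is decisive: every $t_n p_v$ has image in $\CHx$ and the operators $t_n p_v$ all have range contained in spaces that can be compared. More precisely, I would use discreteness of $\CHx$ to approximate $t_n(v)$ by $\chf{C_n} t_n(v)$ for a bounded measurable $C_n$ (by \cref{lem:supports of vectors almost contained in bounded}, or \cref{cor:supports of vectors almost contained in bounded}), so that each $t_n p_v$ is within $\varepsilon/3$ of an operator $s_n p_v$ with $s_n$ of $E$-controlled propagation and \emph{finite rank} — indeed rank one, as $s_n p_v = \matrixunit{\chf{C_n} t_n(v)}{v}$ up to a scalar. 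Then $\phi(s_n p_v)$ is still not $(\varepsilon - 2\varepsilon/3)$-$F_n$-approximable (resp.\ quasi-local), since $\phi$ is contractive on the relevant $\CB(\CH_{A'})$ by \cref{rmk: on uniformization hypotheses}\cref{rmk: on uniformization hypotheses-auto cts}. Now the key point: among the matrix units $\matrixunit{w_n}{v}$ (with $w_n = \chf{C_n} t_n(v)$, $\norm{w_n} \le 1$), one can, passing to a subsequence and using a Gram--Schmidt / orthogonalization argument, arrange that the \emph{ranges} $\phi(\matrixunit{w_n}{v})(\CHy)$ become ``spread out'' — and crucially $\phi(\matrixunit{w_n}{v})^* \phi(\matrixunit{w_m}{v}) = \phi(\matrixunit{v}{w_n}\matrixunit{w_m}{v}) = \scal{w_n}{w_m} \cdot \norm{v}^2 \phi(p_v)$ and $\phi(\matrixunit{w_n}{v})\phi(\matrixunit{w_m}{v})^* = \scal{w_m}{w_n} \phi(\matrixunit{w_n}{w_m})$ — so replacing $(w_n)$ by an orthonormalized sequence $(\tilde w_n)$ (which is still subordinate to bounded sets) makes the operators $\phi(\matrixunit{\tilde w_n}{v})$ pairwise orthogonal. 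Here is where the $\Star{}$homomorphism hypothesis is genuinely used, exactly as flagged in \cref{rmk: on uniformization hypotheses}. One must check the orthogonalized operators still fail to be $\varepsilon'$-$F_n$-approximable/quasi-local for some uniform $\varepsilon' > 0$ and a (re-indexed) cofinal nested sequence; this is the technically delicate bookkeeping step, and I would expect it to require a careful choice of constants and a diagonal extraction.

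\textbf{Applying the Baire step and deriving a contradiction.} Having produced pairwise orthogonal contractions $T_n \coloneqq \phi(\matrixunit{\tilde w_n}{v})$ with $T_n$ not $\varepsilon'$-$F_n$-approximable (resp.\ quasi-local), \cref{lem:spakula-willett easy} gives $K \subseteq \NN$ with $\sum_{k \in K} T_k$ not approximable (resp.\ not quasi-local). On the other hand, $\sum_{k \in K} \matrixunit{\tilde w_k}{v}$ converges strongly (the summands being pairwise orthogonal contractions) to an operator $t_\infty \in \CB(\CHx)$, and because every $\matrixunit{\tilde w_k}{v}$ has $E$-controlled propagation while all $\tilde w_k$ are subordinate to a fixed discrete partition, \cref{cor: sub-basis-has-prop-zero} (or \cref{lemma:qloc-sot-closed}\cref{lemma:qloc-sot-closed:1} together with \cref{lemma:diagonal-ais-preserve-supp}) shows $t_\infty$ still has $E$-controlled propagation, hence $t_\infty \in \cpstar\CHx$. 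Since $\phi$ is \sot-continuous, $\phi(t_\infty) = \sum_{k\in K} T_k$ in the \sot. But $\phi(t_\infty) \in \roeclike\CHy$, which is \emph{contained} in $\cpcstar\CHy$ (resp.\ $\qlcstar\CHy$), so $\phi(t_\infty)$ \emph{is} approximable (resp.\ quasi-local) — contradicting the conclusion of \cref{lem:spakula-willett easy}. This contradiction establishes the one-vector control.

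\textbf{Main obstacle.} The routine parts (bounded-support approximations, passing to block-entourages, \sot-continuity bookkeeping) are standard given the machinery already in the excerpt. The genuinely hard step is the orthogonalization: turning the family $\matrixunit{w_n}{v}$ into one whose images under $\phi$ are pairwise orthogonal while \emph{preserving} the failure of uniform approximability/quasi-locality with a constant bounded away from zero, and doing so compatibly with the cofinal filtration $(F_n)$. This is where one must exploit the multiplicative structure of $\phi$ and a subsequence/diagonal argument; I expect the proof in the paper (this is \cref{prop:equi-controlled rank-1}, described there as ``surprisingly technical'') to spend most of its length precisely here, likely via an auxiliary lemma that the images $\phi(\matrixunit{w}{v})$ for $w$ ranging over unit vectors of bounded support are themselves ``uniformly non-approximable'' in a suitable sense unless the desired $F$ exists — essentially the content foreshadowed by \cref{lem:e_vnvm non-uniformly-approximable}.
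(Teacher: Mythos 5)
Your strategy correctly identifies the skeleton: argue by contradiction, extract a sequence $t_n \in \cpcontr{E}$ with $t_n p_v \notin \phiinv{\varepsilon}{F_n}$, turn these into matrix units, make them pairwise orthogonal, apply \cref{lem:spakula-willett easy}, and close by \sot-continuity. But there is a genuine gap in the orthogonalization step, and it rests on an incorrect matrix-unit identity. You write
\[
  \phi(\matrixunit{w_n}{v})\phi(\matrixunit{w_m}{v})^* = \scal{w_m}{w_n}\,\phi(\matrixunit{w_n}{w_m}),
\]
but in fact $\matrixunit{w_n}{v}\matrixunit{v}{w_m} = \norm{v}^2 \matrixunit{w_n}{w_m}$ (compute directly, or use $\matrixunit{a}{b}\matrixunit{c}{d}=\scal{b}{c}\matrixunit{a}{d}$); there is no $\scal{w_m}{w_n}$ factor. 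Consequently, even after you orthonormalize $(w_n)$ to $(\tilde w_n)$, the operators $\phi(\matrixunit{\tilde w_n}{v})$ are \emph{not} pairwise orthogonal: $\phi(\matrixunit{\tilde w_n}{v})\phi(\matrixunit{\tilde w_m}{v})^*=\phi(\matrixunit{\tilde w_n}{\tilde w_m})\neq 0$. The obstruction is that all the $\matrixunit{\tilde w_n}{v}$ share the same column vector $v$; pairwise orthogonality of $\matrixunit{a}{b}$ and $\matrixunit{c}{d}$ requires \emph{both} $a\perp c$ \emph{and} $b\perp d$. So \cref{lem:spakula-willett easy} cannot be applied to your family, and the final contradiction does not come.

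This is exactly the purpose of \cref{lem:e_vnvm non-uniformly-approximable}, which you correctly flag as being relevant but do not use: given a sequence of ``bad'' matrix units $\matrixunit{v_n}{v}$ with the \emph{rows} $v_n$ mutually orthogonal but a common column $v$, it uses the multiplicativity of $\phi$ (writing $\phi(\matrixunit{v_j}{v_i})=\phi(\matrixunit{v_j}{v})\phi(\matrixunit{v_i}{v})^*$ and exploiting $t_{i_{n+1}}=t_{i_{n+1}}t_{i_n}^*t_{i_n}/\norm{v_{i_n}}^2$) to extract a subsequence $(n_k)$ such that the shifted matrix units $\matrixunit{v_{n_{k+1}}}{v_{n_k}}$ remain uniformly bad. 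These \emph{are} pairwise orthogonal, because both rows and columns are now distinct elements of the orthogonal family $(v_n)$. Two further discrepancies with the paper: the mutually orthogonal rows $v_n$ in the paper are not produced by Gram--Schmidt on approximations $\chf{C_n}t_n(v)$, but by the nested-projection construction $v_n=(1-q_{n-1})t_n p(v)$ together with \cref{lemma:quasi-controlled:qtp} (which guarantees the projected-off pieces $q_{n-1}t_np$ have a uniform approximability bound, so the projected operators remain uniformly bad); and the propagation control for the $s_k$ comes from tracking $\supp(q_n)$ via \cref{lemma:quasi-controlled:q-1-q}, not from having a common bounded rectangle $E(A)\times A$. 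Your ``main obstacle'' paragraph shows you sensed exactly where the difficulty lies, but the mechanism you propose to overcome it does not work.
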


We first need a few preliminary results.
The following is useful to cut operators to mutually orthogonal ones without losing control on the propagation.
\begin{lemma}\label{lemma:quasi-controlled:q-1-q}
  Let $E\in\CE$ be a block-entourage containing the diagonal, and let $\supp(t) \subseteq E$. Moreover, let $q \in \CB(\CHx)$ be the projection onto the closure of the image of $t$. Then both $q$ and $1-q$ have support contained in $E\circ\op E$.
\end{lemma}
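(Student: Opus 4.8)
The plan is to reduce the statement to \cref{lem:join of controlled projection}, by writing $q$ (and then $1-q$) as a join of projections that are visibly supported in a controlled neighbourhood of the diagonal. First I fix a discrete partition $X=\bigsqcup_{i\in I}A_i$ to which the block-entourage $E$ is subordinate. This is the crucial structural input: it makes each $A_i$ measurable and, since $E$ is a block-entourage, it makes each $E(A_i)$ measurable as well, while $\sum_{i\in I}\chf{A_i}=1$ in the strong operator topology. Set $t_i\coloneqq t\chf{A_i}$ and let $q_i\in\CB(\CHx)$ be the orthogonal projection onto $\overline{\image(t_i)}$.

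The next step is to localise each $t_i$. Because $X\smallsetminus E(A_i)$ is $E$-separated from $A_i$ and both sets are measurable, $\supp(t)\subseteq E$ forces $\chfcX{E(A_i)}\,t\,\chf{A_i}=0$, i.e.\ $\image(t_i)\subseteq\CH_{E(A_i)}$. Hence $q_i\leq\chf{E(A_i)}$, so $\chf{E(A_i)}q_i=q_i=q_i\chf{E(A_i)}$, and a short support computation gives $\supp(q_i)\subseteq E(A_i)\times E(A_i)$. Finally I check $E(A_i)\times E(A_i)\subseteq E\circ\op E$: given $y,z\in E(A_i)$ pick any $x\in A_i$ (the case $A_i=\emptyset$ is vacuous); then $(y,x)\in E$ and $(x,z)\in\op E$, so $(y,z)\in E\circ\op E$. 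Thus $\supp(q_i)\subseteq E\circ\op E$ for every $i\in I$.

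Then I claim $q=\bigvee_{i\in I}q_i$. The inequality $\bigvee_i q_i\leq q$ is immediate from $\image(t_i)\subseteq\image(t)$. For the reverse inequality, expand $tv=\sum_{i\in I}t_iv$ (a norm-convergent orthogonal sum, using $\sum_i\chf{A_i}=1$), which shows $\overline{\image(t)}$ is contained in the closed span of $\bigcup_i\image(t_i)=\bigcup_i\image(q_i)$, i.e.\ $q\leq\bigvee_i q_i$. Now \cref{lem:join of controlled projection}, applied with the entourage $E\circ\op E\in\CE$, gives $\supp(q)\subseteq E\circ\op E$. For $1-q$: since $E$ contains the diagonal, $\supp(1)\subseteq\Delta_X\subseteq E\circ\op E$, so $1-q$ is a sum of two operators supported in $E\circ\op E$, whence $\supp(1-q)\subseteq E\circ\op E$ as well.

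The main obstacle — and the reason the argument has to be routed through the columns $t_i=t\chf{A_i}$ rather than through functional calculus — is that the conclusion genuinely uses $\supp(t)\subseteq E$ and not merely $\supp(tt^{*})\subseteq E\circ\op E$. The range projection of a positive operator whose support sits in a fixed neighbourhood of the diagonal need not have its support in that same neighbourhood (polynomial approximations to the spectral projection of $tt^{*}$ have rapidly growing propagation), so one cannot simply invoke that $q$ is the range projection of $tt^{*}$. Decomposing $q$ as the join of the much more tightly localised $q_i$ is what makes \cref{lem:join of controlled projection} applicable, and verifying $q=\bigvee_i q_i$ — together with the bookkeeping that each $q_i$, being a self-adjoint projection, is localised on both sides in $E(A_i)$ — is the one place that needs care.
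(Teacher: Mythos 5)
Your proof is correct, and it is closely related to the paper's but takes a genuinely different route through the decomposition step. Both arguments exploit the partition $X=\bigsqcup_i A_i$ and finish by applying \cref{lem:join of controlled projection} to a join of tightly localised projections, but they differ in how each localised piece is handled. The paper implicitly decomposes $\overline{\image(t)}$ into lines spanned by $t(v_l)$ for basis vectors $v_l$ subordinate to $(A_i)$, reducing to the rank-one case $t=\matrixunit{w}{v}$, where $q=p_w$ is a scalar multiple of $tt^*$ and hence has support in $E\circ\op E$ by the composition rule. You instead cut $t$ into columns $t_i=t\chf{A_i}$ at the block level, observe that $\image(t_i)\subseteq\CH_{E(A_i)}$ so that $q_i\leq\chf{E(A_i)}$, and read off $\supp(q_i)\subseteq E(A_i)\times E(A_i)\subseteq E\circ\op E$ directly (correctly using that $E$ being a block-entourage means $(y,x')\in E$ with $x'\in A_i$ forces $(y,x)\in E$ for every $x\in A_i$). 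Your route avoids the rank-one reduction and the matrix unit identity entirely, and the domination $q_i\leq\chf{E(A_i)}$ makes the support bound transparent; the paper's route is shorter on the page but leaves the rank-one reduction implicit. Your closing remark about why one cannot pass through functional calculus on $tt^*$ is accurate and is precisely the subtlety that makes the decomposition necessary in both approaches.
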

\begin{proof}
  First observe that if $\supp(q) \subseteq E\circ\op E$ then $\supp(1-q) \subseteq \supp(1) \cup \supp(q) \subseteq  E\circ\op E$, so it suffices to prove that $\supp(q) \subseteq E\circ\op E$.

  The image of $t$ is spanned by the vectors $t(v)$ with $v$ supported in the block-diagonal. Since $\supp(tp_v)\subseteq \supp(t)$ for every such $v$, by \cref{lem:join of controlled projection}, it is enough to prove the lemma for $t$ of rank-1.
  We may write $t=\matrixunit{w}{v}$ for some $v,w\in\CHx\smallsetminus \{0\}$, in which case $q=p_w$ is a scalar multiple of $tt^*$ (see \cref{lem: matrixunit identities}). Then $q$ has propagation controlled by $E\circ\op E$.
\end{proof}

\begin{remark}
  \begin{enumerate}[label=(\roman*)]
    \item Observe that the bound $E\circ\op E$ in \cref{lemma:quasi-controlled:q-1-q} is sharp.
        Indeed, if $X = \{-1, 0, +1\}\subseteq \RR$, equipped with the usual metric, and $\CHx = \ell^2(X)$, we may consider $t = e_{-1,0} + e_{+1,0}$. This $t$ has propagation $1$, while the projection onto $\angles{\delta_{-1}+\delta_{+1}}$ has propagation $2$.
    \item The assumption that $E$ is a block-entourage in \cref{lemma:quasi-controlled:q-1-q} is mostly aesthetic. At the cost of further composing with certain gauges, a similar statement can be proved for any locally admissible coarse geometric module.
  \end{enumerate}
\end{remark}

The following is our replacement for \cite{braga_farah_rig_2021}*{Lemma 4.8}. The slightly awkward phrasing ``such that $\phi(qtp)$ is defined'' makes it more apparent that this result is effective, and one does not really need that $\phi$ is defined on the whole of $\cpstar\CHx$ (cf.\ \cref{rmk:effective uniformization}).
\begin{lemma} \label{lemma:quasi-controlled:qtp}
  Let $p, q \in \CB(\CHx)$ be finite rank projections. For every $\varepsilon > 0$ there is some $F \in \CF$ such that $qtp \in \phiinv{\varepsilon}{F}$ whenever $t \in \CB(\CHx)_{\leq 1}$ is such that $\phi(qtp)$ is defined.
\end{lemma}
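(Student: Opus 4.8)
The plan is to exploit that, with the finite-rank projections $p$ and $q$ fixed, the operators $qtp$ with $t\in\CB(\CHx)_{\le 1}$ form a \emph{norm-totally-bounded} family living in a fixed finite-dimensional space, and to combine this with the automatic contractivity of $\phi$ on finite-dimensional \cstar{}sub-algebras of $\cpstar{\CHx}$ (cf.\ \cref{rmk: on uniformization hypotheses}). No Baire-type argument is needed for this lemma; total boundedness of a ball in a finite-dimensional space does all the work.

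Concretely, I would first set $K\coloneqq\image(p)+\image(q)\leq\CHx$, which is finite-dimensional, and observe that every $qtp$ has range contained in $\image(q)$ and range of its adjoint contained in $\image(p)$, so it belongs to the finite-dimensional space $\mathcal M\coloneqq\{x\in\CB(\CHx)\mid \overline{\image(x)}\subseteq K \text{ and } \overline{\image(x^*)}\subseteq K\}\cong\CB(K)$, and has norm at most $1$. Hence the set $\mathcal S\coloneqq\{qtp\mid t\in\CB(\CHx)_{\le 1},\ \phi(qtp)\text{ is defined}\}$ is a bounded subset of the finite-dimensional space $\mathcal M$, and therefore totally bounded. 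If $\mathcal S=\emptyset$ the statement is trivial, so fix a finite $(\varepsilon/2)$-net $s_1,\dots,s_N\in\mathcal S$ for $\mathcal S$ in the operator norm.

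Now, since each $\phi(s_l)$ lies in $\roeclike{\CHy}$ — which by definition consists of approximable (resp.\ quasi-local) operators — there is $F_l\in\CF$ such that $\phi(s_l)$ is $(\varepsilon/2)$-$F_l$-approximable (resp.\ $(\varepsilon/2)$-$F_l$-quasi-local). Put $F\coloneqq F_1\cup\cdots\cup F_N\in\CF$. For an arbitrary $s=qtp\in\mathcal S$, choose $l$ with $\norm{s-s_l}\le\varepsilon/2$. The \Star{}algebra $B_0$ generated by $s$ and $s_l$ consists of operators $x$ with $\overline{\image(x)}\subseteq K$ and $\overline{\image(x^*)}\subseteq K$, so $B_0\subseteq\mathcal M$ is finite-dimensional; and since $\cpstar{\CHx}$ is a \Star{}algebra containing $s$ and $s_l$, we have $B_0\subseteq\cpstar{\CHx}$, i.e.\ $B_0$ is a finite-dimensional (hence complete) \cstar{}sub-algebra of $\cpstar{\CHx}$. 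Thus $\phi|_{B_0}$ is a \Star{}homomorphism, hence a contraction, so $\norm{\phi(s)-\phi(s_l)}=\norm{\phi(s-s_l)}\le\norm{s-s_l}\le\varepsilon/2$. By the triangle inequality $\phi(s)$ is then $\varepsilon$-$F_l$-approximable (resp.\ $\varepsilon$-$F_l$-quasi-local), and a fortiori $\varepsilon$-$F$-approximable (resp.\ $\varepsilon$-$F$-quasi-local) because $F_l\subseteq F$. Hence $s=qtp\in\phiinv{\varepsilon}{F}$, which is what we wanted.

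The points that require care are: (i) checking that the \Star{}algebra generated by $s$ and $s_l$ is genuinely finite-dimensional (and so a genuine \cstar{}algebra) — this is the place where finiteness of the rank of $p$ and $q$ is used essentially, and it is exactly what licenses the use of automatic contractivity without assuming norm-continuity of $\phi$; and (ii) keeping track of the direction of the entourage inclusion $F_l\subseteq F$ in the definitions of $\varepsilon$-$F$-approximable and $\varepsilon$-$F$-quasi-local, so that passing from $F_l$ to $F$ does not spoil the estimate. In the effective setting, where $\phi$ is defined only on a subset of $\cpstar{\CHx}$, one additionally checks that $\phi$ remains defined on the finite-dimensional subalgebra $B_0$; this holds in all cases we actually use. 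I do not expect any serious obstacle beyond these bookkeeping points.
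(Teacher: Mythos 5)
Your proof is correct, and it takes a genuinely different route from the paper's. The paper's argument works on the image side: it observes that $\CB_{q,p}$ is a bounded subset of a finite\-/dimensional space, hence $\sot$\-/compact; then uses the \emph{$\sot$\-/continuity} hypothesis on $\phi$ to conclude $\phi(\CB_{q,p})$ is $\sot$\-/compact, and (again by finite\-/dimensionality of the image space) norm\-/compact; then covers this image by finitely many $\varepsilon/2$\-/balls. Crucially, the paper's proof uses only linearity and $\sot$\-/continuity of $\phi$ — no multiplicativity — which is what licenses the bookkeeping in \cref{rmk: on uniformization hypotheses}~(c) asserting that the $\Star$\-/homomorphism property is needed only in \cref{lem:e_vnvm non-uniformly-approximable}. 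Your proof instead works on the domain side: it takes a finite $\varepsilon/2$\-/net in $\CS$ and transfers norm\-/closeness across $\phi$ via the automatic contractivity of $\Star$\-/homomorphisms restricted to finite\-/dimensional $C^*$\-/subalgebras of $\cpstar{\CHx}$ (cf.\ \cref{rmk: on uniformization hypotheses}~(e)). In doing so you dispense entirely with $\sot$\-/continuity in this lemma, but at the cost of invoking multiplicativity of $\phi$ — a strictly stronger hypothesis than the paper uses here. Both arguments are valid in the context of \cref{thm: uniformization} (where $\phi$ is indeed a $\Star$\-/homomorphism), but if one adopted your proof, the bookkeeping in \cref{rmk: on uniformization hypotheses}~(c) would have to be amended to list this lemma as another place where the homomorphism assumption enters. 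Your trade-off — swapping $\sot$\-/continuity for multiplicativity — is a reasonable one, and you flag the right points of care (finite\-/dimensionality of $B_0$, directionality of the inclusion $F_l\subseteq F$, and the effective\-/domain caveat), so the argument holds up.
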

\begin{proof}
  Consider the restriction of $\phi$ to the set $\CB_{q,p}$ of operators that can be written as $qtp$ for some $t\in \CB(\CHx)_{\leq 1}$ and belong to the domain of $\phi$ (\emph{i.e.}\ have controlled propagation).
  Note that $\CB_{q,p}$ is a convex subset of a finite dimensional vector space (since both $p$ and $q$ are of finite rank). Since $\phi$ is linear, so is its image $\phi(\CB_{p,q})$.
  Moreover, $\CB_{q,p}$ is compact. By \sot-continuity $\phi(\CB_{p,q})$ is also compact in the \sot{} and therefore---by finite dimensionality---also in the norm topology.
  It follows that there are finitely many operators $s_1,\ldots,s_k\in\phi(\CB_{p,q})$ such that $\phi(\CB_{p,q})$ is covered by the balls $\paren{s_i + B_{\varepsilon/2}}_{i = 1}^k$ of radius $\varepsilon/2$.
  For each $i = 1, \dots, k$ we may choose $F_i \in \CF$ such that $s_i$ is $\varepsilon/2$\=/$F_i$\=/approximable (resp.\ $\varepsilon/2$\=/$F_i$\=/quasi-local if $\phi$ lands into $\qlcstar{\CHy}$). Then the entourage $F \coloneqq F_1 \cup \dots \cup F_k \in \CF$ meets the requirements of the statement.
\end{proof}

\begin{remark}
  In our setting, if $\crse X$ is coarsely connected and $p$ and $q$ have controlled propagation, then $\phi(qtp)$ is defined for every $t\in\CB(\CHx)_{\leq 1}$ (this is using that $p$ and $q$ have finite rank and therefore have bounded support, \emph{e.g.}\ by \cref{lem: matrix unit controlled propagation}). Moreover, if we also assumed that $\phi$ is norm-continuous, then \cref{lemma:quasi-controlled:qtp} would also hold for $p,q\in\cpcstar{\CHx}$ (as opposed to $\cpstar{\CHx}$).
  To see this, it is enough to choose finite rank controlled propagation projections $p'$, $q'$ that approximate $p$ and $q$ up to $\varepsilon'\ll \varepsilon$ (this can be done by non-degeneracy because $p$ and $q$ have finite rank) and apply \cref{lemma:quasi-controlled:qtp} to $p'$ and $q'$ with constant $\varepsilon/2$: the norm continuity assumption shows that we can choose $\varepsilon'$ small enough so that $qtp \in \phiinv{\varepsilon}{F}$.
\end{remark}

\begin{remark}
  For uniform Roe algebras of locally finite coarse spaces (\emph{i.e.}\ for $X$ locally finite and $\CHx \coloneqq \ell^2(X)$), \cref{prop:equi-controlled rank-1} follows easily from \cref{lemma:quasi-controlled:qtp}. This fact (in the setting where $\phi$ lands in $\cpcstar{\CHy}$) is leveraged in \cite{braga_farah_rig_2021} and makes the proof of \cref{thm: uniformization} pleasantly shorter. The proof for modules of locally infinite rank is significantly harder, as can also be evinced from \cite{braga_gelfand_duality_2022}.
\end{remark}

\begin{lemma}\label{lem:e_vnvm non-uniformly-approximable}
  Given $E\in\CE$, $\varepsilon>0$ and $v\in(\CHx)_1$ of bounded support, suppose there is a family $(v_i)_{i\in I} \subseteq (\CHx)_{\leq 1}$ such that
  \begin{itemize}
    \item $\matrixunit{v_i}{v} \in \cpcontr E$ for every $i\in I$;
    \item for every $F\in\CF$ there is some $i\in I$ with $\matrixunit{v_i}{v}\notin \phiinv{\varepsilon}{F}$.
  \end{itemize}
  Then, for any given $\varepsilon' < \varepsilon^3$ and sequence $(\overline F_n)_{n\in\NN}$ in $\CF$, there is a sequence $\paren{i_n}_{n\in\NN}$ such that $\matrixunit{v_{i_{n+1}}}{v_{i_n}} \notin \phiinv{\varepsilon'}{\overline F_n}$.
\end{lemma}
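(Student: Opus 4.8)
The strategy is to combine the hypotheses with \cref{lem:spakula-willett easy} applied twice: once to produce an orthogonal subfamily, and once more (via a diagonal argument) to build the desired sequence. The key quantitative input is \cref{lem: matrixunit identities}\cref{e-v-norm-is-matrix-coefficient}, which allows us to relate the ``matrix coefficient'' norms $\norm{\chf{A'}\matrixunit{v_i}{v}\chf A}$ to the scalar products $\scal{v}{\chf A(\cdot)}$ and $\scal{v_i}{\chf{A'}(\cdot)}$, together with the composition identity $\matrixunit{v_j}{v_i}=\matrixunit{v_j}{v}\matrixunit{v}{v_i}/\norm{v}^2=\matrixunit{v_j}{v}\matrixunit{v}{v_i}$ (recall $\norm v=1$). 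The latter shows that $\matrixunit{v_j}{v_i}=\matrixunit{v_j}{v}\,(\matrixunit{v_i}{v})^*$, so that $\phi(\matrixunit{v_j}{v_i})=\phi(\matrixunit{v_j}{v})\,\phi(\matrixunit{v_i}{v})^*$ using that $\phi$ is a \Star{}homomorphism (this is the place, flagged in \cref{rmk: on uniformization hypotheses}, where being a homomorphism is used). Note each $\matrixunit{v_i}{v}$ has controlled propagation and hence lies in some $\CB(\CH_A)\subseteq\cpstar\CHx$, so $\phi$ is contractive on it by \cref{rmk: on uniformization hypotheses}\cref{rmk: on uniformization hypotheses-auto cts}, and therefore $\norm{\phi(\matrixunit{v_i}{v})}\le 1$.

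\textbf{Step 1: extract an orthogonal subfamily that is badly non-approximable (resp.\ non-quasi-local).} Fix a cofinal nested sequence $(F_n)_{n\in\NN}$ in $\CF$. Using the second bullet hypothesis, one recursively selects indices $i_1,i_2,\dots$ together with an increasing sequence of entourages so that each $s_k\coloneqq\matrixunit{v_{i_k}}{v}$ is not $\varepsilon$-$G_k$-approximable (resp.\ not $\varepsilon$-$G_k$-quasi-local) for a suitably growing cofinal sequence $(G_k)$. The operators $\matrixunit{v_{i_k}}{v}$ are in general \emph{not} orthogonal, but all have the same ``source'' vector $v$; to get orthogonality one passes instead to the operators $q_k\coloneqq \matrixunit{v_{i_k}}{v_{i_k}}$-type projections onto the images, or more directly one replaces $v_{i_k}$ by pairwise-orthogonalized vectors $\tilde v_{i_k}$. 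Here I would apply Gram--Schmidt to the $v_{i_k}$ and argue, as in the proof of \cref{lem:e_vnvm non-uniformly-approximable}'s predecessor \cite{braga_farah_rig_2021}*{Lemma 4.8}, that a small perturbation does not destroy the ``not $\varepsilon/2$-$G_k$-approximable (resp.\ quasi-local)'' property of the images $\phi(\matrixunit{\tilde v_{i_k}}{v})$, using again \cref{lem: matrixunit identities}\cref{e-v-norm-is-matrix-coefficient} to control the perturbation in matrix-coefficient norm. One may moreover keep $\matrixunit{\tilde v_{i_k}}{v}\in\cpcontr{E'}$ for a slightly larger block-entourage $E'$. The images $w_k\coloneqq\phi(\matrixunit{\tilde v_{i_k}}{v})$ then have pairwise orthogonal ranges (because the $\tilde v_{i_k}$ are orthogonal, the projections $p_{\tilde v_{i_k}}$ are orthogonal, hence $\phi(p_{\tilde v_{i_k}})$ are orthogonal) so that the $w_kw_k^*$ are orthogonal operators of norm $\le 1$.

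\textbf{Step 2: apply \cref{lem:spakula-willett easy} to a re-indexed and scaled orthogonal family.} Having an orthogonal family $(w_kw_k^*)_k$ of contractions with $w_kw_k^*$ not $(\varepsilon')$-$\overline F_k'$-approximable (resp.\ quasi-local) for an arbitrary prescribed cofinal nested sequence $(\overline F_k')$, \cref{lem:spakula-willett easy} hands us a subset $K\subseteq\NN$ so that $\sum_{k\in K}w_kw_k^*$ is not approximable (resp.\ not quasi-local). This is still not quite the conclusion, which asks for a \emph{sequence} $(i_n)$ with consecutive terms $\matrixunit{v_{i_{n+1}}}{v_{i_n}}\notin\phiinv{\varepsilon'}{\overline F_n}$. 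So instead I would run the contrapositive: suppose that, after passing to any subsequence, one had $\matrixunit{v_{i_{n+1}}}{v_{i_n}}\in\phiinv{\varepsilon'}{\overline F_n}$ for all $n$; unravelling the identity $\phi(\matrixunit{v_j}{v_i})=\phi(\matrixunit{v_j}{v})\phi(\matrixunit{v_i}{v})^*=w_j w_i^*$ together with $\norm{\phi(\matrixunit{v_i}{v})}\le 1$ and $\norm{\chf{A'}w_jw_i^*\chf A}\ge \norm{\chf{A'}w_j}\cdot\norm{\chf A w_i}$-type inequalities from \cref{lem: matrixunit identities}, this would force each $w_k w_k^*$ to be $\varepsilon$-$F$-approximable (resp.\ quasi-local) for some fixed $F$ along a subsequence, contradicting Step 1. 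Choosing $\varepsilon'<\varepsilon^3$ is exactly what is needed so that the product of three norm-$\le\varepsilon$ estimates beats the threshold $\varepsilon$ — this explains the cube.

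\textbf{Main obstacle.} The hard part is Step 1: producing a genuinely \emph{orthogonal} family while preserving the quantitative ``badly non-approximable/non-quasi-local'' property, since the natural family $(\matrixunit{v_i}{v})_i$ shares the common source $v$ and is far from orthogonal. Controlling the Gram--Schmidt perturbation in the relevant (matrix-coefficient) norm, and checking that $\phi$ of the perturbed matrix units still lands outside $\phiinv{\varepsilon/2}{G_k}$, is where the technical work concentrates; this is precisely the analogue of the delicate argument in \cite{braga_farah_rig_2021}*{Lemma 4.8}, now carried out in the coarse-module language and using that $\phi$ is contractive on each $\CB(\CH_A)$.
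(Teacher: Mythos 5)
Your proposal takes a fundamentally different and much more elaborate route than the paper, and there is a genuine gap. The paper's proof is a short inductive bootstrap and uses neither orthogonalization nor \cref{lem:spakula-willett easy}: setting $t_i\coloneqq\phi(\matrixunit{v_i}{v})$, one exploits the algebraic identity $t_j t_i^* t_i = \norm{v_i}^2 t_j$ (which you also record). Having picked $i_n$, one takes $F_n'$ so that $t_{i_n}$ is $\delta$-$F_n'$-quasi-local (resp.\ $\delta$-close to an $F_n'$-controlled operator), sets $F_{n+1}\coloneqq \overline F_n\circ\gaugey\circ F_n'$, and chooses $i_{n+1}$ with $t_{i_{n+1}}\notin\phiinv{\varepsilon}{F_{n+1}}$. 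If $t_{i_{n+1}}t_{i_n}^*$ were $\varepsilon'$-$\overline F_n$-quasi-local, then $t_{i_{n+1}}=t_{i_{n+1}}t_{i_n}^*t_{i_n}/\norm{v_{i_n}}^2$ would be $(\varepsilon'+\delta)/\norm{v_{i_n}}^2$-$F_{n+1}$-quasi-local by \cref{lem:supports and operations quasi-local}, and with $\norm{v_{i_n}}\ge\varepsilon$ and $\delta$ small, $(\varepsilon'+\delta)/\varepsilon^2\le\varepsilon$, a contradiction. This is also why the threshold is $\varepsilon^3$: one factor of $\varepsilon$ from the quasi-locality level and $\varepsilon^2$ from the lower bound on $\norm{v_{i_n}}^2$, not from ``three norm-$\le\varepsilon$ estimates'' as you suggest.

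The gap in your Step 1 is that Gram--Schmidt on the $v_{i_k}$ is not a controlled perturbation: the $v_{i_k}$ share the fixed source $v$ and can have large pairwise overlaps, so the orthogonalized $\tilde v_{i_k}$ may be far from the originals in norm, and there is no reason that the matrix units $\matrixunit{\tilde v_{i_k}}{v}$ remain outside $\phiinv{\varepsilon/2}{G_k}$. Moreover, even if this could be repaired, the lemma asks for a sequence of indices $i_n$ from the \emph{given} family with $\matrixunit{v_{i_{n+1}}}{v_{i_n}}\notin\phiinv{\varepsilon'}{\overline F_n}$, whereas your Step 1 produces statements about the synthetic vectors $\tilde v_{i_k}$. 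Finally, you are conflating two levels of the argument: \cref{lem:spakula-willett easy} and the orthogonality it requires are indeed used, but in the \emph{caller} \cref{prop:equi-controlled rank-1}, where orthogonality is manufactured by the $(1-q_{n-1})$ construction. The present lemma is precisely the algebraic, non-Baire step that produces the consecutive non-approximable matrix units feeding into that later argument; trying to bring the Baire machinery inside the lemma overcomplicates it and leaves the orthogonalization step unjustified.
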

\begin{proof}
  Let $t_i\coloneqq \phi(\matrixunit{v_i}{v})$. Since $\phi$ is a \Star{}homomorphism, 
  \[
    \phi(\matrixunit{v_{j}}{v_i})=\norm{v}^2\phi(\matrixunit{v_{j}}{v}\matrixunit{v}{v_i})
    =\phi(\matrixunit{v_{j}}{v}\matrixunit{v_i}{v}^*)=t_{j}t_i^*
  \]
  (see \cref{lem: matrixunit identities}). 
  Furthermore, observe that for all $i,j \in I$
  \begin{equation} \label{eq:lem:e_vnvm non-uniformly-approximable}
    t_j t_i^*t_i = \phi\left(\matrixunit{v_j}{v} \matrixunit{v}{v_i} \matrixunit{v_i}{v}\right) = \norm{v_i}^2 \phi\left(\matrixunit{v_j}{v} \matrixunit{v}{v}\right) = \norm{v_i}^2 t_j.
  \end{equation}
  As explained in \cref{rmk: on uniformization hypotheses}~\cref{rmk: on uniformization hypotheses-auto cts}, we also have $\norm{t_i}\leq\norm{\matrixunit{v_i}{v}}\leq 1$.
  
  \smallskip 

  We now do the case where $\phi$ lands in $\qlcstar{\CHy}$. Since $\norm{t_i}\leq\norm{\matrixunit{v_i}{v}}=\norm{v_i}$, and operators of norm less that $\varepsilon$ are obviously $\varepsilon$-$F$-quasi-local for every $F\in\CF$, we may without loss of generality assume that $\norm{v_i} \geq \varepsilon$ for all $i \in I$.
  For every $F\in\CF$ we fix an index $i(F)$ such that $t_{i(F)}$ is not $\varepsilon$\=/$F$\=/quasi-local. We also fix a $\delta > 0$ small enough (to be determined during the proof). The construction of the required sequence is performed inductively.

  Let $i_0\in I$ be arbitrary.
  Suppose that $i_n$ has already been chosen.  
  Then $t_{i_n}$ is $\delta$\=/$F_n'$-quasi-local for some $F_n' \in \CF$ large enough.
  Let 
  \[
    F_{n+1} \coloneqq \overline F_n\circ\gaugey \circ F_n',
  \]
  where $\gaugey$ is an admissibility gauge for $\CHy$. We claim that $i_{n+1} \coloneqq i(F_{n+1})$ meets the requirements of the statement.
  In fact, \eqref{eq:lem:e_vnvm non-uniformly-approximable} shows that
  \[
    t_{i_{n+1}} = \frac{t_{i_{n+1}} t_{i_n}^* t_{i_n}}{\norm{v_{i_n}}^2},
  \]
  and if $t_{i_{n+1}} t_{i_n}^*$ was $\varepsilon'$-$\overline F_n$-quasi-local it would follow from \cref{lem:supports and operations quasi-local} that $t_{i_{n+1}}$ is $(\varepsilon'+\delta)/\norm{v_{i_n}}^2$-$F_{n+1}$-quasi-local. If we had taken $\delta$ small enough, \emph{e.g.}\
  \[
    \frac{(\varepsilon'+\delta)}{\varepsilon^2}\leq\varepsilon,
  \]
  this would then lead to a contradiction.

  The case where $\phi$ lands in $\cpcstar\CHy$ is completely analogous. The index
  $i_0$ is chosen arbitrarily and 
  $F_n' \in \CF$ is taken such that $t_{i_n}$ is closer than $\delta$ to a contraction $s_n\in\CB(\CHy)$ of $F_n'$-controlled propagation (one can always approximate an approximable contraction by controlled contractions).
  We also let $F_{n+1} = \overline F_n\circ\gaugey \circ F_n'$, where this time $\gaugey$ is a non-degeneracy gauge. To show that $i_{n+1} \coloneqq i(F_{n+1})$ meets the requirements of the statement we observe that if there was some $s \in \CB(\CHy)_1$ with propagation controlled by $\overline F_n$ such that $t_{i_{n+1}}t_{i_n}^* \approx_{\varepsilon'} s$, then we would have
  \[
    t_{i_{n+1}} = \frac{t_{i_{n+1}} t_{i_n}^* t_{i_n}}{\norm{v_{i_n}}^2}
    \approx_{\delta/\norm{v_{i_n}}^2} \frac{t_{i_{n+1}} t_{i_n}^*s_n}{\norm{v_{i_n}}^2} 
    \approx_{\varepsilon'/\norm{v_{i_n}}^2} \frac{ss_n}{\norm{v_{i_n}}^2}.
  \]
  As the composition $ss_n$ has $F_{n+1}$-controlled propagation (cf.\ \cref{eq:support of composition}), this would lead to a contraction just as before.
\end{proof}

We may now prove \cref{prop:equi-controlled rank-1}.
\begin{proof}[Proof of \cref{prop:equi-controlled rank-1}]
  We argue by contradiction. Let $p \coloneqq p_v$ and assume that for each $F\in\CF$ there is some $t\in\cpcontr E$ so that $tp\notin\phiinv{\varepsilon}{F}$.
  Enlarging $E$ if necessary, we may further assume that $E$ is a block-entourage containing the diagonal (since $\CHx$ is discrete).
  Also fix a countable cofinal family $\paren{F_n}_{n \in \NN} \subseteq \CF$.

  Let $t_1 \in \cpcontr{E}$ be such that $t_1p \not\in \phiinv{\varepsilon}{F_1}$.
  Let $q_1 \in \CB(\CHx)_1$ be the orthogonal projection onto the image of $t_1 p$.
  In particular, $q_1$ is a rank-1 projection such that $t_1 p = q_1 t_1 p$.
  Observe that $p$ and $q_1$ are projecting onto vectors that have bounded support and are supported in the same coarse connected component of $\crse X$ (since $\supp(t_1) \subseteq E$). In particular, for every operator $t$ the composition $q_1tp$ has finite propagation and, hence, belongs to the domain of $\phi$. Applying \cref{lemma:quasi-controlled:qtp} yields some $F_1' \in \CF$ such that $q_1 \cpcontr{E} p \subseteq \phiinv{\varepsilon/2}{F_1'}$.

  Enlarging $F_2$ (and the successive $\paren{F_n}_{n \geq 2}$) if necessary, we may further assume that $F_1'\circ \op{(F_1')}\subseteq F_2$.
  By the assumption, there is some $t_2 \in \cpcontr{E}$ such that $t_2p \not\in \phiinv{\varepsilon}{F_2}$. Writing
  \[
    t_2 p = \left(1-q_1\right) t_2 p + q_1 t_2 p,
  \]
  and noting that $q_1 t_2 p \in \phiinv{\varepsilon/2}{F_1'} \subseteq \phiinv{\varepsilon/2}{F_2}$,
  we conclude that $(1-q_1) t_2 p \not\in \phiinv{\varepsilon/2}{F_2}$.
  Let $E'$ be a block-entourage containing the diagonal and so that $\supp(p)\subseteq E'$. It follows from \cref{lemma:quasi-controlled:q-1-q} that
  \[
    \supp(q_1) \subseteq (E\circ E') \circ \op{(E\circ E')}\eqqcolon E''.
  \]
  Since $E''$ is itself a block-entourage containing the diagonal, then
  \[
    \supp( (1-q_1) t_2 p )\subseteq E''\circ E\circ E' \eqqcolon \overline{E}.
  \]
  We now let $q_2$ be the orthogonal projection onto the subspace generated by the images of $t_1 p$ and $t_2 p$, and repeat the process. Note that $q_2$ (and hence also $1-q_2$) has propagation controlled by $E''$, as it is a join of projections of propagation controlled by $E''$ (see \cref{lem:join of controlled projection}).

  This iterative construction yields a sequence $\paren{q_n, t_n}_{n \in \NN}$ of finite rank projections $q_n$ and contractions $t_n \in \cpcontr{E}$ such that
  \begin{enumerate}[label=(\roman*)]
    \item $q_n \leq q_{n+1}$;
    \item $q_n t_n p = t_n p$;
    \item $(1-q_{n-1})t_n p\in \cpcontr{\overline{E}}$;
    \item $(1-q_{n-1})t_n p \not\in \phiinv{\varepsilon/2}{F_n}$ for all $n \in \NN$.
  \end{enumerate}
  Letting $v_n\coloneqq(1-q_{n-1})t_n p(v) = (1-q_{n-1})t_n (v)$, we may also rewrite
  \[
    (1-q_{n-1})t_n p = \matrixunit{v_n}{v}.
  \]
  We are now in position to apply \cref{lem:e_vnvm non-uniformly-approximable} with respect to $\paren{F_k}_{k\in \NN}$ and an arbitrarily chosen $\varepsilon'\in (0, (\varepsilon/2)^{3})$. This yields a subsequence $\paren{n_k}_{k\in \NN}$ such that
  \[
    s_k\coloneqq \matrixunit{v_{n_{k+1}}}{v_{n_k}} \not\in \phiinv{\varepsilon'}{F_k}
  \]
  for any $k \in \NN$.
  By construction, the vectors $\paren{v_n}_{n\in\NN}$ are mutually orthogonal and, hence, so are the operators $\paren{s_{k}}_{k \in \NN}$.
  Applying \cref{lem:spakula-willett easy}, we may find some $K \subseteq \braces{n_k}_{k \in \NN}$ such that the $\sot$-sum $\sum_{k\in K}\phi(s_{k})$ is not approximable (resp.\ quasi-local).
  However, $s \coloneqq \sum_{k \in K}s_{k}$ is a bounded operator and---since $\supp(s_k)\subseteq \overline{E}\circ\op{\overline{E}}$ for every $k\in\NN$---it follows from \cref{lemma:supports-operators-sum} that $\supp(s)\subseteq \overline{E}\circ\op{\overline{E}}$ as well.
  By strong continuity of $\phi$, it then follows that
  \[
    \sum_{k\in K}\phi(s_{k}) = \phi\paren{s} \in \cpcstar{\CHy}, \;\;\; \left(\text{resp.\ }\qlcstar{\CHy}\right),
  \]
  which is a contradiction.
\end{proof}

\begin{proof}[Proof of \cref{thm: uniformization}]
  Apply \cref{prop:equi-controlled rank-1} and \cref{thm: uniformization phenomenon}.
\end{proof}

We end the section with the following remark regarding \cref{thm: uniformization}.
\begin{remark} \label{rmk:effective uniformization}
  Observe that, even though we were interested in showing that for a given $E\in\CE$ and $\varepsilon>0$ there is an $F\in\CF$ with $\cpcontr E\subseteq\phiinv{\varepsilon}{F}$, during the proof that such an $F$ exists we have been forced to consider some larger entourages as well (and hence used the assumption that $\phi$ is defined on $\cpcontr{E'}$ for other $E'\in\CE$). This is why \cref{thm: uniformization} is stated for $\phi$ defined on the whole $\cpstar\CHx$.
  However, we remark that the required enlargement can always be explicitly controlled. That is, going carefully through the proof we see that it implies the following:
  \begin{center}
    \begin{minipage}{0.9\textwidth}
        \textit{\noindent
        Let $E\in\CE$ be a block-entourage containing the diagonal. There is some $\overline{E} \in \CE$ containing $E$ such that,
        if $\phi\colon\cpcontr{\overline{E}}\to\roeclike\CHy$ is a map satisfying all the other hypotheses of \cref{thm: uniformization}, then for every $\varepsilon>0$ there is some $F\in\CF$ such that $\cpcontr E\subseteq \phiinv{\varepsilon}{F}$.
        }
    \end{minipage}
  \end{center}
  The most tedious proof to make effective is that of \cref{prop:equi-controlled rank-1}, as one needs to control the propagation of the projections $\braces{q_n}_{n \in\NN}$. Matters are greatly simplified by noting that \cref{prop:equi-controlled rank-1} is only applied to projections $p$ that have support contained in $\diag(A_i\mid i\in I)$, and therefore do not increase the propagation.
\end{remark}

\chapter{Rigidity for weakly quasi-controlled operators} \label{sec: rigidity spatially implemented}
The goal of this chapter is to provide a rigidity framework for mappings between Roe-like \cstar{}algebras that are spatially implemented via \emph{weakly quasi-controlled} operators (see \cref{def: quasi-controlled operator} below).
Together with the uniformization phenomena proved in \cref{sec: uniformization}, this will be used in \cref{sec: rigidity phenomena} to prove rigidity of Roe-like \cstar{}algebras in full generality.
The results in this section are in many aspects orthogonal to those in \cref{sec: uniformization}, and these two sections can be read independently from one another.

As explained when introducing \cref{def:controlled mapping}, an operator $T\colon \CHx\to\CHy$ is controlled if and only if $\Ad(T)$ preserves equi controlled propagation.
\cref{def:quasi-controlled mapping} gives useful weakenings of this latter condition, which we now use in the following.

\begin{definition}\label{def: quasi-controlled operator}
  We say $T\colon\CHx\to\CHy$ is \emph{weakly quasi-controlled} if $\Ad(T)\colon$ $\CB(\CHx)\to\CB(\CHy)$ is quasi-controlled. Similarly, $T$ is \emph{weakly approximately-controlled} if $\Ad(T)$ is approximately-controlled.
\end{definition}

\begin{remark}
  The naming ``weakly'' is to help differentiating them from their ``stronger'' counterparts, defined in \cref{def: strong approx-quasi R-ctrl,def: strong approx-quasi ctrl}.
\end{remark}

In the following section we show that if $T\colon\CHx\to\CHy$ is weakly quasi-controlled then certain natural approximating relations define partial coarse maps $\cappmap[T]{\delta}{F}{E} \colon \crse X \to \crse Y$. These approximations play a crucial role in bridging between analytic properties of operators and geometric properties of the spaces. This interplay will culminate in the announced rigidity result (cf.\ \cref{thm: rigidity quasi-proper operators}).

\section{Construction of controlled approximations}\label{subsec: approximations}
As usual, let $\CHx$ and $\CHy$ be coarse geometric modules for the coarse spaces $\crse X$ and $\crse Y$ respectively.
\begin{definition} \label{def:approximating crse map}
  Let $T \colon {\CHx} \to {\CHy}$ be a bounded operator. Given $\delta \geq 0, E \in \CE$ and $F \in \CF$, we define an \emph{approximating relation} as
  \[
    \appmap[T]{\delta}{F}{E} \coloneqq 
    \bigcup\left\{B\times A 
    \text{ meas.\ $(F\otimes E)$-bounded}
    \;\middle|\; 
    \norm{\chf{B}T\chf{A}} > \delta \right\}.
  \]
\end{definition}

\begin{remark}
  Informally, $\appmap[T]{\delta}{F}{E}$ is the relation from $X$ to $Y$ given by the pairs of points $(y, x)$ such that $T$ sends a norm-one vector supported on a $E$-bounded neighborhood of $x$ to a vector whose restriction to an $F$-bounded neighborhood of $y$ has norm at least $\delta$.
  That is, $\appmap[T]{\delta}{F}{E}$ tries to approximate on the level of the spaces what $T$ does on the level of the modules by keeping track of those vectors that are somewhat concentrated on equi bounded subsets.
\end{remark}

In general, the approximating relations in \cref{def:approximating crse map} depend greatly on the choice of parameters. One important example to keep in mind is the following (cf.\ \cite{braga_farah_rig_2021}*{Example 6.3}).
\begin{example}\label{ex:expanders poor approximation}
  Let $\crse X$ be $(\NN,\{\Delta_{\NN}\})$ (\emph{i.e.}\ a coarsely disjoint union of countably many points) and let $\crse Y$ be $(\bigsqcup_{n\in \NN}\CG_n,\CE_d)$ be a disjoint union of a family of transitive expander graphs of increasing cardinality, where $d$ is the extended metric obtained by using the graph metric on each $\CG_n$ and setting different graphs to be at distance $+\infty$. For instance, we may take $\CG_n$ to be Cayley graphs of a residual sequence of finite quotients of a group with Kazhdan's property (T).
  
  Let $\CHx$ and $\CHy$ be the usual $\ell^2$-spaces, and let $T \colon \ell^2(\NN)\to \ell^2(\bigsqcup_{n\in\NN}\CG_n)$ be the operator sending $\delta_n$ to the function that is constantly equal to $\abs{\CG_n}^{-1/2}$ on $\CG_n$ and zero elsewhere. This $T$ is a rather nice isometry: the assumption that $\CG_n$ are expanders implies that $\Ad(T)$ is approximately controlled (cf.\ \cref{def:quasi-controlled mapping}) and defines a \Star{}embedding of $\cpcstar{\CHx}$ into $\cpcstar{\CHy}$.

  Let now $E \coloneqq \Delta_X$, and let $F$ be the entourage consisting of points of $Y$ at distance at most $r$ from one another. It follows from transitivity of $\CG_n$ that the approximation $\appmap[T]{\delta}{F}{E}$ in \cref{def:approximating crse map} will be of the form
  \[
    \appmap[T]{\delta}{F}{E} = \bigsqcup_{n=1}^N \CG_n\times \{n\}
  \]
  for some $N \in \NN$. Crucially, the value of $N$ depends on $r$ and $\delta$ (and it is $+\infty$ for $\delta=0$).
\end{example}

Observe that if $\delta =0$ and $\gaugex$, $\gaugey$ are non-degeneracy gauges then $\appmap[T]{0}{\gaugey}{\gaugex}$ coincides with the representative for the coarse support $\csupp(T)$ given in \cref{eq: explicit coarse support}.
In particular, $T$ is controlled if and only if $\appmap[T]{\delta}{F}{E}$ is controlled for every $\delta\geq 0$.
On the other hand, the following shows that weak quasi-control is already enough to ensure that all the approximations with $\delta>0$ are controlled.

\begin{lemma}\label{lem:approximation is controlled}
  If $T\colon\CHx\to\CHy$ is a weakly quasi-controlled operator, then the approximations $\appmap[T]{\delta}{F}{E}$ are controlled for every choice of $E$, $F$, and $\delta>0$.
\end{lemma}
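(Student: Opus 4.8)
The goal is to show that for a weakly quasi-controlled $T\colon\CHx\to\CHy$ and any $E\in\CE$, $F\in\CF$, $\delta>0$, the relation $\appmap[T]{\delta}{F}{E}$ is a controlled relation from $\crse X$ to $\crse Y$; by the observation following \cref{def:controlled_function}, it suffices to show that $\appmap[T]{\delta}{F}{E}\circ G\circ\op{\appmap[T]{\delta}{F}{E}}\in\CF$ for every $G\in\CE$. Unpacking this composition, I need to bound the set of pairs $(y',y)$ for which there exist $x,x'$ with $(x',x)\in G$, a measurable $(F\otimes E)$-bounded $B'\times A'$ containing $(y',x')$ with $\norm{\chf{B'}T\chf{A'}}>\delta$, and a measurable $(F\otimes E)$-bounded $B\times A$ containing $(y,x)$ with $\norm{\chf{B}T\chf{A}}>\delta$. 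So the heart of the matter is: given two $E$-bounded sets $A,A'$ with $A'\subseteq G(A)$ (hence $A\cup A'$ is contained in a controlled set, say $D$-bounded for $D\coloneqq E\circ G\circ E$ or similar), and $F$-bounded sets $B,B'$ with $\norm{\chf{B}T\chf{A}}>\delta$ and $\norm{\chf{B'}T\chf{A'}}>\delta$, I must produce a single controlled entourage $\widetilde F\in\CF$ (depending only on $E,F,G,\delta,T$) with $B'\subseteq\widetilde F(B)$.

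**The key step.** This is where weak quasi-control enters. Since $A\cup A'$ is $D$-bounded for a fixed $D\in\CE$, consider the finite-rank projection $\chf{A}$ — more precisely, pick a suitable contraction supported in $D$, or argue directly with the quasi-locality of $\Ad(T)(\chf{A})=T\chf{A}T^*$ (after rescaling; $\chf A$ has $D$-controlled propagation as an element of $\CB(\CHx)$). By \cref{def:quasi-controlled mapping}, since $\chf A$ has $D$-controlled propagation, there is some $F_0=F_0(\varepsilon,D)\in\CF$ (independent of $A$) such that $T\chf{A}T^*$ is $\varepsilon$-$F_0$-quasi-local, where I choose $\varepsilon=\varepsilon(\delta)$ small. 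Now I want to conclude that $B$ and $B'$ cannot be $F_0$-separated: if $B'$ were $F_0$-separated from (an $F$-thickening of) $B$, then $\norm{\chf{B'}T\chf{A}T^*\chf{B}}$ would be $\leq\varepsilon$, and combining with $\norm{\chf{B'}T\chf{A'}}>\delta$, $\norm{\chf{B}T\chf{A}}>\delta$ — using that $A'\subseteq G(A)$ to replace $\chf{A'}$ by $\chf{A}$ up to the relevant entourage, or using a Cauchy–Schwarz-type estimate $\norm{\chf{B'}T\chf A}\cdot\norm{\chf{A}T^*\chf{B}}\gtrsim\delta^2$ — I would derive a contradiction for $\varepsilon<\delta^2$ (up to adjusting for the thickenings and for the replacement of $A'$ by $A$). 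Hence $B'\subseteq(\text{controlled thickening of } F_0)(B)$, giving the desired $\widetilde F$.

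**The main obstacle.** The delicate point is matching $A$ and $A'$: the two approximating blocks use possibly different domain sets $A,A'$, related only through $A'\subseteq G(A)$. To feed both into a single quasi-locality estimate for $\Ad(T)$ I should work with one operator, e.g. $T\chf{\widehat A}T^*$ where $\widehat A$ is a measurable controlled thickening of $A\cup A'$ (using admissibility/discreteness of $\CHx$ if needed to get measurability — I'll have to check which hypotheses on the module are in force here; discreteness makes controlled thickenings measurable via block-entourages, cf.\ \cref{subsec: block-entourages}). Then $\norm{\chf B T\chf{\widehat A}}\geq\norm{\chf B T\chf A}>\delta$ and similarly for $B',A'$, so both survive. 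After that, the estimate $\norm{\chf{B'}T\chf{\widehat A}T^*\chf B}\geq\norm{\chf{B'}T\chf{\widehat A}}\cdot\norm{T^*\chf{\widehat A}\chf B}/\|T\| \;$—wait, this needs care; the cleaner route is: if $v$ is a unit vector with $\supp(v)\subseteq\widehat A$ and $\norm{\chf B Tv}>\delta$, and likewise $v'$ with $\supp(v')\subseteq\widehat A$, $\norm{\chf{B'}Tv'}>\delta$, then apply quasi-locality of $T\chf{\widehat A}T^*$ to the vectors $Tv,Tv'$... Actually the simplest correct argument: $\norm{\chf{B'}T\chf{\widehat A}}>\delta$ and $\norm{\chf{\widehat A}T^*\chf{B}}=\norm{\chf B T\chf{\widehat A}}>\delta$, so $\norm{\chf{B'}T\chf{\widehat A}T^*\chf B}$ need not be large — this is the genuine subtlety. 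I expect to need a small supplementary argument (perhaps splitting $\chf{\widehat A}$ further, or using that $\chf{B}T\chf{\widehat A}$ having norm $>\delta$ produces a unit vector $w$ supported near $\widehat A$ with $\|Tw\|$ bounded below and $\chf B Tw$ of norm $>\delta$, then testing $\chf{B'}T\chf{\widehat A}T^*$ against $\chf B Tw$). Getting this bookkeeping clean, with all thickening entourages absorbed into one fixed $\widetilde F\in\CF$, is the real work; once it is done, controlledness of $\appmap[T]{\delta}{F}{E}$ follows immediately. I would also record the symmetric statement for $\op{\appmap[T]{\delta}{F}{E}}$, which follows the same way using quasi-control of $\Ad(T)$ on $\chf{B}$ via $T^*\chf BT$, noting $\Ad(T^*)$ need not be quasi-controlled but here only $T\chf{\widehat A}T^*$ (equivalently $\Ad(T)$ applied to a $\CHx$-side projection) is used.
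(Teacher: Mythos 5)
Your diagnosis of the problem is correct, but the proposal stalls exactly at the place you flag (``wait, this needs care''), and the supplementary argument you hope for does not exist in the form you sketch: for a projection $p$, the quantity $\norm{\chf{B'}TpT^*\chf{B}}$ simply is \emph{not} bounded below by any expression of the form $\norm{\chf{B'}Tp}\cdot\norm{pT^*\chf{B}}/\norm{T}$. Indeed $p$ may have large rank, and the vectors that witness the two norms $\norm{\chf{B'}T\chf{\widehat A}}$ and $\norm{\chf{B}T\chf{\widehat A}}$ may well be orthogonal, making $\chf{B'}T\chf{\widehat A}T^*\chf{B}$ small or even zero. This is the genuine gap; further splitting of $\chf{\widehat A}$ will not rescue it, nor will testing against $\chf{B}Tw$ without a further idea.

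The missing idea is to discard the projection $\chf{\widehat A}$ entirely and test quasi-locality of $\Ad(T)$ on a \emph{rank-one} operator. Pick unit vectors $v\in\CH_A$ and $v'\in\CH_{A'}$ with $\norm{\chf{B}T(v)}>\delta$ and $\norm{\chf{B'}T(v')}>\delta$ (these exist because the blocks are defining blocks of $\appmap[T]{\delta}{F}{E}$), and consider the contraction $\matrixunit{v'}{v}$. Its propagation is controlled by $E\circ\bar E\circ E$ (this only uses \cref{lemma:supp-vector-a-times-a}, so no measurable thickening and no discreteness of $\CHx$ are needed—your worry about making $\widehat A$ measurable evaporates). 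Weak quasi-control gives an $F_\varepsilon$ depending only on $\varepsilon$ and $E\circ\bar E\circ E$ such that $\matrixunit{w'}{w}=\Ad(T)(\matrixunit{v'}{v})$ is $\varepsilon$-$F_\varepsilon$-quasi-local, where $w=T(v)$, $w'=T(v')$. The point of rank one is that the product then \emph{factors exactly}: by \cref{lem: matrixunit identities}\cref{e-v-norm-is-matrix-coefficient},
\[
\norm{\chf{B'}\matrixunit{w'}{w}\chf{B}} = \norm{\chf{B'}(w')}\,\norm{\chf{B}(w)} > \delta^2,
\]
so setting $\varepsilon\coloneqq\delta^2$ forces $B'$ and $B$ not to be $F_{\varepsilon}$-separated, whence $B'\times B\subseteq F\circ F_\varepsilon\circ F$. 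This is exactly the clean bound your Cauchy--Schwarz heuristic was reaching for, and it is an equality rather than an inequality; the rank-one substitution is the step that closes the gap.
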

\begin{proof}
  According to \cref{def:controlled_function}, we need to verify that for every $\bar E\in \CE$ there is some $\bar F\in\CF$ such that if $B'\times A', B\times A\subseteq Y\times X$ is a pair of measurable $(F\otimes E)$-bounded products 
  that moreover satisfy $ A'\cap\bar E(A)\neq\emptyset$, then $B'\times B\subseteq \bar F$. 
  
  Let then $A',A,B',B$ as above be fixed. By definition, this means that both $\norm{\chf{B}T\chf{A}}$ and $\norm{\chf{B'}T\chf{A'}}$ are larger than $\delta$. We may thus find unit vectors $v\in\CH_A$ and $v'\in\CH_{A'}$ whose images $w\coloneqq T(v)$ and $w'\coloneqq T(v')$ satisfy
  \[
    \norm{\chf{B}(w)}> \delta
    \;\; \text{and} \;\;
    \norm{\chf{B'}(w')}> \delta.
  \]
  Note that, in particular, $T$ cannot be $0$.

  Observe that the rank-1 operator $\matrixunit{v'}{v}$ has propagation controlled by $E\circ\bar E\circ E$ (cf.\ \cref{lemma:supp-vector-a-times-a}). By the assumption that $\Ad(T)$ is quasi-controlled, we deduce that for every $\varepsilon>0$ there is some $F_\varepsilon\in\CF$ depending only on $\varepsilon$, $\bar E$ (and $E$) such that $\matrixunit{w'}{w} = \Ad(T)(\matrixunit{v'}{v})$ is $\varepsilon$-$F_\varepsilon$-quasi-local. 
  By \cref{lem: matrixunit identities},
  \[
    \norm{\chf{B'}\matrixunit{w'}{w}\chf{B}} 
    = \norm{\chf{B'}(w')}\norm{\chf{B}(w)}
    > \delta^2.
  \]
  Letting $\varepsilon \coloneqq \delta^2 > 0$, we deduce that $B'$ and $B$ cannot be $F_\varepsilon$-separated. This implies that $B'\times B\subseteq F\circ F_\varepsilon\circ F$, so letting $\bar F\coloneqq F\circ F_\varepsilon\circ F$ concludes the proof. 
\end{proof}

\begin{corollary}\label{cor:approximation quasi-controlled}
  If $T$ is weakly quasi-controlled, then $\appmap[T]{\delta}{F}{E}$ defines a partial coarse map $\cappmap[T]{\delta}{F}{E} \colon \crse{X} \to \crse{Y}$ for every $E$, $F$, and $\delta>0$.
  Moreover, $\cappmap[T]{\delta}{F}{E}\crse \subseteq\csupp(T)$.
\end{corollary}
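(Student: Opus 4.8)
The first assertion requires essentially no work. By \cref{lem:approximation is controlled}, for $\delta>0$ and $T$ weakly quasi-controlled the relation $\appmap[T]{\delta}{F}{E}\subseteq Y\times X$ is controlled, and by \cref{def:coarse-map} the $\asymp$-class of any controlled relation is a partial coarse map. So the plan is simply to set $\cappmap[T]{\delta}{F}{E}\coloneqq\bigbrack{\appmap[T]{\delta}{F}{E}}$ and move on to the ``moreover'' statement, where all the content lies.

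For $\cappmap[T]{\delta}{F}{E}\crse\subseteq\csupp(T)$ the plan is to fix a representative $R\subseteq Y\times X$ of $\csupp(T)$ with $\supp(T)\subseteq R$ (one exists by \cref{def:coarse support}) and to prove the concrete inclusion $\appmap[T]{\delta}{F}{E}\subseteq F\circ R\circ\op{E}$. To do this I would take an arbitrary $(y,x)\in\appmap[T]{\delta}{F}{E}$, unwind \cref{def:approximating crse map} to produce measurable sets $A\subseteq X$ (which is $E$-bounded) and $B\subseteq Y$ (which is $F$-bounded) with $x\in A$, $y\in B$ and $\norm{\chf{B}T\chf{A}}>\delta$, and then use $\delta>0$ to deduce $\chf{B}T\chf{A}\neq 0$. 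By \cref{def:supp-operator} and $\supp(T)\subseteq R$ this forces $B$ to not be $R$-separated from $A$, so there are $x'\in A$ and $y'\in B$ with $(y',x')\in R$. The $E$-boundedness of $A$ gives $(x,x')\in E$, equivalently $(x',x)\in\op{E}$, and the $F$-boundedness of $B$ gives $(y,y')\in F$, whence $(y,x)\in F\circ R\circ\op{E}$.

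Finally, to translate this into a statement about coarse subspaces I would invoke \cref{eq:product relation image} to rewrite $F\circ R\circ\op{E}=(F\otimes E)(R)$ and observe that $(F\otimes E)\cup\Delta_{Y\times X}\in\CF\otimes\CE$, so that $\paren{(F\otimes E)\cup\Delta_{Y\times X}}(R)$ is a controlled thickening of $R$ containing $\appmap[T]{\delta}{F}{E}$; hence $\appmap[T]{\delta}{F}{E}\csub R$, and therefore $\cappmap[T]{\delta}{F}{E}\crse\subseteq[R]=\csupp(T)$ by the definition of coarse containment (\cref{def:coarse-subspace}). The only point requiring a sliver of care --- and the closest thing to an obstacle --- is the bookkeeping with non-symmetric entourages and remembering to adjoin the diagonal so that the thickening actually contains $R$; modulo that, everything is a direct unwinding of the definitions of the support of an operator, of the product coarse structure, and of coarse containment.
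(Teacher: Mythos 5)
Your proof is correct and differs from the paper's only in how the ``moreover'' containment is packaged. The paper chooses non-degeneracy gauges $\gaugey \supseteq F$ and $\gaugex \supseteq E$ and then observes that $\appmap[T]{\delta}{F}{E}$ is literally a subset of $\appmap[T]{0}{\gaugey}{\gaugex}$, which is precisely the representative of $\csupp(T)$ recorded in \eqref{eq: explicit coarse support}; coarse containment is then immediate from a genuine subset inclusion. You instead take an arbitrary representative $R$ with $\supp(T)\subseteq R$, unwind \cref{def:supp-operator} to argue that each defining block $B\times A$ of $\appmap[T]{\delta}{F}{E}$ carries a nonzero coefficient $\chf{B}T\chf{A}$ and so meets $R$, and conclude $\appmap[T]{\delta}{F}{E}\subseteq(F\otimes E)(R)$, a controlled thickening of $R$. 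Both arguments exploit the same crux --- $\delta>0$ forces nonvanishing and hence forces the block to lie close to the support --- and amount to the same short computation. Your version avoids invoking the explicit formula \eqref{eq: explicit coarse support} and needs no choice of gauges containing $E$ and $F$, so it is marginally more self-contained, at the small cost of the extra thickening bookkeeping (adjoining $\Delta_{Y\times X}$) that you correctly flagged; the paper's version is marginally slicker since the containment it exhibits is a literal inclusion of sets rather than a coarse one.
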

\begin{proof}
  The first part is immediate from \cref{lem:approximation is controlled}.
  The ``moreover'' part follows from \cref{eq: explicit coarse support} observing that if $\gaugey,\gaugex$ are non-degeneracy gauges containing $F$ and $E$ respectively then $\appmap[T]{\delta}{F}{E} \subseteq \appmap[T]{0}{\gaugey}{\gaugex}$.
\end{proof}

\begin{remark} \label{rem:app-maps not functorial}
  Albeit natural, associating approximating partial maps with linear operators is \emph{not} a functorial operation. Namely, if we are given $T_1\colon \CHx\to \CHy$ and $T_2\colon\CHy\to \CHz$ the composition $\appmap[T_2]{\delta_2}{D}{F}\circ\appmap[T_1]{\delta_1}{F}{E}$ need not have anything to do with $\appmap[T_2T_1]{\delta'}{D'}{E'}$ for any choice of the parameters $\delta' > 0$, $D' \in \CD$ and $E' \in \CE$.
  On one hand, the approximation $\appmap[T_2T_1]{\delta'}{D'}{E'}$ can be too \emph{large}, because it completely ignores any constraint coming from the middle space $\crse Y$. On the other hand, it can also be too \emph{small}, as it can happen that both $\norm{\chf{B}T_1\chf{A}}$ and $\norm{\chf{C}T_2\chf{B}}$ are large while $\norm{\chf{C}T_2T_1\chf{A}}=0$.
\end{remark}

Since $\norm{\chf BT\chf A}=\norm{\chf A T^*\chf B}$, we observe that the transposition $\op{\paren{\appmap[T]\delta FE}}$ coincides with the approximation of the adjoint $\appmap[T^*]\delta EF$. In particular, if $T^*$ is also weakly quasi-controlled then $\cappmap[T^*]\delta EF$ is a partially defined coarse map as well.
By definition, this means that $\cappmap[T]\delta FE$ is a partial coarse embedding.
The following is now a formal consequence of the properties of partial coarse maps (cf.\ \cref{prop:transpose is coarse inverse}).

\begin{proposition}\label{prop:adjoint is coarse inverse}
  Let $T\colon\CHx\to\CHy$ be an operator such that both $T$ and $T^*$ are weakly quasi-controlled. Arbitrarily fix $E\in\CE$, $F\in\CF$ and $\delta>0$.
  Then the compositions $\cappmap[T^*]\delta EF\circ \cappmap[T]\delta FE$ and $\cappmap[T]\delta FE\circ\cappmap[T^*]\delta EF$ are well-defined and are contained in $\cid_{\crse X}$ and $\cid_{\crse Y}$ respectively.
  Moreover, $\cappmap[T]\delta FE$ is coarsely everywhere defined if and only if $\cappmap[T^*]\delta EF$ is coarsely surjective (and vice versa).

  Lastly, the following are equivalent:
  \begin{enumerate}[label=(\roman*)]
    \item\label{item:prop:adjoint is coarse inverse-ce} $\cappmap[T]\delta FE$ is a coarse equivalence;
    \item\label{item:prop:adjoint is coarse inverse-*ce} $\cappmap[T^*]\delta EF$ is a coarse equivalence;
    \item\label{item:prop:adjoint is coarse inverse-cinv} $\cappmap[T]\delta FE$ and $\cappmap[T^*]\delta EF$ are coarse inverses of one another;
    \item\label{item:prop:adjoint is coarse inverse-csur} $\cappmap[T]\delta FE$ and $\cappmap[T^*]\delta EF$ are coarsely surjective;
    \item\label{item:prop:adjoint is coarse inverse-cdef} $\cappmap[T]\delta FE$ and $\cappmap[T^*]\delta EF$ are coarsely everywhere defined.
  \end{enumerate}  
\end{proposition}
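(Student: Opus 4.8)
The statement is almost entirely a formal consequence of the corresponding facts about partial coarse maps and embeddings established earlier, so the proof should essentially reduce everything to \cref{prop:transpose is coarse inverse}. The key link has already been observed in the text just before the proposition: since $\norm{\chf{B}T\chf{A}} = \norm{\chf{A}T^*\chf{B}}$, the relation $\appmap[T]{\delta}{F}{E}$ and the relation $\appmap[T^*]{\delta}{E}{F}$ are transposes of one another, i.e.\ $\op{\paren{\appmap[T]{\delta}{F}{E}}} = \appmap[T^*]{\delta}{E}{F}$. By \cref{cor:approximation quasi-controlled}, weak quasi-control of $T$ makes $\cappmap[T]{\delta}{F}{E}\colon\crse X\to\crse Y$ a partial coarse map, and weak quasi-control of $T^*$ makes $\cappmap[T^*]{\delta}{E}{F}\colon\crse Y\to\crse X$ a partial coarse map. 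Thus, once both $T$ and $T^*$ are weakly quasi-controlled, $\cappmap[T]{\delta}{F}{E}$ is a partial coarse map whose transpose is also a partial coarse map—which is exactly the definition of a partial coarse embedding from $\crse X$ into $\crse Y$, with transpose $\cappmap[T^*]{\delta}{E}{F}$.

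\textbf{Carrying it out.} First I would fix $E\in\CE$, $F\in\CF$ and $\delta>0$ and record that $\appmap[T]{\delta}{F}{E}$ and $\appmap[T^*]{\delta}{E}{F}$ are transposes; this is immediate from the definition in \cref{def:approximating crse map} together with the identity $\norm{\chf{B}T\chf{A}}=\norm{\chf{A}T^*\chf{B}}$. Next I would invoke \cref{cor:approximation quasi-controlled} twice: applied to $T$ it gives the partial coarse map $\cappmap[T]{\delta}{F}{E}$, and applied to $T^*$ (using that $T^*$ is weakly quasi-controlled) it gives the partial coarse map $\cappmap[T^*]{\delta}{E}{F}$. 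By \cref{def:close_relations} and the discussion of closeness in the product coarse structure, taking transposes respects closeness, so $\op{\bigparen{\cappmap[T]{\delta}{F}{E}}} = \cappmap[T^*]{\delta}{E}{F}$ as coarse subspaces of $\crse{X\times Y}$; since the latter is a partial coarse map, $\cappmap[T]{\delta}{F}{E}$ is by definition a partial coarse embedding into $\crse Y$. At this point every claim in the proposition is a verbatim instance of \cref{prop:transpose is coarse inverse} applied to the partial coarse embedding $\crse f \coloneqq \cappmap[T]{\delta}{F}{E}$, whose transpose is $\op{\crse f} = \cappmap[T^*]{\delta}{E}{F}$: the compositions $\op{\crse f}\crse\circ\crse f$ and $\crse f\crse\circ\op{\crse f}$ are well-defined and contained in $\cid_{\crse X}$ and $\cid_{\crse Y}$; coarse everywhere-definedness of $\crse f$ is equivalent to coarse surjectivity of $\op{\crse f}$; and the five conditions \ref{item:prop:adjoint is coarse inverse-ce}–\ref{item:prop:adjoint is coarse inverse-cdef} are equivalent because they are precisely \ref{item:prop:transpose is coarse inverse-ce}–\ref{item:prop:transpose is coarse inverse-cdef} of \cref{prop:transpose is coarse inverse} read through this identification.

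\textbf{Expected obstacle.} There is no deep obstacle here—the work was all done in \cref{lem:approximation is controlled}, \cref{cor:approximation quasi-controlled} and \cref{prop:transpose is coarse inverse}. The only point requiring minor care is the bookkeeping that taking the transpose of a controlled relation commutes with passing to closeness classes, so that $\op{\bigparen{\cappmap[T]{\delta}{F}{E}}}$ is unambiguously $\cappmap[T^*]{\delta}{E}{F}$; this follows from the definition of the product coarse structure on $\crse{Y\times X}$ (\cref{def:coarse-prod}) and the symmetry of $\asymp$ under transposition, and should be stated in one sentence rather than belabored. Everything else is a direct citation.

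\begin{proof}
  Fix $E\in\CE$, $F\in\CF$ and $\delta>0$. Since $\norm{\chf{B}T\chf{A}}=\norm{\chf{A}T^*\chf{B}}$ for all measurable $A\subseteq X$, $B\subseteq Y$, comparing \cref{def:approximating crse map} for $T$ and for $T^*$ gives the identity of relations
  \[
    \op{\bigparen{\appmap[T]{\delta}{F}{E}}}=\appmap[T^*]{\delta}{E}{F}.
  \]
  Passing to transposes is compatible with the product coarse structure of \cref{def:coarse-prod} and with the equivalence relation $\asymp$, so this identity descends to an identity of coarse subspaces
  \[
    \op{\bigparen{\cappmap[T]{\delta}{F}{E}}}=\cappmap[T^*]{\delta}{E}{F}.
  \]
  By \cref{cor:approximation quasi-controlled} applied to $T$, the left-hand relation defines a partial coarse map $\cappmap[T]{\delta}{F}{E}\colon\crse X\to\crse Y$; by \cref{cor:approximation quasi-controlled} applied to $T^*$ (which is weakly quasi-controlled by hypothesis), $\cappmap[T^*]{\delta}{E}{F}\colon\crse Y\to\crse X$ is a partial coarse map as well. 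Thus $\cappmap[T]{\delta}{F}{E}$ is a partial coarse map whose transpose is also a partial coarse map, i.e.\ it is a partial coarse embedding into $\crse Y$, with $\op{\bigparen{\cappmap[T]{\delta}{F}{E}}}=\cappmap[T^*]{\delta}{E}{F}$.

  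All the remaining assertions now follow by applying \cref{prop:transpose is coarse inverse} to the partial coarse embedding $\crse f\coloneqq\cappmap[T]{\delta}{F}{E}$, for which $\op{\crse f}=\cappmap[T^*]{\delta}{E}{F}$. Indeed, that proposition yields that $\op{\crse f}$ is a partial coarse embedding, that the compositions $\op{\crse f}\crse\circ\crse f$ and $\crse f\crse\circ\op{\crse f}$ are well-defined and are contained in $\cid_{\crse X}$ and $\cid_{\crse Y}$ respectively, that $\crse f$ is coarsely everywhere defined if and only if $\op{\crse f}$ is coarsely surjective (and vice versa), and that conditions \ref{item:prop:transpose is coarse inverse-ce}–\ref{item:prop:transpose is coarse inverse-cdef} are equivalent. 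Reading these through the identification $\crse f=\cappmap[T]{\delta}{F}{E}$, $\op{\crse f}=\cappmap[T^*]{\delta}{E}{F}$ gives exactly the statement.
\end{proof}
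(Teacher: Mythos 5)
Your proof is correct and follows exactly the paper's intended route: the paper itself states \cref{prop:adjoint is coarse inverse} as a "formal consequence" of \cref{prop:transpose is coarse inverse} after observing (in the sentence preceding the proposition) that $\op{\paren{\appmap[T]{\delta}{F}{E}}}=\appmap[T^*]{\delta}{E}{F}$ and that weak quasi-control of $T^*$ makes the latter a partial coarse map. You have simply written out the citation chain in full, which is a fine way to present an argument the paper leaves implicit.
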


\section{Intermezzo: a Concentration Inequality} \label{subsec:concen-ineq}
In this section we prove an analytic inequality that plays a crucial role in our main rigidity result, as it will let us describe the coarse image of the approximating maps $\cappmap[T]{\delta}{F}{E}$.

Informally, this inequality will be used as follows: suppose that a family $(w_i)_{i\in I}$ of orthonormal vectors in $\CHy$ is such that the projections onto the span of $(w_i)_{i\in J}$, where $J\subseteq I$, are uniformly quasi-local. For instance, such vectors may arise as images under a weakly quasilocal operator $T$ of an orthonormal family of zero-propagation vectors $v_i$. Further suppose that the span of the $w_i$ contains a norm one vector supported on a bounded set $B$. Then the Concentration Inequality will imply that there is some $i\in I$ such that ``a sizeable proportion'' of $w_i$ is concentrated on a controlled neighborhood of $B$. If $w_i=T(v_i)$ as above, this means that a suitable approximation of $T$ will map the support of $v_i$ to $B$.
This is how the Concentration Inequality is used in \cite{rigidIHES}, and it is akin to how the Shapley-Folkman Theorem is used in \cite{braga_rigid_unif_roe_2022}*{Theorem~1.2}.

The statement we actually prove (cf.\ \cref{prop: concentration-ineq}) is more refined (and technical), as this greater level of generality will be useful later. We first record two simple facts.

\begin{lemma}\label{lem: norm-realising is eigenvector}
  Let $T \colon \CH\to \CH'$ be a bounded operator, $c\coloneqq\norm{T}$ and $v_n\in\CH$ vectors of norm $1$ such that $\norm{T(v_n)}\to c$. Then $\norm{T^*T(v_n) - c^2v_n}\to 0$.
\end{lemma}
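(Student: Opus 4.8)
The plan is to prove the identity by a short computation with norms, exploiting that $v_n$ is a near-maximizer for $\|T\|$. First I would set $c = \|T\|$ and observe that $\|T^*T\| = c^2$ as well, so $\langle v_n, T^*T v_n\rangle = \|Tv_n\|^2 \to c^2$. The key is to expand $\|T^*Tv_n - c^2 v_n\|^2$ directly. Since the operator $T^*T$ is positive and self-adjoint, we have $\|T^*Tv_n\| \le c^2\|v_n\| = c^2$, and on the other hand $\langle v_n, T^*Tv_n\rangle \le \|T^*Tv_n\|$ by Cauchy--Schwarz, so $\|T^*Tv_n\| \to c^2$ as well (being squeezed between $\langle v_n, T^*Tv_n\rangle \to c^2$ and $c^2$).

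With these two limits in hand, the main step is the expansion
\begin{align*}
  \|T^*Tv_n - c^2 v_n\|^2
  &= \|T^*Tv_n\|^2 - 2c^2\,\mathrm{Re}\,\langle T^*Tv_n, v_n\rangle + c^4\|v_n\|^2 \\
  &= \|T^*Tv_n\|^2 - 2c^2\|Tv_n\|^2 + c^4.
\end{align*}
As $n\to\infty$, the first term tends to $c^4$, the middle term tends to $2c^4$, and the last is constant $c^4$, so the whole expression tends to $c^4 - 2c^4 + c^4 = 0$. This gives $\|T^*Tv_n - c^2 v_n\| \to 0$, as claimed.

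I do not anticipate any real obstacle here; the only point requiring a moment's care is justifying $\|T^*Tv_n\| \to c^2$, which follows from the squeeze $\langle v_n, T^*Tv_n\rangle \le \|T^*Tv_n\| \le \|T^*T\| = c^2$ together with $\langle v_n, T^*Tv_n\rangle = \|Tv_n\|^2 \to c^2$. (One could alternatively handle the trivial case $c = 0$ separately, where $T = 0$ and the statement is immediate, though the computation above covers it as well.)
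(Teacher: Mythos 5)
Your proof is correct and follows essentially the same route as the paper: first establish $\norm{T^*Tv_n}\to c^2$ via the inner product, then expand $\norm{T^*Tv_n - c^2 v_n}^2$ and pass to the limit. You have simply spelled out the squeeze argument for $\norm{T^*Tv_n}\to c^2$ a bit more explicitly than the paper does.
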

\begin{proof}
  Observe that $\norm{T^*T(v_n)}\to c^2$ (one inequality is clear, the other one is obtained by looking at $\scal{T^*T(v_n)}{v_n}$). We then have 
  \begin{align*} 
    \norm{T^*T(v_n) - c^2v_n}^2 & = \scal{T^*T(v_n) - c^2v_n}{T^*T(v_n) - c^2v_n} \\
    & = \norm{T^*T(v_n)}^2 + c^4 - 2c^2 \, \text{Re}\left(\scal{T^*T(v_n)}{v_n}\right) \\
    & = \norm{T^*T(v_n)}^2 + c^4 - 2c^2 \, \norm{T(v_n)}^2 \to 0. \qedhere
  \end{align*}
\end{proof}

The second result is a simple consequence of the parallelogram law.
By induction, the parallelogram law on Hilbert spaces implies that for any choice of $n$ vectors $v_1,\ldots,v_n\in \CH$ the average of the norms $\norm{\sum_{i=1}^n\varepsilon_i v_i}$ over the $2^n$-choices of sign $\varepsilon_i \in \{\pm 1\}$ is equal to $\sum_{i=1}^n{\norm{v_i}}$.
We then observe the following.
\begin{lemma} \label{parallelogram-ineq}
  Let $\paren{v_i}_{i \in I} \subseteq \CH$ be a family of vectors. Then
  \[ 
    \sup_{\varepsilon_i = \pm 1} \norm{\sum_{i \in I} \varepsilon_i v_i}^2 \geq \sum_{i \in I} \norm{v_i}^2, 
  \]
  where the $\sup$ is taken among all possible choices of $\varepsilon_i = \pm 1$ and it is $\infty$ if these sum do not converge unconditionally to vectors in $\CH$.
\end{lemma}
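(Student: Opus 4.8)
The plan is to deduce this directly from the averaging identity recalled just above the statement, first for finite index sets and then by passing to a supremum over finite subfamilies.

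First I would treat the case where $I$ is finite, say $I=\{1,\dots,n\}$. Iterating the parallelogram law — exactly as recalled before the statement — shows that the average of $\norm{\sum_{i=1}^n\varepsilon_i v_i}^2$ over the $2^n$ sign choices equals $\sum_{i=1}^n\norm{v_i}^2$; that is,
\[
  \frac{1}{2^n}\sum_{\varepsilon\in\{\pm1\}^n}\norm{\sum_{i=1}^n\varepsilon_i v_i}^2 = \sum_{i=1}^n\norm{v_i}^2.
\]
Since the maximum of a finite family of non-negative reals is at least their average, it follows that
\[
  \sup_{\varepsilon_i=\pm1}\norm{\sum_{i=1}^n\varepsilon_i v_i}^2 \geq \frac{1}{2^n}\sum_{\varepsilon\in\{\pm1\}^n}\norm{\sum_{i=1}^n\varepsilon_i v_i}^2 = \sum_{i=1}^n\norm{v_i}^2,
\]
which is the desired inequality when $I$ is finite.

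For a general index set $I$ I would then argue by reduction to finite subfamilies: for every finite subset $J\subseteq I$ the finite case provides a choice of signs on $J$ for which $\norm{\sum_{i\in J}\varepsilon_i v_i}^2\geq\sum_{i\in J}\norm{v_i}^2$, and since $\sum_{i\in I}\norm{v_i}^2=\sup_{J}\sum_{i\in J}\norm{v_i}^2$ with $J$ ranging over finite subsets, taking the supremum over $J$ yields the claim. (When $I$ is infinite the left-hand supremum is understood over sign choices on finite subfamilies, which is the reading under which $\sum_{i\in I}\varepsilon_i v_i$ is guaranteed to make sense; for families for which every such signed series converges this coincides with the supremum as written.) I do not expect any genuine obstacle here: the inequality is essentially a one-line consequence of the averaging identity, and the only point that calls for a word of care is the interpretation of the infinite sum on the left when $I$ is infinite, which the passage to finite subfamilies disposes of.
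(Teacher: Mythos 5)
Your argument is correct and is essentially the paper's own proof: for finite $I$ the supremum dominates the average (which equals $\sum_i\norm{v_i}^2$ by the iterated parallelogram law), and for general $I$ one passes to the supremum over finite subfamilies. The extra remark about interpreting the left-hand side when $I$ is infinite is a reasonable clarification but does not change the substance.
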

\begin{proof}
  If $I$ is finite this is just saying that the supremum is at least as large as the average.
  Let then $I$ be infinite. If some series $\sum_{i \in I} \varepsilon_i v_i$ is not unconditionally convergent, there is nothing to prove. If that is not the case (as will happen when we actually apply this lemma), then there must exist for any $\delta>0$ a finite subset $J\subseteq I$ such that $\norm{\sum_{i \in I\smallsetminus J} \varepsilon_i v_i}< \delta$ regardless of the choice of $\varepsilon_i$. Therefore
  \begin{align*}
    \sup_{\varepsilon_i = \pm 1} \norm{\sum_{i \in I} \varepsilon_i v_i}^2
    &\geq \left(\sup_{\varepsilon_i = \pm 1}\norm{\sum_{i \in J} \varepsilon_i v_i} 
        - \norm{\sum_{i \in I\smallsetminus J} \varepsilon_i v_i} \right)^2\\
    &= \sup_{\varepsilon_i = \pm 1}\norm{\sum_{i \in J} \varepsilon_i v_i}^2 + O(\delta) \geq \sum_{i \in J} \norm{v_i}^2 + O(\delta) = \sum_{i \in I} \norm{v_i}^2  + O(\delta),
  \end{align*}
  where we used finiteness of $J$ in the second inequality. Letting $\delta\to 0$ completes the proof.
\end{proof}

We may now state and prove the following key result. When first parsing the statement, the reader may assume that $\eta$ and $\kappa$ are equal to $1$ and $\delta>0$ is very small.

\begin{proposition}[Concentration Inequality] \label{prop: concentration-ineq}
  Let $\CHx$ be discrete with discreteness gauge \gauge, and $T\colon\CHx\to \CHy$ an operator $\eta$-bounded from below for some $\eta>0$. Fix $\delta,\kappa>0$, $F\in\CF$ and $B,C \subseteq Y$ measurable with $F(B)\subseteq C$ and $\norm{\chf BT}\geq \kappa$.
  Suppose that $\norm{\chf{C} T\chf A} \leq \delta$ for every measurable $\gauge$-controlled $A\subseteq X$. Then for every $\varepsilon > 0$ such that
  \[
    \varepsilon< \frac{\kappa^2 \, (\eta^2-\delta^2)^{\frac 12}}{2 \, \norm{T}}
  \]
  there is an $\gauge$-controlled projection whose image under $\Ad(T)$ is \emph{not} $\varepsilon$-$F$-quasi-local.
\end{proposition}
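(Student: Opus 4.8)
The plan is to argue by contradiction. Assume that every $\gauge$-controlled projection $p$ has $\Ad(T)(p)=TpT^*$ being $\varepsilon$-$F$-quasi-local; I want to deduce $\varepsilon\geq \kappa^2(\eta^2-\delta^2)^{\frac12}/(2\norm T)$. We may assume $\eta>\delta$, since otherwise the constraint on $\varepsilon$ is vacuous. Since $\norm{\chf B T}=\norm{T^*\chf B}\geq\kappa$, for each $\kappa_0<\kappa$ we can fix a unit vector $w\in\CH_B$ with $\norm{T^* w}>\kappa_0$ and set $v\coloneqq T^* w\in\CHx$, so $\norm v>\kappa_0$. Because $F(B)\subseteq C$, the pair $(Y\smallsetminus C,B)$ is $F$-separated, so for any projection $p$ for which $TpT^*$ is $\varepsilon$-$F$-quasi-local we must have $\norm{\chfcY C\, TpT^*\,\chf B}\leq\varepsilon$; and since $\chf B w=w$, the left side is at least $\norm{\chfcY C\, TpT^* w}$. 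So the task reduces to exhibiting a single $\gauge$-controlled projection $p$ with $\norm{\chfcY C\, TpT^* w}$ at least roughly $\tfrac12(\eta^2-\delta^2)^{\frac12}\kappa$.

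Next I would use discreteness. Fix a discrete partition $X=\bigsqcup_{i\in I}A_i$ with discreteness gauge $\gauge$, and write $v=\sum_{i\in I}v_i$ with $v_i\coloneqq\chf{A_i}v$; these are pairwise orthogonal and $\sum_i\norm{v_i}^2=\norm v^2$. Each $A_i$ is $\gauge$-controlled, so the hypothesis gives $\norm{\chf C T v_i}\leq\delta\norm{v_i}$, while $T$ being $\eta$-bounded below gives $\norm{Tv_i}\geq\eta\norm{v_i}$; hence $\norm{\chfcY C\, Tv_i}^2=\norm{Tv_i}^2-\norm{\chf C Tv_i}^2\geq(\eta^2-\delta^2)\norm{v_i}^2$. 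Applying the parallelogram estimate \cref{parallelogram-ineq} to the family $(\chfcY C\, Tv_i)_{i\in I}$ yields a sign choice $(\sigma_i)_{i\in I}$ with $\norm{\sum_i\sigma_i\chfcY C\, Tv_i}^2\geq\sum_i\norm{\chfcY C\, Tv_i}^2\geq(\eta^2-\delta^2)\norm v^2$. Setting $A_\pm\coloneqq\bigsqcup_{\sigma_i=\pm1}A_i$ and $v_\pm\coloneqq\chf{A_\pm}v=\sum_{\sigma_i=\pm1}v_i$, continuity of $T$ and of $\chfcY C$ gives $\chfcY C\, T(v_+-v_-)=\sum_i\sigma_i\chfcY C\, Tv_i$, so by the triangle inequality one of $v_+,v_-$ — say $v_+$ — satisfies $\norm{\chfcY C\, Tv_+}\geq\tfrac12(\eta^2-\delta^2)^{\frac12}\norm v$.

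Finally, take $p\coloneqq\chf{A_+}$. The set $A_+$ is measurable by condition~\cref{def:discrete module:conv} of \cref{def:discrete module}, and $\chf{A_+}$, being a multiplication operator, has zero propagation: $\supp(\chf{A_+})\subseteq\Delta_X\subseteq\gauge$, so $p$ is a $\gauge$-controlled projection. Moreover $TpT^* w=T\chf{A_+}(T^* w)=Tv_+$, so $\norm{\chfcY C\, TpT^*\chf B}\geq\norm{\chfcY C\, Tv_+}\geq\tfrac12(\eta^2-\delta^2)^{\frac12}\norm v>\tfrac12(\eta^2-\delta^2)^{\frac12}\kappa_0$. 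Letting $\kappa_0\nearrow\kappa$ shows that for every $\varepsilon<\tfrac12\kappa(\eta^2-\delta^2)^{\frac12}$ there is a $\gauge$-controlled projection whose $\Ad(T)$-image is not $\varepsilon$-$F$-quasi-local; since $\kappa\leq\norm{\chf B T}\leq\norm T$, the range $\varepsilon<\kappa^2(\eta^2-\delta^2)^{\frac12}/(2\norm T)$ is contained in the range just obtained, which is exactly the assertion.

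The main obstacle is conceptual rather than computational: the naive choice $p=p_{v}$ (or $p_{v_\sigma}$) fails outright, because $v$ need not be supported on a bounded set and so $p_{v}$ is not controlled; and one cannot bound $\norm{\chf C Tv}$ by $\sum_i\norm{\chf C Tv_i}\leq\delta\sum_i\norm{v_i}$, whose right-hand side may be infinite. The point of the argument is precisely to make two ingredients cooperate — the parallelogram inequality, which upgrades the useless $\ell^1$-type bound into the orthogonal $\ell^2$-estimate $\norm{\chfcY C\,Tv_\sigma}^2\geq(\eta^2-\delta^2)\norm v^2$, and the closure axiom \cref{def:discrete module:conv} of a discrete module, which packages the relevant sign-class $A_+$ into an honest zero-propagation projection $\chf{A_+}$. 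Everything else (the orthogonality bookkeeping, the $\kappa_0\to\kappa$ limit, comparing constants) is routine.
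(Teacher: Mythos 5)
Your proof is correct and takes a genuinely cleaner route than the paper's. Both proofs decompose a relevant vector along the discrete partition $(A_i)_{i\in I}$, apply the parallelogram estimate \cref{parallelogram-ineq} to the family $(\chfcY{C} T v_i)_i$, and produce the witnessing projection as a partial sum $\sum_{i\in J}\chf{A_i}$; the difference is which vector gets decomposed. The paper works downstairs in $\CHx$ with a sequence of unit vectors $(v_n)$ almost-norming $\norm{\chf B T}$, and must then invoke \cref{lem: norm-realising is eigenvector} to relate $q_{J_n}\chf B(T v_n)$ back to $T\chf{J_n}(v_n)$; this is where the sequences, the $\liminf$, and the extra factor $\norm{w_n}\approx\norm T$ enter. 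You instead pull a nearly-norming vector $w\in\CH_B$ straight back to $v = T^*w\in\CHx$, so that $T\chf{A_+}T^*w = Tv_+$ is an identity rather than an approximation — this eliminates the eigenvector lemma and all sequential bookkeeping. As a by-product your threshold, after letting $\kappa_0\nearrow\kappa$, is $\varepsilon<\tfrac{1}{2}\kappa(\eta^2-\delta^2)^{1/2}$, which since $\kappa\le\norm{\chf B T}\le\norm T$ is at least as generous as the paper's $\kappa^2(\eta^2-\delta^2)^{1/2}/(2\norm T)$, so you actually obtain a marginally sharper form of the inequality. Two small remarks: (i) \cref{parallelogram-ineq} gives a supremum, so strictly speaking one should pick a sign choice achieving the bound only up to a small deficit; your slack $\norm v>\kappa_0$ (strict) absorbs this without fuss (the paper's wording has the same minor looseness). (ii) Your assertion $\supp(\chf{A_+})\subseteq\Delta_X$ is correct since $\chf{A_+}$ is diagonal, and $\Delta_X\subseteq\gauge$ because gauges contain the diagonal; the projection is in fact $\diag(A_i\mid i\in I)$-controlled, consistent with the paper's $\chf{J_n}$.
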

\begin{proof}
  Let $X=\bigsqcup_{i\in I} A_i$ be a discrete \gauge-controlled partition of $\crse{X}$. In particular, we have that $\norm{\chf{C}T\chf{A_i}}\leq\delta$ for every $i\in I$. For every $J\subseteq I$, the SOT-sum $\chf{J} \coloneqq \sum_{i\in J} \chf{A_i}$ exists and it is $\gauge$-controlled (see \cref{lemma:supports-operators-sum}). Let $q_i=T\chf{A_i}T^*$ and $q_J\coloneqq \Ad(T)(\chf{J})=\sum_{i\in J}q_i$ (these positive operators need not be projections, nor do they need to be orthogonal).
  Fix $\varepsilon$ as in the statement. We will show that there is some $J \subseteq I$ such that $q_J$ is not $\varepsilon$-$F$-quasi-local.

  Fix any sequence $\paren{v_n}_{n \in \NN} \subseteq \CHx$ of unit vectors such that 
  \[
    \lim_{n \to \infty} \norm{\chf{B}T (v_n)} =\norm{\chf{B}T}\geq \kappa
  \]
  and let $w_n\coloneqq T(v_n)$.
  For every $i \in I$, let $v_{n,i} \coloneqq \chf{A_i} (v_n)$ and $w_{n,i}\coloneqq T(v_{n,i})$. 
  Note that, by orthogonality of $\paren{\chf{A_i}}_{i \in I}$, we have that $1 = \norm{v_n}^2 = \sum_{i\in I} \norm{v_{n,i}}^2$. By the assumptions that $T$ is bounded below and the inequality $\norm{\chf{C}T\chf{A_i}}\leq\delta$, we deduce that
  \begin{align}\label{eq:meat in the complement}
    \begin{split}
      \norm{\chfcY C (w_{n,i})}^2 
      & = \norm{T(v_{n,i})}^2 - \norm{\chf{C}T(v_{n,i})}^2 
       \geq \left(\eta^2 - \delta^2\right) \, \norm{v_{n,i}}^2. 
    \end{split}
  \end{align}

  For each $n \in \NN$ we may combine \cref{parallelogram-ineq,eq:meat in the complement} on the family of vectors $\paren{\chfcY{C} (w_{n,i})}_{i \in I}$, to deduce that, for every $n \in \NN$,
  \begin{align*}
    \sup_{\varepsilon_i = \pm 1} \norm{\sum_{i\in I} \varepsilon_i \chfcY C (w_{n,i}) }^2 & \geq \sum_{i \in I} \norm{\chfcY{C} (w_{n,i})}^2  \\
    & \geq \sum_{i \in I} \left(\eta^2 - \delta^2\right) \, \norm{v_{n,i}}^2  \\
    & = \eta^2 - \delta^2.
  \end{align*}
  In particular, for each $n \in \NN$ we may fix a choice $\paren{\varepsilon_{n,i}}_{i \in I} \subseteq \{\pm 1\}$ so that 
  \[
    \inf_{n\in\NN} \, \norm{\sum_{i \in I} \varepsilon_{n,i} \chfcY{C} (w_{n,i})}^2 \geq \eta^2-\delta^2. 
  \]
  Let $J_n \coloneqq \{i \in I \mid \varepsilon_{n,i} = 1 \} \subseteq I$ and observe that
  \begin{align*}
    \norm{\sum_{i \in I} \varepsilon_{n,i} \chfcY C (w_{n,i})} 
    & \leq \norm{\sum_{i \in J_n} \chfcY C (w_{n,i})} + \norm{\sum_{i \in I\smallsetminus J_n} \chfcY C (w_{n,i})}.
  \end{align*}
  Thus, by substituting $J_n$ with $I\smallsetminus J_n$ if necessary, we may further assume that 
  \begin{equation}\label{eq:sum over subset}
    \inf_{n\in\NN} \, \norm{\sum_{i \in J_n} \chfcY C (w_{n,i})} \geq \frac{\left(\eta^2-\delta^2\right)^{1/2}}2.
  \end{equation}

  We claim that for $n \in \NN$ large enough $q_{J_n}$ is not $\varepsilon$-$F$-quasi-local. 
  We seek for a lower bound on $\norm{\chfcY{C} q_{J_n}\chf B}$ by testing it on the vectors $w_n$. Observe that
  \[
    q_{J_n}\chf B(w_n) = Tp_{J_n}(T^*\chf B)(\chf B T) (v_n).
  \]
  Since $\norm{\chf{B}T(v_n)}$ converges to $\norm{\chf{B}T}$ by assumption, applying \cref{lem: norm-realising is eigenvector} we deduce that 
  \[
    q_{J_n}\chf B(w_n) \approx \norm{\chf B T}^2Tp_{J_n}(v_n)
  \]
  for $n$ large enough.\footnote{\, Here, by $\approx$ we mean that $\norm{q_{J_n}\chf B(w_n) - \norm{\chf B T}^2Tp_{J_n}(v_n)} \to 0$ when $n \to \infty$.}
  Also note that $Tp_{J_n}(v_n)=\sum_{i\in J_n}T(v_{n,i})=\sum_{i\in J_n}w_{n,i}$.
  This implies that
  \begin{align*} 
    \norm{\chfcY C q_{J_n} \chf{B} \Bigparen{\frac{w_n}{\norm{w_n}} }}
    &\approx \frac{\norm{\chf B T}^2}{\norm{w_n}}\norm{\chfcY C Tp_{J_n} (v_n)} \\
    &\geq \frac{\kappa^2}{\norm{T}}\norm{\sum_{i\in J_n}\chfcY C (w_{n,i})}.
  \end{align*}
  When combined with \cref{eq:sum over subset}, we deduce that 
  \[ 
    \liminf_{n\to\infty} \, \norm{\chfcY{C} q_{J_n} \chf{B}} \geq \frac{\kappa^2}{\norm{T}} \cdot \frac{\left(\eta^2 - \delta^2\right)^{\frac 12}}{2} > \varepsilon.
  \]
  Since $F(B)\subseteq C$, this shows that for some large enough $n$ the operator $q_{J_n}$ is not $\varepsilon$-$F$-quasi-local, as desired.
\end{proof}

\begin{remark}
  In the sequel we will only apply \cref{prop: concentration-ineq} to isometries (and therefore $1$-bounded from below, \emph{i.e.}\ $\eta =1$). However, we find it useful to prove it for operators $\eta$-bounded from below, as this highlights the point where this assumption is needed. Besides, this extra generality is useful if one is interested in rigidity results for mappings of Roe-like \cstar{}algebras that fall short of being \Star{}homomorphisms (\emph{e.g.}\ certain almost homomorphisms). These ideas will, however, not be pursued further here.

  On the contrary, the freedom to choose $\kappa>0$ is directly important for the present memoir, as it is used in the proof of rigidity up to stable isomorphism (cf.\ \cref{thm: stable rigidity}).
\end{remark}

\section{Estimating coarse images of approximating maps} \label{subsec:maps-from-rk-1-to-rk-1}
The key missing ingredient in the proof of rigidity for mappings implemented by weakly quasi-controlled Lipschitz isomorphisms (cf.\ \cref{thm: rigidity quasi-proper operators} below) is an unconditional estimate for the image of the approximating relations defined in \cref{subsec: approximations}. This will be provided by the Concentration Inequality (cf.\ \cref{prop: concentration-ineq}), but we first need to introduce some notation.

Given a subspace $V$ of $\CHx$, we need to consider the subset of $X$ obtained as the union of subsets that ``contain a sizeable proportion of the support'' of a vector in $V$. Specifically, given $E\in\CE$ and $\kappa>0$ we let
\[ 
  \concvec{\kappa}{E}{V} \coloneqq 
  \bigcup \; \braces{A\subseteq X\text{ meas.\ $E$-bounded} \mid \norm{\chf{A}|_{V}} \geq \kappa}.
\]
Informally, $\concvec{\kappa}{E}{V}$ consists of the points $x\in X$ so that there is a vector in $V$ that is at least ``$\kappa$-concentrated'' in an $E$-neighborhood of $x$.

For any bounded $T \colon \CHx \to \CHy$ we may consider the image $T(\CHx)\subseteq \CHy$, and the associated subset $\concvec{\kappa}{F}{T(\CH_X)} \subseteq Y$.
Observe that this subset does \emph{not} depend in any way on the $\crse X$-module structure of $\CHx$.
The following simple consequence of \cref{prop: concentration-ineq} can then be seen as an unconditional estimate for the image of approximating maps introduced in \cref{def:approximating crse map}.
\begin{proposition}\label{prop: unconditional image estimate}
  Let $\CHx$ be  discrete with discreteness gauge $\gauge$, and let $\CHy$ be locally admissible. Fix $F_0\in\CF$, and $\kappa,\eta>0$.

  Then, for any $T\colon\CHx\to\CHy$ weakly quasi-controlled and $\eta$-bounded from below, and any $0<\delta<\eta$, there exists an $F\in\CF$ such that
  \[
    \concvec{\kappa}{F_0}{T(\CH_X)}\subseteq \image \Bigparen{\appmap[T]{\delta}{F}{\gauge}}.
  \]
\end{proposition}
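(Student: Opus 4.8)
The plan is to deduce the containment directly from the Concentration Inequality (\cref{prop: concentration-ineq}). Unravelling the definitions, a point $y \in \concvec{\kappa}{F_0}{T(\CH_X)}$ means there is some measurable $F_0$-bounded set $B \ni y$ such that $\norm{\chf B|_{T(\CH_X)}} \geq \kappa$, i.e.\ there is a unit vector $w = T(u)$ in the image with $\norm{\chf B(w)} \geq \kappa$ (so in particular $\norm{\chf B T} \geq \kappa$, since $T$ is bounded below we may rescale $u$ and assume $\norm{u}\le 1$, hence $\norm{T} \ge \kappa$, etc.). To show $y \in \image(\cappmap[T]{\delta}{F}{\gauge})$ it suffices, by definition of $\cappmap[T]{\delta}{F}{\gauge}$ and of coarse image, to produce for a suitable choice of $F$ a measurable $(F \otimes \gauge)$-bounded product $B' \times A'$ with $y$ (or a point at controlled distance) in $B'$ and $\norm{\chf{B'}T\chf{A'}} > \delta$. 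Equivalently, I want to rule out the possibility that $\norm{\chf C T \chf A} \leq \delta$ for \emph{all} $\gauge$-controlled measurable $A$, where $C$ is a measurable controlled thickening of $F(B)$ (this exists by local admissibility of $\CHy$).

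So the argument is by contradiction: fix $F_0$, $\kappa$, $\eta$, and $\delta < \eta$ as in the statement. First choose $\varepsilon > 0$ small enough that the quantitative bound of \cref{prop: concentration-ineq} is satisfied for the given parameters — note $\norm{T}$ is not controlled a priori, but this is fine because the conclusion we extract ("there is a $\gauge$-controlled projection whose $\Ad(T)$-image is not $\varepsilon$-$F$-quasi-local") will be contradicted using \emph{weak quasi-control} of $T$, which supplies, for this $\varepsilon$ and for each entourage controlling the propagation of $\gauge$-controlled projections (namely $\gauge$ itself, or rather $\gauge \circ \gauge^{\rm T}$ or similar), a single $F \in \CF$ such that $\Ad(T)$ sends all $\gauge$-controlled projections to $\varepsilon$-$F$-quasi-local operators. (The norm dependence in the $\varepsilon$-bound only forces $\varepsilon$ to be small, which is harmless.) Then take this $F$, let $F' \coloneqq F \circ F_0$ or a similar enlargement so that $F(B) \subseteq C$ for a measurable $\gauge_{\crse Y}$-thickening $C$ of $F_0(B)$; apply \cref{prop: concentration-ineq} with these data and the assumption-towards-contradiction $\norm{\chf C T \chf A}\le \delta$ for all $\gauge$-controlled $A$. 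It yields a $\gauge$-controlled projection whose $\Ad(T)$-image is not $\varepsilon$-$F$-quasi-local, contradicting weak quasi-control. Hence for some $\gauge$-controlled measurable $A$ we have $\norm{\chf C T \chf A} > \delta$, and a measurable $(F \otimes \gauge)$-bounded (after adjusting $F$) subproduct of $C \times A$ witnessing this puts a point of $C$ — hence a point within $F_0$ of $y$ — into $\image(\appmap[T]{\delta}{F}{\gauge})$. Since coarse image is closed under controlled thickenings, $y \in \image(\cappmap[T]{\delta}{F}{\gauge})$.

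The main bookkeeping obstacle, and the step I'd be most careful about, is the management of entourages: I must choose $F$ \emph{before} invoking \cref{prop: concentration-ineq} (since weak quasi-control produces $F$ from $\varepsilon$ and the propagation bound of $\gauge$-controlled projections, not from $C$), and then I need the hypothesis "$F(B) \subseteq C$" of \cref{prop: concentration-ineq} to hold for this same $F$ and for a $C$ that is an admissible thickening of $F_0(B)$ — so the $F$ appearing in the statement of \cref{prop: unconditional image estimate} should really be taken as (a gauge containing) the $F$ coming from quasi-control composed with $F_0$ and the $\CHy$-admissibility gauge. One subtlety is that $\concvec{\kappa}{F_0}{T(\CH_X)}$ is defined via $\norm{\chf B|_V} \ge \kappa$ with $V = T(\CH_X)$, which gives $\norm{\chf B T}\ge \kappa$ directly (take a sequence of unit vectors $w_j \in T(\CH_X)$ with $\norm{\chf B w_j} \to \norm{\chf B|_V}$, write $w_j = T(u_j)$; since $T$ is $\eta$-bounded below, $\norm{u_j} \le \eta^{-1}$, so $\norm{\chf B T}\ge \eta \norm{\chf B|_V} \cdot$ — actually one just needs $\norm{\chf B T} \ge \kappa$, which follows since $\norm{\chf B T(u_j/\norm{u_j})} \ge \norm{\chf B T(u_j)}$ when $\norm{u_j}\le 1$; if $\norm{u_j} > 1$ rescale). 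A clean way: any $w \in T(\CH_X)$ with $\norm w = 1$ is $T(u)$ with $\norm u \le \eta^{-1}$, and then $T(u/\norm u)$ has norm $\ge 1$ and $\chf B$-component $\ge \norm{\chf B w}$, giving $\norm{\chf B T} \ge \norm{\chf B w}$, so $\norm{\chf B T} \ge \norm{\chf B|_V} \ge \kappa$. Everything else is a routine translation between the operator-theoretic conclusion of the Concentration Inequality and the combinatorial definition of the approximating relation.
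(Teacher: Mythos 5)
Your overall approach matches the paper's: argue via the contrapositive of the Concentration Inequality, choosing $\varepsilon$ and (via weak quasi-control) the entourage $F_1$ \emph{before} invoking that proposition, then produce the witness block. The entourage bookkeeping you flag as the main risk is handled exactly as you anticipate — the paper sets $F \coloneqq \gaugey\circ F_1\circ F_0\circ \op{F_1}\circ\gaugey$ so that $C\times C\subseteq F$ and then $C\times A$ is a defining block, with $y\in B\subseteq C$ directly (no need to pass through coarse-image closure; the claim is a plain set inclusion).

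There is, however, one quantitative slip. Your "clean way" of getting $\norm{\chf B T}\geq\kappa$ is not correct: from $w=T(u)$ with $\norm{w}=1$ and $T$ being $\eta$-bounded below you only get $\norm{u}\leq\eta^{-1}$, not $\norm{u}\leq 1$. When $\norm{u}>1$ (which can happen whenever $\eta<1$), the vector $T(u/\norm{u})=w/\norm{u}$ has norm $1/\norm{u}<1$ and its $\chf{B}$-component has norm $\norm{\chf B w}/\norm u < \norm{\chf B w}$, so you do not get $\norm{\chf B T}\geq\norm{\chf B w}$. The best unconditional bound is
\[
  \norm{\chf B T}\;\geq\;\frac{\norm{\chf B T(u)}}{\norm{u}}\;\geq\;\eta\,\norm{\chf B w},
\]
whence $\norm{\chf BT}\geq\eta\kappa$. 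The paper accordingly applies \cref{prop: concentration-ineq} with the parameter $\eta\kappa$ in place of $\kappa$, and chooses $\varepsilon$ small enough against $\kappa'\coloneqq\eta\kappa$, namely $\varepsilon<(\eta\kappa)^2(\eta^2-\delta^2)^{1/2}/(2\norm T)$. With that correction (and dropping the appeal to coarse-image thickenings at the end, replacing it with "enlarge $F$ so $C$ is $F$-bounded, hence $C\times A\subseteq\appmap[T]{\delta}{F}{\gauge}$ and $y\in C$"), your argument coincides with the paper's.
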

\begin{proof}
  Arbitrarily fix some $\varepsilon > 0$ such that
  \[
    \varepsilon < \frac{(\eta\kappa)^2(\eta^2-\delta^2)^{\frac 12}}{2\norm{T}}. 
  \]
  Since $T$ is weakly quasi-controlled, there is some $F_1\in\CF$ such that $\Ad(T)$ sends $\gauge$-controlled contractions to $\varepsilon$-$F_1$-quasi-local operators. Enlarging it if necessary, we may assume that $F_1$ contains the diagonal of $\crse{Y}$.

  Let $\gaugey$ be a local admissibility gauge for $\CHy$.
  Fix an $F_0$-bounded measurable $B\subseteq Y$ with $\norm{\chf B|_{T(\CHx)}}\geq \kappa$.
  If $(w_n)_{n\in\NN}\subseteq T(\CHx)$ is a sequence of norm one vectors with $\norm{\chf{B}(w_n)}\to\kappa$, and we let $v_n\in\CHx$ be the preimage of $w_n$, then $\norm{v_n}\leq\eta^{-1}$ (since $T$ is $\eta$-bounded below). Therefore 
  \begin{equation}\label{eq:norm chf_BT}
    \norm{\chf BT}\geq \norm{\chf BT(v_n/\norm{v_n})}\geq \eta\norm{\chf BT(v_n)} \to \eta\kappa.
  \end{equation}
  We may also fix a measurable $C\subseteq Y$ such that $F_1(B)\subseteq C\subseteq \gaugey\circ F_1(B)$.
  By construction, we know that $T$ sends $\gauge$-controlled operators to $\varepsilon$-$F_1$-quasi-local operators. Since \cref{eq:norm chf_BT} holds, we apply the contrapositive of \cref{prop: concentration-ineq} with $\kappa' \coloneqq \eta\kappa$ to deduce that there is a \gauge-controlled measurable $A\subseteq X$ such that $\norm{\chf CT\chf A}\geq\delta$.

  Let $F\coloneqq \gaugey\circ F_1\circ F_0\circ\op{F_1}\circ\gaugey$ and observe that $C\times C$ is contained in $F$. By construction, $C\times A$ is contained in $\appmap[T]{\delta}{F}{\gauge}$. Since $B\subseteq C$, this completes the proof.
\end{proof}

\begin{remark}
  \cref{prop: unconditional image estimate} is not true without the assumption that $T$ be bounded from below. To see this, let $X_n=[n]$ seen as a coarsely disjoint union of $n$ points, and let $Y_n$ be a single point. Consider the partial isometry $T\colon\ell^2(X_n)\to\ell^2(Y_n) = \CCC$ sending the locally constant function $1/\sqrt n$ to $1$ and the orthogonal complement to $0$. Of course, $T$ is surjective, but in order to obtain a non-empty approximating map one needs to take $\delta$ as small as $n^{-1/2}$. The absence of a uniform bound implies that there is no hope to obtain unconditional surjectivity: an actual example is built from the above by considering a disjoint union of $X_n$ and $Y_n$, where $n \in \NN$.
\end{remark}

\section{Rigidity for weakly quasi-controlled operators}
In this final section we will prove the main result of the whole chapter (cf.\ \cref{thm: rigidity quasi-proper operators}). Using the theory we developed thus far, it is rather simple to leverage \cref{prop: unconditional image estimate} to deduce that a Lipschitz isomorphism $T\colon\CHx\to\CHy$ between discrete faithful modules such that both $T$ and $T^*$ are weakly quasi-controlled must give rise to a coarse equivalence between $\crse X$ and $\crse Y$ (see \cref{cor: rigidity controlled unitaries} below). 
However, rather than proving this statement directly we shall instead prove a more general theorem, which will be very useful when tackling rigidity under \emph{stable} isomorphisms of Roe-like \cstar{}algebras.
In that context it is not generally possible to find an operator $T$ that is weakly quasi-controlled on the whole $\CHx$. On the other hand, it is relatively easy to show that one can find operators whose restrictions to certain \emph{submodules} (cf.\ \cref{subsec: submodules}) are weakly quasi-controlled. We will then be able to apply the following general result.
\begin{theorem}\label{thm: rigidity quasi-proper operators}
  Fix locally admissible modules $\CHx$ and $\CHy$, an operator $T\colon \CHx\to \CHy$ and constants $\mu,\eta$ with $0\leq\mu < \eta/2$.
  Suppose there are chains of submodules $p_X^0\leq p_X'\leq p_X''$ of $\CHx$ and $p_Y^0\leq p_Y'\leq p_Y''$ of $\CHy$ with
  \[
    \renewcommand{\arraystretch}{1.5}\setlength{\arraycolsep}{2em}\begin{array}{cc}
      p_Y' Tp_X^0 \approx_\mu  Tp_X^0, &
      p_Y'' Tp_X' \approx_\mu  Tp_X', \\
      p_X' T^*p_Y^0 \approx_\mu  T^*p_Y^0, &
      p_X'' T^*p_Y' \approx_\mu  T^*p_Y'.
    \end{array}
  \]
  Further suppose that the following are satisfied.
  \begin{enumerate}[label=(\roman*)]
    \item $p_X^0$ and $p_Y^0$ are faithful;
    \item  $p_X'$ and $p_Y'$ are discrete;
    \item the restrictions of $T$ and $T^*$ to $p_X'$ and $p_Y'$ respectively are $\eta$-bounded from below;
    \item $Tp_X''\colon p_X''(\CHx)\to\CHy$ and $T^*p_Y''\colon p_Y''(\CHy)\to\CHx$ are weakly quasi-controlled.
  \end{enumerate}
  Then for every $\mu<\delta<\eta$ there are $E\in\CE$ and $F\in\CF$ large enough such that $\cappmap[p_Y''Tp_X'']{\delta-\mu}{F}E\colon\crse X\to\crse Y$ is a coarse equivalence with coarse inverse $\cappmap[p_X''T^*p_Y'']{\delta-\mu}EF$.
\end{theorem}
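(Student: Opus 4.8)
The plan is to reduce everything to the Concentration Inequality and the image estimate of \cref{prop: unconditional image estimate}, plus the formal machinery of approximating relations from \cref{subsec: approximations}. Write $S \coloneqq p_Y'' T p_X''$ and $S^* = p_X'' T^* p_Y''$ (thought of as an operator $\CHx\to\CHy$ via the submodule embeddings). First I would record that $S$ and $S^*$ are weakly quasi-controlled by hypothesis (iv) together with \cref{cor: support and submodules}\cref{cor: support and submodules:1}, which guarantees that quasi-control of $Tp_X''$ as a map out of $p_X''(\CHx)$ is equivalent to quasi-control as a map out of $\CHx$; hence by \cref{cor:approximation quasi-controlled} the relations $\appmap[S]{\delta'}{F}E$ and $\appmap[S^*]{\delta'}EF$ define partial coarse maps $\cappmap[S]{\delta'}FE\colon\crse X\to\crse Y$ and $\cappmap[S^*]{\delta'}EF\colon\crse Y\to\crse X$ for any $\delta'>0$, and by \cref{prop:adjoint is coarse inverse} each is a partial coarse embedding with the other as tentative coarse inverse. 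By that same proposition, to conclude that $\cappmap[S]{\delta-\mu}FE$ is a coarse equivalence with coarse inverse $\cappmap[S^*]{\delta-\mu}EF$ it suffices to show that both are \emph{coarsely surjective} (equivalently, coarsely everywhere defined); by symmetry of the hypotheses it is enough to prove coarse surjectivity of $\cappmap[S]{\delta-\mu}FE$, i.e.\ $\crse Y\crse\subseteq\cim\bigparen{\cappmap[S]{\delta-\mu}FE}$.

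\textbf{The surjectivity step.} This is where the real work lies. I would first transfer the hypotheses on $T$ to the smaller pieces. From $p_Y'Tp_X^0\approx_\mu Tp_X^0$ and $p_X'T^*p_Y^0\approx_\mu T^*p_Y^0$, and using that $p_X^0\leq p_X'$, $p_Y^0\leq p_Y'$, one deduces that the restriction $T'\coloneqq p_Y' T p_X'\colon p_X'(\CHx)\to p_Y'(\CHy)$ is, up to error controlled by $\mu$, "the same as" $T$ on the faithful submodule $p_X^0$: more precisely, for vectors $v$ in $p_X^0(\CHx)$ we have $T'v = p_Y'Tp_X'v = p_Y'Tv \approx_\mu Tv$, and $\norm{Tv}\geq\eta\norm v$ would give $\norm{T'v}\geq(\eta-\mu)\norm v$ — but actually hypothesis (iii) already \emph{gives} $\eta$-boundedness below of $T'$ directly, so I only need the faithfulness of $p_X^0$ to know $p_X^0(\CHx)$ is coarsely dense in $\crse X$. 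The point of the chain $p_Y^0\leq p_Y'\leq p_Y''$ is: given a bounded measurable $B\subseteq Y$ witnessing a point of $\crse Y$ via faithfulness of $p_Y^0$, i.e.\ with $\chf B p_Y^0\neq 0$, I must produce a $\delta$-approximation entry of $S$ hitting near $B$. Because $p_X^0$ is faithful and $T'$ is $\eta$-bounded below, the image $T'(p_X^0(\CHx))$ — hence a fortiori $T(\CHx)$, up to the $\mu$-errors — contains, for every such $B$, a norm-one vector $\kappa$-concentrated on $B$ with $\kappa$ bounded below by a fixed constant depending only on $\eta,\mu$ (roughly $\kappa \geq$ something like $(\eta-\mu)/\eta$, using that $p_Y^0 Tv \approx_\mu T'v$ and $\norm{\chf B p_Y^0 v'} $ is close to $\norm{\chf B v'}$ on the relevant subspace). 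Concretely: since $\chf Bp_Y^0\neq0$, pick a unit vector $w\in p_Y^0(\CHy)$ with $\chf B w = w$ (or nearly so); its preimage under the bounded-below $T'$ up to $\mu$-error lives in $p_X'$, and can be further pushed into $p_X^0$ because $p_X^0$ is coarsely dense. So we get $v$ with $\norm v\leq\eta^{-1}$, $Sv = p_Y''Tp_X''v \approx_{2\mu} Tv \approx_\mu w$, giving $\norm{\chf B Sv}\gtrsim 1-3\mu\eta^{-1}$ — uniformly bounded below. This is exactly the input "$\norm{\chf B|_{S(\CHx)}}\geq\kappa$" needed. Now apply \cref{prop: unconditional image estimate} to $S$ (weakly quasi-controlled, $\eta$-bounded below on the discrete submodule $p_X'$, using \cref{cor: support and submodules} to move between $p_X'$ and $\CHx$): for any $\mu<\delta<\eta$ there is $F\in\CF$ with $\concvec{\kappa}{F_0}{S(\CHx)}\subseteq\image\bigparen{\appmap[S]{\delta'}{F}{\gauge}}$, where $\gauge$ is a discreteness gauge, $\delta'=\delta-\mu$, and $F_0$ an admissibility gauge for $\CHy$ large enough that every bounded $B$ is $F_0$-bounded. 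Since $B\subseteq\concvec{\kappa}{F_0}{S(\CHx)}$ by the paragraph above, $B$ meets the image of the approximation, so $\crse Y\crse\subseteq\cim\bigparen{\cappmap[S]{\delta'}F\gauge}$. Taking $E\coloneqq\gauge$ (or any larger entourage), this is the desired coarse surjectivity.

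\textbf{Finishing.} Having shown $\cappmap[S]{\delta-\mu}FE$ is coarsely surjective, the symmetric argument with the roles of $\crse X,\crse Y$ and $T,T^*$ exchanged — using the other two of the four $\mu$-approximation hypotheses, $p_Y''Tp_X'\approx_\mu Tp_X'$ and $p_X''T^*p_Y'\approx_\mu T^*p_Y'$, together with faithfulness of $p_Y^0$, discreteness of $p_Y'$, $\eta$-boundedness below of $T^*$ on $p_Y'$, and weak quasi-control of $S^*$ — shows $\cappmap[S^*]{\delta-\mu}EF$ is coarsely surjective as well (enlarging $E$ and $F$ if needed, which is harmless). Then \cref{prop:adjoint is coarse inverse}, the equivalence \cref{item:prop:adjoint is coarse inverse-csur}$\Leftrightarrow$\cref{item:prop:adjoint is coarse inverse-cinv}, immediately gives that $\cappmap[p_Y''Tp_X'']{\delta-\mu}FE$ and $\cappmap[p_X''T^*p_Y'']{\delta-\mu}EF$ are coarse inverses of one another, completing the proof. \textbf{The main obstacle} I expect is bookkeeping the $\mu$-errors in the two-step submodule chain cleanly enough to extract a \emph{uniform} lower bound $\kappa$ on the concentration of $S(\CHx)$ near an arbitrary witnessing set $B$ — in particular making sure the constant survives passing through $p_Y^0\leq p_Y'\leq p_Y''$ and that the condition $\mu<\eta/2$ is exactly what keeps $\kappa$ (and the admissible range $\mu<\delta<\eta$) positive — and relatedly, correctly invoking \cref{cor: support and submodules} each time one silently identifies an operator on a submodule with its extension, so that "weakly quasi-controlled" and "$\eta$-bounded below" are used on the right Hilbert spaces.
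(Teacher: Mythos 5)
Your plan has the same architecture as the paper's proof: set $S = p_Y'' T p_X''$, note that $S$ and $S^*$ are weakly quasi-controlled, invoke \cref{prop:adjoint is coarse inverse} to reduce to coarse surjectivity of both approximations, and prove surjectivity via the image estimate of \cref{prop: unconditional image estimate} (which rests on the Concentration Inequality). You also correctly identify where the hypothesis $\mu<\eta/2$ enters: it is what keeps the concentration constant $\kappa$ positive and the admissible range of $\delta-\mu$ non-empty. The one structural variation is cosmetic: you apply \cref{prop: unconditional image estimate} directly to $S|_{p_X'}$ (after checking it is $(\eta-\mu)$-bounded below), whereas the paper applies it to $Tp_X'$ (which is genuinely $\eta$-bounded below) and then transfers the conclusion to $S$ via the containment $\appmap[Tp_X']{\delta}{F}{E}\subseteq\appmap[S]{\delta-\mu}{F}{E}$. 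Both routes are viable, and the $-\mu$ shift in the final parameter appears either way.

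However, the density step — your substitute for showing $\concvec{\kappa}{F_0}{Tp_X'(\CHx)}$ (or $\concvec{\kappa}{F_0}{S(\CHx)}$) is coarsely dense — has a real gap. You write: ``pick a unit vector $w\in p_Y^0(\CHy)$ with $\chf Bw=w$; its preimage under the bounded-below $T'$ up to $\mu$-error lives in $p_X'$, and can be further pushed into $p_X^0$ because $p_X^0$ is coarsely dense. So we get $v$ with $\dots Sv\approx_{2\mu}Tv\approx_\mu w$.'' There are two problems. First, $T'$ being $\eta$-bounded from below does \emph{not} make it (approximately) surjective, so there is in general no $v$ with $Tv\approx_\mu w$: if $T$ is an isometry, say, then $\mathrm{im}(T)$ is a proper closed subspace and $w$ need not be anywhere near it. Even reading ``preimage'' as $v=T^*w$, the identity $TT^*w\approx w$ fails (again, for an isometry $TT^*$ is a projection, and $TT^*w$ can be arbitrarily small). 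Second, faithfulness of $p_X^0$ is a coarse-geometric density property (the $\gauge$\=/controlled measurable sets with $\rk(\chf A p_X^0)\geq 1$ are coarsely dense in $\crse X$); it does \emph{not} say $p_X^0(\CHx)$ is a dense Hilbert subspace, so one cannot ``push $v$ into $p_X^0$''.

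The correct argument — and the one the paper uses — works at the level of operator norms rather than trying to hit $w$ pointwise. Given an $F_0$\=/bounded $B_j$ with $\norm{\chf{B_j}p_Y^0}=1$, one computes directly
\[
\norm{\chf{B_j}Tp_X'}\ \approx_\mu\ \norm{p_Y''\chf{B_j}Tp_X'}\ \geq\ \norm{\chf{B_j}p_Y^0Tp_X'}\ \approx_\mu\ \norm{\chf{B_j}p_Y^0T}\ \geq\ \eta,
\]
the last inequality because $T^*$ is $\eta$-bounded below on $p_Y^0\leq p_Y'$ and $\chf{B_j}p_Y^0$ is a non-zero projection. This gives $\norm{\chf{B_j}|_{Tp_X'(\CHx)}}\geq(\eta-2\mu)/\norm{T}$ uniformly, which is exactly the input for \cref{prop: unconditional image estimate}. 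Your estimate ``$\norm{\chf BSv}\gtrsim 1-3\mu\eta^{-1}$'' is not derivable from the stated hypotheses because the intermediate step $Tv\approx_\mu w$ is unavailable. If you want to salvage a pointwise-flavoured argument, set $v\coloneqq T^*w$ and observe $\norm{\chf BTT^*w}\geq|\scal{w}{TT^*w}|=\norm{T^*w}^2\geq\eta^2$, which together with $\norm{Tv}\leq\norm T^2$ gives $\norm{\chf B|_{T(\CHx)}}\geq\eta^2/\norm T^2$ — but this still needs the submodule errors $\approx_\mu$ threaded through carefully, and the operator-norm computation above is cleaner.
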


\noindent\textit{Proof.}
For notational convenience, let $S\coloneqq p_Y''Tp_X''$. To be completely clear, in the following we will consider $S$ as an operator $\CHx\to \CHy$. We could have as well used $p_Y''Tp_X''\colon p_X''\to p_Y''$, but $S$ has the notational advantage of not cluttering the superscripts. These pedantic details are motivated by the fact that the notation of the approximating relation is slightly abusive, in that they actually depend on the Boolean algebra (in the rest of this proof one should take extra care to the meaning of measurability). This is perhaps a little confusing, but certainly not a serious concern, see also \cref{rmk: for approximations the boolean alg does not matter}.

We start by observing that, since $\Ad(S)=\Ad(p_Y'')\Ad(T)\Ad(p_X'')$ and $p_Y'',p_X''$ have controlled propagation, $S$ is a weakly quasi-controlled operator. In fact, for every $E\in \CE$ we may apply \cref{prop:coarse functions preserve controlled propagation} to $p_X''$ and obtain some $E'\in\CE$ such that $\Ad(p_X'')(\cpop{E})\subseteq \cpop{E'}$. Given $\varepsilon>0$, weak quasi-control on $Tp_X''\colon p_X''\to\CHy$ yields an $F'$ such that operators of $E'$-controlled propagation are mapped via $\Ad(Tp_X'')$ to $\varepsilon$-$F'$-quasi-local operators. In turn, we apply \cref{prop:coarse functions preserve controlled propagation} again to obtain some $F\in \CF$ such that $\Ad(p_Y'')$ maps those operators to $\varepsilon$-$F$-quasi-local operators.
The same argument applies to the adjoint $S^*\colon\CHy\to\CHx$.

By \cref{lem:approximation is controlled}, it follows that for every choice of $\delta>0$, $E\in \CE$ and $F\in\CF$ the approximating relations
$\appmap[S]\delta F E$ and $\appmap[S^*]\delta E F$ are controlled.
We now aim to use these controlled relations to construct coarse equivalences.
By \cref{prop:adjoint is coarse inverse}, it will be enough to show that $\cappmap[S]\delta F E$ and $\cappmap[S^*]\delta E F$ are coarsely surjective for some appropriate choice of parameters. We prove it only for $S$, as the argument for $S^*$ is symmetric.
Later on we will also show that the approximations of $p_Y''Tp_X''$ and $p_X''T^*p_Y''$ (seen as operators among the submodules) give rise to coarse equivalences as well.

One difficulty at this point is that $S$ is generally not bounded from below, hence we are not yet in the position to use the Concentration Inequality. In the next steps we fix this by reducing the surjectivity estimates to properties of the operators among appropriate submodules.

\begin{claim}\label{claim:stable rigidity containment}
  For every $\delta\geq\mu$, $E\in\CE$, and $F\in\CF$ we have
  \(
    \appmap[Tp_X']\delta F E \subseteq \appmap[S]{\delta-\mu} F E.
  \)
\end{claim}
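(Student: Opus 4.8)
The plan is to unwind the definitions of the two approximating relations and show that every defining block of $\appmap[Tp_X']\delta F E$ is also a defining block of $\appmap[S]{\delta-\mu} F E$. Recall that
\[
  \appmap[Tp_X']\delta F E = \bigcup\bigl\{B\times A\text{ meas.\ $(F\otimes E)$-bounded} \mid \norm{\chf B (Tp_X') \chf A} > \delta\bigr\},
\]
and similarly for $\appmap[S]{\delta-\mu}FE$ with $S = p_Y''Tp_X''$. So it suffices to fix a measurable $(F\otimes E)$-bounded product $B\times A$ with $\norm{\chf B (Tp_X')\chf A} > \delta$ and to deduce $\norm{\chf B S\chf A} > \delta - \mu$.

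First I would use the hypothesis $p_X' T^* p_Y^0 \approx_\mu T^* p_Y^0$, but in the form that matters here: I actually want to compare $Tp_X'$ with $p_Y'' T p_X'$, so the relevant estimate is $p_Y'' T p_X' \approx_\mu T p_X'$, which is exactly the second of the four displayed approximations in the statement of \cref{thm: rigidity quasi-proper operators}. From this, $\norm{p_Y'' T p_X' - T p_X'} \leq \mu$, hence $\norm{\chf B p_Y'' T p_X' \chf A - \chf B T p_X' \chf A} \leq \mu$ since $\chf B$ and $\chf A$ are contractions. By the reverse triangle inequality,
\[
  \norm{\chf B\, p_Y'' T p_X'\, \chf A} \geq \norm{\chf B\, T p_X'\, \chf A} - \mu > \delta - \mu.
\]
Now I need to get from $p_Y'' T p_X'$ to $S = p_Y'' T p_X''$. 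Since $p_X'\leq p_X''$ are submodules, $p_X'' p_X' = p_X'$, so $p_Y'' T p_X' = p_Y'' T p_X'' p_X' = S p_X'$. Then $\norm{\chf B S p_X' \chf A} \leq \norm{\chf B S \chf{A'}}$ where $A'$ is a measurable set with $A \subseteq A'$ — wait, this direction does not immediately work because $p_X'$ is not of the form $\chf{A'}$. Instead I would argue directly: $\norm{\chf B S p_X' \chf A} \leq \norm{\chf B S}\cdot\norm{p_X'\chf A}$ is the wrong way. The correct observation is that $p_X'$ commutes with $\chf A$ (as $p_X'$ is a submodule subordinate to a subalgebra and $A$ is measurable in the full algebra — here one should be slightly careful, but since $p_X'$ commutes with all $\chf{A_i}$ of a discrete partition and $A$ is $E$-bounded hence measurable, one can arrange $p_X'$ to commute with $\chf A$, or more robustly use that $\chf B S \chf A p_X' = \chf B S p_X' \chf A$ only needs $[p_X',\chf A]=0$). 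Assuming this, $\chf B S p_X' \chf A = \chf B S \chf A p_X'$, so $\norm{\chf B S \chf A} \geq \norm{\chf B S \chf A p_X'} = \norm{\chf B S p_X' \chf A} > \delta - \mu$, as $p_X'$ is a contraction. This shows $B\times A \subseteq \appmap[S]{\delta-\mu}FE$ and proves the claim.

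The main subtlety — and the step I would be most careful about — is the commutation of $p_X'$ with $\chf A$, i.e.\ the measurability bookkeeping flagged in \cref{rmk: for approximations the boolean alg does not matter}. The cleanest route is probably to avoid it entirely: note $\chf B S p_X' \chf A = \chf B\, p_Y'' T p_X''\, p_X'\, \chf A = \chf B\, p_Y'' T p_X'\, \chf A$ and then use $\norm{\chf B p_Y'' T p_X' \chf A} \leq \norm{\chf B p_Y'' T \chf A}\cdot 1 = \norm{\chf B S' \chf A}$ where $S' = p_Y'' T$; but $S = S' p_X''$, so one still compares $S'$ and $S$ via $p_X''$. Since $p_X''\chf A = \chf A$ would require $\chf A \leq p_X''$, which need not hold, the honest statement is $\norm{\chf B S \chf A} = \norm{\chf B S' p_X'' \chf A}$, and one wants a lower bound. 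Testing on a unit vector $v\in\CH_A$ with $\norm{\chf B T p_X'(v)}>\delta$ close to the sup: then $p_X'(v)$ is supported (in the appropriate sense) on $A$ since $p_X'$ preserves $\CH_A$ by the submodule property $[p_X',\chf A]=0$ — this commutation does follow from \cref{lem: submodule iff commutes}\ref{item:lem: submodule iff commutes: comm} once one knows $A\in\fkA'$, and for $A$ $E$-bounded with $E$ a block-entourage this holds by discreteness of $p_X'$. So the argument reduces to checking $A$ is measurable in the submodule's algebra, which is handled by passing to a block-entourage as in \cref{subsec: block-entourages}. I expect this to be the only point requiring genuine attention; the norm estimates themselves are a two-line application of the triangle inequality.
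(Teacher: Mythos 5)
Your core computation is exactly the paper's: use the hypothesis $p_Y'' T p_X' \approx_\mu T p_X'$ together with the reverse triangle inequality to get $\norm{\chf{B} p_Y'' T p_X' \chf{A}} > \delta - \mu$, rewrite $p_Y'' T p_X' = S p_X'$ via $p_X'' p_X' = p_X'$, and then move $p_X'$ past $\chf{A}$ and peel it off as a contraction to obtain $\norm{\chf{B} S \chf{A}} > \delta - \mu$. That is the same two-line argument the paper gives, so the approach is correct and essentially identical; the $S'$ detour and the vector-testing variant are harmless but wasted effort.

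The one place you are not quite on target is the justification for $[p_X', \chf{A}] = 0$. You are right that this is the only delicate step, but your resolution — ``$A$ is $E$-bounded, and for $E$ a block-entourage this holds by discreteness of $p_X'$'' — does not quite hold up: being $E$-bounded for a block-entourage $E$ does not make $A$ a union of blocks (a single point in one block is $E$-bounded but need not lie in $\fkA'$). The actual reason is definitional, and you almost say it yourself when you write ``once one knows $A \in \fkA'$.'' Per the convention set at the start of the proof of \cref{thm: rigidity quasi-proper operators} (and made explicit in the statement of \cref{claim:stable rigidity containment}'s proof), $Tp_X'$ is treated as an operator $p_X'(\CHx) \to \CHy$, so the approximating relation $\appmap[Tp_X']{\delta}{F}{E}$ is formed from blocks $B \times A$ with $A$ measurable for the \emph{submodule} $p_X'$, i.e.\ $A \in \fkA'$ by fiat. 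Then $[p_X', \chf{A}] = 0$ and, since $\fkA' \subseteq \fkA''$, also $[p_X'', \chf{A}] = 0$, both by \cref{lem: submodule iff commutes}. The passage to block-entourages in \cref{subsec: block-entourages} is not needed here; it would anyway address cofinality of block-entourages in $\CE$, not measurability of an arbitrary $E$-bounded set. With that fix, your proof is complete.
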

\begin{proof}[Proof of Claim]
  Let $B\times A\subseteq Y\times X$ be one of the defining blocks of the approximation of $Tp_X'$. That is, $B\times A$ is $F\otimes E$-bounded with $\CHy$-measurable $B$, $p_X'$-measurable $A$, and
  \[
    \delta 
    <\norm{{\chf{B}}T{p_X'}{\chf{A}}}.
  \]
  By hypothesis, we have that \(p_Y'' Tp_X' \approx_\mu  Tp_X'\), which implies that
  \begin{align*}
    \delta-\mu 
    &\leq \norm{{\chf{B}}{p_Y''}T{p_X'}{\chf{A}}},
  \end{align*}
  and the latter is a lower bound for $\norm{{\chf{B}}S{\chf{A}}}$ because ${p_X'}\leq {p_X''}$ and they both commute with $\chf{A}$ (see \cref{lem: submodule iff commutes}).
\end{proof}
We are now in a very good position, because $Tp_X'$ is a weakly quasi-controlled operator (as $p_X'$ is a submodule of $p_X''$) that is moreover $\eta$-bounded from below (by hypothesis). We shall then be able to leverage \cref{prop: unconditional image estimate} to prove coarse surjectivity of the approximating relations. Let 
\[
  \kappa \coloneqq (\eta-2\mu)/\norm{T} > 0.
\]

\begin{claim}\label{claim:stable rigidity surjective}
  There is a gauge $F_0\in\CF$ such that $\concvec{\kappa}{F_0}{Tp_X'(\CHx)}$
  is coarsely dense in $\crse Y$.
\end{claim}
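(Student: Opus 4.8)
The plan is to leverage faithfulness of $p_Y^0$ to produce a coarsely dense supply of bounded measurable subsets of $Y$, and then to check that each of them already lies in $\concvec{\kappa}{F_0}{Tp_X'(\CHx)}$ for a suitable gauge $F_0$. Concretely, I would take $F_0\in\CF$ to be a faithfulness gauge for $p_Y^0$ (cf.\ \cref{def: ample and faithful}) and let $D$ be the union of all measurable $F_0$-bounded $B\subseteq Y$ with $\chf B p_Y^0\neq 0$; faithfulness of $p_Y^0$ says precisely that $D$ is coarsely dense in $\crse Y$. Since $\concvec{\kappa}{F_0}{Tp_X'(\CHx)}$ is by construction a subset of $Y$, it then suffices to show $\norm{\chf B|_{Tp_X'(\CHx)}}\geq\kappa$ for each such $B$, as this puts $B$ inside $\concvec{\kappa}{F_0}{Tp_X'(\CHx)}$ and yields $\crse Y\csub D\csub\concvec{\kappa}{F_0}{Tp_X'(\CHx)}$.

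The engine of the estimate is to push a unit vector supported on $B$ through $T^*$ and then back through $T$. Fixing such a $B$, note first that $B$ is measurable for $\CHy$ and that $\chf B$ (computed on $\CHy$) restricts on $p_Y^0(\CHy)$ to the corresponding projection of the submodule (cf.\ \cref{def: submodule}); since $p_Y^0$ is a submodule, $\chf B$ commutes with it (cf.\ \cref{lem: submodule iff commutes}), so $\chf Bp_Y^0\neq 0$ lets me choose a unit vector $w\in p_Y^0(\CHy)$ with $\chf B(w)=w$. As $p_Y^0(\CHy)\subseteq p_Y'(\CHy)$ and $T^*$ is $\eta$-bounded below on $p_Y'$, I get $\norm{T^*(w)}\geq\eta$; together with $p_X'T^*p_Y^0\approx_\mu T^*p_Y^0$ this gives $v\coloneqq p_X'T^*(w)\in p_X'(\CHx)$ with $\eta-\mu\leq\norm v\leq\norm T$. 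Putting $u\coloneqq T(v)\in Tp_X'(\CHx)$, self-adjointness of the projection $p_X'$ gives $\scal uw=\scal{v}{T^*(w)}=\norm{p_X'T^*(w)}^2=\norm v^2>0$, so $u\neq0$; and since $\norm u\leq\norm T\norm v$, the unit vector $u/\norm u\in Tp_X'(\CHx)$ satisfies $\abs{\scal{u/\norm u}{w}}=\norm v^2/\norm u\geq\norm v/\norm T\geq(\eta-\mu)/\norm T\geq(\eta-2\mu)/\norm T=\kappa$. Because $w$ is a unit vector with $\chf B(w)=w$, this forces $\norm{\chf B(u/\norm u)}\geq\abs{\scal{u/\norm u}{w}}\geq\kappa$, which is exactly what is needed.

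I do not anticipate a genuine obstacle: once the right vector $w$ is extracted from faithfulness, the rest is a one-line Cauchy--Schwarz manipulation. The only points requiring care are bookkeeping ones: that $B$, being measurable for the submodule $p_Y^0$, is measurable for $\CHy$ (so that $\chf B|_{Tp_X'(\CHx)}$ and the defining condition of $\concvec{\kappa}{F_0}{\cdot}$ make sense); that the representations of $B$ on $\CHy$ and on $p_Y^0$ agree (\cref{def: submodule}); and that the numerical constants fit the prescribed value $\kappa=(\eta-2\mu)/\norm T$, the loss of $\mu/\norm T$ being exactly the defect incurred by replacing $T^*p_Y^0$ with $p_X'T^*p_Y^0$ (this also leaves room to work with a vector $w$ that nearly, rather than exactly, realizes $\chf B$, should one prefer). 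Note that neither the discreteness of $p_X',p_Y'$ nor the weak quasi-control of $Tp_X'',T^*p_Y''$ is used in this claim; those hypotheses enter afterwards, when \cref{prop: unconditional image estimate} is applied to the weakly quasi-controlled, $\eta$-bounded-below operator $Tp_X'$ on the discrete module $p_X'$, so as to convert the coarse density obtained here into coarse surjectivity of $\cappmap[p_Y''Tp_X'']{\delta-\mu}{F}{E}$ (via \cref{claim:stable rigidity containment}).
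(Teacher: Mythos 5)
Your proof is correct but follows a genuinely different route from the paper's. The paper argues entirely at the level of operator norms, writing
\[
  \norm{\chf{B}Tp_X'}\;\approx_\mu\;\norm{\chf{B}p_Y''Tp_X'}\;=\;\norm{p_Y''\chf{B}Tp_X'}\;\geq\;\norm{p_Y^0\chf{B}Tp_X'}\;=\;\norm{\chf{B}p_Y^0Tp_X'}\;\approx_\mu\;\norm{\chf{B}p_Y^0T}\;\geq\;\eta,
\]
so the estimate passes through $p_Y''$ and invokes \emph{both} approximation hypotheses $p_Y''Tp_X'\approx_\mu Tp_X'$ and $p_X'T^*p_Y^0\approx_\mu T^*p_Y^0$, then divides by $\norm{Tp_X'}$. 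You instead fix a unit vector $w\in\chf Bp_Y^0(\CHy)$ and apply Cauchy--Schwarz to the pair $\bigparen{w,\;Tp_X'T^*(w)}$: this is a vector-level argument that uses only the second hypothesis $p_X'T^*p_Y^0\approx_\mu T^*p_Y^0$, never touches $p_Y''$, and actually yields the marginally sharper constant $(\eta-\mu)/\norm T\geq\kappa$. The payoff is a shorter dependency list: one of the four $\approx_\mu$ hypotheses is unnecessary here (of course it is still needed elsewhere in the theorem). The bookkeeping points you flag --- $B$ lying in the smaller Boolean algebra $\fkA'\subseteq\fkA$ of the submodule, agreement of the representations so that $\chf Bp_Y^0\neq 0$ produces a vector $w$ with $\chf B(w)=w=p_Y^0(w)$, and $w\in p_Y^0(\CHy)\subseteq p_Y'(\CHy)$ feeding into the lower bound on $T^*$ --- are exactly the ones that matter, and you handle them correctly. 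Your closing remark that neither discreteness of $p_X',p_Y'$ nor weak quasi-control of $Tp_X'',T^*p_Y''$ is used in this particular claim is also accurate; those hypotheses enter only when \cref{prop: unconditional image estimate} is applied afterwards.
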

\begin{proof}[Proof of Claim]
  Since $p_Y^0$ is faithful, we may choose a gauge $F_0$ such that there is a family $(B_j)_{j\in J}$ of $F_0$-bounded $p_Y^0$-measurable sets with  non-zero $\chf{B_j}p_Y^0$ and such that the union $\bigcup_{j\in J}B_j$ is coarsely dense in $\crse Y$.
  In particular, $\chf{B_j}$ commutes with $p_Y^0$ and $p_Y''$ for every $j\in J$ ($B_j$ is also $p_Y''$-measurable, because $p_Y^0$ is a submodule thereof) and $\norm{\chf{B_j}p_Y^0}=1$.
  
  Observe that
  \begin{align*}
    \norm{{\chf{B_j}}Tp_X'}
    \approx_\mu \norm{{\chf{B_j}}{p_Y''}T{p_X'}}
    &= \norm{{p_Y''}{\chf{B_j}}T{p_X'}} \\
    &\geq \norm{{p_Y^0}{\chf{B_j}}T{p_X'}}
    = \norm{{\chf{B_j}}{p_Y^0}T{p_X'}}
  \end{align*}
  and 
  \begin{align*}
    \norm{\chf{B_j} p_Y^0Tp_X'}
    \approx_\mu \norm{\chf{B_j} p_Y^0T}
    \geq \eta,
  \end{align*}
  where the last inequality holds because $T^*$ is $\eta$-bounded from below on $p_Y'$ (and hence on $p_Y^0$ as well).
  This shows that $\norm{{\chf{B_j}}Tp_X'}\geq \eta-2\mu$ and hence 
  \[
    \norm{\chf{B_j}|_{Tp_X'(\CHx)}}\geq (\eta-2\mu)/\norm{Tp_X'}=\kappa.
  \]
  That is, $B_j\subseteq\concvec{\kappa}{F_0}{Tp_X'(\CHx)}$.
  The claim follows.
\end{proof}

It is now simple to complete the proof.
Let $\gaugex$ be a discreteness gauge for $p_X'$ and fix a $\delta>0$ with $\mu<\delta<\eta$.
Since $Tp_X'\colon p_X'(\CHx)\to\CHy$ is a weakly quasi-controlled operator that is $\eta$-bounded from below from a discrete to a locally admissible module, and $0<\delta<\eta$ by assumption, \cref{prop: unconditional image estimate} yields a controlled entourage $F\in \CF$ such that
\[
  \concvec{\kappa}{F_0}{Tp_X'(\CHx)} \subseteq \image\Bigparen{\appmap[Tp_X']{\delta}{F}{\gaugex}}.
\]
\cref{claim:stable rigidity containment,claim:stable rigidity surjective} then imply that $\appmap[S]{\delta-\mu} F \gaugex$ is coarsely surjective.

By a symmetric argument, once a discreteness gauge $\gaugey$ for $p'_Y$ has been fixed there is an $E\in\CE$ such that $\appmap[S^*]{\delta-\mu} E \gaugey$ is coarsely surjective as well.
Since taking larger controlled entourages yields larger approximating relations, we may also assume that $\gaugex\subseteq E$ and $\gaugey\subseteq F$. We then have
\[
  \appmap[S]{\delta -\mu}{F}{\gaugex} \subseteq \appmap[S]{\delta -\mu}{F}{E}
  \quad\text{and}\quad
  \appmap[S^*]{\delta -\mu}{E}{\gaugey} \subseteq \appmap[S^*]{\delta -\mu}{E}{F}.
\]
Since $\delta-\mu >0$, this shows that $\cappmap[S]{\delta -\mu} F E$ and $\cappmap[S^*]{\delta -\mu} E F$ are coarsely surjective (partial) coarse maps, which are hence coarse inverses to one another by \cref{prop:adjoint is coarse inverse}.
\qed

\medskip

Letting $p_X^0=p_X'=p_X''=1_{\CHx}$ and $p_Y^0=p_Y'=p_Y''=1_{\CHy}$ in \cref{thm: rigidity quasi-proper operators} yields the following immediate consequence.

\begin{corollary}\label{cor: rigidity controlled unitaries}
  If $\CHx$, $\CHy$ are faithful discrete modules and $U\colon\CHx\to \CHy$ is a unitary such that both $U$ and $U^*$ are weakly quasi-controlled, then for every $0<\delta<1$ there are $E\in\CE$ and $F\in\CF$ large enough such that $\cappmap[U]{\delta}{F}E\colon\crse X\to\crse Y$ is a coarse equivalence with coarse inverse $\cappmap[U^*]{\delta}EF$.
\end{corollary}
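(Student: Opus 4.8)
The statement is the special case of \cref{thm: rigidity quasi-proper operators} obtained by collapsing all three submodule chains to the identity. So the plan is simply to verify that the hypotheses of \cref{thm: rigidity quasi-proper operators} are satisfied with the choices $p_X^0 = p_X' = p_X'' = 1_{\CHx}$ and $p_Y^0 = p_Y' = p_Y'' = 1_{\CHy}$, and then read off the conclusion. First I would set $\mu = 0$ and $\eta = 1$, so that the constraint $0 \leq \mu < \eta/2$ becomes $0 < 1/2$, which holds, and the range $\mu < \delta < \eta$ becomes exactly $0 < \delta < 1$, matching the statement. The four approximate-equality conditions $p_Y' Tp_X^0 \approx_\mu Tp_X^0$ etc.\ all become equalities of the form $1_{\CHy}\, U\, 1_{\CHx} = U\, 1_{\CHx}$, which are trivially true (indeed $\mu = 0$ makes $\approx_\mu$ the identity relation).

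Next I would check the four enumerated hypotheses. Condition (i), that $p_X^0 = 1_{\CHx}$ and $p_Y^0 = 1_{\CHy}$ are faithful: this is immediate since $\CHx$ and $\CHy$ are assumed to be faithful modules, and the full module is a submodule of itself. Condition (ii), that $p_X' = 1_{\CHx}$ and $p_Y' = 1_{\CHy}$ are discrete: this is the hypothesis that $\CHx$ and $\CHy$ are discrete modules. Condition (iii), that the restrictions of $T = U$ and $T^* = U^*$ to $p_X'$ and $p_Y'$ are $\eta$-bounded from below with $\eta = 1$: since $U$ is a unitary, $\norm{Uv} = \norm{v}$ for all $v$, so $U$ is $1$-bounded from below, and likewise $U^*$. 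Condition (iv), that $Up_X'' \colon \CHx \to \CHy$ and $U^*p_Y'' \colon \CHy \to \CHx$ are weakly quasi-controlled: with $p_X'' = 1_{\CHx}$ and $p_Y'' = 1_{\CHy}$ this is precisely the hypothesis that $U$ and $U^*$ are weakly quasi-controlled. Finally, faithful discrete modules are in particular locally admissible (discrete modules are admissible, hence locally admissible), so the ambient hypothesis of \cref{thm: rigidity quasi-proper operators} that $\CHx$ and $\CHy$ be locally admissible is met.

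With all hypotheses verified, \cref{thm: rigidity quasi-proper operators} yields, for every $\mu < \delta < \eta$ — that is, for every $0 < \delta < 1$ — controlled entourages $E \in \CE$ and $F \in \CF$ such that $\cappmap[p_Y''Up_X'']{\delta - \mu}{F}{E} = \cappmap[U]{\delta}{F}{E}$ is a coarse equivalence $\crse X \to \crse Y$ with coarse inverse $\cappmap[p_X''U^*p_Y'']{\delta - \mu}{E}{F} = \cappmap[U^*]{\delta}{E}{F}$, which is exactly the assertion of the corollary. There is no real obstacle here: the only thing worth a moment's care is confirming that $\mu = 0$ is permitted (the hypothesis reads $0 \leq \mu$, so it is) and that with $\mu = 0$ the ``$\delta - \mu$'' in the conclusion of \cref{thm: rigidity quasi-proper operators} simplifies to $\delta$; both are transparent. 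One could alternatively remark that this is the statement already anticipated in the text immediately preceding the corollary, so no further argument is needed.
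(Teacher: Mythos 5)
Your proof is correct and is exactly the paper's own argument: the text immediately preceding the corollary states that it follows from \cref{thm: rigidity quasi-proper operators} by taking $p_X^0=p_X'=p_X''=1_{\CHx}$ and $p_Y^0=p_Y'=p_Y''=1_{\CHy}$, and your careful verification of the hypotheses (with $\mu=0$, $\eta=1$) is precisely what is implicit in that remark.
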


\begin{remark}\label{rmk: for approximations the boolean alg does not matter}
  In the statement of \cref{thm: rigidity quasi-proper operators} there is some ambiguity when writing $\cappmap[p_Y''Tp_X'']{\delta-\mu}{F}E$, because the definition of the approximating relations depends on the boolean algebra defining the coarse geometric module, so one should specify if $p_Y''Tp_X''$ is seen as the extended operator $S\colon\CHx\to\CHy$ or the operator among submodules $p_Y''Tp_X''\colon p_X''\to\ p_Y''$.
  
  Up to passing to slightly larger entourages, this does not cause any issues. In fact, if $p_X''$ and $p_Y''$ are discrete, we may then assume that $F$ and $E$ be block entourages. In this case it is easy to see that every $E$-bounded set is contained in an $E$-bounded set that is a union of regions of the discrete partition, and is hence  $p_X''$-measurable. Of course, the same holds on the $Y$-side, hence
  \[
    \appmap[p_Y''Tp_X'']{\delta -\mu}{F'}{E'} = \appmap[S]{\delta -\mu}{F'}{E'}.
  \]
  This will most likely suffice to satisfy the reader. In fact, since $p_X'$ and $p_Y'$ are discrete, they are admissible. 
  A fortiori, $p_X''$ and $p_Y''$ are admissible as well, and we are not aware of meaningful examples of admissible modules that are not discrete.

  If, taken by an excess of zeal, one was to wonder what may happen in case that the modules be admissible but not discrete, one could argue as follow.
  Say that $\gaugex$ and $\gaugey$ are admissibility gauges of $p_X''$ and $p_Y''$ and let $E' = \gaugex\circ E\circ \gaugex$ and $F' = \gaugey\circ F\circ \gaugey$. Then one checks that
  \[
    \appmap[S]{\delta -\mu}{F}{E} \subseteq \appmap[p_Y''Tp_X'']{\delta -\mu}{F'}{E'}
    \quad\text{and}\quad
    \appmap[p_Y''Tp_X'']{\delta -\mu}{F'}{E'}\subseteq \appmap[S]{\delta -\mu}{F'}{E'}
  \]
  (the first containment uses admissibility, and the latter is trivial).
  In general these containments could provide limited information, as $\appmap[S]{\delta -\mu}{F}{E}$ as $\appmap[S]{\delta -\mu}{F'}{E'}$ might in principle be very different. However, since we are in a situation where $F$ and $E$ are large enough to ensure that $\cappmap[S]{\delta -\mu}{F}{E}$ is coarsely everywhere defined, it follows from \cref{lem: functions with same domain coincide} that they actually define the same coarse map.
\end{remark}

\chapter{Rigidity Phenomena} \label{sec: rigidity phenomena}
In this chapter we combine the results of \cref{sec: uniformization,sec: rigidity spatially implemented} in order to finally prove the most general rigidity result (cf.\ \cref{thm: stable rigidity}), and then record some consequences.
Our techniques allow us to work in the following level of generality.

\begin{notation} \label{notation: stable rigidity}
  In the following, $\CH_1$ and $\CH_2$ are (possibly different) non-zero Hilbert spaces, and $\roeclikeone{\CHx}$ and $\roecliketwo{\CHy}$ denote any Roe-like \cstar{}algebra associated to $\CHx$ and $\CHy$, possibly of ``different type''. For instance, $\roeclikeone{\CHx}$ may be $\roecstar{\CHx}$, whereas $\roecliketwo{\CHy}$ may be $\qlcstar{\CHy}$.
\end{notation}

\section{Spatial implementation of stable isomorphisms}
The first step towards proving that Roe-like \cstar{}algebras are rigid is to prove that an isomorphism between Roe-like \cstar{}algebras is always spatially implemented (by a unitary operator). We directly prove it for stabilized algebras, as the non-stable version follows by letting $\CH_1 \coloneqq \CCC \eqqcolon \CH_2$.
\begin{proposition}\label{prop: isos are spatially implemented}
  Let $\Phi\colon\roeclikeone{\CHx}\otimes\CK(\CH_1) \to\roecliketwo{\CHy}\otimes\CK(\CH_2)$ be an isomorphism. Suppose that every coarsely connected component of $\crse X$ and $\crse Y$ is measurable. Then $\Phi$ is spatially implemented by a unitary operator $U\colon\CHx\otimes\CH_1\to\CHy\otimes\CH_2$, \emph{i.e.}\ $\Phi = \Ad(U)$.
\end{proposition}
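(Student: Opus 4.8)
The plan is to reduce the statement to \cref{prop: non-degenerate hom is spatially implemented}, which requires producing an \rlone strongly continuous \Star{}homomorphism defined on an algebra of the form $A \subseteq \prod_{i} \CB(\CH_i)$ containing all the $\CF(\CH_i)$. First I would decompose $\crse X = \bigsqcup_{i \in I} \crse X_i$ and $\crse Y = \bigsqcup_{j \in J} \crse Y_j$ into coarsely connected components; by hypothesis each $X_i$ and $Y_j$ is measurable, so by \cref{prop: roelike cap compacts} applied to the module $\CHx \otimes \CH_1$ (whose coarsely connected components are the $\CH_{X_i} \otimes \CH_1$) we get $\roeclikeone{\CHx}\otimes\CK(\CH_1) \leq \prod_{i\in I}\CB(\CH_{X_i}\otimes\CH_1)$ and, crucially, $\bigl(\roeclikeone{\CHx}\otimes\CK(\CH_1)\bigr) \cap \CK(\CHx\otimes\CH_1) = \bigoplus_{i\in I}\CK(\CH_{X_i}\otimes\CH_1)$, so in particular $\CF(\CH_{X_i}\otimes\CH_1) \subseteq \roeclikeone{\CHx}\otimes\CK(\CH_1)$ for every $i$ (each $\CF(\CH_{X_i}\otimes\CH_1)$ sits inside $\CK(\CH_{X_i}\otimes\CH_1)$, which is a summand). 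The same holds on the $\crse Y$ side.

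Next I would establish the \rlone property. The key point is that $\Phi$, being an isomorphism, sends the ideal of compacts of $\roeclikeone{\CHx}\otimes\CK(\CH_1)$ onto the ideal of compacts of $\roecliketwo{\CHy}\otimes\CK(\CH_2)$ — more precisely, it carries $\bigoplus_{i\in I}\CK(\CH_{X_i}\otimes\CH_1)$ isomorphically onto $\bigoplus_{j\in J}\CK(\CH_{Y_j}\otimes\CH_2)$, since this is exactly the ideal of elements $a$ such that $a$ is locally compact and the two-sided multiplication by $a$ of the whole algebra lands in compacts (alternatively, one characterizes it as the set of $a$ whose restriction to each block is compact and which lies in $\CK$; this is preserved by any isomorphism because it is $C^*$-algebraically intrinsic — it is the largest ideal that is a $c_0$-direct sum of elementary $C^*$-algebras). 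Restricting $\Phi$ to $\bigoplus_i^{\rm alg}\CF(\CH_{X_i}\otimes\CH_1)$, it is an algebra isomorphism onto its image, which is a hereditary (indeed, a corner restricted to each block) subalgebra of $\bigoplus_j \CK(\CH_{Y_j}\otimes\CH_2) \subseteq \CB(\CHy\otimes\CH_2)$ containing $\CF(\CHy\otimes\CH_2) \cap (\text{that ideal})$; by \cref{lem: rank-1 to rank-1 if hereditary} $\Phi$ is \rlone on this subalgebra, and since rank is not increased by any homomorphism that is \rlone on a strongly dense subalgebra containing the finite-rank blocks, $\Phi$ is \rlone on all of $\roeclikeone{\CHx}\otimes\CK(\CH_1)$. (Here one uses that every operator in the big algebra is a strong limit of its finite truncations living in $\bigoplus_i^{\rm alg}\CF$, plus that rank is lower semicontinuous for the SOT.)

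Then I would check strong continuity. This is where the work with extended metric spaces enters: an arbitrary $\Star{}$isomorphism of $C^*$-algebras need not be SOT-continuous, but here one shows that the restriction of $\Phi$ to $\bigoplus_i^{\rm alg}\CF(\CH_{X_i}\otimes\CH_1)$ is a non-degenerate $\Star{}$representation into $\CB(\CHy\otimes\CH_2)$. Non-degeneracy follows because $\Phi$ maps the ideal $\bigoplus_i\CK(\CH_{X_i}\otimes\CH_1)$ onto $\bigoplus_j\CK(\CH_{Y_j}\otimes\CH_2)$, which acts non-degenerately on $\CHy\otimes\CH_2$ (the images of rank-one projections span a dense subspace, by non-degeneracy of the geometric module $\CHy$). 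With non-degeneracy of $\Phi$ on $\bigoplus_i^{\rm alg}\CF$ in hand, the `moreover' clause of \cref{prop: non-degenerate hom is spatially implemented} applies directly and yields an isometry $U\colon\CHx\otimes\CH_1\to\CHy\otimes\CH_2$ with $\Phi=\Ad(U)$ on the big algebra. Finally, applying the symmetric argument to $\Phi^{-1}$ gives an isometry $V$ with $\Phi^{-1}=\Ad(V)$, and $\Ad(UV)=\Ad(VU)=\id$ on (dense subalgebras of) the respective algebras forces $UV$ and $VU$ to be scalars of modulus one; rescaling, $U$ is unitary. The main obstacle I anticipate is the bookkeeping around the ideal of compacts when $\CH_1$ or $\CH_2$ is infinite-dimensional and $\roeclikeone{\variable}=\roecstar{\variable}$ — one must be careful that $\roecstar{\CHx}\otimes\CK(\CH_1)$ genuinely contains enough finite-rank operators (it does not contain $1\otimes\CF(\CH_1)$ type elements unless local ranks are finite, but it does contain $\CF(\CH_{X_i}\otimes\CH_1)$, which is all that \cref{prop: non-degenerate hom is spatially implemented} needs) and that the `ideal of compacts' description is indeed isomorphism-invariant across the two possibly different types of Roe-like algebras; this is exactly what \cref{prop: roelike cap compacts} is designed to handle.
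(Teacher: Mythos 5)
Your overall strategy is the same as the paper's: decompose into coarsely connected components, invoke \cref{prop: roelike cap compacts} to identify the compacts, show $\Phi$ is \rlone, show non-degeneracy on the algebraic direct sum of finite-ranks, and finish with \cref{prop: non-degenerate hom is spatially implemented}. The non-degeneracy argument (density of $\Phi(\bigoplus_i^{\rm alg}\CF)$ in $\bigoplus_j\CK(\CH_{Y_j}\otimes\CH_2)$, which acts non-degenerately because $\CHy\otimes\CH_2=\bigoplus_j(\CH_{Y_j}\otimes\CH_2)$) and the closing argument via $\Phi^{-1}$ to upgrade the isometry to a unitary are both fine; the paper instead extracts surjectivity of $U$ directly from non-degeneracy, but your route is equally valid.

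The \rlone step, however, has a genuine gap. You invoke \cref{lem: rank-1 to rank-1 if hereditary} with the ambient algebra $B=\bigoplus_j\CK(\CH_{Y_j}\otimes\CH_2)$, but that lemma requires $B$ to contain \emph{all} of $\CF(\CHy\otimes\CH_2)$; your $B$ only contains the block-diagonal finite-rank operators, so the hypothesis is not met. Moreover, your assertion that $\Phi(\bigoplus_i^{\rm alg}\CF)$ is ``a corner restricted to each block'' and ``contains $\CF(\CHy\otimes\CH_2)\cap\bigoplus_j\CK$'' presupposes precisely what you are trying to establish (that $\Phi$ preserves rank). Identifying the direct sum of compacts as ``the largest ideal that is a $c_0$-sum of elementaries'' gets you that $\Phi$ maps $\bigoplus_i\CK(\CH_{X_i}\otimes\CH_1)$ onto $\bigoplus_j\CK(\CH_{Y_j}\otimes\CH_2)$, but not a block-by-block correspondence by itself. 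The fix (and this is what the paper does) is to note that the $\CK(\CH_{X_i}\otimes\CH_1)$ are exactly the \emph{minimal} ideals of $\roeclikeone{\CHx}\otimes\CK(\CH_1)$; since $\Phi$ is an isomorphism it carries minimal ideals to minimal ideals, producing a bijection $i\mapsto j(i)$ and isomorphisms $\Phi_i\colon\CK(\CH_{X_i}\otimes\CH_1)\xrightarrow{\cong}\CK(\CH_{Y_{j(i)}}\otimes\CH_2)$. Applying \cref{lem: rank-1 to rank-1 if hereditary} to each $\Phi_i$ with $\CH'=\CH_{Y_{j(i)}}\otimes\CH_2$ and $B=\CK(\CH_{Y_{j(i)}}\otimes\CH_2)$ now works verbatim. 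To conclude that $\Phi$ itself is \rlone, observe that any rank-$\leq 1$ element of $\roeclikeone{\CHx}\otimes\CK(\CH_1)$ already lies in a single $\CK(\CH_{X_i}\otimes\CH_1)$ (it is compact, hence block-diagonal, and a rank-$1$ block-diagonal operator must be supported in one block); your SOT lower-semicontinuity digression is therefore unnecessary, and it is in any case somewhat precarious since SOT-continuity of $\Phi$ is exactly what is not yet known at this point.
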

\begin{proof}
  Let $\crse X = \bigsqcup_{i\in I} \crse X_{i}$ be the decomposition in coarsely connected components of $\crse X$. By \cref{prop: roelike cap compacts} we have 
  \[
    \roeclikeone{\CHx} \leq \prod_{i\in I}\roeclikeone{\CH_{X_i}}
    \quad\text{and}\quad
    \CK(\CHx) \cap \roeclikeone{\CHx} = \bigoplus_{i \in I} \CK(\CH_{X_i}).
  \]
  Tensoring with $\CK(\CH_1)$ yields
  \begin{align}
    \roeclikeone{\CHx} \otimes \CK\left(\CH_1\right)
    & \leq \prod_{i\in I} \bigparen{\roeclikeone{\CH_{X_i}} \otimes \CK\left(\CH_1\right)}
    \leq \prod_{i\in I}\CB\left(\CH_{X_i}\otimes \CH_1\right), \label{eq:roeclike in product} \\
    \CK\left(\CHx\otimes \CH_1\right) & \cap \bigparen{\roeclikeone{\CHx}\otimes\CK\left(\CH_1\right)}
    \cong \bigoplus_{i \in I} \CK\left(\CH_{X_i}\otimes \CH_1\right). \label{eq:compacts are sum}
  \end{align}
  As a consequence, it follows that for every $i\in I$ the compacts $\CK(\CH_{X_i}\otimes\CH_1)$ are a minimal ideal in $\roeclikeone{\CHx} \otimes \CK(\CH_1)$. Likewise, every minimal ideal takes this form.
  
  The same considerations hold for the decomposition in coarsely connected components $\crse Y =\bigsqcup_{j\in J}\crse Y_j$. Since an isomorphism sends minimal ideals to minimal ideals, we deduce that there is a bijection $I\to J, i \mapsto j(i),$ such that $\Phi$ restricts to isomorphisms 
  \[
    \Phi_i\colon \CK\left(\CH_{X_i}\otimes\CH_1\right)\xrightarrow{\cong}\CK\left(\CH_{Y_{j(i)}}\otimes\CH_2\right).
  \]
  It follows that each $\Phi_i$ is \rlone (recall \cref{lem: rank-1 to rank-1 if hereditary}). Moreover, by \eqref{eq:compacts are sum} every rank-1 operator in $\roeclikeone{\CHx}\otimes\CK(\CH_1)$ must belong to some $\CK(\CH_{X_i}\otimes\CH_1)$. Thus, $\Phi$ itself is \rlone.

  By \cref{prop: non-degenerate hom is spatially implemented}, it only remains to show that the restriction of $\Phi$ to $\bigoplus_{i \in I} \CK(\CH_{X_i}\otimes \CH_1)$ defines a non-degenerate \Star{}representation into $\CB(\CHy\otimes\CH_2)$. This is readily done: consider a simple tensor $w\otimes \xi\in \CHy\otimes\CH_2$. For every $\varepsilon > 0$, by \cref{lem:supports of vectors almost contained in bounded}, we may find finitely many bounded measurable subsets $B_k\subseteq Y$ such that $w\approx_\varepsilon \sum_k\chf{B_k}(w)$. Let $q_k\in \roecliketwo{\CHy}\otimes \CK(\CH_2)$ denote the rank-1 projection onto the span of $\chf{B_k}(w)\otimes\xi$. 
  Since $B_k$ is bounded, $q_k\in \CK(\CH_{Y_j}\otimes \CH_2)$ for some $j\in J$, and therefore $p_k \coloneqq \Phi^{-1}(q_k)$ is a rank-1 projection in $\CK(\CH_{X_i}\otimes \CH_1)$ for some $i\in I$.
  This implies the required non-degeneracy.
\end{proof}

\begin{remark}
  The analogous statement of \cref{prop: isos are spatially implemented} need not be true for \Star{}homomorphisms $\roeclikeone{\CHx}\otimes\CK(\CH_1) \to\roecliketwo{\CHy}\otimes\CK(\CH_2)$. 
  This can be seen for instance by letting $\crse Y\coloneqq \crse X\sqcup\crse X$ and considering a diagonal embedding $\roeclike{\CHx}\to \roeclike{\CHy} \cong \roeclike{\CHx}\oplus\roeclike{\CHx}$.

  One can fix this issue by adding extra assumptions on the homomorphism, such as the requirement that the image be a hereditary subalgebra \cite{braga2020embeddings}. In this memoir we will not pursue this line of thoughts.
\end{remark}

\section{Stable rigidity of Roe-like algebras of modules}
In this section we prove \cref{thm:intro: rigidity}, for which we only need one last preliminary result, the proof of which varies a little depending on whether the \cstar{}algebra under consideration is unital or not (cf.\ \cref{rmk: unital Roe algebras}).

\begin{lemma}\label{lem: approximate image under stable by finite rank}
  Let $\CHy$ be discrete with discrete partition $(B_j)_{j\in J}$. Given an operator $T\colon \CHx\otimes\CH_1\to \CHy\otimes\CH_2$ and a submodule $p\leq \CHx\otimes\CH_1$, suppose that:
  \begin{enumerate}[label=(\roman*)]
    \item $\Ad(T)$ maps $\roeclikeone\CHx\otimes\CK(\CH_1)$ into $\roecliketwo\CHy\otimes\CK(\CH_2)$;
    \item $p\in\roeclikeone\CHx\otimes\CK(\CH_1)$;
    \item either $\roecliketwo\CHy$ is unital or $\crse Y$ is coarsely locally finite.
  \end{enumerate}
  Then, for every $\mu>0$ there is a submodule $r\leq\CHy\otimes\CH_2$ subordinate to $(B_j)_{j\in J}$ such that $\cpcstar{r}\subseteq\roecliketwo\CHy\otimes\CK(\CH_2)$ and 
  \[
    \norm{Tp- rTp} \leq \mu.
  \]
\end{lemma}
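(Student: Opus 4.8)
The strategy is to apply \cref{lem: exists submodule approximating operator} (and its extension noted in the subsequent remark) to the operator $Tp$, viewed as living inside $\roecliketwo\CHy\otimes\CK(\CH_2)$. First I would observe that by hypothesis (i) and (ii) we have $Tp\cdot p = Tp$ and hence $TpT^*\in\roecliketwo\CHy\otimes\CK(\CH_2)$; more directly, since $p\in\roeclikeone\CHx\otimes\CK(\CH_1)$ and $\Ad(T)$ maps this algebra into $\roecliketwo\CHy\otimes\CK(\CH_2)$, the projection $q\coloneqq\Ad(T)(p)$ (the range projection of $Tp$, up to the relevant partial-isometry polar-decomposition adjustments) lies in $\roecliketwo\CHy\otimes\CK(\CH_2)$, and so does every operator of the form $sTp$ with $s$ of controlled propagation in $\CHy\otimes\CH_2$. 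This reduces the problem to finding, inside $\roecliketwo\CHy\otimes\CK(\CH_2)$, a submodule $r$ subordinate to the given discrete partition $(B_j)_{j\in J}$ (tensored with a basis of $\CH_2$) that almost absorbs $Tp$ on the left and satisfies $\cpcstar r\subseteq\roecliketwo\CHy\otimes\CK(\CH_2)$.

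The key step is the construction of the approximate unit. In the unital case ($\roecliketwo\CHy$ unital, equivalently $\CHy$ of locally finite rank by \cref{rmk: unital Roe algebras}), one simply takes $r=1_{\CHy}\otimes 1_{\CH_2}$ if $\CH_2$ is finite rank, or more carefully $r=1_{\CHy}\otimes r_0$ for a suitable finite-rank $r_0$, noting $\cpcstar{1_\CHy}=\cpcstar\CHy=\roecstar\CHy$ in this case; but actually the cleanest uniform treatment is to note that when $\roecliketwo\CHy$ is unital, $\cpcstar\CHy=\roecstar\CHy$, so any submodule $r$ subordinate to $(B_j)_{j\in J}$ automatically has $\cpcstar r\subseteq\cpcstar\CHy\otimes\CK(\CH_2)=\roecstar\CHy\otimes\CK(\CH_2)=\roecliketwo\CHy\otimes\CK(\CH_2)$, provided $r$ itself lies in $\roecliketwo\CHy\otimes\CK(\CH_2)$; and $Tp$ being compact-valued (via (i),(ii)), one chooses $r$ to be a finite-rank projection approximating the range projection of $Tp$, which can be taken subordinate to $(B_j)_{j\in J}$ by \cref{lem:supports of vectors almost contained in bounded} applied to a finite set of vectors spanning most of the image. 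In the non-unital case, by hypothesis (iii) $\crse Y$ is coarsely locally finite, so I would invoke \rfThmApproxUnit (equivalently, \cite{braga_gelfand_duality_2022}*{Proposition 2.1}): $\roecstar\CHy$ over a discrete module on a coarsely locally finite space admits an approximate unit $(r_\lambda)_{\lambda\in\Lambda}\subseteq\roestar\CHy$ consisting of projections of locally finite rank that commute with all the $\chf{B_j}$. Tensoring with $\CK(\CH_2)$ and choosing a suitable finite-rank approximate unit for $\CK(\CH_2)$ gives an approximate unit $(r_\lambda)_\lambda$ for $\roecliketwo\CHy\otimes\CK(\CH_2)$ consisting of submodules subordinate to $(B_j)_{j\in J}$ with $\cpcstar{r_\lambda}\subseteq\roecliketwo\CHy\otimes\CK(\CH_2)$ (locally finite rank forces local compactness of $r_\lambda s r_\lambda$ for $s$ of finite propagation, exactly as in the proof of \cref{lem: exists submodule approximating operator}).

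Having fixed such a net, since $Tp\in\roecliketwo\CHy\otimes\CK(\CH_2)$ and $(r_\lambda)_\lambda$ is an approximate unit, there is some $\lambda$ with $\norm{r_\lambda(Tp)-Tp}\leq\mu$; setting $r\coloneqq r_\lambda$ finishes the proof. \textbf{The main obstacle} I anticipate is verifying that $Tp$ genuinely lands in $\roecliketwo\CHy\otimes\CK(\CH_2)$ and not merely in its multiplier algebra or weak closure: this requires using hypothesis (ii) together with (i) in the precise form $\Ad(T)(p)=TpT^*\in\roecliketwo\CHy\otimes\CK(\CH_2)$, and then deducing $Tp=(Tp)(TpT^*)^{1/2}\cdots$ or, more simply, observing $TpT^*\cdot Tp$-type products stay in the algebra and that $Tp$ is a limit of such — equivalently, that $p\in\roeclikeone\CHx\otimes\CK(\CH_1)$ implies $Tp=T''p$ where $T''$ need not be in any Roe-like algebra, yet $\Ad(T)(p)=\phi(p)$ is, so that $Tp$ is (a partial isometry factor of) something in the algebra; the care needed is that $T$ itself is just an operator between modules, so one must route everything through the hereditary-subalgebra structure rather than through $T$ directly. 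The second, minor, obstacle is bookkeeping the subordination of $r$ to $(B_j)_{j\in J}\otimes(\text{basis of }\CH_2)$ in the tensor product, but this is handled exactly as in \cref{rmk: roelike of tensor with finite rank} and \cref{subsec: block-entourages}.
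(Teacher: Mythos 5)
Your overall outline lands in the right place—you correctly identify $TpT^*\in\roecliketwo\CHy\otimes\CK(\CH_2)$ as the operative input and you correctly flag the passage from an estimate on $TpT^*$ to an estimate on $Tp$ as the main obstacle—but the plan as written contains a claim that is simply false and the gap you flag is never actually closed. The problematic assertion is ``since $Tp\in\roecliketwo\CHy\otimes\CK(\CH_2)$ \ldots there is some $\lambda$ with $\norm{r_\lambda(Tp)-Tp}\leq\mu$''. The operator $Tp$ maps $\CHx\otimes\CH_1$ to $\CHy\otimes\CH_2$, so it is not an element of $\roecliketwo\CHy\otimes\CK(\CH_2)\subseteq\CB(\CHy\otimes\CH_2)$, and the approximate-unit property of $(r_\lambda)_\lambda$ does not apply to it. None of the three resolutions you list—the factorization ``$Tp=(Tp)(TpT^*)^{1/2}\cdots$'', the ``limit of $TpT^*\cdot Tp$-type products'', or the appeal to ``the hereditary-subalgebra structure''—is worked out to the point of being a proof; the first is the wrong factorization, and the others remain gestures. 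The missing observation is the elementary $C^*$-identity
\[
  \norm{(1-r)Tp}^2 \;=\; \norm{(1-r)Tp\,(Tp)^*\,(1-r)} \;=\; \norm{(1-r)\,TpT^*\,(1-r)} \;\leq\; \norm{(1-r)\,TpT^*},
\]
valid for any projection $r$ in $\CB(\CHy\otimes\CH_2)$. This single line converts the approximate-unit estimate on $TpT^*$—which \emph{is} in the algebra—into the required bound on $Tp$, with no polar decomposition or heredity argument needed. (Your polar-decomposition idea can also be made to work—$Tp=(TpT^*)^{1/2}V$ for a partial isometry $V\colon\CHx\otimes\CH_1\to\CHy\otimes\CH_2$—but that is a longer detour to the same estimate, and you do not carry it out.)

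A secondary imprecision: in the unital case you assert that ``any submodule $r$ subordinate to $(B_j)_{j\in J}$ automatically has $\cpcstar r\subseteq\cpcstar\CHy\otimes\CK(\CH_2)$, provided $r$ itself lies in $\roecliketwo\CHy\otimes\CK(\CH_2)$''. This does not follow from subordination alone; it requires $r\leq 1_{\CHy}\otimes q$ for some finite-rank $q\in\CK(\CH_2)$, since only then does $\cpcstar r\subseteq\cpcstar{\CHy\otimes q(\CH_2)}=\cpcstar\CHy\otimes\CK(q(\CH_2))$ (cf.\ \cref{rmk: roelike of tensor with finite rank}). The paper arranges exactly this: first it uses the approximate unit $(1_{\CHy}\otimes q)$ of $\CK(\CH_2)$ to get into a finite-$\CH_2$-rank picture (which already finishes the unital case with $r=1_{\CHy}\otimes q$), and only then, in the non-unital case, it invokes \cref{lem: exists submodule approximating operator} inside $\roecstar{\CHy\otimes q(\CH_2)}$ to shrink further. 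Your plan of directly building a single approximate unit of the form $p_\mu\otimes q_\nu$ for $\roecstar\CHy\otimes\CK(\CH_2)$ can be made to work, but you should state and verify the inclusion $\cpcstar{p_\mu\otimes q_\nu}\subseteq\roecstar\CHy\otimes\CK(q_\nu(\CH_2))\subseteq\roecstar\CHy\otimes\CK(\CH_2)$ rather than assert it follows from subordination.
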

\begin{proof}
  By assumption, the operator $TpT^*$ is in $\roecliketwo{\CHy}\otimes\CK(\CH_2)$. Since finite rank projections form an approximate unit for $\CK(\CH_2)$, we may find a (finite rank) projection $q\in\CK(\CH_2)$ such that
  \[
  \norm{\paren{1 -1_{\CHy}\otimes q}TpT^*}\leq (\mu/2  )^2,
  \]
  where $1=1_{\CHy\otimes\CH_2}$. It follows that 
  \begin{align*}
    \norm{\paren{1 -1_{\CHy}\otimes q}Tp}^2
  &=\norm{\paren{1 -1_{\CHy}\otimes q}TpT^*\paren{1 -1_{\CHy}\otimes q}} \\
  &\leq (\mu/2)^2\norm{\paren{1 -1_{\CHy}\otimes q}} \leq (\mu/2)^2.
  \end{align*}
  This proves that $\norm{Tp- (1_{\CHy}\otimes q)Tp}\leq\mu/2$.

  If $\roecliketwo\CHy$ is unital we may already let $r \coloneqq 1_{\CHy}\otimes q$ and we are done.
  Otherwise, we are in the setting where $\roecliketwo\CHy=\roecstar\CHy$ and $\crse Y$ is coarsely locally finite. Observe that the operator $(1_{\CHy}\otimes q)TpT^*(1_{\CHy}\otimes q)$ is an element of $\roecstar{\CHy}\otimes\CB(q(\CH_2))$, and the latter is nothing but $\roecstar{\CHy\otimes q(\CH_2)}$, since $q$ has finite rank (cf.\ \cref{rmk: roelike of tensor with finite rank}).
  We may then apply \cref{lem: exists submodule approximating operator} to the discrete module $1_{\CHy}\otimes q$ to deduce that there is a submodule $r\leq 1_{\CHy}\otimes q$ subordinate to $(B_j)_{j\in J}$ such that $\cpcstar{r}\subseteq\roecstar{\CHy\otimes q(\CH_2)} \subseteq \roecstar\CHy\otimes\CK(\CH_2)$ and 
  \[
  \norm{(1-r)(1_{\CHy}\otimes q)TpT^*(1_{\CHy}\otimes q)}\leq (\mu/2  )^2.
  \]
  Arguing as above, we deduce $\norm{(1_{\CHy}\otimes q)Tp- r(1_{\CHy}\otimes q)Tp}\leq\mu/2$. Thus
  \[
    \norm{Tp-rTp}
    \leq\norm{Tp- (1_{\CHy}\otimes q)Tp} + \norm{(1_{\CHy}\otimes q)Tp- r(1_{\CHy}\otimes q)Tt}
    \leq \mu,
  \]
  as desired.
\end{proof}

We now have all the necessary ingredients to prove \cref{thm:intro: rigidity}. The precise statement we prove is the following (recall that $\roeclikeone \CHx$ and $\roecliketwo \CHy$ may denote different kinds of Roe-like \cstar{}algebras, see \cref{notation: stable rigidity}).

\begin{theorem}[cf.\ \cref{thm:intro: rigidity}]\label{thm: stable rigidity}
  Let $\crse X$ and $\crse Y$ be countably generated coarse spaces; $\CHx$ and $\CHy$ faithful discrete modules; and $\CH_1$ and $\CH_2$ Hilbert spaces. Suppose that $\roeclikeone{\CHx} \otimes \CK(\CH_1) \cong \roecliketwo{\CHy} \otimes \CK(\CH_2)$ and the following hold:
  \begin{enumerate}[label=(\roman*)]
    \item either $\roeclikeone{\CHx}$ is unital or $\crse X$ is coarsely locally finite,
    \item either $\roecliketwo{\CHy}$ is unital or $\crse Y$ is coarsely locally finite.
  \end{enumerate}
  Then $\crse X$ and $\crse Y$ are coarsely equivalent.
\end{theorem}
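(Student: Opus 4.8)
The strategy is to combine the three pillars developed in the preceding sections: spatial implementation (\cref{prop: isos are spatially implemented}), uniformization (\cref{thm: uniformization}), and the rigidity engine for weakly quasi-controlled operators (\cref{thm: rigidity quasi-proper operators}). First I would apply \cref{prop: isos are spatially implemented}: since $\CHx$ and $\CHy$ are discrete, every coarsely connected component of $\crse X$ and $\crse Y$ is measurable, so the given isomorphism $\Phi\colon\roeclikeone{\CHx}\otimes\CK(\CH_1)\to\roecliketwo{\CHy}\otimes\CK(\CH_2)$ is $\Ad(U)$ for a unitary $U\colon\CHx\otimes\CH_1\to\CHy\otimes\CH_2$. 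Note that $\CHx\otimes\CH_1$ and $\CHy\otimes\CH_2$ are again faithful discrete modules for $\crse X$ and $\crse Y$ (the tensor factor does not interfere with the discrete partition), so it suffices to produce a coarse equivalence from these tensored modules.

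\textbf{Setting up the submodule chains.} The obstacle to applying \cref{cor: rigidity controlled unitaries} directly is that $\Ad(U)$ need not be quasi-controlled on all of $\CB(\CHx\otimes\CH_1)$, because $\Phi$ is only defined on the Roe-like algebra, not on $\cpstar{\variable}$ of the tensored module — and when $\roeclikeone{\CHx}=\roecstar{\CHx}$ the algebra $\roeclikeone{\CHx}\otimes\CK(\CH_1)$ is much smaller than $\cpstar{\CHx\otimes\CH_1}$. This is exactly the situation \cref{thm: rigidity quasi-proper operators} was designed for. The plan is to construct the chains $p_X^0\le p_X'\le p_X''$ and $p_Y^0\le p_Y'\le p_Y''$ as follows. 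Take $p_X^0$ to be a faithful submodule of $\CHx\otimes\CH_1$ lying inside $\roeclikeone{\CHx}\otimes\CK(\CH_1)$ — such a $p_X^0$ exists because $\CHx$ is faithful, so one can select a cofinal family of bounded measurable sets $A$ with nonzero $\chf{A}$, pick for each a rank-one projection onto a supported vector tensored with a fixed unit vector of $\CH_1$, and take the join of a locally-finite-rank selection; discreteness/coarse local finiteness (hypothesis (i)) ensures this join is a submodule with $\cpcstar{p_X^0}\subseteq\roeclikeone{\CHx}\otimes\CK(\CH_1)$, in particular $p_X^0\in\roeclikeone{\CHx}\otimes\CK(\CH_1)$. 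Symmetrically choose $p_Y^0\le\CHy\otimes\CH_2$ faithful with $p_Y^0\in\roecliketwo{\CHy}\otimes\CK(\CH_2)$. Now alternately enlarge: using \cref{lem: approximate image under stable by finite rank} applied to $T=U$ and $p=p_X^0$ (with $\Ad(U)$ sending $\roeclikeone{\CHx}\otimes\CK(\CH_1)$ to $\roecliketwo{\CHy}\otimes\CK(\CH_2)$, and hypothesis (ii) covering the non-unital case), choose a submodule of $\CHy\otimes\CH_2$ approximating $Up_X^0$ up to $\mu/2$; joining it with $p_Y^0$ (so the larger submodule is still subordinate to a fixed discrete partition, via \cref{rmk: join of submodules}) gives $p_Y'$, discrete, with $\cpcstar{p_Y'}\subseteq\roecliketwo{\CHy}\otimes\CK(\CH_2)$ and $p_Y'Up_X^0\approx_\mu Up_X^0$. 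Applying the lemma again to $T=U^*$, $p=p_Y'$ yields $p_X'$ discrete with $p_X'U^*p_Y^0\approx_\mu U^*p_Y^0$ (note $U^*p_Y^0\approx U^*p_Y'$ up to a controlled error absorbed into $\mu$, or one simply enlarges to make both statements hold). One more round on each side produces $p_X''$ and $p_Y''$ with $p_Y''Up_X'\approx_\mu Up_X'$ and $p_X''U^*p_Y'\approx_\mu U^*p_Y'$, and crucially with $\cpcstar{p_X''},\cpcstar{p_Y''}\subseteq\roeclikeone{\CHx}\otimes\CK(\CH_1)$ respectively $\roecliketwo{\CHy}\otimes\CK(\CH_2)$.

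\textbf{Verifying the hypotheses of \cref{thm: rigidity quasi-proper operators}.} With $\mu$ chosen small (say $\mu<1/4$, so that $\mu<\eta/2$ with $\eta=1$ below), the approximation conditions of \cref{thm: rigidity quasi-proper operators} hold by construction. Faithfulness of $p_X^0,p_Y^0$ and discreteness of $p_X',p_Y'$ are arranged. Since $U$ is a unitary, its restriction to any submodule is an isometry, hence $1$-bounded from below, so (iii) holds with $\eta=1$; likewise $U^*$. For (iv) I need $Up_X''\colon p_X''(\CHx\otimes\CH_1)\to\CHy\otimes\CH_2$ and $U^*p_Y''$ to be weakly quasi-controlled. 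This is where uniformization enters: because $\cpcstar{p_X''}\subseteq\roeclikeone{\CHx}\otimes\CK(\CH_1)$, the map $\Ad(Up_X'')=\Phi\circ\Ad(p_X'')$ restricted to $\cpstar{p_X''}$ is a $*$-homomorphism $\cpstar{p_X''}\to\roecliketwo{\CHy}\otimes\CK(\CH_2)\subseteq\CB(\CHy\otimes\CH_2)$; since $p_X''$ is discrete (admissible), $\CHy\otimes\CH_2$ is admissible, $\crse Y$ (equivalently $\CF$) is countably generated, and $\Phi$ — being an isomorphism, hence a fortiori strongly continuous on the relevant domain, or one notes $*$-homomorphisms between these algebras restrict continuously — is SOT-continuous, \cref{thm: uniformization} (applied with the module $p_X''$ in place of $\CHx$, landing in a $\cpcstar{}$ or $\qlcstar{}$ target according to the type of $\roecliketwo{}$) yields that $\Ad(Up_X'')$ is approximately- or quasi-controlled on $\cpstar{p_X''}$; by \cref{prop: controlled iff controlled} this translates to $Up_X''$ being weakly quasi-controlled as an operator out of the submodule. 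The same for $U^*p_Y''$. Hence all hypotheses of \cref{thm: rigidity quasi-proper operators} are met, and for any $\mu<\delta<1$ we obtain $E\in\CE$, $F\in\CF$ such that $\cappmap[p_Y''Up_X'']{\delta-\mu}{F}{E}\colon\crse X\to\crse Y$ is a coarse equivalence (with coarse inverse $\cappmap[p_X''U^*p_Y'']{\delta-\mu}{E}{F}$), which proves $\crse X$ and $\crse Y$ are coarsely equivalent.

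\textbf{Main obstacle.} The delicate point is the bookkeeping in the alternating submodule construction: one must ensure simultaneously that (a) each submodule in the $X$-chain lands inside $\roeclikeone{\CHx}\otimes\CK(\CH_1)$ (so that uniformization applies to $p_X''$ and so that \cref{lem: approximate image under stable by finite rank} is applicable at each step), (b) the nesting $p_X^0\le p_X'\le p_X''$ holds with each a genuine submodule (subordinate to one fixed discrete partition, so joins stay non-degenerate — this is why \cref{rmk: join of submodules} is invoked), and (c) the four approximation inequalities hold with a \emph{uniform} $\mu$. Getting the quantifiers in the right order — first fix $\mu$, then build the chains, then choose $\delta$ — and threading the non-unital case (where hypotheses (i),(ii) force coarse local finiteness, needed both for \cref{lem: exists submodule approximating operator} inside \cref{lem: approximate image under stable by finite rank} and to ensure the relevant algebras have the approximate units of locally finite rank projections) is the part requiring genuine care rather than routine computation.
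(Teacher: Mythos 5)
Your proposal is correct and follows essentially the same route as the paper's proof: spatial implementation via \cref{prop: isos are spatially implemented}, alternating construction of submodule chains using \cref{lem: approximate image under stable by finite rank} and \cref{rmk: join of submodules}, uniformization from \cref{thm: uniformization} applied with the universal target $\qlcstar{\CHy\otimes\CH_2}$, and finally \cref{thm: rigidity quasi-proper operators}. The only cosmetic differences are the phrasing of the construction of $p_X^0$ (the paper uses $r_X\otimes q_\xi$, you use a join of rank-one projections, which amounts to the same thing), and your hedged explanation of SOT-continuity of $\Phi$ — note that an abstract $*$-isomorphism need \emph{not} be strongly continuous; what saves you is that $\Phi = \Ad(U)$ is conjugation by a bounded operator, which is automatically SOT-continuous, exactly as the paper observes.
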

\begin{proof}
  Let $\Phi \colon \roeclikeone{\CHx}\otimes\CK(\CH_1) \to \roecliketwo{\CHy}\otimes\CK(\CH_2)$ be an isomorphism. 
  For discrete modules, connected components are always measurable. We may thus apply \cref{prop: isos are spatially implemented} to deduce that $\Phi=\Ad(U)$ for some unitary $U\colon \CHx\otimes\CH_1 \to \CHy\otimes\CH_2$.
  We shall now consider the tensor products $\CHx\otimes \CH_1$ and $\CHy\otimes \CH_2$ as discrete coarse geometric modules in their own right. Observe that the operator $U$ needs \emph{not} be weakly quasi-controlled, as shown by \cref{ex:tensor-k-screws-quasi-control}.
  Crucially, we will see that $U$ is weakly quasi-controlled when restricted to some carefully chosen submodules, and this will suffice to apply \cref{thm: rigidity quasi-proper operators}.

  Fix once and for all sufficiently large gauges $\gaugex$, $\gaugey$ and locally finite
  discrete partitions $X=\bigsqcup_{i\in I}A_i$ and $Y=\bigsqcup_{j\in J}B_j$ subordinate to these gauges and such that $\chf{A_i}$ and $\chf{B_j}$ are non-zero for every $i\in I$ and $j\in J$. This can be arranged thanks to the faithfulness assumptions on $\CHx$ and $\CHy$.
  If $\crse X$ (resp.\ $\crse Y$) is coarsely locally finite, we also assume that the respective partition is locally finite as well.

  Arbitrarily fix some $0<\mu< 1/2$ and a vector $\xi\in \CH_1$ of norm one, and let $q_\xi\in\CK(\CH_1)$ denote the projection onto its span. We choose a faithful submodule $r_X\leq \CHx$ subordinate to the partition $(A_i)_{i\in I}$ such that $p_X^0 \coloneqq r_X\otimes q_\xi$ is a submodule of $\CHx\otimes\CH_1$ with $\cpcstar {p_X^0} \subseteq \roeclikeone\CHx\otimes\CK(\CH_1)$. When $\roeclikeone\CHx$ is unital (\emph{e.g.}\ $\cpcstar\CHx$ or $\qlcstar\CHx$) this is easily accomplished letting $r_X=1_{\CHx}$.
  In the non-unital case, $(A_i)_{i\in I}$ is locally finite by assumption, and we may hence construct $r_X\in\roecstar{\CHx}$ choosing a locally finite rank projection commuting with $\chf{A_i}$ and such that $\chf{A_i}r_X$ is non-zero for every $i\in I$ (that is, choose non-empty finite rank vector subspaces of each $\CH_{A_i} = \chf{A_i}(\CHx)$).
  
  Similarly choose a submodule $p_Y^0\coloneqq r_Y\otimes q_\zeta$ of $\CHy \otimes \CH_2$, where $\zeta \in \CH_2$ is any vector of norm $1$.
  By construction, $p_X^0$ and $p_Y^0$ are faithful discrete coarse geometric modules subordinate to the partitions $(A_i)_{i\in I}$ and $(B_j)_{j\in J}$.

  We may now repeatedly apply \cref{lem: approximate image under stable by finite rank} to both $U$ and $U^*$ to obtain discrete submodules $p_X',p_X''$ of $\CHx\otimes\CH_1$ subordinate to $(A_i)_{i\in I}$, and $p_Y',p_Y''$ of $\CHy\otimes \CH_2$ subordinate to $(B_j)_{j\in J}$ such that 
  \[
    \renewcommand{\arraystretch}{1.5}\setlength{\arraycolsep}{2em}\begin{array}{cc}
      p_Y' Tp_X^0 \approx_\mu  Tp_X^0, &
      p_Y'' Tp_X' \approx_\mu  Tp_X', \\
      p_X' T^*p_Y^0 \approx_\mu  T^*p_Y^0, &
      p_X'' T^*p_Y' \approx_\mu  T^*p_Y',
    \end{array}
  \]
  and, furthermore, $\cpcstar{p_X'}$,$\cpcstar {p_X''}$, $\cpcstar {p_Y'}$ and $\cpcstar {p_Y''}$ are contained in $\roeclikeone\CHx\otimes\CK(\CH_1)$ and $\roecliketwo\CHy\otimes\CK(\CH_2)$ respectively.
  Since finite joins of submodules subordinate to the same partitions are still submodules (cf.\ \cref{rmk: join of submodules}), we may further assume that they form chains
  \[
    p_X^0\leq p_X'\leq p_X''
    \;\; \text{ and } \;\;
    p_Y^0\leq p_Y'\leq p_Y''.
  \]
  
  All the above modules are discrete, and the restriction of $U$ and $U^*$ to them is an isometry (which is hence $1$-bounded from below). Crucially, $\Ad(U)$ defines a (automatically strongly continuous) homomorphism from $\cpcstar{p_X''}$ into $\qlcstar{\CHy}\otimes \CK(\CH_2) \subseteq \qlcstar{\CHy\otimes\CH_2}$.
  We may hence apply \cref{thm: uniformization} and deduce that $U\colon p_X''\to\CHy\otimes\CH_2$ is weakly quasi-controlled. Analogously, $U^*\colon p_Y''\to\CHx\otimes\CH_1$ is weakly quasi-controlled as well. The statement then follows from \cref{thm: rigidity quasi-proper operators}.
\end{proof}

\begin{remark}
  Note that the coarse local finiteness assumption on $\crse{X}$ (resp.\ $\crse{Y}$) in \cref{thm: stable rigidity} is only needed when working with non-unital Roe algebras.  
  On a technical level, this is needed in two separate (but related) places: once when choosing $r_X$ and/or $r_Y$, and once when applying \cref{lem: exists submodule approximating operator} which, in turn, uses \rfThmApproxUnit. This last theorem necessitates the coarse space to be coarsely locally finite.
\end{remark}

A key point in the proof of \cref{thm: stable rigidity} is the passing to appropriate submodules of $\CHx\otimes \CH_1$ and $\CHy\otimes \CH_2$. This step is crucial in order to obtain weakly quasi-controlled operators, which are necessary to construct partial coarse maps via approximations. The following easy example shows why this step is indeed necessary.
\begin{example} \label{ex:tensor-k-screws-quasi-control}
  Let $\crse X=\crse Y$ be the set of integers $\ZZ$ with the usual metric. Let $\CHx=\CHy=\ell^2(\ZZ)$ and $\CH_1 = \CH_2 = \ell^2(\NN)$, and consider the unitary operator $U\colon \ell^2(\ZZ)\otimes\ell^2(\NN)\to \ell^2(\ZZ)\otimes\ell^2(\NN)$ sending the basis element $\delta_{k,n}$ to $\delta_{k+n,n}$ for every $k\in \ZZ$, $n\in\NN$.
  This operator is decidedly not weakly quasi-controlled. Indeed, consider
  \[
    s \colon \ell^2\left(\ZZ\right) \otimes \ell^2\left(\NN\right) \to \ell^2\left(\ZZ\right) \otimes \ell^2\left(\NN\right), \;\; \delta_{k,n} \mapsto
      \left\{
        \begin{array}{rl}
          \delta_{k,n+k} & \text{if } \, k \geq 0, \\
          0 & \text{otherwise.}
        \end{array}
      \right.
  \]
  Then $s$ has $0$ propagation, but $UsU^*$ is not quasi-local at all.
  Nevertheless, $\Ad(U)$ does define an isomorphism $\cpcstar{\CHx}\otimes\CK(\CH)\cong\cpcstar{\CHy}\otimes\CK(\CH)$.
  In order to see this, arbitrarily fix $\munit ij$ and $t \otimes \matrixunit{i}{j} \in \cpstar{\CHx} \otimes \CK(\CH)$. That is, $t$ is a sum $\sum_{\ell,m\in \ZZ} t_{\ell,m}\matrixunit{\ell}{m}$ where the supremum of $\abs{\ell - m}$ with $t_{\ell,m}\neq 0$ is bounded.
  We may then compute that
  \[
    \left(U \left(t \otimes \matrixunit{i}{j}\right) U^*\right) \left(\delta_{p,q}\right) =
      \left\{
        \begin{array}{ll}
          \sum_{\ell \in \ZZ} t_{\ell,p-q} \delta_{\ell+i,i} & \text{if } \, j = q, \\
          0 & \text{otherwise.}
        \end{array}
      \right.
  \]
  As $t_{\ell,p-q}$ can only be non-zero when $\ell$ stays uniformly close to $p-q$, and both $i$ and $q=j$ are fixed, $\ell+i$ stays uniformly close to $p$. Thus $U(t \otimes \matrixunit{i}{j}) U^*\in \cpstar{\CHy} \otimes \CK(\CH)$.
  To conclude that $\Ad(U)$ is an isomorphism it suffices to observe that the same argument applies to the adjoint operator $U^*\colon \delta_{k,n}\mapsto\delta_{k-n,n}$ as well.
\end{example}

\section{Stable rigidity of Roe-like algebras of coarse spaces} \label{subsec:rig roe algs spaces}
One immediate consequence of \cref{thm: stable rigidity} above is that ``(coarsely locally finite) countably generated coarse spaces are coarsely equivalent if and only if they have isomorphic Roe-like \cstar{}algebras''.
The issue with this statement is that so far we have only studied Roe algebras of \emph{modules}, not of the spaces themselves. Making this consequence precise requires recalling a few extra facts, which we do now.

The main observation is that \cref{prop: existence of covering iso} implies that the isomorphism class of Roe-like algebras does not depend on the choice of coarse geometric module, so long as it is discrete and as ample as its rank. The main point that remains to sort out is whether such a module exists at all. The answer, it turns out, depends on the size of the space.

Following \cite{roe-algs}, we say that the \emph{coarse cardinality} of $\crse X$ is the minimal cardinality of a coarse space coarsely equivalent to $\crse X$. 

\begin{example}
  It is easy to show that if $(X,d)$ is a non-empty separable metric space then $\crse X = (X,\CE_d)$ has countable coarse cardinality: it may be $1$ (bounded case) or $\aleph_0$ (unbounded case). Separable \emph{extended} metric spaces may have any countable cardinal as coarse cardinality, because the other finite cardinalities are reached by taking disjoint unions. Removing the separability assumption, their coarse cardinality can be any cardinal at all.
\end{example}

It is not hard to show (and it is proved in detail in \rfLemExistsDiscreteIffCardinality) that for any cardinal $\kappa$ a coarse space $\crse X$ admits a $\kappa$-ample discrete module of rank $\kappa$ if and only if the coarse cardinality of $\crse X$ is at most $\kappa$.
The following is then a meaningful definition.

\begin{definition}[cf.\ \rfDefRoeAlgsSpace]\label{def: roe-like of spaces}
  Let $\kappa$ be a cardinal and $\crse X$ be a coarse space of coarse cardinality at most $\kappa$.
  The \emph{Roe-like \cstar{}algebras of $\crse X$ of rank $\kappa$} are the Roe-like \cstar{}algebras associated with any $\kappa$-ample discrete $\crse X$-module $\CHx$ of rank $\kappa$. 
  These are denoted by $\roecstar[\kappa]{\crse X}$, $\cpcstar[\kappa]{\crse X}$, and $\qlcstar[\kappa]{\crse X}$.
  If $\kappa=\aleph_0$, we omit it from the notation.
\end{definition}

\begin{remark}
  \begin{enumerate}[label=(\roman*)]
    \item The above definition is indeed well posed, since it follows from \cref{prop: existence of covering iso} applied to the coarse equivalence $\cid_{\crse X} \crse{\colon X \to X}$ that different choices of $\kappa$-ample modules of rank $\kappa$ yield isomorphic (\cstar{})algebras.
    Since $\cid_{\crse X}$ is proper, one may also obtain well-defined \cstar{}algebras by considering modules that are $\kappa$-ample and of \emph{local} rank $\kappa$.
    \item If $\crse X = (X,d)$ is a proper metric space, $\roecstar{\crse X}$ is isomorphic to the classically defined Roe algebra $C^*(X)$ of the metric space $(X,d)$.
  \end{enumerate}
\end{remark}

A rather different algebra of classical interest is the \emph{uniform Roe algebra} of a coarse space $\crse{X}$~\cites{braga2020embeddings,braga_gelfand_duality_2022,braga_farah_rig_2021,braga_rigid_unif_roe_2022,bbfvw_2023_embeddings_vna}. 
Recall that we also call $\ell^2(X)$ the uniform module $\CH_{u, \crse{X}}$ of $\crse X$ (cf.\ \cref{ex:module-uniform}). We may define its Roe \cstar{}algebra to be the uniform Roe algebra of $\crse X$ (see \rfUnifRoeAlg for a more detailed discussion). Namely, we use the following.

\begin{definition} \label{ex:uniform-roe-algebra}
  Given any coarse space $\crse X$, the \emph{uniform Roe algebra} of $\crse X$, denoted by $\uroecstar{\crse X}$, is defined to be $\cpcstar{\ell^2(X)}$.
\end{definition}

As is well known, the uniform Roe algebra is \emph{not} an invariant of coarse equivalence: the easiest way to see this is by observing that all finite metric spaces are bounded, but give rise uniform Roe algebras of different dimensions. Thus, denoting it $\uroecstar{\crse X}$ is an abuse of our notational conventions regarding bold symbols, which we will only use in this chapter. Moreover, we shall presently see that uniform Roe algebras are only well behaved on coarse spaces that are uniformly locally finite (see \cref{def:loc-fin,rmk: uniform local finiteness}).

\medskip

Recall that two \cstar{}algebras $A,B$ are \emph{stably isomorphic} if $A\otimes\CK(\CH)\cong B\otimes \CK(\CH)$, where $\CH$ is a separable Hilbert space.
Complementing the existing literature, we may finally state and prove the following immediate consequence of \cref{thm: stable rigidity,prop: existence of covering iso}.

\begin{corollary}[cf.\ \cref{cor:intro: roe-rigidity}]\label{cor:roe-rigidity}
 Let $\crse{X}$ and $\crse{Y}$ be coarsely locally finite, countably generated coarse spaces of coarse cardinality $\kappa$. Consider:
  \begin{enumerate} [label=(\roman*)]
    \item \label{cor:roe-rigidity:ceq} $\crse X$ and $\crse Y$ are coarsely equivalent.
    \item \label{cor:roe-rigidity:roe} $\roecstar[\kappa]{\crse{X}}$ and $\roecstar[\kappa]{\crse{Y}}$ are \Star{}isomorphic.
    \item \label{cor:roe-rigidity:cp} $\cpcstar[\kappa]{\crse{X}}$ and $\cpcstar[\kappa]{\crse{Y}}$ are \Star{}isomorphic.
    \item \label{cor:roe-rigidity:ql} $\qlcstar[\kappa]{\crse{X}}$ and $\qlcstar[\kappa]{\crse{Y}}$ are \Star{}isomorphic.
  \end{enumerate}
  and
  \begin{enumerate}[label=(\roman*),resume]
    \item \label{cor:roe-rigidity:u-k-iso} $\uroecstar{\crse X}$ and $\uroecstar{\crse Y}$ are stably \Star{}isomorphic.
    \item \label{cor:roe-rigidity:morita} $\uroecstar{\crse X}$ and $\uroecstar{\crse Y}$ are Morita equivalent.
  \end{enumerate}
  Then \cref{cor:roe-rigidity:ceq} $\Leftrightarrow$ \cref{cor:roe-rigidity:roe} $\Leftrightarrow$ \cref{cor:roe-rigidity:cp} $\Leftrightarrow$ \cref{cor:roe-rigidity:ql} and \cref{cor:roe-rigidity:u-k-iso} $\Leftrightarrow$ \cref{cor:roe-rigidity:morita}. Moreover, \cref{cor:roe-rigidity:u-k-iso} $\Rightarrow$ \cref{cor:roe-rigidity:ceq} always holds, and \cref{cor:roe-rigidity:ceq} $\Rightarrow$ \cref{cor:roe-rigidity:u-k-iso} holds if $\crse{X}$ and $\crse{Y}$ are uniformly locally finite.
\end{corollary}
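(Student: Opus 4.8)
The plan is to assemble the statement from three already-available inputs: \cref{cor:existence of covering Ad}, which turns coarse maps into homomorphisms of Roe-like algebras; \cref{thm: stable rigidity}, which is the deep converse; and the classical circle of ideas around uniform Roe algebras together with the Brown--Green--Rieffel stabilisation theorem. First I would dispose of the degenerate case: if $\kappa$ is finite then both spaces are bounded and all the assertions hold trivially, so we may assume $\kappa\geq\aleph_0$.

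For the forward implications \cref{cor:roe-rigidity:ceq}$\Rightarrow$\cref{cor:roe-rigidity:roe},\cref{cor:roe-rigidity:cp},\cref{cor:roe-rigidity:ql}, I would fix $\kappa$-ample discrete $\crse X$- and $\crse Y$-modules of rank $\kappa$ --- these exist by \rfLemExistsDiscreteIffCardinality because the coarse cardinality is $\kappa$ --- choose a coarse equivalence $\crse{f\colon X\to Y}$, and use that a coarse equivalence is in particular a \emph{proper} coarse map (its coarse inverse witnesses properness). Then \cref{cor:existence of covering Ad} produces a unitary $U$ covering $\crse f$, and $\Ad(U)$ restricts to $^*$-isomorphisms at the level of $\cpcstar{\variable}$ and $\qlcstar{\variable}$ and --- thanks to properness --- of $\roecstar{\variable}$ as well, with inverse implemented by $U^*$ (which covers the coarse inverse $\op{\crse f}$). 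For each converse implication I would invoke \cref{thm: stable rigidity} with $\CH_1=\CH_2=\CCC$, so that $\roeclike{\variable}\otimes\CK(\CCC)=\roeclike{\variable}$: the chosen modules are faithful (being $1$-ample), discrete and admissible, the spaces are countably generated, and --- crucially --- coarsely locally finite, which is exactly what lets one drop the unitality requirement in the hypotheses of \cref{thm: stable rigidity}. Hence \cref{cor:roe-rigidity:roe}, \cref{cor:roe-rigidity:cp} and \cref{cor:roe-rigidity:ql} each return \cref{cor:roe-rigidity:ceq}, and the four statements are equivalent.

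It then remains to handle the uniform Roe algebras. Since $\uroecstar{\variable}=\cpcstar{\ell^2(\variable)}$ is unital by \cref{rmk: unital Roe algebras}, hence $\sigma$-unital, the equivalence \cref{cor:roe-rigidity:u-k-iso}$\Leftrightarrow$\cref{cor:roe-rigidity:morita} is just the Brown--Green--Rieffel theorem. For \cref{cor:roe-rigidity:u-k-iso}$\Rightarrow$\cref{cor:roe-rigidity:ceq} I would apply \cref{thm: stable rigidity} once more, this time to the uniform modules $\CH_{u,\crse X}=\ell^2(X)$ and $\CH_{u,\crse Y}=\ell^2(Y)$, which are faithful and discrete by \cref{ex:module-uniform}, with $\cpcstar{\variable}$ in the role of both $\roeclikeone{\variable}$ and $\roecliketwo{\variable}$ and $\CH_1=\CH_2$ a separable infinite-dimensional Hilbert space; the unitality of $\cpcstar{\ell^2(\variable)}$ again makes the hypotheses automatic, so a stable isomorphism of the uniform Roe algebras forces $\crse X$ and $\crse Y$ to be coarsely equivalent. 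Finally, \cref{cor:roe-rigidity:ceq}$\Rightarrow$\cref{cor:roe-rigidity:u-k-iso} under uniform local finiteness is the classical fact that a coarse equivalence of uniformly locally finite coarse spaces induces a Morita equivalence of the uniform Roe algebras, which I would cite from \cite{brodzki_property_2007}*{Theorem 4} and then feed into the Brown--Green--Rieffel equivalence.

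I expect essentially no obstacle here: the entire difficulty is concentrated in \cref{thm: stable rigidity} and \cref{cor:existence of covering Ad}, both already established, and the remainder is the routine verification that their hypotheses hold in each case. The one point I would be careful about is the uniform-Roe-algebra case, where I must confirm that the relevant module is the uniform module $\ell^2(\variable)$ --- so that it is visibly faithful and discrete --- and that $\cpcstar{\ell^2(\variable)}$ is unital, so that the coarse-local-finiteness clause of \cref{thm: stable rigidity} plays no role there and \cref{cor:roe-rigidity:u-k-iso}$\Rightarrow$\cref{cor:roe-rigidity:ceq} genuinely holds for all countably generated, coarsely locally finite $\crse X$ and $\crse Y$.
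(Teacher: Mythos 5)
Your proposal is correct and follows essentially the same route as the paper: the forward implications \cref{cor:roe-rigidity:ceq}$\Rightarrow$\cref{cor:roe-rigidity:roe},\cref{cor:roe-rigidity:cp},\cref{cor:roe-rigidity:ql} are read off from \cref{cor:existence of covering Ad}, the converses and \cref{cor:roe-rigidity:u-k-iso}$\Rightarrow$\cref{cor:roe-rigidity:ceq} all come from \cref{thm: stable rigidity} (with $\CH_1=\CH_2=\CCC$ in one case, and with the uniform modules and a separable $\CH_1=\CH_2$ in the other), and \cref{cor:roe-rigidity:u-k-iso}$\Leftrightarrow$\cref{cor:roe-rigidity:morita}, \cref{cor:roe-rigidity:ceq}$\Rightarrow$\cref{cor:roe-rigidity:u-k-iso} are cited classical facts (Brown--Green--Rieffel and \cite{brodzki_property_2007}*{Theorem 4}). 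The one stylistic deviation is your preliminary reduction to $\kappa\geq\aleph_0$: the paper's notation $\roecstar[\kappa]{\variable}$ is only defined for $\kappa$ infinite, so this case distinction is not present in the paper's proof, but it does no harm.
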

\begin{proof}
  The equivalence \cref{cor:roe-rigidity:u-k-iso} $\Leftrightarrow$ \cref{cor:roe-rigidity:morita} is classical. In fact, $\uroecstar{\variable}$ is always unital, and hence always contains a strictly positive element.\footnote{\, Recall that an element $a \in A$ is \emph{strictly positive} if $\rho(a) > 0$ for all states $\rho$ on $A$. In particular, if $A$ is unital $\rho(1_A) = 1 > 0$ for all states, and hence $1_A$ is strictly positive.
  }
  The equivalence \cref{cor:roe-rigidity:u-k-iso} $\Leftrightarrow$ \cref{cor:roe-rigidity:morita} is then an immediate consequence of \cite{brown_green_rieffel_book_1977}*{Theorem~1.2}.

  The fact that if $\crse{X}$ and $\crse{Y}$ are uniformly locally finite then \cref{cor:roe-rigidity:ceq} implies \cref{cor:roe-rigidity:u-k-iso} is also classical: the uniform local finiteness allows one to use a coarse equivalence to construct a well-behaved bijection $X\times\NN\to Y\times \NN$ that induces an isomorphism of the stabilized uniform Roe algebras. For the details, we refer to \cite{brodzki_property_2007}*{Theorem~4} (this theorem is stated for metric spaces, but the proof holds in general).

  At this level of generality, the implications \cref{cor:roe-rigidity:ceq} $\Rightarrow$ \cref{cor:roe-rigidity:roe,cor:roe-rigidity:cp,cor:roe-rigidity:ql} are all particular cases of \cref{prop: existence of covering iso} (and are once again classical for proper metric spaces). 
  
  Finally, our main contribution is the hardest implications. Namely, \cref{thm: stable rigidity} shows that any of \cref{cor:roe-rigidity:roe}--\cref{cor:roe-rigidity:u-k-iso} implies \cref{cor:roe-rigidity:ceq}, thus finishing the proof of this corollary (the implication \cref{cor:roe-rigidity:u-k-iso} $\Rightarrow$ \cref{cor:roe-rigidity:ceq} was already known for uniformly locally finite metric spaces: it is the main theorem of \cite{braga_rigid_unif_roe_2022}).
\end{proof}

\begin{remark}
  Note that \cref{thm: stable rigidity} shows that the ``upwards'' implications of \cref{cor:roe-rigidity} in the unital case do not even require coarse local finiteness (this assumption is only added to include the Roe algebras as well).  
  In particular, our proof of the implication \cref{cor:roe-rigidity:u-k-iso} $\Rightarrow$ \cref{cor:roe-rigidity:ceq} completely drops the bounded geometry assumption in \cite{braga_rigid_unif_roe_2022}*{Theorem 1.2}, not even local finiteness is required.
\end{remark}

The following example shows that the hypothesis that $\crse{X}$ and $\crse{Y}$ be uniformly locally finite for the implication \cref{cor:roe-rigidity:ceq} $\Rightarrow$ \cref{cor:roe-rigidity:u-k-iso} in \cref{cor:roe-rigidity} is sharp (fact that is most likely known to experts).

\begin{example} \label{ex:v-i-sharp}
  Let $X$ be the metric space obtained from $\NN$ by replacing each $n\in\NN$ with a cluster of $n$ points at distance $1$ from one another, while the distance of points between different clusters $n$ and $m$ is just $\abs{n-m}$.
  The associated coarse space $\crse X$ is clearly locally finite and coarsely equivalent to $\crse Y \coloneqq (\NN,\abs\variable)$.
  Observe that the clusters give rise to an obvious embedding of $\prod_{n\in \NN}M_n(\CCC)\hookrightarrow\uroecstar{\crse X}$, as operators permuting points within the clusters have propagation $1$.
  On the other hand, the uniform Roe algebra of $\crse Y$ is nuclear, as $Y$ has property A (cf.\ \cite{yu_coarse_2000} and \cite{brown_c*-algebras_2008}*{Theorem 5.5.7}), and hence so is $\uroecstar{\crse Y}\otimes\CK(\CH)$.
  A nuclear \cstar{}algebra cannot contain $\prod_{n\in \NN}M_n(\CCC)$, because nuclear \cstar{}algebras are exact (cf.\ \cite{brown_c*-algebras_2008}*{Section 2.3})---while $\prod_{n\in \NN}M_n(\CCC)$ is not (cf.\ \cite{brown_c*-algebras_2008}*{Exercise 2.3.6})---and exactness passes to subalgebras (this follows from the \emph{nuclearly embeddable} definition of exactness, see \cite{brown_c*-algebras_2008}*{Definition 2.3.2}).
  This shows that $\uroecstar{\crse X}$ and $\uroecstar{\crse Y}$ are not stably isomorphic.
\end{example}

\section{Rigidity of groups and semigroups}\label{subsec:inv-sem}
Any countable group $\Gamma$ admits a proper (right) invariant metric, which is moreover unique up to coarse equivalence.
In particular, $\crse \Gamma$ is a well defined uniformly locally finite countably generated coarse space. \cref{cor:roe-rigidity} shows that for any $\Gamma$ and $\Lambda$ countable groups, $\crse{\Gamma}$ and $\crse{\Lambda}$ are coarsely equivalent if and only if $\roecstar{\crse\Gamma}\cong\roecstar{\crse\Lambda}$, and if and only if  $\uroecstar{\crse\Gamma}$ and $\uroecstar{\crse\Lambda}$ are Morita equivalent/stably isomorphic.
It is well known that $\uroecstar{\crse\Gamma}$ is canonically isomorphic to $\ell^\infty(\Gamma)\rtimes_{\rm red}\Gamma$ (cf.\ \cite{brown_c*-algebras_2008}*{Proposition 5.1.3}), therefore $\crse{\Gamma}$ and $\crse{\Lambda}$ are coarsely equivalent if and only if $\ell^\infty(\Gamma)\rtimes_{\rm red}\Gamma$ and $\ell^\infty(\Lambda)\rtimes_{\rm red}\Lambda$ are Morita equivalent/stably isomorphic.

Having proved our main results for \emph{extended} metric spaces allows us to effortlessly extend these observations to the setting of \emph{inverse semigroups}.
We first need to introduce some notions (we also refer the reader to~\cite{chyuan_chung_inv_sem_2022} for a more comprehensive approach).
An \emph{inverse semigroup} is a semigroup $S$ such that for all $s \in S$ there is a unique $s^* \in S$ such that $ss^*s = s$ and $s^*ss^* = s^*$.\footnote{\, All inverse semigroups here considered shall be discrete.}
As usual, $e \in S$ is an \emph{idempotent} if $e^2 = e$. Note that in such case $e^* = e$, so $e$ is \emph{self-adjoint}. We say that $S$ is \emph{quasi-countable} if there is some countable $K \subseteq S$ such that $S = \angles{K \cup E}$, where $E \subseteq S$ is defined to be the set of idempotents of $S$. The semigroup $S$ has a \emph{unit} if there is some (necessarily unique) element $1 \in S$ such that $1 \cdot s = s \cdot 1 = s$ for all $s \in S$. If a unit exists, $S$ is called an \emph{inverse monoid}.

An extended metric $d \colon S \times S \to [0, \infty]$ is \emph{(right) sub-invariant} if it satisfies $d(s_1t,s_2t)\leq d(s_1,s_2)$ for every $s_1,s_2,t\in S$. It is said to be \emph{proper} if for every $r > 0$ there is a finite $F \subseteq S$ such that $y \in Fx$ whenever $0 < d(x, y) \leq r$.
Just as is done with discrete countable groups, every quasi-countable inverse semigroup $S$ can be equipped with a proper and right sub-invariant extended metric whose coarsely connected components are the \emph{$\CL$-classes} of $S$, where Green's equivalence relation $\CL$ is defined as $x \CL y \Leftrightarrow (x^*x = y^*y)$ (see \cite{chyuan_chung_inv_sem_2022}*{Definition~3.1}).
Moreover such an extended metric is unique up to coarse equivalence (cf.\ \cite{chyuan_chung_inv_sem_2022}*{Theorem~3.22}). 

Using the above metric, one may canonically see the quasi-countable inverse semigroup $S$ as a uniformly locally finite coarse space $\crse S$ and then consider its uniform Roe-algebra $\uroecstar{\crse S}$. Just as in the group case, if $S$ is also an inverse monoid this \cstar{}algebra can be canonically identified with the crossed product $\ell^{\infty}(S) \rtimes_{\rm red} S$ (cf.\ \cite{chyuan_chung_inv_sem_2022}*{Theorem 4.3}).
With the above definitions, the following is an immediate consequence of \cref{cor:roe-rigidity}.

\begin{corollary} \label{cor:inv-sem-roe-rigid}
  Let $S$ and $T$ be two quasi-countable inverse monoids, and let $\crse{S}$ and $\crse{T}$ be the coarse spaces obtained from $S$ and $T$ when equipped with any proper and right sub-invariant metric.
  The following are equivalent:
  \begin{enumerate} [label=(\roman*)]
    \item \label{cor:inv-sem-roe-rigid:ceq} $\crse{S}$ and $\crse{T}$ are coarsely equivalent.
    \item \label{cor:inv-sem-roe-rigid:roe} $\roecstar[|\crse{S}|]{\crse{S}}$ and $\roecstar[|\crse{T}|]{\crse{T}}$ are \Star{}isomorphic.
    \item \label{cor:inv-sem-roe-rigid:cp} $\cpcstar[|\crse{S}|]{\crse{S}}$ and $\cpcstar[|\crse{T}|]{\crse{T}}$ are \Star{}isomorphic.
    \item \label{cor:inv-sem-roe-rigid:ql} $\qlcstar[|\crse{S}|]{\crse{S}}$ and $\qlcstar[|\crse{T}|]{\crse{T}}$ are \Star{}isomorphic.
    \item \label{cor:inv-sem-roe-rigid:morita} $\ell^{\infty}(S) \rtimes_{\rm red} S$ and $\ell^{\infty}(T) \rtimes_{\rm red} T$ are Morita equivalent.
    \item \label{cor:inv-sem-roe-rigid:u-k-iso} $\ell^{\infty}(S) \rtimes_{\rm red} S$ and $\ell^{\infty}(T) \rtimes_{\rm red} T$ are stably isomorphic.
  \end{enumerate}
\end{corollary}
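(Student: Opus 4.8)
The plan is to deduce \cref{cor:inv-sem-roe-rigid} from \cref{cor:roe-rigidity} by verifying that the coarse spaces $\crse S$ and $\crse T$ attached to quasi-countable inverse monoids fall within its scope, and then identifying the crossed products with uniform Roe algebras. First I would recall from \cite{chyuan_chung_inv_sem_2022} that every quasi-countable inverse monoid $S$ admits a proper, right sub-invariant extended metric, unique up to coarse equivalence, whose coarsely connected components are the $\CL$-classes; this makes $\crse S$ a well-defined coarse space. Properness of the metric gives that $\crse S$ is uniformly locally finite (the finite set $F$ witnessing properness for radius $r$ bounds the number of points in any $r$-ball, hence in any $E$-controlled set), and quasi-countability together with properness forces $\crse S$ to be countably generated (equivalently, the extended metric exhibits a countable cofinal family of entourages, cf.\ \cref{prop:metrizable coarse structure iff ctably gen}). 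In particular the coarse cardinality of $\crse S$ equals $|S|$, so the algebras $\roecstar[|S|]{\crse S}$, $\cpcstar[|S|]{\crse S}$, $\qlcstar[|S|]{\crse S}$ and $\uroecstar{\crse S}$ are all defined, and likewise for $\crse T$.

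Next I would address the crossed-product identification. Since $S$ is a monoid, $\ell^\infty(S)\rtimes_{\rm red}S$ is canonically isomorphic to $\uroecstar{\crse S}=\cpcstar{\ell^2(S)}$ by \cite{chyuan_chung_inv_sem_2022}*{Theorem 4.3}, and similarly $\ell^\infty(T)\rtimes_{\rm red}T\cong\uroecstar{\crse T}$. Hence conditions \cref{cor:inv-sem-roe-rigid:morita} and \cref{cor:inv-sem-roe-rigid:u-k-iso} translate verbatim into ``$\uroecstar{\crse S}$ and $\uroecstar{\crse T}$ are Morita equivalent'' and ``\ldots\ stably isomorphic'', which are \cref{cor:roe-rigidity:morita} and \cref{cor:roe-rigidity:u-k-iso} of \cref{cor:roe-rigidity} for the spaces $\crse S$ and $\crse T$.

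With these translations in place, the corollary is a direct application of \cref{cor:roe-rigidity} to $\crse X\coloneqq\crse S$ and $\crse Y\coloneqq\crse T$, which are coarsely locally finite (indeed uniformly locally finite) and countably generated, of coarse cardinalities $|S|$ and $|T|$ respectively. That result gives the chain of equivalences \cref{cor:roe-rigidity:ceq}$\Leftrightarrow$\cref{cor:roe-rigidity:roe}$\Leftrightarrow$\cref{cor:roe-rigidity:cp}$\Leftrightarrow$\cref{cor:roe-rigidity:ql}, the equivalence \cref{cor:roe-rigidity:u-k-iso}$\Leftrightarrow$\cref{cor:roe-rigidity:morita}, and both implications relating \cref{cor:roe-rigidity:ceq} and \cref{cor:roe-rigidity:u-k-iso} (the backward one unconditionally, the forward one using uniform local finiteness of $\crse S$ and $\crse T$). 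Stringing these together yields the equivalence of \cref{cor:inv-sem-roe-rigid:ceq}--\cref{cor:inv-sem-roe-rigid:u-k-iso}. I expect no serious obstacle here: everything is bookkeeping once the metric properties of $\crse S$ are confirmed. The one point demanding care is checking that the coarse cardinality is genuinely $|S|$ rather than something smaller—since $S$ may be uncountable, one should confirm that no coarse equivalence shrinks it below $|S|$ (each $\CL$-class is at most countable by properness and quasi-countability, and there are $|S|$ many of them, so $\crse S$ is coarsely equivalent to a set of size $|S|$ but not smaller)—and, when $S$ is uncountable, to read \cref{cor:roe-rigidity} with $\kappa=|S|$ in place of $\aleph_0$, which is exactly the generality in which it is stated.
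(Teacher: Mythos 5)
Your proposal is correct and follows the same route as the paper, which dispatches \cref{cor:inv-sem-roe-rigid} by citing it as an immediate consequence of \cref{cor:roe-rigidity} together with the identification $\ell^\infty(S)\rtimes_{\rm red}S\cong\uroecstar{\crse S}$ from \cite{chyuan_chung_inv_sem_2022}*{Theorem~4.3}. One small simplification: you do not need to derive countable generation of $\crse S$ from quasi-countability and properness — any coarse structure induced by an extended metric is automatically countably generated by \cref{prop:metrizable coarse structure iff ctably gen} (take the entourages of radius $n$, $n\in\NN$); what quasi-countability and properness are actually used for is the \emph{existence} of a proper right sub-invariant extended metric in the first place.
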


\begin{remark}
  As explained in \cite{braga_rigid_unif_roe_2022}, in the group-setting one may obtain sharper results. Namely, observe that $\uroecstar{\crse\Gamma}$ does not depend on the choice of proper metric: this is because uniform Roe algebras are preserved under \emph{bijective} coarse equivalences. For the same reason, it is also the case that if $\Gamma$ and $\Lambda$ are coarsely equivalent via a bijective coarse equivalence then $\uroecstar{\crse\Gamma}\cong \uroecstar{\crse\Lambda}$.
  In the opposite direction, if $\Gamma$ and $\Lambda$ are amenable then $\crse \Gamma$ and $\crse \Lambda$ have property A, and in this setting it is known that an isomorphism $\uroecstar{\crse\Gamma}\cong \uroecstar{\crse\Lambda}$ gives rise to a bijective coarse equivalence between $\Gamma$ and $\Lambda$. In the non-amenable case, \emph{every} coarse equivalence is close to a bijective coarse equivalence. Putting these two cases together shows that $\ell^\infty(\Gamma)\rtimes_{\rm red}\Gamma\cong \ell^\infty(\Lambda)\rtimes_{\rm red}\Lambda$ if and only if there is a bijective coarse equivalence between $\crse \Gamma$ and $\crse \Lambda$ (cf.\ \cite{braga_rigid_unif_roe_2022}*{Corollary 3.10}).

  These observations do not extend as easily to the semigroup setting. In fact, it is shown in \cite{chyuan_chung_inv_sem_2022}*{Theorem 3.23} that for every uniformly locally finite metric space $X$ there is a quasi-countable inverse monoid $S$ which has an $\CL$-class that is bijectively coarsely equivalent to $X$.
  This shows that inverse monoids can be rather wild, and should hence be handled with care.
  The problem to understand whether two uniformly locally finite metric space with isomorphic uniform Roe algebras must be bijectively coarsely equivalent is still open, and seems rather hard \cites{baudier2023coarse,braga_rigid_unif_roe_2022,white_cartan_2018}.
\end{remark}

\chapter{Quasi-proper operators vs.\ local compactness}\label{sec: quasi-proper}
We now start moving in the direction of a more refined rigidity result.
Recall that an operator is $T\colon \CHx\to\CHy$ is proper if so is $\csupp(T)$ (cf.\ \cref{def:controlled and proper operator}). As explained in \cref{sec: coarse geometric setup}, this condition is important because if $\CHx$ is locally admissible and $T$ is proper then $\Ad(T)$ maps locally compact operators to locally compact operators (see \rfCorAdProperLocalCpt for details). In particular, if $T$ is also assumed to be controlled then $\Ad(T)$ defines mappings of Roe algebras
\[
  \roestar\CHx\to\roestar\CHy
  \;\; \text{ and } \;\;
  \roecstar\CHx\to\roecstar\CHy.
\]
The main goal of this section is to prove that the converse holds as well, at least modulo \emph{quasification} (see \cref{thm: quasi-proper}). In turn, this will be useful to deduce the more refined rigidity results alluded to in the introduction.

\section{Quasi-properness}
We already recalled that if $\CHx$ and $\CHy$ are locally admissible then $T$ is proper if and only if for every bounded measurable $B\subseteq Y$ there is a bounded measurable $A\subseteq X$ such that $\chf BT=\chf BT\chf A$ (cf.\ \cref{rmk: proper-iff-proper}). The latter condition has the following natural \emph{quasification}.
\begin{definition} \label{def:quasi-proper}
  A bounded operator $T \colon \CHx \to \CHy$ is \emph{quasi-proper} if for all $B \subseteq Y$ bounded measurable and $\varepsilon > 0$ there is some bounded measurable $A \subseteq X$ such that
  \[
    \norm{\chfcX{A}T^*\chf{B}}= \norm{\chf{B}T\chfcX{A}}\leq \varepsilon.
  \]
\end{definition}

As a sanity check for \cref{def:quasi-proper}, it is worthwhile noting that an analogue of \cref{lem:approximation is controlled} holds true.
Namely, it is clear that $T$ is proper if and only if $\cappmap[T]{\delta}{F}{E}$ is proper for every $\delta \geq 0$. The following shows that if $T$ is quasi-proper then $\cappmap[T]{\delta}{F}{E}$ is still proper for every $\delta$ strictly greater than $0$.

\begin{lemma}
  Suppose that $\CHy$ is locally admissible, and let $T\colon\CHx\to\CHy$ be a quasi-proper operator. Then for every $\delta>0$ the approximating relation $\appmap[T]{\delta}{F}{E}$ is proper.
\end{lemma}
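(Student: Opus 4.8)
The goal is to show that the approximating relation $\appmap[T]{\delta}{F}{E}$ is proper, meaning $\op{\paren{\appmap[T]{\delta}{F}{E}}}(B')$ is bounded for every bounded $B'\subseteq Y$. Recall from just before \cref{def:approximating crse map} that $\op{\paren{\appmap[T]{\delta}{F}{E}}} = \appmap[T^*]{\delta}{E}{F}$, so it is equivalent to show that $\appmap[T^*]{\delta}{E}{F}(B')$ is bounded. Unwinding \cref{def:approximating crse map}, a point $x\in X$ lies in $\appmap[T^*]{\delta}{E}{F}(B')$ precisely when there is an $E$-bounded measurable $A\ni x$ and an $F$-bounded measurable $B\subseteq Y$ with $B\cap B'\neq\emptyset$ and $\norm{\chf{A}T^*\chf{B}}>\delta$.

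First I would fix a bounded measurable $B'\subseteq Y$. Since $\CHy$ is locally admissible, we may fix a local admissibility gauge $\gaugey$ and replace $B'$ by a measurable $\gaugey$-controlled thickening of $F(B')$; call this set $B_0$. Then every $F$-bounded measurable $B$ meeting $B'$ is contained in $B_0$, so for such $B$ we have $\chf{B}\leq\chf{B_0}$ and hence $\norm{\chf{A}T^*\chf{B}}\leq\norm{\chf{A}T^*\chf{B_0}}$. Applying \cref{def:quasi-proper} to $B_0$ with $\varepsilon\coloneqq\delta$, we obtain a bounded measurable $A_0\subseteq X$ with $\norm{\chfcX{A_0}T^*\chf{B_0}}\leq\delta$. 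Thus whenever $A$ is a measurable set contained in $X\smallsetminus A_0$ we get $\norm{\chf{A}T^*\chf{B}}\leq\delta$, which forces: every $E$-bounded $A$ witnessing membership of some point in $\appmap[T^*]{\delta}{E}{F}(B')$ must intersect $A_0$. Consequently $\appmap[T^*]{\delta}{E}{F}(B')\subseteq E(A_0)$, which is controlled (a controlled thickening of the bounded set $A_0$), hence bounded. This shows $\appmap[T]{\delta}{F}{E}$ is proper.

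The only mildly delicate point is justifying the step ``$A$ contained in $X\smallsetminus A_0$ implies $\norm{\chf{A}T^*\chf{B}}\leq\delta$'': this is just $\chf{A}\leq\chfcX{A_0}$ together with $\norm{\chf{A}T^*\chf{B}}\leq\norm{\chfcX{A_0}T^*\chf{B_0}}\leq\delta$, using $\chf{B}\leq\chf{B_0}$ and $\norm{\chf{A}T^*\chf{B}} = \norm{\chf{A}T^*\chf{B}\chf{B_0}}\leq\norm{\chf{A}T^*\chf{B_0}}$ since $\chf{B}=\chf{B}\chf{B_0}$. The contrapositive then gives: if $x\in\appmap[T^*]{\delta}{E}{F}(B')$ via a block $A\times B$ then $A\cap A_0\neq\emptyset$ and $A$ is $E$-bounded, so $x\in A\subseteq E(A_0)$. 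I expect no real obstacle here beyond bookkeeping the measurability and the thickening; the local admissibility hypothesis on $\CHy$ is exactly what guarantees the thickening $B_0$ can be chosen measurable so that \cref{def:quasi-proper} applies.
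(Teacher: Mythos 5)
Your proof is correct and takes essentially the same route as the paper's: fix a bounded set in $Y$, take a measurable thickening of its $F$-thickening via local admissibility, apply quasi-properness of $T$ to produce a bounded measurable $A_0\subseteq X$, and conclude that every defining block $B\times A$ with $B$ meeting the original set must have $A$ intersecting $A_0$, hence $A\subseteq E(A_0)$. Phrasing the argument via $T^*$ and the transposed relation is a cosmetic change — the paper works directly with $\norm{\chf{B_1}T\chf{A_1}-\chf{B_1}T}\leq\delta$, which is the same inequality written on the other side of the adjoint.
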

\begin{proof}
  Let $B_0\subseteq Y$ be bounded. By definition, 
  \[
    \paren{\appmap[T]{\delta}{F}{E}}^{-1}(B_0) = 
    \bigcup\left\{
      A\;\middle| \; \exists B,
      \begin{array}{c}
         \ B\times A\text{ meas.\ $(F\otimes E)$-bounded}, \\
         \norm{\chf{B}T\chf A}> \delta,\ B\cap B_0\neq\emptyset
      \end{array}
    \right\}.
  \]
  By local admissibility we may choose measurable bounded $B_1\subseteq Y$ containing $B_0$ and $F(B_0)$.
  By quasi-properness of $T$, we may then find a bounded measurable $A_1\subseteq X$ such that $\norm{\chf{B_1}T\chf{A_1} - \chf{B_1}T}\leq\delta$. Pick $B,A$ with $B\times A$ measurable $(F\otimes E)$-bounded, $\norm{\chf{B}T\chf A}> \delta$ and $B\cap B_0\neq\emptyset$.
  By construction, $B$ is contained in $B_1$. It hence follows
  \[
    \delta < \norm{\chf{B}T\chf A}\leq \norm{\chf{B_1}T\chf A},
  \]
  therefore $0 < \norm{\chf{B_1}T\chf{A_1}\chf A}$. This shows that $A_1\cap A\neq\emptyset$, so $A\subseteq E(A_1)$, which proves that $\paren{\appmap[T]{\delta}{F}{E}}^{-1}(B_0)\subseteq E(A_1)$ is bounded.
\end{proof}

We shall now turn to the main objective of this section (see \cref{thm: quasi-proper}), but we first record the following simple fact.
\begin{lemma}\label{lem:picking_orthogonal_vectors}
  Let $\crse X$ be coarsely connected, $B\subseteq Y$ measurable, and $H\leq\CH_B $ a finite dimensional subspace.
  Suppose $\varepsilon > 0$ is so that for every bounded measurable $A\subseteq X$ there is some unit vector $w \in \CH_B $ with $\norm{\chfcX{A}T^*(w)} > \varepsilon$.
  Then for every bounded measurable $A$ there exists $\bar w\in \CH_B  \cap {H}^\perp$ unit vector with $\norm{\chfcX{A}T^*(\bar w)} > \varepsilon/2$.
\end{lemma}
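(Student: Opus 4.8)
\textbf{Proof plan for \cref{lem:picking_orthogonal_vectors}.}
The idea is to start from an arbitrary bounded measurable $A$, enlarge it slightly to a bigger bounded measurable set $A'$ using local admissibility, and then show that the norm-one vector $w\in\CH_B$ witnessing $\norm{\chfcX{A'}T^*(w)}>\varepsilon$ can be moved into $U^\perp$ at the cost of halving the constant and replacing $A'$ by $A$. The point is that $U$ is finite dimensional, so its ``shadow'' under $T^*$ is (up to $\varepsilon$) supported on a bounded set, which can be absorbed into the enlargement.

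Concretely, first I would apply \cref{cor:ql-finite-rank-locally supported} (valid since $\crse X$ is coarsely connected) to the finite rank operator $\chf B T^*\chf B$ restricted appropriately—or more directly, pick an orthonormal basis $u_1,\dots,u_n$ of $U\leq\CH_B$ and apply \cref{lem:supports of vectors almost contained in bounded} to each of the finitely many vectors $T^*(u_k)\in\CHx$ to find a single bounded measurable set $A_0\subseteq X$ with $\norm{\chfcX{A_0}T^*(u_k)}$ as small as we like for all $k$; since the $u_k$ span $U$ and $U$ is finite dimensional, this gives $\norm{\chfcX{A_0}T^*|_U}\leq\varepsilon'$ for a prescribed small $\varepsilon'$. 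Next, given the target bounded measurable $A$, set $A'\coloneqq A\cup A_0$ (still bounded measurable since $\crse X$ is coarsely connected), and let $w\in\CH_B$ be a unit vector with $\norm{\chfcX{A'}T^*(w)}>\varepsilon$, provided by the hypothesis. Decompose $w = w_U + \bar w_0$ orthogonally with $w_U\in U$ and $\bar w_0\in\CH_B\cap U^\perp$. Then
\[
  \norm{\chfcX{A'}T^*(\bar w_0)} \geq \norm{\chfcX{A'}T^*(w)} - \norm{\chfcX{A'}T^*(w_U)} > \varepsilon - \norm{w_U}\,\varepsilon' \geq \varepsilon - \varepsilon',
\]
and since $A\subseteq A'$ we have $\chfcX{A'}\leq\chfcX{A}$ as projections, so $\norm{\chfcX{A}T^*(\bar w_0)}\geq\norm{\chfcX{A'}T^*(\bar w_0)}>\varepsilon-\varepsilon'$. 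Finally normalize: $\bar w\coloneqq \bar w_0/\norm{\bar w_0}$ has $\norm{\bar w_0}\leq 1$, so $\norm{\chfcX{A}T^*(\bar w)}\geq\norm{\chfcX{A}T^*(\bar w_0)}>\varepsilon-\varepsilon'$, and choosing $\varepsilon'\leq\varepsilon/2$ from the start yields $\norm{\chfcX{A}T^*(\bar w)}>\varepsilon/2$, as required. One should double-check that $\bar w_0\neq 0$; this holds since $\norm{\chfcX{A}T^*(\bar w_0)}>0$.

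The only mild subtlety—hardly an obstacle—is making sure all the sets produced (the $A_0$ absorbing the finitely many supports, and $A' = A\cup A_0$) are simultaneously \emph{measurable} and \emph{bounded}; measurability of finite unions is built into the Boolean algebra $\fkA$, and boundedness of a finite union is exactly where coarse connectedness of $\crse X$ is used. Everything else is the elementary orthogonal-decomposition estimate above. (Note that local admissibility of $\CHy$, present in the surrounding discussion, is not actually needed for this particular lemma, since we only enlarge on the $\crse X$ side.)
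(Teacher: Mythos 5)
Your proof is correct and follows essentially the same route as the paper's: decompose a witnessing vector $w$ as $w_U+\bar w_0$ with $\bar w_0\in\CH_B\cap U^\perp$, use finite-dimensionality of $U$ to absorb the support of $T^*|_U$ into a bounded set $A_0$, enlarge $A$ accordingly, and estimate. The only cosmetic differences are that you bound $\chfcX{A_0}T^*|_U$ by working through an orthonormal basis of $U$ via \cref{lem:supports of vectors almost contained in bounded}, whereas the paper invokes that strong convergence implies norm convergence on finite-dimensional subspaces, and you carry $A'=A\cup A_0$ explicitly rather than ``enlarging $A$''; your added remark that $\bar w_0\neq 0$ is a small point the paper leaves implicit.
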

\begin{proof}
  Since $\crse X$ is coarsely connected, the projections $\chf{A}$ with $A$ measurable and bounded converge strongly to the identity as $A$ increases (cf.\ \cref{cor:supports of vectors almost contained in bounded}).
  Moreover, since $H$ is finite dimensional, it follows that the restrictions $\chf{A}T^*|_{H}$ converge in norm to $T^*|_{H}$.
  In particular, there is some large enough bounded measurable set $A_0 \subseteq X$ such that $\norm{\chfcX{A_0}T^*(u)}\leq \varepsilon/2$ for every unit vector $u\in {H}$. Let now $A \subseteq X$ be any given bounded measurable set. Enlarging it if necessary, we may assume that $A_0 \subseteq A$. Given $w\in \CH_B $ with $\norm{\chfcX{A}T^*(w)} > \varepsilon$ as in the hypothesis, let $w=w_{H} + w_\perp$ with $w_{H}\in {H}$ and $w_\perp\in {H}^\perp$. Observe that, in this situation,
  \[
    \norm{\chfcX{A} T^*(w_\perp)}
    \geq \norm{\chfcX{A} T^*(w)} - \norm{\chfcX{A} T^*(w_{H})}
    > \varepsilon - \frac{\varepsilon}{2}\,\norm{w_{H}} \geq \frac{\varepsilon}{2}.
  \]
  We may then let $\bar w\coloneqq w_\perp/\norm{w_\perp}$.
\end{proof}

Recall that $\cpop{E}$ denotes the set of operators of $E$-controlled propagation.
The following theorem is the main result of the section.

\begin{theorem} \label{thm: quasi-proper}
  Let $\crse X$ and $\crse Y$ be coarse spaces, with $\crse X$ coarsely connected, and let $\CHx$ and $\CHy$ be modules. Given an operator $T \colon \CHx \to \CHy$ and a gauge $\gauge\in \CE$, consider:
  \begin{enumerate}[label=(\roman*)]
    \item \label{item:thm:qp:qp} $T$ is quasi-proper.
    \item \label{item:thm:qp:lc-to-lc} $\Ad(T) (\lccstar{\CHx}) \subseteq \lccstar{\CHy}$.
    \item \label{item:thm:qp:roe to lc} $\Ad(T) (\cpop{\gauge}\cap\lccstar{\CHx}) \subseteq \lccstar{\CHy}$.
  \end{enumerate}
  Then~\cref{item:thm:qp:qp} implies~\cref{item:thm:qp:lc-to-lc} which (trivially) implies~\cref{item:thm:qp:roe to lc}. 
  If $\CHx$ is discrete, $\crse X$ is coarsely locally finite and $\gauge$ is a gauge witnessing both discreteness and local finiteness, then~\cref{item:thm:qp:roe to lc} implies~\cref{item:thm:qp:qp} as well, so the above are all equivalent.
\end{theorem}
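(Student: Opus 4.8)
The plan is to prove the cycle \cref{item:thm:qp:qp}$\Rightarrow$\cref{item:thm:qp:lc-to-lc}$\Rightarrow$\cref{item:thm:qp:roe to lc}$\Rightarrow$\cref{item:thm:qp:qp}, the middle implication being immediate since $\cpop{\gauge}\cap\lccstar{\CHx}\subseteq\lccstar{\CHx}$. For \cref{item:thm:qp:qp}$\Rightarrow$\cref{item:thm:qp:lc-to-lc}, fix $t\in\lccstar{\CHx}$ and a bounded measurable $B\subseteq Y$; given $\varepsilon>0$, quasi-properness yields a bounded measurable $A\subseteq X$ with $\norm{\chf B T\chfcX A}\le\varepsilon$, and the splitting $\chf B TtT^*=(\chf B T\chf A)tT^*+(\chf B T\chfcX A)tT^*$ writes $\chf B TtT^*$ as a compact operator (the first summand, because $\chf A t$ is compact) plus an operator of norm at most $\varepsilon\norm t\norm T$; letting $\varepsilon\to 0$ gives that $\chf B TtT^*$ is compact, and applying this to $t^*$ (which is also in $\lccstar{\CHx}$) handles $(TtT^*)\chf B=(\chf B Tt^*T^*)^*$. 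So $\Ad(T)(\lccstar{\CHx})\subseteq\lccstar{\CHy}$. (This direction can also be extracted from \rfCorAdProperLocalCpt by noting that a quasi-proper operator is proper ``up to $\varepsilon$'', but the direct argument is shorter.)

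The real content is \cref{item:thm:qp:roe to lc}$\Rightarrow$\cref{item:thm:qp:qp}, which I would prove by contraposition: assuming $T$ is not quasi-proper, I construct $t\in\cpop{\gauge}\cap\lccstar{\CHx}$ with $TtT^*\notin\lccstar{\CHy}$. Fix a witness to the failure: a bounded measurable $B\subseteq Y$ and $\varepsilon>0$ with $\norm{\chf B T\chfcX A}>\varepsilon$ for \emph{every} bounded measurable $A$. Using that $\CHx$ is discrete with a $\gauge$-controlled, locally finite discrete partition $X=\bigsqcup_{i\in I}A_i$, and that $\crse X$ is coarsely connected (so that finite unions of $\gauge$-blocks are bounded and measurable, and $\sum_i\chf{A_i}$ converges strongly to $1$, cf.\ \cref{cor:supports of vectors almost contained in bounded}), I build inductively a strictly increasing chain $\emptyset=A^{(0)}\subsetneq A^{(1)}\subsetneq\cdots$ of finite unions of blocks together with unit vectors $v_n$ supported on $A^{(n)}\setminus A^{(n-1)}$ and with $\norm{\chf B Tv_n}>\varepsilon/2$. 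Concretely, given $A^{(n-1)}$, the failure of quasi-properness provides a unit $v$ with $\chfcX{A^{(n-1)}}v=v$ and $\norm{\chf B Tv}>\varepsilon$; choosing a finite $I_n\subseteq I$ with $\norm{v-\sum_{i\in I_n}\chf{A_i}v}$ small enough, the vector $v_n$ obtained by renormalizing $\sum_{i\in I_n}\chf{A_i}v$ still satisfies $\norm{\chf B Tv_n}>\varepsilon/2$; then set $A^{(n)}\coloneqq A^{(n-1)}\cup\bigcup_{i\in I_n}A_i$.

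For each $n$ let $q_n$ be the orthogonal projection onto the span of the single-block components $\{\chf{A_i}v_n\mid i\in I_n\}$. By \cref{lemma:supp-vector-a-times-a,lemma:supports-operators-sum,lem:join of controlled projection} this is a finite-rank projection with $\supp(q_n)\subseteq\gauge$, and crucially $q_nv_n=v_n$. Since the blocks occurring in different $I_n$ are disjoint, the $q_n$ are pairwise orthogonal, so $t\coloneqq\sum_n q_n$ is a projection with $\supp(t)\subseteq\gauge$; local finiteness of the partition makes each $\chf A t$ finite-rank, so in fact $t\in\roestar{\CHx}\subseteq\cpop{\gauge}\cap\lccstar{\CHx}$. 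The key inequality is that $q_nv_n=v_n$ (with $\norm{v_n}=1$) forces $Tq_nT^*\ge\munit{Tv_n}{Tv_n}$—test against an arbitrary vector, move $T$ across, and apply Cauchy--Schwarz to $q_nT^*\xi$ and $v_n$—so that, compressing by $\chf B$,
\[
  \chf B TtT^*\chf B\ \ge\ \sum_n\munit{\xi_n}{\xi_n},\qquad \xi_n\coloneqq\chf B Tv_n,\quad\norm{\xi_n}>\varepsilon/2 .
\]
Because the left-hand side is a bounded operator, $\sum_n\abs{\scal{\xi_n}{\eta}}^2\le\norm{\chf B TtT^*}\norm{\eta}^2$ for all $\eta$, so $(\xi_n)$ converges weakly to $0$ while remaining bounded below in norm; a standard gliding-hump extraction then yields a subsequence along which $\sum\munit{\xi_n}{\xi_n}$ dominates a positive multiple of an infinite-rank projection, hence is non-compact, and therefore $\chf B TtT^*\chf B$—and a fortiori $\chf B TtT^*$—is non-compact, i.e.\ $TtT^*\notin\lccstar{\CHy}$, contradicting \cref{item:thm:qp:roe to lc}. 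The main obstacle is precisely this last step, namely arranging that the approximating projections $q_n$ can be taken $\gauge$-controlled: this is the reason one decomposes $v_n$ into single-block pieces and takes the \emph{join} of the corresponding rank-one projections rather than the rank-one projection onto $v_n$ itself, whose support need not be $\gauge$-controlled, and it is also the point where coarse local finiteness (not merely coarse connectedness) is genuinely used, both to guarantee local compactness of $t$ and to keep the blocks under control.
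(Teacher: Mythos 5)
Your proof is correct, and it takes a genuinely different (though related) route from the paper's argument for \cref{item:thm:qp:roe to lc}$\Rightarrow$\cref{item:thm:qp:qp}. Where the paper starts from orthonormal vectors $w_n\in\CH_{B_0}$ on the \emph{target} side (whose orthogonality has to be arranged via an auxiliary lemma, \cref{lem:picking_orthogonal_vectors}), pulls them back to $v_n\coloneqq T^*(w_n)$ which may have unbounded support, and then truncates to the annuli $R_n=A_{n+1}\smallsetminus A_n$ to recover orthogonality, you instead build the unit vectors $v_n$ directly on the \emph{source} side, supported from the outset on disjoint finite unions of blocks. This makes the orthogonality of $(v_n)$ and of the projections $(q_n)$ automatic and bypasses the orthogonalization lemma as well as the truncation bookkeeping; the price is that the target-side vectors $\xi_n=\chf{B}Tv_n$ are no longer orthonormal, so you must do a small amount of extra work (the Bessel bound from $\sum_n\munit{\xi_n}{\xi_n}\le\chf{B}TtT^*\chf{B}$, hence $\xi_n\rightharpoonup 0$ weakly while $\norm{\xi_n}\ge\varepsilon/2$) to conclude non-compactness, whereas in the paper the orthonormality of $(w_n)$ makes the final non-compactness of $\Ad(T)(p)\chf{B_0}$ immediate. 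One cosmetic remark: you don't actually need the gliding-hump subsequence extraction you gesture at at the end; since the compact positive operators form a hereditary cone, compactness of $\chf{B}TtT^*\chf{B}$ would force compactness of the dominated sum $\sum_n\munit{\xi_n}{\xi_n}$, and a compact operator sends the normalized weakly-null sequence $\hat\xi_n=\xi_n/\norm{\xi_n}$ to $0$ in norm, contradicting $\scal{\bigparen{\sum_m\munit{\xi_m}{\xi_m}}\hat\xi_n}{\hat\xi_n}\ge\norm{\xi_n}^2\ge(\varepsilon/2)^2$. Your implication \cref{item:thm:qp:qp}$\Rightarrow$\cref{item:thm:qp:lc-to-lc} is the same as the paper's.
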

\begin{proof}
  The fact that~\cref{item:thm:qp:qp} implies~\cref{item:thm:qp:lc-to-lc} is simple to show. Indeed, let $t \in \CB(\CHx)$ be any locally compact operator. For any bounded measurable $B \subseteq Y$ and any $\varepsilon > 0$, let $A = A(B, \varepsilon) \subseteq X$ be a bounded measurable set witnessing quasi-properness of $T$. In such case,
  \begin{align*}
    \norm{TtT^* \chf{B} - T t \chf{A} T^* \chf{B}} & \leq \norm{Tt}\norm{T^* \chf{B} - \chf{A} T^*\chf{B}}\leq \norm{Tt}\varepsilon.
  \end{align*}
  Observe that $t \chf{A}$ is compact, since $t$ is locally compact and $A$ is bounded. This shows that $TtT^* \chf{B}$ is approximated in norm by compact operators, and therefore is itself compact.
  An analogous argument shows that $\chf{B} TtT^*$ is also compact. Alternatively, we may also reduce to the previous case using the fact that the set of (locally) compact operators is closed under adjoints.

  \smallskip

  Rather that directly moving to \cref{item:thm:qp:roe to lc}$\Rightarrow$\cref{item:thm:qp:qp}, we shall first prove that if $\CHx$ is discrete and $\crse X$ is coarsely locally finite then \cref{item:thm:qp:lc-to-lc} already implies~\cref{item:thm:qp:qp}. This reduces the complexity of the arguments and is useful for \cref{rmk: lc implies proper when countable} below.
  Fix a locally finite discrete partition $X=\bigsqcup_{i\in I}C_i$.
  Suppose that $T$ is not quasi-proper, that is, there is some bounded set $B_0 \subseteq Y$ and $\varepsilon_0 > 0$ such that for all bounded $A \subseteq X$ there is some $w \in \CH_{B_0}$ of norm $1$ satisfying $\norm{\chf{A}T^*(w) - T^*(w)} > 2 \, \varepsilon_0$.
  We now make an inductive construction
  of a nested sequence of measurable bounded sets $A_n\subseteq X$, orthogonal vectors of norm one $w_n\in\CH_{B_0}$, and vectors $v_n\coloneqq T^*(w_n)$ such that
  \begin{itemize}
    \item $A_n$ is a (finite) union of $C_i$;
    \item $\norm{\chfcX{A_{n}}(v_n)} > {\varepsilon_0}$;
    \item $\norm{\chfcX{A_{n}}(v_i)}^2 \leq {\varepsilon_0}^2/2$ for every $i< n$.
  \end{itemize}

  \medskip\noindent\textit{Base Step}: fix an arbitrary bounded $A_1' \subseteq X$ and $w_1 \in \CH_{B_0}$ of norm $1$ with $\norm{\chf{A_1} (v_1) - v_1} > \varepsilon_0$, where $v_1 \coloneqq T^*(w_1)$. Let $A_1\coloneqq\bigsqcup\{C_i\mid C_i\cap A_1'\neq\emptyset\}$.
  
  \medskip\noindent\textit{Inductive Step}: let $A_1,\ldots, A_{n-1}$ and $w_1, \dots, w_{n-1} \in \CH_{B_0}$ be defined and let $v_i \coloneqq T^*(w_i)$ for $i = 1, \dots, n-1$. As $\crse X$ is connected, we may choose a bounded subset $A_n' \subseteq X$ containing $A_{n-1}$ and such that
  \[
    \norm{\chfcX{A_{n}'}(v_i)}^2 = \norm{\chf{A_{n}'}(v_i) - v_i}^2 \leq {\varepsilon_0^2}/{2}
  \]
  for every $i=1,\ldots, n-1$ (cf.\ \cref{cor:supports of vectors almost contained in bounded}). We then define $A_n\coloneqq\bigsqcup\{C_i\mid C_i\cap A_n'\neq\emptyset\}$, which is bounded by construction. Applying \cref{lem:picking_orthogonal_vectors} to ${H}_n \coloneqq \spn{w_1,\dots,w_{n-1}}$ we find some $w_n \in \CH_{B_0}\cap {H}_n^\perp$ of norm $1$ such that
  \begin{equation} \label{eq:thm:qp:vn-out-an}
    \norm{\chfcX{A_n}(v_n)} = \norm{v_n - \chf{A_n}(v_n)} > \varepsilon_0,
  \end{equation}
  where $v_n \coloneqq T^*(w_n)$.

  \smallskip

  Let $R_n\coloneqq A_{n+1}\smallsetminus A_n$. Since the $A_n \subseteq A_{n+1}$, it follows that the $\paren{R_n}_{n \in \NN}$ are pairwise disjoint.
  Observe that each $R_n$ is a finite union $R_n=\bigsqcup_{i\in I_n} C_i$. Since $(C_i)_{i\in I}$ is locally finite, the family $\paren{R_n}_{n \in \NN}$ is a fortiori locally finite.

  We may now consider the restrictions of the vectors $v_n$ to $R_n$, that is,
  \[
    \tilde{v}_n \coloneqq \chf{R_n} (v_n)
    = \chf{A_{n+1}} (v_n)- \chf{A_n} (v_n) = \chfcX{A_{n}} (v_n) - \chfcX{A_{n+1}} (v_n).
  \]
  It follows that
  \begin{equation}\label{eq:thm:qp:norm-vn-tilde}
   \norm{\tilde{v}_n}^2
    = \norm{\chfcX{A_{n}} (v_n)}^2 - \norm{\chfcX{A_{n+1}} (v_n)}^2
    > \varepsilon_0^2/2.
  \end{equation}
  Observe as well that $\tilde{v}_n$ and $\tilde{v}_m$ are orthogonal when $n \neq m$, since $R_n$ and $R_m$ are then disjoint.
  Let $\tilde p$ then be the projection onto the closed span of the vectors $\{\tilde v_n\}_{n \in \NN}$, \emph{i.e.}
  \[
   \tilde p\coloneqq \sum_{n\in\NN} p_{\tilde v_n}.
  \]
  Since $\paren{R_n}_{n\in\NN}$ is locally finite and each $p_{\tilde v_n}$ has rank one, the projection $\tilde p$ has locally finite rank, and in particular it is locally compact.
  \begin{claim}\label{claim:thm:qp:not-loc-cpt}
    The operator $\Ad(T)(\tilde p)$ is not locally compact.
  \end{claim}
  \begin{proof}
    Note that for every $n\in\NN$
    \[
    \scal{\Ad(T)(\tilde p)\chf{B_0} (w_n)}{w_n} = \scal{\tilde p T^*(w_n)}{\tilde p T^*(w_n)} = \norm{\tilde p (v_n)}^2.
    \]
    Since the vectors $\{\tilde v_n\}_{n \in \NN}$ are pairwise orthogonal, \cref{eq:thm:qp:norm-vn-tilde} implies
    \begin{equation}\label{eq:thm:norm pv_n}
      \norm{\tilde p (v_n)}^2 =
      \norm{p_{\tilde v_n}(v_n)}^2 + \sum_{m\neq n}\norm{p_{\tilde v_m}(v_n)}^2
      \geq \norm{\tilde v_n}^2>\frac{\varepsilon_0^2}{2}.
    \end{equation}
    Moreover, as $\paren{w_n}_{n \in \NN}$ are orthogonal vectors of norm one, it follows that $\Ad(T)(\tilde p)\chf{B_0}$ is not compact, and hence $\Ad(T)(\tilde p)$ is not locally compact.
  \end{proof}

  Since $\tilde p\in \lccstar{\CHx}$, \cref{claim:thm:qp:not-loc-cpt} proves the implication \cref{item:thm:qp:lc-to-lc} $\Rightarrow$ \cref{item:thm:qp:qp}.

  \smallskip

  We shall now further refine the above argument to show that \cref{item:thm:qp:roe to lc} $\Rightarrow$ \cref{item:thm:qp:qp}.
  We may further divide $\tilde v_n$ into its $C_i$-components, that is, consider
  \[ 
    u_{n,i} \coloneqq \chf{C_i} (\tilde v_n) = \chf{C_i} \chf{R_n} (v_n).
  \]
  We have that:
  \begin{itemize}
    \item $\scal{u_{n,i}}{u_{m,j}} = 0$ whenever $(n,i) \neq (m,j)$;
    \item $\sum_{i \in I_{n}} u_{n,i} = \chf{R_n} (v_n) = \tilde v_n$.
  \end{itemize}
  Consider now the projection onto the closed span of $\{u_{n,i}\}_{n \in \NN, i \in I_n}$, \emph{i.e.}\
  \[
    p \coloneqq \sum_{n \in \NN} \sum_{i \in I_n} p_{u_{n,i}}
  \]
  (some of the $u_{n,i}$ might be zero, in which case $p_{u_{n,i}}$ is just the zero operator).
  Since the family of $\paren{C_i}_{i\in I}$ is locally finite, we once again deduce that $p$ has locally finite rank and it is hence locally compact.
  Importantly, \cref{lem:join of controlled projection} shows that $p$ has propagation controlled by $\gauge$.
  
  We have now shown that $p$ belongs to $\cpop{\gauge}\cap\lccstar{\CHx}$.
  As in the proof of \cref{claim:thm:qp:not-loc-cpt}, observe that
  \[
  \scal{\Ad(T)(p)\chf{B_0} (w_n)}{w_n} = \scal{ p T^*(w_n)}{ p T^*(w_n)} = \norm{p (v_n)}^2.
  \]
  Moreover, note that $\tilde p = \tilde p p$, as $\tilde v_n = \sum_{i \in I_n} u_{n,i}$. It follows that
  \[
   \norm{p( v_n)}^2\geq \norm{\tilde p p( v_n)}^2
   =\norm{\tilde p( v_n)}^2
   >\frac{\varepsilon_0^2}{2},
  \]
  where the last inequality is just \cref{eq:thm:norm pv_n}. As in \cref{claim:thm:qp:not-loc-cpt}, since $\paren{w_n}_{n \in \NN}$ is an orthonormal family, it follows that $\Ad(T)(p)\chf{B_0}$ is not compact and hence $\Ad(T)(p)\notin \lccstar{\CHy}$.
\end{proof}

\begin{corollary}\label{cor: approximation is proper}
  Let $T\colon\CHx\to\CHy$ be weakly quasi-controlled, with $\CHx$ discrete.
  Suppose, moreover, that $\crse{X}$ is coarsely connected, locally finite and countably generated.
  If $\Ad(T)(\roestar{\CHx})\subseteq \lccstar{\CHy}$,
  then $\cappmap[T]{\delta}{F}{E}$ is a proper partial coarse map for any choice of $E\in\CE$, $F\in\CF$ and $\delta>0$.
\end{corollary}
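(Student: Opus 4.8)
Since $T$ is weakly quasi-controlled, \cref{cor:approximation quasi-controlled} already gives that $\appmap[T]{\delta}{F}{E}$ defines a partial coarse map $\cappmap[T]{\delta}{F}{E}\colon\crse X\to\crse Y$ for every $\delta>0$, so the only thing left to prove is that this partial coarse map is \emph{proper}, i.e.\ that $\paren{\appmap[T]{\delta}{F}{E}}^{-1}(B_0)$ is bounded for every bounded measurable $B_0\subseteq Y$. One could do this by first showing $T$ is quasi-proper and then quoting the lemma preceding \cref{thm: quasi-proper}; under hypothesis (ii) quasi-properness is exactly \cref{thm: quasi-proper} (via $\cpop{\gauge}\cap\lccstar{\CHx}\subseteq\roestar{\CHx}$ for a gauge $\gauge$ witnessing discreteness and coarse local finiteness), but hypothesis (i) does not force $\crse X$ to be coarsely locally finite (an infinite-dimensional Hilbert space with its metric has a discrete uniform module and is not coarsely locally finite), so I would instead argue directly and uniformly.

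Suppose then that $\paren{\appmap[T]{\delta}{F}{E}}^{-1}(B_0)$ is unbounded. Unwinding \cref{def:approximating crse map}, this set is a union of (uniformly) $E$-bounded measurable sets $A\subseteq X$, each carrying an $F$-bounded measurable $B\subseteq F(B_0)$ with $\norm{\chf BT\chf A}>\delta$. Using that $\CE$ is countably generated, fix a cofinal nested chain of symmetric gauges $E_1\subseteq E_2\subseteq\cdots$ with $E\subseteq E_1$ and $E_n\circ E_n\subseteq E_{n+1}$. The plan is to pick such sets $A_n$ inductively so that in addition $A_n\cap E_{n-1}\bigparen{A_1\cup\cdots\cup A_{n-1}}=\emptyset$ for all $n$: this is possible because $E_{n-1}(A_1\cup\cdots\cup A_{n-1})$ is bounded while the union in question is not, so some available $A$ contains a point outside $E_n(A_1\cup\cdots\cup A_{n-1})$, and then — since $A$ is $E$-bounded with $E\subseteq E_{n-1}$ and $E_{n-1}\circ E_{n-1}\subseteq E_n$ — all of $A$ avoids $E_{n-1}(A_1\cup\cdots\cup A_{n-1})$. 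For each $n$ pick a unit $v_n\in\CH_{A_n}$ with $\norm{\chf{B_n}Tv_n}>\delta$ for the corresponding $B_n\subseteq F(B_0)$. The $A_n$ are then pairwise disjoint, hence the $v_n$ pairwise orthogonal; and the separation condition forces $(A_n)_n$ to be a locally finite family, because any $E_k$-bounded set can meet $A_m$ for at most one index $m\ge k+1$ (if $m<m'$ both lie above $k$ and both meet such a set, then $A_{m'}$ meets $E_k(A_m)\subseteq E_{m'-1}(A_1\cup\cdots\cup A_{m'-1})$, against the choice of $A_{m'}$).

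Now set $p\coloneqq\sum_n p_{v_n}$. Its support lies in $\bigcup_n A_n\times A_n\subseteq E$, so $p$ has $E$-controlled propagation; and local finiteness of $(A_n)_n$ makes $\chf C p$ and $p\chf C$ finite rank for every bounded measurable $C$, so $p$ is locally compact. Hence $p\in\roestar{\CHx}\subseteq\lccstar{\CHx}$, so $\Ad(T)(p)\in\lccstar{\CHy}$ under either hypothesis. On the other hand, by local admissibility of $\CHy$ fix a bounded measurable $B_1\supseteq F(B_0)$; then $\chf{B_1}\Ad(T)(p)\chf{B_1}$ is compact, whereas for the unit vectors $\hat u_n\coloneqq\chf{B_1}Tv_n/\norm{\chf{B_1}Tv_n}$ — which tend to $0$ weakly since $(v_n)_n$ does — one computes
\[
  \scal{\chf{B_1}\Ad(T)(p)\chf{B_1}\,\hat u_n}{\hat u_n}=\norm{pT^*\hat u_n}^2=\sum_m\abs{\scal{\chf{B_1}Tv_m}{\hat u_n}}^2\ge\norm{\chf{B_1}Tv_n}^2>\delta^2 ,
\]
contradicting compactness of $\chf{B_1}\Ad(T)(p)\chf{B_1}$. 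This contradiction proves $\cappmap[T]{\delta}{F}{E}$ is proper.

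The step I expect to be the main obstacle is the construction of the sequence $(A_n)_n$: one must replace the appeal to a locally finite controlled partition (which is how \cref{thm: quasi-proper} proceeds, and which need not exist under hypothesis (i)) by a sequence of bounded sets escaping to infinity fast enough, relative to a countable cofinal chain of entourages, that the family is \emph{automatically} locally finite — this is precisely what makes the witness $p$ land in $\roestar{\CHx}$. Everything else is routine: extracting the data $A_n,v_n,B_n$ consistently, and the weak-null test-vector argument for non-compactness. (Incidentally, this route shows that the coarse local finiteness assumption in (ii), and the standing assumptions that $\CHx$ be discrete and $\crse X$ coarsely connected, are not actually needed for this corollary.)
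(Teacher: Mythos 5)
Your proof is correct and takes a genuinely different (and more self-contained) route than what the paper intends. The paper's implicit argument is to first establish that $T$ is \emph{quasi-proper}—via \cref{thm: quasi-proper}~\cref{item:thm:qp:roe to lc}$\Rightarrow$\cref{item:thm:qp:qp} under hypothesis~(ii), and via the adaptation in \cref{rmk: lc implies proper when countable} (which you seem to have overlooked: it explicitly replaces the coarse-local-finiteness hypothesis by countable generation) under hypothesis~(i)—and then invoke the unnamed lemma following \cref{def:quasi-proper} to translate quasi-properness into properness of $\cappmap[T]{\delta}{F}{E}$. You instead attack properness directly: rather than choosing orthonormal vectors $w_n\in\CH_{B_0}$ realizing non-quasi-properness and pulling back to $v_n=T^*(w_n)$ as in the proof of \cref{thm: quasi-proper}, you extract the $A_n$ straight from the defining blocks of the approximating relation and pick unit vectors $v_n\in\CH_{A_n}$ realizing the norm bound $\norm{\chf{B_n}T\chf{A_n}}>\delta$. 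Forcing the $A_n$ to escape along a cofinal chain $(E_n)$ makes the family automatically locally finite, so $p=\sum_n p_{v_n}$ lands in $\roestar{\CHx}$ without invoking a discrete partition; the weak-null test-vector computation for $\chf{B_1}\Ad(T)(p)\chf{B_1}$ is then the same non-compactness mechanism as in \cref{claim:thm:qp:not-loc-cpt}. What each route buys: the paper's version isolates the geometrically meaningful notion of quasi-properness and re-uses it in \cref{prop: quasi proper are one-vector app ctrl}; your version is shorter and, as you correctly point out, shows the corollary holds without discreteness of $\CHx$ or coarse connectedness of $\crse X$—those hypotheses are inherited from \cref{thm: quasi-proper} and are genuinely unnecessary here. (One cosmetic slip: in your sentence justifying the choice of $A_n$, the bounded set whose complement must be hit is $E_n(A_1\cup\cdots\cup A_{n-1})$, not $E_{n-1}(\cdots)$; the subsequent reasoning already uses the correct set.)
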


\begin{remark}\label{rmk: lc implies proper when countable}
  The implication \cref{item:thm:qp:lc-to-lc} $\Rightarrow$ \cref{item:thm:qp:qp} in \cref{thm: quasi-proper} can be proved using different hypotheses. Specifically, instead of asking for discreteness and coarse local finiteness one may instead require $\crse X$ to be countably generated and $\CHx$ to be admissible. The proof is essentially the same, except that the $A_n$ are not required to be unions of $C_i$ (which are not defined anyway), but rather it is required that $E_n(A_n)\subseteq A_{n+1}$, where $(E_n)_{n \in \mathbb{N}}$ is a cofinal sequence in $\CE$. This requirement suffices to imply that $(R_n)_{n\in \NN}$ is locally finite, and the rest of the argument holds verbatim.

  Apart from this, the following examples show that the hypotheses of \cref{thm: quasi-proper} are sharp.
\end{remark}

\begin{example}
  It is straightforward to see that the coarse connectedness assumption of $\crse X$ in \cref{thm: quasi-proper} is necessary. Indeed, let $\crse X\coloneqq (\braces{-1,1},\{\Delta_X\})$ and let $\crse Y$ be a bounded coarse space. Let $\CHx= \ell^2(\{-1,1\}) = \CCC^2$ and $\CHy$ any Hilbert space of dimension at least $2$.
  Then no isometry $T\colon \CHx\to \CHy$ is quasi-proper. However, $\lccstar{\crse X} = \CB(\CHx) \cong \MM_2$, so $T$ trivially satisfies condition \cref{item:thm:qp:lc-to-lc} of \cref{thm: quasi-proper}.
\end{example}

\begin{example} \label{ex:non-cnt-gen-coarse-space-with-roe-compact}
  For the upwards implications of \cref{thm: quasi-proper} it is certainly necessary to assume coarse local finiteness of $\crse X$ (or countable generation, see \cref{rmk: lc implies proper when countable}). Indeed, let $\crse X = (X,\varcrs{\aleph_0})$ where $X$ is an uncountable set and $\varcrs{\aleph_0}$ is the coarse structure whose entourages are precisely the relations with at most countably many off-diagonal elements. Note that a subset of $X$ is bounded if and only if it is countable. Let $\CHx \coloneqq \ell^2(X)$. We claim that, in such case, $\lccstar{\CHx}=\CK(\CHx)$.

  It is enough to show that if $t\in\CB(\CHx)$ is \emph{not} compact then it is not locally compact either.
  Given such a $t$ there must be some $\delta>0$ and a sequence of unit vectors $\paren{v_n}_{n\in\NN}$ with $\norm{t(v_n)-t(v_m)}\geq \delta$ for every $n\neq m$.
  For every $n \in \NN$, there is some countable $A_n \subseteq X$ so that $\norm{\chf{A_n}t(v_n) - t(v_n)}\leq \delta/3$ (cf.\ \cref{cor:supports of vectors almost contained in bounded}).
  The union $A\coloneqq\bigcup_{n\in\NN}A_n$ is then countable, and hence bounded in $\crse X$. However, by the triangle inequality
  \[
   \norm{\chf{A} t(v_n)- \chf{A} t(v_m)} \geq \norm{t(v_n) - t(v_m)} - \frac{2}{3} \delta \geq \delta/3
  \]
  for every $n\neq m$. This shows that $\chf{A}t$ is not compact, hence $t$ is not locally compact, as desired.
\end{example}

We end the section with the following remarks.

\begin{remark} \label{rem:rigidity-fails-without-cnt-gen}
  Note that \cref{ex:non-cnt-gen-coarse-space-with-roe-compact} in fact shows that the main rigidity result \cref{thm: stable rigidity} fails if one of the spaces is allowed to be non-countably generated and non coarsely locally finite.
  Indeed, let $\crse{X}$ be as in \cref{ex:non-cnt-gen-coarse-space-with-roe-compact} and $\crse{Y} = \{{\rm pt}\}$ be trivial. We may then take the uniform module $\CHx\coloneqq \ell^2(X)$ and let $\CHy\coloneqq \ell^2(X)$ with the obvious representation $\chf{\rm pt} \coloneqq 1$. Then $\roecstar{\CHx} = \CK(\ell^2(X)) = \roecstar{\CHy}$, but $\crse{X}$ and $\crse{Y}$ are not coarsely equivalent.

  This issue will come up again in \cref{rem:multiplier-fails-without-cnt-gen}.
\end{remark}

\begin{remark}
  As explained before \cref{prop:adjoint is coarse inverse}, if $T$ and $T^*$ are both weakly quasi-controlled then $\cappmap[T]\delta FE$ is a partial coarse embedding. In particular, it is a proper partial coarse map.
  It is worthwhile observing that if $T^*$ is  weakly quasi-controlled then $T$ is in fact quasi-proper, therefore \cref{cor: approximation is proper} can be seen as an extension of this phenomenon.
\end{remark}

\begin{remark}
  If, in addition to all the other hypotheses in \cref{thm: quasi-proper}, $\crse X$ also has bounded geometry, the same methods as in \cref{thm: quasi-proper} also show that quasi-properness of $T \colon \CHx \to \CHy$
  is equivalent to $\Ad(T)$ sending the \Star{}algebra of ``uniformly finite rank operators of controlled propagation'' into $\lccstar\CHy$ (an operator $t$ has \emph{uniformly finite rank} if for every $F\in \CF$ the supremum of the ranks of the operators $t\chf B$ and $\chf B t$ with $B\subseteq Y$ measurable and $F$-controlled is finite). This stronger statement holds because, under the bounded geometry assumption, the family $(C_{i})_{i\in I}$ in the proof of \cref{thm: quasi-proper} is then \emph{uniformly} locally finite.
\end{remark}

\section{Rigidity vs.\ local compactness}\label{subsec: rigidity vs local compactness}
In this section we record an interesting application of \cref{thm: quasi-proper}. Namely, the proof of rigidity that we gave in \cref{thm: stable rigidity} has a shortcoming: given an isomorphism $\roecstar\CHx\cong\roecstar\CHy$, \cref{thm: stable rigidity} does not directly imply that the unitary $U$ inducing $\phi$ is weakly approximately controlled. This fact would become cumbersome in \cref{sec: strong approx and out,sec:consequences strong rigidity}.
Fortunately, this will not be an issue, because \cref{thm: quasi-proper} interacts particularly well with the uniformization phenomenon in \cref{thm: uniformization phenomenon}.

Recall the notion of one-vector approximately-control (cf.\ \cref{def: one-vector quasi-controlled mapping}). We then have the following.
\begin{proposition}\label{prop: quasi proper are one-vector app ctrl}
  Fix $T\colon \CHx \to \CHy$. Suppose that $\crse Y$ is coarsely locally finite, $\CHx$ is locally admissible, $\CHy$ is discrete, $\Ad(T)$ maps $\roestar\CHx$ into $\qlcstar\CHy$ and $\Ad(T^*)$ maps $\cpop{\gaugey}\cap\lccstar{\CHy}$ into $\lccstar{\CHx}$, where $\gaugey$ is a gauge witnessing discreteness and local finiteness.
  Then $\Ad(T)$ is one-vector approximately-controlled.
\end{proposition}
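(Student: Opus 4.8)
The goal is to show that $\Ad(T)$ is one-vector approximately-controlled, i.e.\ for every unit vector $v\in(\CHx)_1$ of bounded support, every $E\in\CE$ and every $\varepsilon>0$ there is some $F\in\CF$ so that for every $t\in\cpop E$ with $tp_v$ in the domain, the image $\Ad(T)(tp_v)=Ttp_vT^*$ is $(\varepsilon\norm t)$-$F$-approximable. I would first use quasi-properness of $T$, which is available: the hypothesis ``$\Ad(T^*)$ maps $\cpop{\gaugey}\cap\lccstar{\CHy}$ into $\lccstar\CHx$'' is precisely condition \cref{item:thm:qp:roe to lc} of \cref{thm: quasi-proper} applied to $T^*$ (with the roles of $\crse X,\crse Y$ swapped), so since $\CHy$ is discrete, $\crse Y$ is coarsely locally finite and $\gaugey$ witnesses both, \cref{thm: quasi-proper} gives that $T^*$ is quasi-proper. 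Equivalently (by \cref{def:quasi-proper}), for every bounded measurable $A\subseteq X$ and every $\mu>0$ there is a bounded measurable $B=B(A,\mu)\subseteq Y$ with $\norm{\chfcY{B}T\chf A}\le\mu$, i.e.\ $\chf A T^*\approx_\mu \chf A T^*\chf B$.

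The next step is to reduce the operator $Ttp_vT^*$ to something manageable. Since $v$ has bounded support, say $\supp(v)\subseteq A_0$, and $t$ has $E$-controlled propagation, the vector $w\coloneqq t(v)$ is supported on the bounded set $A_1\coloneqq E(A_0)$ (after passing to a measurable thickening, using local admissibility of $\CHx$); in fact $tp_v=\matrixunit{w}{v}$ by definition of $\matrixunit{}{}$, so $Ttp_vT^*=\matrixunit{T(w)}{T(v)}$. This is a \emph{rank-one} operator. The key point is now that $\matrixunit{T(w)}{T(v)}$ is already controlled-propagation in a strong, quantitative way once we know where $T(v)$ and $T(w)$ are ``concentrated'': by quasi-properness of $T^*$ there is a bounded measurable $B_0=B_0(A_0\cup A_1,\varepsilon')\subseteq Y$, depending only on $A_0$, $E$ and $\varepsilon'\coloneqq \varepsilon/3$ (not on $t$!), such that $\norm{\chfcY{B_0}T(v)}\le \varepsilon'$ and $\norm{\chfcY{B_0}T(w)}\le \varepsilon'\norm{t}$ (the latter because $\norm w\le\norm t$). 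Then $\chf{B_0}T(v)$ and $\chf{B_0}T(w)$ approximate $T(v)$ and $T(w)$, so $\matrixunit{\chf{B_0}T(w)}{\chf{B_0}T(v)}$ approximates $\matrixunit{T(w)}{T(v)}$ in norm up to $O(\varepsilon\norm t)$ (a routine bilinear estimate using $\norm{T(w)}\le\norm T\norm t$ and $\norm{T(v)}\le\norm T$). The approximating operator $\matrixunit{\chf{B_0}T(w)}{\chf{B_0}T(v)}$ has support contained in $B_0\times B_0$ by \cref{lemma:supp-vector-a-times-a}, hence in the controlled entourage $F\coloneqq B_0\times B_0\in\CF$ (it is controlled because $B_0$ is bounded, hence $B_0\times B_0\in\CF$). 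Thus $\Ad(T)(tp_v)$ is $(C\varepsilon\norm t)$-$F$-approximable for an absolute constant $C$, and rescaling $\varepsilon$ at the start finishes the argument.

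The entourage $F$ I produced depends only on $A_0$ (equivalently on $v$), on $E$ and on $\varepsilon$ — not on the particular $t$ — which is exactly what the definition of one-vector approximately-controlled demands. I should double-check the one slightly delicate point: that $w=t(v)$ really is supported on a \emph{measurable} bounded set. Since $\CHx$ is locally admissible, every bounded set has a measurable controlled thickening, so $E(A_0)$ admits a measurable bounded thickening $A_1$, and $\chf{A_1}(w)=w$ because $\supp(t)\subseteq E$ forces $w=t(v)=\chf{E(\supp v)}t(v)$; this uses that $\supp(tp_v)\subseteq E\circ\gaugex\circ(A_0\times A_0)$ from \cref{eq:support of composition} together with $\supp(p_v)\subseteq A_0\times A_0$, so one has to absorb a non-degeneracy gauge $\gaugex$ for $\CHx$ here, which only enlarges $A_1$ harmlessly.

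The main obstacle, and the reason the statement is not completely trivial, is keeping the choice of $F$ genuinely independent of $t$: this is precisely what quasi-properness buys us. Without it — i.e.\ if one only knew $\Ad(T^*)$ preserves local compactness on a smaller class — one would a priori need a different $B_0$ for each $t$, destroying the uniformity in $t$ that makes the result usable in \cref{thm: uniformization phenomenon}. Everything else is a bookkeeping exercise with rank-one operators and the identities in \cref{lem: matrixunit identities} and \cref{lemma:supp-vector-a-times-a}. One should also note that the argument automatically lands in the \emph{approximable} (not merely quasi-local) world, since the approximant is literally an operator supported on a bounded product $B_0\times B_0$, hence of controlled propagation; this is why the conclusion is one-vector \emph{approximately}-controlled even though the hypothesis only involves $\qlcstar\CHy$ for $\Ad(T)$ on $\roestar\CHx$ — the quasi-local hypothesis there is not even needed for this particular proposition, only the local-compactness hypothesis on $\Ad(T^*)$ is, though stating it with both makes the later application cleaner.
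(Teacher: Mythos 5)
Your overall strategy is essentially the paper's: write $\Ad(T)(tp_v)$ as a rank-one operator, use quasi-properness to show the relevant vectors concentrate on a bounded set $B_0$ independent of $t$, and then truncate to land in a $(B_0\times B_0)$-controlled operator. The bilinear estimate in your step with $\matrixunit{\cdot}{\cdot}$ is also essentially what the paper does (phrased there as cutting $T_i\chf{A}tp_v\chf{A}T^*$ by $\chf{B}$ on both sides). However, there is a genuine gap at the very first step.

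You apply \cref{thm: quasi-proper} to $T^*\colon\CHy\to\CHx$ (implication \cref{item:thm:qp:roe to lc}$\Rightarrow$\cref{item:thm:qp:qp}, with the roles of $\crse X$ and $\crse Y$ swapped). You verify the hypotheses ``$\CHy$ is discrete, $\crse Y$ is coarsely locally finite, $\gaugey$ witnesses both'' -- but you omit the remaining hypothesis of that theorem, namely that the source space (here $\crse Y$, after swapping) must be \emph{coarsely connected}. The proposition does not assume $\crse Y$ is coarsely connected, and without that hypothesis the implication \cref{item:thm:qp:roe to lc}$\Rightarrow$\cref{item:thm:qp:qp} is simply false: the example given right after \cref{thm: quasi-proper} shows precisely this (a disconnected two-point $\crse X$, where \cref{item:thm:qp:lc-to-lc} holds trivially but no isometry is quasi-proper). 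In a disconnected $\crse Y$ the vector $T(v)$ may be spread over infinitely many components, and then there is no bounded $B_0$ with $\norm{\chfcY{B_0}T(v)}\le\varepsilon'$, so your step~3 collapses.

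The paper's proof closes this gap precisely by using the hypothesis you declare superfluous. From ``$\Ad(T)$ maps $\roestar\CHx$ into $\qlcstar\CHy$'' and the fact that $\qlcstar\CHy\le\prod_i\CB(\CH_{Y_i})$, one deduces that the rank-$\le 1$ operator $\Ad(T)(tp_v)$ lives in a single block $\CB(\CH_{Y_i})$, and moreover the index $i$ is forced by $T(v)$ alone, hence is the same for every $t\in\cpop{E}$. One then sets $T_i\coloneqq\chf{Y_i}T$, notes $\Ad(T)(tp_v)=\Ad(T_i)(tp_v)$, and applies \cref{thm: quasi-proper} to $T_i^*$: now the source $\crse Y_i$ \emph{is} coarsely connected, so $T_i^*$ is quasi-proper and the rest of your argument goes through verbatim with $T_i$ in place of $T$. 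In short: the quasi-locality hypothesis on $\Ad(T)$ is exactly what buys the connectedness needed to invoke \cref{thm: quasi-proper}; your last paragraph, asserting that this hypothesis is dispensable, is therefore wrong.
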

\begin{proof}
  Let $v\in(\CHx)_1$ be a vector of bounded support, say $\supp(v)\subseteq A_0$, and fix $E\in\CE$ and $\varepsilon>0$.
  Fix also $A\subseteq X$, a measurable and bounded neighborhood of $E(A_0)$.

  For every $t\in\cpop{E}$, $tp_v$ belongs to $\roestar\CHx$ and $tp_v= \chf Atp_v\chf A$. Since $\Ad(T)$ maps $\roestar\CHx$ into $\qlcstar\CHy$, we deduce that there is a coarsely connected component $Y_i\subseteq Y$ such that $\Ad(T)(tp_v)\in\CB(\CH_{Y_i})$ for every such $t$. In particular, if we let $T_i\coloneqq \chf{Y_i}T$ we then have $\Ad(T)(tp_v)=\Ad(T_i)(tp_v)$.

  Observe that $\Ad(T_i^*)$ a fortiori maps $\cpop{\gaugey}\cap\lccstar{\CHy}$ into $\lccstar{\CHx}$. Since $\crse Y_i$ is coarsely connected, we are now in the position of applying \cref{item:thm:qp:roe to lc} $\Rightarrow$ \cref{item:thm:qp:qp} of \cref{thm: quasi-proper} to deduce that $T_i^*$ is quasi-proper (inverting the role of $\crse X$ and $\crse Y$). This means that for any fixed $\delta>0$ there is a bounded measurable $B\subseteq Y$ such that
  \[
    \norm{(1-\chf B) T_i\chf A}\leq\delta.
  \]
  For any $t\in \cpop{E}$ we then have that the operator
  \[
    \Ad(T)(tp_v)=\Ad(T_i)(tp_v)= T_i\chf A tp_v\chf A T^*
  \]
  is within distance $2\delta\norm{T}\norm{t}$ of $\chf BT_i\chf A tp_v\chf A T^*\chf B$. Since the latter has support contained in $B\times B$, letting $\delta \coloneqq \varepsilon/(2\norm T)$ shows that $\Ad(T)(tp_v)$ is $(\varepsilon\norm t)$-$(B\times B)$-approximately controlled.
\end{proof}

Together with the uniformization phenomenon \cref{thm: uniformization phenomenon}, \cref{prop: quasi proper are one-vector app ctrl} has the following immediate consequence.

\begin{corollary}\label{cor: uniformization for roe algs}
  Fix $T\colon \CHx \to \CHy$. Suppose that $\crse Y$ is countably generated and coarsely locally finite, $\CHx$ and $\CHy$ are discrete, and we have
  \begin{align*}
    \Ad(T)(\roestar\CHx)& \subseteq \qlcstar\CHy, \;\; \text{and} \\
    \Ad(T^*)(\cpop{\gaugey}\cap\lccstar{\CHy})& \subseteq \lccstar{\CHx},
  \end{align*}
  where $\gaugey$ is a gauge witnessing discreteness and local finiteness.
  Then $T$ is weakly approximately controlled and quasi-proper.
\end{corollary}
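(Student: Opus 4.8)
The plan is to deduce \cref{cor: uniformization for roe algs} from \cref{prop: quasi proper are one-vector app ctrl} together with the uniformization phenomenon (\cref{thm: uniformization phenomenon}), exactly as the final sentence of the excerpt suggests. The key point is to check that the hypotheses assembled here are enough to feed both of these results, and then to run the machinery. Concretely, I first record that $\CHx$ discrete implies $\CHx$ locally admissible (discrete modules are admissible, hence locally admissible), so the hypotheses of \cref{prop: quasi proper are one-vector app ctrl} are met: $\crse Y$ is coarsely locally finite, $\CHx$ is locally admissible, $\CHy$ is discrete, $\Ad(T)$ maps $\roestar\CHx$ into $\qlcstar\CHy$, and $\Ad(T^*)$ maps $\cpop{\gaugey}\cap\lccstar{\CHy}$ into $\lccstar{\CHx}$, with $\gaugey$ a gauge witnessing discreteness and local finiteness of $\CHy$. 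Applying \cref{prop: quasi proper are one-vector app ctrl} therefore gives that $\Ad(T)$ is one-vector approximately-controlled.

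Next I would verify the hypotheses of \cref{thm: uniformization phenomenon} for the map $\phi \coloneqq \Ad(T)$ restricted to $\roestar\CHx$, taking values in the Roe-like algebra $\roeclike\CHy = \qlcstar\CHy$ (which is a \cstar{}algebra since $\CHy$ is admissible, being discrete). The restriction $\Ad(T)|_{\roestar\CHx}$ is linear and \Star{}preserving because $\Ad(T)$ always is (it is the conjugation $s\mapsto TsT^*$, which is linear and sends $s^*$ to $(TsT^*)^*$), it is strongly continuous because $\Ad(T)$ is $\sot$-continuous, and it is one-vector approximately-controlled by the previous paragraph. Moreover $\CHx$ is discrete and $\crse Y$ is countably generated by assumption. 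Hence \cref{thm: uniformization phenomenon} applies and yields that $\Ad(T)\colon\roestar\CHx\to\qlcstar\CHy$ is approximately-controlled.

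The only remaining issue is bookkeeping about domains: \cref{thm: uniformization phenomenon} produces approximate-control of $\phi$ on $\roestar\CHx$, i.e.\ a statement about operators of controlled propagation that are \emph{also locally compact}, whereas ``$T$ is weakly approximately-controlled'' (\cref{def: quasi-controlled operator}) asks that $\Ad(T)\colon\CB(\CHx)\to\CB(\CHy)$ be approximately-controlled as in \cref{def:quasi-controlled mapping}\cref{def:quasi-controlled mapping: approx-control}, i.e.\ a statement about \emph{all} operators of $E$-controlled propagation. To close this gap I would argue as follows: fix $E\in\CE$ (which I may enlarge to a block-entourage containing the diagonal, since $\CHx$ is discrete) and $\varepsilon>0$; by the approximate-control of $\phi$ on $\roestar\CHx$ there is $F\in\CF$ such that $\Ad(T)(s)$ is $(\varepsilon\norm s)$-$F$-approximable for every $s\in\roestar\CHx$ of $E$-controlled propagation. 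Now let $t\in\cpstar\CHx$ have $E$-controlled propagation. Using \cref{lem: exists submodule approximating operator} --- applicable since $\crse X$ is coarsely locally finite (this is where that hypothesis, present via ``$\CHx$ discrete'' together with the coarse local finiteness needed to even state things cleanly, is used) --- or more directly using an approximate unit $(p_\lambda)_\lambda\subseteq\roestar\CHx$ of locally finite rank projections commuting with a fixed discrete partition (see \rfApproxUnit), the operators $p_\lambda t p_\lambda$ lie in $\roestar\CHx$, have $E$-controlled propagation by \cref{cor: sub-basis-has-prop-zero} (the $p_\lambda$ are subordinate to the discrete partition), and converge strongly to $t$ with $\norm{p_\lambda tp_\lambda}\le\norm t$; by \wot-closedness of the $(\varepsilon\norm t)$-$F$-approximable operators (\cref{lemma:qloc-sot-closed}\cref{lemma:qloc-sot-closed:3}) and $\sot$-continuity of $\Ad(T)$, the image $\Ad(T)(t)=\lim\Ad(T)(p_\lambda tp_\lambda)$ is $(\varepsilon\norm t)$-$F$-approximable as well. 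Hence $\Ad(T)$ is approximately-controlled on all of $\cpstar\CHx$, i.e.\ $T$ is weakly approximately-controlled.

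I expect the main obstacle to be precisely this last domain-extension step: one must ensure that approximate-control on the small algebra $\roestar\CHx$ genuinely upgrades to approximate-control on $\cpstar\CHx$, and the cleanest route is the density/approximate-unit argument above, which quietly relies on the coarse local finiteness of $\crse X$ (to have locally finite rank projections forming an approximate unit in $\roestar\CHx$). Everything else is a routine verification that the hypotheses of \cref{prop: quasi proper are one-vector app ctrl} and \cref{thm: uniformization phenomenon} are satisfied, and an appeal to the $\sot$/$\wot$ continuity and closedness facts already established.
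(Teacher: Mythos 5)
Your overall strategy matches the paper's intent: apply \cref{prop: quasi proper are one-vector app ctrl} to get one-vector approximate control, feed that into \cref{thm: uniformization phenomenon}, then close the domain gap. And you are right to flag that last step: \cref{thm: uniformization phenomenon} yields approximate control of $\Ad(T)$ as a map from $\roestar{\CHx}$, whereas ``weakly approximately controlled'' (\cref{def: quasi-controlled operator}) asks for approximate control of $\Ad(T)$ on all of $\CB(\CHx)$. So a domain-extension argument is genuinely needed even though the paper treats the corollary as immediate.

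However, the way you execute the extension introduces a hypothesis the corollary does not provide. You invoke \cref{lem: exists submodule approximating operator} (equivalently the locally-finite-rank approximate unit of \rfApproxUnit), both of which require $\crse X$ to be coarsely locally finite. But \cref{cor: uniformization for roe algs} only assumes $\crse Y$ is countably generated and coarsely locally finite, and this is \emph{not} implied by discreteness of $\CHx$, contrary to what your parenthetical suggests. As written, your argument therefore proves only a special case. The fix is to use the simpler net $(p_{J,\lambda})_{(J,\lambda)\in\Lambda_{\rm fin}}$ of \cref{lemma:discrete-module-admits-nice-projections}: these are \emph{finite-rank} projections subordinate to the discrete partition, they exist for any discrete $\CHx$, and they converge strongly to $1$. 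Having enlarged $E$ to a block-entourage containing the diagonal subordinate to the same partition, each $p_{J,\lambda}tp_{J,\lambda}$ is finite rank (hence in $\roestar{\CHx}$), is $E$-controlled by \cref{cor: sub-basis-has-prop-zero}, and has $\norm{p_{J,\lambda}tp_{J,\lambda}}\leq\norm{t}$. Your \wot-closedness argument via \cref{lemma:qloc-sot-closed} then runs verbatim and yields the corollary under exactly the stated hypotheses.
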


\begin{corollary}\label{cor: iso of roe algs are weakly approximable}
  Let $\crse X,\crse Y$ be countably generated and coarsely locally finite, $\CHx$ and $\CHy$ discrete modules. If $U\colon \CHx \to \CHy$ is a unitary inducing an isomorphism $\Ad(U)\colon\roecstar{\CHx}\cong\roecstar\CHy$, then both $U$ and $U^*$ are weakly approximately controlled.
\end{corollary}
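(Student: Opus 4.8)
The plan is to deduce this statement directly from \cref{cor: uniformization for roe algs}, applied twice: once to $U$ and once to $U^*$. In other words, the genuine content is already packaged inside \cref{cor: uniformization for roe algs} (which in turn rests on the local compactness characterisation of \cref{thm: quasi-proper}, together with \cref{prop: quasi proper are one-vector app ctrl} and the Baire-type uniformisation of \cref{thm: uniformization phenomenon}), and all that remains here is to check that its hypotheses hold in the present setup.

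First I would record the relevant containments. Since $U$ is unitary, $\Ad(U^*)=\Ad(U)^{-1}$ is the inverse of the isomorphism $\Ad(U)\colon\roecstar\CHx\to\roecstar\CHy$; hence $\Ad(U)$ maps $\roestar\CHx$ into $\roecstar\CHy$, and $\Ad(U^*)$ maps $\roecstar\CHy$ onto $\roecstar\CHx$. Because $\CHx$ and $\CHy$ are discrete, they are admissible, so $\roecstar\CHy\subseteq\qlcstar\CHy$ and $\roecstar\CHx\subseteq\qlcstar\CHx$. Moreover $\roecstar\CHx$ and $\roecstar\CHy$ are, by definition, norm-closures of $\cpstar\variable\cap\lccstar\variable$, and $\lccstar\variable$ is a norm-closed \cstar{}subalgebra, so $\roecstar\CHx\subseteq\lccstar\CHx$ and $\roecstar\CHy\subseteq\lccstar\CHy$. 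Finally, for any gauge $\gauge$ the set $\cpop{\gauge}$ consists of operators of $\gauge$-controlled propagation, so $\cpop{\gauge}\cap\lccstar\variable\subseteq\cpstar\variable\cap\lccstar\variable=\roestar\variable\subseteq\roecstar\variable$.

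Next I would apply \cref{cor: uniformization for roe algs} with $T\coloneqq U$: the space $\crse Y$ is countably generated and coarsely locally finite, $\CHx$ and $\CHy$ are discrete, and, fixing a gauge $\gaugey$ witnessing discreteness and coarse local finiteness of $\crse Y$, the two inclusions $\Ad(U)(\roestar\CHx)\subseteq\roecstar\CHy\subseteq\qlcstar\CHy$ and $\Ad(U^*)(\cpop{\gaugey}\cap\lccstar\CHy)\subseteq\Ad(U^*)(\roecstar\CHy)=\roecstar\CHx\subseteq\lccstar\CHx$ hold by the previous paragraph. Thus $U$ is weakly approximately controlled. Running the identical argument with the roles of $(\crse X,\CHx)$ and $(\crse Y,\CHy)$ interchanged — so the relevant gauge is now a gauge $\gaugex$ for $\crse X$ witnessing discreteness and local finiteness — and with $U$ replaced by the unitary $U^*$, which induces the isomorphism $\Ad(U^*)\colon\roecstar\CHy\to\roecstar\CHx$, yields that $U^*$ is weakly approximately controlled as well.

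There is essentially no obstacle at this level: the only point demanding a moment's care is the bookkeeping of which gauge plays the role of the ``discreteness and local finiteness'' gauge in each of the two applications, and this is immediate from the standing hypotheses. The real difficulty lies entirely within \cref{cor: uniformization for roe algs}, which is why here the argument is just a verification.
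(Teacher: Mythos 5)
Your proof is correct and matches the paper's intent exactly: the statement is presented as an immediate consequence of \cref{cor: uniformization for roe algs} (with no separate proof given), and the double application to $U$ and $U^*$, together with the containments $\Ad(U)(\roestar\CHx)\subseteq\roecstar\CHy\subseteq\qlcstar\CHy$ and $\cpop{\gaugey}\cap\lccstar\CHy\subseteq\roestar\CHy$, is exactly what is needed. The bookkeeping of which gauge witnesses discreteness/local finiteness in each application is handled correctly.
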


\begin{remark}
  The statements of \cref{prop: quasi proper are one-vector app ctrl,cor: uniformization for roe algs} are given in terms of $\cpop{\gaugey}\cap\lccstar{\CHy}$ to highlight the effective nature of the uniformization phenomenon (compare with \cref{rmk:effective uniformization}).
\end{remark}

\begin{remark}
  It is interesting to observe that \cref{cor: uniformization for roe algs} goes a long way towards the proof of \cstar{}rigidity assuming countable generation only on one of the spaces. Namely, the rest of the theory we developed shows that an isomorphism $\Phi\colon \roecstar\CHx\to\roecstar\CHy$ is implemented by a unitary $U$ and that its approximating relations give rise to coarsely surjective, proper, partial coarse maps.

  It one could show that such approximations are also coarsely everywhere defined and coarse embeddings without assuming countable generation of $\crse X$, this could be used to \emph{prove} that $\crse X$ must be countably generated.
  That is, this would show that isomorphisms of Roe-like \cstar{}algebras preserve the property of being countably generated, and would be a first rigidity statement for general coarse spaces.
  Note that this has indeed been proved if $\crse Y$ satisfies additional regularity assumptions, such as property A \cites{braga_farah_vignati_2022,braga2024operator}.
\end{remark}

\chapter{Refined rigidity: strong control notions}\label{sec: strong approx and out}
In this chapter we further push our techniques to generalize the ``Gelfand type duality'' studied in \cite{braga_gelfand_duality_2022}*{Theorem A}. In that paper, it is proven that, for a metric space $\crse{X} = (X, d)$ of bounded geometry with Yu's \emph{property A}~\cite{yu_coarse_2000}, there is a group isomorphism between:
\begin{itemize}
  \item the set of coarse equivalences of $\crse{X}$ (considered up to closeness);
  \item the set of outer automorphisms of the Roe algebra of $\crse{X}$. 
\end{itemize}
Using our rigidity techniques, we generalize this to arbitrary proper extended metric spaces. As usual, we will also work with $\cpcstar\variable$ and $\qlcstar\variable$. As it turns out, the outer automorphism groups of all these algebras are isomorphic to one another (cf.\ \cref{cor: all groups are iso}), which is perhaps surprising.

As a brief overview on this section, in \cref{subsec: strong approx-quasi control} we introduce a strengthening of the weak approximate and quasi versions of control for operators, and prove a stronger form of our rigidity result, which applies in a more restrictive---but still very general---setup. This is  the main technical result of the chapter (cf.\ \cref{thm: strong rigidity}). In the subsequent sections we illustrate various consequences of this more refined form of rigidity. In particular, in \cref{subsec: rig outer aut 1,subsec: rig outer aut 2}, we use it to establish a general form of duality between coarse equivalences and outer automorphisms of Roe-like \cstar{}algebras (cf.\ \cref{cor: isomorphic out aut,cor: all groups are iso}).

\section{Strong approximate/quasi control} \label{subsec: strong approx-quasi control}

Previously, we ``quasi-fied'' the notion of control for operators by generalizing the idea that $T$ is controlled if and only if $\Ad(T)$ maps operators of $E$-controlled propagation to operators of $F$-controlled propagation. This leads to the definition of weak approximate (resp.\ quasi-) control of operators.

On the other had, since we defined that an operator $T \colon \CHx \to \CHy$ is controlled if there is a controlled relation $R$ such that $\supp(T)\subseteq R$, there is a perhaps more straightforward strategy for ``quasi-fication''. Namely, by using approximate and quasi containment of supports as introduced in \cref{subsec: almost_n_quasi support}.
Recall that $T$ has support
\begin{enumerate} [label=(\roman*)]
  \item $\varepsilon$-approximately contained in $R$ if there is some $S \colon \CHx\to\CHy$ such that $\supp(S)\subseteq R$ and $\norm{S - T} \leq \varepsilon$;
  \item $\varepsilon$-quasi-contained in $R$ if $\norm{\chf {B} T\chf A} \leq \varepsilon$ for every choice of $R$-separated measurable subsets $A\subseteq X$ $B\subseteq Y$;  
\end{enumerate}
(see \cref{def:app_n_quasi-supp}). Inspired by these, we give the following definition.

\begin{definition} \label{def: strong approx-quasi R-ctrl}
  Let $R\subseteq Y\times X$ be a controlled relation. An operator $T\colon\CHx\to\CHy$ is:
  \begin{enumerate}[label=(\roman*)]
    \item \emph{$\varepsilon$-$R$-approximately controlled} if its has support $\varepsilon$-approximately contained in $R$.
    \item \emph{$\varepsilon$-$R$-quasi-controlled} if it has support $\varepsilon$-quasi-contained in $R$.
  \end{enumerate}
\end{definition}

\begin{definition} \label{def: strong approx-quasi ctrl}
  An operator $T\colon\CHx\to\CHy$ is:
  \begin{enumerate}[label=(\roman*)]
    \item \emph{strongly approximately controlled} if for every $\varepsilon>0$ there is a controlled relation $R \subseteq Y \times X$ such that $T$ is $\varepsilon$-$R$-approximately controlled.
    \item \emph{strongly quasi-controlled} if for every $\varepsilon>0$ there is a controlled relation $R \subseteq Y \times X$ such that $T$ is $\varepsilon$-$R$-quasi-controlled.
  \end{enumerate}
\end{definition}

As usual, $\varepsilon$-$R$-approximately controlled operators are always $\varepsilon$-$R$-quasi-controlled. In particular, strongly approximately controlled operators are always strongly quasi-controlled.

Strong approximate and quasi control are well behaved under finite sums and adjoint. Moreover, \cref{lem:supports and operations quasi-local} shows that if $T\colon\CHx\to\CHy$ is $\varepsilon_T$-$R_T$-approximately controlled, $S\colon\CHy\to\CHz$ is $\varepsilon_S$-$R_S$-approximately controlled and $\gaugey$ is a non-degeneracy gauge, then $ST$ is $(\varepsilon_S\norm T+\varepsilon_T\norm S)$-$(R_S\circ\gaugey\circ R_T)$-approximately controlled. If $\CHy$ is admissible and $\gaugey$ is an admissibility gauge, the same holds for the quasi-control.
In particular, compositions of strongly approximately/quasi-controlled operators on (admissible) modules remain strongly approximately/quasi-controlled.
The next lemma justifies using the adjective ``strong'' in \cref{def: strong approx-quasi R-ctrl,def: strong approx-quasi ctrl}.

\begin{lemma}\label{lem: str quasi-controlled is quasi-controlled}
  Any strongly approximately controlled operator $T\colon\CHx\to\CHy$ is weakly approximately-controlled.
  If $\CHx$ is admissible, any strongly quasi-controlled operator $T\colon\CHx\to\CHy$ is weakly quasi-controlled.
\end{lemma}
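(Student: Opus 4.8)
The claim is about two implications: strong approximate control implies weak approximate control (unconditionally), and strong quasi-control implies weak quasi-control (when $\CHx$ is admissible). In both cases the task is to show that $\Ad(T)$ sends operators of equi controlled propagation to operators that are equi approximable (resp.\ equi quasi-local). The plan is to use the composition rules for strong approximate/quasi-control recalled just before the lemma statement, together with the fact that an operator of controlled propagation is trivially strongly approximately controlled (and strongly quasi-controlled).

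For the first implication, fix $E\in\CE$ and $\varepsilon>0$. Let $t\in\CB(\CHx)$ have $E$-controlled propagation; we may assume $\norm{t}\leq 1$ by rescaling, and it suffices to produce a single $F\in\CF$ (depending only on $E$ and $\varepsilon$) such that $\Ad(T)(t)=TtT^*$ is $\varepsilon$-$F$-approximable. Since $t$ is $R$-controlled with $R=E$ — a controlled relation — and $T$ is strongly approximately controlled, by the composition rule for strong approximate control (which follows from \eqref{eq:support of composition}) the product $Tt$ is strongly approximately controlled; more precisely, for any $\varepsilon'>0$ there is a controlled relation $R'$ such that $Tt$ is $\varepsilon'$-$R'$-approximately controlled, and tracking the argument one sees $R'$ may be taken of the form $R_{\varepsilon'}\circ\gaugex\circ E$ where $R_{\varepsilon'}$ witnesses the strong approximate control of $T$ and $\gaugex$ is a non-degeneracy gauge for $\CHx$. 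Applying the same to $T^*$ on the right (or equivalently noting $T^*$ is strongly approximately controlled as well, since $\supp(S^*)\subseteq\op{S}$), we get that $TtT^*$ is $\varepsilon$-$F$-approximately controlled for a suitable controlled relation $F$ obtained by composing the witnessing relations for $T$, for $E$, and for $T^*$, with non-degeneracy gauges inserted as needed. Crucially, $F$ depends only on $E$ and $\varepsilon$, not on $t$: this is exactly the uniformity needed for weak approximate control in the sense of \cref{def:quasi-controlled mapping}\cref{def:quasi-controlled mapping: approx-control}. Finally, being $\varepsilon$-$F$-approximately controlled (in the sense of \cref{def: strong approx ctrl}) implies being $\varepsilon$-$F$-approximable in the sense of \cref{def:cp-ope}, so $\Ad(T)(t)$ is $(\varepsilon\norm{t})$-$F$-approximable after undoing the rescaling, as required.

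For the second implication the argument is parallel, but now one must invoke the quasi-local composition estimate \cref{lem:supports and operations quasi-local} (or rather its analogue for operators between possibly different modules, stated just before the lemma), which is where the admissibility of $\CHx$ is used: given $\varepsilon'>0$ choose a controlled relation $R_{\varepsilon'}$ witnessing that $T$ is $\varepsilon'$-$R_{\varepsilon'}$-quasi-controlled; then $Tt$ is $\varepsilon'$-$(R_{\varepsilon'}\circ\gaugex\circ E)$-quasi-controlled, where $\gaugex$ is an admissibility gauge for $\CHx$ and $t$ is $E$-controlled (so in particular $\varepsilon$-$E$-quasi-controlled with $\varepsilon=0$). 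Composing once more on the right with $T^*$ (noting $T^*$ is $\varepsilon'$-$\op{R_{\varepsilon'}}$-quasi-controlled) and adjusting the $\varepsilon'$'s so the resulting error is at most $\varepsilon$, we find $F\in\CF$ depending only on $E$ and $\varepsilon$ such that $TtT^*$ is $\varepsilon$-$F$-quasi-controlled, hence $\varepsilon$-$F$-quasi-local in the sense of \cref{def:ql-ope}. This is precisely weak quasi-control.

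The main (modest) obstacle is bookkeeping: one must check that the controlled relation $F$ produced at the end genuinely does not depend on the particular operator $t$ of $E$-controlled propagation, only on $E$ and $\varepsilon$. This is automatic from the composition estimates once one uses that the $t$ in question all have propagation controlled by the \emph{same} fixed $E$; no contraction-specific choices enter. The only place a hypothesis is genuinely needed is the admissibility of $\CHx$ in the quasi-control case, because the composition rule for quasi-locality inserts an admissibility gauge — whereas the composition rule for supports (and hence for strong approximate control) requires only a non-degeneracy gauge, which every coarse module has. Thus the two implications are proved under exactly the stated hypotheses.
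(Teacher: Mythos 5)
Your proof is correct and follows essentially the same route as the paper's: the paper directly constructs $F = R\circ\gauge\circ E\circ\gauge\circ\op R$ and estimates, while you package the same computation as an application of the composition rules for supports (resp.\ quasi-locality), but the underlying mechanism and the role of the non-degeneracy (resp.\ admissibility) gauge are identical. The one small point worth flagging for cleanliness: the paper's explicit estimate comes out as $\varepsilon(\norm{T}+\norm{T'})\norm{s}$ rather than $\varepsilon\norm{s}$, so the initial choice of $\varepsilon'$ must absorb the constant $2\norm{T}+\varepsilon'$ — you correctly note this adjustment, which the paper leaves implicit.
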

\begin{proof}
  Fix $\varepsilon>0$ and let $R \subseteq Y\times X$ be a controlled relation witnessing strong approximate (resp.\ quasi-) control for $T$ with respect to $\varepsilon$. Fix also a non-degeneracy (resp.\ admissibility) gauge $\gauge\in\CE$ for $\CHx$.

  Given an arbitrary controlled entourage $E \in \CE$, we have to show that there is some $F \in \CF$ such that, for any $s\in\cpstar\CHx$ of $E$-controlled propagation, the image $\Ad(T)(s)$ is $\varepsilon$\=/$F$\=/approximable (resp.\ $\varepsilon$\=/$F$\=/quasi-local).
  This is readily done: the composition estimates of \cref{lem:supports and operations quasi-local} directly imply that $TsT^*$ has support $(2\varepsilon\norm{T}\norm{s})$-approximately (resp.\ quasi-) contained in $F\coloneqq R\circ \gauge\circ E\circ\gauge\circ \op R$.
  Since $\gauge\circ E\circ\gauge\in\CE$ and $R$ is controlled, we see that $F$ is indeed a controlled entourage.
\end{proof}

\begin{remark}
  It seems unlikely that the converse of \cref{lem: str quasi-controlled is quasi-controlled} holds true in general. However, we shall see below (cf.\ \cref{prop: quasi-control vs str quasi-control}) that it does hold in certain cases that are important in applications, particularly with regards to \cref{cor: all groups are iso}.
\end{remark}

The coarse support of a controlled operator played a key role in our approach to the theory of rigidity we developed up to this point. To proceed further, the analogous constructs in the \emph{approximate} and \emph{quasi} settings that are defined below will be just as important.
\begin{definition}\label{def: approximate and quasi support}
  Given $T \colon \CHx \to \CHy$ and a coarse subspace $\crse {S\subseteq Y\times X}$, we define:
  \begin{enumerate}[label=(\roman*)]
    \item \label{def: approximate and quasi support:contains-heart} $\crse {S}$ \emph{contains the approximate support} of $T$ (denoted $\acsupp(T)\crse \subseteq \crse S$) if for every $\varepsilon>0$ there is an $R\subseteq Y\times X$ such that $T$ has support $\varepsilon$-approximately contained in $R$ and $\crse{R\subseteq S}$.
    \item \label{def: approximate and quasi support:a-supp} $\crse S$ is the \emph{approximate support of $T$} if it satisfies \cref{def: approximate and quasi support:contains-heart} and is the smallest with this property (\emph{i.e.}\ $\crse{S}\crse\subseteq\crse{S'}$ whenever $\crse{S'\subseteq Y\times X}$ is as in \cref{def: approximate and quasi support:contains-heart}). In such case, we write $\acsupp(T)\coloneqq \crse S$.
  \end{enumerate}
  The \emph{containment of the quasi support} (denoted $\qcsupp(T)\crse \subseteq \crse S$) and the \emph{quasi-support} (denoted $\qcsupp(T)$) are defined as above replacing `approximately' by `quasi' everywhere.
\end{definition}

\begin{remark}
  As was the case for coarse compositions, we are \emph{not} claiming that approximate/quasi supports exist in general. When they do not exist, writing
    \[
      \acsupp(T)\crse\subseteq\crse R 
      \;\;(\text{resp.\ }\qcsupp(T)\crse\subseteq\crse R).
    \]
    is a slight abuse of notation, which should be understood as an analogue of the notion of containment of supports of operators (cf.\ \cref{def:supp-operator}). 
    When they exist, approximate/quasi supports are clearly unique.
\end{remark}

\begin{remark}\label{rkm: about approximate and quasi support: contained in idx}
  Observe that $T\in\CB(\CHx)$ is approximable (\emph{i.e.}\ belongs to $\cpcstar\CHx$) if and only $\acsupp(T)\crse{\subseteq}\cid_{\crse X}$. Similarly, it is quasi-local (\emph{i.e.}\ belongs to $\qlcstar\CHx$) if and only $\qcsupp(T)\crse{\subseteq}\cid_{\crse X}$.
\end{remark}

If $\acsupp(T)\crse{\subseteq S}$, then $\acsupp(T^*)\crse{\subseteq} \op{\crse{S}}$. Moreover, if $\acsupp(T)$ does exist, then so does $\acsupp(T^*)$ (and $\acsupp(T^*)=\op{\acsupp(T)}$).
It is also clear that if $\acsupp(T_{1})\crse{\subseteq S_{1}}$ and $\acsupp(T_{2})\crse{\subseteq S_{2}}$ then $\acsupp(T_{1}+T_{2})\crse{\subseteq S_{1} \cup S_{2}}$. The same considerations hold for $\qcsupp(T)$ as well.

The composition of coarse supports is also well behaved, at least insofar as their coarse composition is well-defined (cf.\ \cref{def: coarse composition}).
  
\begin{lem}\label{lem: composition of approximate and quasi support}
  Suppose that $T_1\colon\CHx\to\CHy$, $T_2\colon\CHy\to\CHz$ are operators with
  \[
  \acsupp(T_1)\crse{\subseteq S}_1
  \quad\text{and}\quad 
  \acsupp(T_2)\crse{\subseteq S}_2.
  \]
  where $\crse{S}_1\crse{\subseteq Y\times X}$ and $\crse{S}_2\crse{\subseteq Z\times Y}$
  admit a coarse composition. Then 
  \[
    \acsupp(T_2T_1) \crse\subseteq\crse S_2\crse \circ\crse S_1.
  \]
  If $\CHy$ is admissible, the same holds for quasi-supports as well.
\end{lem}
\begin{proof}
  Fix $\varepsilon>0$, and choose relations $R_i$ such that $T_i$ has support $\delta$-approximately/quasi contained in $R_i$, for some $\delta>0$ much smaller than $\varepsilon$. Then \cref{lem:supports and operations quasi-local} 
   shows that $T_2\circ T_1$ has support $\delta(\norm{T_1}+\norm{T_2})$-approximately/quasi contained in $R_2\circ\gauge\circ R_1$, where $\gauge$ is any non-degeneracy/admissibility gauge for $\CHy$.
   Since $[R_2]\crse{\subseteq S_{2}}$ and $[\gauge\circ R_1] \crse{\subseteq S_{1}}$, this proves the lemma.
\end{proof}

In particular, \cref{lem: composition of approximate and quasi support} applies when $\crse S_1$, $\crse S_2$ are partial coarse maps and $\cim(\crse S_1)\crse \subseteq\cdom(\crse S_2)$, cf.\ \cref{lem: composition exists}.

\smallskip

Having approximate/quasi supports at hand, the characterization of a controlled operator as one $T$ with controlled coarse support $\csupp(T)$ suggests even more ways of ``quasi-fying'' the definition of control. Namely, we may define:

\begin{definition}\label{def:effective control}
  An operator $T\colon\CHx\to\CHy$ is:
  \begin{enumerate}[label=(\roman*)]
    \item \emph{effectively approximately controlled} if $\acsupp(T)$ is a partial coarse map $\crse{S\colon X\to Y}$;
    \item \emph{almost effectively approximately controlled} if $\acsupp(T)\crse{\subseteq S}$ for some partial coarse map $\crse{S\colon X\to Y}$.
  \end{enumerate}
  \emph{Effective quasi-control} and \emph{almost effective quasi-control} are defined analogously.
\end{definition}

\begin{remark}
  As usual, an important aspect of \cref{def:effective control} above is that \(\acsupp(T)\) needs not exist. When it does exist, the almost effective and effective versions are equivalent.
\end{remark}

There is an obvious hierarchy among all these definitions. Namely:
\begin{equation}
\text{effective}\implies
\text{almost effective}\implies
\text{strong}\implies
\text{weak}. \label{eq:hierarchy}
\end{equation}
Fortunately, we shall soon see that for operators inducing isomorphisms of Roe-like \cstar{}algebras all of the above are actually equivalent.
It is not clear to us which of the converse implications are true in general.

\begin{remark}
  The effectively approximately/quasi controlled operators are those with which it is the most natural to associate mappings at the level of spaces. Since the uniformization phenomenon yields weakly approximately/quasi controlled operators, the final part of the proof of \cstar{}rigidity can be framed as a journey to reverse all the implications in the hierarchy \eqref{eq:hierarchy}.
\end{remark}

Since we are dealing with coarse subspaces, which are defined as smallest elements and may or may not exist, it is convenient to slightly extend the meaning of the coarse containment symbol $\crse \subseteq$ by saying that
$\crse{A\subseteq B}$ if for every $\crse C$ with $\crse{B\subseteq C}$ we also have $\crse{A\subseteq C}$.
For example, using this convention we may rephrase \cref{lem: composition of approximate and quasi support} by simply saying
\[
\acsupp(T_2T_1)\crse \subseteq\acsupp(T_2)\crse \circ \acsupp(T_1),
\]
without worrying whether the approximate supports and their coarse composition are actually defined.
Similarly, since approximate containment implies quasi-containment we see that:

\begin{lemma}\label{lem: quasi-supp in approx-supp}
  For every operator $T\colon\CHx\to\CHy$
  \[
  \qcsupp(T)\crse \subseteq\acsupp(T).
  \]
\end{lemma}
\begin{proof}
  By definition, if $\acsupp(T)\crse{\subseteq S}$ then for every $\varepsilon>0$ there is $R\crse{\subseteq S}$ that $\varepsilon$-approximately contains the support of $T$. Hence $T$ has support $\varepsilon$-quasi-contained in $R$, which exactly shows that $\qcsupp\crse{\subseteq S}$.
\end{proof}

\begin{remark}
  In the following we will often encounter operators for which $\qcsupp(T)\crse = \acsupp(T)$. However, this is not a general phenomenon. In fact, if $T\in\qlcstar\CHx\smallsetminus\cpcstar\CHx$ is some quasi-local but not approximable operator (such operators exist by \cite{ozawa-2023}), then $\qcsupp(T)\crse \subseteq \cid_{\crse X}$ but $\acsupp(T)$ is not coarsely contained in $\cid_{\crse X}$ (cf.\ \cref{rkm: about approximate and quasi support: contained in idx}).
\end{remark}

The quasi support interacts very well with the approximating relations defined in \cref{subsec: approximations}.

\begin{lemma}\label{lem: approximating relation contained in quasi-support}
  Given $T\colon \CHx\to \CHy$, $E\in\CE$, $F\in\CF$, and $\delta>0$, we have
  \[
    \cappmap[T]{\delta}{F}{E} \crse{\subseteq}\qcsupp(T).
  \]
\end{lemma}
\begin{proof}
  Fix any $\crse{S \subseteq Y\times X}$ with $\qcsupp(T)\crse{\subseteq S}$. By definition, there must be some $R\subseteq Y\times X$ such that $T$ has support $\delta$-quasi-contained in $R$ and $R\crse{\subseteq S}$.
  Let $B\times A$ is one of the defining blocks of $\appmap[T]{\delta}{F}{E}$ (recall \cref{def:approximating crse map}), then $(B\times A)\cap R \neq \emptyset$ by definition of quasi-containment of the support. 
  It follows that $\appmap[T]{\delta}{F}{E}\subseteq F\circ R\circ E \crse{\subseteq S}$.
\end{proof}

Having established this, in the following section we will need to work to show that in the cases of interest in \cstar{}rigidity one can choose $F,E,\delta>0$ such that
\[
  \qcsupp(T)\crse\subseteq\cappmap[T]{\delta}{F}{E},
\]
thus proving effective $\, \Longleftrightarrow \,$ weak in \eqref{eq:hierarchy}.

\begin{example}
  It is easy to see that in general there need not be $F,E,\delta>0$ such that $\qcsupp(T)\crse = \cappmap[T]{\delta}{F}{E}$. For instance, let $\crse X = \mathbb N$ seen as a disjoint union of points (\emph{i.e.}\ equipped with the extended metric $d(n,m)=\infty$ for every $n\neq m$) and let $T\in\CB(\ell^2(\mathbb N))$ be multiplication by the function $1/n\in\ell^\infty(\mathbb N)$.
  In this case, $\qcsupp(T)=\acsupp(T)=\cid_{\crse X}$, while $\appmap[T]{\delta}{F}{E}$ is always finite. Indeed, $\appmap[T]{\delta}{F}{E}$ only contains pairs $(m,m)$ such that \(1/m \geq \delta\).

  It seems then tempting to take unions $\bigcup \appmap[T]{\delta}{F}{E}$ to attempt to construct approximate/quasi supports. This is however a delicate matter. Consider $\crse X = \NN\times \NN$, where the first copy of $\mathbb N$ is a disjoint union as before, while the second one is given the usual metric. Coarsely, this space looks like countably many disconnected copies of half-lines.
  Let $T\in\CB(\ell^2(\NN\times\NN))$ be the operator sending $\delta_{n,m}$ to $\delta_{n,m+n}/n$.
  It is then still true that $\qcsupp(T)=\acsupp(T)=\cid_{\crse X}$. However, taking a union of $\appmap[T]{\delta}{\Delta}{\Delta}$ with $\delta\to 0$ results in the relation $\{((n,m),(n,n+m))\mid n,m\in \NN\}$, which does not belong to $\CE$ and is hence not coarsely contained in $\cid_{\crse X}$.
\end{example}

\section{A refined rigidity theorem}
Our interest in approximate and quasi supports of operators stems from the following, which is the main theorem of the chapter.  Observe that it is a strong (but ample and non-stable) version of \cref{thm: stable rigidity}.

\begin{theorem}[Refined Rigidity]\label{thm: strong rigidity}
  Let $\crse X$ and $\crse Y$ be countably generated coarse spaces; $\CHx$ and $\CHy$ ample discrete modules.
  Suppose that $\phi \colon \roeclikeone{\CHx} \to \roecliketwo{\CHy}$ is a \Star{}isomorphism, and that (at least) one of the following holds:
  \begin{enumerate}[label=(\roman*)]
    \item \label{thm: strong rigidity:cp-ql} one of $\roeclikeone{\CHx}$ and $\roecliketwo{\CHy}$ is unital,
    \item \label{thm: strong rigidity:roe}  $\crse{X}$ and $\crse{Y}$ are coarsely locally finite.
  \end{enumerate}
  Then the following assertions hold.
  \begin{enumerate}[label=(\alph*)]
    \item\label{thm: strong rigidity:one} $\roeclikeone\variable=\roecliketwo\variable$ (unless $\cpcstar\CHx = \qlcstar\CHx$ and $\cpcstar\CHy = \qlcstar\CHy$, in which case they are of course interchangeable).
    \item\label{thm: strong rigidity:two} $\phi = \Ad(U)$ where $U\colon\CHx\to\CHy$ is a unitary such that both $U$ and $U^*$ are effectively quasi-controlled and the quasi supports
    \[
      \qcsupp\left(U\right) \colon \crse X \to \crse Y \; \text{and} \; \qcsupp\left(U^*\right) \colon \crse Y\to\crse X
    \]
    are coarse inverses of one another. In particular, they give a coarse equivalence.
    \item\label{thm: strong rigidity:three} If either $\roeclikeone{\CHx}$ or $\roecliketwo{\CHy}$ is not $\qlcstar{\variable}$ then $U$ and $U^*$ are also effectively approximately controlled and
    \[
    \acsupp(U)=\qcsupp(U)
    \quad\text{ and }\quad
    \acsupp(U^*)=\qcsupp(U^*).
    \]
  \end{enumerate}
\end{theorem}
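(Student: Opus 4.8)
The plan is to combine the spatial implementation results of \cref{sec: rigidity phenomena} with the uniformization phenomenon of \cref{sec: uniformization} and the upgrades of \cref{sec: quasi-proper}, following the scheme already laid out for \cref{thm: stable rigidity}, and then to promote the resulting weak quasi-control to strong quasi-control (and, where possible, to strong approximate control). I will treat the statement in three pieces corresponding to \ref{thm: strong rigidity:one}--\ref{thm: strong rigidity:three}, though in practice \ref{thm: strong rigidity:two} is proved first and the other two are read off from it.

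\emph{Spatial implementation and uniformization.} First I would apply \cref{prop: isos are spatially implemented} with $\CH_1 = \CH_2 = \CCC$ (discrete modules have measurable connected components) to write $\phi = \Ad(U)$ for a unitary $U\colon\CHx\to\CHy$. Next, to see that $U$ and $U^*$ are weakly quasi-controlled, I distinguish cases. If $\roeclikeone\CHx \in \{\cpcstar\CHx,\qlcstar\CHx\}$ (so $\CHx$ is unital and the restriction $\phi\colon\cpstar\CHx\to\roecliketwo\CHy\subseteq\qlcstar\CHy$ is a strongly continuous \Star{}homomorphism), \cref{thm: uniformization} directly gives that $\Ad(U)$ is quasi-controlled, i.e.\ $U$ is weakly quasi-controlled. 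If instead $\roeclikeone\CHx = \roecstar\CHx$ (hence by hypothesis \ref{thm: strong rigidity:roe} $\crse X$, $\crse Y$ are coarsely locally finite), I would invoke \cref{cor: iso of roe algs are weakly approximable}, which via \cref{thm: quasi-proper} yields that $U$ and $U^*$ are in fact weakly \emph{approximately} controlled. Either way, a symmetric argument applies to $U^*$. Since $\CHx,\CHy$ are ample and discrete and $U$ is unitary, \cref{cor: rigidity controlled unitaries} then provides $E\in\CE$, $F\in\CF$ and $0<\delta<1$ such that $\cappmap[U]{\delta}FE\colon\crse X\to\crse Y$ and $\cappmap[U^*]{\delta}EF\colon\crse Y\to\crse X$ are coarse equivalences that are coarse inverses of one another.

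\emph{From weak to strong, and identifying the quasi support.} The key remaining point is to promote weak quasi-control of $U$ to \emph{strong} quasi-control, and to show $\qcsupp(U) = \cappmap[U]{\delta}FE$. This should follow from \cref{prop: quasi-control vs str quasi-control} (the result the excerpt anticipates precisely for this purpose): under the ample/discrete hypotheses and the fact that $\Ad(U)$ restricts to an isomorphism of Roe-like algebras, a weakly quasi-controlled operator of the relevant kind is automatically strongly quasi-controlled, and its quasi support is the coarse map carried by its approximations. I would then argue that $\qcsupp(U)$ is exactly the coarse equivalence $\cappmap[U]{\delta}FE$: on one hand any controlled relation $R$ witnessing $\varepsilon$-$R$-quasi-control of $U$ (for small $\varepsilon$) coarsely contains every $\appmap[U]{\delta'}{F'}{E'}$ with $\delta'>\varepsilon$; on the other, the approximating maps stabilize to a fixed coarse map once the parameters are large enough (by \cref{prop:adjoint is coarse inverse} together with \cref{lem: functions with same domain coincide}, since they are controlled and coarsely everywhere defined). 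The same reasoning applies to $U^*$, and \cref{prop:adjoint is coarse inverse} then upgrades "coarse inverses" to the clean statement \ref{thm: strong rigidity:two}. For \ref{thm: strong rigidity:three}, if neither algebra is $\qlcstar\variable$ then $\Ad(U)$ restricts to a homomorphism into $\cpcstar\CHy$ (or $\roecstar\CHy$), so the relevant uniformization output is weak \emph{approximate} control, and \cref{prop: quasi-control vs str quasi-control} (or its approximate analogue) gives strong approximate control; moreover the approximate and quasi supports must then coincide, since the approximate support always coarsely contains the quasi support while here both are realised by the same coarse equivalence $\cappmap[U]{\delta}FE$.

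\emph{Part \ref{thm: strong rigidity:one}.} Finally, for the identification $\roeclikeone\variable=\roecliketwo\variable$: a quasi-local operator $t\in\qlcstar\CHx$ that does not lie in $\cpcstar\CHx$ has $\qcsupp(t)\crse\subseteq\cid_{\crse X}$ but $\acsupp(t)$ not coarsely contained in $\cid_{\crse X}$ (cf.\ \cref{rkm: about approximate and quasi support}\ref{rkm: about approximate and quasi support: contained in idx}); conjugating by $U$ and using \cref{rkm: composition of approximate and quasi support} together with the fact that $\qcsupp(U)$, $\qcsupp(U^*)$ are mutually inverse coarse \emph{equivalences} shows $\qcsupp(\Ad(U)(t))\crse\subseteq\cid_{\crse Y}$ while $\acsupp(\Ad(U)(t))$ is not, so $\Ad(U)(t)\in\qlcstar\CHy\smallsetminus\cpcstar\CHy$. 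Hence an isomorphism of $\cpcstar\CHx$ with $\qlcstar\CHy$ forces $\cpcstar\CHx=\qlcstar\CHx$ (and symmetrically $\cpcstar\CHy=\qlcstar\CHy$); in every other pairing the two algebras must be of the same type, since $\roecstar\variable$ is the unique non-unital one among the three (cf.\ \cref{rmk: unital Roe algebras}) and $\roecstar\CHx=\cpcstar\CHx$ exactly when $\CHx$ has locally finite rank, a property transported by $\Ad(U)$ via the identification of compacts in \cref{prop: roelike cap compacts}.

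\textbf{Main obstacle.} The step I expect to require the most care is the passage from weak to strong quasi-control (the invocation of \cref{prop: quasi-control vs str quasi-control}) and the precise matching $\qcsupp(U)=\cappmap[U]{\delta}FE$: one must verify that the approximating relations, for all large enough parameters, define one and the same coarse map and that this coarse map is genuinely the minimal coarse subspace containing all $\varepsilon$-$R$-quasi-supports, which is where the coarse-everywhere-defined and coarsely-surjective properties from \cref{prop:adjoint is coarse inverse} and the uniqueness lemma \cref{lem: functions with same domain coincide} must be deployed carefully, keeping track of the ambiguity about which Boolean algebra/module is being used (as in \cref{rmk: for approximations the boolean alg does not matter}).
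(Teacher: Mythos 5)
Your proposal tracks the paper's proof closely: spatial implementation via \cref{prop: isos are spatially implemented}, uniformization via \cref{thm: uniformization} (or \cref{cor: uniformization for roe algs} in the Roe case), the rigidity step via \cref{cor: rigidity controlled unitaries}, and the upgrade from weak to strong control via \cref{prop: quasi-control vs str quasi-control}. The overall architecture matches. However, there is a concrete gap in your treatment of part \ref{thm: strong rigidity:three}, and it propagates into your argument for part \ref{thm: strong rigidity:one}.

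Part \ref{thm: strong rigidity:three} states that if \emph{either} $\roeclikeone{\CHx}$ or $\roecliketwo{\CHy}$ is not $\qlcstar\variable$, then $U$ and $U^*$ are strongly approximately controlled. You argue it only under the hypothesis that \emph{neither} algebra is $\qlcstar\variable$. This misses the asymmetric case --- e.g.\ $\roeclikeone\CHx = \cpcstar\CHx$ and $\roecliketwo\CHy = \qlcstar\CHy$ --- where $\Ad(U)$ lands only in $\qlcstar\CHy$, so applying \cref{thm: uniformization} to $\phi = \Ad(U)$ directly gives only weak \emph{quasi}-control of $U$. The missing step is the adjoint trick: since $\cpcstar\CHy \subseteq \qlcstar\CHy = \roecliketwo\CHy$ and $\phi^{-1}=\Ad(U^*)$ maps $\roecliketwo\CHy$ bijectively onto $\cpcstar\CHx$, the restriction of $\Ad(U^*)$ to $\cpstar\CHy$ lands in $\cpcstar\CHx$. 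Hence \cref{thm: uniformization} applied to $\Ad(U^*)$ yields weak \emph{approximate} control of $U^*$, and \cref{prop: quasi-control vs str quasi-control} then gives $\acsupp(U^*) = \cappmap[U^*]{\delta}{E}{F}$. Since the latter is a coarse equivalence, its transpose $\acsupp(U)=\op{\acsupp(U^*)}$ is a coarse map, so $U$ is strongly approximately controlled as well. You cannot dodge this by appealing to part \ref{thm: strong rigidity:one} to assert the two types must match, because you then use \ref{thm: strong rigidity:three} inside your argument for \ref{thm: strong rigidity:one}.

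Specifically, your argument for \ref{thm: strong rigidity:one} in the cross case $\cpcstar\CHx \cong \qlcstar\CHy$ needs $\acsupp(U)$ and $\acsupp(U^*)$ to be mutually inverse coarse equivalences in order to transport the failure of approximability --- i.e.\ to conclude that $\acsupp(\Ad(U)(t))$ is not coarsely contained in $\cid_{\crse Y}$ when $\acsupp(t)$ is not coarsely contained in $\cid_{\crse X}$. This requires strong \emph{approximate} control of both $U$ and $U^*$, which is exactly the output of \ref{thm: strong rigidity:three} in the asymmetric case you left out. With the adjoint argument above, this step closes and the rest of your reasoning for \ref{thm: strong rigidity:one} goes through.
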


\begin{remark}
  The statement of \cref{thm: strong rigidity} is using rather heavily that ``Roe-like'' is defined as a specific list of \cstar{}algebras. It is not clear to us how a more conceptual formulation of the theorem may look like.
\end{remark}

The heart of the proof of \cref{thm: strong rigidity} is the following criterion, which allows us to upgrade weak approximate (resp.\ quasi-) control to its strong counterpart.
\begin{proposition}\label{prop: quasi-control vs str quasi-control}
  Let $\CHx$ be ample and discrete, $\CHy$ admissible, $T\colon\CHx\to\CHy$ a weakly approximately (resp.\ quasi-) controlled isometry such that $\cappmap[T]{\delta}{F}{E}$ is coarsely everywhere defined for some choice of $\delta>0$, $E\in\CE$ and $F\in\CF$.
  Then $T$ is effectively approximately (resp.\ quasi-) controlled and 
  \[
  \cappmap[T]{\delta}{F}{E}=\acsupp(T)=\qcsupp(T)
  \quad (\text{resp.}\ \cappmap[T]{\delta}{F}{E}=\qcsupp(T)).
  \]
\end{proposition}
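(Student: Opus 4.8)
\textbf{Proof plan for \cref{prop: quasi-control vs str quasi-control}.}
The strategy is to prove that the approximating relation $R_0 \coloneqq \appmap[T]{\delta}{F}{E}$ actually carries \emph{all} of the operator $T$, in the following sense: for every $\varepsilon' > 0$ there is a (larger) controlled relation $R \supseteq R_0$ such that $T$ is $\varepsilon'$-$R$-approximately (resp.\ quasi-) controlled. Since $\cappmap[T]{\delta}{F}{E}$ is assumed coarsely everywhere defined, \cref{lem:approximation is controlled} shows it is controlled, hence a coarse map $\crse X \to \crse Y$; enlarging it by composing with gauges keeps it a coarse map, and the various $\varepsilon'$-witnesses will all be close to $R_0$, so in the limit $\acsupp(T)$ (resp.\ $\qcsupp(T)$) exists and equals $\cappmap[T]{\delta}{F}{E}$ by minimality. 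This immediately yields strong approximate (resp.\ quasi-) control by \cref{rkm: about approximate and quasi support}.

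First I would fix a discrete partition $X = \bigsqcup_{i\in I} A_i$ witnessing discreteness of $\CHx$ with gauge $\gaugex \supseteq E$, so that $t_A \coloneqq \chf A$ for $A$ a union of blocks behaves well. The key point is the following: because $\CHx$ is \emph{ample}, one can choose the partition so that $\chf{A_i}$ has large rank; then, combining with the fact that $T$ is an isometry and $\cappmap[T]{\delta}{F}{E}$ is coarsely everywhere defined, each basis block $A_i$ is ``seen'' by $R_0$. Concretely, I would argue that for a bounded $E$-controlled $A$, the vector subspace $\chf A(\CHx)$ is sent by $T$ into a region of $\CHy$ that is controlled-close (up to $\delta$, and after composing with the weak quasi-control entourage for propagation-$\diag$ operators) to $R_0(A)$: if some unit vector $v \in \chf A(\CHx)$ had $T(v)$ with a non-negligible component outside any controlled neighborhood of $R_0(A)$, one could find a bounded measurable $B \subseteq Y$ far from $R_0(A)$ with $\norm{\chf B T \chf A}$ sizeable, contradicting the definition of $R_0$ once $\delta$ is taken into account — here is where weak quasi-control enters, to pass from ``$\norm{\chf B T \chf A}$ large'' statements about individual rank-one pieces $\munit{w}{v} = \Ad(T)(\munit{v'}{v})$ to uniform-in-$A$ control of the entourage, exactly as in the proof of \cref{lem:approximation is controlled}. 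Summing over the blocks of a generic bounded set and using \cref{lemma:supports-operators-sum} and the composition rule \cref{eq:support of composition} then gives that $\chf{Y \smallsetminus R(A)} T \chf A$ has small norm for $R \coloneqq \gaugey \circ R_0 \circ \gaugex$, uniformly; cutting $T$ down to $\chf{R(A_i)} T \chf{A_i}$ on each block and summing (the blocks being $\gaugex$-controlled, the images $R(A_i)$ are equi-bounded) produces an $R'$-controlled operator, $R' \coloneqq R \circ \gaugex$ a block-entourage, within $\varepsilon'$ of $T$ in norm — giving strong \emph{approximate} control; for the weaker quasi-control hypothesis one instead directly estimates $\norm{\chf{B'} T \chf B}$ for $R'$-separated $B', B$ using the same decomposition, giving strong \emph{quasi}-control.

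Once strong approximate (resp.\ quasi-) control is established with witnessing relations all coarsely contained in $\gaugey \circ R_0 \circ \gaugex$, I would invoke \cref{cor:approximation quasi-controlled}, which already gives $\cappmap[T]{\delta}{F}{E} \crse\subseteq \csupp(T)$, together with the fact that for a strongly approximately (resp.\ quasi-) controlled $T$ the approximating relation with $\delta > 0$ is squeezed between $R_0$ and any witnessing $R$: indeed $\norm{\chf B T \chf A} > \delta$ forces $\chf B (T - S) \chf A \neq 0$ or $\chf B S \chf A \neq 0$ for an $R$-controlled $S$ approximating $T$ to within $\delta$, hence $B \times A$ meets $R$ up to gauges. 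Therefore $R_0$, every witnessing $R$, and hence $\acsupp(T)$ (resp.\ $\qcsupp(T)$) are all asymptotic, and minimality identifies the quasi (resp.\ approximate) support with $\cappmap[T]{\delta}{F}{E}$. In the approximable case, $\acsupp(T) = \qcsupp(T)$ follows because a strongly approximately controlled operator is strongly quasi-controlled with the \emph{same} witnessing relations (an $\varepsilon$-$R$-approximation is an $\varepsilon$-$R$-quasi-control), and both supports coincide with $R_0$.

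\textbf{Main obstacle.} The delicate step is the uniformity: showing that a \emph{single} controlled entourage $R$ (depending only on $\varepsilon'$, $E$, $F$, $\delta$ and the gauges, not on the bounded set $A$) simultaneously captures $T$ on every block $A_i$ up to norm $\varepsilon'$ (resp.\ up to the quasi-local estimate). This is where one must feed in weak approximate/quasi-control of $T$ in its full strength — the bound it provides is precisely ``equi-controlled propagation in, equi-approximation/equi-quasi-locality out'' — and where the ampleness and discreteness of $\CHx$ are used to reduce everything to rank-one matrix units $\munit{v'}{v}$ of controlled propagation, whose images $\Ad(T)(\munit{v'}{v}) = \munit{Tv'}{Tv}$ have matrix-coefficient norm $\norm{\chf{B'}(Tv')}\,\norm{\chf B (Tv)}$ controllable via \cref{lem: matrixunit identities}, exactly mirroring \cref{lem:approximation is controlled}. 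Assembling the block-wise cuts into a genuine bounded operator without blowing up the propagation is handled by \cref{cor: sub-basis-has-prop-zero} and \cref{lemma:diagonal-ais-preserve-supp}, since all the relevant projections are subordinate to the discrete partition.
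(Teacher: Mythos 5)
There is a genuine gap in the assembly step. You correctly identify that weak quasi-control plus coarse everywhere-definedness of $\cappmap[T]{\delta}{F}{E}$ yields a uniform-in-$A$ estimate of the form $\norm{\chf{C}T\chf A}\leq\varepsilon'$ for $C$ sufficiently far from $R_0(A)$ (this is essentially \cref{lem: quasi-control vs str quasi-control}). But from there you ``cut $T$ down to $\sum_i \chf{R(A_i)}T\chf{A_i}$ and sum,'' claiming the result is within $\varepsilon'$ of $T$ in norm. This does not follow: while the domains $\chf{A_i}$ are pairwise orthogonal, the error terms $\chf{Y\smallsetminus R(A_i)}T\chf{A_i}$ have no reason to have orthogonal ranges, so for $v=\sum_i v_i$ with $v_i\in\CH_{A_i}$ you only get $\norm{\sum_i \chf{Y\smallsetminus R(A_i)}T(v_i)}\leq\varepsilon'\sum_i\norm{v_i}$, which is not bounded by $\varepsilon'\norm{v}$. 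In other words, the block-wise bound does not upgrade to an operator-norm bound ``for free.'' Your invocation of \cref{cor: sub-basis-has-prop-zero} and \cref{lemma:diagonal-ais-preserve-supp} fixes the propagation bookkeeping, not the norm.

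The paper's proof resolves exactly this issue, and the mechanism is the part you are missing. \cref{lem: find unitary for strong approximate} uses ampleness ($\CH_{A_i}$ of infinite rank) to choose, on each finite union of blocks $p_{J,\lambda}$, a $\diag$-controlled unitary twist $s_{J,\lambda}$ that makes the error terms $\chf{C_i}Ts_ip_{\lambda(i)}$ pairwise \emph{orthogonal}, so the norm of their sum is a maximum rather than a sum. One then still needs to recover $T$ itself from $Ts_{J,\lambda}p_{J,\lambda}$, and this is done via the isometry identity $Tp_{J,\lambda}=\bigparen{Ts_{J,\lambda}^*T^*}\bigparen{Ts_{J,\lambda}p_{J,\lambda}}$: the middle factor is $\Ad(T)(s_{J,\lambda}^*)$, which is $\varepsilon_2$-$F_2$-approximable (resp.\ quasi-local) uniformly by weak approximate/quasi-control, and composing the two pieces gives strong control of $Tp_{J,\lambda}$. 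Finally, since the finite-rank $p_{J,\lambda}$ form an approximate unit strongly converging to $1$, \sot-closure of $\varepsilon$-$R$-control (\cref{lemma:qloc-sot-closed}) passes the estimate to $T$. Your minimality argument at the end is correct in spirit, but the core upgrade from weak to strong control needs the twist-and-isometry argument, not a direct block-wise cut.
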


For now, we treat \cref{prop: quasi-control vs str quasi-control} as a black box, and proceed with the proof of \cref{thm: strong rigidity}.

\begin{proof}[Proof of \cref{thm: strong rigidity}]
  The first steps of the proof follow that of \cref{thm: stable rigidity}. By \cref{prop: isos are spatially implemented}, $\phi=\Ad(U)$ for some unitary $U\colon\CHx\to\CHy$. However, unlike what we did in \cref{thm: stable rigidity}, we now aim to show that the whole of $U$ and $U^*$ are is quasi-controlled, without the need to restrict to submodules.

  Of course, if one of $\roeclikeone{\CHx}$ and $\roecliketwo{\CHy}$ is unital, then the other one must be unital as well. In particular, they both contain $\cpcstar\variable$ and are contained in $\qlcstar\variable$. We thus deduce from \cref{thm: uniformization} that $U$ and $U^*$ are weakly quasi-controlled (and even weakly approximately controlled if they send $\cpcstar{\variable}$ into $\cpcstar{\variable}$).
  
  The non-unital case requires an alternative argument (this is the shortcoming mentioned in \cref{subsec: rigidity vs local compactness}).
  In this case both  $\roeclikeone\variable $ and $\roecliketwo\variable$ must be $\roecstar\variable$.
  Weak approximability of $U$ and $U^*$ is then given by \cref{cor: iso of roe algs are weakly approximable}.
  
  \smallskip
  Now, \cref{thm: rigidity quasi-proper operators}---or, rather, \cref{cor: rigidity controlled unitaries}---yields $\delta>0$, $E\in \CE$ and $F\in\CF$ such that $\cappmap[U]\delta F E$ and $\cappmap[U^*]\delta E F$ are coarse equivalences inverse to one another. Assertion \cref{thm: strong rigidity:two} directly follows from \cref{prop: quasi-control vs str quasi-control}.
  
  \smallskip
  For \cref{thm: strong rigidity:three}, observe that if $\roecliketwo{\variable}$ is not $\qlcstar\variable$ then---always by \cref{prop: quasi-control vs str quasi-control}---$U$ is effectively approximately controlled with $\acsupp(U) = \qcsupp(U) = \cappmap[U]\delta F E$.
  Since approximate/quasi supports are well behaved under taking adjoints and $\op{\cappmap[U]\delta F E}=\cappmap[U^*]\delta E F$, we obtain as a consequence that $U^*$ must also be effectively approximately controlled with 
  \[
    \acsupp(U^*) = \qcsupp(U^*) = \op{\acsupp(U)} = \cappmap[U^*]\delta E F.
  \]
  If it is the case that $\roeclikeone{\variable}$ is not $\qlcstar\variable$, the symmetric argument applies.

  \smallskip
  It only remains to complete the proof of \cref{thm: strong rigidity:one}. The case of $\roecstar\variable$ is clear, as it is the only non-unital option.
  If $\roeclikeone\CHx = \cpcstar\CHx$, then $\phi^{-1}$ maps $\cpcstar\CHy$ into $\cpcstar\CHx$. It then follows that $U^*$ is effectively approximately controlled, with $\acsupp\left(U^*\right) = \cappmap[U^*]\delta E F$. Since taking adjoints is highly compatible with approximate supports, it follows that
  \[
    \acsupp\left(U\right) = \cappmap[U]\delta F E.
  \]
  Since the latter is a coarse map, we deduce that $U$ itself is effectively approximately controlled.
  It follows that $\Ad(U)$ maps both $\cpcstar\CHx$ and $\qlcstar\CHx$ into $\qlcstar\CHy$.
  If $\cpcstar\CHx\neq \qlcstar\CHx$, it then follows that $\cpcstar\CHy\neq \qlcstar\CHy$ as well and that $\roecliketwo\CHy = \cpcstar\CHy$.
  A symmetric argument shows that if $\roeclikeone\CHx = \qlcstar{\CHx}$, then $\roecliketwo\CHy = \qlcstar\CHy$.
\end{proof}

The following consequence is immediate, and will be used later.

\begin{cor}\label{cor: unitaries weak control adjoint iff effective ce}
  Let $\crse X$ and $\crse Y$ be countably generated coarse spaces and $U\colon \CHx\to\CHy$ a weakly approximately (resp.\ quasi) controlled unitary among ample discrete modules. The following conditions are equivalent.
  \begin{enumerate}[label=(\alph*)]
    \item $U^*$ is weakly approximately (resp.\ quasi) controlled;
    \item $U$ is effectively approximately (resp.\ quasi) controlled and 
    \[
      \acsupp(U)\crse{\colon X\to Y} \quad (\text{resp.}\ \qcsupp(U)\crse{\colon X\to Y}) 
    \]
    is a coarse equivalence.
  \end{enumerate}
\end{cor}
\begin{proof}
  One implication is immediate, because if $\acsupp(U)\crse{\colon X\to Y}$ is a coarse equivalence then $\acsupp(U^*)=\op{\acsupp(U)}$ is a coarse map, showing that $U^*$ is effectively (whence weakly) approximately controlled.
  Vice versa, if both $U$ and $U^*$ are weakly approximately controlled, then $\Ad(U)$ induces an isomorphism $\cpcstar\CHx\to\cpcstar\CHy$, and the claim follows from \cref{thm: strong rigidity}~\cref{thm: strong rigidity:three}.
  The argument for the ``quasi'' statement is completely analogous.
\end{proof}

Unpacking the notation, the (proof of) the approximable case of \cref{thm: strong rigidity} implies the following.

\begin{corollary}[cf.\ \cite{rigidIHES}*{Theorem 4.5}]\label{cor:  aut is limit of controlled}
  Let $\crse X$ and $\crse Y$ be countably generated coarse spaces; $\CHx$ and $\CHy$ ample discrete modules and $U\colon \CHx \to \CHy$ is a unitary.
  Suppose that either:
  \begin{enumerate}[label=(\alph*)]
    \item $\Ad(U)\colon \cpcstar{\CHx} \to \cpcstar{\CHy}$ is a \Star{}isomorphism; or
    \item $\crse X$ and $\crse Y$ are coarsely locally finite and $\Ad(U)\colon \roecstar{\CHx} \to \roecstar{\CHy}$ is a \Star{}isomorphism.
  \end{enumerate}
  Then $U$ is the norm-limit of operators that are coarsely supported on a (fixed) coarse equivalence $\crse{f\colon X\to Y}$.
\end{corollary}

\begin{remark}\label{rmk: functoriality of qcsupp}
  Combining \cref{lem: composition of approximate and quasi support} with \cref{lem: functions with same domain coincide}, we see that \cref{thm: strong rigidity} implies that if $\CHx$ is an ample module then $\qcsupp$ defines a group homomorphism from $\aut(\roeclike\CHx)$ into the group of coarse equivalences $\coe{\crse{X}}$ (see \cref{def: CE and CUni} below).
  This fact will be important in the applications of \cref{thm: strong rigidity} explained in \cref{subsec: rig outer aut 1}.

  More generally, let $\roeclike\variable$ denote $\cpcstar{\variable}$ or $\qlcstar\variable$ and consider the category \Cat{IsoCMod$_{\CR}$} whose objects are ample discrete coarse geometric modules over countably generated coarse spaces and where morphisms from $\CHx$ to $\CHy$ are the isomorphisms $\roeclike\CHx\to\roeclike\CHy$. Then assigning $\crse X$ to $\CHx$ and $\qcsupp(U)$ to $\Ad(U)$ defines a functor
  \[
    \Cat{IsoCMod}_{\CR}\to\Cat{Coarse}
  \]
  to the category of coarse spaces (the image of this functor only consists of countably generated coarse spaces and coarse equivalences). The same holds for $\roeclike\variable = \roecstar\variable$ under the additional assumption that the coarse spaces be coarsely locally finite.
\end{remark}

\begin{remark}
  There is an interesting categorical point of view on the rigidity of spaces using \cstar{}categories \cite{Krutoy2025categorical}. Ignoring technicalities arising from choosing cardinals and only considering coarse spaces and modules of said coarse cardinality and ampleness, one may associate to any coarse space $\crse X$ the category $\CA(\crse X)$ of coarse geometric modules over $\crse X$, where the morphisms from $\CHx$ to $\CHx'$ are operators $T\colon \CHx\to\CHx'$ with $\acsupp(T)\crse\subseteq \cid_{\crse X}$.
  This category is actually a \cstar{}category.

  Following the definitions, we see that there is $\cpcstar\CHx$ is nothing but the set of endomorphisms of $\CHx$ as an object in $\CA(\crse X)$. In turn, the rigidity theorem can be interpreted as saying that one may associate to a \Star{}equivalence of categories $F\colon \CA(\crse X)\to \CA(\crse Y)$ a coarse equivalence $\crse{X\cong Y}$. We refer \cite{Krutoy2025categorical} for details and other related facts concerning e.g.\ full and faithful functors and coarse embeddings.
\end{remark}

\section{Upgrading weak to strong control}
We shall now proceed with the proof of \cref{prop: quasi-control vs str quasi-control}. In order to show it, we first need a few preliminary results.
Let $\CHx$ be discrete, and let $X = \bigsqcup_{i \in I} A_i$ be an $\gauge$\=/controlled discrete partition of $X$. 
Given a finite set of indices $J\subseteq I$, let $\Lambda^J$ be the set of functions $\lambda$ associating to each $j\in J$ a finite dimensional vector subspace of $\CH_{A_j}$.
Let $p_{\lambda(j)}$ be the projection onto $\lambda(j)$, and let
\[
  p_{J,\lambda}\coloneqq\sum_{j\in J} p_{\lambda(j)}
\]
be the projection onto the span of $\braces{\lambda(j)}_{j \in J}$. 
Ordering $\Lambda^J$ by pointwise inclusion makes each $(p_{J,\lambda})_{\lambda\in\Lambda^J}$ into a net.
Let $\Lambda_{\rm fin}\coloneqq\bigcup\Lambda^J$ be the set of all pairs $(J,\lambda)$ with $J$ finite and $\lambda$ defined on $J$.
We order $\Lambda_{\rm fin}$ by extension, so that $(p_{J,\lambda})_{(J,\lambda)\in\Lambda_{\rm fin}}$ becomes a net as well.\footnote{\,
The net $(p_{J,\lambda})_{(J,\lambda)\in\Lambda_{\rm fin}}$ is a close relative of the approximate unit constructed in \rfApproxUnit and used in \cref{lem: exists submodule approximating operator}. The difference is that here we only consider finitely supported functions $\lambda$.}
\begin{lemma} \label{lemma:discrete-module-admits-nice-projections}
  The net $\paren{p_{J,\lambda}}_{(J,\lambda)\in\Lambda_{\rm fin}}$ described above satisfies:
  \begin{enumerate}[label=(\roman*)]
    \item \label{lemma:discrete-module-admits-nice-projections:2} $p_{\lambda(j)} \leq \chf{A_j}$;
    \item \label{lemma:discrete-module-admits-nice-projections:1} $p_{J,\lambda}$ has finite rank for all $(J,\lambda)\in\Lambda_{\rm fin}$;
    \item \label{lemma:discrete-module-admits-nice-projections:3} $\supp(p_{J,\lambda})\subseteq \gauge$ for all $(J,\lambda) \in \Lambda_{\rm fin}$;
    \item \label{lemma:discrete-module-admits-nice-projections:5} $(p_{J,\lambda})_{(J,\lambda)\in\Lambda_{\rm fin}}$ strongly converges to $1$.
  \end{enumerate}
\end{lemma}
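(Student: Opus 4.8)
The plan is to verify each of the four listed properties in turn, noting that they are essentially immediate from the construction of the net and the properties of discrete modules already recalled in the excerpt.

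First, for property \cref{lemma:discrete-module-admits-nice-projections:2}, I would observe that by definition $p_{\lambda(j)}$ is the orthogonal projection onto a subspace $\lambda(j)$ of $\CH_{A_j}=\chf{A_j}(\CHx)$; hence its range is contained in the range of $\chf{A_j}$, which is exactly the statement $p_{\lambda(j)}\leq\chf{A_j}$. For property \cref{lemma:discrete-module-admits-nice-projections:1}, I note that each $\lambda(j)$ is by construction a \emph{finite dimensional} subspace, so $p_{\lambda(j)}$ has finite rank, and $p_{J,\lambda}=\sum_{j\in J}p_{\lambda(j)}$ is a finite sum (as $J$ is finite) of finite-rank projections onto mutually orthogonal subspaces (the $\CH_{A_j}$ are orthogonal since the $A_j$ are disjoint), hence itself of finite rank.

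For property \cref{lemma:discrete-module-admits-nice-projections:3}, I would argue as follows: each vector spanning $\lambda(j)$ is supported on $A_j$, which is $\gauge$-bounded, so the family of all these spanning vectors (over all $j\in J$) is subordinate to the partition $(A_i)_{i\in I}$ in the sense of \cref{subsec: block-entourages}; then by \cref{lem:join of controlled projection} (or directly by \cref{cor: sub-basis-has-prop-zero}) the projection $p_{J,\lambda}$ onto their closed span has support contained in $\diag(A_i\mid i\in I)\subseteq\gauge$, using that $\gauge$ is a discreteness gauge and thus $A_j\times A_j\subseteq\gauge$ for every $j$. Finally, for property \cref{lemma:discrete-module-admits-nice-projections:5}, I would fix a vector $v\in\CHx$ and $\varepsilon>0$; by condition \cref{def:discrete module:sum} of \cref{def:discrete module} the sum $\sum_{i\in I}\chf{A_i}$ converges strongly to $1$, so there is a finite $J\subseteq I$ with $\norm{v-\sum_{j\in J}\chf{A_j}(v)}<\varepsilon/2$; for each $j\in J$ choose $\lambda(j)$ to be the (finite dimensional) span of $\chf{A_j}(v)$, so that $p_{J,\lambda}(v)=\sum_{j\in J}\chf{A_j}(v)$, whence $\norm{v-p_{J,\lambda}(v)}<\varepsilon/2$; since the net is increasing under extension and $p_{J',\lambda'}$ is a projection for every $(J',\lambda')\geq(J,\lambda)$, monotonicity gives $\norm{v-p_{J',\lambda'}(v)}<\varepsilon$ for all such $(J',\lambda')$, proving strong convergence to $1$.

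I do not expect any genuine obstacle here: every assertion reduces to a definitional check or a direct invocation of \cref{lem:join of controlled projection}, \cref{def:discrete module}, and the orthogonality of the $\CH_{A_i}$. The only point requiring a modicum of care is \cref{lemma:discrete-module-admits-nice-projections:5}, where one must remember to pass to a cofinal sub-net by extending $J$ and shrinking nothing, and use that the partial sums of a strongly convergent sum of orthogonal projections are monotone; this is the mild ``main obstacle'', but it is entirely routine.
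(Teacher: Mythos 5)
Your proposal is correct and follows essentially the same route as the paper's (very terse) proof: items (i) and (ii) by construction, item (iii) via \cref{lemma:supp-vector-a-times-a} together with \cref{lem:join of controlled projection}, and item (iv) from condition \cref{def:discrete module:sum} of \cref{def:discrete module} plus monotonicity of the net. The only cosmetic point is that your parenthetical appeal to \cref{cor: sub-basis-has-prop-zero} for item (iii) is slightly off-target (that corollary is about composing with $p$, not the support of $p$ itself), but your primary citation of \cref{lem:join of controlled projection} is the right one.
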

\begin{proof}
  Assertions~\cref{lemma:discrete-module-admits-nice-projections:2,lemma:discrete-module-admits-nice-projections:1} hold by construction, while~\cref{lemma:discrete-module-admits-nice-projections:3} follows from \cref{lemma:supp-vector-a-times-a,lem:join of controlled projection}.
  Lastly, \cref{lemma:discrete-module-admits-nice-projections:5} follows from discreteness.
\end{proof}

\begin{lemma}\label{lem: find unitary for strong approximate}
  Let $X=\bigsqcup_{i\in I} A_i$ be a discrete partition, $\gaugey$ an admissibility gauge for $\CHy$,  $R\subseteq Y\times X$ a relation, and $T\colon\CHx\to\CHy$ an operator.
  Suppose that:
  \begin{itemize}
    \item $\CH_{A_i}$ has infinite rank for every $i\in I$;
    \item $\norm{\chf{C_i} T\chf{A_i}}\leq \varepsilon$ for every $i\in I$ and measurable $C_i\subseteq Y$ that is $R$-separated from $A_i$.
  \end{itemize}
  Then, for every $(J,\lambda)\in\Lambda_{\rm fin}$ there is a $\diag(A_i\mid i\in I)$-controlled unitary $s_{J,\lambda}\in \cpstar\CHx$ such that $Ts_{J,\lambda}p_{J,\lambda}$ has support $\varepsilon$-approximately controlled in $\gaugey\circ R$.
\end{lemma}
\begin{proof}
  To begin with, choose for every $i\in I$ a measurable $\gaugey$-controlled thickening $B_i$ of $R(A_i)$ and let $C_i\coloneqq Y\smallsetminus B_i$. 
  In particular, $\norm{\chf{C_i} T\chf{A_i}}\leq \varepsilon$ for every $i\in I$.

  Now, enumerate the (finite) index set $J$. For each $i\in J$, we iteratively define a unitary operator $s_i \in \CB(\CH_{A_i})$ as follows. Since the span
  \[
    V_i\coloneqq \angles{T^*\chf{C_i}\chf{C_j}Ts_jp_{\lambda(j)}(\CHx) \mid j\in J,\ j < i}
  \]
  is finite dimensional, we may (arbitrarily) choose a unitary $s_i \in \CB(\CH_{A_i})$ such that $s_ip_{\lambda(i)}s_i^*$ is orthogonal to $V_i$. 
  That is,
  \[
    (s_ip_{\lambda(i)}s_i^*)(T^*\chf{C_i}\chf{C_j}Ts_jp_{\lambda(j)}) = 0
  \]
  for every $j<i$.
  Note that here is where we used that $\CH_{A_i}$ has infinite rank.

  This choice is made so that the operators $(\chf{C_i}Ts_ip_{\lambda(i)})_{i\in J}$ are pairwise orthogonal. In fact, given $j\neq i$ we obviously have
  \[
    \chf{C_j}Ts_jp_{\lambda(j)}(\chf{C_i}Ts_ip_{\lambda(i)})^*
    =\chf{C_j}Ts_jp_{\lambda(j)}p_{\lambda(i)}s_i^*T^*\chf{C_i} = 0
  \]
  and, since $s_i$ is a unitary on $\CH_{A_i}$, by construction we also have
  \[
    (\chf{C_i}Ts_ip_{\lambda(i)})^*\chf{C_j}Ts_jp_{\lambda(j)}
    =p_{\lambda(i)}s_i^*T^*\chf{C_i}\chf{C_j}Ts_jp_{\lambda(j)} = 0.
  \]

  We then define $s_{J, \lambda} \coloneqq \sum_{i\in J}s_i +\sum_{i\in I\smallsetminus J}1_{\CH_{A_i}}$, and observe that it is a unitary of $\CHx$ supported on $\diag(A_i\mid i\in I)$. We claim that such $s_{J, \lambda}$ satisfies our requirements.
  The operator $T_{J, \lambda} \coloneqq\sum_{i\in J} \chf{B_i}Ts_i p_{\lambda(i)}$ is $(\gaugey\circ R)$-controlled by construction. Moreover,
  \[
    \norm{Ts_{J, \lambda} p_{J,\lambda} - T_{J, \lambda}} = \norm{\sum_{i\in J}\bigparen{Ts_ip_{\lambda(i)} - \chf{B_i}Ts_ip_{\lambda(i)}}}
    = \norm{\sum_{i\in J}\chf{C_i}Ts_ip_{\lambda(i)}}.
  \]
  By orthogonality, the latter is simply $\max_{i\in J}\norm{\chf{C_i}Ts_ip_{\lambda(i)}}\leq \varepsilon$.
\end{proof}

\begin{lemma}\label{lem: quasi-control vs str quasi-control}
  Let $T\colon\CHx\to\CHy$ be weakly quasi-controlled and $\CHy$ admissible.
  Suppose that $\cappmap[T]{\delta_0}{F_0}{E_0}$ is coarsely everywhere defined for some $\delta_0>0$, $E_0\in\CE$ and $F_0\in\CF$.
  Then, for every $E\in \CE$ and $\delta>0$ there exists some $F\in\CF$ such that for every measurable $E$-controlled $A\subseteq X$ we have
    \[
      \norm{\chf{C}T\chf{A} - T\chf{A}}\leq{\delta}
    \]
    for every measurable $C\subseteq Y$ disjoint from $\appmap[T]{\delta_0}{F}{E_0}(A)$.
\end{lemma}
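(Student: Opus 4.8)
The aim is, given $E\in\CE$ and $\delta>0$, to produce $F\in\CF$ so that $T\chf A$ is $\delta$\nobreakdash-concentrated on a measurable thickening of $\appmap[T]{\delta_0}{F}{E_0}(A)$ for every (say) $E$\nobreakdash-bounded measurable $A$; the displayed inequality $\norm{\chf C T\chf A-T\chf A}=\norm{\chfcY C T\chf A}\le\delta$ then follows from monotonicity of $B\mapsto\norm{\chf B\xi}$, since $Y\smallsetminus C$ being disjoint from $\appmap[T]{\delta_0}{F}{E_0}(A)$ makes $\chfcY C$ dominated by the indicator of the complement of that thickening. First I would reduce to a single vector: enlarging $E_0$ (harmless, as $\cappmap[T]{\delta_0}{F_0}{E_0}$ stays controlled and coarsely everywhere defined) we may assume $E\subseteq E_0$, and then it suffices to find $F$ such that $\norm{Tv-\chf D Tv}\le\delta$ whenever $v\in(\CH_A)_1$ is supported on an $E_0$\nobreakdash-bounded measurable $A$ and $D:=\appmap[T]{\delta_0}{F}{E_0}(A)$, because $\norm{\chf C T\chf A-T\chf A}=\sup_{v\in(\CH_A)_1}\norm{\chfcY C Tv}$.

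Fix such $A$, $v$ and set $w:=Tv$; if $\norm w\le\delta$ there is nothing to prove, so assume $\norm w>\delta$. Since $v$ is supported on the $E_0$\nobreakdash-bounded set $A$, the projection $p_v$ has propagation in $E_0$ by \cref{lemma:supp-vector-a-times-a}, so weak quasi-control of $T$ (\cref{def: quasi-controlled operator}) yields, for $\varepsilon_1:=(\delta/4)^2$, a controlled entourage $F_1\in\CF$ depending only on $\varepsilon_1$ and $E_0$ with $\Ad(T)(p_v)=\munit{w}{w}$ being $\varepsilon_1$\nobreakdash-$F_1$\nobreakdash-quasi-local; by \cref{lem: matrixunit identities} this reads $\norm{\chf{B_1}w}\,\norm{\chf{B_2}w}\le\varepsilon_1$ whenever $B_1,B_2$ are $F_1$\nobreakdash-separated. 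Enlarging $F_1$ we may take it to be a gauge containing a non-degeneracy gauge $\gaugey$ for $\CHy$. Pick a $\gaugey$\nobreakdash-bounded measurable $B_\alpha$ with $\norm{\chf{B_\alpha}w}$ within $\eta'$ of the supremum $\alpha$ of $\norm{\chf B w}$ over all $\gaugey$\nobreakdash-bounded measurable $B$, and set $B_w:=(F_1\circ F_1)(B_\alpha)$, a measurable $G$\nobreakdash-bounded set with $G$ depending only on $F_1$ (here $\CHy$ admissible makes the thickening measurable). The crucial observation is that $Y\smallsetminus B_w$ carries almost no mass: any measurable $B\subseteq Y\smallsetminus B_w$ that is a (possibly infinite) union of $\gaugey$\nobreakdash-bounded pieces is $F_1$\nobreakdash-separated from $B_\alpha$ (as $B_w\supseteq F_1(B_\alpha)$), hence $\norm{\chf B w}\le\varepsilon_1/\norm{\chf{B_\alpha}w}$; feeding this into \cref{lem:supports of vectors almost contained in bounded} applied to $\chfcY{B_w}w$ and letting $\eta'\to0$ gives $\norm{w-\chf{B_w}w}\le\varepsilon_1/\alpha$.

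It then remains to bound $\alpha$ — the largest mass $w$ places on a $\gaugey$\nobreakdash-bounded set — below by a constant independent of $A$ and $v$; I expect this to be the main obstacle. Morally, weak quasi-control prevents $w=Tv$ from being spread arbitrarily thinly: if every $\gaugey$\nobreakdash-bounded chunk of $w$ had mass $<\sqrt{\varepsilon_1}$, one can iterate the single\nobreakdash-far\nobreakdash-set estimate above, peeling off $G$\nobreakdash-bounded clusters around successively densest chunks (whose squared masses are summable, being bounded by $\norm w^2$), and then use that $\cappmap[T]{\delta_0}{F_0}{E_0}$ is coarsely everywhere defined — which, combined with quasi-control, forces some bounded chunk of the $T$\nobreakdash-image of any bounded unit vector to carry mass exceeding $\delta_0$ — to contradict $\norm w>\delta$. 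Making this ``no thin spreading'' argument fully rigorous without a bounded-geometry hypothesis is the delicate point; I would carry it out echoing the Baire/compactness bookkeeping of \cref{sec: uniformization}, or by a direct appeal to the Concentration Inequality \cref{prop: concentration-ineq}. Granting $\alpha\ge\sqrt{\varepsilon_1}$, we obtain $\norm{w-\chf{B_w}w}\le\sqrt{\varepsilon_1}=\delta/4$. Finally, choosing the output $F\supseteq G\cup F_0\cup\gaugey$: the set $B_w$ is $F$\nobreakdash-bounded, and $\norm{\chf{B_w}w}\ge\norm w-\delta/4>\delta_0$ (the remaining regime $\norm{\chf{B_w}w}\le\delta_0$ is excluded by the same ``no thin spreading'' circle of ideas, using $\norm w>\delta$), whence $\norm{\chf{B_w}T\chf A}\ge\norm{\chf{B_w}w}>\delta_0$ forces $B_w\times A\subseteq\appmap[T]{\delta_0}{F}{E_0}$, so $B_w\subseteq D$ and $\norm{\chfcY D Tv}\le\norm{\chfcY{B_w}w}\le\delta/4\le\delta$. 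Taking the supremum over $v$ completes the reduction and the proof.
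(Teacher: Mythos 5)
The first half of your argument is sound: applying weak quasi-control to $p_v$ gives $\norm{\chf{B_1}w}\,\norm{\chf{B_2}w}\le\varepsilon_1$ for $F_1$-separated $B_1,B_2$, and this does concentrate $w=Tv$ near its densest gauge-bounded chunk $B_\alpha$ up to an error of order $\varepsilon_1/\alpha$. But the step you flag as ``the main obstacle'' --- a uniform lower bound on $\alpha=\sup_B\norm{\chf B w}$ --- is a genuine gap, and none of the routes you sketch can close it. Coarse everywhere-definedness of $\cappmap[T]{\delta_0}{F_0}{E_0}$ does \emph{not} force ``some bounded chunk of the $T$-image of any bounded unit vector to carry mass exceeding $\delta_0$'': it only guarantees that near each point of $X$ there \emph{exists} some pair $B_0\times A_0$ and hence \emph{some} unit vector $v_0\in\CH_{A_0}$ with $\norm{\chf{B_0}Tv_0}>\delta_0$; it says nothing about an arbitrary $v$ supported nearby. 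The Concentration Inequality (\cref{prop: concentration-ineq}) is unavailable because $T$ is not assumed bounded below here. And the ``peeling'' iteration cannot produce a contradiction: if every $\gaugey$-bounded chunk of $w$ has mass below $\sqrt{\varepsilon_1}$, then all the pairwise product bounds coming from quasi-locality of $\munit ww$ are satisfied vacuously, so quasi-locality of the \emph{diagonal} rank-one operator is simply compatible with $w$ being spread thinly over infinitely many bounded pieces.

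The missing idea, which is how the paper proceeds, is to couple the arbitrary $v$ to the witness $v_0$ by an \emph{off-diagonal} rank-one operator. Coarse everywhere-definedness gives a gauge $E_1$ and, for each $E$-bounded $A$, a defining block $B_0\times A_0$ of $\appmap[T]{\delta_0}{F_0}{E_0}$ with $A\subseteq E_1(A_0)$ and a unit vector $v_0\in\CH_{A_0}$ with $\norm{\chf{B_0}Tv_0}>\delta_0$. For any $v\in(\CH_A)_1$ the operator $\matrixunit{v}{v_0}$ is $E_1$-controlled, so weak quasi-control makes $\Ad(T)(\matrixunit{v}{v_0})=\matrixunit{Tv}{Tv_0}$ an $\varepsilon$-$F_1$-quasi-local operator with $F_1$ depending only on $\varepsilon$ and $E_1$; since $\norm{\chf{C}\matrixunit{Tv}{Tv_0}\chf{B_0}}\geq \norm{\chf{C}Tv}\cdot\delta_0^2/\norm{T}$, any $C$ that is $F_1$-separated from $B_0$ satisfies $\norm{\chf C Tv}\leq \varepsilon\norm T/\delta_0^2$. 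This pins $Tv$ onto a controlled thickening of $B_0$ directly and uniformly in $v$, with no need to lower-bound $\alpha$ at all; one then only has to enlarge $F$ so that this thickening of $B_0$ sits inside $\appmap[T]{\delta_0}{F}{E_0}(A)$. Without this coupling your argument does not go through.
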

\begin{proof}
  Since the domain of $\appmap[T]{\delta_0}{F_0}{E_0}$ is coarsely dense, we may find a large enough gauge $E_1\in\CE$ such that for every $E$-bounded (measurable) $A\subseteq X$ there is a defining pair $B_0\times A_0\subseteq \appmap[T]{\delta_0}{F_0}{E_0}$ with $A\subseteq E_1(A_0)$. Here, by definition, $B_0\times A_0$ is measurable and $(F_0\otimes E_0)$-controlled, and we may fix some $v_0\in (\CH_{A_0})_1$ with $\norm{\chf{B_0}T(v_0)}>{\delta_0}$.

  Now, for any arbitrary $v\in(\CH_A)_1$, the operator $\matrixunit{v}{v_0}$ is $E_1$-controlled by \cref{lemma:supp-vector-a-times-a}. The weak quasi-control condition on $T$ implies that for every $\varepsilon>0$ there is an $F_1\in\CF$, depending only $E_1$ and $\varepsilon$, such that $T\matrixunit{v}{v_0}T^*$ is $\varepsilon$-$F_1$-quasi-local.
  Moreover, observe that 
  \begin{equation*}
    \scal{T^*\chf{B_0}T(v_0)}{v_0}
    =\scal{\chf{B_0}T(v_0)}{\chf{B_0}T(v_0)} > {\delta_0}^2.
  \end{equation*}
  Now, if $C\subseteq Y$ is measurable and $F_1$-separated from $B_0$, by quasi-locality of $T \matrixunit{v}{v_0} T^*$, we have:
  \begin{align*}
    \varepsilon &\geq \norm{\chf{C}T\matrixunit{v}{v_0}T^*\chf{B_0}}  \\
     & \geq\norm{\chf{C}T(v)}\scal{v_0}{T^*\chf{B_0}\Bigparen{\frac{T(v_0)}{\norm{T(v_0)}}} } > \norm{\chf{C}T(v)} \, \frac{{\delta_0}^2}{\norm{T(v_0)}}.
  \end{align*}
  Thus
  \[
    \norm{\chf{C}T(v)} < \varepsilon\norm{T}/{\delta_0}^2.
  \]
  Now we are almost done. Let $\varepsilon \coloneqq \delta_0^2\delta/\norm{T}$, and consider the resulting $F_1\in\CF$---we may assume that $F_1$ contains the diagonal. 
  Let $F\coloneqq \gaugey\circ F_1\circ F_0\circ \op{F_1}\circ \gaugey$, where $\gaugey$ is an admissibility gauge for $\CHy$. This choice is so that $F_1(B_0)$ is contained in some measurable $F$-controlled set. Since $B_0\subseteq F_1(B_0)$, the set $F_1(B_0)$ is contained in the image of $\appmap[T]{\delta_0}{F}{E_0}$, so any $C$ disjoint from  $\appmap[T]{\delta_0}{F}{E_0}(B_0)$ is $F_1$-separated form $B_0$. The claim then follows.
\end{proof}

We may finally prove \cref{prop: quasi-control vs str quasi-control}.

\begin{proof}[Proof of \cref{prop: quasi-control vs str quasi-control}]
  By \cref{lem: approximating relation contained in quasi-support,lem: quasi-supp in approx-supp}, we already know that we always have coarse containments
  \[
    \cappmap[T]{\delta}{F}{E} \crse\subseteq \qcsupp(T)\crse\subseteq\acsupp(T).
  \]
  What we need to show is that, under the assumptions of \cref{prop: quasi-control vs str quasi-control}, $\cappmap[T]{\delta}{F}{E}$ coarsely contains the approximate (resp.\ quasi-) coarse support of $T$. Namely, we need to verify the approximate (resp.\ quasi) version of \cref{def: approximate and quasi support}~\cref{def: approximate and quasi support:contains-heart}.

  Fix a large enough gauge $\gaugex$ such that there is a $\gaugex$-controlled discrete partition $X=\bigsqcup_{i\in I}A_i$ with $\CH_{A_i}$ of infinite rank, and let $\gaugey$ be a gauge for admissibility and non-degeneracy of $\CHy$.
  Fix also  $\varepsilon>0$. 
  
  For any $\varepsilon_1>0$, \cref{lem: quasi-control vs str quasi-control} yields an $F_1\in\CF$ such that $\norm{\chf CT \chf{A}}\leq\varepsilon_1$ whenever $A$ is measurable and $\gaugex$-controlled and $C\subseteq Y$ is measurable and $\appmap[T]{\delta}{F_1}{E}$-separated from $A$.
  We may then apply \cref{lem: find unitary for strong approximate} to deduce that for every $(J,\lambda)\in\Lambda_{\rm fin}$ there is a unitary $s_{J,\lambda}$ supported on $\diag(A_i\mid i\in I)$ such that $Ts_{J,\lambda}p_{J,\lambda}$ is $\varepsilon_1$-$R_1$-approximately controlled for $R_1\coloneqq \gaugey\circ\appmap[T]{\delta}{F_1}{E}$.
  Since $T$ and $s_{J, \lambda}$ are isometries, we observe that 
  \[
    Tp_{J,\lambda} = (Ts_{J,\lambda}^*T^*)(Ts_{J,\lambda}p_{J,\lambda}).
  \]
  By the assumption on $T$, for any $\varepsilon_2>0$ there is an $F_2\in \CF$ depending only on \gaugex{} and $\varepsilon_2$ such that $Ts_{J,\lambda}^*T^*$ is $\varepsilon_2$-$F_2$-approximable (resp.\ quasi-local).
  It follows that $Tp_{J,\lambda}$ is $\varepsilon_3$-$R_3$-approximately (resp.\ quasi-) controlled for 
  \[
    \varepsilon_3 = \varepsilon_2+\varepsilon_1
    \;\;\text{and}\;\;
    R_3 \coloneqq F_2\circ\gaugey\circ R_1
  \]
  (here we have used that the net $(Tp_{J,\lambda})_{(J,\lambda)\in\Lambda_{\rm fin}}$ strongly converges to $T$). On the other hand, \cref{lemma:qloc-sot-closed} shows that the set of $\varepsilon_3$-$R_3$-approximately (resp.\ quasi-) controlled operators is \sot closed. It then follows that $T$ itself $\varepsilon_3$-$R_3$-approximately (resp.\ quasi-) controlled.

  Thus, to finish, let $\varepsilon_1 \coloneqq \varepsilon/2 \eqqcolon \varepsilon_2$, and observe that
  \[
    R_3 = F_2\circ\gaugey\circ\gaugey\circ\appmap[T]{\delta}{F_1}{E}
  \]
  is a controlled thickening of the controlled relation $\appmap[T]{\delta}{F_1}{E}$. In particular, $\crse R_3$ is a partial coarse map that coarsely contains $\cappmap[T]{\delta}{F}{E}$. Since $\cappmap[T]{\delta}{F}{E}$ is coarsely everywhere defined, this implies that indeed $\crse R_3 = \cappmap[T]{\delta}{F}{E}$ (cf.\ \cref{lem: functions with same domain coincide}). This shows that $\cappmap[T]{\delta}{F}{E}$ satisfies \cref{def: approximate and quasi support}~\cref{def: approximate and quasi support:contains-heart}. 
\end{proof}

\chapter{Consequences of the refined rigidity} \label{sec:consequences strong rigidity}
In this last chapter we explore some consequences of \cref{thm: strong rigidity}. For the most part, these are generalizations of \cite{braga_gelfand_duality_2022}.

\section{Multipliers of the Roe algebra} \label{subsec: multiplier}
In this section we extend \cite{braga_gelfand_duality_2022}*{Theorem~4.1} to the setting of coarsely locally finite coarse spaces with countably generated coarse structure.
Namely, we show that in this case $\cpcstar{\CHx}$ coincides with the multiplier algebra of $\roecstar{\CHx}$.

As is well known, since $\roecstar\CHx\leq\CB(\CHx)$ is a non degenerate concretely represented \cstar{}algebra, the multiplier algebra $\multiplieralg{\roecstar\CHx}$ is equal to the idealizer of $\roecstar\CHx$ in $\CB(\CHx)$ 
\[
  \multiplieralg{\roecstar\CHx} = \{t\in\CB(\CHx)\mid ts,st\in \roecstar\CHx\ \forall s\in \roecstar\CHx\} 
\]  
(see, \emph{e.g.}\ \cite{blackadar2006operator}*{Theorem~II.7.3.9}). In particular, we may compare $\multiplieralg{\roecstar{\CHx}}$ and $\cpcstar{\CHx}$, as they are both subalgebras of $\CB(\CHx)$. 
The following is now a simple consequence of the rigidity theory we developed.
\begin{corollary}[cf.~\cite{braga_gelfand_duality_2022}*{Theorem 4.1}] \label{cor: cpc is multiplier of roec}
  Let $\crse X$ be countably generated and coarsely locally finite, and $\CHx$ be a discrete ample module. Then
  \[
    \multiplieralg{\roecstar{\CHx}}=\cpcstar\CHx.
  \]
\end{corollary}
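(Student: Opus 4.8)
The two algebras $\cpcstar{\CHx}$ and $\roecstar{\CHx}$ are both concretely represented in $\CB(\CHx)$, so by the standard identification of multiplier algebras (\cite{blackadar2006operator}*{Theorem~II.7.3.9}) it suffices to show that $\cpcstar{\CHx}$ is exactly the idealizer of $\roecstar{\CHx}$ inside $\CB(\CHx)$. One inclusion is easy and essentially formal: if $t\in\cpcstar{\CHx}$ and $r\in\roecstar{\CHx}$, then both $tr$ and $rt$ have controlled propagation (as products of approximable operators, recall \cref{eq:support of composition} and \cref{lemma:qloc-sot-closed}), and they are locally compact because $r$ is locally compact and local compactness is an ideal property in $\CB(\CHx)$. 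Hence $tr,rt\in\cpstar{\CHx}\cap\lccstar{\CHx}$-closure$=\roecstar{\CHx}$ by \cref{thm: roe as intersection} (which applies since $\crse X$ is coarsely locally finite and $\CHx$ is discrete). Thus $\cpcstar{\CHx}\subseteq\multiplieralg{\roecstar{\CHx}}$.

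For the reverse inclusion, let $\phi\colon\multiplieralg{\roecstar{\CHx}}\to\CB(\CHx)$ be the inclusion; equivalently, think of an arbitrary multiplier $m$ of $\roecstar{\CHx}$ and consider the $*$-homomorphism $\Ad$-type action it induces on $\roecstar{\CHx}$ by left multiplication. The strategy is to reduce to the rigidity machinery. Concretely, one forms the coarse space $\crse X\sqcup\crse X$ with module $\CHx\oplus\CHx$; an element $m\in\multiplieralg{\roecstar{\CHx}}$ of norm $\le 1$ can be encoded (via the usual $2\times 2$ matrix trick) as a unitary or as part of a larger operator whose associated $*$-homomorphism on $\roecstar{\CHx\oplus\CHx}$ is an automorphism, and then \cref{thm: strong rigidity}\cref{thm: strong rigidity:two,thm: strong rigidity:three} applies. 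More directly, the cleanest route is: the identity map $\roecstar{\CHx}\to\roecstar{\CHx}$ extends to the identity $\multiplieralg{\roecstar{\CHx}}\to\multiplieralg{\roecstar{\CHx}}$, and any $m\in\multiplieralg{\roecstar\CHx}$ together with the trivial action gives rise, through the uniformization results, to control data for $m$ itself. Precisely, for the operator $T=\mathrm{id}_{\CHx}$ viewed as carrying no information, one instead looks at left-multiplication $L_m\colon\roecstar{\CHx}\to\roecstar{\CHx}$; since $\roecstar{\CHx}$ is the closed span of operators of controlled propagation that are locally compact, and $\crse X$ is countably generated, \cref{cor: uniformization for roe algs} (applied with $\CHx=\CHy$ and $T$ chosen so that $\Ad(T)$ records $L_m$ suitably, e.g. on the stabilized picture) shows that multiplication by $m$ sends equi-controlled, locally finite rank operators to equi-approximable operators. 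Feeding this into the one-vector control criterion and \cref{prop: quasi-control vs str quasi-control} forces $m$ itself to be strongly approximately controlled, i.e. $\acsupp(m)\crse\subseteq\cid_{\crse X}$, which by \cref{rkm: about approximate and quasi support}\cref{rkm: about approximate and quasi support: contained in idx} means exactly $m\in\cpcstar{\CHx}$.

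I expect the main obstacle to be setting up the bookkeeping so that a mere multiplier — which a priori carries no ``operator between modules'' structure — gets handled by machinery (\cref{thm: uniformization}, \cref{cor: uniformization for roe algs}, \cref{prop: quasi-control vs str quasi-control}) that is phrased in terms of $\Ad(T)$ for an operator $T$ between modules. The honest fix is probably to pass to $\CHx\oplus\CHx$: a contractive multiplier $m$ fits into a unitary $U=\begin{pmatrix} m & (1-mm^*)^{1/2}\\ -(1-m^*m)^{1/2} & m^*\end{pmatrix}$-type dilation on $\CHx\oplus\CHx$, and one checks that $\Ad(U)$ preserves $\roecstar{\CHx\oplus\CHx}$ (using that $m$ is a multiplier, so $m\roecstar{\CHx}, \roecstar{\CHx}m\subseteq\roecstar{\CHx}$, and $\roecstar{\CHx\oplus\CHx}$ is the obvious matrix algebra over $\roecstar{\CHx}$ together with the cross terms, modulo coarse disjointness of the two copies — here the two copies being coarsely disjoint is harmless because $m$ and $m^*$ map each copy to itself). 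Then \cref{thm: strong rigidity} (or just \cref{cor: uniformization for roe algs} plus \cref{prop: quasi-control vs str quasi-control} applied to $U$) yields that $U$, hence $m$, is strongly approximately controlled, completing the proof. The remaining details — verifying the dilation preserves the Roe algebra, and that ``strongly approximately controlled with support in $\cid_{\crse X}$'' really is membership in $\cpcstar{\CHx}$ — are routine given the results already established, so I would not belabor them.
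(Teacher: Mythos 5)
The easy inclusion $\cpcstar{\CHx}\subseteq\multiplieralg{\roecstar{\CHx}}$ is fine, and using \cref{thm: roe as intersection} for it is a valid route. The hard inclusion, however, has a real gap, and there is also a subsidiary error in the dilation setup.

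\textbf{The gap.} After invoking the machinery (via \cref{thm: strong rigidity} or \cref{cor: uniformization for roe algs} plus \cref{prop: quasi-control vs str quasi-control}) you conclude that $U$ (or $m$) is strongly approximately controlled, and then silently pass to ``$\acsupp(m)\crse\subseteq\cid_{\crse X}$, hence $m\in\cpcstar{\CHx}$''. These are not the same thing. \cref{thm: strong rigidity}~\cref{thm: strong rigidity:two,thm: strong rigidity:three} only say that $\acsupp(U)$ is \emph{some} coarse equivalence of $\crse X$ with itself; it need not be $\cid_{\crse X}$. (For a generic unitary whose conjugation is an automorphism of the Roe algebra, its approximate support can be any coarse self-equivalence of $\crse X$.) You never use the multiplier property of $m$ a second time to rule out a nontrivial support. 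The paper pins this down with a short composition argument: reduce (by the standard fact that a unital $C^*$-algebra is spanned by its unitaries) to a \emph{unitary} multiplier $U$, so that $\Ad(U)\in\aut(\roecstar\CHx)$ directly; pick any $t\in\roestar\CHx$ with $\csupp(t)=\cid_{\crse X}$ (e.g.\ a locally finite rank projection subordinate to a discrete partition). Then $tU^*\in\roecstar\CHx$ because $U^*$ is a multiplier, so $\acsupp(tU^*)\crse\subseteq\cid_{\crse X}$, and the composition rule for (approximate) supports gives
\[
  \cid_{\crse X} = \csupp(tU^*U)\crse\subseteq\cid_{\crse X}\crse\circ\acsupp(U)=\acsupp(U).
\]
Since $\acsupp(U)$ is a partial coarse map and $\cid_{\crse X}$ is coarsely everywhere defined, \cref{lem: functions with same domain coincide} gives $\acsupp(U)=\cid_{\crse X}$, hence $U\in\cpcstar\CHx$.

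\textbf{The subsidiary error.} You propose to dilate a contractive multiplier $m$ to a unitary on $\CHx\oplus\CHx$ viewed as a module over $\crse X\sqcup\crse X$, and assert this is harmless because ``$m$ and $m^*$ map each copy to itself.'' But the unitary $U$ you wrote down has nonzero off-diagonal blocks $(1-mm^*)^{1/2}$ and $(1-m^*m)^{1/2}$, which map one copy to the other. If the two copies of $\crse X$ are coarsely disjoint, those off-diagonal blocks cannot have controlled propagation, so $\Ad(U)$ does \emph{not} preserve $\roecstar{\CHx\oplus\CHx}$ in that picture. The dilation can be made to work only over the \emph{same} $\crse X$, taking $\CHx\otimes\CCC^2$ (doubling local rank, not disconnecting the space), so that $\roecstar{\CHx\otimes\CCC^2}\cong M_2(\roecstar\CHx)$ and $U\in M_2(\multiplieralg{\roecstar\CHx})$ genuinely implements an automorphism. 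Even with this fix, the gap above remains: one still needs an argument forcing $\acsupp(U)$ to be $\cid_{\crse X}$. The paper's reduction to unitary multipliers is cleaner and avoids the dilation entirely.
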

\begin{proof}
  The inclusion $\cpcstar\CHx\subseteq\multiplieralg{\roecstar{\CHx}}$ is the easy implication as it suffices to observe that if $t$ has controlled propagation and $s$ is locally compact then $st$ and $ts$ are locally compact as well (details are given in \rfMultRoeinCcp).
  For the converse containment, it is enough to show that all the unitaries in $\multiplieralg{\roecstar{\CHx}}$ belong to $\cpcstar\CHx$. Let then $U$ be such a unitary, and note that $\Ad(U)$ defines a \Star{}isomorphism of $\roecstar{\CHx}$. By \cref{thm: strong rigidity}, $U$ is effectively approximately controlled and its approximate support $\acsupp(U)$ is a partial coarse function. It only remains to verify that $\acsupp(U)\crse\subseteq \cid_{\crse X}$.

  Let $t\in\roestar\CHx$ be any operator with $\csupp(t)=\cid_{\crse X}$, \emph{e.g.}\ a locally finite rank projection selecting a rank one vector on each $\CH_{A_i}$ for some locally finite discrete partition.
  By assumption, $tU^*$ is still in $\roecstar\CHx$, therefore
  \[
    \cid_{\crse X} = \csupp(tU^*U) \crse\subseteq \cid_{\crse X}\crse\circ \acsupp(U) 
    =\acsupp(U).
  \]
  Since $\cid_{\crse X}$ is coarsely everywhere defined, it follows that $\cid_{\crse X} = \acsupp(U)$ (see \cref{lem: functions with same domain coincide}), and hence $U$ belongs to $\cpcstar\CHx$.
\end{proof}

\begin{remark}\label{rmk: ample is not necessary}
  The \emph{ample} condition on $\CHx$ in \cref{cor: cpc is multiplier of roec} is actually \emph{not} necessary for the conclusion to hold. The proof we just presented does require it, though: ampleness is needed for the trick powering \cref{lem: find unitary for strong approximate}.
  In view of \cref{rmk: unital Roe algebras}, the only modules that we do not cover here are those modules that are neither locally finite nor ample. That is, there is a bounded $A\subseteq X$ with $\chf{A}$ of infinite rank and at the same time there are arbitrarily thick $A'\subseteq X$ such that $\chf{A'}$ has finite rank. Such a setup seems to be unlikely to appear in applications, so we feel content with our version of \cref{cor: cpc is multiplier of roec}.
  The reader wishing to prove \cref{cor: cpc is multiplier of roec} in full generality may adapt the proof given in \cite{braga_gelfand_duality_2022}*{Theorem~4.1}. In doing so, some effort is spared by observing that \cite{braga_gelfand_duality_2022}*{Claim~4.2} is a direct consequence of \cref{thm: quasi-proper}. The approximate units of \cref{lemma:discrete-module-admits-nice-projections} are also helpful.

  One may of course completely avoid using \cref{thm: quasi-proper}. In that case, it is interesting to observe that one can use \cref{cor: cpc is multiplier of roec} instead of \cref{cor: uniformization for roe algs} at the beginning of the proof of \cref{thm: strong rigidity} to show that $U$ and $U^*$ are weakly approximately controlled: an isomorphism sending $\roecstar\CHx$ to $\roecstar\CHy$ extends to an isomorphism of their multiplier algebras, so the usual uniformization theorem applies (cf.\ \cref{thm: uniformization}). However, we find the proof relying on \cref{thm: quasi-proper} more conceptual and easier to generalize.
\end{remark}

\begin{remark} \label{rem:multiplier-fails-without-cnt-gen}
  Observe that \cref{cor: cpc is multiplier of roec} does \emph{not} hold without the assumption that $\crse{X}$ be countably generated and/or coarsely locally finite. Indeed, let $\crse{X}$ be the coarse space described in \cref{ex:non-cnt-gen-coarse-space-with-roe-compact}, where it is also shown that $\lccstar{\ell^2(X)} = \CK(\ell^2(X))$. Since $\crse{X}$ is coarsely connected it follows that $\roecstar{\ell^2(X)} = \CK(\ell^2(X))$ as well (cf.\ \cref{prop: roelike cap compacts} and the subsequent remark).
  Thus, $\multiplieralg{\roecstar{\ell^2(X)}} = \CB(\ell^2(X))$, whereas one can easily construct bounded operators that are not approximable, \emph{i.e.}\ $\cpcstar{\ell^2(X)} \neq \CB(\ell^2(X))$.
\end{remark}

\section{(Outer) automorphisms of Roe algebras}\label{subsec: outer of Roe}
We keep exploring the consequences of \cref{thm: strong rigidity} examining what it implies for automorphisms of Roe-like \cstar{}algebras. For reasons that will become apparent soon, we are particularly interested in the \emph{outer} automorphisms of a Roe-like \cstar{}algebra $\roeclike{\CHx}$, and how these relate to the coarse equivalences of $\crse{X}$ (cf.\ \cref{cor: isomorphic out aut,cor: all groups are iso}).
This line of investigation was started in \cite{braga_gelfand_duality_2022}*{Theorem C}, where $\Out(\roecstar{\crse X})$ is studied under the assumption of property A.

First we prove that any automorphism of $\cpcstar{\CHx}$ leaves $\roecstar{\CHx}$ invariant.
\begin{corollary} \label{cor: aut-cp sends roe-to-roe}
  Let $\crse X$ be a coarsely locally finite and countably generated coarse space, and let $\CHx$ be ample and discrete. 
  Then, every automorphism of $\cpcstar\CHx$ sends $\roecstar{\CHx}$ into itself.
\end{corollary}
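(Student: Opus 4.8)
The strategy is to use the characterization of the Roe algebra as an intersection (\cref{thm: roe as intersection}), which says that for a coarsely locally finite coarse space and a discrete module $\cpcstar{\CHx}\cap\lccstar{\CHx}=\roecstar{\CHx}$. Since any automorphism $\phi$ of $\cpcstar{\CHx}$ is in particular a \Star{}isomorphism $\cpcstar{\CHx}\to\cpcstar{\CHx}$, \cref{thm: strong rigidity} applies with $\roeclikeone{\variable}=\roecliketwo{\variable}=\cpcstar{\variable}$ (the unitality hypothesis \cref{thm: strong rigidity:cp-ql} holds since $\cpcstar{\CHx}$ is unital). Thus $\phi=\Ad(U)$ for a unitary $U\colon\CHx\to\CHx$ which, by part \cref{thm: strong rigidity:three} of that theorem (the case where $\roeclikeone{\variable}$ is $\cpcstar{\variable}$, not $\qlcstar{\variable}$), is strongly approximately controlled, with approximate support $\acsupp(U)=\cappmap[U]{\delta}{F}{E}$ a coarse equivalence, and likewise for $U^*$. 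In particular $U$ is a \emph{controlled} operator in the sense of \cref{def:controlled and proper operator}? — not quite: it is only \emph{strongly approximately} controlled, so we must be slightly careful.

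First I would reduce to showing that $\Ad(U)$ preserves local compactness, \emph{i.e.}\ that $\Ad(U)(\lccstar{\CHx})\subseteq\lccstar{\CHx}$ (equivalently, using that $U^*=U^{-1}$ also induces an automorphism, that $\Ad(U)$ restricts to an automorphism of $\lccstar{\CHx}\cap\cpcstar{\CHx}$). Indeed, once this is known: given $t\in\roecstar{\CHx}=\cpcstar{\CHx}\cap\lccstar{\CHx}$, we have $\Ad(U)(t)\in\cpcstar{\CHx}$ because $\Ad(U)$ is an automorphism of $\cpcstar{\CHx}$, and $\Ad(U)(t)\in\lccstar{\CHx}$ by the local compactness preservation; applying \cref{thm: roe as intersection} again gives $\Ad(U)(t)\in\roecstar{\CHx}$, as desired. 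So the whole problem is to show $\Ad(U)$ preserves local compactness.

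To prove that $\Ad(U)$ preserves local compactness, I would invoke \cref{thm: quasi-proper}: it suffices to show that $U$ is \emph{quasi-proper}, since then assertion \cref{item:thm:qp:qp} $\Rightarrow$ \cref{item:thm:qp:lc-to-lc} gives $\Ad(U)(\lccstar{\CHx})\subseteq\lccstar{\CHx}$ (the hypotheses of \cref{thm: quasi-proper} are met: $\crse X$ is coarsely connected on each component — one runs the argument component by component, or notes \cref{thm: quasi-proper} as stated requires coarse connectedness, so I would decompose $\crse X$ into coarsely connected components and apply \cref{prop: roelike cap compacts} to split $U$ accordingly). Now quasi-properness of $U$ follows from the fact that $U^*$ is \emph{weakly quasi-controlled} (by \cref{lem: str quasi-controlled is quasi-controlled}, since $U^*$ is strongly approximately controlled hence strongly quasi-controlled, and $\CHx$ is admissible being discrete): as remarked just before \cref{prop:adjoint is coarse inverse} — or more precisely in the last remark of \cref{sec: quasi-proper} — if $T^*$ is weakly quasi-controlled then $T$ is quasi-proper. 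Alternatively and more robustly, since $\acsupp(U)$ exists and is a partial coarse map (indeed a coarse equivalence), one directly checks quasi-properness from the defining property of $\acsupp(U)$: for a bounded measurable $B\subseteq X$ and $\varepsilon>0$, strong approximate control gives a controlled $R$ with $U\approx_{\varepsilon}S$ for some $R$-controlled $S$, and then $\chf{B}U\chf{X\smallsetminus A}\approx_{\varepsilon}\chf{B}S\chf{X\smallsetminus A}$ which vanishes once $A\supseteq \op{R}(B)$, a bounded set since $\op{R}$ restricted to bounded sets has bounded image (as $R$ is controlled — wait, controlledness alone does not give properness; but $\acsupp(U)$ being a \emph{coarse equivalence} is in particular proper, so we may choose $R$ within the coarse equivalence class, which is proper). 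This last point — ensuring the relation $R$ witnessing strong approximate control can be taken proper, using that $\acsupp(U)$ is a coarse equivalence hence a coarse embedding hence proper — is the one place requiring a little care, and is the main (mild) obstacle; everything else is a direct assembly of \cref{thm: strong rigidity}, \cref{thm: quasi-proper}, and \cref{thm: roe as intersection}.
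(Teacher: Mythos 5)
Your proof is correct, and it follows the same broad outline as the paper's: reduce via \cref{thm: roe as intersection} to showing that $\phi = \Ad(U)$ preserves local compactness, then obtain strong approximate control of the implementing unitary from \cref{thm: strong rigidity}. The two differences worth flagging are both small inefficiencies, not errors.

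First, the paper works with $U^*$ rather than $U$. From strong quasi-control of $U^*$ one immediately gets, for each $\varepsilon>0$, a \emph{controlled} relation $R$ with $U^*$ $\varepsilon$-$R$-quasi-controlled; since a controlled relation always sends bounded sets to bounded sets (if $A$ is $E$-bounded then $R(A)\times R(A)\subseteq (R\otimes R)(E)\in\CE$), one takes $B$ to be a bounded measurable thickening of $R(A)$ and concludes $\norm{\chfcX{B}U^*\chf{A}}\leq\varepsilon$. No properness of $R$ is required, and this is precisely quasi-properness of $U$. Your argument instead works from strong approximate control of $U$, which forces you to bound $\op R(B)$ and hence to appeal to properness of $\acsupp(U)$ via its being a coarse equivalence. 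That is correct (properness is indeed invariant on the closeness class, and \cref{thm: strong rigidity}\cref{thm: strong rigidity:three} guarantees $\acsupp(U)$ exists and is a coarse equivalence in the $\cpcstar{\variable}$ case), but it is extra work the paper's orientation avoids. By contrast, your route (a) via the remark following \cref{cor: approximation is proper} is shakier: that remark is stated without proof and is not obviously applicable here, so you are right to hedge with route (b).

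Second, the paper does not invoke \cref{thm: quasi-proper} at all: once quasi-properness is in hand, preservation of local compactness is a two-line computation ($\phi(t)\chf{A}= UtU^*\chf{A}$ is approximated by $Ut\chf{B}U^*\chf{A}$, a compact operator), which is just the proof of the implication \cref{item:thm:qp:qp}$\Rightarrow$\cref{item:thm:qp:lc-to-lc} done inline. Your worry about the coarse-connectedness hypothesis of \cref{thm: quasi-proper} is therefore unnecessary: that hypothesis is only used for the converse implications \cref{item:thm:qp:lc-to-lc}$\Rightarrow$\cref{item:thm:qp:qp} and \cref{item:thm:qp:roe to lc}$\Rightarrow$\cref{item:thm:qp:qp}, not for the easy forward direction you need. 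The component-by-component decomposition you propose as a safeguard is harmless but superfluous.
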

\begin{proof}
  Fix $\phi\in \aut(\cpcstar\CHx)$. By \cref{thm: roe as intersection} we known that
  \[
    \roecstar\CHx = \cpcstar\CHx\cap\lccstar{\CHx}.
  \]
  It is, hence, enough to show that $\phi$ preserves local compactness. By \cref{thm: strong rigidity}, we deduce that $\phi=\Ad(U)$ for some unitary $U$ with $U^*$ strongly approximately controlled. In particular, $U^*$ is also strongly quasi-controlled. This implies that for every measurable and bounded $A\subseteq X$ and $\varepsilon>0$ there is a measurable and bounded $B\subseteq X$ such that $U^*\chf A\approx_\varepsilon \chf BU^*\chf A$.
  Given any $t\in\lccstar{\CHx}$, observe that
  \[
    \phi(t)\chf A = UtU^*\chf A = \lim_{B\text{ bounded}} U (t\chf B) U^*\chf A,
  \]
  and the right-hand side is a limit of compact operators.
\end{proof}

Recall that the group of outer automorphisms of a \cstar{}algebra $A$ is defined as
\[
\out(A)\coloneqq \aut(A)/\U(\CM(A)),
\]
where unitaries in the multiplier algebras $\CM(A)$ act by conjugation on $A$. Generally speaking, automorphisms that are implemented by unitaries in \(\CM(A)\) are called \emph{inner}. It makes thus sense that elements of the quotient \(\out(A)\) above are called \emph{outer}.
Combining \cref{cor: cpc is multiplier of roec,cor: aut-cp sends roe-to-roe} yields the following.
\begin{corollary}\label{cor: aut of Roe}
    Let $\crse X$ be a coarsely locally finite and countably generated coarse space. Let $\CHx$ be ample and discrete. Then
    \[
      \aut(\roecstar\CHx) = \aut(\cpcstar\CHx) \; \text{and} \; \Out(\roecstar\CHx) = \Out(\cpcstar\CHx),
    \]
    where equality of $\aut(\variable)$ is meant as subsets of $\U(\CHx)$.
\end{corollary}

\section{Outer automorphisms vs.\ coarse equivalences I}\label{subsec: rig outer aut 1}
As it turns out, there is a very strong relation between (outer) automorphisms of Roe-like \cstar{}algebra and coarse equivalences, which we shall explore now. 
The arguments in this section were essentially observed by Krutoy in \cite{Krutoy2025categorical}, while our original approach is included in the next section.

\begin{definition}
  Let $\coe{\crse X}$ be the set of coarse equivalences $\crse{f \colon X \to X}$.
\end{definition}
True to our notational conventions, $\coe{\crse X}$ is a set of equivalence classes of controlled relations. That is, coarse equivalences are considered up to closeness.
Assume now that $\crse X$ be countably generated and let $\CHx$ be an discrete ample module.
Consider the following set of unitaries:
\begin{align*}
  \eacuni\CHx
  &\coloneqq\braces{U\in \U(\CHx)\mid U,\ U^* \text{ weakly approximately controlled}} \\
  &=\braces{U\in \U(\CHx)\mid \acsupp(U)\in\coe {\crse X}}  
\end{align*}
(observe that the equality of the latter two is given by \cref{cor: unitaries weak control adjoint iff effective ce}).
Associating with each $U\in \eacuni\CHx$ its approximate coarse support $\acsupp(U)$ gives a natural mapping
\[
\acsupp \colon \eacuni\CHx\to \coe{\crse X}.
\]

Observe that both $\eacuni\CHx$ and $\coe{\crse X}$ are groups under composition, and we already noticed that $\acsupp$ is actually a homomorphism (cf.\ \cref{rmk: functoriality of qcsupp}). In fact, for every $U,V\in \eacuni\CHx$ we know by \cref{lem: composition of approximate and quasi support} that
\[
\acsupp(UV)\crse \subseteq \acsupp(U)\crse \circ\acsupp(V).
\]
Since $\acsupp(UV)$ is itself a coarse equivalence (and is in particular coarsely everywhere defined), equality follows from \cref{lem: functions with same domain coincide}.

Recall however that if $\CHx$ is $\kappa$-ample and of local rank at most $\kappa$, then every coarse equivalence is covered by some controlled unitary (cf.\ \cref{prop: existence of covering iso}). This implies that $\acsupp$ is a surjective homomorphism.
Its kernel is also easy to describe. In fact, we already observed (cf.\ \cref{rkm: about approximate and quasi support: contained in idx}) that $\acsupp(U)=\cid_{\crse X}$ if and only if $U\in\cpcstar\CHx$.
That is, we have a canonical isomorphism
\[
\acsupp \colon \eacuni\CHx/ \U(\cpcstar\CHx)\xrightarrow{\ \cong\ } \coe{\crse X}.
\]
On the other hand, conjugation defines another natural homomorphism
\[
  \Ad\colon \eacuni\CHx\to\aut(\cpcstar\CHx),
\]
and \cref{thm: strong rigidity} shows that this homomorphism is also surjective.
By definition, the kernel of $\Ad$ is the group of unitaries in $\eacuni\CHx$ that centralize $\cpcstar\CHx$.

Let $\crse X= \bigsqcup_{i\in I}\crse X_i$ be the decomposition in coarsely connected components, and $\CH_{\crse X_i}$ the restriction of the module to each component. By \cref{prop: roelike cap compacts} we know that
\[
\bigoplus_{i\in I} \CK(\CH_{\crse X_i}) \leq \cpcstar\CHx.
\]
It then follows that the commutant of $\cpcstar\CHx$ in $\CB(\CHx)$ is equal to $\prod_{i\in I}\CCC\cdot \chf{\crse X_i}$.
As a consequence, $\Ad$ descends to a canonical isomorphism
\[
  \Ad \colon \eacuni\CHx /\braces{\sum_{i\in I}\lambda_i\chf{\crse X_i}\mid \lambda_i\in \CCC} \xrightarrow{\ \cong\ }\aut(\cpcstar\CHx).
\]
In turn, quotienting out $\U(\cpcstar\CHx)$ descends to another isomorphism
\[
  \eacuni\CHx/ \U(\cpcstar\CHx)\xrightarrow{\ \cong\ } \aut(\cpcstar\CHx)/\U(\cpcstar\CHx) = \out(\cpcstar\CHx).
\]

Of course, all these considerations hold verbatim for
\begin{align*}
  \eqcuni\CHx
  &\coloneqq\braces{U\in \U(\CHx)\mid U,\ U^* \text{ weakly quasi-controlled}} \\
  &=\braces{U\in \U(\CHx)\mid \qcsupp(U)\in\coe {\crse X}}  
\end{align*}
and $\qcsupp$. We thus obtain the following.

\begin{cor}\label{cor: isomorphic out aut}
  Let $\crse X$ be countably generated, $\CHx$ $\kappa$-ample discrete and of local rank at most $\kappa\geq \aleph_0$. Then there are canonical isomorphisms:
  \[
  \coe{\crse{X}}
  \cong \frac{\eacuni\CHx}{\U(\cpcstar\CHx)}
  \cong \frac{\eqcuni\CHx}{\U(\qlcstar\CHx)}
  \cong\out(\cpcstar\CHx)
  \cong\out(\qlcstar\CHx).
  \]
\end{cor}

\section{Outer automorphisms vs.\ coarse equivalences II}\label{subsec: rig outer aut 2}
In the previous section we proved \cref{cor: isomorphic out aut} by showing that $\acsupp$ and $\qcsupp$ give rise to surjective homomorphisms onto $\coe{\crse X}$. One alternative approach is to work directly with \cref{prop: existence of covering iso} to map $\coe{\crse X}$ into $\U(\CHx)$.
The issue here is that for a given coarse equivalence $\crse f\colon\crse X\to\crse Y$ the unitary $U\colon\CHx\to\CHy$ provided by \cref{prop: existence of covering iso} is very much not unique.
In turn, this lack of uniqueness results in difficulties in trying to define a homomorphism $\coe{\crse X}\to \U(\CHx)$.

In hindsight, it is rather natural to expect that to obtain a homomorphism one should take some quotient on the right hand side.
This strategy works well, because the unitaries provided by \cref{prop: existence of covering iso} are unique up to conjugation (cf.\ \cite{braga_gelfand_duality_2022}, and \rfLemCoveringUniIsUnique).
To make make this statement precise, we introduce the following.

\begin{definition}[cf.\ \rfDefCEandCUni] \label{def: CE and CUni}
  Let $\crse X$ be a coarse space, and let $\CHx$ be an $\crse{X}$-module. We denote by $\cuni{\CHx}$ the set of unitaries $U \colon \CHx \to \CHx$ such that both $U$ and $U^*$ are controlled.
\end{definition}

\begin{remark}
  There are obvious containments:
  \[
    \cuni{\CHx} \subseteq \eacuni{\CHx} \subseteq \eqcuni{\CHx}.
  \]
\end{remark}

Observe that the group of unitaries $\U(\cpstar{\CHx})$ is a normal subgroup of $\cuni{\CHx}$. Assigning with each coarse equivalence a covering unitary via \cref{prop: existence of covering iso} defines a (non-canonical) mapping $\coe{\crse X}\to \cuni{\CHx}$, and it is not hard to see that this mapping becomes a canonical homomorphism when quotienting out $\U(\cpstar{\CHx})$. With some extra care, one may even prove the following.

\begin{theorem}[cf.\ \rfIsoCEtoCtrUni]\label{thm: coarse equivalences embed in cuni}
  Let $\CHx$ be a discrete $\kappa$-ample module of local rank $\kappa$, where $\kappa>0$. There is a canonical isomorphism 
  \[
    \rho\colon\coe{\crse X}\xrightarrow{\ \cong\ } \cuni{\CHx}/\U(\cpstar{\CHx}).
  \]
\end{theorem}
\begin{remark}
  The map $\rho$ in \cref{thm: coarse equivalences embed in cuni} is defined using \cref{prop: existence of covering iso} to choose covering unitaries.
  The ``canonical'' adjective means that the resulting map $\rho$ does not depend on the choice made.
\end{remark}

This approach is inverse to that of \cref{subsec: rig outer aut 1}, which would consist of noting that sending a unitary in $\cuni{\CHx}$ to the coarse equivalence obtained from its approximating relations defines a homomorphism $\cuni{\CHx}\to\coe{\crse X}$ whose kernel is $\U(\cpstar{\CHx})$.

\smallskip

Once again, $\Ad$ induces a homomorphism onto the group of automorphisms of Roe-like \cstar{}algebras, whose kernels consist of central unitaries.
We consider the induced homomorphisms to the outer automorphisms groups:
\[
  \sigma_{\CR}\colon\cuni{\CHx}/\U(\cpstar{\CHx})\rightarrow \out(\roeclike{\CHx}),
\]
where $\CR$ may be either of `cp', `ql' or `Roe' (in the ql case it is necessary to assume that $\CHx$ be admissible to make sure that $\sigma_{\rm ql}$ is well-defined).
It is proved in \rfEmbeddingCtrUniintoOut (extending \cite{braga_gelfand_duality_2022}*{Section 2.2}) that these homomorphisms are usually injective.

\begin{theorem}[cf.\ \rfEmbeddingCtrUniintoOut]\label{thm: injection into outer}
  Let $\CHx$ be an admissible $\crse X$-module. Then $\Ad$ induces canonical homomorphisms
  \[
    \sigma_{\CR}\colon\cuni{\CHx}/\U(\cpstar{\CHx})\rightarrow \out(\roeclike{\CHx}).
  \]
  Moreover, if $\multiplieralg{\roeclike{\CHx}} \subseteq \qlcstar{\CHx}$ then $\sigma_{\CR}$ is injective.
  In particular, the latter is the case when $\roeclike\CHx$ is $\cpcstar\CHx$ or $\qlcstar{\CHx}$.
\end{theorem}

With \cref{cor: cpc is multiplier of roec} at hand, we also have:
\begin{corollary}
  If $\crse X$ is countably generated and coarsely locally finite and $\CHx$ is a discrete (and ample)\footnote{\, This is not necessary, see \cref{rmk: ample is not necessary}.}, then $\sigma_{\rm Roe}$ is also injective.
\end{corollary}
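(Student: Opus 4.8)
The statement is the combination of \cref{thm: injection into outer} with \cref{cor: cpc is multiplier of roec}. Concretely, \cref{thm: injection into outer} asserts that the homomorphism $\sigma_{\CR}\colon\cuni{\CHx}/U(\cpstar{\CHx})\to\out(\roeclike\CHx)$ is injective as soon as the multiplier algebra $\multiplieralg{\roeclike\CHx}$ is contained in $\qlcstar\CHx$. Thus the only task is to verify this last inclusion in the case $\roeclike\CHx=\roecstar\CHx$, under the stated hypotheses that $\crse X$ is countably generated and coarsely locally finite and that $\CHx$ is discrete and ample.

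\textbf{Main step.} By \cref{cor: cpc is multiplier of roec}, under exactly these hypotheses we have $\multiplieralg{\roecstar\CHx}=\cpcstar\CHx$. Since operators of controlled propagation are in particular quasi-local, there is always a containment $\cpcstar\CHx\subseteq\qlcstar\CHx$ (recall \cref{rmk: approximable vs quasi ctrl}, and observe that $\qlcstar\CHx$ is defined because $\CHx$ is admissible, being discrete). Chaining these two facts gives $\multiplieralg{\roecstar\CHx}=\cpcstar\CHx\subseteq\qlcstar\CHx$, which is precisely the hypothesis needed to invoke the second half of \cref{thm: injection into outer}. Hence $\sigma_{\rm Roe}$ is injective, as claimed. (The footnote that ampleness is not really needed refers to \cref{rmk: ample is not necessary}, which explains that \cref{cor: cpc is multiplier of roec} holds more generally; the proof above only uses the version of \cref{cor: cpc is multiplier of roec} stated in the text, so the ample hypothesis is carried along merely for convenience.)

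\textbf{Where the work is.} There is essentially no obstacle here: the corollary is a purely formal consequence of two already-established results, and the only thing to check is that their hypotheses match. The one point deserving a word of care is the verification that $\qlcstar\CHx$ is even defined in this setting — this needs $\CHx$ admissible, which holds since discrete modules are admissible (cf.\ \cref{rmk: on discrete modules}). With that observed, the proof is a one-line citation of \cref{thm: injection into outer} and \cref{cor: cpc is multiplier of roec}. The genuinely hard content all lives upstream, in the proof of \cref{cor: cpc is multiplier of roec} (which rests on \cref{thm: strong rigidity}) and in \cref{thm: injection into outer} itself.
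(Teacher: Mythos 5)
Your proof is correct and follows exactly the route the paper intends: the corollary is stated in the paper with no explicit proof beyond the indication that it follows from combining \cref{thm: injection into outer} with \cref{cor: cpc is multiplier of roec}, and you have spelled out precisely that combination, including the (correct) small observations that $\cpcstar\CHx\subseteq\qlcstar\CHx$ and that $\qlcstar\CHx$ is defined because discrete modules are admissible.
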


Applying \cref{thm: injection into outer}, we now obtain the following.

\begin{corollary}\label{cor: surjection to Out}
  Let $\crse X$ be a countably generated coarse space and $\CHx$ be discrete and $\kappa$-ample of local rank $\kappa\geq\aleph_0$. Then $\sigma_{\rm cp}$ and $\sigma_{\rm ql}$ are surjective.

  If $\crse{X}$ is also coarsely locally finite, then $\sigma_{\rm Roe}$ is surjective as well.
\end{corollary}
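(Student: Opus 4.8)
The plan is to show that every outer automorphism class of a Roe-like \cstar{}algebra is realized by conjugation by a controlled unitary, i.e.\ lies in the image of $\sigma_\CR\circ\rho$ (with $\rho$ as in \cref{thm: coarse equivalences embed in cuni}), and hence a fortiori in the image of $\sigma_\CR$. So fix a class $[\phi]\in\out(\roeclike{\CHx})$ and lift it to an automorphism $\phi\in\aut(\roeclike{\CHx})$, which is in particular a \Star{}isomorphism $\roeclike{\CHx}\to\roeclike{\CHx}$. Under the standing hypotheses on $\crse X$ and $\CHx$ we may apply \cref{thm: strong rigidity} with $\crse Y=\crse X$ and $\CHy=\CHx$: we invoke option \cref{thm: strong rigidity:cp-ql} when $\roeclike{\variable}$ is $\cpcstar{\variable}$ or $\qlcstar{\variable}$ (these are unital), and option \cref{thm: strong rigidity:roe} when $\roeclike{\variable}=\roecstar{\variable}$, which is precisely why the last clause of the statement requires $\crse X$ to be coarsely locally finite. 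We thus obtain a unitary $U\colon\CHx\to\CHx$ with $\phi=\Ad(U)$ such that $U$ and $U^*$ are strongly quasi-controlled and $\crse f\coloneqq\qcsupp(U)\colon\crse X\to\crse X$ is a coarse equivalence with coarse inverse $\op{\crse f}=\qcsupp(U^*)$. Moreover, when $\roeclike{\variable}$ is $\cpcstar{\variable}$ or $\roecstar{\variable}$, part \cref{thm: strong rigidity:three} of that theorem gives that $U$ and $U^*$ are even strongly approximately controlled, with $\acsupp(U)=\crse f$ and $\acsupp(U^*)=\op{\crse f}$.

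Next I would produce a \emph{controlled} unitary covering $\crse f$. Since $\CHx$ is discrete and $\kappa$-ample of local rank $\kappa$ and $\crse f$ is a coarse equivalence (hence proper), \cref{cor:existence of covering Ad} provides a unitary $U_{\crse f}\colon\CHx\to\CHx$ covering $\crse f$. Then $\csupp(U_{\crse f})=\crse f$ and $\csupp(U_{\crse f}^*)=\op{\crse f}$ are partial coarse maps, so $U_{\crse f}\in\cuni{\CHx}$, and by construction the class of $U_{\crse f}$ in $\cuni{\CHx}/U(\cpstar{\CHx})$ is $\rho(\crse f)$.

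The core of the argument is to verify that $\sigma_\CR$ sends the class of $U_{\crse f}$ to $[\phi]$, i.e.\ that $W\coloneqq U_{\crse f}^*U$ induces an inner automorphism of $\roeclike{\CHx}$. A controlled operator is trivially strongly approximately/quasi-controlled by itself, so $\acsupp(U_{\crse f}^*)\crse\subseteq\csupp(U_{\crse f}^*)=\op{\crse f}$ (resp.\ for $\qcsupp$). Combining this with $\acsupp(U)=\crse f$ (resp.\ $\qcsupp(U)=\crse f$) via the composition rule of \cref{rkm: composition of approximate and quasi support} — the partial coarse maps $\op{\crse f}$ and $\crse f$ are composable since $\crse f$ is a coarse equivalence — yields $\acsupp(W)\crse\subseteq\op{\crse f}\crse\circ\crse f$ in the $\cpcstar{}$ and $\roecstar{}$ cases, and $\qcsupp(W)\crse\subseteq\op{\crse f}\crse\circ\crse f$ in the $\qlcstar{}$ case; and $\op{\crse f}\crse\circ\crse f\crse\subseteq\cid_{\crse X}$ by \cref{prop:transpose is coarse inverse}. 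By \cref{rkm: about approximate and quasi support}\cref{rkm: about approximate and quasi support: contained in idx} this gives $W\in\cpcstar{\CHx}$ (resp.\ $W\in\qlcstar{\CHx}$). In all three cases $W$ is a unitary in the multiplier algebra of $\roeclike{\CHx}$: this is clear when $\roeclike{\CHx}$ is unital, and for $\roeclike{\CHx}=\roecstar{\CHx}$ it is \cref{cor: cpc is multiplier of roec}, which gives $\multiplieralg{\roecstar{\CHx}}=\cpcstar{\CHx}$ (again using countable generation, coarse local finiteness, and ampleness). Hence $\Ad(W)$ restricts to an inner automorphism of $\roeclike{\CHx}$, and from $\phi=\Ad(U)=\Ad(U_{\crse f})\,\Ad(W)$ we conclude $[\phi]=[\Ad(U_{\crse f})]=\sigma_\CR\big(\rho(\crse f)\big)$ in $\out(\roeclike{\CHx})$. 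This establishes surjectivity of $\sigma_{\rm cp}$ and $\sigma_{\rm ql}$ in general, and of $\sigma_{\rm Roe}$ when $\crse X$ is additionally coarsely locally finite.

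I expect the one genuinely delicate point to be the support bookkeeping: one only recovers $W\in\cpcstar{\CHx}$ or $W\in\qlcstar{\CHx}$, not $W\in\cpstar{\CHx}$, so the argument cannot directly factor $U$ inside $\cuni{\CHx}/U(\cpstar{\CHx})$ but instead has to be run one level down, in $\out(\roeclike{\CHx})$, where membership of $W$ in the multiplier algebra is exactly what is needed. Getting \cref{rkm: composition of approximate and quasi support} to apply (composability of $\op{\crse f}$ and $\crse f$, existence of the approximate/quasi supports guaranteed by \cref{thm: strong rigidity}) and correctly identifying $\multiplieralg{\roecstar{\CHx}}$ with $\cpcstar{\CHx}$ in the Roe case are the steps to handle with care.
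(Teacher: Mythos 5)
Your proof is correct and follows essentially the same path as the paper's: invoke \cref{thm: strong rigidity} to write $\phi=\Ad(U)$ with $\qcsupp(U)$ (resp.\ $\acsupp(U)$) a coarse equivalence, cover that coarse equivalence by a unitary from $\cuni{\CHx}$ via \cref{cor:existence of covering Ad}, and then use the composition rule for (approximate/quasi) supports plus \cref{prop:transpose is coarse inverse} and \cref{rkm: about approximate and quasi support}\cref{rkm: about approximate and quasi support: contained in idx} to conclude that the correction unitary $W$ lies in the multiplier algebra, so that $[\Ad(V)]=[\phi]$. The only cosmetic difference is in the Roe case: the paper deduces it by combining the $\cpcstar{}$ case with \cref{cor: aut of Roe}, while you run the $\acsupp$ computation directly and appeal to $\multiplieralg{\roecstar{\CHx}}=\cpcstar{\CHx}$ (\cref{cor: cpc is multiplier of roec}); these are the same ingredients in slightly different order.
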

\begin{proof}
  Fix $\phi\in \aut(\roeclike\CHx)$.
  By \cref{thm: strong rigidity}, $\phi=\Ad(U)$, where the quasi support $\qcsupp(U)\colon\crse X\to\crse X$ is a coarse equivalence.
  Let $V\in\cuni{\CHx}$ be a unitary covering $\qcsupp(U)$, which exists by \cref{prop: existence of covering iso}.
  We wish to prove that $[\Ad(V)]=[\phi]$ in $\Out(\roeclike\CHx)$.
  That is, we must show that the unitary $V^*U$ belongs to $\multiplieralg{\roeclike\CHx}$.
  In the quasi-local case we have
  \begin{align*}
    \qcsupp\left(V^*U\right) & \crse\subseteq \qcsupp\left(V^*\right)\crse\circ\qcsupp\left(U\right) \\
    & \crse{=} \qcsupp\left(U\right)^{-1}\crse\circ \qcsupp\left(U\right) \crse{=} \cid_{\crse X}.
  \end{align*}
  The approximate case uses $\acsupp(U)$ and is analogous. The Roe algebra case follows from the approximate one and \cref{cor: aut of Roe}.
\end{proof}

\begin{remark}
  In the setting of \cref{subsec: rig outer aut 1}, one can of course deduce both injectivity and surjectivity of $\sigma_\CR$ by the isomorphisms of \cref{cor: isomorphic out aut}. In fact, the proof of \cref{cor: surjection to Out} is essentially repeating the argument that $\eqcuni{\CHx}\to\coe{\crse X}$ is surjective (hence has a section) and
  \[
    \eqcuni{\CHx}/\U(\qlcstar \CHx) \to \out(\qlcstar\CHx)
  \]
  is surjective and well-defined.
  
  For injectivity, this essentially amounts to observing that
  \[
  \U(\cpstar\CHx) = \cuni{\CHx}\cap\U(\cpcstar\CHx) = \cuni{\CHx}\cap\U(\qlcstar\CHx),
  \]
  which can be deduced by observing that for a unitary $U\in\cuni{\CHx}$ the supports $\csupp(U)$, $\acsupp(U)$ and $\qcsupp(U)$ coincide (and are contained in \(\cid_{\crse X}\) if \(U\) is moreover contained in \(\U(\cpstar \CHx)\)).
\end{remark}

We may then add $\cuni{\CHx}/\U(\cpstar{\CHx})$ to the list of isomorphic groups in \cref{cor: isomorphic out aut}. We do this in the following somewhat verbose statement.

\begin{corollary}\label{cor: all groups are iso}
  Let $\crse X$ be a coarse space, and let $\CHx$ be a discrete and $\kappa$-ample $\crse{X}$\=/module of local rank $\kappa\geq \aleph_0$. Then the following groups are isomorphic.
  \begin{enumerate}[label=(\roman*)]
    \item The group of coarse equivalences $\crse{f \colon X \to X}$.
    \item The group of controlled unitaries of $U \colon \CHx \to \CHx$, with $U^*$ controlled, up to unitary equivalence in $\cpcstar{\CHx}$.
  \end{enumerate}
  If $\crse{X}$ is countably generated, then the above are also isomorphic to: 
  \begin{enumerate}[label=(\roman*), resume]
    \item The group of outer automorphisms of $\cpcstar{\CHx}$.
    \item The group of outer automorphisms of $\qlcstar{\CHx}$.
    \item The group of unitaries $U \in \U(\CHx)$ such that \(\acsupp(U)\) defines a coarse equivalence up to unitary equivalence in \(\cpcstar \CHx\).
    \item The group of unitaries $U \in \U(\CHx)$ such that \(\qcsupp(U)\) defines a coarse equivalence up to unitary equivalence in \(\qlcstar \CHx\).
  \end{enumerate}
  Lastly, if $\crse{X}$ is coarsely locally finite as well, then they are isomorphic to:
  \begin{enumerate}[label=(\roman*), resume]
    \item The group of outer automorphisms of $\roecstar{\CHx}$.
  \end{enumerate}
\end{corollary}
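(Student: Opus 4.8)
The plan is to assemble \cref{cor: all groups are iso} from the isomorphisms and surjectivity results already in place, organizing the five groups into a chain of canonical maps and showing each is a bijective homomorphism. First I would invoke \cref{thm: coarse equivalences embed in cuni}, which gives the canonical isomorphism $\rho\colon\coe{\crse X}\xrightarrow{\cong}\cuni{\CHx}/U(\cpstar{\CHx})$, valid whenever $\CHx$ is discrete $\kappa$-ample of local rank $\kappa$. This identifies groups (i) and (ii) without any countability hypothesis, so those two are always isomorphic. Note that the phrasing ``up to unitary equivalence in $\cpcstar{\CHx}$'' in item (ii) is exactly the quotient $\cuni{\CHx}/U(\cpstar{\CHx})$: two controlled unitaries $U,U'$ with controlled adjoints are identified precisely when $U^*U'\in U(\cpstar{\CHx})$, since $U(\cpstar\CHx)$ is the group of unitaries in $\cpcstar\CHx$ that lie in $\cuni\CHx$ (and $\cpstar\CHx\subseteq\cpcstar\CHx$ with the same unitary group on the relevant subset, because a controlled unitary whose adjoint is controlled and which is approximable is already of controlled propagation — or, more carefully, one uses that $U(\cpcstar\CHx)\cap\cuni\CHx = U(\cpstar\CHx)$ by the definition of $\cuni\CHx$).

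Next I would bring in \cref{thm: injection into outer}, which under the admissibility assumption on $\CHx$ (satisfied here, as discrete modules are admissible) produces canonical homomorphisms $\sigma_{\CR}\colon\cuni{\CHx}/U(\cpstar{\CHx})\to\out(\roeclike{\CHx})$ for each flavor $\CR\in\{\mathrm{cp},\mathrm{ql},\mathrm{Roe}\}$, and asserts injectivity of $\sigma_{\mathrm{cp}}$ and $\sigma_{\mathrm{ql}}$ (since $\cpcstar\CHx$ and $\qlcstar\CHx$ are their own multiplier algebras and are contained in $\qlcstar\CHx$). For $\sigma_{\mathrm{Roe}}$, injectivity needs the extra hypotheses $\crse X$ countably generated and coarsely locally finite, via \cref{cor: cpc is multiplier of roec} (which gives $\multiplieralg{\roecstar\CHx}=\cpcstar\CHx\subseteq\qlcstar\CHx$), as recorded in the corollary immediately following \cref{thm: injection into outer}. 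Then I would apply \cref{cor: surjection to Out}: assuming $\crse X$ countably generated, $\sigma_{\mathrm{cp}}$ and $\sigma_{\mathrm{ql}}$ are surjective, and if moreover $\crse X$ is coarsely locally finite, $\sigma_{\mathrm{Roe}}$ is surjective too. Combining injectivity and surjectivity, $\sigma_{\mathrm{cp}}$ and $\sigma_{\mathrm{ql}}$ become isomorphisms $\cuni{\CHx}/U(\cpstar\CHx)\xrightarrow{\cong}\out(\cpcstar\CHx)$ and $\xrightarrow{\cong}\out(\qlcstar\CHx)$ under the countable generation hypothesis, giving (ii)$\cong$(iii)$\cong$(iv); and under the additional coarse local finiteness $\sigma_{\mathrm{Roe}}$ is an isomorphism onto $\out(\roecstar\CHx)$, giving the identification with (v). Chaining through $\rho$ then yields all five groups isomorphic, with the stated dependence on hypotheses.

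The argument is genuinely short modulo the cited results: the only thing to be careful about is bookkeeping of which hypotheses are needed for which isomorphism, so that the nested ``if … then also …'' structure of the statement is respected. Concretely: (i)$\cong$(ii) needs only that $\CHx$ is discrete $\kappa$-ample of local rank $\kappa$; adding ``$\crse X$ countably generated'' upgrades this to (i)$\cong$(ii)$\cong$(iii)$\cong$(iv) via $\sigma_{\mathrm{cp}},\sigma_{\mathrm{ql}}$; and adding ``$\crse X$ coarsely locally finite'' brings in (v) via $\sigma_{\mathrm{Roe}}$, where one also uses \cref{cor: aut of Roe} to make sense of $\out(\roecstar\CHx)$ sitting inside the same picture (indeed \cref{cor: aut of Roe} already shows $\out(\roecstar\CHx)=\out(\cpcstar\CHx)$ as subquotients of $\U(\CHx)$, so the isomorphism (iii)$\cong$(v) is essentially an equality in that case). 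I do not anticipate a serious obstacle — the real content has been done in \cref{thm: coarse equivalences embed in cuni,thm: injection into outer,cor: surjection to Out,cor: cpc is multiplier of roec,cor: aut of Roe}, all of which rest on \cref{thm: strong rigidity}. If anything, the subtlety is purely expository: making explicit that all the isomorphisms are \emph{canonical} (i.e.\ independent of the choices of covering unitaries), which is already part of the statements of \cref{thm: coarse equivalences embed in cuni} and \cref{thm: injection into outer} and so requires no new work.

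\begin{proof}
  Throughout, recall that a discrete module is in particular admissible and $\kappa$-ample of local rank $\kappa$ by hypothesis, so all of \cref{thm: coarse equivalences embed in cuni,thm: injection into outer} apply.

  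\emph{The isomorphism \textup{(i)}$\cong$\textup{(ii)}.}
  By \cref{thm: coarse equivalences embed in cuni} there is a canonical isomorphism
  \[
    \rho\colon\coe{\crse X}\xrightarrow{\ \cong\ }\cuni{\CHx}/U(\cpstar{\CHx}).
  \]
  By definition, $\cuni{\CHx}/U(\cpstar\CHx)$ is precisely the group of controlled unitaries $U\colon\CHx\to\CHx$ with $U^*$ controlled, taken up to unitary equivalence in $\cpstar\CHx$, and this coincides with unitary equivalence in $\cpcstar\CHx$ because a unitary of $\cpcstar\CHx$ lying in $\cuni\CHx$ already belongs to $U(\cpstar\CHx)$. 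This identifies \textup{(i)} with \textup{(ii)}, with no countability assumption needed.

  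\emph{The isomorphisms with \textup{(iii)} and \textup{(iv)} under countable generation.}
  Assume now that $\crse X$ is countably generated. By \cref{thm: injection into outer}, the canonical homomorphisms
  \[
    \sigma_{\rm cp}\colon\cuni{\CHx}/U(\cpstar{\CHx})\to\out(\cpcstar{\CHx}),
    \qquad
    \sigma_{\rm ql}\colon\cuni{\CHx}/U(\cpstar{\CHx})\to\out(\qlcstar{\CHx})
  \]
  are injective, since $\multiplieralg{\cpcstar\CHx}=\cpcstar\CHx$ and $\multiplieralg{\qlcstar\CHx}=\qlcstar\CHx$ are both contained in $\qlcstar\CHx$. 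By \cref{cor: surjection to Out}, $\sigma_{\rm cp}$ and $\sigma_{\rm ql}$ are surjective as well. Hence both are isomorphisms, and composing with $\rho$ gives canonical isomorphisms of \textup{(i)}$\cong$\textup{(ii)} with \textup{(iii)} and with \textup{(iv)}.

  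\emph{The isomorphism with \textup{(v)} under coarse local finiteness.}
  Assume in addition that $\crse X$ is coarsely locally finite. Then \cref{cor: cpc is multiplier of roec} gives $\multiplieralg{\roecstar\CHx}=\cpcstar\CHx\subseteq\qlcstar\CHx$, so by \cref{thm: injection into outer} the homomorphism
  \[
    \sigma_{\rm Roe}\colon\cuni{\CHx}/U(\cpstar{\CHx})\to\out(\roecstar{\CHx})
  \]
  is injective; by \cref{cor: surjection to Out} it is surjective, hence an isomorphism. (Consistently, \cref{cor: aut of Roe} already records $\aut(\roecstar\CHx)=\aut(\cpcstar\CHx)$ and $\out(\roecstar\CHx)=\out(\cpcstar\CHx)$.) Composing with $\rho$ identifies \textup{(v)} with \textup{(i)}--\textup{(iv)} as well. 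This proves all five groups are canonically isomorphic under the stated hypotheses.
\end{proof}
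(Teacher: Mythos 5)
Your proof is correct and follows exactly the route the paper intends: the corollary is stated without an explicit proof precisely because it is an immediate assembly of \cref{thm: coarse equivalences embed in cuni}, \cref{thm: injection into outer}, \cref{cor: surjection to Out}, and (for the Roe-algebra case) \cref{cor: cpc is multiplier of roec}, and you have chained these together with the correct bookkeeping of hypotheses. The one small imprecision is your remark that $U(\cpcstar\CHx)\cap\cuni\CHx = U(\cpstar\CHx)$ holds ``by the definition of $\cuni\CHx$'': this identity is true, but it is not a definition-chase — it is most cleanly seen as a consequence of the injectivity of $\sigma_{\rm cp}$ from \cref{thm: injection into outer} (if $u\in\cuni\CHx$ lies in $\cpcstar\CHx$ then $\Ad(u)$ is inner on $\cpcstar\CHx$, whence $u\in U(\cpstar\CHx)$ by injectivity, which holds already for admissible modules without countable generation). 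Since you correctly state the fact and it does not affect the validity of the chain, this is a presentation point rather than a gap.
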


As final words, it is worth noting that the isomorphism between $\Out(\cpcstar{\CHx})$ and $\Out(\qlcstar{\CHx})$ in \cref{cor: all groups are iso} is \emph{not} induced by the identity on $\aut(\variable)$ (as was instead the case for $\out(\roecstar\CHx)=\out(\cpcstar\CHx)$, cf.\ \cref{cor: aut of Roe}).
Namely, even though both $\aut(\cpcstar{\CHx})$ and $\aut(\qlcstar\CHx)$ can be seen as subgroups of $\U(\CHx)$, it is in general not true that $\aut(\cpcstar{\CHx}) = \aut(\qlcstar\CHx)$.
As a matter of fact, when $\cpcstar{\CHx}\neq \qlcstar\CHx$ the opposite is true. The following is inspired from \cite{ozawa-2023} and should be compared with \cref{cor: aut-cp sends roe-to-roe}.

\begin{corollary}
  Let $\crse X$ be a coarsely locally finite and countably generated coarse space. Let $\CHx$ be discrete and $\kappa$-ample of rank $\kappa\geq \aleph_0$. Suppose there is some unitary $U \in \qlcstar{\CHx}$ that is not approximable. Then $\Ad(U)$ does not send $\cpcstar{\CHx}$ into itself.
\end{corollary}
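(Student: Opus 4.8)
The plan is to argue by contrapositive: I will assume that $\Ad(U)$ maps $\cpcstar{\CHx}$ into itself and derive that $U$ must be approximable, contradicting the hypothesis. So suppose $\Ad(U)(\cpcstar\CHx)\subseteq\cpcstar\CHx$. Since $U$ is a unitary in $\qlcstar\CHx$, conjugation by $U$ certainly maps $\qlcstar\CHx$ into $\qlcstar\CHx$ (quasi-locality is preserved under conjugation by quasi-local operators, cf.\ \cref{lem:supports and operations quasi-local}), and by assumption it also maps $\cpcstar\CHx$ into $\cpcstar\CHx$. Because $U$ is a unitary, $\Ad(U^*)$ is the inverse map; I must check that $\Ad(U^*)$ also maps $\cpcstar\CHx$ into itself, which will make $\Ad(U)$ an automorphism of $\cpcstar\CHx$. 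This is where I would use a symmetry/surjectivity argument: $\Ad(U)$ restricted to $\cpcstar\CHx$ is an injective \Star{}endomorphism; to see it is onto I would invoke that $U^*\in\qlcstar\CHx$ as well (the adjoint of a quasi-local operator is quasi-local), so that $\Ad(U^*)$ maps $\cpcstar\CHx$ into $\qlcstar\CHx$, and then deduce from a more careful argument — or simply from the hypothesis applied to $U^*$ in place of $U$, noting the statement is really about any non-approximable unitary — that in fact $\Ad(U^*)(\cpcstar\CHx)\subseteq\cpcstar\CHx$ too. (Alternatively: run the whole argument below symmetrically.)

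Granting that $\phi\coloneqq\Ad(U)\in\aut(\cpcstar\CHx)$, I would now bring in the rigidity machinery. By \cref{thm: strong rigidity}\cref{thm: strong rigidity:three} — applied with $\roeclikeone\CHx=\roecliketwo\CHx=\cpcstar\CHx$, which is not $\qlcstar\CHx$ by our standing hypothesis that $\cpcstar\CHx\neq\qlcstar\CHx$ — the operator $U$ is \emph{strongly approximately controlled}, with $\acsupp(U)=\qcsupp(U)$ a coarse equivalence of $\crse X$. In particular $U$ can be approximated in norm by controlled operators: for every $\varepsilon>0$ there is a controlled relation $R\subseteq X\times X$ and an $R$-controlled $S\in\CB(\CHx)$ with $\norm{U-S}\leq\varepsilon$. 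But an $R$-controlled operator has $R$-controlled propagation, hence belongs to $\cpstar\CHx$, so $U$ lies in the norm-closure $\cpcstar\CHx$. This contradicts the assumption that $U$ is not approximable. Therefore $\Ad(U)$ cannot map $\cpcstar\CHx$ into itself.

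I would organize the writeup as follows: first state the contrapositive assumption; second, verify that $\Ad(U)$ is an automorphism of $\cpcstar\CHx$ (this is the only non-cosmetic bookkeeping — one needs $\Ad(U^*)$ to preserve $\cpcstar\CHx$, which I would get by applying the hypothesis to $U^*$, itself a non-approximable unitary in $\qlcstar\CHx$ since approximability and quasi-locality are both $*$-invariant and $U$ non-approximable forces $U^*$ non-approximable); third, invoke \cref{thm: strong rigidity:three} to conclude $U$ is strongly approximately controlled; fourth, observe that strongly approximately controlled operators are approximable (any $\varepsilon$-$R$-approximately controlled operator is within $\varepsilon$ of an element of $\cpstar\CHx$, hence lies in $\cpcstar\CHx$); and finally, note the contradiction.

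The step I expect to be the main obstacle is the second one — confirming that $\Ad(U)$ is genuinely an \emph{automorphism} of $\cpcstar\CHx$ and not merely an endomorphism. The cleanest route is indeed to observe that the hypotheses are symmetric in $U$ and $U^*$ (both are non-approximable unitaries of $\qlcstar\CHx$), so one may just as well assume $\Ad(U^*)(\cpcstar\CHx)\subseteq\cpcstar\CHx$ and immediately get bijectivity; but one should take care that the statement as phrased speaks of a single fixed $U$, so strictly speaking I would phrase the contrapositive as: ``if $\Ad(U)(\cpcstar\CHx)\subseteq\cpcstar\CHx$, then since $U$ is unitary $\Ad(U)$ is injective with inverse $\Ad(U^*)$; moreover $U^*$ is also a non-approximable unitary in $\qlcstar\CHx$ (as $\qlcstar\CHx$ and the set of approximable operators are self-adjoint), so if additionally $\Ad(U^*)(\cpcstar\CHx)\subseteq\cpcstar\CHx$ we are done by the argument above applied to $U^*$; and conversely if $\Ad(U^*)(\cpcstar\CHx)\not\subseteq\cpcstar\CHx$ then already the conclusion of the corollary holds for $U^*$, and a symmetric inspection transfers it to $U$''. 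A slicker alternative, which I would prefer if the referee allows, is to first prove the sharper statement ``$\Ad(U)(\cpcstar\CHx)\subseteq\cpcstar\CHx$ implies $U\in\cpcstar\CHx$'' directly — only requiring $\Ad(U)$ to be a $*$-homomorphism $\cpcstar\CHx\to\qlcstar\CHx$ that happens to land in $\cpcstar\CHx$ — by combining \cref{thm: uniformization} (which gives weak quasi-control of $U$ from the homomorphism $\cpstar\CHx\to\qlcstar\CHx$), \cref{cor: rigidity controlled unitaries} (to get $\cappmap[U]{\delta}{F}E$ coarsely everywhere defined), and then \cref{prop: quasi-control vs str quasi-control} together with the observation that $\Ad(U)$ landing in $\cpcstar\CHx$ forces $\acsupp(U)=\qcsupp(U)$; this avoids invoking full isomorphism and sidesteps the bijectivity worry entirely.
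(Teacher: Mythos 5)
Your overall architecture — run the contrapositive, get from $\Ad(U)(\cpcstar\CHx)\subseteq\cpcstar\CHx$ to strong approximate control of $U$, and then deduce $U\in\cpcstar\CHx$ — matches the paper's (very short) proof, which in full reads: apply \cref{thm: strong rigidity} to conclude $U$ is strongly approximately controlled with $\acsupp(U)=\qcsupp(U)$; since $U\in\qlcstar\CHx$, $\qcsupp(U)=\cid_{\crse X}$; hence $\acsupp(U)=\cid_{\crse X}$, so $U\in\cpcstar\CHx$. Your caution about the isomorphism hypothesis in \cref{thm: strong rigidity} is reasonable and your ``slicker alternative'' (decomposing the theorem into \cref{thm: uniformization}, \cref{cor: rigidity controlled unitaries} and \cref{prop: quasi-control vs str quasi-control}) is a legitimate way to see that the conclusion of \cref{thm: strong rigidity:two}--\cref{thm: strong rigidity:three} really only needs $\Ad(U)$ and $\Ad(U^*)$ to be strongly continuous $*$-homomorphisms $\cpstar\CHx\to\qlcstar\CHx$, which holds automatically when $U\in\qlcstar\CHx$ is a unitary; bijectivity of $\Ad(U)$ on $\cpcstar\CHx$ is not actually used.

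The genuine gap is in your final step. You write that $U$ ``can be approximated in norm by controlled operators: for every $\varepsilon>0$ there is a controlled relation $R\subseteq X\times X$ and an $R$-controlled $S$ with $\norm{U-S}\leq\varepsilon$. But an $R$-controlled operator has $R$-controlled propagation, hence belongs to $\cpstar\CHx$.'' This conflates two different notions: an $R$-\emph{controlled} operator (coarse support contained in a controlled \emph{relation} $R$, cf.\ \cref{def:controlled and proper operator}) versus an operator of $E$-\emph{controlled propagation} (support contained in a controlled \emph{entourage} $E$, i.e.\ a relation coarsely contained in $\Delta_X$, cf.\ \cref{def:controlled-prop operator}). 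They coincide only when $R\csub\Delta_X$. In your own text, $\acsupp(U)=\qcsupp(U)$ is ``a coarse equivalence of $\crse X$'' — but a general coarse equivalence is not $\cid_{\crse X}$, and an operator controlled by (say) a nontrivial coarse self-equivalence is emphatically not of controlled propagation, so it need not lie in $\cpstar\CHx$. Strong approximate control alone does not imply approximability.

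What closes the argument — and this is the crux of the paper's proof — is precisely the hypothesis $U\in\qlcstar\CHx$: by \cref{rkm: about approximate and quasi support}\cref{rkm: about approximate and quasi support: contained in idx}, $U$ quasi-local is equivalent to $\qcsupp(U)\crse\subseteq\cid_{\crse X}$, and since $\cid_{\crse X}$ is coarsely everywhere defined we get $\qcsupp(U)=\cid_{\crse X}$. Combined with $\acsupp(U)=\qcsupp(U)$ from \cref{thm: strong rigidity}\cref{thm: strong rigidity:three}, this gives $\acsupp(U)=\cid_{\crse X}$, and then again by \cref{rkm: about approximate and quasi support}\cref{rkm: about approximate and quasi support: contained in idx} (the approximate version) $U$ is approximable. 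Your writeup never uses $U\in\qlcstar\CHx$ to pin down the (quasi) support of $U$, so the identification of $\acsupp(U)$ with $\cid_{\crse X}$ — without which the final inclusion into $\cpcstar\CHx$ is false — is missing. The fix is a single sentence, but it is the load-bearing one.
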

\begin{proof}
  Let $U$ be some unitary in $\qlcstar{\CHx}$. If $\Ad(U)$ sends $\cpcstar{\CHx}$ into itself then, by \cref{thm: strong rigidity} it follows that $U$ is strongly approximately controlled, and $\acsupp(U)=\qcsupp(U)$.
  Since $U$ is quasi-local by assumption, $\qcsupp(U)$ has to be $\cid_{\crse X}$. Then also $\acsupp(U)=\cid_{\crse X}$, so $U$ belongs to $\cpcstar{\CHx}$.
\end{proof}

\bibliography{BibRigidity}

\end{document}